\newcommand{\assign}{:=}
\newcommand{\cdummy}{\cdot}
\newcommand{\comma}{{,}}
\newcommand{\mathD}{\mathrm{D}}
\newcommand{\mathd}{\mathrm{d}}
\newcommand{\nobracket}{}
\newcommand{\nosymbol}{}
\newcommand{\tmem}[1]{{\em #1\/}}
\newcommand{\tmop}[1]{\ensuremath{\operatorname{#1}}}
\newcommand{\tmscript}[1]{\text{\scriptsize{$#1$}}}
\newcommand{\tmtextit}[1]{{\itshape{#1}}}
\newcommand{\um}{-}
\newcommand{\para}{\,\mathord{\prec}\,}
\newcommand{\lpara}{\,\mathord{\succ}\,}
\newcommand{\mpara}{\,\mathord{\prec\!\!\!\prec}\,}
\newcommand{\reso}{\,\mathord{\circ}\,}
\newcommand{\rbe}{\mathrm{rbe}}
\newcommand{\kpz}{\mathrm{kpz}}
\newcommand{\rhe}{\mathrm{rhe}}
\newcommand{\expp}{\mathrm{exp}}
\newcommand{\arr}[1]{\overleftarrow{#1}}
\newtheorem{theorem}{Theorem}[section]
\newtheorem{lemma}[theorem]{Lemma}
\newtheorem{corollary}[theorem]{Corollary}
\newtheorem{remark}[theorem]{Remark}
\newtheorem{proposition}[theorem]{Proposition}
\newtheorem*{assumption}{Assumption}
\theoremstyle{definition}
\newtheorem{definition}[theorem]{Definition}
\newenvironment{enumeratenumeric}{\begin{enumerate}[1.] }{\end{enumerate}}
\newcommand{\zzone}{\text{\resizebox{.7em}{!}{\includegraphics{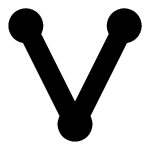}}}}
\newcommand{\zztwo}{\text{\resizebox{.7em}{!}{\includegraphics{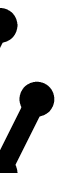}}}}
\newcommand{\zzthree}{\text{\resizebox{.7em}{!}{\includegraphics{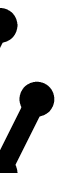}}}}
\newcommand{\zzfour}{\text{\resizebox{1em}{!}{\includegraphics{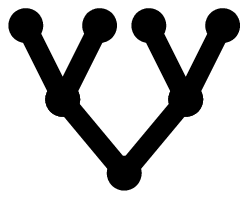}}}}
\newcommand{\zzfive}{\text{\resizebox{.7em}{!}{\includegraphics{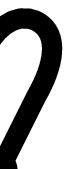}}}}
\newcommand{\zzsix}{\text{\resizebox{.7em}{!}{\includegraphics{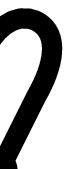}}}}
\newcommand{\zzseven}{\text{\resizebox{.7em}{!}{\includegraphics{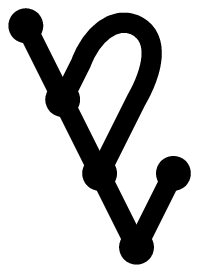}}}}
\newcommand{\zzeight}{\text{\resizebox{.7em}{!}{\includegraphics{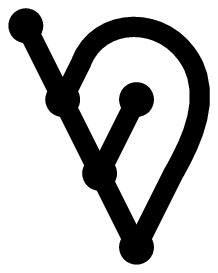}}}}
\newcommand{\zzthreereso}{\text{\resizebox{.7em}{!}{\includegraphics{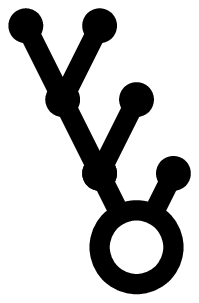}}}}
\newcommand{\zzfivereso}{\text{\resizebox{.7em}{!}{\includegraphics{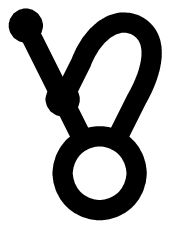}}}}
\newcommand{\zzsixreso}{\text{\resizebox{.7em}{!}{\includegraphics{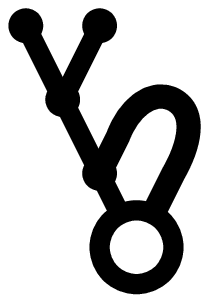}}}}
\newcommand{\zzsevenreso}{\text{\resizebox{.7em}{!}{\includegraphics{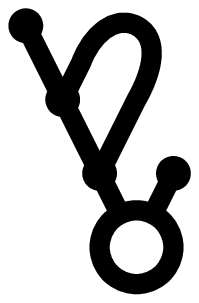}}}}
\newcommand{\zzeightreso}{\text{\resizebox{.7em}{!}{\includegraphics{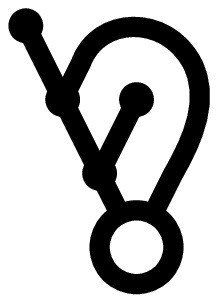}}}}
\newcommand{\zznine}{\text{\resizebox{1em}{!}{\includegraphics{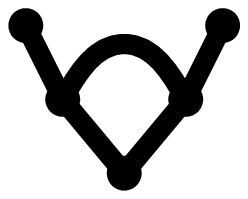}}}}
\newcommand{\CC}{\mathscr{C}}
\newcommand{\CD}{\mathscr{D}}
\newcommand{\CA}{\mathscr{A}}
\newcommand{\CB}{\mathscr{B}}
\newcommand{\CS}{\mathscr{S}}
\newcommand{\CF}{\mathscr{F}}
\newcommand{\DD}{\mathscr{D}}
\newcommand{\LL}{\mathscr{L}}
\newcommand{\CM}{\mathcal{M}}
\newcommand{\CX}{\mathcal{X}}
\newcommand{\F}{\mathcal{F}}
\newcommand{\PC}{\mathcal{P}}
\newcommand{\dd}{\mathd}
\newcommand{\R}{\mathbb{R}}
\newcommand{\X}{\mathbb{X}}
\newcommand{\T}{\mathbb{T}}
\newcommand{\Z}{\mathbb{Z}}
\newcommand{\E}{\mathbb{E}}
\renewcommand{\P}{\mathbb{P}}
\newcommand{\N}{\mathbb{N}}
\newcommand{\Y}{\mathbb{Y}}
\newcommand{\C}{\mathbb{C}}
\newcommand{\1}{\mathds{1}}
\newcommand{\Q}{\mathbb{Q}}
\newcommand{\prog}{\mathfrak{pm}}
\newcommand{\FM}{\mathfrak{M}}
\newcommand{\Ykpz}{\mathcal{Y}_{\mathrm{kpz}}}
\newcommand{\Xrbe}{\mathcal{X}_{\mathrm{rbe}}}
\title{\textbf{KPZ reloaded}}
\author{
  Massimiliano Gubinelli \\
  CEREMADE \& UMR 7534 CNRS \\
  Universit{\'e} Paris--Dauphine \\
  and Institut Universitaire de France \\
  \texttt{gubinelli@ceremade.dauphine.fr}
  \and
  Nicolas Perkowski \\
  Humboldt--Universit\"at zu Berlin \\
  Institut f\"ur Mathematik \\
  \texttt{perkowsk@math.hu-berlin.de}
}
\begin{document}

\maketitle

\begin{abstract}
  We analyze the one-dimensional periodic Kardar--Parisi--Zhang equation in the language of paracontrolled distributions, giving an alternative viewpoint on the seminal results of Hairer. 
  
  Apart from a basic existence and uniqueness result for paracontrolled solutions to the KPZ equation we perform a thorough study of some related problems. We rigorously prove the links between KPZ equation, stochastic Burgers equation, and (linear) stochastic heat equation and also the existence of solutions starting from quite irregular initial conditions. We also build a partial link between energy solutions as introduced by Gon\c{c}alves and Jara and paracontrolled solutions.
  
  Interpreting the KPZ equation as the value function of an optimal control problem, we give a pathwise proof for the global existence of solutions and thus for the strict positivity of solutions to the stochastic heat equation.
  
  Moreover, we study Sasamoto-Spohn type discretizations of the stochastic Burgers equation and show that their limit solves the continuous Burgers equation  possibly with an additional linear transport term. As an application, we give a proof of the invariance of the white noise for the stochastic Burgers equation which does not rely on the Cole--Hopf transform.
\end{abstract}

\tableofcontents

\section{Introduction}

The Kardar--Parisi--Zhang
(KPZ) equation is the stochastic partial differential equation (SPDE)
\begin{equation}\label{eq:kpz}
  \LL h (t, x) = (\mathD h (t, x))^2 + \xi (t, x), \hspace{2em} x \in \R\comma \hspace{1em} t \geqslant 0,
\end{equation}
where $h : \R_+ \times \R \rightarrow \R$ is a real
valued function on the real line,
 $\LL = \partial_t - \Delta$ denotes the heat operator, $\mathD = \partial / \partial x$ and $\partial_t = \partial / \partial t$
are the spatial respectively temporal derivatives and  $\xi$ is a
space-time white noise: the centered Gaussian space-time random
distribution with covariance
\[
   \E [\xi (t, x) \xi (s, y)] = \delta (t - s) \delta (x - y), \hspace{2em} t, s \geqslant 0 \comma \hspace{1em} x, y \in \R.
\]
The KPZ equation was introduced by Kardar, Parisi, and Zhang~\cite{Kardar1986} as an SPDE model describing the large scale fluctuations of a growing interface represented by a \emph{height field} $h$. Based on non-rigorous perturbative renormalization group arguments they predicted that upon a particular ``1-2-3" rescaling and centering, the height field $h$ (or at least its finite dimensional distributions) must converge to a scale invariant random field $H_{\text{kpz}}$ (the \emph{KPZ fixed point}) obtained as
\begin{equation}
\label{eq:kpz-fixpoint}
H_{\text{kpz}}(t,x) = \lim_{\lambda\to \infty} \lambda h(\lambda^3 t,\lambda^2 x) - c(\lambda) t .
\end{equation}
According to the general renormalization group (RG) understanding of dynamic critical phenomena a large class of similar interface growth mechanisms must feature the same large scale statistical behavior, namely their height fields $\tilde h(t,x)$ converge, upon rescaling (and maybe centering and a suitable Galilean transformation to get rid of a uniform spatial drift) to the same KPZ fixed point $H_{\text{kpz}}$. 

Proving any kind of rigorous results about the convergence stated in~\eqref{eq:kpz-fixpoint} (called sometimes the \emph{strong KPZ universality conjecture}) is a wide open problem for any continuous SPDE modeling interface growth and in particular for the KPZ equation; see however~\cite{SasamotoSpohn2010,AmirCorwin2011} for recent breakthroughs. On the other side there has been a tremendous amount of progress in understanding the large scale behavior of certain discrete probabilistic models of growth phenomena belonging to the same universality class, mainly thanks to a special feature of these models called \emph{stochastic integrability}. For further details see the excellent recent surveys~\cite{Corwin2012, Quastel2014, QuastelSpohn2015}.

A weaker form of universality comes from looking at the KPZ equation as a \emph{mesoscopic model} of a special class of growth phenomena. Indeed by the same theoretical physics RG picture it is expected that the KPZ equation is itself a universal description of the fluctuations of weakly asymmetric growth models, a prediction that is commonly referred to as the \emph{weak KPZ universality conjecture}. In this case the microscopic model possesses a parameter which drives the system out of equilibrium and controls the speed of growth. It is then possible to keep the nonlinear contributions of the same size as the diffusion and noise effects by tuning the parameter and at the same time rescaling the height field  diffusively. It is then expected that the random field so obtained satisfies the KPZ equation~\eqref{eq:kpz}. 

For a long time, the main difficulty in addressing these beautiful problems and obtaining any kind of invariance principle for space--time random fields describing random growth has been the elusiveness of the KPZ equation itself.  The main difficulty in making sense of the equation stems from the fact that for any fixed
time $t \geqslant 0$ the dependence of the solution $h (t, x)$ on the space
variable $x \in \T$ cannot be better than Brownian. That is, if we
measure spatial regularity in the scale of H{\"o}lder spaces $\CC^{\gamma}
(\R)$, we expect that $h (t, \cdot) \in \CC^{\gamma} (\R)$
for any $\gamma < 1 / 2$ but not better, in particular certainly  the quadratic term
$(\mathD h (t, x))^2$ is not well defined.

The first rigorous results about KPZ is due to Bertini and Giacomin~\cite{BertiniGiacomin1997}, who show that the height function of the weakly asymmetric simple exclusion process (WASEP) converges under appropriate rescaling to a continuous random field $h$ which they characterize using the Cole--Hopf transform. Namely they show that the random field $w = e^h$ is the solution of a particular It\^o SPDE : the stochastic heat equation (SHE)
\begin{equation}
\label{eq:she-intro}
   \dd w(t,x) = \Delta w(t,x) \dd t + w(t,x) \dd B_t(x),
\end{equation}
where $B_t(x) = \int_0^t \xi(s,x) \dd s$ is a cylindrical Brownian motion in $L^2(\R)$. The regularity of the solution of this equation does not allow to determine the intrinsic dynamics of $h$, but the convergence result shows that any candidate notion of solution to the KPZ equation should have the property that $e^h$ solves the SHE~\eqref{eq:she-intro}.

The key tool in Bertini and Giacomin's proof is G\"artner's microscopic Cole--Hopf transform~\cite{Gartner1988} which allows them to prove the convergence of a transformed version of the WASEP to the stochastic heat equation. But the microscopic Cole--Hopf transform only works for this specific model (or at least only for a few special models, see the recent works~\cite{Dembo2013, CorwinTsai2015} for extensions of the Bertini-Giacomin result) and cannot be used to prove universality. 

Another notion of solution for the KPZ equation has been introduced by Gon\c{c}alves and Jara~\cite{GoncalvesJara2014} who proved tightness for a quite general class of rescaled weakly asymmetric exclusion processes and provided a description of the dynamics of the limiting random fields by showing that they must be \emph{energy solutions} of the KPZ equation. In particular, they showed that (after a subtraction) the quadratic term in~\eqref{eq:kpz} would make
sense only as a space-time distribution and not better. This notion of
\tmtextit{energy solution} has been subsequently interpreted by Gubinelli and Jara in the
language of controlled paths in~{\cite{GubinelliJara2013}}. The
key observation is that the fast time decorrelation of the noise provides a
mechanism to make sense of the quadratic term. Unfortunately, energy solutions
are very weak and allow only for few a priori estimates and currently there is
no hint that they are unique.
In this respect  Gon\c{c}alves and Jara's result falls short of proving weak universality for systems without microscopic Cole--Hopf transformation. 

A related problem is studied in the the recent work of Alberts, Khanin and Quastel~\cite{AlbertsKhaninQuastel2014} where a universality result is rigorously shown for discrete random directed polymers with weak noise, which converge under rescaling to the continuum random directed polymer whose partition function is given by the stochastic heat equation.

This was the state of the art until 2011, when Hairer~{\cite{hairer_solving_2011}} established a
well-posedness theory for the periodic KPZ equation using three key ingredients: a partial series
expansion of the solution, an explicit control of various stochastic terms,
and a fixed point argument involving rough path theory. As a final outcome he
was able to show very explicitly that solutions $h$ to equation~\eqref{eq:kpz} are
limits for $\varepsilon \rightarrow 0$ of the approximate solutions
$h_{\varepsilon}$ of the equation
\begin{equation}
  \partial_t h_{\varepsilon} (t, x) = \Delta h_{\varepsilon} (t, x) + (\mathD
  h_{\varepsilon} (t, x))^2 - C_{\varepsilon} + \xi_{\varepsilon} (t, x),
  \hspace{2em} x \in \T, \hspace{1em} t \geqslant 0, \label{eq:kpz-eps}
\end{equation}
where $\T=\R/ (2 \pi\Z)$ and $\Delta$ is the Laplacian with periodic boundary conditions on $\T$,
$\xi_{\varepsilon}$ is a Gaussian process obtained by regularizing $\xi$
by convoluting it with a smooth kernel $\rho$, that is $\xi_{\varepsilon} =
\rho_{\varepsilon} \ast \xi$ with $\rho_{\varepsilon} (x) = \varepsilon^{- 1}
\rho (x / \varepsilon)$, and where the constant $C_{\varepsilon}$ has to be
chosen in such a way that it diverges to infinity as $\varepsilon \rightarrow
0$. While it had been known since Bertini and Giacomin's work~{\cite{BertiniGiacomin1997}} that the solutions to~(\ref{eq:kpz-eps})
converge to a unique limit, the key point of Hairer's
construction is that using rough path theory he is able to give an \emph{intrinsic} meaning to
the product $(\mathD h (t, x))^2$ and to obtain good bounds for that operation
on suitable function spaces, which ultimately allowed him to solve the equation.

The solution of the KPZ equation was one of the stepping stones in the development of Hairer's regularity structures~\cite{Hairer2014Regularity}, and now it is of course also possible to solve the equation using regularity structures rather than rough paths; see for example~\cite{FrizHairer2014}.

Using regularity structures, Hairer and Quastel~\cite{HairerQuastel} recently proved the universality of the KPZ equation in the following sense: they consider a regularized version $\xi_\varepsilon$ of the white noise and an arbitrary even polynomial $P$ and show that under the right rescaling the solution $h_\varepsilon$ to
\begin{equation}\label{eq:hairer-quastel}
   \partial_t h_\varepsilon = \Delta h_\varepsilon + \sqrt{\varepsilon} P(\mathD h_\varepsilon) + \xi_\varepsilon
\end{equation}
converges to the KPZ equation with (possibly) a non--universal constant in front of the non--linearity. Let us also mention the recent work~\cite{HairerShen2015}, where a similar problem is studied except that $P(x) = x^2$ but $\xi_\varepsilon$ is not necessarily Gaussian.

In the same period Hairer was developing his theory,  together with P.~Imkeller we proposed the notion of \emph{paracontrolled distributions}~\cite{gubinelli_paraproducts_2012, GubinelliImkeller2014, GubinelliPerkowski2015} as a tool to give a meaning to and solve a large class of singular SPDEs. The class of models which can be analyzed using this method includes the generalized Parabolic Anderson model in 2 dimensions and the stochastic quantization equation in 3 dimensions and other interesting singular SPDEs \cite{CatellierChouk2013, ChoukFriz2014, Zhu2014NS, Zhu2014NSdiscrete, Bailleul20152d, CannizzaroChouk2015, Bailleul20153d, Proemel2015}. 

After Hairer's breakthrough, another set of tools which should allow to solve the KPZ equation was developed by Kupiainen~\cite{Kupiainen2014}, who builds on renormalization group techniques to make sense of the three dimensional stochastic quantization equation $\phi^4_3$.

The main aim of the present paper is to describe the paracontrolled approach to the KPZ equation. While the analysis of the KPZ equation itself is quite simple and  shorter versions of the present paper circulated informally among people interested in paracontrolled equations, we kept delaying a public version in order to use the full versatility of paracontrolled analysis to explore various questions around KPZ and related equations and more importantly to find simple and direct arguments for most of the proofs. 

Indeed, despite paracontrolled calculus being currently less powerful than the full fledged regularity structure theory, we believe that it is lightweight enough to be effective in exploring various questions related to the qualitative behaviour of solutions and in particular in bridging the gap between stochastic and deterministic PDE theories.

We start in Section~\ref{sec:paracontrolled} by recalling some basic results from paracontrolled distributions, which we then apply in Section~\ref{sec:burgers} to solve the conservative Burgers equation
\[
   \LL u = \mathD u^2 + \mathD \xi
\]
driven by an irregular signal $\xi$ in a pathwise manner. In Section~\ref{sec:kpz} we indicate how to adapt the arguments to also solve the KPZ equation and the rough heat equation pathwise, and we show that the formal links between these equations ($u = \mathD h$, $w = e^h$) can indeed be justified rigorously, also for a driving signal that only has the regularity of the white noise and not just for the equations driven by mollified signals.

We then show in Section~\ref{sec:interpretation} that by working in a space of paracontrolled distributions one can find a natural interpretation for the nonlinearity in the stochastic Burgers equation, an observation which was not yet made in the works based on rough paths or regularity structures, where the focus is more on the continuous dependence of the solution map on the data rather than on the declaration of the nonlinear operations appearing in the equation. Our result allows us to build a partial bridge between energy solutions and paracontrolled solutions, although the full picture remains unclear.

Section~\ref{sec:singular initial} extends our previous results to start the equations in more irregular initial conditions. In the case of the KPZ equation we can start in any $\CC^\beta$ function for $\beta>0$, but the linear heat equation can be started in $w_0 \in B_{p,\infty}^{-\gamma}$ whenever $\gamma < 1/2$ and $p \in [1,\infty]$; in particular we can take $w_0$ to be a fractional derivative of order $\gamma$ of the Dirac delta.

In Section~\ref{sec:hjb} we develop yet another approach to the KPZ equation. We show that its solution is given by the value function of the stochastic control problem
\begin{equation}\label{eq:hjb introduction}
  h(t,x) = \sup_v \E_x \left[ \bar{h}(\gamma^v_t) + \int_0^t \Big(\arr \xi (s, \gamma^v_s) - \infty - \frac{1}{4} | v_s |^2 \Big) \mathd s \right],
\end{equation}
where $\arr \xi(s) = \xi(t-s)$ and under $\E_x$ we have
\[
   \gamma^v_s = x + \int_0^s v_r \dd r + \sqrt{2} W_s
\]
with a standard Brownian motion $W$. Such a representation has already proved very powerful in the case of a spatially smooth noise $\xi$, see for example~\cite{EKhanin2000,BakhtinCator2014}, but of course it is not obvious how to even make sense of it when $\xi$ is a space-time white noise. Based on paracontrolled distributions and the techniques of~\cite{DelarueDiel2014,CannizzaroChouk2015} we can give a rigorous interpretation for the optimization problem and show that the identity~\eqref{eq:hjb introduction} is indeed true. Immediate consequences are a comparison principle for the KPZ equation and a pathwise global existence result. Recall that in~\cite{hairer_solving_2011} global existence could only be shown by identifying the rough path solution with the Cole--Hopf solution, which means that the null set where global existence fails may depend on the initial condition. Here we show that this is not the case, and that for any $\omega$ for which $\xi(\omega)$ can be enhanced to a \emph{KPZ-enhancement} (see Definition~\ref{def:kpz rough distribution}) and for any initial condition $h_0 \in \CC^\beta$ with $\beta>0$ there exists a unique global in time paracontrolled solution $h(\omega)$ to the equation
\[
   \LL h(t,x,\omega) = (\mathD h(t,x,\omega))^2 - \infty + \xi(t,x,\omega), \qquad h(0,\omega) = h_0.
\]
and that the $L^\infty$ norm of $h$ is controlled only in terms of the KPZ-enhancement of the noise and the $L^\infty$ norm of the initial condition.

A surprising byproduct of these estimates is a positivity result for the solution of the SHE which is independent of the precise details of the noise. This is at odds with currently available proofs of positivity~\cite{Mueller1991, Moreno-Flores2014, ChenKim2014}, starting from the original proof of Mueller~\cite{Mueller1991}, which all use heavily the Gaussian nature of the noise and proceed via rather indirect arguments. Ours is a direct PDE argument showing that the ultimate reason for positivity does not lie in the law of the noise but somehow in its space-time regularity (more precisely in the regularity of its enhancement).  

Section~\ref{sec:SS} is devoted to the study of Sasamoto-Spohn~\cite{SasamotoSpohn2009} type discretizations of the conservative stochastic Burgers equation. We consider a lattice model $u_N \colon [0,\infty) \times \T_N \to \R$ (where $\T_N = (2\pi \Z / N) / (2 \pi \Z)$), defined by an SDE
\begin{align*}
       \mathd u_{N}(t,x) & = \Delta_N u_N(t,x) \mathd t + \big(\mathD_{N} B_{N} (u_N(t), u_N(t))\big)(x) \mathd t +  \mathd (\mathD_{N} \varepsilon^{-1/2} W_N(t,x)) \\
       u_N(0,x) & = u_0^N(x),
\end{align*}
where $\Delta_{N}$, $\mathD_{N}$ are approximations of Laplacian and spatial derivative, respectively, $B_N$ is a bilinear form approximating the pointwise product, $(W_{N}(t,x))_{t \in \R_+, x \in \T_N}$ is an $N$--dimensional standard Brownian motion, and $u_0^N$ is independent of $W_N$. We show that if $(u_0^N)$ converges in distribution in $\CC^{-\beta}$ for some $\beta < 1$ to $u_0$, then $(u_N)$ converges weakly to the paracontrolled solution $u$ of
\begin{equation}\label{eq:perturbed burgers intro}
   \LL u = \mathD u^2 + c \mathD u + \mathD \xi,\qquad u(0) = u_0,
\end{equation}
where $c \in \R$ is a constant depending on the specific choice of $\Delta_N$, $\mathD_N$, and $B_N$. If $\Delta_N$ and $\mathD_N$ are the usual discrete Laplacian and discrete gradient and $B_N$ is the pointwise product, then $c = 1/2$. However, if we replace $B_N$ by the bilinear form introduced in~\cite{Krug1991}, then $c=0$. It was observed before in the works of Hairer, Maas and Weber~\cite{Hairer_Maas_2012, Hairer_Maas_2014} that such a ``spatial Stratonovich corrector'' can arise in the limit of discretizations of Hairer's~\cite{Hairer2011Rough} generalized Burgers equation $\LL u = g(u) \partial_x u + \xi$, and indeed one of the motivations for the work~\cite{Hairer_Maas_2014} was that it would provide a first step towards the understanding of discretizations of the KPZ equation. To carry out this program we simplify many of the arguments in~\cite{Hairer_Maas_2014}, replacing rough paths by paracontrolled distributions. A key difference to~\cite{Hairer_Maas_2014} is that here the ``paracontrolled derivative'' is not constant, which introduces tremendous technical difficulties that were absent in all discretizations of singular PDEs which were studied so far, for example~\cite{Zhu2014NSdiscrete,MourratWeber2014}. We overcome these difficulties by introducing a certain random operator which we bound using stochastic computations.

As an application of our convergence result, we show that  the distribution of $m + 2^{-1/2} \eta$, where $\eta$ is a space white noise and $m \in \R$, is invariant under the evolution of the conservative stochastic Burgers equation. While this is well known (see~\cite{BertiniGiacomin1997} and also the recent work~\cite{FunakiQuastel2014} for an elegant proof in the more complicated setting of the non-periodic KPZ equation), ours seems to be the first proof which does not rely on the Cole--Hopf transform.

In Section~\ref{sec:stochastics} we construct the enhanced white noise which we needed in our pathwise analysis. We try to build a link with the Feynman diagrams from quantum field theory. The required bounds are shown by reducing everything to a few basic integrals that can be controlled by a simple recursive algorithm. In Section~\ref{sec:discrete stochastics} we indicate how to adapt these calculations to also obtain the convergence result for the enhanced data in the lattice models. We also calculate the explicit form of the correction constant $c$ appearing in~\eqref{eq:perturbed burgers intro}.

\paragraph{Acknowledgments.} We are very grateful to Khalil Chouk, who pointed out that by working in more general Besov spaces than $B_{\infty,\infty}$ one can start the rough heat equation in the Dirac delta. We would also like to thank Rongchan and Xiangchan Zhu, who found a mistake in a previous version of the paper.

The main part of the work was carried out while N.P. was employed by Universit\'e Paris Dauphine and supported by the Fondation Sciences Math\'ematiques de Paris (FSMP) and by a public grant overseen by the French National Research Agency (ANR) as part of the ``Investissements d'Avenir'' program (reference: ANR-10-LABX-0098).

\section{Paracontrolled calculus}\label{sec:paracontrolled}

Paracontrolled calculus and the relevant estimates which will be needed in our
study of the KPZ (and related) equations have been introduced
in~{\cite{gubinelli_paraproducts_2012}}. In this section we will recall the
notations and the basic results of paracontrolled calculus without proofs. For
more details on Besov spaces, Littlewood--Paley theory, and Bony's paraproduct
the reader can refer to the nice recent monograph~{\cite{Bahouri2011}}.

\subsection{Notation and conventions}

Throughout the paper, we use the notation $a \lesssim b$ if there exists a
constant $c > 0$, independent of the variables under consideration, such that
$a \leqslant c \cdot b$, and we write $a \simeq b$ if $a \lesssim b$ and $b
\lesssim a$. If we want to emphasize the dependence of $c$ on the variable
$x$, then we write $a (x) \lesssim_x b (x)$. For index variables $i$ and $j$
of Littlewood-Paley decompositions (see below) we write $i \lesssim j$ if
there exists $N \in \mathbb{N}$, independent of $j$, such that $i \leqslant j
+ N$ (or in other words if $2^i \lesssim 2^j$), and we write $i \sim j$ if $i \lesssim j$ and $j \lesssim i$.

An \tmtextit{annulus} is a set of the form $\CA = \{x \in \mathbb{R}^d : a
\leqslant |x| \leqslant b\}$ for some $0 < a < b$. A {\tmem{ball}} is a set of
the form $\CB = \{x \in \mathbb{R}^d : |x| \leqslant b\}$. $\mathbb{T}
=\R/ (2 \pi \Z)$ denotes the torus.

If $f$ is a map from $A \subset \R$ to the linear space $Y$, then we
write $f_{s, t} = f (t) - f (s)$. For $f \in L^p (\mathbb{T})$ we write $\|f
(x)\|^p_{L^p_x (\mathbb{T})} \assign \int_{\mathbb{T}} |f (x) |^p \mathd x$.

Given a Banach space $X$ with norm $\| \cdummy \|_X$ and $T > 0$, we write
$C_T X = C ([0, T], X)$ for the space of continuous maps from $[0, T]$ to $X$,
equipped with the supremum norm $\lVert \cdummy \rVert_{C_T X}$, and we set $C
X = C (\R_+, X)$, equipped with the topology of uniform convergence on compacts. Similarly $C(\R,X)$ will always be equipped with the locally uniform topology. For $\alpha \in (0, 1)$ we also define
$C^{\alpha}_T X$ as the space of $\alpha$-H{\"o}lder continuous functions from
$[0, T]$ to $X$, endowed with the seminorm $\|f\|_{C^{\alpha}_T X} = \sup_{0
\leqslant s < t \leqslant T} \|f_{s, t} \|_X / |t - s|^{\alpha}$, and we write
$C^{\alpha}_{\tmop{loc}} X$ for the space of locally $\alpha$-H{\"o}lder
continuous functions from $\R_+$ to $X$.

The space of distributions on the torus is denoted by $\DD' (\T)$ or
$\DD'$. The Fourier transform is defined with the normalization
\[ \CF u (k) = \hat{u} (k) = \int_{\T} e^{- i \langle k, x
   \rangle} u (x) \mathd x, \hspace{2em} k \in \Z, \]
so that the inverse Fourier transform is given by $\CF^{- 1} v (x) = (2
\pi)^{- 1} \sum_k e^{i \langle k, x \rangle} v (k)$. We denote Fourier multipliers by $\varphi(\mathD) u = \CF^{-1} (\varphi \CF u)$ whenever the right hand side is well defined.

Throughout the paper, $(\chi, \rho)$ will denote a dyadic partition of unity
such that $\tmop{supp} (\rho (2^{- i} \cdummy)) \cap \tmop{supp} (\rho (2^{-
j} \cdummy)) = \emptyset$ for $|i - j| > 1$. \ The family of operators
$(\Delta_j)_{j \geqslant - 1}$ will denote the Littlewood-Paley projections
associated to this partition of unity, that is $\Delta_{- 1} u = \CF^{- 1}
\left( \chi \CF u \right)$ and $\Delta_j = \CF^{- 1} \left( \rho (2^{- j}
\nosymbol \cdummy) \CF u \right)$ for $j \geqslant 0$. We also use the notation $S_j
= \sum_{i < j} \Delta_i$. We write $\rho_j = \rho(2^{-j} \cdot)$ for $j \geqslant 0$ and $\rho_{-1} = \chi$, and $\chi_j = \chi(2^{-j} \cdot)$ for $j \geqslant 0$ and $\chi_{j} = 0$ for $j < 0$. We define $\psi_{\prec} (k, \ell) = \sum_j \chi_{j - 1} (k) \rho_j (\ell)$ and $\psi_{\circ} (k, \ell) = \sum_{| i - j | \leqslant 1} \rho_i(k) \rho_j (\ell)$.
The H{\"o}lder-Besov space $B^{\alpha}_{\infty,
\infty} (\T, \R)$ for $\alpha \in \R$ will be
denoted by $\CC^{\alpha}$ and equipped with the norm
\[ \lVert f \rVert_{\alpha} = \lVert f \rVert_{B^{\alpha}_{\infty, \infty}} =
   \sup_{i \geqslant - 1} (2^{i \alpha} \| \Delta_i f \|_{L^{\infty}
   (\T)}) . \]
If $f$ is in $\CC^{\alpha - \varepsilon}$ for all $\varepsilon > 0$, then we
write $f \in \CC^{\alpha -}$. For $\alpha \in (0, 2)$, we also define the
space $\LL_T^{\alpha} = C^{\alpha / 2}_T L^{\infty} \cap C_T \CC^{\alpha}$,
equipped with the norm
\[ \| f \|_{\LL_T^{\alpha}} = \max \left\{ \| f \|_{C^{\alpha / 2}_T
   L^{\infty}}, \| f \|_{C_T \CC^{\alpha}} \right\} . \]
The notation is chosen to be reminiscent of $\LL = \partial_t - \Delta$, by which we
will always denote the heat operator with periodic boundary conditions on
$\T$. We also write $\LL^{\alpha} = C^{\alpha / 2}_{\tmop{loc}}
L^{\infty} \cap C \CC^{\alpha}$.

\subsection{Bony--Meyer paraproducts}\label{sec:bony}

Paraproducts are bilinear operations introduced by J.~M.~Bony~{\cite{Bony1981,Meyer1981}} in
order to linearize a class of nonlinear PDE problems. Paraproducts allow
to describe functions which ``look like'' some given reference functions and
to perform detailed computations on their singular behavior. They also appear
naturally in the analysis of the product of two Besov distributions. In terms
of Littlewood--Paley blocks, the product $fg$ of two distributions $f$ and $g$
can be (at least formally) decomposed as
\[ fg = \sum_{j \geqslant - 1} \sum_{i \geqslant - 1} \Delta_i f \Delta_j g =
   f \para g + f \lpara g + f \reso g. \]
Here $f \para g$ is the part of the double sum with $i < j - 1$, and $f \lpara
g$ is the part with $i > j + 1$, and $f \reso g$ is the ``diagonal'' part,
where $|i - j| \leqslant 1$. More precisely, we define
\[ f \para g = g \lpara f = \sum_{j \geqslant - 1} \sum_{i = - 1}^{j - 2}
   \Delta_i f \Delta_j g \hspace{2em} \text{and} \hspace{2em} f \reso g =
   \sum_{|i - j| \leqslant 1} \Delta_i f \Delta_j g. \]
This decomposition behaves nicely with respect to Littlewood--Paley theory. Of
course, the decomposition depends on the dyadic partition of unity used to
define the blocks $\{ \Delta_j \}_{j \geqslant - 1}$, and also on the
particular choice of the set of pairs $(i, j)$ which contributes to the
diagonal part. Our choice of taking all $(i, j)$ with $|i - j| \leqslant 1$
into the diagonal part corresponds to a dyadic partition of unity which
satisfies $\tmop{supp} (\rho (2^{- i} \cdummy)) \cap \tmop{supp} (\rho (2^{-
j} \cdummy)) = \emptyset$ for $|i - j| > 1$. This implies that every term
$S_{j - 1} f \Delta_j g$ in the series $f \para g = \sum_j S_{j - 1} f
\Delta_j g$ has a Fourier transform which is supported in an annulus $2^j
\CA$, and of course the same holds true for $f \lpara g$. On the other side,
the terms in the diagonal part $f \reso g$ have Fourier transforms which are
supported in balls. We call $f \para g$ and $f \lpara g$
\tmtextit{paraproducts}, and $f \reso g$ the \tmtextit{resonant} term.

Bony's crucial observation is that $f \para g$ (and thus $f \lpara g$) is
always a well-defined distribution. In particular, if $\alpha > 0$ and $\beta
\in \mathbb{R}$, then $(f, g) \mapsto f \para g$ is a bounded bilinear
operator from $\CC^{\alpha} \times \CC^{\beta}$ to $\CC^{\beta}$.
Heuristically, $f \para g$ behaves at large frequencies like $g$ (and thus
retains the same regularity), and $f$ provides only a modulation of $g$ at
larger scales. The only difficulty in defining $fg$ for arbitrary
distributions lies in handling the resonant term $f \reso g$. The basic result
about these bilinear operations is given by the following estimates.

\begin{lemma}[Paraproduct estimates]
  \label{thm:paraproduct} For any $\beta \in \mathbb{R}$ we have
  \begin{equation}
    \label{eq:para-1} \|f \para g\|_{\beta} \lesssim_{\beta}
    \|f\|_{L^{\infty}} \|g\|_{\beta},
  \end{equation}
  and for $\alpha < 0$ furthermore
  \begin{equation}
    \label{eq:para-2} \|f \para g\|_{\alpha + \beta} \lesssim_{\alpha, \beta}
    \|f\|_{\alpha} \|g\|_{\beta} .
  \end{equation}
  For $\alpha + \beta > 0$ we have
  \begin{equation}
    \label{eq:para-3} \|f \reso g\|_{\alpha + \beta} \lesssim_{\alpha, \beta}
    \|f\|_{\alpha} \|g\|_{\beta} .
  \end{equation}
\end{lemma}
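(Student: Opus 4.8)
The plan is to prove each of the three estimates by a direct Littlewood--Paley computation, exploiting the Fourier localization properties recorded just before the statement.

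\textbf{Estimate \eqref{eq:para-1}.} First I would recall that each block $S_{j-1}f\,\Delta_j g$ appearing in $f\para g=\sum_j S_{j-1}f\,\Delta_j g$ has Fourier support contained in an annulus of the form $2^j\CA$. Hence there is a constant $N$ such that $\Delta_i(S_{j-1}f\,\Delta_j g)=0$ unless $i\sim j$, so that $\Delta_i(f\para g)=\sum_{j\sim i}\Delta_i(S_{j-1}f\,\Delta_j g)$ is a finite sum of uniformly many terms. Then I would bound, using that $\Delta_i$ and $S_{j-1}$ are bounded on $L^\infty$ uniformly in $i,j$,
\[
  \|\Delta_i(f\para g)\|_{L^\infty}\lesssim \sum_{j\sim i}\|S_{j-1}f\|_{L^\infty}\|\Delta_j g\|_{L^\infty}\lesssim \|f\|_{L^\infty}\sum_{j\sim i}2^{-j\beta}\|g\|_\beta\lesssim 2^{-i\beta}\|f\|_{L^\infty}\|g\|_\beta,
\]
and multiplying by $2^{i\beta}$ and taking the supremum over $i$ gives \eqref{eq:para-1}.

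\textbf{Estimate \eqref{eq:para-2}.} The only change when $\alpha<0$ is how one controls $S_{j-1}f$: instead of the trivial $L^\infty$ bound one writes $S_{j-1}f=\sum_{k<j-1}\Delta_k f$ and estimates $\|S_{j-1}f\|_{L^\infty}\le\sum_{k<j-1}\|\Delta_k f\|_{L^\infty}\lesssim\sum_{k<j-1}2^{-k\alpha}\|f\|_\alpha\lesssim 2^{-(j-1)\alpha}\|f\|_\alpha$, where the geometric sum converges precisely because $\alpha<0$. Inserting this in place of $\|f\|_{L^\infty}$ in the previous display yields $\|\Delta_i(f\para g)\|_{L^\infty}\lesssim 2^{-i(\alpha+\beta)}\|f\|_\alpha\|g\|_\beta$, which is \eqref{eq:para-2}.

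\textbf{Estimate \eqref{eq:para-3}.} For the resonant term $f\reso g=\sum_{|k-j|\le1}\Delta_k f\,\Delta_j g$, the key structural fact is the opposite of before: each summand $\Delta_k f\,\Delta_j g$ with $|k-j|\le1$ has Fourier support in a \emph{ball} of radius $\sim 2^j$, so $\Delta_i(\Delta_k f\,\Delta_j g)=0$ unless $i\lesssim j$. Thus $\Delta_i(f\reso g)=\sum_{j\gtrsim i}\sum_{|k-j|\le1}\Delta_i(\Delta_k f\,\Delta_j g)$, a sum over $j\gtrsim i$ rather than $j\sim i$. I would then estimate
\[
  \|\Delta_i(f\reso g)\|_{L^\infty}\lesssim\sum_{j\gtrsim i}\sum_{|k-j|\le1}\|\Delta_k f\|_{L^\infty}\|\Delta_j g\|_{L^\infty}\lesssim\|f\|_\alpha\|g\|_\beta\sum_{j\gtrsim i}2^{-j(\alpha+\beta)},
\]
and here the condition $\alpha+\beta>0$ is exactly what makes this geometric tail summable, giving $\sum_{j\gtrsim i}2^{-j(\alpha+\beta)}\lesssim 2^{-i(\alpha+\beta)}$ and hence \eqref{eq:para-3}.

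\textbf{Main obstacle.} None of the individual steps is deep; the points that require care are the bookkeeping with the Fourier-support constraints (annulus versus ball, which dictates whether the relevant index range is $j\sim i$ or $j\gtrsim i$) and making sure one uses Bernstein-type/$L^\infty$ boundedness of $\Delta_i$ and $S_j$ uniformly in the indices. The genuine content is the role of the signs of $\alpha$ and $\alpha+\beta$ in guaranteeing convergence of the relevant geometric series, so the "hard part'' is really just identifying in each of the three cases which direction the frequencies pile up and checking the corresponding summability condition. Since the paper refers to \cite{Bahouri2011} for background, one may alternatively simply cite the standard Bony paraproduct estimates there, but the self-contained argument above is short enough to include.
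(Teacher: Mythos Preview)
Your argument is correct and is precisely the standard Littlewood--Paley proof of Bony's paraproduct estimates. The paper does not actually prove this lemma: Section~\ref{sec:paracontrolled} explicitly recalls the basic results of paracontrolled calculus \emph{without proofs}, referring to \cite{Bahouri2011}, so your self-contained write-up is exactly the argument one finds in that reference and is the proof the paper implicitly cites.
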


A natural corollary is that the product $fg$ of two elements $f \in
\CC^{\alpha}$ and $g \in \CC^{\beta}$ is well defined as soon as $\alpha +
\beta > 0$, and that it belongs to $\CC^{\gamma}$, where $\gamma = \min
\{\alpha, \beta, \alpha + \beta\}$.

\subsection{Product of paracontrolled distributions and other useful results}\label{sec:preliminaries}

The key result of paracontrolled calculus is the fact that we are able to multiply
paracontrolled distributions, extending Bony's results beyond the case $\alpha
+ \beta > 0$. We present here a simplified version which is adapted to our
needs. Let us start with the following meta-definition:

\begin{definition}
  Let $\beta > 0$ and $\alpha \in \R$. A distribution $f \in
  \CC^{\alpha}$ is called paracontrolled by $u \in \CC^{\alpha}$ if there
  exists $f' \in \CC^{\beta}$ such that $f^{\sharp} = f - f' \para u \in
  \CC^{\alpha + \beta}$.
\end{definition}

Of course in general the derivative $f'$ is not uniquely determined by $f$ and
$u$, so more correctly we should say that $(f, f')$ is paracontrolled by $u$.

\begin{theorem}\label{thm:paracontrolled product}
   Let $\alpha, \beta \in (1 / 3, 1 / 2)$.
  Let $u \in \CC^{\alpha}$, $v \in \CC^{\alpha - 1}$, and let $(f, f')$ be
  paracontrolled by $u$ and $(g, g')$ be paracontrolled by $v$. Assume that $u
  \reso v \in \CC^{2 \alpha - 1}$ is given as limit of $(u_n \reso v_n)$ in
  $\CC^{2 \alpha - 1}$, where $(u_n)$ and $(v_n)$ are sequences of smooth
  functions that converge to $u$ in $\CC^{\alpha}$ and to $v$ in $\CC^{\alpha
  - 1}$ respectively. Then $f g$ is well defined and satisfies
  \[ \| f g - f \para g \|_{2 \alpha - 1} \lesssim (\| f' \|_{\beta} \| u
     \|_{\alpha} + \| f^{\sharp} \|_{\alpha + \beta}) (\| g' \|_{\beta} \| v
     \|_{\alpha - 1} + \| g^{\sharp} \|_{\alpha + \beta - 1}) + \| f' g'
     \|_{\beta} \| u \reso v \|_{2 \alpha - 1} . \]
  Furthermore, the product is locally Lipschitz continuous: Let $\tilde{u} \in
  \CC^{\alpha}$, $\tilde{v} \in \CC^{\alpha - 1}$ with $\tilde{u} \reso
  \tilde{v} \in \CC^{2 \alpha - 1}$ and let $(\tilde{f}, \tilde{f}')$ be
  paracontrolled by $\tilde{u}$ and $(\tilde{g}, \tilde{g}')$ be
  paracontrolled by $\tilde{v}$. Assume that $M > 0$ is an upper bound for the
  norms of all distributions under consideration. Then
  \[ \| (f g - f \para g) - (\tilde{f} \tilde{g} - \tilde{f} \para \tilde{g})
     \|_{2 \alpha - 1} \lesssim (1 + M^3) [\| f' - \tilde{f}' \|_{\beta} + \|
     g' - \tilde{g}' \|_{\beta} \nobracket + \| u - \tilde{u} \|_{\alpha} + \|
     v - \tilde{v} \|_{\alpha - 1} \]
  \[ + \| f^{\sharp} - \tilde{f}^{\sharp} \|_{\alpha + \beta} + \| g^{\sharp}
     - \tilde{g}^{\sharp} \|_{\alpha + \beta - 1} + \nobracket \| u \reso v -
     \tilde{u} \reso \tilde{v} \|_{2 \alpha - 1}] . \]
  If $f' = \tilde{f}' = 0$ or $g' = \tilde{g}' = 0$, then $M^3$ can be
  replaced by $M^2$.
\end{theorem}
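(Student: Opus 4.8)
The plan is to decompose the product $fg$ using Bony's paraproduct and then repeatedly apply the paraproduct estimates of Lemma~\ref{thm:paraproduct} together with a single commutator estimate. First I would write
\[
   fg = f \para g + f \lpara g + f \reso g,
\]
so that the quantity to control is $fg - f\para g = f\lpara g + f\reso g$. Since $g\in\CC^{\alpha-1}$ with $\alpha-1<0$ and $f\in\CC^{\alpha}$, the term $f\lpara g = g\para f$ is estimated directly by \eqref{eq:para-2}: $\|g\para f\|_{2\alpha-1}\lesssim \|g\|_{\alpha-1}\|f\|_{\alpha}$, and here $\|f\|_\alpha$ is controlled by $\|f'\|_\beta\|u\|_\alpha + \|f^\sharp\|_{\alpha+\beta}$ using the paracontrolled structure $f = f'\para u + f^\sharp$ and \eqref{eq:para-1}. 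So the whole difficulty is the resonant term $f\reso g$, which a priori makes no sense because $\alpha + (\alpha-1) = 2\alpha-1$ could be negative (indeed it is, since $\alpha<1/2$).

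The heart of the argument is the commutator lemma (this is the key technical input of paracontrolled calculus, and I expect it is available from the earlier references, e.g.\ from \cite{gubinelli_paraproducts_2012}): for suitable exponents the trilinear operator
\[
   \mathsf{C}(f',u,g) \assign (f'\para u)\reso g - f'\,(u\reso g)
\]
extends to a bounded map, with $\|\mathsf{C}(f',u,g)\|_{\beta + \alpha + (\alpha-1)} \lesssim \|f'\|_\beta\|u\|_\alpha\|g\|_{\alpha-1}$, the point being that the output regularity $\alpha+\beta-1 + \alpha$ can be made positive (since $\beta>1/3$ and $\alpha>1/3$ give $\alpha+\beta-1>-1/3$, and adding $\alpha>1/3$ this is $>0$). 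Substituting $f = f'\para u + f^\sharp$ into $f\reso g$:
\[
   f\reso g = (f'\para u)\reso g + f^\sharp \reso g
            = \mathsf{C}(f',u,g) + f'\,(u\reso g) + f^\sharp\reso g .
\]
The term $f^\sharp\reso g$ is handled by \eqref{eq:para-3} since $f^\sharp\in\CC^{\alpha+\beta}$ and $(\alpha+\beta) + (\alpha-1) = 2\alpha+\beta-1 > 0$. The term $f'(u\reso g)$ is where the hypothesis that $u\reso v$ exists as a limit of $u_n\reso v_n$ is used: one first needs $u\reso g$ to make sense. Here I would further decompose $g = g'\para v + g^\sharp$, so $u\reso g = u\reso(g'\para v) + u\reso g^\sharp$; the second piece is fine by \eqref{eq:para-3} (as $\alpha + (\alpha+\beta-1)>0$), and for the first piece I apply the commutator lemma again in the form $u\reso(g'\para v) = \mathsf{C}(g',v,u) + g'\,(u\reso v)$, so that the genuinely ill-defined object $u\reso v$ appears only multiplied by the smooth-enough factor $g'\in\CC^\beta$, and it is precisely the postulated limit. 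Then $f'\,(u\reso v)$ is a product of a $\CC^\beta$ distribution and a $\CC^{2\alpha-1}$ distribution with $\beta + (2\alpha-1) = \beta + 2\alpha - 1 > 0$, hence well defined by the corollary to Lemma~\ref{thm:paraproduct}, and its norm is bounded by $\|f'g'\|_\beta\|u\reso v\|_{2\alpha-1}$ up to the commutator remainders; one checks $f'g'\in\CC^\beta$ since $\beta>0$ and both factors are in $\CC^\beta$. Collecting all the pieces and using $\|f'\|_\beta\|u\|_\alpha + \|f^\sharp\|_{\alpha+\beta}$ (resp.\ the $g$-analogue) as the natural bound for the paracontrolled data gives the claimed estimate on $\|fg - f\para g\|_{2\alpha-1}$.

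For the local Lipschitz statement, the strategy is the standard telescoping/multilinearity argument: all the operations used above — the paraproducts, the resonant product, and the two commutators — are bilinear or trilinear and bounded, so a difference such as $fg - f\para g$ minus its tilde-counterpart expands into a sum of terms each of which is one of these multilinear maps applied to a list of arguments in which exactly one entry is a difference (e.g.\ $f' - \tilde f'$, $u - \tilde u$, $u\reso v - \tilde u\reso\tilde v$, etc.) and the remaining entries are either original or tilde data, hence bounded by $M$. Counting the maximal number of ``bounded'' factors multiplying a single difference gives the factor $1 + M^3$ in general (the resonant-through-commutator chain involves $f'$, $g'$, and one of $u,v$, i.e.\ three bounded factors); when $f' = \tilde f' = 0$ or $g' = \tilde g' = 0$ the longest such chain loses one factor, yielding $1+M^2$. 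The main obstacle, and the only nontrivial ingredient, is the commutator estimate $\|\mathsf{C}(f',u,g)\|_{\alpha+\beta-1+\alpha}\lesssim\|f'\|_\beta\|u\|_\alpha\|g\|_{\alpha-1}$ in the negative-regularity regime for $g$; everything else is bookkeeping with Lemma~\ref{thm:paraproduct}. I would expect this commutator bound to be quoted from \cite{gubinelli_paraproducts_2012} rather than reproved here, its proof being a Littlewood--Paley computation exploiting the frequency localization of $(f'\para u)\reso g$ into balls and summing the resulting geometric series in the frequency index.
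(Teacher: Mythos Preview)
Your proposal is correct and matches the paper's approach exactly: the paper does not spell out a proof but only states in Remark~\ref{rmk:paracontrolled commutator} that the proof rests on the commutator bound $C(f,g,h) := (f\para g)\reso h - f(g\reso h)$ being bounded from $\CC^\beta\times\CC^\alpha\times\CC^{\alpha-1}$ to $\CC^{2\alpha+\beta-1}$, quoted from~\cite{gubinelli_paraproducts_2012}. Your decomposition via Bony's paraproduct, the two applications of this commutator (once for $f$, once for $g$) to isolate $f'g'(u\reso v)$, and the multilinearity argument for the Lipschitz bound are precisely the details that this remark is summarizing.
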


\begin{remark}\label{rmk:paracontrolled commutator}
   The proof is based on the simple result of~\cite{gubinelli_paraproducts_2012} that the commutator
   \[
      C(f,g,h) := (f \para g) \reso h - f (g \reso h)
   \]
   is a bounded trilinear operator from $\CC^\beta \times \CC^\alpha \times \CC^{\alpha-1}$ to $\CC^{2\alpha + \beta - 1}$.
\end{remark}

We will write $f \cdummy g$ instead of $f g$ whenever we want to stress the
fact that we are considering the product of paracontrolled distributions. For
some computations we will also need a paralinearization result which allows us
to control nonlinear functions of the unknown in terms of a paraproduct.

\begin{lemma}[Bony--Meyer paralinearization theorem]
  \label{lemma:paralinearization} Let $\alpha \in (0, 1)$ and let $F \in C^2$.
  There exists a locally bounded map $R_F : \CC^{\alpha} \rightarrow \CC^{2
  \alpha}$ such that
  \begin{equation}
    \label{eq:para-linearization} F (f) = F' (f) \para f + R_F (f)
  \end{equation}
  for all $f \in \CC^{\alpha}$. If $F \in C^3$, then $R_F$ is locally
  Lipschitz continuous.
\end{lemma}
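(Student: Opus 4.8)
The plan is to establish the paralinearization identity~\eqref{eq:para-linearization} by a dyadic block analysis, following the classical approach of Bony and Meyer but keeping track of the explicit regularity gain. First I would write, using the Littlewood--Paley decomposition and a telescoping argument, $F(f) = \sum_{i\geqslant -1} \big( F(S_{i+1} f) - F(S_i f) \big)$ (with $F(S_{-1} f) := F(0)$ absorbed into the remainder, or handled separately since we work on the torus). For each $i$ a first-order Taylor expansion gives $F(S_{i+1} f) - F(S_i f) = F'(S_i f)\,\Delta_i f + \tfrac12 F''(\zeta_i)(\Delta_i f)^2$ for some intermediate point $\zeta_i$. The quadratic terms $\sum_i F''(\zeta_i)(\Delta_i f)^2$ are straightforward to control in $\CC^{2\alpha}$: each $\Delta_i f$ has size $\lesssim 2^{-i\alpha}\|f\|_\alpha$ and Fourier support in a ball of radius $\sim 2^i$, so the sum defines a distribution in $\CC^{2\alpha}$ with norm bounded by $\|F''\|_{L^\infty(K)}\|f\|_\alpha^2$ on the relevant range $K$ of $f$; this uses the standard Besov criterion that a series of functions with annular/ball-supported Fourier transform and coefficients decaying like $2^{-i\gamma}$ sums to an element of $\CC^\gamma$.

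The main term $\sum_i F'(S_i f)\,\Delta_i f$ must then be compared with the paraproduct $F'(f)\para f = \sum_i S_{i-1}(F'(f))\,\Delta_i f$. I would split the difference as $\sum_i \big( F'(S_i f) - S_{i-1}(F'(f)) \big)\Delta_i f$ and bound each factor $F'(S_i f) - S_{i-1}(F'(f))$ in $L^\infty$ by $\lesssim 2^{-i\alpha}$: indeed $F'(S_i f) - F'(f)$ is controlled by $\|F''\|_{L^\infty}\|f - S_i f\|_\infty \lesssim 2^{-i\alpha}\|f\|_\alpha$ since $F'\circ f$ has the same $\CC^\alpha$ regularity as $f$ when $F\in C^2$ (this is itself the composition estimate, provable by the same telescoping), and $F'(f) - S_{i-1}(F'(f))$ is $\lesssim 2^{-i\alpha}\|F'(f)\|_\alpha$ by definition of the Besov norm. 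Each summand therefore has size $\lesssim 2^{-i\cdot 2\alpha}$ and Fourier support in a ball $2^i\CB$, so the whole series lies in $\CC^{2\alpha}$, and one sets $R_F(f)$ equal to the sum of this quantity and the collected quadratic terms; local boundedness of $f\mapsto R_F(f)$ follows since all constants depend only on $\|F'\|_{C^1}$ over a ball containing the range of $f$ and on $\|f\|_\alpha$.

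For the Lipschitz statement when $F\in C^3$, I would run the same decomposition for $R_F(f) - R_F(g)$, expanding each piece to one further order so that differences of the form $F'(S_i f) - F'(S_i g)$, $F''(\zeta_i^f)(\Delta_i f)^2 - F''(\zeta_i^g)(\Delta_i g)^2$, etc., are estimated using $\|F'''\|_{L^\infty}$ together with $\|f-g\|_\alpha$ and the a priori bounds $\|f\|_\alpha,\|g\|_\alpha \leqslant M$; the point of needing $C^3$ rather than $C^2$ is precisely that one must differentiate the $F''$-terms once more to extract the factor $\|f-g\|$. The only genuinely delicate point is making the telescoping rigorous on $\DD'(\T)$ — ensuring the series converge in the stated spaces and that the intermediate points $\zeta_i$ from the mean value theorem depend measurably/boundedly on $x$ — but this is routine given that $f$ is a bounded function (as $\alpha>0$ forces $\CC^\alpha\subset L^\infty$) so all evaluations stay in a fixed compact set. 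I expect this bookkeeping, rather than any conceptual difficulty, to be the bulk of the work.
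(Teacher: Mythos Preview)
The paper does not prove this lemma; it is stated in Section~2.3 as one of several preliminary results recalled ``without proofs'' from \cite{gubinelli_paraproducts_2012}. Your telescoping-plus-Taylor approach is indeed the classical Bony--Meyer strategy and is the right idea, but there is a genuine gap in the argument as written.

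You claim that the remainder terms $F''(\zeta_i)(\Delta_i f)^2$ and $\big(F'(S_i f)-S_{i-1}(F'(f))\big)\Delta_i f$ have Fourier support in a ball $2^i\CB$, and then invoke the standard criterion that a series of ball-supported terms with $L^\infty$ norms $\lesssim 2^{-2i\alpha}$ sums to an element of $\CC^{2\alpha}$. The spectral support claim is false: while $(\Delta_i f)^2$ and $S_{i-1}(F'(f))\Delta_i f$ are indeed spectrally localized near scale $2^i$, the nonlinear factors $F''(\zeta_i)$ and $F'(S_i f)$ are not --- applying a nonlinear function to a band-limited function produces all frequencies (think of $e^{\sin x}$). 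Once you multiply by these factors the ball support is destroyed, and the bare $L^\infty$ bound $\lesssim 2^{-2i\alpha}$ on each term is \emph{not} enough to conclude that the sum lies in $\CC^{2\alpha}$.

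The fix is to control the high-frequency leakage explicitly. Since $S_i f+\tau\Delta_i f$ is band-limited at scale $2^i$ and $F'\in C^1$, one has $\|\mathD\,F'(S_i f+\tau\Delta_i f)\|_{L^\infty}\lesssim \|F''\|_{L^\infty}\,2^{i(1-\alpha)}$, hence $\|\Delta_q[F'(S_i f+\tau\Delta_i f)]\|_{L^\infty}\lesssim 2^{-q+i(1-\alpha)}$ for $q>i$. Feeding this into the product with $\Delta_i f$ gives enough decay in $q-i$ to close the double sum $\sum_i\|\Delta_q(\cdots)\|_{L^\infty}\lesssim 2^{-2q\alpha}$. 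Alternatively, one can bypass the telescoping entirely and argue directly on each block: writing $\Delta_q[F(f)](x)=\int K_q(y)\big[F(f(x-y))-F(f(x))-F'(f(x))(f(x-y)-f(x))\big]\mathd y + F'(f(x))\Delta_q f(x)$, the second-order Taylor bound and $|f(x-y)-f(x)|\lesssim|y|^\alpha$ give the integral $\lesssim 2^{-2q\alpha}$, after which one compares $F'(f)\Delta_q f$ with $\Delta_q[F'(f)\para f]$ by elementary commutator estimates. Either route requires the extra ingredient you omitted.
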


We will also need the following lemma which allows to replace the paraproduct
by a pointwise product in certain situations:

\begin{lemma}[A further commutator estimate]
  \label{lem:bony commutator}Let $\alpha > 0$, $\beta \in \R$, and
  let $f, g \in \CC^{\alpha}$, and $h \in \CC^{\beta}$. Then
  \[ \| f \para (g \para h) - (f g) \para h \|_{\alpha + \beta} \lesssim \| f
     \|_{\alpha} \| g \|_{\alpha} \| h \|_{\beta} . \]
\end{lemma}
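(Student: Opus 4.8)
The plan is to work directly with the Littlewood--Paley decomposition and expose the Fourier support structure of the three terms $f \para (g \para h)$, $(fg) \para h$, and an intermediate object. First I would write out $g \para h = \sum_j S_{j-1} g \, \Delta_j h$, so that each summand has Fourier support in an annulus $2^j \CA$; hence $f \para (g \para h) = \sum_j S_{j-1} f \, \Delta_j (g \para h)$ up to the standard harmless modification, and one checks (as in the proof of Bony's estimates) that for each $j$ the block $\Delta_j(g \para h)$ is, up to a bounded number of neighbouring frequencies, comparable to $S_{j-1} g \, \Delta_j h$. So morally $f \para (g \para h) \approx \sum_j S_{j-1} f \, S_{j-1} g \, \Delta_j h$. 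On the other hand $(fg) \para h = \sum_j S_{j-1}(fg)\, \Delta_j h$, and by the paraproduct/resonant decomposition $fg = f\para g + f \lpara g + f \reso g$ together with the fact that $\alpha > 0$ implies $fg \in \CC^\alpha$ (Lemma~\ref{thm:paraproduct} and its corollary), one expects $S_{j-1}(fg) \approx S_{j-1}f\, S_{j-1}g$ up to an error that is smoothing by $2^{-j\alpha}$. Thus the difference $f\para(g\para h) - (fg)\para h$ should be a sum $\sum_j E_j \, \Delta_j h$ where each $E_j$ has Fourier support in a ball of radius $\lesssim 2^j$ and $\|E_j\|_{L^\infty} \lesssim 2^{-j\alpha} \|f\|_\alpha \|g\|_\alpha$; summing against $\|\Delta_j h\|_{L^\infty} \lesssim 2^{-j\beta}\|h\|_\beta$ and using almost-orthogonality of the frequency supports gives the claimed bound in $\CC^{\alpha+\beta}$.

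Concretely, the key decomposition step is to write
\[
   f \para (g \para h) - (fg)\para h = \sum_j \big( S_{j-1}f\, \Delta_j(g\para h) - S_{j-1}(fg)\,\Delta_j h\big),
\]
and then to split the inner expression as
\[
   S_{j-1}f\,\big(\Delta_j(g\para h) - S_{j-1}g\,\Delta_j h\big) + \big(S_{j-1}f\, S_{j-1}g - S_{j-1}(fg)\big)\Delta_j h.
\]
For the first bracket I would use a commutator-type estimate: $\Delta_j(g\para h) - S_{j-1}g\,\Delta_j h = \sum_{|j'-j|\le 1}[\Delta_j, S_{j'-1}g]\Delta_{j'} h$ plus low-order terms, and the standard commutator bound $\|[\Delta_j, \varphi]\psi\|_{L^\infty} \lesssim 2^{-j}\|\nabla \varphi\|_{L^\infty}\|\psi\|_{L^\infty} \lesssim 2^{-j\alpha}\|g\|_\alpha \|\Delta_j h\|_{L^\infty}$ (using $\|\nabla S_{j-1} g\|_{L^\infty}\lesssim 2^{j(1-\alpha)}\|g\|_\alpha$ for $\alpha \in (0,1)$) does the job; this is essentially the content of Lemma~\ref{lem:bony commutator} already present in~\cite{gubinelli_paraproducts_2012}. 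For the second bracket, $S_{j-1}f\,S_{j-1}g - S_{j-1}(fg)$ is a classical paraproduct remainder: writing $fg = f\para g + g\para f + f\reso g$ one has $S_{j-1}(f\para g) = S_{j-1}f\,S_{j-1}g$ up to frequency-$2^j$ truncation errors of size $2^{-j\alpha}\|f\|_\alpha\|g\|_\alpha$, while $S_{j-1}(g\para f + f\reso f)$ is bounded in $\CC^\alpha$ and contributes $\lesssim 2^{-j\alpha}$ after applying $S_{j-1}$ to something in $\CC^\alpha$ (the high frequencies are killed). Finally I collect: each term is of the form $E_j \Delta_j' h$ with $\mathrm{supp}\,\widehat{E_j\Delta_j' h}$ in $2^j\CB$ or $2^j\CA$ and $L^\infty$-norm $\lesssim 2^{-j(\alpha+\beta)}\|f\|_\alpha\|g\|_\alpha\|h\|_\beta$, and conclude by the Besov characterization via Bernstein's lemma.

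The main obstacle I anticipate is bookkeeping the frequency supports carefully enough to legitimately sum the series in $\CC^{\alpha+\beta}$: the terms $S_{j-1}f\,S_{j-1}g\,\Delta_j h$ have Fourier support in a ball of radius $\sim 2^j$, not an annulus, so one cannot read off the Besov norm by brute-force block estimates — one must exploit the cancellation that makes the $j$-th correction $E_j$ genuinely of order $2^{-j\alpha}$ (rather than $O(1)$), and then verify the standard lemma that a series $\sum_j u_j$ with $\mathrm{supp}\,\hat u_j \subset 2^j\CB$ and $\|u_j\|_{L^\infty}\lesssim 2^{-j\gamma}$ (with $\gamma > 0$, here $\gamma = \alpha + \beta$, which forces us to also check $\alpha + \beta$ can be negative — in which case one needs the annulus version for the leading piece) lies in $\CC^\gamma$. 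Handling the case $\alpha + \beta \le 0$ requires isolating the resonant-type contributions that do live on annuli and treating the ball-supported remainder, which is smoother, separately. All the individual estimates are routine consequences of Bernstein's inequality and the commutator bound; the only real care is in organizing the telescoping so that each piece carries the full decay $2^{-j\alpha}$ from $f,g$ and $2^{-j\beta}$ from $h$.
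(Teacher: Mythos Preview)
The paper does not give a proof of this lemma; it is one of the preliminary paracontrolled-calculus estimates recalled without proof in Section~\ref{sec:preliminaries}, with reference to~\cite{gubinelli_paraproducts_2012}. Your outline is the standard argument and is essentially correct, but two points deserve tightening.

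First, your justification of the second bracket is garbled: applying $S_{j-1}$ to an element of $\CC^\alpha$ does \emph{not} produce $2^{-j\alpha}$ decay (that would be $\mathrm{id}-S_{j-1}$). What is actually true is the direct bound $\|S_{j-1}(fg)-S_{j-1}f\,S_{j-1}g\|_{L^\infty}\lesssim 2^{-j\alpha}\|f\|_\alpha\|g\|_\alpha$. Write $fg-S_{j-1}f\,S_{j-1}g=(f-S_{j-1}f)g+S_{j-1}f(g-S_{j-1}g)$; since $\alpha>0$ each piece is $O(2^{-j\alpha})$ in $L^\infty$, and $S_{j-1}$ is uniformly bounded on $L^\infty$. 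The remaining term $(\mathrm{id}-S_{j-1})(S_{j-1}f\,S_{j-1}g)$ is handled by a frequency argument: only products $\Delta_p f\,\Delta_q g$ with $p\vee q\sim j$ can contribute to frequencies $\sim 2^j$, and summing those gives $2^{-j\alpha}$ again because $\alpha>0$.

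Second, your worry about the case $\alpha+\beta\le 0$ is unfounded. The full $j$-th summand $S_{j-1}f\,\Delta_j(g\para h)-S_{j-1}(fg)\,\Delta_j h$ has Fourier support in an \emph{annulus} $2^j\CA'$, because each of its two constituents is a paraproduct block with annular support. Your intermediate object $S_{j-1}f\,S_{j-1}g\,\Delta_j h$ may only be ball-supported, but that is irrelevant: you only use the $L^\infty$ bounds on the split pieces (A) and (B), and then invoke the annular support of their \emph{sum} to read off the $\CC^{\alpha+\beta}$ norm directly. No separate treatment of $\alpha+\beta\le 0$ is needed.
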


When dealing with paraproducts in the context of parabolic equations it would
be natural to introduce parabolic Besov spaces and related paraproducts. But
to keep a simpler setting, we choose to work with space--time distributions
belonging to the scale of spaces $\left( C \CC^{\alpha} \right)_{\alpha \in
\R}$. To do so efficiently, we will use a modified paraproduct which
introduces some smoothing in the time variable that is tuned to the parabolic
scaling. Let therefore $\varphi \in C^{\infty} (\R, \R_+)$
be nonnegative with compact support contained in $\R_+$ and with
total mass $1$, and define for all $i \geqslant - 1$ the operator
\[ Q_i : C \CC^{\beta} \rightarrow C \CC^{\beta}, \hspace{2em} Q_i f (t) =
   \int_{\R} 2^{- 2 i} \varphi (2^{2 i} (t - s)) f (s \vee 0) \mathd
   s. \]
We will often apply $Q_i$ and other operators on $C \CC^{\beta}$ to functions
$f \in C_T \CC^{\beta}$ which we then simply extend from $[0, T]$ to
$\R_+$ by considering $f (\cdummy \wedge T)$. With the help of $Q_i$,
we define a modified paraproduct
\[ f \mpara g = \sum_i (Q_i S_{i - 1} f) \Delta_i g \]
for $f, g \in C \left( \R_+, \CD' (\T) \right)$. It is easy
to see that for $f \mpara g$ we have essentially the same estimates as for
the pointwise paraproduct $f \para g$, only that we have to bound $f$
uniformly in time. More precisely:

\begin{lemma}
  For any $\beta \in \mathbb{R}$ we have
  \begin{equation}
    \label{eq:mod para-1} \|f \mpara g (t) \|_{\beta} \lesssim \|f\|_{C_t
    L^{\infty}} \|g (t) \|_{\beta},
  \end{equation}
  for all $t > 0$, and for $\alpha < 0$ furthermore
  \begin{equation}
    \label{eq:mod para-2} \|f \mpara g (t) \|_{\alpha + \beta} \lesssim
    \|f\|_{C_t \CC^{\alpha}} \|g (t) \|_{\beta} .
  \end{equation}
\end{lemma}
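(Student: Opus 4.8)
The plan is to reduce the estimates for the modified paraproduct $f \mpara g$ to the estimates for the ordinary paraproduct $f \para g$ by exploiting two facts: first, that $Q_i$ is essentially a spatial Fourier multiplier-like smoothing in time of scale $2^{-2i}$, so it preserves the frequency localization structure; and second, that $Q_i$ is bounded on $L^\infty$ uniformly in $i$ and in time. First I would record the Fourier-support property: since $Q_i S_{i-1} f$ still has its spatial Fourier transform supported in a ball of radius $\lesssim 2^i$ and $\Delta_i g$ has spatial Fourier transform supported in an annulus $2^i \CA$, the product $(Q_i S_{i-1} f) \Delta_i g$ has spatial Fourier transform supported in an annulus $2^i \CA'$. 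Hence $\Delta_j[(Q_i S_{i-1} f)\Delta_i g] = 0$ unless $i \sim j$, which is exactly the summability structure that makes the ordinary paraproduct estimates work.

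Next I would establish the pointwise-in-time bound. Because $\varphi \geqslant 0$ has total mass $1$ and support in $\R_+$, the operator $Q_i$ is an average of the values $f(s\vee 0)$ over $s$ in an interval of length $\lesssim 2^{-2i}$ lying to the left of $t$; therefore $\|Q_i g(t)\|_{L^\infty} \leqslant \sup_{0 \leqslant s \leqslant t} \|g(s)\|_{L^\infty} = \|g\|_{C_t L^\infty}$, and more generally $\|Q_i g(t)\|_{X} \leqslant \|g\|_{C_t X}$ for any Banach space $X$ of spatial distributions, uniformly in $i \geqslant -1$ and $t > 0$. Combined with the uniform bound $\|S_{i-1} f\|_{L^\infty} \lesssim \|f\|_{L^\infty}$ (from the partition of unity), this gives $\|Q_i S_{i-1} f(t)\|_{L^\infty} \lesssim \|f\|_{C_t L^\infty}$, and also $\|Q_i S_{i-1} f(t)\|_{\CC^\alpha} \lesssim 2^{-i\alpha}\|f\|_{C_t \CC^\alpha}$ when $\alpha < 0$, again using the standard bound $\|S_{i-1}f\|_{L^\infty} \lesssim 2^{-i\alpha}\|f\|_{\alpha}$ for negative $\alpha$.

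With these two ingredients the proof of \eqref{eq:mod para-1} and \eqref{eq:mod para-2} is the same computation as for Lemma~\ref{thm:paraproduct}: write $\Delta_j(f \mpara g)(t) = \sum_{i \sim j} \Delta_j[(Q_i S_{i-1} f(t))\Delta_i g(t)]$, estimate each summand in $L^\infty$ by $\|Q_i S_{i-1} f(t)\|_{L^\infty}\|\Delta_i g(t)\|_{L^\infty}$ (resp.\ by $2^{-i\alpha}\|f\|_{C_t \CC^\alpha}\|\Delta_i g(t)\|_{L^\infty}$ in the second case), multiply by $2^{j\beta}$ (resp.\ $2^{j(\alpha+\beta)}$), use $i \sim j$ to replace $2^{j\beta}$ by $2^{i\beta}$ up to a constant, and sum the finitely-many-per-$j$ terms, absorbing $\sum_i 2^{i\beta}\|\Delta_i g(t)\|_{L^\infty}$-type sums into $\|g(t)\|_\beta$ exactly as in the classical argument. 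I do not expect any genuine obstacle here; the only point requiring a little care is checking that $Q_i$ really does not enlarge the spatial Fourier support (it acts only in time, so this is immediate) and that the time-averaging with $s \vee 0$ does not spoil the uniform $L^\infty$ bound near $t = 0$ — which it does not, precisely because $\varphi$ is supported in $\R_+$ so only values $f(s\vee 0)$ with $s \leqslant t$ enter. If one wanted the analogue of \eqref{eq:para-2} with $f$ replaced by its $\CC^\alpha$ norm uniformly in time, the same scheme applies verbatim.
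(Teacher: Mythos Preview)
Your proposal is correct and is exactly the argument the paper has in mind: the paper states this lemma without proof, remarking only that ``for $f \mpara g$ we have essentially the same estimates as for the pointwise paraproduct $f \para g$, only that we have to bound $f$ uniformly in time,'' and the more detailed appendix proof of the generalized version (Lemma~\ref{lem:modified paraproduct exp}) proceeds by precisely the same scheme you outline --- bounding $\|Q_i S_{i-1} f(t)\|_{L^\infty}$ via the fact that $\varphi$ is a probability density supported in $\R_+$, then feeding this into the classical paraproduct computation.
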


We will also need the following two commutation results.

\begin{lemma}\label{lem:modified paraproduct commutators}
Let $\alpha \in (0, 2)$ and
  $\beta \in \R$. Then
  \[ \| (f \mpara g - f \para g) (t) \|_{\alpha + \beta} \lesssim \| f
     \|_{\LL^{\alpha}_t} \| g (t) \|_{\beta} \]
  for all $t \geqslant 0$. If $\alpha \in (0, 1)$, then we also have
  \[ \left\| \left( \LL (f \mpara g) - f \mpara \left( \LL g \right)
     \right) (t) \right\|_{\alpha + \beta - 2} \lesssim \| f
     \|_{\LL^{\alpha}_t} \| g (t) \|_{\beta} . \]
\end{lemma}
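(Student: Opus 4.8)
The plan is to reduce both commutator estimates to the analogous statements for the unmodified paraproduct $f \para g$, which in turn follow from standard Littlewood--Paley bookkeeping as in Lemma~\ref{thm:paraproduct}, and to handle the extra terms coming from the time-smoothing operators $Q_i$ by exploiting the parabolic tuning $Q_i \sim$ smoothing at time scale $2^{-2i}$. Recall that $f \mpara g - f\para g = \sum_i \big((Q_i S_{i-1} f) - S_{i-1}f\big)\Delta_i g$, so the key input is a bound on $\|(Q_i - \mathrm{Id}) S_{i-1} f(t)\|_{L^\infty}$. Since $\varphi$ has total mass $1$ and compact support in $\R_+$, we have $(Q_i g)(t) - g(t) = \int 2^{-2i}\varphi(2^{2i}(t-s))\,(g(s\vee 0) - g(t))\,\mathd s$, and the integrand is supported on $|t-s|\lesssim 2^{-2i}$; using $f \in C^{\alpha/2}_t L^\infty$ (part of the $\LL^\alpha_t$ norm) together with $\|S_{i-1}\|_{L^\infty\to L^\infty}\lesssim 1$ yields $\|(Q_i-\mathrm{Id})S_{i-1}f(t)\|_{L^\infty} \lesssim 2^{-i\alpha}\|f\|_{\LL^\alpha_t}$. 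This is the ``gained'' factor $2^{-i\alpha}$.

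For the first estimate, I would then write, for each Littlewood--Paley block $j$,
\[
  \Delta_j\big((f\mpara g - f\para g)(t)\big) = \sum_{i \sim j}\Delta_j\big((Q_i - \mathrm{Id})S_{i-1}f(t)\cdot \Delta_i g(t)\big),
\]
using that the summand has Fourier support in an annulus $2^i\CA$ so only $i \sim j$ contribute (same frequency-localization argument as for $f\para g$). Taking $L^\infty$ norms, bounding $\|(Q_i-\mathrm{Id})S_{i-1}f(t)\|_{L^\infty}\lesssim 2^{-i\alpha}\|f\|_{\LL^\alpha_t}$ and $\|\Delta_i g(t)\|_{L^\infty}\lesssim 2^{-i\beta}\|g(t)\|_\beta$, and summing over the finitely many $i\sim j$ gives $\|\Delta_j(\cdots)\|_{L^\infty}\lesssim 2^{-j(\alpha+\beta)}\|f\|_{\LL^\alpha_t}\|g(t)\|_\beta$, which is exactly the claimed $\CC^{\alpha+\beta}$ bound. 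Note this is where $\alpha < 2$ matters: for $\alpha\geqslant 2$ the Hölder-in-time bound on $f$ would have to be replaced by a statement about time derivatives, which is not part of the $\LL^\alpha_t$ scale as defined.

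For the second estimate I would commute $\LL$ with the modified paraproduct directly. Write $\LL(f\mpara g) - f\mpara(\LL g) = \sum_i \LL\big((Q_i S_{i-1}f)\Delta_i g\big) - (Q_i S_{i-1}f)(\LL\Delta_i g)$, and expand $\LL = \partial_t - \Delta$ by the Leibniz rule. The Laplacian produces the usual paraproduct commutator terms $-2\nabla(Q_iS_{i-1}f)\cdot\nabla\Delta_i g - (\Delta Q_i S_{i-1}f)\Delta_i g$; since each $\nabla$ or $\Delta$ falling on the low-frequency factor $Q_iS_{i-1}f$ costs a factor $2^{i}$ or $2^{2i}$ in $L^\infty$ but this is compensated because $\Delta_i g$ carries the frequency $2^i$, the net effect is a loss of two derivatives relative to $g$ — consistent with the target regularity $\alpha+\beta-2$. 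The genuinely new term is $(\partial_t Q_i S_{i-1}f)\Delta_i g$, and here the point of the parabolic tuning is decisive: $\partial_t Q_i f(t) = \int 2^{-2i}\,2^{2i}\varphi'(2^{2i}(t-s))f(s\vee 0)\,\mathd s = \int \varphi'(2^{2i}(t-s))f(s\vee 0)\,\mathd s$, and since $\int\varphi' = 0$ we may subtract $f(t)$ inside the integral, so $\|\partial_t Q_i S_{i-1}f(t)\|_{L^\infty} = \|\int \varphi'(2^{2i}(t-s))(S_{i-1}f(s\vee 0) - S_{i-1}f(t))\,\mathd s\|_{L^\infty}\lesssim \|\varphi'\|_{L^1}\,2^{-2i\cdot(\alpha/2)}\|f\|_{C^{\alpha/2}_tL^\infty} = 2^{-i\alpha}\|f\|_{\LL^\alpha_t}$ — exactly two derivatives' worth of decay in the parabolic counting. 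Multiplying by $\|\Delta_i g(t)\|_{L^\infty}\lesssim 2^{-i\beta}\|g(t)\|_\beta$ and summing over $i\sim j$ gives the desired $\CC^{\alpha+\beta-2}$ bound. The restriction $\alpha<1$ enters because the subtract-and-estimate trick for $\partial_t Q_i$ uses Hölder continuity of $f$ in time with exponent $\alpha/2 < 1/2 < 1$, i.e.\ $f$ is merely Hölder, not differentiable, in time — one cannot afford to also differentiate $f$ once more in time.

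The main obstacle I anticipate is purely bookkeeping rather than conceptual: carefully tracking that every derivative landing on the slowly-varying factor $Q_iS_{i-1}f$ is matched by the frequency $2^i$ of $\Delta_i g$ so that nothing is lost, and confirming that the frequency supports still line up after applying $Q_i$ (they do, since $Q_i$ acts only in time and does not touch the spatial Fourier support). A minor technical point worth stating explicitly is that the extension $f(\cdot\wedge T)$ used to pass from $C_T\CC^\beta$ to $C\CC^\beta$ is compatible with all these estimates because it does not increase the $C^{\alpha/2}_tL^\infty$ seminorm.
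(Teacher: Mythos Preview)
Your approach matches the paper's: the lemma itself is deferred to \cite{gubinelli_paraproducts_2012}, but the proof of the weighted analogue (Lemma~\ref{lem:modified paraproduct commutators exp}) in Appendix~\ref{app:some proofs} follows exactly your template---subtract $S_{i-1}f(t)$ using $\int\varphi=1$ for the first claim, and for the second isolate $\sum_i(\partial_tQ_iS_{i-1}f)\,\Delta_ig$ and exploit $\int\varphi'=0$. Two points, however, need correction.

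First, your displayed bound for $\partial_tQ_iS_{i-1}f$ is off by a factor $2^{2i}$. With the intended normalisation $Q_if(t)=\int 2^{2i}\varphi(2^{2i}(t-s))f(s\vee0)\,\mathd s$ (the $2^{-2i}$ in the paper's definition is a typo; compare the Appendix computations), one obtains $\|\partial_tQ_iS_{i-1}f(t)\|_{L^\infty}\lesssim 2^{i(2-\alpha)}\|f\|_{\LL^\alpha_t}$, not $2^{-i\alpha}$; only this, multiplied by $\|\Delta_ig(t)\|_{L^\infty}\lesssim 2^{-i\beta}$, actually lands in $\CC^{\alpha+\beta-2}$. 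Second, and more substantively, your attribution of the restriction $\alpha<1$ is incorrect: the $\partial_tQ_i$ argument needs only $\alpha/2<1$, i.e.\ $\alpha<2$. The genuine bottleneck is the spatial cross term $2(\mathD Q_iS_{i-1}f)(\mathD\Delta_ig)$. To get $\|\mathD S_{i-1}f\|_{L^\infty}\lesssim 2^{i(1-\alpha)}\|f\|_{\alpha}$ one needs $1-\alpha>0$ so that $\sum_{k<i}2^{k(1-\alpha)}$ is dominated by its last term; for $\alpha>1$ this sum is $O(1)$ and the resulting product bound $2^{i(1-\beta)}$ is strictly worse than the required $2^{i(2-\alpha-\beta)}$. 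Equivalently, writing the spatial commutator as $-2(\mathD f)\mpara(\mathD g)-(\Delta f)\mpara g$, estimate~\eqref{eq:mod para-2} applies to the first summand only when $\mathD f\in\CC^{\alpha-1}$ has negative index.
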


For a proof see {\cite{gubinelli_paraproducts_2012}}. As a consequence, every distribution on $\R_+ \times \T$ which is paracontrolled in terms of the modified paraproduct is also paracontrolled using the original paraproduct, at least if the derivative is in $\LL^{\alpha}$ and not just in $C \CC^{\alpha}$.

\

Moreover, we introduce the linear operator $I : C \left( \R_+, \DD'
(\T) \right) \rightarrow C \left( \R_+, \DD' (\T)
\right)$ given by
\[ I f (t) = \int_0^t P_{t - s} f (s) \mathd s, \]
for which we have standard estimates that are summarized in the following
Lemma.

\begin{lemma}[Schauder estimates]\label{lemma:schauder}
  For $\alpha \in (0, 2)$ we have
  \begin{equation}
    \label{eq:schauder-heat} \|I f\|_{\LL^{\alpha}_T} \lesssim \| f \|_{C_T
    \CC^{\alpha - 2}}
  \end{equation}
  for all $T > 0$, as well as
  \begin{equation}
    \label{eq:schauder initial contribution} \| s \mapsto P_s u_0
    \|_{\LL^{\alpha}_T} \lesssim \| u_0 \|_{\alpha} .
  \end{equation}
\end{lemma}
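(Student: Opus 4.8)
The plan is to prove the two bounds by reducing everything to the classical heat-kernel estimates on the Littlewood--Paley blocks. First I would recall that the heat semigroup $P_t = e^{t\Delta}$ acts on a block $\Delta_j f$ as a Fourier multiplier whose symbol is essentially $e^{-t|k|^2}$ localized at frequency $2^j$, so that one has the two fundamental estimates $\|P_t \Delta_j f\|_{L^\infty} \lesssim e^{-ct 2^{2j}}\|\Delta_j f\|_{L^\infty}$ for $j \geq 0$ (and the trivial bound for $j=-1$), together with the companion estimate $\|(1-P_t)\Delta_j f\|_{L^\infty} \lesssim (t 2^{2j})^{\sigma} \|\Delta_j f\|_{L^\infty}$ for any $\sigma \in [0,1]$, obtained by writing $1 - P_t = \int_0^t \Delta P_s \,\mathd s$ and interpolating. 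These are standard and may be quoted from \cite{Bahouri2011}.

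For the bound \eqref{eq:schauder initial contribution} on $s \mapsto P_s u_0$, I would first estimate the spatial Hölder norm: $\|\Delta_j P_s u_0\|_{L^\infty} \lesssim e^{-cs2^{2j}}\|\Delta_j u_0\|_{L^\infty} \lesssim e^{-cs2^{2j}} 2^{-j\alpha}\|u_0\|_\alpha$, and since $\sup_s e^{-cs2^{2j}} \leq 1$ this gives $\|P_s u_0\|_{C_T\CC^\alpha} \lesssim \|u_0\|_\alpha$ uniformly in $s$. For the time-regularity part $C^{\alpha/2}_T L^\infty$ I would write, for $0 \leq s < t \leq T$, $P_t u_0 - P_s u_0 = (1 - P_{t-s})P_s u_0$ and decompose into blocks: $\|\Delta_j(1-P_{t-s})P_s u_0\|_{L^\infty} \lesssim ((t-s)2^{2j})^{\alpha/2} e^{-cs2^{2j}} 2^{-j\alpha}\|u_0\|_\alpha = (t-s)^{\alpha/2} e^{-cs2^{2j}}\|u_0\|_\alpha$, and summing over $j$ using that $\sum_j e^{-cs2^{2j}}$ is bounded once we also absorb it against a slight excess of decay — more precisely one splits the sum at $2^{2j}\sim (t-s)^{-1}$, using the $(t-s)^{\alpha/2}2^{j\alpha}$-type bound below the threshold and the exponential decay above it, which yields the factor $(t-s)^{\alpha/2}$ as required.

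For \eqref{eq:schauder-heat} on $If(t) = \int_0^t P_{t-s}f(s)\,\mathd s$, the argument is the same in spirit but now one integrates in time. For the spatial part, $\|\Delta_j If(t)\|_{L^\infty} \leq \int_0^t e^{-c(t-s)2^{2j}}\,\mathd s \,\|\Delta_j f\|_{C_T L^\infty} \lesssim 2^{-2j}\|f(s)\|_{\CC^{\alpha-2}} 2^{-j(\alpha-2)} = 2^{-j\alpha}\|f\|_{C_T\CC^{\alpha-2}}$, using $\int_0^t e^{-c(t-s)2^{2j}}\,\mathd s \leq c^{-1}2^{-2j}$; for $j=-1$ one just bounds the integral by $t$. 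For the time part, write $If(t)-If(s) = \int_s^t P_{t-r}f(r)\,\mathd r + \int_0^s (P_{t-r}-P_{s-r})f(r)\,\mathd r = \int_s^t P_{t-r}f(r)\,\mathd r + (1-P_{t-s})\int_0^s P_{s-r}f(r)\,\mathd r$; the first term is handled by the same block estimate integrated over $[s,t]$, giving a factor $(t-s)$ (better than $(t-s)^{\alpha/2}$ since $\alpha<2$), and the second is $(1-P_{t-s})If(s)$, to which one applies the $(1-P_{t-s})$-estimate with exponent $\alpha/2$ together with the already-established bound $\|If(s)\|_{C_T\CC^\alpha}\lesssim \|f\|_{C_T\CC^{\alpha-2}}$, reproducing exactly the threshold-splitting computation from the previous paragraph.

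The main obstacle is purely bookkeeping: controlling the low-frequency block $j=-1$ (where there is no exponential gain and one must use the crude bound $\int_0^t 1\,\mathd s = t \leq T$, so the implicit constant depends on $T$, as the statement allows), and carrying out cleanly the frequency-splitting sum $\sum_j \min\{(t-s)^{\alpha/2}2^{j\alpha}, e^{-cs2^{2j}}\}$ so that it produces precisely $(t-s)^{\alpha/2}$ without a logarithmic loss — this works because $\alpha < 2$ makes the geometric series below the threshold converge and the exponential tail above it is summable. Everything else is a direct application of the Littlewood--Paley heat-kernel bounds; there is no analytic subtlety beyond these.
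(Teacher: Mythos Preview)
Your approach is correct and is the standard Littlewood--Paley proof of parabolic Schauder estimates. The paper does not actually prove Lemma~\ref{lemma:schauder}; it is stated as known, and the appendix proves only the weighted generalization (Lemma~\ref{lemma:schauder exp}), starting from Lemma~\ref{lemma:schauder} as given. That appendix proof uses exactly your decomposition $If(t)-If(s)=\int_s^t P_{t-r}f(r)\,\mathd r+(P_{t-s}-\tmop{id})If(s)$ together with the two semigroup bounds $\|P_t v\|_{L^\infty}\lesssim t^{(\alpha-2)/2}\|v\|_{\CC^{\alpha-2}}$ and $\|(P_t-\tmop{id})u\|_{L^\infty}\lesssim t^{\alpha/2}\|u\|_{\CC^\alpha}$, so your plan matches it.

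One point to correct: the first piece $\int_s^t P_{t-r}f(r)\,\mathd r$ does \emph{not} give ``a factor $(t-s)$''. Since $f(r)\in\CC^{\alpha-2}$ with $\alpha-2<0$, passing to $L^\infty$ costs $(t-r)^{(\alpha-2)/2}$, and
\[
\Big\|\int_s^t P_{t-r}f(r)\,\mathd r\Big\|_{L^\infty}\lesssim \int_s^t (t-r)^{\alpha/2-1}\,\mathd r\,\|f\|_{C_T\CC^{\alpha-2}}\simeq (t-s)^{\alpha/2}\|f\|_{C_T\CC^{\alpha-2}}.
\]
Equivalently, at the block level the crude bound $\int_s^t e^{-c(t-r)2^{2j}}\,\mathd r\leqslant t-s$ is useless because $\sum_j(t-s)2^{j(2-\alpha)}$ diverges; you must keep $\min\{t-s,2^{-2j}\}$ and split at $2^{2j}\sim(t-s)^{-1}$, exactly as you do for the $(P_{t-s}-\tmop{id})$ piece. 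The outcome is $(t-s)^{\alpha/2}$, which is precisely what is needed, so the overall argument survives.
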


Combining the commutator estimate above with the Schauder estimates, we are
able to control the modified paraproduct in $\LL^{\alpha}_T$ spaces rather
than just $C_T \CC^{\alpha}$ spaces:

\begin{lemma}
  \label{lem:paraproduct parabolic space}Let $\alpha \in (0, 2)$, $\delta > 0$
  and let $f \in \LL_T^{\delta}$, $g \in C_T \CC^{\alpha}$, and $\LL g \in C_T
  \CC^{\alpha - 2}$. Then
  \[ \| f \mpara g \|_{\LL^{\alpha}_T} \lesssim \| f \|_{\LL^{\delta}_T}
     \left( \| g \|_{C_T \CC^{\alpha}} + \left\| \LL g \right\|_{C_T
     \CC^{\alpha - 2}} \right) . \]
\end{lemma}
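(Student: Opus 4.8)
The plan is to decompose $f \mpara g = f \para g + (f \mpara g - f \para g)$ and to control the two pieces separately in the $\LL^\alpha_T$ norm, which by definition means controlling both the $C_T \CC^\alpha$ norm and the $C^{\alpha/2}_T L^\infty$ norm. For the first piece $f \para g$, since $\delta > 0$ we have $f \in C_T \CC^{\delta} \hookrightarrow C_T L^\infty$, so the paraproduct estimate~\eqref{eq:para-1} already gives $\|f \para g(t)\|_\alpha \lesssim \|f\|_{C_T L^\infty} \|g(t)\|_\alpha$, hence control of the $C_T \CC^\alpha$ part. For the time-H\"older part one writes $f\para g(t) - f\para g(s) = (f(t)-f(s))\para g(t) + f(s)\para(g(t)-g(s))$ and estimates each summand; the first is handled with~\eqref{eq:para-1} and the $C^{\delta/2}_T L^\infty$ bound on $f$ (note $\delta/2 \geq \alpha/2$ may fail, so one should instead interpolate or directly use that $\|f(t)-f(s)\|_{L^\infty} \lesssim |t-s|^{\delta/2}\|f\|_{\LL^\delta_T} \lesssim |t-s|^{\alpha/2}\cdot(\text{const})$ only when $\delta\ge\alpha$ — so more honestly one combines the spatial regularity of $f$ with the time regularity to land in $C^{\alpha/2}_T L^\infty$), while for the second summand one uses that $g$ itself need not be time-H\"older but $\LL g \in C_T\CC^{\alpha-2}$; indeed writing $g = P_\cdot g(0) + I(\LL g) + (\text{correction})$ and invoking the Schauder estimate~\eqref{eq:schauder-heat} shows $g \in \LL^\alpha_T$, so $\|g(t)-g(s)\|_{L^\infty}\lesssim |t-s|^{\alpha/2}(\|g\|_{C_T\CC^\alpha}+\|\LL g\|_{C_T\CC^{\alpha-2}})$, and then~\eqref{eq:para-1} finishes it.

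For the second piece, I would apply directly the first commutator bound in Lemma~\ref{lem:modified paraproduct commutators}, with the roles $\alpha \rightsquigarrow \delta$ and $\beta \rightsquigarrow \alpha$ in that lemma's notation (legitimate since $\delta > 0$, and we may shrink $\delta$ so that $\delta \in (0,2)$): this gives $\|(f\mpara g - f\para g)(t)\|_{\delta+\alpha} \lesssim \|f\|_{\LL^\delta_t}\|g(t)\|_\alpha$. Since $\delta > 0$, the space $C_T\CC^{\delta+\alpha}$ embeds into $C_T\CC^\alpha$, giving the spatial part; and $\CC^{\delta+\alpha}\hookrightarrow C^{(\delta+\alpha-\alpha)/2}_{\dots}$ — more precisely, having a strictly better spatial regularity $\delta+\alpha$ than $\alpha$ uniformly in time, together with a crude time-continuity bound, upgrades via interpolation to $C^{\alpha/2}_T L^\infty$ control of the difference. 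Combining, $\|f\mpara g\|_{\LL^\alpha_T} \lesssim \|f\|_{\LL^\delta_T}(\|g\|_{C_T\CC^\alpha} + \|\LL g\|_{C_T\CC^{\alpha-2}})$.

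The main obstacle is the time-H\"older ($C^{\alpha/2}_T L^\infty$) component rather than the spatial one: the spatial bounds are essentially immediate from~\eqref{eq:para-1}, \eqref{eq:mod para-1} and the embeddings, but $g$ carries no direct time-regularity hypothesis — only the combination "$g \in C_T\CC^\alpha$ and $\LL g \in C_T\CC^{\alpha-2}$", which is exactly what says $g$ behaves parabolically even though we have not put it literally in $\LL^\alpha_T$. So the real content is to first observe (via the mild formulation $g(t) = P_t g(0) + I(\LL g)(t)$ and Lemma~\ref{lemma:schauder}) that this hypothesis forces $g \in \LL^\alpha_T$ with the stated norm bound, and only then to run the paraproduct and commutator estimates, being careful when $\delta < \alpha$ to interpolate between the spatial gain $\delta$ and a rough time bound rather than expecting $f$ or the commutator to be $\alpha/2$-H\"older in time outright. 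Once that observation is in place, the rest is a routine splitting-and-triangle-inequality argument using the lemmas already recorded.
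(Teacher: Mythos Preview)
Your decomposition $f\mpara g = f\para g + (f\mpara g - f\para g)$ cannot be bounded term by term in $\LL^\alpha_T$, and this is a genuine gap rather than a missing detail. The issue is the case $\delta < \alpha$ (which is the relevant one in the applications of this lemma). Take $f(t,x)=\psi(t)$ constant in space with $\psi\in C^{\delta/2}$ and no better, and $g$ smooth in time. Then $S_{j-1}(1)=1$ for $j\geqslant 0$, so $f\para g(t)=\psi(t)(g(t)-\Delta_{-1}g(t))$, which is only $\delta/2$--H\"older in time in $L^\infty$; hence $f\para g\notin C^{\alpha/2}_T L^\infty$ in general. Since $f\mpara g$ \emph{does} lie in $C^{\alpha/2}_T L^\infty$ (that is what the lemma claims), the commutator $f\mpara g - f\para g$ must fail the same bound, and neither piece can be handled separately. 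Your interpolation sketches do not rescue this: for $(f(t)-f(s))\para g(t)$ the best available bound on $\|f(t)-f(s)\|_{\CC^\gamma}$ for $\gamma\leqslant 0$ is $|t-s|^{\delta/2}$, so you never get more than $\delta/2$--H\"older; and for the commutator, ``better spatial regularity plus crude time continuity implies $C^{\alpha/2}_T L^\infty$ via interpolation'' is simply false without a quantitative time-H\"older input at the right exponent.

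The paper's proof takes a different and much shorter route that uses exactly what the modified paraproduct is built for. One writes, via the Schauder estimates (Lemma~\ref{lemma:schauder}),
\[
\|f\mpara g\|_{\LL^\alpha_T}\lesssim \|f\mpara g(0)\|_{\alpha}+\|\LL(f\mpara g)\|_{C_T\CC^{\alpha-2}},
\]
and then applies the \emph{second} commutator bound of Lemma~\ref{lem:modified paraproduct commutators} (not the first one you invoked) to replace $\LL(f\mpara g)$ by $f\mpara(\LL g)$ up to an error in $C_T\CC^{\alpha+\delta-2}$; the remaining term $f\mpara(\LL g)$ is controlled by~\eqref{eq:mod para-1}. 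The point is that the time-smoothing operators $Q_i$ in $\mpara$ are calibrated to the parabolic scaling precisely so that $\LL$ commutes with $\mpara$ up to good errors --- this is what produces the time regularity, and it is lost the moment you split off $f\para g$. In particular there is no need to first argue that $g\in\LL^\alpha_T$.
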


\begin{proof}
  We apply the Schauder estimates for $\LL$ to obtain
  \begin{align*}
     \| f \mpara g \|_{\LL^{\alpha}_T} & \lesssim \| f \mpara g (0) \|_{\CC^{\alpha}} + \left\| \LL (f \mpara g) \right\|_{C_T \CC^{\alpha- 2}} \\
     & \leqslant \left\| \LL (f \mpara g) - f \mpara \LL g \right\|_{C_T \CC^{\alpha + \delta - 2}} + \left\| f \mpara \left( \LL g \right) \right\|_{C_T \CC^{\alpha - 2}} \\
     & \lesssim \| f \|_{\LL^{\delta}_T} ( \| g \|_{C_T \CC^{\alpha}} + \left\| \LL g \right\|_{C_T \CC^{\alpha - 2}} ).
  \end{align*}
  
\end{proof}

It is possible to replace $\| f \|_{\LL^{\delta}_T}$ on the right hand side by $\| f \|_{C_T L^{\infty}}$, see Lemma~2.16 of~{\cite{furlan2014}}. But we will only apply Lemma~\ref{lem:paraproduct parabolic space} in the form stated above, so that we do not need this improvement.

At the end of this section, let us observe that in $\left( \LL^{\alpha}_T
\right)_{\alpha}$ spaces we can gain a small scaling factor by passing to a
larger space.

\begin{lemma}
  \label{lem:scaling factor smaller norm}Let $\alpha \in (0, 2)$, $T > 0$, and
  let $f \in \LL^{\alpha}_T$. Then for all $\delta \in (0, \alpha]$ we have
  \[ \| f \|_{\LL^{\delta}_T} \lesssim \| f (0) \|_{\delta} + T^{(\alpha -
     \delta) / 2} \| f \|_{\LL^{\alpha}_T} . \]
\end{lemma}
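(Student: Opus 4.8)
The plan is to estimate separately the two seminorms making up $\|f\|_{\LL^\delta_T} = \max\{\|f\|_{C^{\delta/2}_T L^\infty}, \|f\|_{C_T\CC^\delta}\}$, using interpolation between the size of $f$ at time $0$ and the full $\LL^\alpha_T$ norm. For the spatial part, I would first split $f(t) = f(0) + (f(t) - f(0))$ and note $\|f(0)\|_{\CC^\delta} \leqslant \|f(0)\|_\delta$ trivially (this is where the first term on the right appears). For the increment $f(t)-f(0)$, one uses the standard interpolation inequality between Hölder--Besov spaces: for $\delta \leqslant \alpha$ and any $g$,
\[
   \|g\|_{\CC^\delta} \lesssim \|g\|_{L^\infty}^{1-\delta/\alpha}\,\|g\|_{\CC^\alpha}^{\delta/\alpha},
\]
applied to $g = f(t)-f(0)$. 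Here $\|f(t)-f(0)\|_{L^\infty} \leqslant t^{\alpha/2}\|f\|_{C^{\alpha/2}_T L^\infty} \leqslant T^{\alpha/2}\|f\|_{\LL^\alpha_T}$ and $\|f(t)-f(0)\|_{\CC^\alpha} \lesssim \|f\|_{C_T\CC^\alpha} \leqslant \|f\|_{\LL^\alpha_T}$, so the interpolation gives a bound by $T^{(\alpha/2)(1-\delta/\alpha)}\|f\|_{\LL^\alpha_T} = T^{(\alpha-\delta)/2}\|f\|_{\LL^\alpha_T}$, as desired.

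For the time-regularity part $\|f\|_{C^{\delta/2}_T L^\infty}$, observe that for $0 \leqslant s < t \leqslant T$,
\[
   \frac{\|f_{s,t}\|_{L^\infty}}{|t-s|^{\delta/2}} = \frac{\|f_{s,t}\|_{L^\infty}}{|t-s|^{\alpha/2}}\,|t-s|^{(\alpha-\delta)/2} \leqslant \|f\|_{C^{\alpha/2}_T L^\infty}\, T^{(\alpha-\delta)/2} \leqslant T^{(\alpha-\delta)/2}\|f\|_{\LL^\alpha_T},
\]
which is even cleaner than the spatial estimate and does not need the $\|f(0)\|_\delta$ term at all. Taking the maximum of the two contributions yields the claimed inequality.

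The only genuinely non-routine ingredient is the interpolation inequality for Hölder--Besov spaces used in the spatial estimate; this is classical (it follows directly from the definition of the $\CC^\gamma$ norm as a weighted supremum over Littlewood--Paley blocks, optimizing the split between low and high frequencies), and in the paracontrolled literature it is used ubiquitously, so I would simply invoke it. I do not expect any real obstacle here: both halves of the $\LL^\delta_T$ norm are controlled by elementary manipulations once one has the interpolation bound, and the appearance of $T^{(\alpha-\delta)/2}$ is forced by parabolic scaling (a time increment of length $|t-s|$ costs $|t-s|^{1/2}$ in spatial regularity). If one wanted to avoid interpolation entirely, an alternative is to work block by block: $2^{j\delta}\|\Delta_j f_{s,t}\|_{L^\infty}$ is bounded both by $2^{j(\delta-\alpha)}\cdot 2^{j\alpha}\|\Delta_j f_{s,t}\|_{L^\infty}$ (good for large $j$) and by a power of $|t-s|$ times the time-regularity seminorm (good for small $j$), and balancing the two at the threshold $2^j \sim |t-s|^{-1/2}$ reproduces the same estimate; this is really just interpolation unwound, so I would keep the slicker version above.
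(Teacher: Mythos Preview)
Your proof is correct and is essentially the paper's argument spelled out in detail: the paper's one-line proof just records the interpolation $\|f\|_{C^{(\alpha-\delta)/2}_T \CC^\delta} \lesssim \|f\|_{\LL^\alpha_T}$, which is exactly what your Besov interpolation applied to increments $f(t)-f(s)$ yields, and then the bounds on $\|f\|_{C_T\CC^\delta}$ (via $f(t)=f(0)+(f(t)-f(0))$) and on $\|f\|_{C^{\delta/2}_T L^\infty}$ follow as you write them.
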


\begin{proof}
  It suffices to observe that by interpolation $\| f \|_{C^{(\alpha - \delta)
  / 2}_T \CC^{\delta}} \lesssim \| f \|_{\LL^{\alpha}_T}$.
\end{proof}

\section{Stochastic Burgers equation}\label{sec:burgers}

\subsection{The strategy}

Instead of dealing directly with the KPZ equation we find it convenient to
consider its derivative $u = \mathD h$ which solves the Stochastic Burgers
equation (SBE)
\begin{equation}
  \LL u = \mathD u^2 + \mathD \xi . \label{eq:burgers}
\end{equation}
In a later section we will show that the two equations are equivalent. The
strategy we adopt is to consider a regularized version for this equation,
where the noise $\xi$ has been replaced by a noise $\xi_{\varepsilon}$ that is
smooth in space. Our aim is then to show that the solution $u_{\varepsilon}$
of the equation
\[ \LL u_{\varepsilon} = \mathD u_{\varepsilon}^2 + \mathD \xi_{\varepsilon}
\]
converges in the space of distributions to a limit $u$ as $\varepsilon
\rightarrow 0$. Moreover, we want to describe a space of distributions where
the nonlinear term $\mathD u^2$ is well defined, and we want to show that $u$
is the unique element in this space which satisfies $\LL u = \mathD u^2 +
\mathD \xi$ and has the right initial condition.

That it is non-trivial to study the limiting behavior of the sequence
$(u_{\varepsilon})$ can be understood from the following considerations.
First, the limiting solution $u$ cannot be expected to behave better (in terms
of spatial regularity) than the solution $X$ of the linear equation
\[ \LL X = \mathD \xi \]
(for example with zero initial condition at time 0). It is well known that,
almost surely, $X \in C \CC^{- 1 / 2 -}$ but not better, at least on the scale
of $\left( C \CC^{\alpha} \right)_{\alpha}$ spaces. In that case, the term
$u^2$ featured on the right hand side of~(\ref{eq:burgers}) is not well
defined since the product $(f, g) \mapsto fg$ is a continuous map from $C
\CC^{\alpha} \times C \CC^{\beta}$ to $C \CC^{\alpha \wedge \beta}$ only if
$\alpha + \beta > 0$, a condition which is violated here. Thus, we cannot hope
to directly control the limit of $(u_{\varepsilon}^2)_{\varepsilon}$ as
$\varepsilon \rightarrow 0$.

What raises some hope to have a well--defined limit is the observation that
if $X_{\varepsilon}$ is the solution of the linear equation with regularized
noise $\xi_{\varepsilon}$, then $(\mathD X_{\varepsilon}^2)_{\varepsilon}$
converges to a well defined space--time distribution $\mathD X^2 $ which
belongs to $\LL C \CC^{0 -}$ (the space of all distributions of the form $\LL
v$ for some $v \in C \CC^{0 -}$). This hints to the fact that there are
stochastic cancellations going into $\mathD X^2$ due to the correlation
structure of the white noise, cancellations which result in a well behaved
limit. Taking these cancellations properly into account in the full solution
$u$ is the main aim of the strategy we will implement below.

In order to prove the convergence and related results it will be convenient to
take a more general point of view on the problem and to study the
\tmtextit{solution map} $\Phi_{\tmop{rbe}} : (\theta, \upsilon) \mapsto u$
mapping well-behaved (for example smooth, but here it is enough to consider
elements of $C (\R_+, C^1)$ and $C^1$ respectively) functions
$\theta$ and initial conditions $\upsilon$ to the classical solution of the
evolution problem
\begin{equation}
  \LL u = \mathD u^2 + \mathD \theta, \hspace{2em} u (0) = \upsilon,
  \label{eq:sbe-theta}
\end{equation}
which we will call the Rough Burgers Equation (RBE).

To recover the original problem our aim is to understand the properties of
$\Phi$ when $\theta$ belongs to spaces of distributions of the same regularity
as the space--time white noise, that is for $\theta \in C \CC^{- 1 / 2 -}$. In order
to do so we will introduce an $n$-tuple of distributions $\X (\theta)$
constructed recursively from the data $\theta$ of the problem and living in a
complete metric space $\mathcal{X}_{\tmop{rbe}}$ and show that for every bounded ball
$\CB$ around 0 in $\mathcal{X}_{\tmop{rbe}}$ and every bounded ball $\tilde \CB$ in $\CC^{-1-}$ there exist $T > 0$ and a continuous map
$\Psi_{\tmop{rbe}} : \CB \times \tilde \CB \rightarrow C_T \CC^{- 1 -}$
satisfying $\Phi_{\tmop{rbe}} (\theta, \upsilon) = \Psi_{\tmop{rbe}}
(\X (\theta), \upsilon) |_{[0, T]}$ for all $(\X (\theta), \upsilon) \in
\CB \times \tilde \CB$. This will allow us to obtain the local in time convergence of
$u_{\varepsilon} = \Phi_{\tmop{rbe}} (\xi_{\varepsilon}, \varphi(\varepsilon \mathD) u_0)$ by showing that the sequence $(\X
(\xi_{\varepsilon}))_{\varepsilon}$ converges as $\varepsilon \rightarrow 0$
to a well--defined element of $\mathcal{X}_{\tmop{rbe}}$; here $\varphi$ is a compactly supported, infinitely smooth function with $\varphi(0) = 1$. That last step will
be accomplished via direct probabilistic estimates with respect to the law of
the white noise $\xi$. The described approach thus allows us to decouple the
initial problem into two parts:
\begin{enumeratenumeric}
  \item a functional analytic part related to the study of particular spaces
  of distributions that are suitable to analyze the structure of solutions to
  equation~(\ref{eq:sbe-theta});
  
  \item a probabilistic part related to the construction of the
  \tmtextit{RBE-enhancement} $\X (\xi) \in
  \mathcal{X}_{\tmop{rbe}}$ associated to the white noise $\xi$. 
\end{enumeratenumeric}

\subsection{Structure of the solution}

In this discussion we consider the case of zero initial condition. Let us
first define a linear map $J (f)$ for each smooth $f$ on $\R_+ \times
\T$ as the solution to $\LL J (f) = \mathD f$ with initial condition
$J (f) (0) = 0$. Explicitly, we have
\[ J (f) (t) = I (\mathD f) (t) = \int_0^t P_{t - s} \mathD (f (s)) \mathd s,
\]
where $(P_t)_{t \geqslant 0}$ is the semi--group generated by the periodic
Laplacian on $\T$. Using the estimates on $I$ given in
Lemma~\ref{lemma:schauder}, we see that $J$ is a bounded linear operator from
$C_T \CC^{\alpha}$ to $C_T \CC^{\alpha + 1}$ for all $\alpha \in \R$ and $T>0$.

Let us now expand the solution to the rough burgers
equation~(\ref{eq:sbe-theta}) around the solution $X = J (\theta)$ to the
linear equation $\LL X = \mathD \theta$. Setting $u = X + u^{\geqslant 1}$, we
have
\[ \LL u^{\geqslant 1} = \mathD (u^2) = \mathD (X^2) + 2 \mathD (Xu^{\geqslant
   1}) + \mathD ((u^{\geqslant 1})^2) . \]
We can proceed by performing a further change of variables in order to remove
the term $\mathD (X^2)$ from the equation by setting
\begin{equation}
  u = X + J (X^2) + u^{\geqslant 2} \label{eq:naive-exp} .
\end{equation}
Now $u^{\geqslant 2}$ satisfies
\begin{equation}
  \begin{array}{lll}
    \LL u^{\geqslant 2} & = & 2 \mathD (XJ (X^2)) + \mathD (J (X^2) J (X^2))\\
    &  & + 2 \mathD (Xu^{\geqslant 2}) + 2 \mathD (J (X^2) u^{\geqslant 2}) +
    \mathD ((u^{\geqslant 2})^2) .
  \end{array} \label{eq:naive-exp-2}
\end{equation}
We can imagine to make a similar change of variables to get rid of the term $2
\mathD (XJ (X^2))$. As we proceed in this inductive expansion, we generate a
certain number of explicit terms, obtained via various combinations of the
function $X$ and of the bilinear map
\[ B (f, g) = J (fg) = \int_0^{\cdot} P_{\cdot - s} \mathD (f (s) g (s))
   \mathd s. \]
Since we will have to deal explicitly with at least some of these terms, it is
convenient to represent them with a compact notation involving binary trees. A
binary tree $\tau \in \mathcal{T}$ is either the root $\bullet$ or the
combination of two smaller binary trees $\tau = (\tau_1 \tau_2)$, where the
two edges of the root of $\tau$ are attached to $\tau_1$ and $\tau_2$
respectively. The natural grading $\bar d \colon \mathcal{T} \rightarrow \N$ is
given by $\bar d (\bullet) = 0$ and $\bar d ((\tau_1 \tau_2)) = 1 + \bar d (\tau_1) + \bar d
(\tau_2)$. However, for our purposes it is more convenient to work with the degree $d(\tau)$, which we define as the number of leaves of $\tau$. By induction we easily see that $d = \bar d +1$. Then we define recursively a map $X : \mathcal{T} \rightarrow C
\left( \R_+, \CD' \right)$ by
\[ X^{\bullet} = X, \hspace{2em} X^{(\tau_1 \tau_2)} = B (X^{\tau_1},
   X^{\tau_2}), \]
giving
\[ X^{\zzone} = B (X, X), \hspace{1em} X^{\zztwo} = B (X, X^{\zzone}),
   \hspace{1em} X^{\zzthree} = B (X, X^{\zztwo}), \hspace{1em} X^{\zzfour} = B
   (X^{\zzone}, X^{\zzone}) \]
and so on, where
\[ (\bullet \bullet) =
   \text{\resizebox{.8em}{!}{\includegraphics{trees-1.eps}}} \comma
   \hspace{1em} (\text{\resizebox{.8em}{!}{\includegraphics{trees-1.eps}}}
   \bullet) = \text{\resizebox{.8em}{!}{\includegraphics{trees-2.eps}}} \comma
   \hspace{1em} (\bullet
   \text{\resizebox{.8em}{!}{\includegraphics{trees-2.eps}}}) =
   \text{\resizebox{1em}{!}{\includegraphics{trees-3.eps}}} \comma
   \hspace{1em} (\text{\resizebox{.8em}{!}{\includegraphics{trees-1.eps}}} 
   \resizebox{.8em}{!}{\includegraphics{trees-1.eps}}) =
   \text{\resizebox{1.25em}{!}{\includegraphics{trees-4.eps}}} \comma
   \hspace{1em} \ldots \]
In this notation the expansion~(\ref{eq:naive-exp})-(\ref{eq:naive-exp-2})
reads
\begin{equation}
  u = X + X^{\zzone} + u^{\geqslant 2}, \label{eq:u-expansion}
\end{equation}
\begin{equation}
  u^{\geqslant 2} = 2 X^{\zztwo} + X^{\zzfour} + 2 B (X, u^{\geqslant 2}) + 2
  B (X^{\zzone}, u^{\geqslant 2}) + B (u^{\geqslant 2}, u^{\geqslant 2}) .
  \label{eq:u-ge-2}
\end{equation}
\begin{remark}
  We observe that formally the solution $u$ of equation~(\ref{eq:sbe-theta})
  can be expanded as an infinite sum of terms labelled by binary trees:
  \[ u = \sum_{\tau \in \mathcal{T}} c (\tau) X^{\tau}, \]
  where $c (\tau)$ is a combinatorial factor counting the number of planar
  trees which are isomorphic (as graphs) to $\tau$. For example $c (\bullet) =
  1$, $c ( \text{\resizebox{.8em}{!}{\includegraphics{trees-1.eps}}}) = 1$, $c (
  \text{\resizebox{.8em}{!}{\includegraphics{trees-2.eps}}} ) = 2$, $c
  (\text{\resizebox{1em}{!}{\includegraphics{trees-3.eps}}}) = 4$, $c (
  \text{\resizebox{1.2em}{!}{\includegraphics{trees-4.eps}}} ) = 1$ and
  in general $c (\tau) = \sum_{\tau_1, \tau_2 \in \mathcal{T}}
  \1_{(\tau_1 \tau_2) = \tau} c (\tau_1) c (\tau_2)$. Alternatively,
  truncating the summation at trees of degree at most $n$ and setting
  \[ u = \sum_{\tau \in \mathcal{T}, d (\tau) < n} c (\tau) X^{\tau} +
     u^{\geqslant n}, \]
  we obtain a remainder $u^{\geqslant n}$ that satisfies the equation
  \[ u^{\geqslant n} = \sum_{\tmscript{\begin{array}{c}
       \tau_1, \tau_2 : d (\tau_1) < n, d (\tau_2) < n\\
       d ((\tau_1 \tau_2)) \geqslant n
     \end{array}}} c (\tau_1) c (\tau_2) X^{(\tau_1 \tau_2)} + \sum_{\tau : d
     (\tau) < n} c (\tau) B (X^{\tau}, u^{\geqslant n}) + B (u^{\geqslant n},
     u^{\geqslant n}) . \]
\end{remark}

\

Our aim is to control the truncated expansion under the natural regularity
assumptions for the white noise case. These regularities turn out to be
\begin{equation}
  X \in C \CC^{- 1 / 2 -}, \hspace{1em} X^{\zzone} \in C \CC^{0 -},
  \hspace{1em} X^{\zztwo}, X^{\zzthree} \in C \CC^{1 / 2 -}, \hspace{1em}
  X^{\zzfour} \in C \CC^{1 -} . \label{eq:reg-X}
\end{equation}
Moreover, for any $f \in C \CC^{\alpha}$ with $\alpha > 1 / 2$ we have that $B
(X, f) \in C \CC^{1 / 2 -}$: indeed, the bilinear map $B$ satisfies
\begin{equation}
  \| B (f, g) \|_{C_T \CC^{\delta}} \lesssim \| f \|_{C_T \CC^{\alpha}} \| g
  \|_{C_T \CC^{\beta}} \label{eq:bilinear-estimate}
\end{equation}
for any $T > 0$, where $\delta = \min (\alpha, \beta, \alpha + \beta) + 1$,
but only if $\alpha + \beta > 0$. \ Equation~(\ref{eq:u-ge-2}) implies that
$u^{\geqslant 2}$ has at least regularity $(1 / 2) \um$. In this case $B
(u^{\geqslant 2}, u^{\geqslant 2})$ is well defined and in $C \CC^{3 / 2 -}$,
but $B (X, u^{\geqslant 2})$ is not well defined because the sum of the
regularities of $X$ and of $u^{\geqslant 2}$ just fails to be positive. To
make sense of $B (X, u^{\geqslant 2})$, we continue the expansion. Setting
$u^{\geqslant 2} = 2 X^{\zztwo} + u^{\geqslant 3}$, we obtain
from~(\ref{eq:u-ge-2}) that $u^{\geqslant 3}$ solves
\[ u^{\geqslant 3} = 4 X^{\zzthree} + X^{\zzfour} + 2 B (X, u^{\geqslant 3}) +
   2 B (X^{\zzone}, u^{\geqslant 2}) + B (u^{\geqslant 2}, u^{\geqslant 2}) .
\]
At this stage, if we assume that $u^{\geqslant 3}, u^{\geqslant 2} \in C
\CC^{1 / 2 -}$, all the terms but $4 X^{\zzthree} + 2 B (X, u^{\geqslant 3})$
are of regularity $C \CC^{1 -}$: indeed
\[ B (X^{\zzone}, u^{\geqslant 2}) \in C \CC^{1 -}, \hspace{1em} B
   (u^{\geqslant 2}, u^{\geqslant 2}) \in C \CC^{3 / 2 -}, \]
and we already assumed that $X^{\zzfour} \in C \CC^{1 -}$. This observation
implies that $u^{\geqslant 3}$ has the form
\[ u^{\geqslant 3} = 2 B (X, u^{\geqslant 2}) + 4 X^\zzthree + C \CC^{1 -}, \]
by which we mean $u^{\geqslant 3} - 2 B (X, u^{\geqslant 2}) - 4X^\zzthree \in C \CC^{1 -}$.
Of course, the problem still lies in obtaining an expression for $B (X,
u^{\geqslant 2})$ which is not well defined since our a priori estimates are
insufficient to handle it. This problem leads us to finding a suitable
paracontrolled structure to overcome the lack of regularity of $u^{\geqslant
2}$.

\subsection{Paracontrolled ansatz}

Here we discuss formally our approach to solving the rough Burgers
equation~(\ref{eq:sbe-theta}). The rigorous calculations will be performed in
the following section.

Inspired by the partial tree series expansion of $u$, we set up a
paracontrolled ansatz of the form
\begin{equation}
  u = X + X^{\zzone} + 2 X^{\zztwo} + u^Q, \hspace{2em} u^Q = u' \mpara Q +
  u^{\sharp}, \label{eq:paracontrolled-structure}
\end{equation}
where the functions $u', Q$ and $u^{\sharp}$ are for the moment arbitrary and
later will be chosen so that $u$ solves the RBE~(\ref{eq:sbe-theta}). The idea
is that the term $u' \mpara Q$ takes care of the less regular contributions
to $u$ of all the terms of the form $B (X, f)$. In this sense, we assume that
$Q \in C \CC^{1 / 2 -}$ and that $u' \in C \CC^{1 / 2 -} \cap C^{1 / 4 -}
L^{\infty}$. The remaining term $u^{\sharp}$ will contain the more regular
part of the solution, more precisely we assume that $u^{\sharp} \in C \CC^{1
-}$. Any such $u$ is called paracontrolled.

For paracontrolled distributions $u$, the nonlinear term takes the form
\[ \mathD u^2 = \mathD (XX + 2 XX^{\zzone} + X^{\zzone} X^{\zzone} + 4
   XX^{\zztwo}) + 2 \mathD (Xu^Q) + 2 \mathD (X^{\zzone} (u^Q + 2 X^{\zztwo}))
   + \mathD ((u^Q + 2 X^{\zztwo})^2) . \]
Of all the term involved, the only one which is problematic is $\mathD
(Xu^Q)$, since by our assumptions $2 \mathD (X^{\zzone} (u^Q + X^{\zztwo})) +
\mathD ((u^Q + X^{\zztwo})^2)$ are well defined and
\[ \mathD (XX + 2 XX^{\zzone} + X^{\zzone} X^{\zzone} + 4 XX^{\zztwo}) = \LL
   (X^{\zzone} + 2 X^{\zztwo} + X^{\zzfour} + 4 X^{\zzthree}) . \]
But using the paracontrolled structure of $u$, the term $\mathD (X u^Q)$ is
defined as $\mathD (X \cdummy u^Q)$ provided that $Q \reso X \in C \CC^{0 -}$
is given. Under this assumption we easily deduce from
Theorem~\ref{thm:paracontrolled product} that
\[
   \mathD (Xu^Q) - u^Q \para \mathD X \in C \CC^{- 1 -}.
\]
In other words, for paracontrolled distributions the nonlinear operation on
the right hand side of~(\ref{eq:sbe-theta}) is well defined as a bounded
operator.

Next, we should specify how to choose $Q$ and which form $u'$ will take for
the solution $u$ of~(\ref{eq:sbe-theta}). To do so, we derive an equation for
$u^{\sharp}$:
\[ \LL u^{\sharp} = \LL (u - X - X^{\zzone} - 2 X^{\zztwo} - u' \mpara Q) =
   4 \mathD (X X^{\zztwo}) + 2 u^Q \para \mathD X - u' \mpara \LL Q + C
   \CC^{- 1 -}, \]
where we used the commutator estimate Lemma~\ref{lem:modified paraproduct
commutators} to replace $\LL (u' \mpara Q)$ by $u' \mpara \LL Q$. If we
now assume that $X^{\zztwo} \reso X \in C \CC^{0 -}$, then we obtain
\[ \LL u^{\sharp} = (2 u^Q + 4 X^{\zztwo}) \mpara \mathD X - u' \mpara
   \LL Q + C \CC^{- 1 -}, \]
which shows that we should choose $Q$ so that $\LL Q = \mathD X$ and $u' = 2
u^Q + 4 X^{\zztwo}$. While this equation looks weird, it can be easily solved
by a fixed point argument. However, in the next section we take a different
approach and start with the solution to the equation driven by regular data
(of which we know that it exists), and use it to derive a priori estimates
which allow us to pass to the limit.

\subsection{A priori estimates on small time intervals}\label{sec:burgers smooth initial}

We are now ready to turn these formal discussions into rigorous mathematics
and to solve Burgers equation on small time intervals. For simplicity, we
first treat the case of initial conditions that are smooth perturbations of
the white noise. In Section~\ref{sec:singular initial} we indicate how to
adapt the arguments to treat more general initial conditions.

Throughout this section we fix $\alpha \in (1 / 3, 1 / 2)$. First, let us
specify which distributions will play the role of enhanced data for us. We
will usually consider regularized versions of the white noise which leave the
temporal variable untouched and convolute the spatial variable against a
smooth kernel. So in the following definition we are careful to only require a
smooth $X$, but not a smooth $\theta$.

\begin{definition}(RBE-enhancement)
  Let
  \[
     \mathcal{X}_{\tmop{rbe}} \subseteq C \CC^{\alpha - 1} \times C \CC^{2 \alpha - 1} \times \LL^{\alpha} \times \LL^{2 \alpha} \times \LL^{2 \alpha} \times C \CC^{2 \alpha - 1}
  \]
  be the closure of the image of the map $\Theta_{\tmop{rbe}} : \LL C(\R, C^{\infty} (\T)) \rightarrow \mathcal{X}_{\tmop{rbe}}$ given by
  \begin{equation}\label{eq:enhanced-theta}
     \Theta_{\tmop{rbe}}(\theta) = (X (\theta), X^{\zzone} (\theta), X^{\zztwo} (\theta), X^\zzthreereso (\theta), X^{\zzfour} (\theta), (Q \reso X) (\theta)),
  \end{equation}
  where
  \begin{equation}\label{eq:pol-X}
    \begin{array}{rll}
      \LL X (\theta) & = & \mathD \theta,\\
      \LL X^{\zzone} (\theta) & = & \mathD (X (\theta)^2),\\
      \LL X^{\zztwo} (\theta) & = & \mathD (X (\theta) X^{\zzone} (\theta)),\\
      \LL X^{\zzthreereso} (\theta) & = & \mathD (X^\zztwo (\theta) \reso X (\theta)),\\
      \LL X^{\zzfour} (\theta) & = & \mathD (X^{\zzone} (\theta) X^{\zzone}
      (\theta)),\\
      \LL Q (\theta) & = & \mathD X (\theta),
    \end{array}
  \end{equation}
  all with zero initial conditions except $X (\theta)$ for which we choose the ``stationary'' initial condition
  \[
     X (\theta) (0) = \int_{- \infty}^0 P_{- s} \mathD \theta (s) \mathd s.
  \]
  We call $\Theta_{\tmop{rbe}} (\theta)$ the \tmtextit{RBE-enhancement}
  of the driving distribution $\theta$. For $T > 0$ we define
  $\mathcal{X}_{\tmop{rbe}} (T) =\mathcal{X}_{\tmop{rbe}} |_{[0, T]}$ and we
  write $\| \X \|_{\mathcal{X}_{\tmop{rbe}} (T)}$ for the norm of
  $\X$ in the Banach space $C_T \CC^{\alpha - 1} \times C_T \CC^{2
  \alpha - 1} \times \LL^{\alpha}_T \times \LL_T^{2 \alpha} \times \LL^{2\alpha}_T \times C_T \CC^{2 \alpha - 1}$. Moreover, we set
  $d_{\mathcal{X}_{\tmop{rbe}} (T)} (\X, \tilde{\X}) = \|
  \X- \tilde{\X} \|_{\mathcal{X}_{\tmop{rbe}} (T)}$.
\end{definition}

For every RBE-enhancement we have an associated space of paracontrolled distributions.

\begin{definition}
  Let $\X \in \mathcal{X}_{\tmop{rbe}}$ and $\delta > 1 - 2 \alpha$.
  We define the space $\CD^{\delta}_{\tmop{rbe}} = \DD^{\delta}_{\tmop{rbe},
  \X}$ of distributions paracontrolled by $\X$ as the set of
  all $(u,u',u^\sharp) \in C \CC^{\alpha-1} \times \LL^{\delta} \times \LL^{\alpha + \delta}$ such that
  \[
     u = X + X^{\zzone} + 2 X^{\zztwo} + u' \mpara Q + u^{\sharp}.
  \]
  For $\delta = \alpha$ we will usually write $\DD_{\tmop{rbe}} =
  \CD^{\alpha}_{\tmop{rbe}}$. For every $T > 0$ we set
  $\CD^{\delta}_{\tmop{rbe}} (T) = \CD^{\delta}_{\tmop{rbe}} |_{[0, T]}$, and
  we define
  \[ \| u \|_{\DD^{\delta}_{\tmop{rbe}} (T)} = \| u' \|_{\LL^{\delta}_T} + \|
     u^{\sharp} \|_{\LL_T^{\alpha + \delta}} . \]
  We will often use the notation $u^Q = u' \mpara Q + u^{\sharp}$. Since $u$ is a function of $u'$ and $u^\sharp$, we will also write $(u',u^\sharp) \in \CD_{\mathrm{rbe}}$, and if there is no ambiguity then we also abuse notation by writing $u \in \CD_{\mathrm{rbe}}$. We call $u'$ the \emph{derivative} and $u^\sharp$ the \emph{remainder}.
\end{definition}

Strictly speaking it is not necessary to assume $u^{\sharp} \in
\LL^{\alpha + \delta}$, the weaker condition $u^{\sharp} \in C \CC^{\alpha +
\delta}$ would suffice to define $\mathD u^2$. But this stronger assumption will
simplify the notation and the uniqueness argument below, and it will always be
satisfied by the paracontrolled solution to Burgers equation.

Our a priori estimate below will bound $\| u \|_{\CD_{\tmop{rbe}} (T)}$ in
terms of the weaker norm $\| u \|_{\CD_{\tmop{rbe}}^{\delta} (T)}$ for
suitable $\delta < \alpha$. This will allow us to obtain a scaling factor
$T^{\varepsilon}$ on small time intervals $[0, T]$, from where we can then
derive the locally Lipschitz continuous dependence of the solution $u$ on the
data $(\X, u_0)$. Throughout this section we will work under the following assumption:

\begin{assumption}[T,M]
  Assume that $\theta, \tilde{\theta} \in \LL C (\R,
  C^{\infty} (\T))$ and $u_0, \tilde{u}_0 \in \CC^{2\alpha}$, and that $u$ is the unique global in time
  solution to the Burgers equation
  \begin{equation}\label{eq:rbe smooth initial}
     \LL u = \mathD u^2 + \mathD \xi, \hspace{2em} u (0) = X(0) + u_0.
  \end{equation}
  We define $\X= \Theta_{\tmop{rbe}} (\theta)$, $u^Q = u - X -
  X^{\zzone} - 2 X^{\zztwo}$, and $u' = 2 u^Q + 4 X^{\zztwo}$, and we set
  $u^{\sharp} = u^Q - u' \mpara Q$. Similarly we define $\tilde{\X}, \tilde{u}, \tilde{u}^Q, \tilde{u}', \tilde{u}^{\sharp}$. Finally we assume that $T,M>0$ are such that
  \[
     \max \big\{ \|u_0\|_{2\alpha}, \|\tilde{u}_0\|_{2\alpha}, \|\X\|_{\mathcal{X}_{\tmop{rbe}} (T)}, \| \tilde{\X} \|_{\mathcal{X}_{\tmop{rbe}} (T)} \big\} \leqslant M.
  \]
\end{assumption}

\begin{lemma}\label{lem:burgers a priori}
   Make Assumption (T,M) and let $\delta > 1 - 2 \alpha$. Then
  \begin{equation}\label{eq:usharp a priori}
     \| \LL u^{\sharp} - \LL X^{\zzfour} - 4\LL X^\zzthreereso \|_{C_T \CC^{2 \alpha - 2}} + \| u' \|_{\LL^{\alpha}_T} \lesssim (1 + M^2) ( 1 + \| u \|^2_{\CD^{\delta}_{_{\tmop{rbe}}} (T)} ).
  \end{equation}
  If further also $\| u \|_{\CD_{\tmop{rbe}, \X} (T)}, \| \tilde{u} \|_{\CD_{\tmop{rbe}, \tilde{\X}} (T)} \leqslant M$, then
  \begin{align} \label{eq:usharp a priori difference}
     & \|(\LL u^{\sharp} - \LL X^{\zzfour} - 4\LL X^\zzthreereso) - (\LL \tilde{u}^{\sharp} - \LL \tilde{X}^{\zzfour} - 4\LL \tilde X^\zzthreereso)\|_{C_T \CC^{2 \alpha - 2}} + \| u' - \tilde{u}'  \|_{\LL^{\alpha}_T} \\ \nonumber
     &\hspace{130pt} \lesssim M^2 \big(d_{\mathcal{X}_{\tmop{rbe}} (T)} (\X, \tilde{\X}) + ( \| u' - \tilde{u}' \|_{\LL^{\delta}_T} + \| u^{\sharp} - \tilde{u}^{\sharp} \|_{\LL^{2 \delta}_T} ) \big) .
  \end{align}
\end{lemma}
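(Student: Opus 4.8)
The plan is to derive an equation for $u^\sharp$ directly from the paracontrolled ansatz and then estimate each term using the paraproduct and commutator lemmas from Section~\ref{sec:paracontrolled}. First I would compute $\LL u^\sharp = \LL u - \LL X - \LL X^{\zzone} - 2\LL X^{\zztwo} - \LL(u' \mpara Q)$. Using the Burgers equation $\LL u = \mathD u^2 + \mathD\theta$ and the defining relations~\eqref{eq:pol-X}, the low-order explicit terms combine: $\mathD(XX + 2XX^{\zzone} + X^{\zzone}X^{\zzone} + 4XX^{\zztwo}) = \LL(X^{\zzone} + 2X^{\zztwo} + X^{\zzfour} + 4X^{\zzthreereso})$, exactly as noted before the statement, so these cancel against the subtracted terms and against $\LL X^{\zzfour} + 4\LL X^{\zzthreereso}$ on the left of~\eqref{eq:usharp a priori}. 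What remains is
\[
   \LL(u^\sharp - X^{\zzfour} - 4X^{\zzthreereso}) = 2\mathD(X u^Q) + 2\mathD\big(X^{\zzone}(u^Q + 2X^{\zztwo})\big) + \mathD\big((u^Q + 2X^{\zztwo})^2\big) - \LL(u' \mpara Q).
\]
For the last term I would apply the commutator estimate in Lemma~\ref{lem:modified paraproduct commutators} to replace $\LL(u' \mpara Q)$ by $u' \mpara \LL Q = u' \mpara \mathD X$ up to a remainder in $C_T\CC^{2\alpha-2}$ (using $u' \in \LL^\delta$ with $\delta > 1-2\alpha$ and $\mathD X \in C_T\CC^{\alpha-2}$). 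For the problematic term $\mathD(X u^Q)$ I would invoke Theorem~\ref{thm:paracontrolled product}: since $u^Q$ is paracontrolled by $Q$ with derivative $u'$ and remainder $u^\sharp$, and $Q \reso X$ is part of the enhanced data $\X$, the product $X \cdummy u^Q$ is well defined and $\mathD(X u^Q) - u^Q \para \mathD X \in C_T\CC^{2\alpha-2}$ with the stated quantitative bound. Then I would note $u^Q \para \mathD X - u' \mpara \mathD X = (u^Q - u' \mpara Q) \para \mathD X + (u' \mpara Q \para \mathD X - u' \para \mathD X) \in C_T\CC^{2\alpha-2}$, where the first piece uses $u^\sharp \in C_T\CC^{\alpha+\delta}$ with $\alpha + \delta > 1$ and the paraproduct estimate, and the second piece uses Lemma~\ref{lem:bony commutator} together with $\LL^\delta$ to $C\CC^\delta$ continuity; so choosing $u' = 2u^Q + 4X^{\zztwo}$ makes the singular contributions cancel, leaving $4X^{\zztwo} \para \mathD X$, which is again $C_T\CC^{2\alpha-2}$ because $X^{\zztwo} \reso X$ is assumed given (part of $\X$) and $X^{\zztwo} \para \mathD X$, $X^{\zztwo} \lpara \mathD X$ are controlled by paraproduct estimates. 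The remaining terms $2\mathD(X^{\zzone}(u^Q + 2X^{\zztwo}))$ and $\mathD((u^Q + 2X^{\zztwo})^2)$ are handled by the corollary to Lemma~\ref{thm:paraproduct}: the sums of regularities are positive (e.g. $X^{\zzone} \in C\CC^{0-}$ against $u^Q \in C\CC^{(1/2)-}$), so the pointwise products are well defined and land in $C_T\CC^{2\alpha-2}$ after one derivative.

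Next I would convert these bounds into a bound on $\|u'\|_{\LL^\alpha_T}$. Since $u' = 2u^Q + 4X^{\zztwo} = 2u' \mpara Q + 2u^\sharp + 4X^{\zztwo}$, I would use Schauder (Lemma~\ref{lemma:schauder}) applied to the equation for $u^\sharp$ to control $\|u^\sharp\|_{\LL^{2\alpha}_T}$ by $\|\LL(u^\sharp - X^{\zzfour} - 4X^{\zzthreereso})\|_{C_T\CC^{2\alpha-2}}$ plus the enhanced-data norms, then Lemma~\ref{lem:paraproduct parabolic space} to bound $\|u' \mpara Q\|_{\LL^\alpha_T}$, and close the estimate on $\|u'\|_{\LL^\alpha_T}$. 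Throughout, each application of Theorem~\ref{thm:paracontrolled product} contributes factors of the enhanced-data norms bounded by $M$ and the weaker norm $\|u\|_{\CD^\delta_{\rbe}(T)} = \|u'\|_{\LL^\delta_T} + \|u^\sharp\|_{\LL^{\alpha+\delta}_T}$; the worst term is quadratic in $u^Q$ (hence in $\|u\|_{\CD^\delta_{\rbe}(T)}$) and the enhanced data enters at most to the power two, which gives the $(1+M^2)(1 + \|u\|^2_{\CD^\delta_{\rbe}(T)})$ form of~\eqref{eq:usharp a priori}.

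For the difference estimate~\eqref{eq:usharp a priori difference} I would repeat the computation for $u$ and $\tilde u$ and subtract, using the local Lipschitz part of Theorem~\ref{thm:paracontrolled product}, the linearity of the paraproduct and modified paraproduct, and the Lipschitz dependence in Lemma~\ref{lemma:paralinearization}-style arguments; each difference of two products splits as (difference of first factor)$\times$(second factor) $+$ (first factor)$\times$(difference of second factor), and since all norms are bounded by $M$ one collects a prefactor that is at worst $M^2$ (the cubic term $M^3$ from Theorem~\ref{thm:paracontrolled product} does not appear because here one of the two paracontrolled factors, namely $X$, has trivial derivative). The differences of derivatives and remainders are measured in the weaker norms $\|u' - \tilde u'\|_{\LL^\delta_T} + \|u^\sharp - \tilde u^\sharp\|_{\LL^{2\delta}_T}$ exactly as on the right-hand side, together with $d_{\mathcal{X}_{\rbe}(T)}(\X, \tilde\X)$ from the differences of enhanced data.

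The main obstacle I expect is the bookkeeping around the term $\mathD(X u^Q)$: one must carefully track that the singular part $u^Q \para \mathD X$ produced by Theorem~\ref{thm:paracontrolled product} is cancelled by the choice $u' = 2u^Q + 4X^{\zztwo}$ only \emph{after} commuting the modified paraproduct past $\LL$ and replacing $u' \mpara \mathD X$ by $u^Q \para \mathD X$ up to controlled remainders — this requires the chain $u' \mpara Q \to u' \para Q \to$ resonant manipulations, invoking Lemmas~\ref{lem:modified paraproduct commutators}, \ref{lem:bony commutator}, and the commutator $C(f,g,h)$ of Remark~\ref{rmk:paracontrolled commutator} in the right order, and keeping all regularity indices consistent with $\alpha \in (1/3,1/2)$ and $\delta > 1 - 2\alpha$ (so that $\alpha + \delta > 1$ but $\delta$ can be taken below $\alpha$ to later gain the time factor). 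Getting the quadratic-not-cubic dependence on $\|u\|_{\CD^\delta_{\rbe}(T)}$ right, i.e. making sure no term is genuinely cubic, is the other point requiring care.
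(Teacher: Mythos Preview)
Your approach is essentially the paper's: compute $\LL u^\sharp$, group the explicit tree terms, handle $\mathD(Xu^Q)$ via the paracontrolled product, and cancel the singular paraproduct against $u'\mpara \mathD X$ using $u'=2u^Q+4X^{\zztwo}$. Two points deserve correction.

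First, your displayed identity $\mathD(XX + 2XX^{\zzone} + X^{\zzone}X^{\zzone} + 4XX^{\zztwo}) = \LL(X^{\zzone} + 2X^{\zztwo} + X^{\zzfour} + 4X^{\zzthreereso})$ is wrong: the last term should be $4X^{\zzthree}$, not $4X^{\zzthreereso}$. Consequently your equation for $\LL(u^\sharp - X^{\zzfour} - 4X^{\zzthreereso})$ is missing the pieces $4\mathD(X^{\zztwo}\para X) + 4\mathD(X^{\zztwo}\lpara X)$. You do mention these later, and they are exactly what the paper writes out explicitly as $4\mathD[X^{\zztwo}\lpara X] + 4[\mathD(X^{\zztwo}\para X) - X^{\zztwo}\para\mathD X]$; the $4X^{\zztwo}\para\mathD X$ then combines with $2u^Q\para\mathD X$ to give $u'\para\mathD X$, and one is left with $u'\para\mathD X - u'\mpara\mathD X$, bounded by Lemma~\ref{lem:modified paraproduct commutators}. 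Your chain ``$(u^Q - u'\mpara Q)\para\mathD X + (u'\mpara Q\para\mathD X - u'\para\mathD X)$'' is garbled and invokes the wrong commutator lemma; the clean route is the one just described.

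Second, your plan to bound $\|u'\|_{\LL^\alpha_T}$ by first applying Schauder to the $u^\sharp$-equation is circular: the estimate on $\|\LL u^\sharp - \LL X^{\zzfour} - 4\LL X^{\zzthreereso}\|_{C_T\CC^{2\alpha-2}}$ already contains $\|u'\|_{\LL^\alpha_T}$ on the right (coming from the $u'\para\mathD X - u'\mpara\mathD X$ term, which needs $u'\in\LL^\alpha$ to land in $\CC^{2\alpha-2}$). The paper avoids this by bounding $\|u'\|_{\LL^\alpha_T}$ \emph{directly} from the algebraic relation $u' = 2(u'\mpara Q) + 2u^\sharp + 4X^{\zztwo}$: Lemma~\ref{lem:paraproduct parabolic space} gives $\|u'\mpara Q\|_{\LL^\alpha_T}\lesssim \|u'\|_{\LL^\delta_T}\|\X\|_{\Xrbe(T)}$, and $\|u^\sharp\|_{\LL^\alpha_T}\leqslant \|u^\sharp\|_{\LL^{\alpha+\delta}_T}$ is already part of $\|u\|_{\CD^\delta_{\rbe}(T)}$. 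No Schauder step is needed for $\|u'\|_{\LL^\alpha_T}$. With these two fixes your argument coincides with the paper's.
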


\begin{proof}
  Let us decompose $\LL u^{\sharp}$ into a sum of bounded multilinear
  operators:
  \begin{align*}
    \LL u^{\sharp} & = \mathD u^2 - \mathD X^2 - 2 \mathD X^{\zzone} X - \LL
     (u' \mpara Q) \\
     & = \LL X^{\zzfour} + 2 \mathD [u^Q X - u^Q \para X] + 2 [\mathD (u^Q \para X) - u^Q \para \mathD X] \\
     &\quad + 4 \LL X^\zzthreereso  + 4 \mathD [X^{\zztwo} \lpara X] + 4 [\mathD (X^{\zztwo} \para X) - X^{\zztwo} \para \mathD X] + [u' \para \mathD X - u' \mpara \mathD X] \\
     &\quad + \mathD [2 X^{\zzone} (2 X^{\zztwo} + u^Q) + (2 X^{\zztwo} + u^Q)^2] - \left[ \LL (u' \mpara Q) - u' \mpara \left( \LL Q \right) \right],
  \end{align*}
  where we used that $u' = 2 u^Q + 4 X^{\zztwo}$. From here it is
  straightforward to show that
  \begin{align}\label{eq:usharp a priori pr1}
     \| \LL u^{\sharp} - \LL X^{\zzfour} - 4 \LL X^\zzthreereso \|_{C_T \CC^{2 \alpha - 2}} & \lesssim \| \X \|_{\mathcal{X}_{\tmop{rbe}} (T)} ( \| u^Q \|_{C_T \CC^{\alpha}} + \| u' \|_{\LL^{\alpha}_T} ) + \| u^Q \|_{C_T L^{\infty}}^2 \\ \nonumber
     &\quad + (\| \X \|_{\mathcal{X}_{\tmop{rbe}} (T)} + \| \X \|_{\mathcal{X}_{\tmop{rbe}} (T)}^2) ( 1 + \| u^{\sharp} \|_{C_T \CC^{\alpha + \delta}} + \| u' \|_{C_T \CC^{\delta}} ) .
  \end{align}
  Now $\| u^Q \|_{C_T \CC^{\alpha}} \lesssim \| u' \|_{\LL^{\delta}_T} \|
  \X \|_{\mathcal{X}_{\tmop{rbe}} (T)} + \| u^{\sharp} \|_{C_T
  \CC^{\alpha}}$ by Lemma~\ref{lem:paraproduct parabolic space}  (observe that $\| Q
  \|_{\LL^{\alpha}_T} \lesssim \| \X \|_{\mathcal{X}_{\tmop{rbe}}
  (T)}$ by the Schauder estimates, Lemma~\ref{lemma:schauder}), and therefore
  the estimate~(\ref{eq:usharp a priori}) follows as soon as we can bound $(1 + \| \X \|_{\mathcal{X}_{\tmop{rbe}} (T)}) \| u' \|_{\LL^{\alpha}_T}$ by the right hand side of~(\ref{eq:usharp a priori}).
  For this purpose it suffices to use the explicit form $u' = 2 u^Q + 4
  X^{\zztwo}$ and then $u^Q = u' \mpara Q + u^{\sharp}$ in combination with
  Lemma~\ref{lem:paraproduct parabolic space}.
  
  The same arguments combined with the multilinearity of our operators also yield~(\ref{eq:usharp a priori difference}).
\end{proof}

The main result of this section now follows:

\begin{theorem}\label{thm:burgers smooth initial}
  For every $(\X,u_0) \in \mathcal{X}_{\tmop{rbe}} \times \CC^{2\alpha}$ there exists $T^\ast \in (0,
  \infty]$ such that for all $T<T^\ast$ there is a unique solution $(u,u',u^\sharp) \in \CD_{\mathrm{rbe}}(T)$ to the rough Burgers equation~\eqref{eq:rbe smooth initial} on the interval $[0,T]$. Moreover, we can choose
  \[
     T^\ast = \inf\{t \geqslant 0: \|u\|_{\CD_{\mathrm{rbe}}(t)} = \infty \}.
  \]
\end{theorem}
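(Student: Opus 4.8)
The plan is to reduce the whole statement to the a priori bounds of Lemma~\ref{lem:burgers a priori}, upgraded into a \emph{self-improving} estimate on short time intervals, and then to pass from smooth driving signals (for which a global classical solution is already available) to a general enhancement $\X\in\Xrbe$ by approximation. Throughout, fix an exponent $\delta\in(1-2\alpha,\alpha)$; such a $\delta$ exists precisely because $\alpha>1/3$.

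First I would close the a priori estimate. Applying the Schauder estimate (Lemma~\ref{lemma:schauder}) to~\eqref{eq:usharp a priori}, and using that the components $X^{\zzfour},X^{\zzthreereso}$ of $\X$ lie in $\LL^{2\alpha}$ with norm $\leqslant M$ while the initial values $u'(0)=2u_0$ and $u^\sharp(0)=u_0$ are controlled by $\|u_0\|_{2\alpha}\leqslant M$ (the remaining stochastic terms vanishing at time $0$), one bounds the full norm $\|u\|_{\CD_{\mathrm{rbe}}(T)}=\|u'\|_{\LL^\alpha_T}+\|u^\sharp\|_{\LL^{2\alpha}_T}$ by $P(M)\big(1+\|u\|^2_{\CD^\delta_{\mathrm{rbe}}(T)}\big)$ for some polynomial $P$. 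Lemma~\ref{lem:scaling factor smaller norm} then gives, with $\varepsilon=(\alpha-\delta)/2>0$, a bound $\|u\|_{\CD^\delta_{\mathrm{rbe}}(T)}\lesssim_M 1+T^\varepsilon\|u\|_{\CD_{\mathrm{rbe}}(T)}$, and substituting this back yields $\|u\|_{\CD_{\mathrm{rbe}}(T)}\leqslant \tilde P(M)\big(1+T^{2\varepsilon}\|u\|^2_{\CD_{\mathrm{rbe}}(T)}\big)$. For smooth $\theta$ the solution is smooth, so $t\mapsto\|u\|_{\CD_{\mathrm{rbe}}(t)}$ is continuous with limit $\lesssim M$ as $t\downarrow 0$, and a standard continuity (bootstrap) argument then produces $T_0=T_0(M)>0$ and $R=R(M)$ with $\|u\|_{\CD_{\mathrm{rbe}}(T_0)}\leqslant R$. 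The same manipulations applied to~\eqref{eq:usharp a priori difference} (again via Lemmas~\ref{lemma:schauder} and~\ref{lem:scaling factor smaller norm}) give, on a possibly smaller interval depending only on $M$, the Lipschitz bound $\|u-\tilde u\|_{\CD_{\mathrm{rbe}}(T)}\lesssim_M d_{\Xrbe(T)}(\X,\tilde\X)+\|u_0-\tilde u_0\|_{2\alpha}$.

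For existence with a general $(\X,u_0)\in\Xrbe\times\CC^{2\alpha}$, I would pick $\theta_n\in\LL C(\R,C^\infty(\T))$ with $\Theta_{\mathrm{rbe}}(\theta_n)\to\X$ (possible since $\Xrbe$ is by definition the closure of the image of $\Theta_{\mathrm{rbe}}$) and let $u_n$ be the global classical solutions of~\eqref{eq:rbe smooth initial} driven by $\theta_n$. By the previous step the $u_n$ are bounded in $\CD_{\mathrm{rbe}}(T)$ uniformly on a common interval $[0,T]$, and the Lipschitz bound makes them Cauchy there; the limit $(u,u',u^\sharp)\in\CD_{\mathrm{rbe}}(T)$ solves~\eqref{eq:rbe smooth initial}, since $\mathD u_n^2\to\mathD u^2$ by the continuity of the paracontrolled product (Theorem~\ref{thm:paracontrolled product}) and all other terms are multilinear and continuous in the data. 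For uniqueness I would note that the multilinear decomposition behind Lemma~\ref{lem:burgers a priori} uses only that $\X\in\Xrbe$ and that $u$ is paracontrolled by $\X$ (the hypotheses of Theorem~\ref{thm:paracontrolled product} are met because $\X$ is a limit of smooth-data enhancements), so~\eqref{eq:usharp a priori difference} holds for any two paracontrolled solutions with the same enhancement; with $\X=\tilde\X$ and $u_0=\tilde u_0$ this gives $\|u-\tilde u\|_{\CD_{\mathrm{rbe}}(T')}\lesssim_M (T')^\varepsilon\|u-\tilde u\|_{\CD_{\mathrm{rbe}}(T')}$ on a short interval, hence $u=\tilde u$ there, and iterating over successive intervals (with constants depending only on the norm, finite where both solutions live) gives uniqueness on the whole common interval. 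Gluing the unique local solutions produces a maximal one on $[0,T^\ast)$; and if one had $T^\ast<\infty$ together with $\sup_{t<T^\ast}\|u\|_{\CD_{\mathrm{rbe}}(t)}<\infty$, then restarting the equation from a time $t_0<T^\ast$ — using the version of the local theory that allows a paracontrolled initial condition, as carried out in Section~\ref{sec:singular initial}, since $u(t_0)$ is then paracontrolled with norm bounded uniformly in $t_0$ — would extend the solution past $T^\ast$, a contradiction. Hence $\|u\|_{\CD_{\mathrm{rbe}}(t)}\to\infty$ as $t\uparrow T^\ast$, which is the asserted formula for $T^\ast$.

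The main obstacle is the self-improving character of the first step: Lemma~\ref{lem:burgers a priori} controls the \emph{strong} norm $\|u\|_{\CD_{\mathrm{rbe}}(T)}$ only through the \emph{weaker} norm $\|u\|_{\CD^\delta_{\mathrm{rbe}}(T)}$, so one genuinely needs the interpolation gain of Lemma~\ref{lem:scaling factor smaller norm}, and the ensuing factor $T^{2\varepsilon}$, to turn it into a closed inequality, followed by a careful bootstrap on the smooth approximants — continuity of $t\mapsto\|u\|_{\CD_{\mathrm{rbe}}(t)}$, and the choice of the radius $R$ and the time $T_0$ — to make the bound rigorous. A secondary point, needed for both uniqueness and the extension argument, is that the difference estimate~\eqref{eq:usharp a priori difference}, although stated under Assumption (T,M) for smooth $\theta$, applies verbatim to arbitrary paracontrolled solutions driven by an enhancement in $\Xrbe$.
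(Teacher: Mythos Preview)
Your approach to the local existence and uniqueness is the same as the paper's: approximate by smooth data, close the a priori estimate via Lemma~\ref{lem:burgers a priori}, Schauder, and the scaling gain of Lemma~\ref{lem:scaling factor smaller norm}, run a continuity (bootstrap) argument on the smooth approximants, and pass to the limit using the Lipschitz difference bound. This part is correct and matches the paper.

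The gap is in the iteration step. Your appeal to Section~\ref{sec:singular initial} for the restart is circular: the proof of Theorem~\ref{thm:burgers general initial} there itself invokes ``the arguments of Section~\ref{sec:burgers smooth initial}'' to restart from a time $\tau>0$, and Section~\ref{sec:singular initial} treats $\CC^{-\beta}$ initial conditions (with explosive norms $\CD^{\mathrm{exp}}_{\mathrm{rbe}}$), not paracontrolled ones in $\CD_{\mathrm{rbe}}$. The paper's argument here is self-contained, and the key observation you are missing is that although $u(\tau)-X(\tau)$ is only in $\CC^{2\alpha-1}$, the \emph{remainder} at the restart time has regularity $\CC^{2\alpha}$: one computes $\tilde u^\sharp(0)=u^Q(\tau)-u'(\tau)\para Q(\tau)$, and Lemma~\ref{lem:modified paraproduct commutators} gives $u'(\tau)\para Q(\tau)-u'\mpara Q(\tau)\in\CC^{2\alpha}$, so the paracontrolled structure on $[0,\tau]$ yields $\tilde u^\sharp(0)\in\CC^{2\alpha}$. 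This is exactly what is needed to rerun the local argument with a new constant $\tilde M$ depending on $\|u\|_{\CD_{\mathrm{rbe}}(\tau)}$ and $\|\X\|_{\Xrbe(\tau+1)}$, and it is what makes the extension time a function of the paracontrolled norm alone.

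There is a second, more technical point the paper addresses that you do not mention: the modified paraproduct $\mpara$ is nonlocal in time, so after gluing one only knows $u^Q-u'\mpara_\tau Q\in\LL^{2\alpha}$ on $[\tau,\tau+\tilde\tau]$, where $\mpara_\tau$ uses history starting from $\tau$. One must still verify $u^Q-u'\mpara Q\in\LL^{2\alpha}$ with the original $\mpara$. The paper does this by showing $(u'\mpara_\tau Q-u'\mpara Q)|_{[\tau,\tau+\tilde\tau]}\in\LL^{2\alpha}$ via the commutator estimates of Lemma~\ref{lem:modified paraproduct commutators} and the Schauder bounds. Without this step, the glued object is not obviously in $\CD_{\mathrm{rbe}}$ on the extended interval.
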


\begin{proof}
  Let $M = 2 \max\{\|u_0\|_{2\alpha}, \|\X \|_{\mathcal{X}_{\tmop{rbe}}(1)}\}$. Consider a sequence $(\theta_n, u_0^n)\subset \LL C(\R,C^\infty) \times \CC^{2\alpha}$ such that $\X_n=\Theta_{\tmop{rbe}}(\theta_n)$ approximates $\X|_{[0,T]}$ in $\mathcal{X}_{\tmop{rbe}}(T)$ for all $T>0$ and $(u_0^n)$ approximates $u_0$ in $\CC^{2\alpha}$. Without loss of generality assume that $\|u_0^n\|_{2\alpha}\vee\|\X_n \|_{\mathcal{X}_{\tmop{rbe}}(1)} \leqslant M$ for all $n$. Denote by $u_n$ the solution to Burgers equation driven by $\X_n$ and started in $u_0^n$ and define $u_n^Q$, $u_n'$, $u_n^\sharp$ as in Assumption~($T,M$). In particular, for all $n$ we are in the setting of Assumption~($1,M$).
  
  For fixed $n$, the remainder has the initial condition $u^\sharp_n(0) = u^n_0 \in \CC^{2\alpha}$, so that Lemma~\ref{lem:burgers a priori} and the Schauder estimates
  (Lemma~\ref{lemma:schauder}) yield
  \[
     \| u_n \|_{\CD_{\tmop{rbe}} (\tau)} \lesssim (1 + M^2) ( 1 + \| u_n \|^2_{\CD^{\delta}_{\tmop{rbe}} (\tau)} )
  \]
  for all $\tau \in (0, 1]$. Therefore, we can apply Lemma~\ref{lem:scaling factor smaller norm} to gain a small scaling factor:
  \begin{equation}\label{eq:burgers smooth initial pr1}
     \| u_n \|_{\CD_{\tmop{rbe}} (\tau)} \lesssim (1 + M^2) ( 1 + \tau^{\alpha - \delta} \| u_n \|^2_{\CD_{\tmop{rbe}} (\tau)} ) .
  \end{equation}
  Now $\| u_n \|_{\CD_{\tmop{rbe}} (\tau)} \leqslant \| u_n \|_{\CD_{\tmop{rbe}}
  (1)}$ for all $\tau \in (0, 1]$, and therefore there exists a universal
  constant $C > 0$ such that $\| u_n \|_{\CD_{\tmop{rbe}} (\tau)} \leqslant C (1 +
  M^2)$ for all sufficiently small $\tau \leqslant \tau^*$. However, a priori $\tau^*$ may
  depend on stronger norms of $\theta_n$ and not just on $\| \X_n
  \|_{\mathcal{X}_{\tmop{rbe}} (1)}$. To see that we may choose $\tau^*$ only
  depending on $M$, it suffices to note that $\tau \mapsto \| u_n \|_{\CD_{\tmop{rbe}} (\tau)}$ is a
  continuous function (which follows from the fact that $u^{\sharp}_n$ and $u'_n$
  are actually more regular than $\LL^{2 \alpha}$ and $\LL^{\alpha}$
  respectively), to select the smallest $\tau^*$ with $\| u_n
  \|_{\CD_{\tmop{rbe}} (\tau^*)} = C (1 + M^2)$, and to plug $\| u_n
  \|_{\CD_{\tmop{rbe}} (\tau^*)}$ into~(\ref{eq:burgers smooth initial pr1}).
  
  In other words $\| u_n \|_{\CD_{\tmop{rbe}} (\tau)} \leqslant C(1+M^2)$ for all $n$, so that the second statement of Lemma~\ref{lem:burgers a priori} in conjunction with the Schauder estimates for the Laplacian shows that 
  \[
     \|u^{\sharp}_m - u^{\sharp}_n \|_{\LL^{2 \alpha}_\tau} + \| u'_m -  u_n' \|_{\LL^{\alpha}_\tau} \lesssim (1 + M^2) \big(d_{\mathcal{X}_{\tmop{rbe}} (\tau)} (\X_m, \X_n) + \|u_0^m - u_0^n\|_{2\alpha} \big)
  \]
  for all $m,n$ (possibly after further decreasing $\tau = \tau(M)$).
  
  So if
  \[
     \CB = \left\{ (\Theta_{\tmop{rbe}} (\theta), u_0) : \theta \in \LL C(\R, C^{\infty} (\T)), v \in \CC^{2\alpha},  \| \Theta_{\tmop{rbe}} (\theta) \|_{\mathcal{X}_{\tmop{rbe}} (1)}, \|v\|_{2\alpha} \leqslant M \right\},
  \]
  and if $\Psi : \CB \rightarrow \LL^{\alpha}_{\tau(M)} \times \LL^{2 \alpha}_{\tau(M)}$ is the map
  which assigns to $(\theta, v)$ the local solution $(u', u^{\sharp})$ of \eqref{eq:rbe smooth initial}, then $\Psi$ has a unique
  continuous extension from $\CB$ to its closure in $\mathcal{X}_{\tmop{rbe}}(1) \times \CC^{2\alpha}$, which is just the ball $\CB_M (\mathcal{X}_{\tmop{rbe}} (1) \times \CC^{2\alpha})$ of size $M$ in this space. By construction, $u = (u', u^{\sharp}) = \Psi
  (\X,u_0)$ solves Burgers equation driven by $\X$ and with initial condition $u_0$ if we interpret the product $\mathD u^2$
  using the paracontrolled structure of $u$. To see that $u$ is the unique
  element in $\CD_{\tmop{rbe}, \X} (\tau (M))$ which solves the
  equation, it suffices to repeat the proof of estimate~(\ref{eq:usharp a priori difference}).
  
  It remains to iterate this argument in order to obtain solutions until the blow up time $T^\ast$. Assume that we constructed a paracontrolled solution $u$ on $[0,\tau]$ for some $\tau>0$ and consider the time interval $[\tau,\tau+1]$. Denote the solution on this interval by $\tilde u$, and write $\tilde \X(t) = \X(t+\tau)$. The initial condition for $\tilde u$ is $\tilde u(0) = u(\tau) \in \CC^\alpha$, so that we are not in the same setting as before. But in fact we only needed a smooth initial condition  to obtain an initial condition of regularity $2\alpha$ for the remainder, and now we have
  \begin{align*}
     \tilde u^\sharp(0) & = \tilde u(0) - \tilde X(0) - \tilde X^{\zzone} (0) - 2\tilde X^{\zztwo} (0) - \tilde u' \mpara \tilde Q(0) \\
     & = u(\tau) - X(0) - X^{\zzone} (\tau) - 2 X^{\zztwo} (\tau) - u'(\tau) \para Q(\tau).
  \end{align*}
  According to Lemma~\ref{lem:modified paraproduct commutators} we have $u'(\tau) \para Q(\tau) - u'\mpara Q(\tau) \in \CC^{2\alpha}$, so that the paracontrolled structure of $u$ on the time interval $[0,\tau]$ shows that the initial condition for $\tilde u^\sharp$ is in $\CC^{2\alpha}$. As a consequence, there exists $\tilde M>0$ which is a function of $\|\X\|_{\mathcal{X}_{\tmop{rbe}}(\tau+1)}$, $\|u\|_{\CD_{\tmop{rbe}}(\tau)}$ such that the unique paracontrolled solution $\tilde u$ can be constructed on $[0,\tau(\tilde M)]$. We then extend $u$ and $u'$ from $[0,\tau]$ to $[0,\tau + \tau(\tilde M)]$ by setting $u(t) = \tilde u(t-\tau)$ for $t \geqslant \tau$ and similarly for $u'$. Then by definition $u' \in \LL^{\alpha}_{\tau + \tau(\tilde M)}$. For the remainder we know from our construction that on the interval $[\tau,\tau+\tau(\tilde M)]$ we have $u^Q - u'\mpara_{\tau} Q \in \LL^{2\alpha}$, where
  \[
      u'\mpara_{\tau} Q(t) = \sum_j \int_{-\infty}^t 2^{2j} \varphi(2^{2j}(t-s)) S_{j-1} u'(s \vee \tau) \mathd s \Delta_j Q(t).
  \]
  It therefore suffices to show that $(u'\mpara_{\tau}Q - u'\mpara Q)|_{[\tau,\tau+\tau(\tilde M)]} \in \LL^{2\alpha}$. But we already showed that $(u'\mpara_{\tau}Q - u'\mpara Q)(\tau) \in \CC^{2\alpha}$, and for $t \in [\tau,\tau+\tau(\tilde M)]$ we get
  \begin{align*}
     \LL(u'\mpara_{\tau}Q - u'\mpara Q)(t) & = (u' \mpara_{\tau}\LL Q - u'\mpara \LL Q)(t) + \CC^{2\alpha} \\
     & = (u' \mpara_{\tau}\mathD X - u'\mpara \mathD X)(t) + \CC^{2\alpha}.
  \end{align*}
  Now Lemma~\ref{lem:modified paraproduct commutators} shows that $u'\mpara \mathD X - u' \para \mathD X \in C_{\tau+\tau(\tilde M)}\CC^{2\alpha-2}$, and by the same arguments it follows that also $(u'\mpara_{\tau} \mathD X - u' \para \mathD X \in \CC^{2\alpha-2})|_{[\tau,\tau+\tau(\tilde M)]} \in C([\tau,\tau+\tau(\tilde M)], \CC^{2\alpha})$, so that the $\LL^{2\alpha}$ regularity of $u^\sharp$ on $[0,\tau+\tau(\tilde M)]$ follows from the Schauder estimates for the heat flow, Lemma~\ref{lemma:schauder}.
  
  Reversing these differences between the different paraproducts, we also obtain the uniqueness of $u$ on $[0,\tau + \tilde \tau(M)]$.
\end{proof}

\begin{remark}
   There is a subtlety concerning the uniqueness statement. It is to be interpreted in the sense that there exists (locally in time) a unique fixed point $(u,u',u^\sharp)$ in $\CD_{\mathrm{rbe}}$ for the map $\CD_{\mathrm{rbe}} \ni (v,v',v^\sharp) \mapsto (w,w',w^\sharp) \in \CD_{\mathrm{rbe}}$, where
   \[
      w = P_\cdot(u_0) + I(\mathD v^2) + X, \qquad w' = 2 v^Q + 4 X^{\zztwo}, \qquad w^\sharp = w - X - X^\zzone - 2 X^\zzthree - w'\mpara Q.
   \]
   For general $\X \in \mathcal{X}_{\mathrm{rbe}}$ it is \emph{not true} that there is a unique $u \in C_T \CC^{\alpha-1}$ for which there exist $u', u^\sharp$ with $(u,u',u^\sharp) \in \CD_{\mathrm{rbe,\X}}(T)$ and such that $u = P_\cdot(u_0) + I(\mathD u^2) + X$, where the square is calculated using $u'$ and $u^\sharp$. The problem is that $u'$ and $u^\sharp$ are in general not uniquely determined by $u$ (see however~\cite{HairerPillai2013,FrizShekhar2012} for conditions that guarantee uniqueness of the derivative in the case of controlled rough paths). Nonetheless, we will see in Section~\ref{sec:interpretation} below that this stronger notion of uniqueness always holds as long as $\X$ is ``generated from convolution smoothings'', and in particular when $\X$ is constructed from the white noise.
   
   Moreover, without additional conditions on $\X \in \mathcal{X}_{\mathrm{rbe}}$ there always exists a unique $u \in C_T \CC^{\alpha-1}$ so that if $(\theta_n)$ is a sequence in $\LL C(\R,C^\infty)$ for which $(\Theta_{\mathrm{rbe}}(\theta_n))$ converges to $\X$, then the classical solutions $u_n$ to Burgers equation driven by $\theta_n$ converge to $u$.
\end{remark}

The solution constructed in Theorem~\ref{thm:burgers smooth initial} depends continuously on the data $\X$ and on the initial condition $u_0$. To formulate this, we have to be a little careful because for now we cannot exclude the possibility that the blow up time $T^\ast$ is finite (although we will see in Section~\ref{sec:hjb} below that in fact $T^\ast = \infty$ for all $\X \in \mathcal{X}_{\rbe}$, $u_0 \in \CC^{2\alpha}$).

We define two distances: write
\begin{equation}\label{eq:local Xrbe distance}
   d_{\mathcal{X}_\rbe}(\X, \tilde \X) = \sum_{m=1}^\infty 2^{-m} (1 \wedge d_{\mathcal{X}_\rbe(m)}(\X, \tilde \X)).
\end{equation}
For $u \in \CD_{\rbe}$ and $c>0$ we define
\[
   \tau_c(u) = \inf\{ t \geqslant 0: \| u\|_{\CD_\rbe(t)} \geqslant c\}
\]
and then for $u \in \CD_{\rbe,\X}$, $\tilde u \in \CD_{\rbe,\tilde \X}$
\[
   d_{\rbe,c}(u,\tilde u) = \| u' - \tilde u' \|_{\LL^\alpha_{c \wedge \tau_c(u) \wedge \tau_c (\tilde u)}} + \| u^\sharp - \tilde u^\sharp \|_{\LL^{2\alpha}_{c \wedge \tau_c(u) \wedge \tau_c (\tilde u)}}
\]
and
\begin{equation}\label{eq:local Drbe distance}
   d_{\rbe}(u,\tilde u) = \sum_{m=1}^\infty 2^{-m} (1 \wedge (d_{\rbe,m}(u,\tilde u) + |\tau_m(u) - \tilde \tau_m(\tilde u)|)).
\end{equation}

\begin{theorem}\label{thm:rbe continuous from data}
   Let $(\X_n)_n, \X \in \CX_\rbe$ be such that $\lim_n d_{\CX_\rbe}(X_n, X) = 0$ and let $(u_0^n), u_0 \in \CC^{2\alpha}$ be such that $\lim_n \| u_0^n - u_0 \|_\alpha = 0$. Denote the solution to the rough Burgers equation~\eqref{eq:rbe smooth initial} driven by $\X_n$ and started in $X_n(0) + u_0^n$ by $u_n$, and write $u$ for the solution driven by $\X$ and started in $X(0) + u_0$. Then
   \[
      \lim_n d_{\rbe}(u_n, u) = 0.
   \]
\end{theorem}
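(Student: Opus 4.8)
The plan is to prove convergence first on a short time interval in the norm of $\CD_\rbe$, then to propagate it up to the (possibly finite) lifespan of the limiting solution by finitely many continuation steps, and finally to unwind the definition of $d_\rbe$. For the short-time step we repeat the argument from the proof of Theorem~\ref{thm:burgers smooth initial}. Suppose that at some time $S$ we already know that $u_n \to u$ in $\CD_\rbe(S)$ and $\|u\|_{\CD_\rbe(S)} < \infty$; for $S = 0$ this is immediate from the assumed convergence of the data, since $u_n^\sharp(0) = u_0^n$, $u_n'(0) = 2 u_0^n$ and $X_n(0) \to X(0)$ in $\CC^{\alpha-1}$. Shifting time by $S$ and writing $\tilde\X_n(\cdot) = \X_n(\cdot+S)$, the paracontrolled structure of $u_n$ on $[0,S]$ together with Lemma~\ref{lem:modified paraproduct commutators} shows that the restarted remainder starts from a datum in $\CC^{2\alpha}$ whose norm is controlled by $\|u_n\|_{\CD_\rbe(S)}$ and $\|\X_n\|_{\CX_\rbe(S)}$, and that these data converge in $\CC^{2\alpha}$ to the corresponding datum for $u$ — this uses the same estimates (ultimately Lemma~\ref{lem:modified paraproduct commutators}) that govern the switch from $\mpara$ to the shifted paraproduct $\mpara_S$ in the continuation step of Theorem~\ref{thm:burgers smooth initial}. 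Then Lemma~\ref{lem:burgers a priori}, combined with the Schauder estimates (Lemma~\ref{lemma:schauder}) and the scaling Lemma~\ref{lem:scaling factor smaller norm} exactly as in Theorem~\ref{thm:burgers smooth initial}, yields an increment $\sigma = \sigma(\|u\|_{\CD_\rbe(S)}, \|\X\|_{\CX_\rbe(S+1)}) > 0$ on which all the restarted solutions exist, and the difference estimate~\eqref{eq:usharp a priori difference} together with the inductive convergence on $[0,S]$ gives $\|u_n - u\|_{\CD_\rbe(S+\sigma)} \to 0$.

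To reach any fixed $T_0 < T^\ast := T^\ast(\X,u_0)$ we iterate this step starting from $S = 0$. Since $t \mapsto \|u\|_{\CD_\rbe(t)}$ is continuous (as recorded in the proof of Theorem~\ref{thm:burgers smooth initial}) and $T_0 < T^\ast$, the quantity $M^\ast := \sup_{t \le T_0}\|u\|_{\CD_\rbe(t)}$ is finite, so at every restart time $S \le T_0$ the increment above is bounded below by $\sigma_0 := \sigma(M^\ast, \|\X\|_{\CX_\rbe(T_0+1)}) > 0$. Hence $T_0$ is reached after at most $\lceil T_0/\sigma_0 \rceil$ iterations; the hypothesis $\|u\|_{\CD_\rbe(S)} < \infty$ needed at each restart holds because $\|u\|_{\CD_\rbe(S)} \le M^\ast$ and, by the convergence already obtained on $[0,S]$, $\|u_n\|_{\CD_\rbe(S)} \le M^\ast + 1$ for $n$ large, uniformly over the finitely many stages. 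We conclude that for $n$ large the solutions $u_n$ exist on $[0,T_0]$ and that $\|u_n - u\|_{\CD_\rbe(T_0)} \to 0$; in particular $\|u_n\|_{\CD_\rbe(\cdot)} \to \|u\|_{\CD_\rbe(\cdot)}$ uniformly on $[0,T_0]$.

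It remains to pass to the metric $d_\rbe$. Given $\varepsilon > 0$, choose $M_0$ with $2^{-M_0} < \varepsilon/2$ and apply the previous step with $T_0 := M_0 \wedge \tau_{M_0}(u)$, which satisfies $T_0 < T^\ast$ because $\tau_{M_0}(u) < T^\ast$ (if $T^\ast < \infty$ then $\|u\|_{\CD_\rbe(t)} \uparrow \infty$ as $t \uparrow T^\ast$, while if $\tau_{M_0}(u) = \infty$ then $T^\ast = \infty$). For every $m \le M_0$ the time horizon $m \wedge \tau_m(u_n) \wedge \tau_m(u)$ entering $d_{\rbe,m}$ does not exceed $T_0$, so $d_{\rbe,m}(u_n,u) \to 0$ by the convergence on $[0,T_0]$, and the uniform convergence $\|u_n\|_{\CD_\rbe(\cdot)} \to \|u\|_{\CD_\rbe(\cdot)}$ together with the continuity and monotonicity of $t \mapsto \|u\|_{\CD_\rbe(t)}$ gives $|\tau_m(u_n) - \tau_m(u)| \to 0$ for $m \le M_0$ (it is exactly to keep this kind of statement meaningful in the presence of a possibly finite lifespan that $d_\rbe$ is defined through these stopping times). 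Summing, $\limsup_n d_\rbe(u_n, u) \le \sum_{m > M_0} 2^{-m} \le 2^{-M_0} < \varepsilon$, and since $\varepsilon$ was arbitrary the theorem follows.

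The main obstacle is the short-time step at the restart times $S > 0$: there the remainder does not start from an a priori given $\CC^{2\alpha}$ function but only from one that is seen to lie in $\CC^{2\alpha}$ through the paracontrolled structure carried over from $[0,S]$, and one must verify that convergence is preserved both under this reconstruction and under the replacement of $\mpara$ by $\mpara_S$ — precisely the technical point that makes the continuation argument in Theorem~\ref{thm:burgers smooth initial} delicate, and which is reused here. A secondary, purely organizational difficulty is keeping the increment $\sigma$ and the constants in~\eqref{eq:usharp a priori difference} uniform over the finitely many iterations, which is why one invokes that the limiting solution has finite $\CD_\rbe$-norm on all of $[0,T_0]$.
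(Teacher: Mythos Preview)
Your argument is correct and follows essentially the same route as the paper: iterate the short-time estimates from Theorem~\ref{thm:burgers smooth initial} (Lemma~\ref{lem:burgers a priori} plus Schauder and the scaling Lemma~\ref{lem:scaling factor smaller norm}) to get convergence of $(u_n',u_n^\sharp)$ in $\LL^\alpha\times\LL^{2\alpha}$ on any $[0,T_0]$ with $T_0<T^\ast$, then unpack the metric $d_\rbe$. The paper compresses the first part into a single sentence (``same arguments as in the proof of Theorem~\ref{thm:burgers smooth initial}'') and, for $\tau_m(u_n)\to\tau_m(u)$, prefers to write down explicitly the equation satisfied by the differences $y_n=u^Q-u_n^Q$, $z_n=u^\sharp-u_n^\sharp$ on $[0,\tau_m(u)]$; but this is just another packaging of the same iteration, and your handling of the restart data at $S>0$ via Lemma~\ref{lem:modified paraproduct commutators} is precisely what the continuation step in Theorem~\ref{thm:burgers smooth initial} does.

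One small point of care, shared with the paper: uniform convergence of the nondecreasing continuous functions $t\mapsto\|u_n\|_{\CD_\rbe(t)}$ to $t\mapsto\|u\|_{\CD_\rbe(t)}$ on $[0,T_0]$ yields $\liminf_n\tau_m(u_n)\ge\tau_m(u)$ immediately, but the matching $\limsup$ bound needs the limiting norm to cross level $m$ strictly---a plateau at height exactly $m$ would allow $\tau_m(u_n)$ to overshoot. The paper's ``therefore $\lim_n\tau_m(u_n)=T$'' is equally brief on this, so this is not a divergence between your argument and theirs.
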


\begin{proof}
   Let $m\in \N$ and define $M = \max\{ \|\X\|_{\CX_\rbe(m)},  \| \X_n\|_{\CX_\rbe(m)}, \|u_0\|_{2\alpha}, \| u_0^n \|_{2\alpha}\}$. Using the same arguments as in the proof of Theorem~\ref{thm:burgers smooth initial}, we see that there exists a constant $C>0$, depending only on $m$ and $M$, such that
   \[
      d_{\rbe,m}(u,u_n) \leqslant C( d_{\CX_{\rbe}(m)}(\X,\X_n) + \|u_0 - u_0^n\|_{2\alpha}).
   \]
   It remains to show that $\tau_m(u_n)$ converges to $\tau_m(u)$ for all $m$. Write $T = \tau_m(u)$. Using the same arguments as in Lemma~\ref{lem:burgers a priori}, we obtain that $z_n = u^\sharp - u^\sharp_n$, $y_n = u^Q - u^Q_n$ solves an equation of the type
   \[
      \LL z_n = \nabla (y_n^2) + A_n(y_n) + B_n(z_n) + \nabla h_n, \qquad z_n(0) = u_0 - u_0^n,
   \]
   where $(A_n)$ and $(B_n)$ are sequences of bounded operators from $\LL_T^\alpha$ to $C_T \CC^{2\alpha-2}$ and from $\LL_T^{2\alpha}$ to $C_T \CC^{2\alpha-2}$ respectively, with operator norms bounded uniformly in $n$, and where $( I (\nabla h_n))$ converges to zero in $\LL_T^{2\alpha}$. Furthermore, we have $y_n = 2 y_n \mpara Q_n + z_n + \varepsilon(n)$, where $\varepsilon(n)$ converges to zero in $\LL^\alpha_T$. Since also $z_n(0)$ converges to zero in $\CC^{2\alpha}$, we can use the same arguments as above to get that $(y_n, z_n)$ converge to zero in $\LL_T^{\alpha} \times \LL_T^{2\alpha}$, and therefore $\lim_n \tau_m(u_n) = T$.
\end{proof}

\section{KPZ and Rough Heat Equation}\label{sec:kpz}

We discuss the relations of the RBE with the KPZ and the multiplicative heat equation in the paracontrolled setting.

\subsection{The KPZ equation}

For the KPZ equation
\[
   \LL h = (\mathD h)^2 + \xi, \hspace{2em} h (0) = h_0,
\]
one can proceed, at least formally, as for the SBE equation by introducing a
series of driving terms $(Y^{\tau})_{\tau \in \mathcal{T}}$ defined
recursively by
\[ \LL Y^{\bullet} = \xi, \hspace{2em} \LL Y^{(\tau_1 \tau_2)} = (\mathD
   Y^{\tau_1} \mathD Y^{\tau_2}) \]
and such that $\mathD Y^{\tau} = X^{\tau}$ for all $\tau \in \mathcal{T}$; we will usually write $Y = Y^\bullet$.
However, now we have to be more careful, because it will turn out that for the
white noise $\xi$ some of the terms $Y^{\tau}$ can only be constructed after a
suitable renormalization. Let us consider for example $Y^{\zzone}$. If $\varphi\colon \R \to \R$
is a smooth compactly supported function with $\varphi(0) = 1$ and $\xi_{\varepsilon} = \varphi(\varepsilon \mathD) \xi$, then as $\varepsilon \rightarrow 0$ we only obtain the convergence of
$Y^{\zzone}_{\varepsilon}$ after subtracting suitable diverging constants
(which are deterministic): there exist $(c_{\varepsilon})_{\varepsilon > 0}$,
with $\lim_{\varepsilon \rightarrow 0} c_{\varepsilon} = \infty$, such that
$(Y_{\varepsilon}^{\zzone} (t) - c_{\varepsilon} t)_{t \geqslant 0}$ converges
to a limit $Y^{\zzone}$. We stress the fact that while $c_{\varepsilon}$
depends on the specific choice of $\varphi$, the limit $Y^{\zzone}$ does not. To
make the constant $c_{\varepsilon}$ appear in the equation, we should solve
for
\[
   \LL h_{\varepsilon} = (\mathD h_{\varepsilon})^2 - c_{\varepsilon} + \xi_{\varepsilon}, \hspace{2em} h_{\varepsilon} (0) = \varphi(\varepsilon\mathD) h_0.
\]
In that sense, the limiting object $h$ will actually solve
\begin{equation}
  \label{eq:kpz-renormalized} \LL h = (\mathD h)^{\diamond 2} + \xi,
  \hspace{2em} h (0) = h_0,
\end{equation}
where $(\mathD h)^{\diamond 2} = (\mathD h)^2 - \infty$ denotes a renormalized
product. The reason why we did not see this renormalization appear in Burgers
equation is that there we differentiated after the multiplication, considering
for example $X^{\zzone}_{\varepsilon} (t) = \mathD (Y^{\zzone}_{\varepsilon}
(t) - c_{\varepsilon} t) = \mathD Y^{\zzone}_{\varepsilon} (t)$. There are two
other terms which need to be renormalized, $Y^{\zzthree}$ and $Y^{\zzfour}$.
But on the level of the equation these renormalizations cancel exactly, so
that it suffices to introduce the renormalization of
$Y^{\zzone}_{\varepsilon}$ by subtracting $c_{\varepsilon}$.
We still consider a fixed $\alpha \in (1 / 3, 1 / 2)$.

\begin{definition}\label{def:kpz rough distribution}(KPZ-enhancement)
   Let
  \[
     \mathcal{Y}_{\tmop{kpz}} \subseteq \LL^{\alpha} \times \LL^{2 \alpha} \times \LL^{\alpha + 1} \times \LL^{2 \alpha + 1} \times \LL^{2\alpha+1} \times C \CC^{2 \alpha - 1}
  \]
  be the closure of the image of the map
  \[
     \Theta_{\tmop{kpz}} : \LL C^{\alpha / 2}_{\mathrm{loc}} (\R, C^{\infty} (\T)) \times \R \times \R \rightarrow \mathcal{Y}_{\tmop{kpz}},
  \]
  given by
  \begin{equation}\label{eq:enhanced-kpz}
    \Theta_{\tmop{kpz}} (\theta, c^{\zzone}, c^{\zzfour}) = (Y (\theta), Y^{\zzone} (\theta), Y^{\zztwo} (\theta), Y^\zzthreereso(\theta), Y^{\zzfour} (\theta), (\mathD P \reso \mathD Y) (\theta)), 
  \end{equation}
  where
  \begin{equation}
    \begin{array}{rll}
      \LL Y (\theta) & = & \theta,\\
      \LL Y^{\zzone} (\theta) & = & (\mathD Y (\theta))^2 - c^{\zzone},\\
      \LL Y^{\zztwo} (\theta) & = & \mathD Y (\theta) \mathD Y^{\zzone} (\theta),\\
      \LL Y^\zzthreereso(\theta) & = & \mathD Y^\zztwo \reso \mathD Y + c^\zzfour/4 \\
      \LL Y^{\zzfour} (\theta) & = & \mathD Y^{\zzone} (\theta) \mathD Y^{\zzone} (\theta) - c^{\zzfour},\\
      \LL P (\theta) & = & \mathD Y (\theta),
    \end{array} \label{eq:pol-Y}
  \end{equation}
  all with zero initial conditions except $Y(\theta)$ for which we choose the
  ``stationary'' initial condition
  \[
     Y (\theta) (0) = \int_{- \infty}^0 P_{- s} \Pi_{\neq 0} \theta (s) \mathd s.
  \]
  Here and in the following $\Pi_{\neq 0}$ denotes the projection on the (spatial) Fourier modes $\neq 0$. We call $\Theta_{\tmop{kpz}} (\theta,c^\zzone, c^\zzfour)$ the \ \tmtextit{renormalized KPZ-enhancement} of the driving
  distribution $\theta$. For $T > 0$ we define $\mathcal{Y}_{\tmop{kpz}} (T)
  =\mathcal{Y}_{\tmop{kpz}} |_{[0, T]}$ and we write $\| \Y
  \|_{\mathcal{Y}_{\tmop{kpz}} (T)}$ for the norm of $\Y$ in the
  Banach space $\LL_T^{\alpha} \times \LL_T^{2 \alpha} \times \LL^{\alpha +
  1}_T \times \LL_T^{2 \alpha + 1} \times \LL_T^{2\alpha+1} \times C_T
  \CC^{2 \alpha - 1}$. Moreover, we define the distance
  $d_{\mathcal{Y}_{\tmop{kpz}} (T)} (\Y, \tilde{\Y}) = \|
  \Y- \tilde{\Y} \|_{\mathcal{Y}_{\tmop{kpz}} (T)}$.
\end{definition}

Observe the link between the renormalizations for $Y^{\zzfour}$ and $Y^\zzthreereso$. It is chosen so that $c^{\zzfour}$ will never appear in the equation. Also note that for every $\Y \in \mathcal{Y}_{\tmop{kpz}}$ there exists an associated $\X \in \mathcal{X}_{\tmop{rbe}}$, obtained by differentiating the first five entries of $\Y$ and keeping the sixth entry fixed. Abusing notation, we write $\X = \mathD \Y$.

The paracontrolled ansatz for KPZ is $h \in \DD_{\tmop{kpz}} =
\CD_{\tmop{kpz}, \Y}$ if
\begin{equation}
  h = Y + Y^{\zzone} + 2 Y^{\zztwo} + h^P, \hspace{2em} h^P = h' \mpara P +
  h^{\sharp}, \label{eq:paracontrolled-structure-kpz}
\end{equation}
where $h' \in \LL^{\alpha}$, and $h^{\sharp} \in \LL^{2 \alpha + 1}$. For $h
\in \CD_{\tmop{kpz}}$ it now follows from the same arguments as in the case of
Burgers equation that the term
\[ (\mathD h)^{\diamond 2} = (\mathD h)^2 - c^{\zzone} \]
is well defined as a bounded multilinear operator. Now we obtain the local
existence and uniqueness of paracontrolled solutions to the KPZ
equation~(\ref{eq:kpz-renormalized}) exactly as for the Burgers equation, at least as long as the initial condition is of the form $Y(0) + h(0)$ with $h(0) \in \CC^{2\alpha+1}$.

Moreover, let $\X= \mathD \Y \in \mathcal{X}_{\tmop{rbe}}$,
and let $h \in \CD_{\tmop{kpz}, \Y}$. Then $\mathD h = X +
X^{\zzone} + 2 X^{\zztwo} + \mathD h^P$, where
\[ \hspace{2em} \mathD h^P = (\mathD h') \mpara P + h' \mpara \mathD P +
   \mathD h^{\sharp} = h' \mpara Q + (\mathD h)^{\sharp} \]
with $(\mathD h)^{\sharp} \in \LL^{2 \alpha - \varepsilon}$ for all
$\varepsilon > 0$. This follows from the fact that $\LL^{2 \alpha + 1}
\subseteq C^{(2 \alpha - \varepsilon) / 2} \CC^{1 + \varepsilon}$. In other
words $\mathD h \in \CD_{\tmop{rbe}, \X}^{\delta}$ for all $\delta <
\alpha$ (using an interpolation argument one can actually show that $(\mathD h)^{\sharp} \in \LL^{2 \alpha}$, but we will not need this). Moreover, if $h \in \DD_{\tmop{kpz}}$ solves the KPZ equation with
initial condition $h_0$, then $u = \mathD h \in \DD_{\tmop{rbe}}^\delta$ solves the
Burgers equation with initial condition $\mathD h_0$.

Conversely, let $u \in \DD_{\tmop{rbe}}$. It is easy to see that $u = \mathD
\tilde{h} + f$, where $\tilde{h} \in \CD_{\tmop{kpz}}$ and $f \in C \CC^{2\alpha}$. Therefore, the renormalized product
$u^{\diamond 2} = u^2 - c^{\zzone}$ is well defined. Note that $u^{\diamond
2}$ does not depend on the decomposition $u = \mathD \tilde{h} + f$. The
linear equation
\[
   \LL h = u^{\diamond 2} + \theta, \hspace{2em} h (0) = Y(0) + h_0,
\]
has a unique global in time solution $h$ for all $h_0 \in \CC^{2\alpha+1}$. Setting $h^P = h - Y - Y^{\zzone} -
2 Y^{\zztwo}$, we get
\[ \LL h^P = \LL (h - Y - Y^{\zzone} - 2 Y^{\zztwo}) = u^{\diamond 2} -
   ((\mathD Y)^2 - c^{\zzone}) - 2 \mathD Y \mathD Y^{\zzone} . \]
Recalling that $u^{\diamond 2} = (\mathD \tilde{h})^{\diamond 2} + 2 f \mathD \tilde{h} + f^2 = ((\mathD Y)^2 - c^{\zzone}) + 2 \mathD Y \mathD Y^{\zzone} + \LL (\LL^{\alpha + 1})$, we deduce that $h^P \in \LL^{\alpha + 1}$. Furthermore,
if we set $h' = 2 u^Q + 4 X^{\zztwo}$ and expand $u^{\diamond 2}$, then
\begin{align*}
   \LL (h^P - h' \mpara P) &= u^{\diamond 2} - ((\mathD Y)^2 - c^{\zzone}) - 2 \mathD Y \mathD Y^{\zzone}\\
   &\qquad - \left( \LL (h' \mpara P) - h' \mpara \LL P \right) + h' \mpara \LL P \\
   & = (h' \para X - h' \mpara X) + \LL (\LL^{2 \alpha + 1}).
\end{align*}
Using Lemma~\ref{lem:modified paraproduct commutators}, we get that $h^{\sharp} \in \LL^{2 \alpha + 1}$ and therefore $h \in \DD_{\tmop{kpz}}$. By construction, $\mathD h$ solves the equation $\LL
(\mathD h) = \mathD u^2 + \mathD \theta$ with initial condition $\mathD h (0)
= X(0) + \mathD h_0$. So if $u$ solves the Burgers equation with initial condition $X(0) + \mathD h_0$, then
$\LL (\mathD h - u) = 0$ and $(\mathD h - u) (0) = 0$, and therefore $\mathD h
= u$. As a consequence, $h$ solves the KPZ equation with initial condition $Y(0) + h_0$. 

We can also introduce analogous distances $d_{\Ykpz}(\Y, \tilde \Y)$ and $d_{\kpz}(h,\tilde h)$ as in Section~\ref{sec:burgers smooth initial}, equations~\eqref{eq:local Xrbe distance} and~\eqref{eq:local Drbe distance}. We omit the details since the definition is exactly the same, replacing every occurrence of ``$\mathrm{rbe}$'' by ``$\kpz$'' and adapting the regularities to the KPZ setting.

\begin{theorem}\label{thm:kpz smooth initial}
  For every $(\Y, h_0) \in \mathcal{X}_{\tmop{kpz}} \times \CC^{2\alpha+1}$ there exists $T^\ast \in (0,
  \infty]$ such that for all $T<T^\ast$ there is a unique solution $(h,h',h^\sharp) \in \CD_{\mathrm{kpz}}(T)$ to the KPZ equation
  \begin{equation}\label{eq:kpz smooth initial}
     \LL h = (\mathD h)^{\diamond 2} + \theta, \qquad h (0) = Y(0)+h_0,
  \end{equation}
  on the interval $[0,T]$. The map that sends $(\Y, h_0) \in \Ykpz \times \CC^{2\alpha+1}$ to the solution $h \in \CD_{\kpz,\Y}$ is continuous with respect to the distances $(d_{\Ykpz} + \|\cdot \|_{2\alpha+1}, d_{\kpz})$, and we can choose
  \[
     T^\ast = \inf\{t \geqslant 0: \|h\|_{\CD_{\mathrm{kpz}}(t)} = \infty \}.
  \]
  If $\X= \mathD \Y$, then $\X \in \mathcal{X}_{\tmop{rbe}}$ and $u = \mathD h \in \CD_{\tmop{rbe}}(T)$ solves the rough Burgers equation with data $\X$ and initial condition $u (0) = X(0) + \mathD h_0$.
  Conversely, given the solution $u$ to Burgers equation driven by
  $\X$ and started in $X(0) + \mathD h_0$, the solution $h \in \CD_{\tmop{kpz}}(T)$ to the linear PDE $\LL h = u^{\diamond 2} + \theta$, $h (0) = Y(0)+h_0$, solves the KPZ equation driven by $\Y$ and with initial condition $Y(0)+h_0$.
\end{theorem}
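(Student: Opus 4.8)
The plan is to mirror the proof of Theorem~\ref{thm:burgers smooth initial} essentially line by line, working with the KPZ paracontrolled structure~\eqref{eq:paracontrolled-structure-kpz} in place of the Burgers one and exploiting the correspondence $\mathD h\leftrightarrow u$ that was already set up in the discussion preceding the theorem. The first step is a KPZ analogue of the a priori estimate in Lemma~\ref{lem:burgers a priori}. Given $h\in\CD_{\kpz,\Y}$, set $u=\mathD h$, $h'=2u^Q+4X^{\zztwo}$, $h^P=h-Y-Y^{\zzone}-2Y^{\zztwo}$ and $h^\sharp=h^P-h'\mpara P$. Using $\LL P=\mathD Y$ and Lemma~\ref{lem:modified paraproduct commutators} to replace $\LL(h'\mpara P)$ by $h'\mpara\mathD Y$ up to a controlled error, one decomposes
\[
   \LL h^\sharp=(\mathD h)^{\diamond 2}-\LL Y^{\zzone}-2\LL Y^{\zztwo}-\big[\LL(h'\mpara P)-h'\mpara\LL P\big]-h'\mpara\mathD Y
\]
into a finite sum of bounded multilinear operators, exactly as in the proof of Lemma~\ref{lem:burgers a priori}, invoking Theorem~\ref{thm:paracontrolled product} and Remark~\ref{rmk:paracontrolled commutator} for the resonant products, Lemmas~\ref{lem:modified paraproduct commutators} and~\ref{lem:paraproduct parabolic space} for the paraproduct terms, and the fact (shown above) that $u=\mathD h\in\CD^\delta_{\rbe,\X}$ for every $\delta<\alpha$, with derivative $h'$ and remainder $(\mathD h)^\sharp\in\LL^{2\alpha-\varepsilon}$. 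This should yield
\[
   \|\LL h^\sharp-\LL Y^{\zzfour}-4\LL Y^{\zzthreereso}\|_{C_T\CC^{2\alpha-1}}+\|h'\|_{\LL^\alpha_T}\lesssim(1+M^2)\big(1+\|h\|^2_{\CD^\delta_{\kpz}(T)}\big),
\]
where $\|h\|_{\CD^\delta_{\kpz}(T)}=\|h'\|_{\LL^\delta_T}+\|h^\sharp\|_{\LL^{\alpha+1+\delta}_T}$, together with the corresponding Lipschitz-in-the-data bound. The renormalization constants $c^{\zzone}$ and $c^{\zzfour}$ drop out of this estimate: $c^{\zzfour}$ cancels between $\LL Y^{\zzfour}$ and $4\LL Y^{\zzthreereso}$ by the choice of renormalization in~\eqref{eq:pol-Y}, while $c^{\zzone}$ cancels between $(\mathD h)^{\diamond 2}$ and $\LL Y^{\zzone}$.

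The second step runs the fixed point argument of Theorem~\ref{thm:burgers smooth initial} verbatim. I would pick a sequence $(\theta_n,c^{\zzone}_n,c^{\zzfour}_n,h_0^n)$ with $\theta_n$ smooth in space, $h_0^n\in\CC^{2\alpha+1}$, $\Y_n=\Theta_{\kpz}(\theta_n,c^{\zzone}_n,c^{\zzfour}_n)\to\Y$ in every $\Ykpz(T)$, and $h_0^n\to h_0$ in $\CC^{2\alpha+1}$, and let $h_n$ be the classical solution of~\eqref{eq:kpz smooth initial} driven by $\theta_n$; such global classical solutions exist since for spatially smooth noise the renormalized KPZ equation is a classically well-posed parabolic PDE (for instance through the Cole--Hopf transform $w=e^h$). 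Since the remainder has initial condition $h^\sharp_n(0)=h_0^n\in\CC^{2\alpha+1}$, the a priori estimate combined with the Schauder estimates Lemma~\ref{lemma:schauder} and the scaling gain of Lemma~\ref{lem:scaling factor smaller norm} gives $\|h_n\|_{\CD_{\kpz}(\tau)}\leqslant C(1+M^2)$ for a time $\tau=\tau(M)>0$ depending only on $M\geqslant\|\Y\|_{\Ykpz(1)}\vee\|h_0\|_{2\alpha+1}$, the independence of $\tau$ from stronger norms of $\theta_n$ being obtained by the same continuity-in-$\tau$ argument as in the Burgers case. The Lipschitz bound then shows that $(h'_n,h^\sharp_n)$ is Cauchy in $\LL^\alpha_\tau\times\LL^{2\alpha+1}_\tau$, so the solution map extends continuously from enhancements of smooth data to all of $\Ykpz\times\CC^{2\alpha+1}$, the limit $h$ solves~\eqref{eq:kpz smooth initial} once $(\mathD h)^{\diamond 2}$ is read through the paracontrolled structure, and uniqueness in $\CD_{\kpz,\Y}(\tau)$ follows by repeating the Lipschitz estimate with $\tilde\Y=\Y$. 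The iteration up to $T^\ast=\inf\{t:\|h\|_{\CD_{\kpz}(t)}=\infty\}$ and the continuity with respect to the metrics $(d_{\Ykpz}+\|\cdot\|_{2\alpha+1},d_{\kpz})$ are then obtained exactly as in Theorems~\ref{thm:burgers smooth initial} and~\ref{thm:rbe continuous from data}; when restarting on $[\tau,\tau+1]$ the new remainder initial value $\tilde h^\sharp(0)=h(\tau)-Y(0)-Y^{\zzone}(\tau)-2Y^{\zztwo}(\tau)-h'(\tau)\para P(\tau)$ lies in $\CC^{2\alpha+1}$ by Lemma~\ref{lem:modified paraproduct commutators}, and the only extra bookkeeping is to check, as in Theorem~\ref{thm:burgers smooth initial}, that gluing the time-shifted modified paraproduct on $[\tau,\tau+1]$ to $h'\mpara P$ on $[0,\tau]$ preserves the $\LL^{2\alpha+1}$ regularity of $h^\sharp$, which again reduces to Lemmas~\ref{lemma:schauder} and~\ref{lem:modified paraproduct commutators}.

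The third step, the identification of the KPZ and Burgers solutions, requires essentially no new work, since the relevant computations are already carried out before the theorem. If $h\in\CD_{\kpz,\Y}$ solves~\eqref{eq:kpz smooth initial}, then $\mathD h=X+X^{\zzone}+2X^{\zztwo}+h'\mpara Q+(\mathD h)^\sharp\in\CD^\delta_{\rbe,\X}$ for every $\delta<\alpha$, and $\LL(\mathD h)=\mathD(\mathD h)^2+\mathD\theta$ with $\mathD h(0)=X(0)+\mathD h_0$, so $u=\mathD h$ is, by the uniqueness part of Theorem~\ref{thm:burgers smooth initial}, the solution of the rough Burgers equation with data $\X$ and initial condition $X(0)+\mathD h_0$. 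Conversely, given the Burgers solution $u$, one writes $u=\mathD\tilde h+f$ with $\tilde h\in\CD_{\kpz}$, $f\in C\CC^{2\alpha}$, so that $u^{\diamond 2}=u^2-c^{\zzone}$ is well defined; the linear heat equation $\LL h=u^{\diamond 2}+\theta$, $h(0)=Y(0)+h_0$, has a unique global solution, and expanding $u^{\diamond 2}=((\mathD Y)^2-c^{\zzone})+2\,\mathD Y\,\mathD Y^{\zzone}+\LL(\LL^{\alpha+1})$ together with $h'=2u^Q+4X^{\zztwo}$ and Lemma~\ref{lem:modified paraproduct commutators} shows $h^P\in\LL^{\alpha+1}$ and $h^\sharp\in\LL^{2\alpha+1}$, i.e.\ $h\in\CD_{\kpz,\Y}$; since $\LL(\mathD h-u)=0$ and $(\mathD h-u)(0)=0$ we conclude $\mathD h=u$, whence $h$ solves~\eqref{eq:kpz smooth initial}.

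I expect the first step to be the main obstacle. The delicate point is not the algebraic decomposition of $\LL h^\sharp$ — that is routine multilinear bookkeeping — but the fact that in the KPZ setting the derivative $h'$ only belongs to $\LL^\alpha$ and never to $\LL^{2\alpha}$, while $(\mathD h)^\sharp$ only belongs to $\LL^{2\alpha-\varepsilon}$, so that one cannot simply quote Lemma~\ref{lem:burgers a priori} with $\delta=\alpha$. The whole estimate must instead be phrased in terms of $\|h\|_{\CD^\delta_{\kpz}(T)}$ with $\delta$ strictly below $\alpha$, and one must verify — via the interpolation argument behind Lemma~\ref{lem:scaling factor smaller norm} — that after passing to this weaker norm every term genuinely carries a strictly positive power of $T$, so that the map contracts on small intervals. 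Checking this uniformly in the data, while simultaneously tracking the cancellation of $c^{\zzone}$ and $c^{\zzfour}$ and feeding the Burgers bound of Lemma~\ref{lem:burgers a priori} through the map $h\mapsto\mathD h$, is where essentially all the effort lies; the remaining steps are mechanical repetitions of the Burgers arguments.
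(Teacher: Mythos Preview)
Your proposal is correct and matches the paper's approach, which proves the theorem by stating that local existence and uniqueness are obtained ``exactly as for the Burgers equation'' together with the KPZ--Burgers correspondence already worked out in the discussion preceding the theorem. The concern you flag in your final paragraph is misplaced: the direct KPZ a priori estimate in your step~1 is structurally identical to Lemma~\ref{lem:burgers a priori} with all regularities shifted by $+1$ (so $h^\sharp\in\LL^{2\alpha+1}$ plays the role of $u^\sharp\in\LL^{2\alpha}$, and $\LL h^\sharp$ is controlled in $C_T\CC^{2\alpha-1}$ rather than $C_T\CC^{2\alpha-2}$), hence Lemma~\ref{lem:scaling factor smaller norm} produces the same factor $T^{(\alpha-\delta)/2}$ without any $\varepsilon$-loss; the loss $(\mathD h)^\sharp\in\LL^{2\alpha-\varepsilon}$ only enters when you identify $\mathD h$ as an element of $\CD^\delta_{\rbe}$ in step~3, which the paper explicitly notes is harmless.
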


\subsection{The Rough Heat Equation}
\label{sec:rhe}
Let us now solve
\begin{equation}\label{eq:stochastic heat equation}
   \LL w = w \diamond \xi, \hspace{2em} w(0) = w_0,
\end{equation}
where $\diamond$ denotes again a suitably renormalized product and where $\xi$
is the space-time white noise. We replace for the moment $\xi$ by some $\theta \in C (\R, C^{\infty} (\T))$ and let $\Y= \Theta_{\tmop{kpz}}
(\theta, c^{\zzone}, c^{\zzfour})$ for some $c^{\zzone}, c^{\zzfour} \in \R$ be as in Definition~\ref{def:kpz rough distribution}. Then we know from the Cole--Hopf transform that the unique
classical solution to the heat equation $\LL w = w (\theta - c^\zzone)$, $h(0) = e^{h_0}$ is given by $w = \exp (h)$, where $h$ solves the KPZ equation with initial condition $h_0$. Indeed, we have
\[
   \LL w = \exp (h) \LL h - \exp (h) (\mathD h)^2 = \exp (h) ((\mathD h)^2 - c^\zzone + \theta) - \exp (h) (\mathD h)^2 = w (\theta - c^\zzone).
\]
Extending the Cole--Hopf transform to the paracontrolled setting, we suspect
that the solution will be of the form $w = e^{Y +
Y^{\zzone} + 2 Y^{\zztwo} + h^P}$ where $h^P = h' \mpara P + h^{\sharp}$
with $h' \in \LL^{\alpha}$ and $h^{\sharp} \in \LL^{2 \alpha + 1}$, or in
other words
\[
   w = e^{Y + Y^{\zzone} + 2 Y^{\zztwo}} w^P, \hspace{2em} w^P = w' \mpara P + w^{\sharp},
\]
with $w' \in \LL^{\alpha}$ and $w^{\sharp} \in \LL^{2 \alpha + 1}$. So we call $w$ paracontrolled and write $w \in \CD_{\tmop{rhe}, \Y} =   \DD_{\tmop{rhe}}$ if it is of this form.

Our first objective is to make sense of the term $w \diamond \theta$ for $w \in
\DD_{\tmop{rhe}}$. In the renormalized smooth case $\Y= \Theta_{\tmop{kpz}}(\theta, c^{\zzone}, c^{\zzfour})$ for $\theta \in C (\R_+, C^{\infty} (\T))$ and $c^{\zzone}, c^{\zzfour} \in \R$  we have for $w \in \DD_{\tmop{rhe}}$
\begin{align*}
  \LL w & = e^{Y + Y^{\zzone} + 2 Y^{\zztwo}} \left[ \LL w^P + \LL (Y + Y^{\zzone} + 2 Y^{\zztwo}) w^P - (\mathD Y + \mathD Y^{\zzone} + 2 \mathD Y^{\zztwo})^2 w^P \right] \\
  &\qquad - 2 e^{Y + Y^{\zzone} + 2 Y^{\zztwo}} \mathD (Y + Y^{\zzone} + 2 Y^{\zztwo}) \mathD w^P .
\end{align*}
Taking into account that $\LL Y =  \theta$, $\LL Y^{\zzone} = (\mathD Y)^2 -
c^{\zzone}$, and $\LL Y^{\zztwo} = \mathD Y \mathD Y^{\zzone}$, we get
\begin{align}\label{eq:heat equation product definition} \nonumber
   w (\theta - c^{\zzone}) = \LL w - e^{Y + Y^{\zzone} + 2 Y^{\zztwo}} & \Big[- [4 (\LL Y^\zzthreereso + \mathD Y^\zztwo \para \mathD Y + \mathD Y^\zztwo \lpara \mathD Y) + \LL Y^\zzfour ] w^p \\ \nonumber
   &\qquad + [4 \mathD Y^{\zzone} \mathD Y^{\zztwo} + (2 \mathD Y^{\zztwo})^2] w^P \\
   &\qquad + \LL w^P - 2 \mathD (Y + Y^{\zzone} + 2 Y^{\zztwo}) \mathD w^P \Big].
\end{align}
At this point we can simply take the right hand side as the
\tmtextit{definition} of $w \diamond \theta = w (\theta - c^{\zzone})$. For smooth $\theta$ and $\Y= \Theta_{\tmop{rbe}} (\theta, c^{\zzone}, c^{\zzfour})$, we have
just seen that $w \diamond \theta$ is nothing but the renormalized pointwise
product $w (\theta - c^{\zzone})$. Moreover, the operation $w \diamond \theta$
is continuous:

\begin{lemma}
  The distribution $w \diamond \theta$ is well defined for all $\Y =  \Theta_{\tmop{rbe}} (\theta, c^{\zzone}, c^{\zzfour})$ with $\theta \in \LL C^{\alpha / 2}_{\mathrm{loc}} (\R, C^{\infty} (\T))$, $c^\zzone, c^\zzfour \in \R$, $w' \in \LL^\alpha$ and $w^\sharp \in \LL^{2\alpha}$. Moreover, $w \diamond \theta$ depends continuously on $(\Y, w', w^{\sharp}) \in \Ykpz \times \LL^\alpha \times \LL^{2\alpha}$.
\end{lemma}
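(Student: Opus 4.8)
The plan is to take the right-hand side of~\eqref{eq:heat equation product definition} as the definition of $w\diamond\theta$ and to check that, written out, it is a finite linear combination of operations that are jointly continuous in $(\Y,w',w^{\sharp})\in\Ykpz\times\LL^{\alpha}\times\LL^{2\alpha}$; well-definedness for smooth $\theta$ is then automatic, since for such data~\eqref{eq:heat equation product definition} is a literal computation and reduces to the classical renormalized product $w(\theta-c^{\zzone})$. Abbreviate $Z=Y+Y^{\zzone}+2Y^{\zztwo}$ and $w^{P}=w'\mpara P+w^{\sharp}$. First I would record that $w^{P}\in\LL^{2\alpha}$: since $w'\in\LL^{\alpha}$, $P\in\LL^{\alpha+1}$ and $\LL P=\mathD Y\in C\CC^{\alpha-1}$, Lemma~\ref{lem:paraproduct parabolic space} gives $w'\mpara P\in\LL^{\alpha+1}$, and $\alpha+1>2\alpha$; similarly $Z\in\LL^{\alpha}$ (the terms $Y^{\zzone},Y^{\zztwo}$ being more regular). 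Hence $w=e^{Z}w^{P}$ is a well-defined element of $C\CC^{\alpha}$, the composition being controlled by the Bony--Meyer paralinearization Lemma~\ref{lemma:paralinearization} applied to $F=\exp$ (which is $C^{3}$, so $R_{\exp}$ is even locally Lipschitz).

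The core step is to re-express the right-hand side of~\eqref{eq:heat equation product definition} in a manifestly continuous form. For general $\Y\in\Ykpz$ the individual terms $\LL w$, $e^{Z}\LL w^{P}$ and $e^{Z}\mathD Z\cdot\mathD w^{P}$ are only ``$\infty-\infty$'', so the naive Leibniz expansion is not meaningful term by term. The resolution is to expand $\LL w=\LL(e^{Z}w^{P})$ using the paralinearization of $\exp$ together with the Leibniz and chain rules, keeping all resonant products explicit: the $e^{Z}\LL w^{P}$ and $2e^{Z}\mathD Z\cdot\mathD w^{P}$ contributions cancel against the last two entries of the bracket in~\eqref{eq:heat equation product definition}, so that neither $\LL w^{P}$ nor a full derivative of $w^{P}$ survives, and any resonant product that is produced by the chain rule and is not classically defined — in particular the part of $\mathD Y\reso\mathD w^{P}$ seen through $\mathD w^{P}=w'\mpara Q+(\mathD w')\mpara P+\mathD w^{\sharp}$, which is paracontrolled by $Q=\mathD P$ — is handled by Theorem~\ref{thm:paracontrolled product} using precisely the sixth component $\mathD P\reso\mathD Y$ of $\Y$. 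Inserting the defining relations~\eqref{eq:pol-Y} then eliminates the ill-defined products $(\mathD Y)^{2}$, $\mathD Y\mathD Y^{\zzone}$, $(\mathD Y^{\zzone})^{2}$ and $\mathD Y^{\zztwo}\reso\mathD Y$, the last two being replaced by the enhancement entries $\LL Y^{\zzfour}+c^{\zzfour}$ and $\LL Y^{\zzthreereso}-c^{\zzfour}/4$ in $C\CC^{2\alpha-1}$, and it makes the renormalization constant $c^{\zzfour}$ drop out identically, exactly as announced after Definition~\ref{def:kpz rough distribution}. What remains is $e^{Z}$ times a fixed finite sum of classical products and paraproducts of the components of $\Y$ and their spatial derivatives with $w^{P}$ — all bounded by Lemma~\ref{thm:paraproduct}, because $\alpha>1/3$ makes every relevant sum of regularities positive (e.g.\ $\mathD Y^{\zzone}\in C\CC^{2\alpha-1}$, $\mathD Y^{\zztwo}\in C\CC^{\alpha}$, $w^{P}\in C\CC^{2\alpha}$, with $(2\alpha-1)+2\alpha>0$) — plus the one paracontrolled product above. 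Since several of the resulting terms are genuinely ``$\LL$ of a $C\CC^{\bullet}$ distribution'' rather than a $C\CC^{\bullet}$ distribution, I would work throughout in the scale $\LL C\CC^{\bullet}$ and use the modified-paraproduct commutator estimates of Lemma~\ref{lem:modified paraproduct commutators} and the Schauder estimates of Lemma~\ref{lemma:schauder} to keep track of the parabolic regularities.

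With the reduction in hand, continuity is essentially formal: each of the operations above is multilinear and jointly continuous on bounded sets of the relevant function spaces, and the outer multiplication by $e^{Z}$ depends locally Lipschitz-continuously on $Z\in\LL^{\alpha}$ by Lemma~\ref{lemma:paralinearization}; composing, $(\Y,w',w^{\sharp})\mapsto w\diamond\theta$ is continuous from $\Ykpz\times\LL^{\alpha}\times\LL^{2\alpha}$ into a space of the form $\LL C\CC^{\beta}$, and therefore extends continuously from the dense image of $\Theta_{\kpz}$ (smooth $\theta$) to all of $\Ykpz$, agreeing there with the classical renormalized product $w(\theta-c^{\zzone})$ when $\theta$ is smooth. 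I expect the only genuinely delicate point to be the bookkeeping in the reduction step: for each monomial produced by the expansion one has to decide whether it is classically defined, a paraproduct covered by Lemma~\ref{thm:paraproduct}, a resonant term supplied by one of the entries $\LL Y^{\zzthreereso},\LL Y^{\zzfour},\mathD P\reso\mathD Y$ of the enhancement, or a paracontrolled product handled by Theorem~\ref{thm:paracontrolled product}, all the while carrying the $\LL^{\bullet}$ versus $C\CC^{\bullet}$ structure and verifying that the weaker hypothesis $w^{\sharp}\in\LL^{2\alpha}$ (rather than $\LL^{2\alpha+1}$, as in $\DD_{\rhe}$) is enough at every step.
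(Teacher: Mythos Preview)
Your proposed route has a genuine gap. When you expand $\LL w=\LL(e^{Z}w^{P})$ by the chain rule and cancel $e^{Z}\LL w^{P}$ and $2e^{Z}\mathD Z\,\mathD w^{P}$ against the corresponding entries of the bracket, what survives is $e^{Z}w^{P}\cdot[\LL Z-(\mathD Z)^{2}+\text{bracket terms}]$, and $\LL Z$ contains $\LL Y=\theta$. Thus you have reduced $w\diamond\theta$ to an expression containing $e^{Z}w^{P}\cdot\theta$, which is exactly the ill-defined product the formula~\eqref{eq:heat equation product definition} was set up to avoid. There is no way to make sense of $e^{Z}w^{P}\cdot\LL Y$ on its own: $e^{Z}w^{P}\in C\CC^{\alpha}$ and $\Delta Y\in C\CC^{\alpha-2}$ have regularity sum $2\alpha-2<0$, and even the paracontrolled machinery fails because the would-be resonant input $Y\reso\Delta Y$ is not defined. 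Relatedly, your claim that $\LL Y^{\zzthreereso}$ and $\LL Y^{\zzfour}$ lie in $C\CC^{2\alpha-1}$ is incorrect: only their Laplacian parts do, while $\partial_{t}Y^{\zzthreereso}$ and $\partial_{t}Y^{\zzfour}$ are merely temporal distributions.

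The paper proceeds quite differently. It \emph{does not} expand $\LL w$; that term is kept as a single well-defined space--time distribution. The work is entirely in making sense of $e^{Z}\cdot[\text{bracket}]$. Inside the bracket the three genuinely problematic pieces are $\mathD Y\,\mathD w^{P}$, $\LL Y^{\zzthreereso}\,w^{P}$ and $\LL Y^{\zzfour}\,w^{P}$. The first is handled as you say, via the paracontrolled structure of $w^{P}$ and the datum $\mathD P\reso\mathD Y$. For the last two the paper splits $\LL=\partial_{t}-\Delta$: the $\Delta$-parts are classical, while the $\partial_{t}$-parts are defined by \emph{Young integration in time}, using $Y^{\zzthreereso},Y^{\zzfour}\in C^{(2\alpha+1)/2}L^{\infty}$ and $w^{P}\in C^{\alpha/2}L^{\infty}$ so that $\tfrac{3\alpha}{2}+\tfrac12>1$. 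After this the bracket is rewritten as $\partial_{t}(w^{\sharp}+Y^{\zzthreereso}+Y^{\zzfour})+(\cdots)\para\mathD Y+C\CC^{2\alpha-1}$. To multiply by $e^{Z}$ one needs two further ingredients you did not anticipate: the product $e^{Z}\cdot\partial_{t}(\cdots)$ is again defined by Young integration in time, and the product $e^{Z}\cdot[(\cdots)\para\mathD Y]$ requires the resonant term $Y\reso\mathD Y$, supplied by the algebraic identity $Y\reso\mathD Y=\tfrac12\mathD(Y\reso Y)$. These temporal-Young and $Y\reso\mathD Y$ steps are the missing ideas; without them the argument cannot close.
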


\begin{proof}
  Consider~(\ref{eq:heat equation product definition}). Inside the big square bracket on the right hand side, there are three terms which are not immediately well defined: $\mathD Y \mathD w^P$, $\LL Y^\zzthreereso w^P$, and $\LL Y^\zzfour w^P$. For the first one, we can use the fact that $w^P = w' \mpara P + w^\sharp$ and that $\Y$ ``contains'' $\mathD P \reso \mathD Y$, just as we did when solving the KPZ equation. The product $\Delta Y^\zzthreereso w^P$ is well defined because $\Delta Y^\zzthreereso \in C \CC^{2\alpha-1}$ and $w^P \in C \CC^\alpha$. The product $\partial_t Y^\zzthreereso w^P$ can be defined using Young integration: we have $Y^\zzthreereso \in C^{\alpha+1/2}_{\mathrm{loc}} L^\infty$ and $w^P \in C^{\alpha/2}_{\mathrm{loc}} L^\infty$ and $3\alpha/2 + 1/2 > 1$. The same arguments show that $\LL Y^\zzfour w^P$ is well defined.
  
  So taking into account that $w^P = w' \mpara P + w^{\sharp}$ and using
  Lemma~\ref{lem:modified paraproduct commutators} as well as Bony's
  commutator estimate Lemma~\ref{lem:bony commutator}, we see that the term in
  the square brackets is of the form
  \[
      \partial_t (w^{\sharp} + Y^\zzthreereso + Y^\zzfour) + (w' - 4 w^P \mathD Y^{\zztwo} - 2 \mathD w^P) \para \mathD Y + C \CC^{2 \alpha - 1}.
  \]
  The second term is paracontrolled by $\mathD Y$, and since $e^{Y +
  Y^{\zzone} + 2 Y^{\zztwo}}$ is paracontrolled by $Y$, the product between
  the two is well defined as long as $Y \reso \mathD Y$ is given. But of
  course $Y \reso \mathD Y = \mathD (Y \reso Y) / 2$. Finally, the product $e^{Y +
  Y^{\zzone} + 2 Y^{\zztwo}} \partial_t (w^{\sharp} + Y^\zzthreereso + Y^\zzfour)$ can be defined using Young
  integration as described above.
\end{proof}

Besides it agreeing with the renormalized pointwise product in the smooth case and being continuous in suitable paracontrolled spaces, there is another indication that our definition of $w \diamond \theta$ is, despite its unusual appearance, quite useful.

\begin{proposition}
  Let $\xi$ be a space-time white noise on $\R \times \T$, and let $(\mathcal{F}_t)_{t \geqslant 0}$ be its natural filtration,
  \[
     \mathcal{F}_t^{\reso} = \sigma \left( \xi( \1_{(s_1, s_2]} \psi : -\infty < s_1 < s_2 \leqslant t, \psi \in C^{\infty} (\T,\R) \right),
     \hspace{2em} t \geqslant 0, \]
  and $(\mathcal{F}_t)_{t \geqslant 0}$ is its right-continuous completion. Let $\Y \in \mathcal{Y}_{\tmop{kpz}}$ be constructed from $\xi$ as described in Theorem~\ref{thm:kpz data}, and let $(w_t)_{t \geqslant 0}$ be an adapted process with values in $\LL^{\alpha + 1}$, such that almost surely $w \in \CD_{\tmop{kpz}, \Y}$ with adapted $w'$ and $w^{\sharp}$. Let $\psi : \R_+ \times \T \rightarrow \R$ be a
  smooth test function of compact support. Then
  \[
     \int_{\R_+ \times \T} \psi (s, x) (w \diamond \xi) (s, x) \mathd x \mathd s = \int_{\R_+ \times \T} (\psi (s, x) w (s, x)) \xi (s, x) \mathd x \mathd s,
  \]
  where the left hand side is to be interpreted using our definition of $w
  \diamond \xi$, and the right hand side denotes the It{\^o} integral.
\end{proposition}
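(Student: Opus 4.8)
The plan is to reduce the identity to It\^o's formula for a spatially regularised noise and then pass to the limit, exploiting that both sides are continuous in the noise in suitable topologies. Concretely, set $\xi_{\varepsilon}=\varphi(\varepsilon\mathD)\xi$ and $B^{\varepsilon}_t(x)=\int_0^t\xi_{\varepsilon}(s,x)\,\mathd s=\varphi(\varepsilon\mathD)B_t(x)$, so that for fixed $\varepsilon>0$ the process $B^{\varepsilon}$ is a continuous adapted $L^{2}(\T)$--valued martingale, and let $\Y_{\varepsilon}=\Theta_{\tmop{kpz}}(\xi_{\varepsilon},c^{\zzone}_{\varepsilon},c^{\zzfour}_{\varepsilon})$ with the renormalisation constants of Theorem~\ref{thm:kpz data}; recall $\Y_{\varepsilon}\to\Y$ in $\Ykpz(T)$ for every $T$, in probability. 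Replacing in the paracontrolled representation of $w$ the components $Y,Y^{\zzone},Y^{\zztwo},P$ of $\Y$ by those of $\Y_{\varepsilon}$, keeping the same $w'$ and $w^{\sharp}$, yields an adapted process $w_{\varepsilon}=e^{Y_{\varepsilon}+Y^{\zzone}_{\varepsilon}+2Y^{\zztwo}_{\varepsilon}}(w'\mpara P_{\varepsilon}+w^{\sharp})$. Since $F\mapsto e^{F}$ and the modified paraproduct act continuously on the relevant $\LL^{\gamma}$ spaces, $\Y_{\varepsilon}\to\Y$ forces $w_{\varepsilon}\to w$ in $C\CC^{\alpha}$ locally uniformly, hence in $C L^{2}(\T)$. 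By the preceding lemma $w_{\varepsilon}\diamond\xi_{\varepsilon}$ is given by the right-hand side of~\eqref{eq:heat equation product definition} with $\Y_{\varepsilon}$ in place of $\Y$, so continuity of $\diamond$ in $(\Y,w',w^{\sharp})$ gives $\int\psi\,(w_{\varepsilon}\diamond\xi_{\varepsilon})\to\int\psi\,(w\diamond\xi)$ in probability.

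The core step is to show that for each fixed $\varepsilon>0$, almost surely,
\[
   \int_{\R_+\times\T}\psi(s,x)\,(w_{\varepsilon}\diamond\xi_{\varepsilon})(s,x)\,\mathd x\,\mathd s\;=\;\int_0^{\infty}\langle\,\psi(s,\cdot)\,w_{\varepsilon}(s,\cdot)\,,\,\mathd B^{\varepsilon}_s\,\rangle .
\]
Fix $x\in\T$ and write $g_{\varepsilon}=Y_{\varepsilon}+Y^{\zzone}_{\varepsilon}+2Y^{\zztwo}_{\varepsilon}$, $w^{P}_{\varepsilon}=w'\mpara P_{\varepsilon}+w^{\sharp}$, so $w_{\varepsilon}=e^{g_{\varepsilon}}w^{P}_{\varepsilon}$. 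For fixed $\varepsilon$ everything is smooth in space and $t\mapsto Y_{\varepsilon}(t,x)$ is a continuous semimartingale with martingale part $B^{\varepsilon}_{\cdot}(x)$, whereas $Y^{\zzone}_{\varepsilon},Y^{\zztwo}_{\varepsilon},w'\mpara P_{\varepsilon}$ and $w^{\sharp}$ are H\"older of exponent strictly above $1/2$ in time (here $\alpha>0$ enters, through $w^{\sharp}\in\LL^{2\alpha+1}$ and $w'\mpara P_{\varepsilon}\in\LL^{\alpha+1}$) and thus of zero quadratic variation in time. Hence $g_{\varepsilon}(\cdot,x)$ is a semimartingale with $\mathd\langle g_{\varepsilon}(\cdot,x)\rangle_t=\mathd\langle B^{\varepsilon}(\cdot,x)\rangle_t$, and $w^{P}_{\varepsilon}(\cdot,x)$ is an adapted process of zero quadratic variation with no covariation with $e^{g_{\varepsilon}}$. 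Applying It\^o's formula to $e^{g_{\varepsilon}}w^{P}_{\varepsilon}$ in this generalised form and inserting $\LL Y_{\varepsilon}=\xi_{\varepsilon}$, $\LL Y^{\zzone}_{\varepsilon}=(\mathD Y_{\varepsilon})^2-c^{\zzone}_{\varepsilon}$, $\LL Y^{\zztwo}_{\varepsilon}=\mathD Y_{\varepsilon}\,\mathD Y^{\zzone}_{\varepsilon}$, one recovers the classical expansion of $\LL w_{\varepsilon}$ underlying~\eqref{eq:heat equation product definition}, \emph{augmented} inside the bracket by the second-order term $\tfrac12\bigl(\tfrac{\mathd}{\mathd t}\langle g_{\varepsilon}(\cdot,x)\rangle_t\bigr)w^{P}_{\varepsilon}$ and outside by the martingale differential $w_{\varepsilon}\,\mathd B^{\varepsilon}_{\cdot}(x)$. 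The decisive point is that a direct computation identifies $c^{\zzone}_{\varepsilon}$ with exactly half the quadratic-variation rate of $B^{\varepsilon}(\cdot,x)$, so that $-c^{\zzone}_{\varepsilon}+\tfrac12\tfrac{\mathd}{\mathd t}\langle g_{\varepsilon}(\cdot,x)\rangle_t=0$; moreover for fixed $\varepsilon$ all resonant products in~\eqref{eq:heat equation product definition} become ordinary products and $4\LL Y^{\zzthreereso}_{\varepsilon}+\LL Y^{\zzfour}_{\varepsilon}$ is free of $c^{\zzfour}_{\varepsilon}$. Subtracting the bracket of~\eqref{eq:heat equation product definition} leaves $w_{\varepsilon}\diamond\xi_{\varepsilon}$ equal to the martingale part of $\LL w_{\varepsilon}$, i.e. $w_{\varepsilon}(\cdot,x)\,\mathd B^{\varepsilon}_{\cdot}(x)$; pairing with $\psi$ and using a stochastic Fubini theorem gives the displayed identity.

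To finish, let $\varepsilon\to 0$ on the right. Writing $\langle\psi(s)w_{\varepsilon}(s),\mathd B^{\varepsilon}_s\rangle=\langle\varphi(\varepsilon\mathD)(\psi(s)w_{\varepsilon}(s)),\mathd B_s\rangle$ and using $w_{\varepsilon}\to w$ in $C L^{2}(\T)$ together with $\varphi(\varepsilon\mathD)\to\mathrm{Id}$ strongly on $L^{2}(\T)$, the It\^o isometry (after localising on $\{\sup_{s\le T}\|w(s)\|_{L^{2}}\le R\}$ and on the support of $\psi$) yields
\[
   \int_0^{\infty}\langle\psi(s)w_{\varepsilon}(s),\mathd B^{\varepsilon}_s\rangle\;\longrightarrow\;\int_0^{\infty}\langle\psi(s)w(s),\mathd B_s\rangle\;=\;\int_{\R_+\times\T}(\psi w)(s,x)\,\xi(s,x)\,\mathd x\,\mathd s
\]
in probability. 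Comparing with the first step and noting that $\int\psi\,(w\diamond\xi)$ and the It\^o integral on the right are fixed random variables identifies them almost surely.

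The main obstacle is the fixed-$\varepsilon$ identity of the second step: one has to run It\^o's formula for the product $e^{g_{\varepsilon}}w^{P}_{\varepsilon}$ in a form valid when one factor is only a continuous adapted process of zero quadratic variation (so forward integrals replace classical ones and the covariation term drops), and to recognise that the second-order term it generates is \emph{precisely} the constant $c^{\zzone}_{\varepsilon}$ hidden in $\LL Y^{\zzone}_{\varepsilon}$ -- this is why no renormalisation constant survives in $w\diamond\xi$ and the diamond product is, in the end, nothing but the It\^o product. This uses that Theorem~\ref{thm:kpz data} employs the renormalisation for which $c^{\zzone}_{\varepsilon}$ equals half the quadratic-variation rate of $B^{\varepsilon}(\cdot,x)$, together with a careful term-by-term comparison with~\eqref{eq:heat equation product definition}; the other ingredients (continuity of $\diamond$, convergence of the enhanced noise, convergence of the stochastic integrals) are routine.
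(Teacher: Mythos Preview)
Your approach is essentially the same as the paper's: regularise the noise, replace the $\Y$-components in the paracontrolled representation of $w$ by their $\varepsilon$-versions while keeping $w'$ and $w^{\sharp}$ fixed, apply It\^o's formula for fixed $\varepsilon$ and $x$ to $e^{g_\varepsilon}w^{P_\varepsilon}$, and pass to the limit on both sides using continuity of $\diamond$ on the left and the It\^o isometry (together with stochastic Fubini) on the right.

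One small inaccuracy: you assert that $c^{\zzone}_{\varepsilon}$ equals \emph{exactly} half the quadratic-variation rate of $B^{\varepsilon}(\cdot,x)$. In fact, with $c^{\zzone}_{\varepsilon}=\tfrac{1}{4\pi\varepsilon}\int_{\R}\varphi^2(x)\,\mathd x$ as in Theorem~\ref{thm:kpz data}, the rate $\tfrac{\mathd}{\mathd t}\langle B^{\varepsilon}(\cdot,x)\rangle_t=(2\pi)^{-1}\sum_k\varphi(\varepsilon k)^2$ only equals $2c^{\zzone}_{\varepsilon}+o(1)$, since one is a Riemann sum and the other the corresponding integral. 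So the cancellation in the second step is not exact but leaves an $o(1)$ remainder, and you obtain $w_{\varepsilon}\diamond\xi_{\varepsilon}\,\mathd t=w_{\varepsilon}\,\mathd B^{\varepsilon}+o(1)$ rather than equality; this is precisely how the paper phrases it. The $o(1)$ term is harmless because it disappears when $\varepsilon\to0$, so your argument still closes.
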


\begin{proof}
  Let $\varphi$ be a smooth compactly supported even function with $\varphi(0) = 1$ and set $\xi_{\varepsilon} = \varphi(\varepsilon \mathD) \xi$. Theorem~\ref{thm:kpz data} states that for $c_\varepsilon^\zzone = \frac{1}{4 \pi \varepsilon} \int_\R \varphi^2(x) \dd x$ and an appropriate choice of $c^{\zzfour}_{\varepsilon}$, we have that $\Theta_{\tmop{kpz}} (\xi_{\varepsilon}, c^{\zzone}_{\varepsilon}, c^{\zzfour}_{\varepsilon})$ converges in probability to $\Y$ as $\varepsilon \rightarrow 0$. Moreover, by the stochastic Fubini theorem ({\cite{daprato_redbook_1992}},
  Theorem~4.33) and the continuity properties of the It{\^o} integral,
  \[ \int_{\R_+ \times \T} (\psi (s, x) w (s, x)) \xi (s, x)
     \mathd x \mathd s = \lim_{\varepsilon \rightarrow 0} \int_{\R_+
     \times \T} (\psi (s, x) w_{\varepsilon} (s, x))
     \xi_{\varepsilon} (s, x) \mathd x \mathd s, \]
  where we set
  \[
     w_{\varepsilon} = e^{Y_{\varepsilon} + Y_{\varepsilon}^{\zzone} + 2 Y^{\zztwo}_{\varepsilon}} (w' \mpara P_{\varepsilon} + w^{\sharp}),
  \]
  and where the $Y_{\varepsilon}^{\tau}$ and $P_{\varepsilon}$ are constructed from $\varphi$, $c^{\zzone}_{\varepsilon}$, and $c^{\zzfour}_{\varepsilon}$ as in Definition~\ref{def:kpz rough
  distribution}. But by definition of $\Y$ and the continuity
  properties of our product operator, also $w_{\varepsilon} \diamond
  \xi_{\varepsilon}$ converges in probability to $w \diamond \xi$ in the sense
  of distributions on $\R_+ \times \T$. It therefore
  suffices to show that
  \[ \int_0^t (w_{\varepsilon} \diamond \xi_{\varepsilon}) (s, x) \mathd s =
     \int_0^t w_{\varepsilon} (s, x) \xi_{\varepsilon} (s, x) \mathd s =
     \int_0^t w_{\varepsilon} (s, x) \mathd_s B_{\varepsilon} (s, x) \]
  for all $(t, x) \in \R_+ \times \T$, where we wrote
  $B_{\varepsilon} (t, x) = \int_0^t \xi_{\varepsilon} (s, x) \mathd s$ which,
  as a function of $t$, is a Brownian motion with covariance $(2\pi)^{-1} \sum_k \varphi(\varepsilon k)^2 = 2 c^\zzone_\varepsilon + o(1)$ for every $x \in \T$. The first term on the right hand side of~(\ref{eq:heat equation product definition}) is $\LL w_{\varepsilon}$. Let us therefore apply It{\^o}'s formula: Writing $w^{P_\varepsilon} = w' \mpara P_{\varepsilon} + w^{\sharp}$, we have for fixed $x \in \T$
  \begin{align*}
    \mathd_t  w_{\varepsilon} - \Delta w_{\varepsilon} \mathd t & = e^{Y_\varepsilon + Y_\varepsilon^{\zzone} + 2 Y_\varepsilon^{\zztwo}} \Big[ \mathd_t w^{P_\varepsilon} - \Delta w^{P_\varepsilon}\mathd t \\
    &\hspace{65pt} + w^{P_\varepsilon} (\mathd_t(Y_\varepsilon + Y_\varepsilon^{\zzone} + 2 Y_\varepsilon^{\zztwo}) - \Delta(Y_\varepsilon + Y_\varepsilon^{\zzone} + 2 Y_\varepsilon^{\zztwo}) \mathd t) \\
    &\hspace{65pt} - (\mathD Y_\varepsilon + \mathD Y_\varepsilon^{\zzone} + 2 \mathD Y_\varepsilon^{\zztwo})^2 w^{P_\varepsilon} \mathd t - 2 \mathD (Y_\varepsilon + Y_\varepsilon^{\zzone} + 2 Y_\varepsilon^{\zztwo}) \mathD w^{P_\varepsilon} \mathd t \Big] \\
  &\qquad + \frac{1}{2} w_\varepsilon \mathd\langle Y_\varepsilon(\cdot, x) \rangle_t,
  \end{align*}
  where we used that all terms except $Y_\varepsilon$ have zero quadratic variation. But now $\mathd\langle Y_\varepsilon(\cdot, x) \rangle_t = \mathd \langle B_\varepsilon(\cdot, x) \rangle_t = (2 c^{\zzone}_\varepsilon + o(1)) \mathd t$, and therefore the It{\^o} correction term cancels in the limit with
  the term $- w_{\varepsilon} c_{\varepsilon}^{\zzone} \mathd t$ that
  we get from $w_{\varepsilon} (\mathd_t Y_\varepsilon^\zzone - \Delta Y_\varepsilon^\zzone \mathd t - (\mathD Y_\varepsilon)^2 \dd t)$. So if we compare the expression we obtained with the right hand side of~\eqref{eq:heat equation product definition}, we see that
  \[
     w_\varepsilon \diamond \xi_\varepsilon \dd t = w_{\varepsilon} (\mathd_t Y_\varepsilon - \Delta Y_\varepsilon \mathd t) + o(1) = w_\varepsilon \mathd_t B_{\varepsilon} + o(1),
  \]
  and the proof is complete.
\end{proof}

With our definition~(\ref{eq:heat equation product definition}) of $w \diamond
\theta$, the function $w \in \DD_{\tmop{rhe}}$ solves the rough heat
equation~(\ref{eq:stochastic heat equation}) if and only if
\begin{align*}
   e^{Y + Y^{\zzone} + 2 Y^{\zztwo}} & \Big[- [4 (\LL Y^\zzthreereso + \mathD Y^\zztwo \para \mathD Y + \mathD Y^\zztwo \lpara \mathD Y) + \LL Y^\zzfour + 4 \mathD Y^{\zzone} \mathD Y^{\zztwo} + (2 \mathD Y^{\zztwo})^2] w^P \\
   &\qquad + \LL w^P - 2 \mathD (Y + Y^{\zzone} + 2 Y^{\zztwo}) \mathD w^P \Big] = 0.
\end{align*}
Since $e^{Y + Y^{\zzone} + 2 Y^{\zztwo}}$ is a strictly positive
continuous function, this is only possible if the term in the square brackets
is constantly equal to zero, that is if
\begin{align}\label{eq:rhe reinterpretation} \nonumber
   \LL w^P & = [4 (\LL Y^\zzthreereso + \mathD Y^\zztwo \para \mathD Y + \mathD Y^\zztwo \lpara \mathD Y) + \LL Y^\zzfour + 4 \mathD Y^{\zzone} \mathD Y^{\zztwo} + (2 \mathD Y^{\zztwo})^2] w^p \\
   &\quad + 2 \mathD (Y + Y^{\zzone} + 2 Y^{\zztwo}) \mathD w^P.
\end{align}

We deduce the following equation for $w^{\sharp}$:
\[
   \LL w^{\sharp} = (2 \mathD w^P + 4 w^P \mathD Y^{\zztwo} - w') \para \mathD Y + (4 \LL Y^\zzthreereso + \LL Y^\zzfour) w^P + \LL \LL^{2 \alpha + 1},
\]
where we used again Bony's commutator estimate Lemma~\ref{lem:bony commutator}. Thus, we should choose $w' = 2
\mathD w^P + 4 \mathD Y^{\zztwo} w^P$. Now we can use similar arguments as for Burgers and KPZ equation to obtain the existence and uniqueness of
solutions to the rough heat equation for every $\Y \in
\mathcal{Y}_{\tmop{kpz}}$ with initial condition $e^{Y_0} w_0$ with $w_0 \in \CC^{2\alpha}$. Since the equation is linear, here we even obtain the global existence and uniqueness of solutions. Moreover, as limit of nonnegative functions $w$ is nonnegative whenever the initial condition $w_0$ is nonnegative.

\begin{theorem}\label{thm:she smooth initial}
  For all $(\Y, w_0) \in \mathcal{X}_{\tmop{kpz}} \times \CC^{2\alpha+1}$ and all $T>0$ there is a unique solution $(w,w',w^\sharp) \in \CD_{\mathrm{kpz}}(T)$ to the rough heat equation
  \begin{equation}\label{eq:she smooth initial}
     \LL w = w \diamond \theta, \qquad w(0) = e^{Y(0)} w_0,
  \end{equation}
  on the interval $[0,T]$. The solution is nonnegative whenever $w_0$ is nonnegative. If $\Y$ is sampled from the space-time white noise $\xi$ as described in Theorem~\ref{thm:kpz data}, then $w$ is almost surely equal to the It{\^o} solution of~\eqref{eq:she smooth initial}.
\end{theorem}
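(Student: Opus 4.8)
The plan is to reproduce the fixed point construction already carried out for the Burgers and KPZ equations, exploiting that the rough heat equation is \emph{linear} in $w$ to upgrade the local solution to a global one. Write $w = e^{Y + Y^{\zzone} + 2 Y^{\zztwo}}\, w^P$ with $w^P = w' \mpara P + w^{\sharp}$, $w' \in \LL^{\alpha}$, $w^{\sharp} \in \LL^{2\alpha+1}$. By the computation leading to~\eqref{eq:rhe reinterpretation} and the subsequent equation for $w^{\sharp}$, the triple $(w,w',w^{\sharp})$ solves~\eqref{eq:she smooth initial} if and only if $w' = 2\mathD w^P + 4 \mathD Y^{\zztwo} w^P$ and $w^{\sharp}$ solves that equation, with initial data $w^P(0) = w_0$ (since $Y^{\zzone}$ and $Y^{\zztwo}$ vanish at $t = 0$) and $w^{\sharp}(0) = w_0 - w' \mpara P(0) \in \CC^{2\alpha+1}$ (since $P(0) = 0$, so that $w' \mpara P(0) = (w' \mpara P - w' \para P)(0)$ is of regularity $2\alpha+1$ by Lemma~\ref{lem:modified paraproduct commutators}). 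Every term on the right-hand side of the $w^{\sharp}$-equation is bounded using the Schauder estimates (Lemma~\ref{lemma:schauder}), the paracontrolled product estimates (Theorem~\ref{thm:paracontrolled product}), Lemma~\ref{lem:paraproduct parabolic space}, Bony's commutator estimate (Lemma~\ref{lem:bony commutator}), and Young integration for the terms $\partial_t Y^{\zzthreereso}\, w^P$ and $\partial_t Y^{\zzfour}\, w^P$; together with Lemma~\ref{lem:scaling factor smaller norm} this produces, exactly as in Lemma~\ref{lem:burgers a priori}, an a priori bound of the schematic form
\[
   \| w \|_{\CD_{\rhe}(\tau)} \lesssim (1 + \| \Y \|_{\Ykpz(\tau)}^k)\big( \| w_0 \|_{2\alpha+1} + \tau^{\varepsilon}\, \| w \|_{\CD_{\rhe}(\tau)} \big)
\]
for some $k \in \N$ and $\varepsilon > 0$, together with a Lipschitz estimate for the difference of two solutions. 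Choosing $\tau = \tau(\| \Y \|_{\Ykpz(T)})$ small enough absorbs the last term, which gives a unique local solution $(w',w^{\sharp})$ on $[0,\tau]$ depending continuously on $(\Y, w_0)$; uniqueness in $\CD_{\rhe}$ follows from the difference estimate.

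Next I would iterate to obtain a global solution. Since $\tau$ depends only on $\| \Y \|_{\Ykpz(T)}$, one extends the solution step by step over $[0,T]$. At each restart the modified paraproduct $w' \mpara P$ is replaced by the plain paraproduct $w'(\tau) \para P(\tau)$, the difference being of regularity $\CC^{2\alpha+1}$ by Lemma~\ref{lem:modified paraproduct commutators} and hence absorbable into the remainder, exactly as in the proof of Theorem~\ref{thm:burgers smooth initial}; this shows that $w(\tau)$ again admits a paracontrolled structure with remainder in $\CC^{2\alpha+1}$, so the local construction applies on the next interval. Because the equation is linear, $\| w \|_{\CD_{\rhe}(t)}$ grows at most geometrically in the number of steps and stays finite on every compact interval, so there is no finite blow-up time and we obtain a global solution, continuous in $(\Y, w_0)$ through this finite iteration.

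It remains to address nonnegativity and the It\^o identification, both by approximation. Given $\Y \in \Ykpz$, pick smooth $\theta_n$ and constants $c_n^{\zzone}, c_n^{\zzfour}$ with $\Theta_{\kpz}(\theta_n, c_n^{\zzone}, c_n^{\zzfour}) \to \Y$ in $\Ykpz$ and nonnegative $w_0^n \to w_0$ in $\CC^{2\alpha+1}$; for smooth data the paracontrolled solution coincides with the classical solution of the linear parabolic equation $\LL w_n = w_n (\theta_n - c_n^{\zzone})$ with nonnegative initial datum $e^{Y_n(0)} w_0^n$, which is nonnegative by the parabolic maximum principle, and since $\CD_{\rhe}(T) \hookrightarrow C_T \CC^{\alpha} \hookrightarrow C([0,T] \times \T)$ and $w_n \to w$, nonnegativity passes to the limit. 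Finally, when $\Y$ is built from the white noise $\xi$ as in Theorem~\ref{thm:kpz data}, the fixed point $w$ (together with $w'$ and $w^{\sharp}$) is adapted, because every operation in its construction ($I$, the $\mpara$- and $\para$-paraproducts, the resonant products and the Young integrals) is causal; the preceding Proposition then gives $\int \psi\, (w \diamond \xi) = \int \psi w\, \xi$ for all smooth compactly supported $\psi$, that is $w$ is a space-time weak solution of the stochastic heat equation $\LL w = w\, \mathd B$. Testing against $\psi = P_{t - \cdot}\phi$ and invoking the stochastic Fubini theorem turns this into the mild formulation, and since the mild solution of the stochastic heat equation with initial datum $e^{Y(0)} w_0 \in L^{\infty}$ is unique, $w$ coincides almost surely with the It\^o solution.

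The step I expect to be most delicate is the a priori estimate for the $w^{\sharp}$-equation: one must carefully track how each multilinear operation in~\eqref{eq:heat equation product definition}--\eqref{eq:rhe reinterpretation} interacts with the paracontrolled ansatz, in particular the Young integrals against $\partial_t Y^{\zzthreereso}$ and $\partial_t Y^{\zzfour}$, which did not appear in the Burgers or KPZ analyses. The restart argument around the Cole--Hopf-type exponential also requires some care, although it is structurally identical to the Burgers case, and the remaining verifications (adaptedness, weak-implies-mild for the stochastic heat equation, uniqueness of the mild solution) are standard.
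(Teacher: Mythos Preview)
Your proposal is correct and follows essentially the same approach as the paper, which gives only a sketch immediately before the theorem statement: derive the linear equation for $w^{\sharp}$ with the choice $w' = 2\mathD w^P + 4 w^P \mathD Y^{\zztwo}$, run the Burgers/KPZ machinery to get local well-posedness, use linearity for global existence, and obtain nonnegativity and the It\^o identification by approximation. Your concern about the Young integrals against $\partial_t Y^{\zzthreereso}$ and $\partial_t Y^{\zzfour}$ is on point and is exactly what the paper handles in the lemma preceding the theorem; your treatment of the It\^o identification (adaptedness plus the preceding proposition plus mild-solution uniqueness) fills in details the paper leaves implicit.
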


\subsection{Relation between KPZ and the Rough Heat Equation}

\paragraph{From KPZ to heat equation.}

Let now $h \in \DD_{\tmop{kpz}}$ and set $w = e^h$. Then $w = e^{Y +
Y^{\zzone} + 2 Y^{\zztwo}} w^P$ with $w^P = e^{h^P}$. To see that $w \in
\CD_{\tmop{rhe}}$, we need the following lemma:

\begin{lemma}\label{lem:paralinearization in time}
  Let $F \in C^2 (\R,\R)$, $h' \in \LL^{\alpha}$, $P \in \LL^{\alpha + 1}$ with $\LL P \in C \CC^{\alpha - 1}$, and $h^{\sharp} \in \LL^{2 \alpha + 1}$. Write $h^P = h' \mpara P + h^{\sharp} \in \LL^{\alpha + 1}$. Then we have for all $\varepsilon > 0$
  \[
     F (h^P) - (F' (h^P) h') \mpara P \in \LL^{2 \alpha + 1 - \varepsilon}.
  \]
\end{lemma}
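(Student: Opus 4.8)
The plan is to combine the Bony--Meyer paralinearization theorem (Lemma~\ref{lemma:paralinearization}) with the modified-paraproduct commutator estimates (Lemma~\ref{lem:modified paraproduct commutators}) so as to transfer the spatial paralinearization into the parabolic scale $\LL^{\alpha}$. First I would apply paralinearization in space at every fixed time $t$: since $h^P(t) \in \CC^{\alpha+1-\varepsilon} \subseteq \CC^{\alpha}$ uniformly in $t$ (in fact $h^P \in \LL^{\alpha+1}$, so $h^P(t)\in\CC^{\alpha+1}$), Lemma~\ref{lemma:paralinearization} with the exponent $\alpha$ replaced by $\alpha+1-\varepsilon\in(0,1)$ gives $F(h^P) = F'(h^P)\para h^P + R_F(h^P)$, and $R_F$ maps $\CC^{\alpha+1-\varepsilon}$ into $\CC^{2\alpha+2-2\varepsilon}$, which is certainly contained in $\CC^{2\alpha+1}$; since $F\in C^2$ is only claimed, I would note that here we actually use $h^P \in \CC^{\alpha+1}$ to get $R_F(h^P)\in\CC^{2(\alpha+1)-}\subseteq C\CC^{2\alpha+1}$, with the time continuity coming from the continuity of $h^P$ in $\CC^{\alpha}$ and local boundedness of $R_F$. (One should double-check whether the time-Hölder part $C^{(2\alpha+1-\varepsilon)/2}L^\infty$ of $R_F(h^P)$ follows from $h^P\in\LL^{\alpha+1}$ using that $F$ is locally Lipschitz on the relevant bounded set; this is the one mildly technical point of this step.)

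Next I would replace the spatial paraproduct $F'(h^P)\para h^P$ by the modified paraproduct $F'(h^P)\mpara h^P$. By Lemma~\ref{lem:modified paraproduct commutators}, since $h^P\in\LL^{\alpha+1}$ and $F'(h^P)\in C\CC^{\alpha}$ (as $F'\in C^1$ and composition preserves $\CC^\alpha$-type regularity), we have $(F'(h^P)\mpara h^P - F'(h^P)\para h^P) \in C\CC^{2\alpha+1}$; but I want the stronger $\LL^{2\alpha+1-\varepsilon}$ statement, so instead I would invoke the commutator bound in the $\LL^{\delta}$-form, i.e.\ the statement that $f\mpara g - f\para g$ is controlled in $\LL^{\alpha+\beta}$ when $f\in\LL^{\alpha}$, combined with Lemma~\ref{lem:paraproduct parabolic space}: one checks $\LL(F'(h^P)\mpara h^P) = F'(h^P)\mpara(\LL h^P) + \LL^{2\alpha+1-\varepsilon}$-error, and since $\LL h^P \in C\CC^{\alpha-1}$ and $F'(h^P)\in\LL^{\alpha-}$ we get $F'(h^P)\mpara h^P\in\LL^{\alpha+1}$ with the right-hand side bounded, hence the difference lands in the desired space by Schauder. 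Thus $F(h^P) = F'(h^P)\mpara h^P + \LL^{2\alpha+1-\varepsilon}$.

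Finally I would replace $F'(h^P)\mpara h^P$ by $(F'(h^P)h')\mpara P$. Write $h^P = h'\mpara P + h^\sharp$. Then
\[
 F'(h^P)\mpara h^P = F'(h^P)\mpara(h'\mpara P) + F'(h^P)\mpara h^\sharp.
\]
The second term is in $\LL^{2\alpha+1}$ by Lemma~\ref{lem:paraproduct parabolic space} since $h^\sharp\in\LL^{2\alpha+1}$ and $F'(h^P)\in\LL^{\delta}$ for some $\delta>0$. For the first term I would use the commutator Lemma~\ref{lem:bony commutator} (with the modified paraproduct; the same estimate holds, see~\cite{gubinelli_paraproducts_2012}) to get $F'(h^P)\mpara(h'\mpara P) - (F'(h^P)h')\mpara P \in \LL^{2\alpha+1}$, using $F'(h^P), h' \in \CC^{\alpha}$-type regularity and $P\in\CC^{\alpha+1}$; the product $F'(h^P)h'$ is the ordinary product of two $\CC^\alpha$ distributions with $\alpha>0$, hence well defined and in $\CC^\alpha$. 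Collecting the three steps gives $F(h^P) - (F'(h^P)h')\mpara P \in \LL^{2\alpha+1-\varepsilon}$.

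The main obstacle I expect is the \emph{bookkeeping of the time regularity}: all the cited lemmas are stated either purely spatially (paralinearization, Bony commutator) or in the $\LL^\alpha_T$-scale, and one must consistently track both the $C_T\CC^{\bullet}$ and the $C^{\bullet/2}_T L^\infty$ components through each substitution, losing an arbitrarily small $\varepsilon$ when passing from $\CC^{\alpha+1}$-regularity of $h^P$ to the exponent $2\alpha+1$ (that is the source of the $\varepsilon$ in the statement — note $2(\alpha+1) = 2\alpha+2 > 2\alpha+1$, so in fact there is room, and the $\varepsilon$-loss really comes only from the embedding $\LL^{2\alpha+1}\hookrightarrow C^{(2\alpha+1-\varepsilon)/2}\CC^{1+\varepsilon}$ needed to differentiate, exactly as in the paragraph following~\eqref{eq:paracontrolled-structure-kpz}). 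A secondary subtlety is that $F\in C^2$ only, so $R_F$ need not be Lipschitz; but for this lemma we only need it to be a locally bounded map, which $C^2$ suffices for.
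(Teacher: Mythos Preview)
Your plan handles the spatial half of the $\LL^{2\alpha+1-\varepsilon}$ norm correctly, but the temporal half is not ``mildly technical bookkeeping'': it is the whole point of the lemma, and your scheme does not provide it. Concretely, in Step~1 paralinearization gives $R_F(h^P)(t)\in\CC^{2(\alpha+1-\varepsilon)}$ at each fixed $t$, but for the $\LL^{2\alpha+1-\varepsilon}$ norm you need $R_F(h^P)\in C^{(2\alpha+1-\varepsilon)/2}_T L^\infty$. From $h^P\in C^{(\alpha+1)/2}_T L^\infty$ and local Lipschitz continuity of $F,F'$ you only get $F(h^P),\,F'(h^P)\para h^P\in C^{(\alpha+1)/2}_T L^\infty$, hence $R_F(h^P)\in C^{(\alpha+1)/2}_T L^\infty$, and $(\alpha+1)/2<(2\alpha+1)/2$. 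The same obstruction hits Steps~2--3: Lemma~\ref{lem:modified paraproduct commutators} and Lemma~\ref{lem:bony commutator} are \emph{fixed-time} spatial estimates and give only $C_T\CC^{\bullet}$ control of the commutators, not the time-H\"older part of $\LL^{\bullet}$. There is no ``$\LL^\delta$-form'' of these commutators available in the paper, and proving one would amount to the lemma you are trying to establish.

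The paper takes a completely different route that you should adopt. It first observes (as you do) that the spatial regularity $C_T\CC^{2\alpha+1}$ is easy, and then attacks the time regularity by applying $\LL$ to the full difference. The chain rule $\LL F(h^P)=F'(h^P)\LL h^P-F''(h^P)(\mathD h^P)^2$ together with Lemma~\ref{lem:modified paraproduct commutators} and Lemma~\ref{lem:bony commutator} reduces $\LL\bigl(F(h^P)-(F'(h^P)h')\mpara P\bigr)$ to $F'(h^P)\,\partial_t h^\sharp$ modulo $C_T\CC^{2\alpha-1}$. The term $F'(h^P)\,\partial_t h^\sharp$ is not classically defined (since $h^\sharp$ is only $C^{(2\alpha+1)/2}$ in time), but because $F'(h^P)\in C^{(\alpha+1)/2}_T L^\infty$ and $(\alpha+1)/2+(2\alpha+1)/2>1$, the \emph{Young integral} $I(t)=\int_0^t F'(h^P)\,\mathd h^\sharp$ makes sense and lies in $C^{(2\alpha+1)/2}_T L^\infty$. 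A mild-formulation estimate (Theorem~1 of \cite{Gubinelli2006}) then gives $\int_0^\cdot P_{\cdot-s}\,\mathd I(s)\in C^{(2\alpha+1-\varepsilon)/2}_T L^\infty$, which is exactly the missing temporal regularity. This Young-integration step is the key idea absent from your proposal.
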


\begin{proof}
  It is easy to see that the difference is in $C \CC^{2\alpha+1}$, the difficulty is to get the right temporal regularity. 
  If we apply the
  heat operator to the difference and use Lemma~\ref{lem:modified paraproduct
  commutators} together with Lemma~\ref{lem:bony commutator}, we get
  \begin{align*}
     \LL (F (h^P) - (F' (h^P) h') \mpara P) & = F' (h^P) \LL (h' \mpara P + h^{\sharp}) - F'' (h^P) (\mathD h^P)^2 \\
     &\qquad - (F' (h^P) h') \mpara \LL P + C \CC^{2 \alpha - 1} \\
     & = F' (h^P)\partial_t h^\sharp + C \CC^{2 \alpha - 1}.
  \end{align*}
  So an application of the Schauder estimates for $\LL$ shows that
  \[
     (F (h^P) - (F' (h^P) h') \mpara P - \int_0^\cdot P_{\cdot - s} \{F' (h^P(s))\partial_s h^\sharp(s)\} \mathd s \in \LL^{2\alpha+1}.
  \]
  Since $F'(h^P) \in C^{(\alpha+1)/2}L^\infty$ and $h^\sharp \in C^{(2\alpha+1)/2} L^\infty$, the Young integral
  \[
     I(t) = \int_0^t F' (h^P(s)) \mathd h^\sharp(s)
  \]
  is well defined and in $C^{(2\alpha+1)/2} L^\infty$. We can therefore apply Theorem~1 of \cite{Gubinelli2006} to see that $\int_0^\cdot P_{\cdot -s} \mathd I(s) \in C^{(2\alpha+1-\varepsilon)/2}L^\infty$ for all $\varepsilon>0$, and the proof is complete.
\end{proof}

As a consequence of this lemma we have $e^h \in \CD_{\tmop{rhe}}^\delta$ for every $h
\in \CD_{\tmop{kpz}}$ and $\delta < \alpha$, with derivative $e^{h^P} h' \in \LL^{\alpha}$.

\begin{lemma}
  Let $\Y \in \mathcal{Y}_{\tmop{kpz}}$, $T>0$, and let $h \in \DD_{\tmop{kpz},\Y}$ solve the KPZ equation on $[0,T]$ with initial condition $h(0) = h_0$, and set $w = e^h \in \CD^{\alpha-\varepsilon}_{\tmop{rhe}, \Y}$ with derivative $e^{h^P} h'$. Then $w$ solves the rough heat equation on $[0,T]$ with initial condition $w(0) = e^{h_0}$.
\end{lemma}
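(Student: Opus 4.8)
The plan is to reduce the claim to the classical Cole--Hopf identity established for smooth data at the beginning of Section~\ref{sec:rhe}, and then to propagate it to a general enhancement $\Y$ by the continuity properties already collected above; no Gaussian structure is used, only $\Y \in \mathcal{Y}_{\tmop{kpz}}$, so this is a pure stability statement.

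\emph{First I would set up the approximation.} Since $h \in \DD_{\tmop{kpz},\Y}$, its remainder satisfies $h^{\sharp}(0) = h^{P}(0) = h_0 - Y(0) \in \CC^{2\alpha+1}$, because $P(0)=0$ makes the modified paraproduct $h' \mpara P$ vanish at time $0$; write $h_0' := h_0 - Y(0)$. Choose $\theta_n \in \LL C^{\alpha/2}_{\mathrm{loc}}(\R,C^\infty(\T))$ and $c^{\zzone}_n, c^{\zzfour}_n \in \R$ with $\Y_n := \Theta_{\tmop{kpz}}(\theta_n, c^{\zzone}_n, c^{\zzfour}_n) \to \Y$ in $\mathcal{Y}_{\tmop{kpz}}(T')$ for every $T'>0$, and let $h_n \in \CD_{\tmop{kpz},\Y_n}$ be the KPZ solution driven by $\Y_n$ with initial condition $Y_n(0) + h_0'$. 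By the continuity statement of Theorem~\ref{thm:kpz smooth initial}, and since $d_{\kpz}$ also controls the blow-up times, the $h_n$ are defined on $[0,T]$ for $n$ large and $h_n \to h$ in $\CD_{\tmop{kpz}}(T)$. For these $n$ the datum $\Y_n$ is ``smooth'', so the classical chain rule (the computation displayed at the start of Section~\ref{sec:rhe}, now applied to $\Y_n$) gives that $w_n := e^{h_n}$ solves $\LL w_n = e^{h_n}(\theta_n - c^{\zzone}_n)$ classically, which by the identification of $\diamond$ with the renormalized pointwise product in the smooth case is exactly $\LL w_n = w_n \diamond \theta_n$.

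\emph{Second I would pass to the limit.} We already know, from Lemma~\ref{lem:paralinearization in time} applied to $F=\exp \in C^3$, that $w_n = e^{h_n} \in \CD^{\alpha-\varepsilon}_{\tmop{rhe},\Y_n}$ with derivative $e^{h_n^{P_n}} h_n'$, and likewise $w = e^h \in \CD^{\alpha-\varepsilon}_{\tmop{rhe},\Y}$ with derivative $e^{h^P} h'$. Using $\exp \in C^\infty$ one upgrades the bound of Lemma~\ref{lem:paralinearization in time} to a locally Lipschitz estimate (the paraproduct and commutator bounds, the Schauder estimate, and the Young-type integral of~\cite{Gubinelli2006} entering its proof being all locally Lipschitz), so from $h_n \to h$ in $\CD_{\tmop{kpz}}(T)$ and $\Y_n \to \Y$ (the latter through $P_n \to P$ and $e^{Y_n + Y_n^{\zzone}+2Y_n^{\zztwo}} \to e^{Y + Y^{\zzone}+2Y^{\zztwo}}$) the same bookkeeping as in Section~\ref{sec:burgers} yields convergence of the derivatives in $\LL^\alpha_T$, of the remainders in $\LL^{2\alpha}_T$, and $w_n \to w$ in $C_T \CC^{\alpha-\varepsilon}$; in particular $\LL w_n \to \LL w$ in the sense of distributions. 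The continuity of $(\Y, w', w^{\sharp}) \mapsto w \diamond \theta$ proved above then gives $w_n \diamond \theta_n \to w \diamond \theta$ as distributions on $\R_+ \times \T$, and letting $n\to\infty$ in $\LL w_n = w_n \diamond \theta_n$ produces $\LL w = w \diamond \theta$ on $[0,T]$; the initial condition $w(0)=e^{h(0)}=e^{h_0}$ is immediate.

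\emph{The main obstacle} I expect is the Lipschitz upgrade of Lemma~\ref{lem:paralinearization in time} in the second step: the pointwise bound is already available, but one must re-run its proof checking that the step invoking the Schauder estimate together with the heat-flow regularity of the Young integral from~\cite{Gubinelli2006} is stable under perturbations of \emph{both} $h_n$ and the enhancement $\Y_n$ (the latter shifting $P_n$, hence $h_n^{P_n}$, hence the base point of the nonlinearity). Everything else is an application of results already proved. As an alternative that sidesteps the density argument, one could try to verify directly that $w^P = e^{h^P}$ satisfies~\eqref{eq:rhe reinterpretation} by computing $\LL(e^{h^P})$ with a paracontrolled chain rule and using that $h$ solves KPZ --- tempting since $\mathD w^P = e^{h^P}\mathD h^P$ is a genuine classical identity --- but justifying the time-derivative part of that chain rule costs essentially the same work as Lemma~\ref{lem:paralinearization in time}, so I would still favour the approximation route.
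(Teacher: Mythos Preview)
Your approximation argument is correct in principle, but the paper takes the direct route you mention at the end and dismiss too quickly. The paper simply applies the chain rule for the heat operator to $w^P = e^{h^P}$, obtaining $\LL w^P = w^P \LL h^P - w^P(\mathD h^P)^2$, then substitutes the known expression for $\LL h^P$ (coming from $h$ solving KPZ) and does a few lines of algebra to land exactly on~\eqref{eq:rhe reinterpretation}. The point you underestimate is that this chain rule is essentially classical here: $h^P \in \LL^{\alpha+1}$ with $\alpha+1>1$, so the spatial identity $\Delta(e^{h^P}) = e^{h^P}(\Delta h^P + (\mathD h^P)^2)$ is pointwise, while the temporal identity $\partial_t(e^{h^P}) = e^{h^P}\partial_t h^P$ holds distributionally because $h^P$ is continuous in time and the product on the right is between $C\CC^{\alpha+1}$ and $C\CC^{\alpha-1}$, hence well defined by Bony's estimates. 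No paracontrolled machinery beyond what is already in place is needed, and in particular Lemma~\ref{lem:paralinearization in time} is used only \emph{before} this proof to establish that $w \in \CD^{\alpha-\varepsilon}_{\tmop{rhe}}$, not inside it.

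Your route trades this short computation for a stability argument that, as you note, requires upgrading Lemma~\ref{lem:paralinearization in time} to a locally Lipschitz statement in both $(h',h^\sharp)$ and the enhancement $\Y$. That is doable but is strictly more work than the three-line algebraic verification the paper gives; the approximation buys you nothing here because the algebraic identity holds already at the level of the limiting objects.
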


\begin{proof}
  Recall that
  \[ \LL h^P = \LL (h - Y - Y^{\zzone} - 2 Y^{\zztwo}) = (\mathD h)^{\diamond
     2} - ((\mathD Y)^2 - c^{\zzone}) - 2 \mathD Y \mathD Y^{\zzone} . \]
  From the chain rule for the heat operator we obtain
  \[ \LL w^P = w^P \LL h^P - w^P (\mathD h^P)^2 = w^P ((\mathD h)^{\diamond 2}
     - ((\mathD Y)^2 - c^{\zzone}) - 2 \mathD Y \mathD Y^{\zzone} - (\mathD
     h^P)^2) . \]
  But now
  \begin{align*}
     &(\mathD h)^{\diamond 2} - ((\mathD Y)^2 - c^{\zzone}) - 2 \mathD Y \mathD
     Y^{\zzone} - (\mathD h^P)^2 \\
     &\hspace{30pt} = 4 (\LL Y^\zzthreereso + \mathD Y^\zztwo \para \mathD Y + \mathD Y^\zztwo \lpara Y) + Y^\zzfour + 4 \mathD Y^{\zztwo} \mathD Y^{\zzone} + (2 \mathD Y^{\zztwo})^2 \\
     &\hspace{30pt} \quad + 2 \mathD (Y + Y^{\zzone} + 2 Y^{\zztwo}) \mathD h^P,
  \end{align*}
  and $\mathD h^P = \mathD (\log w^P) = \mathD w^P / w^P$, from where we
  deduce that
  \begin{align*}
     \LL w^P &= [4 (\LL Y^\zzthreereso + \mathD Y^\zztwo \para \mathD Y + \mathD Y^\zztwo \lpara Y) + Y^\zzfour + 4 \mathD Y^{\zztwo} \mathD Y^{\zzone} + (2
     \mathD Y^{\zztwo})^2] w^P \\
     &\quad + 2 \mathD (Y + Y^{\zzone} + 2 Y^{\zztwo}) \mathD w^P.
  \end{align*}
  In other words, $w^P$ satisfies~(\ref{eq:rhe reinterpretation}) and $w$
  solves the rough heat equation.
\end{proof}

\paragraph{From the heat equation to KPZ.}

Conversely, let $w = e^{Y + Y^{\zzone} + 2 Y^{\zztwo}} w^P \in
\DD_{\tmop{rhe}}$ be strictly positive and define $h = \log w$. Then $h = Y +
Y^{\zzone} + 2 Y^{\zztwo} + \log w^P$, and Lemma~\ref{lem:paralinearization in
time} yields $\log w^P = (w' / w^P) \mpara P + \LL^{2 \alpha + 1 -
\varepsilon}$ for all $\varepsilon > 0$, in other words $h$ is paracontrolled
with derivative $w' / w^P$.

\begin{lemma}
  Let $\Y \in \mathcal{Y}_{\tmop{kpz}}$, $T>0$, and let $w \in \DD_{\tmop{rhe},\Y}$ be a strictly positive solution to the rough heat equation with initial condition $w(0) = w_0 > 0$ on the time interval $[0,T]$. Set $h = \log w \in \CD^{\alpha-\varepsilon}_{\tmop{rhe},\Y}$ with derivative $w' / w^P$. Then $h$ solves the KPZ equation on $[0,T]$ with initial condition $h_0 = \log w_0$.
\end{lemma}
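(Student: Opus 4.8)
The plan is to run the chain rule used in the ``From KPZ to heat equation'' step in reverse. Set $h^P = \log w^P$, so that $h = \log w = Y + Y^{\zzone} + 2 Y^{\zztwo} + h^P$; since $w$ is strictly positive and $e^{Y + Y^{\zzone} + 2 Y^{\zztwo}}$ is a strictly positive continuous function, $w^P$ is strictly positive and $h^P$ is well defined. As recalled just before the statement, Lemma~\ref{lem:paralinearization in time} applied with $F = \log$ gives $h^P = (w'/w^P) \mpara P + \LL^{2\alpha+1-\varepsilon}$, so that $h$ is paracontrolled by $\Y$ with derivative $h' = w'/w^P$ (for every $\delta < \alpha$), and in particular $(\mathD h)^{\diamond 2}$ is well defined as in Section~\ref{sec:kpz}. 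The initial condition is immediate: $h(0) = \log w(0) = \log w_0$.

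First I would apply the chain rule for the heat operator to $w^P = e^{h^P}$, which is legitimate since both $w^P$ and $h^P$ are paracontrolled: it gives $\LL w^P = w^P \LL h^P - w^P (\mathD h^P)^2$, hence, dividing by the strictly positive $w^P$,
\[
   \LL h^P = \frac{\LL w^P}{w^P} + (\mathD h^P)^2 .
\]
Then I would substitute the identity coming from the hypothesis that $w$ solves the rough heat equation, namely that $w^P$ satisfies~\eqref{eq:rhe reinterpretation}; dividing that equation by $w^P$ and using $\mathD h^P = \mathD w^P / w^P$ yields
\begin{align*}
   \LL h^P &= 4 ( \LL Y^{\zzthreereso} + \mathD Y^{\zztwo} \para \mathD Y + \mathD Y^{\zztwo} \lpara \mathD Y ) + \LL Y^{\zzfour} + 4 \mathD Y^{\zzone} \mathD Y^{\zztwo} + (2 \mathD Y^{\zztwo})^2 \\
   &\quad + 2 \mathD (Y + Y^{\zzone} + 2 Y^{\zztwo}) \mathD h^P + (\mathD h^P)^2 .
\end{align*}

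It then remains to identify the right-hand side. Adding $\LL Y + \LL Y^{\zzone} + 2 \LL Y^{\zztwo} = \theta + ((\mathD Y)^2 - c^{\zzone}) + 2 \mathD Y \mathD Y^{\zzone}$ to both sides produces $\LL h$ on the left, while on the right I would expand $(\mathD h)^2 = (\mathD Y + \mathD Y^{\zzone} + 2 \mathD Y^{\zztwo} + \mathD h^P)^2$ and feed in the relations $\LL Y^{\zzone} = (\mathD Y)^2 - c^{\zzone}$, $\LL Y^{\zztwo} = \mathD Y \mathD Y^{\zzone}$, $\LL Y^{\zzfour} = (\mathD Y^{\zzone})^2 - c^{\zzfour}$, $\LL Y^{\zzthreereso} = \mathD Y^{\zztwo} \reso \mathD Y + c^{\zzfour}/4$ together with the paraproduct decomposition $\mathD Y \mathD Y^{\zztwo} = \mathD Y^{\zztwo} \para \mathD Y + \mathD Y^{\zztwo} \lpara \mathD Y + \mathD Y^{\zztwo} \reso \mathD Y$. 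The two occurrences of $c^{\zzfour}$ cancel, thanks precisely to the renormalization chosen to link $Y^{\zzfour}$ and $Y^{\zzthreereso}$, and one reads off $\LL h = (\mathD h)^2 - c^{\zzone} + \theta = (\mathD h)^{\diamond 2} + \theta$. This is exactly the computation performed in the ``From KPZ to heat equation'' direction, read from right to left, so no new algebra is needed.

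The only point that requires genuine care --- and the main obstacle --- is that the a priori ill-defined products occurring above (the resonances $\mathD Y^{\zztwo} \reso \mathD Y$ and $\mathD P \reso \mathD Y$, the paracontrolled product $\mathD Y \cdummy \mathD h^P$, and the Young products $\partial_t Y^{\zzthreereso}\, w^P$ and $\partial_t Y^{\zzfour}\, w^P$) must be interpreted through exactly the same paracontrolled and Young-integral constructions used in the definition~\eqref{eq:heat equation product definition} of $w \diamond \theta$ and in Section~\ref{sec:kpz}, so that the formal cancellations are legitimate; in particular one must keep track of the $c^{\zzfour}$ bookkeeping so that it really drops out of the equation.
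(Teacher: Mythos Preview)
Your argument is correct and follows essentially the same route as the paper: apply the chain rule for $\LL$ to relate $\LL h^P$ and $\LL w^P$, substitute the equation~\eqref{eq:rhe reinterpretation} satisfied by $w^P$, use $\mathD h^P = \mathD w^P / w^P$ to simplify, and recognise the result as $\LL h = (\mathD h)^{\diamond 2} + \theta$. The paper writes the chain rule directly for $\LL(\log w^P)$ rather than for $\LL(e^{h^P})$ followed by division, but this is the same identity, and your additional remarks on the $c^{\zzfour}$ cancellation and on the paracontrolled interpretation of the singular products make explicit what the paper leaves implicit.
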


\begin{proof}
  We have
  \begin{align*}
     \LL h^P & = \LL (\log w^P) = \frac{1}{w^P} \LL w^P + \frac{1}{(w^P)^2} (\mathD w^P)^2 \\
     & = \frac{1}{w^P} [4 (\LL Y^\zzthreereso + \mathD Y^\zztwo \para \mathD Y + \mathD Y^\zztwo \lpara \mathD Y) + \LL Y^\zzfour + 4 \mathD Y^{\zzone} \mathD Y^{\zztwo} + (2 \mathD Y^{\zztwo})^2] w^p \\
   &\quad + \frac{1}{w^P} 2 \mathD (Y + Y^{\zzone} + 2 Y^{\zztwo}) \mathD w^P + (\mathD h^P)^2,
  \end{align*}
  where for the last term we used that $\mathD w^P = w^P \mathD h^P$. Using this relation once more, we arrive at
  \begin{align*}
     \LL h^P & = 4 (\LL Y^\zzthreereso + \mathD Y^\zztwo \para \mathD Y + \mathD Y^\zztwo \lpara \mathD Y) + \LL Y^\zzfour + 4 \mathD Y^{\zzone} \mathD Y^{\zztwo} + (2 \mathD Y^{\zztwo})^2 \\
     &\quad + 2 \mathD (Y + Y^{\zzone} + 2 Y^{\zztwo}) \mathD h^P + (\mathD h^P)^2,
  \end{align*}
  or in other words $\LL h = (\mathD h)^{\diamond 2} + \theta$.
\end{proof}

Let us summarize our observations:

\begin{theorem}\label{thm:heat to kpz}
     Let $\Y, w_0, h$ be as in Theorem~\ref{thm:she smooth initial} and set
     \[
        T^\ast = \inf\{ t \geqslant 0: \min_{x\in \T} w(t,x)=0\}.
     \]
     Then for all $T<T^\ast$ the function $\log w|_{[0,T]}\in \CD_{\tmop{kpz}}(T)$ solves the KPZ equation~\eqref{eq:kpz smooth initial} driven by $\Y$ and started in $Y(0) + \log w_0$. Conversely, if $h \in \CD_{\tmop{kpz}}(T)$ solves the KPZ equation driven by $\Y$ and started in $Y(0) + h_0$, then $w = \exp(h)$ solves~\eqref{eq:she smooth initial} with $w_0 = \exp(h_0)$.
\end{theorem}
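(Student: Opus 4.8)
The statement is essentially a repackaging of the two lemmas of this subsection together with the well-posedness of the rough heat equation (Theorem~\ref{thm:she smooth initial}); the plan is to assemble them, the only non-automatic points being a small regularity bootstrap for the remainder and the positivity bookkeeping needed to form logarithms.

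\emph{From the heat equation to KPZ.} Let $w$ be the solution to~\eqref{eq:she smooth initial} furnished by Theorem~\ref{thm:she smooth initial}. Since $w \in C_T \CC^{\alpha+1}$ for every $T$ and $\alpha+1>0$, the field $w$ is jointly continuous on $\R_+ \times \T$, so $t \mapsto \min_{x \in \T} w(t,x)$ is continuous; by the definition of $T^\ast$ it is strictly positive on $[0,T]$ for every $T<T^\ast$, hence bounded below by some $\delta>0$ on the compact $[0,T]$. In particular $w_0>0$, so $\log w_0 \in \CC^{2\alpha+1}$ by the standard composition/algebra estimates for Besov functions. The lemma on passing from the rough heat equation to KPZ then applies to $w|_{[0,T]}$ and shows that $h := \log w$ is paracontrolled by $\Y$ with derivative $w'/w^P$, solves the KPZ equation on $[0,T]$, and has initial datum $h(0) = \log(e^{Y(0)} w_0) = Y(0) + \log w_0$.

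That lemma only delivers $h^\sharp \in \LL^{2\alpha+1-\varepsilon}$, so to conclude $h|_{[0,T]} \in \CD_{\kpz}(T)$ one still has to upgrade the remainder. Knowing now that $h$ solves the KPZ equation, I would decompose $\LL h^\sharp$ into bounded multilinear operators exactly as in Lemma~\ref{lem:burgers a priori} (its KPZ analogue, which is one derivative smoother) to get $\LL h^\sharp - \LL Y^\zzfour - 4 \LL Y^\zzthreereso \in C_T \CC^{2\alpha-1}$, hence, since $Y^\zzfour, Y^\zzthreereso \in \LL^{2\alpha+1}$, also $\LL h^\sharp \in C_T \CC^{2\alpha-1}$. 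Because $Y^\zzone(0) = Y^\zztwo(0) = P(0) = 0$, the remainder's initial datum is $h^\sharp(0) = \log w_0 \in \CC^{2\alpha+1}$, so the Schauder estimates (Lemma~\ref{lemma:schauder}) give $h^\sharp = P_\cdot h^\sharp(0) + I(\LL h^\sharp) \in \LL^{2\alpha+1}_T$, and thus $h|_{[0,T]} \in \CD_{\kpz}(T)$. For the converse: if $h \in \CD_{\kpz}(T)$ solves~\eqref{eq:kpz smooth initial} with $h(0) = Y(0) + h_0$, then $e^{h_0} \in \CC^{2\alpha+1}$ (composition again), and the lemma on passing from KPZ to the rough heat equation shows that $w := e^h$ is paracontrolled by $\Y$ (with derivative $e^{h^P} h'$) and solves~\eqref{eq:she smooth initial} on $[0,T]$; it is automatically strictly positive, and $w(0) = e^{Y(0)+h_0} = e^{Y(0)} e^{h_0}$, so $w_0 = e^{h_0}$. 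If one insists on landing in $\CD_{\kpz}(T)$ in the precise sense of Theorem~\ref{thm:she smooth initial}, the remainder is again upgraded from $\LL^{2\alpha+1-\varepsilon}$ to $\LL^{2\alpha+1}$ by the same a priori-plus-Schauder argument, or simply by invoking uniqueness in Theorem~\ref{thm:she smooth initial}.

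\emph{Main obstacle.} There is no substantial obstacle, since the mathematical content already lives in the two lemmas. The two points requiring attention are (i) the bootstrap above, needed so that the solution genuinely lies in $\CD_{\kpz}(T)$ and not merely in an $\varepsilon$-enlargement of it; and (ii) the positivity/compactness bookkeeping that converts the pointwise condition defining $T^\ast$ into a uniform lower bound on $[0,T] \times \T$, which is exactly what legitimizes forming $\log w$ and treating $1/w^P$ as a genuine multiplier on that time interval.
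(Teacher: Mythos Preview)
Your proposal is correct and follows the paper's approach: the theorem is indeed just a packaging of the two preceding lemmas, and the paper offers no separate proof beyond ``Let us summarize our observations''. You are in fact more careful than the paper: the lemmas only place $\log w$ (respectively $e^h$) in $\CD^{\alpha-\varepsilon}_{\kpz}$ (respectively $\CD^{\alpha-\varepsilon}_{\rhe}$) because of the $\varepsilon$-loss in Lemma~\ref{lem:paralinearization in time}, whereas the theorem asserts membership in $\CD_{\kpz}(T)$ proper, and your bootstrap via the KPZ analogue of Lemma~\ref{lem:burgers a priori} plus Schauder closes that gap cleanly (the alternative you mention, invoking uniqueness in the larger $\CD^{\alpha-\varepsilon}$ class, also works and is perhaps tidier for the converse direction).
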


As an immediate consequence we obtain a better characterization of the blow up time for the solution to the KPZ equation:

\begin{corollary}\label{cor:kpz explosion time}
   In the context of Theorem~\ref{thm:kpz smooth initial}, the explosion time $T^\ast$ of the paracontrolled norm of the solution $h$ to the KPZ equation is given by
   \[
      T^\ast = \inf\{ t \geqslant 0: \min_{x \in \T} \exp(h(t,x)) = 0\} = \inf \{t\geqslant 0: \|h(t)\|_{L^\infty} = \infty\}.
   \]
\end{corollary}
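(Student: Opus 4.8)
The plan is to obtain the corollary from the Cole--Hopf correspondence of Theorem~\ref{thm:heat to kpz}, exploiting the contrast between the rough heat equation, which by Theorem~\ref{thm:she smooth initial} has global solutions, and the KPZ equation, whose paracontrolled solution is only local (Theorem~\ref{thm:kpz smooth initial}). I would fix $(\Y,h_0) \in \mathcal{Y}_{\tmop{kpz}} \times \CC^{2\alpha+1}$, let $h$ be the maximal paracontrolled KPZ solution started in $Y(0)+h_0$ with explosion time $T^\ast = \inf\{t: \|h\|_{\CD_{\mathrm{kpz}}(t)} = \infty\}$, set $w_0 = e^{h_0} \in \CC^{2\alpha+1}$ (strictly positive, since $h_0$ is continuous), and let $w$ be the globally defined paracontrolled solution of the rough heat equation with $w(0) = e^{Y(0)}w_0 = e^{h(0)}$ from Theorem~\ref{thm:she smooth initial}. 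First I would record that every element of $\CD_{\mathrm{kpz}}(T)$ is a genuine continuous function on $[0,T]\times\T$ --- the constituents $Y$, $Y^{\zzone}$, $Y^{\zztwo}$, $P$ and the remainders all live in spaces $C\CC^{\gamma}$ with $\gamma>0$ --- so that $t \mapsto \min_{x \in \T} w(t,x)$ is continuous, nonnegative, and equal to $e^{\min_x h_0(x)}>0$ at $t=0$. Writing $\tilde T = \inf\{t: \min_{x}w(t,x) = 0\}$, this gives $\min_{x}w(t,x) > 0$ for every $t < \tilde T$, and $\min_{x}w(\tilde T,x) = 0$ if $\tilde T<\infty$.

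Next I would prove $\tilde T \leqslant T^\ast$. For $t<\tilde T$ the function $w$ is strictly positive on $[0,t]\times\T$, so the ``heat to KPZ'' half of Theorem~\ref{thm:heat to kpz} makes $\log w|_{[0,t]}$ a paracontrolled KPZ solution driven by $\Y$ with initial condition $Y(0) + \log w_0 = Y(0) + h_0$; by the uniqueness in Theorem~\ref{thm:kpz smooth initial} it agrees with $h$ on their common interval of existence. Since $s \mapsto \|\log w\|_{\CD_{\mathrm{kpz}}(s)}$ is finite (indeed continuous) for $s<\tilde T$, the maximal solution $h$ cannot explode before $\tilde T$, i.e. $T^\ast \geqslant \tilde T$. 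For the reverse inequality I would use the ``KPZ to heat'' half of Theorem~\ref{thm:heat to kpz}: for $t<T^\ast$, $e^h|_{[0,t]}$ solves the rough heat equation with initial datum $e^{h(0)}$, hence by the \emph{global} uniqueness in Theorem~\ref{thm:she smooth initial} it equals $w$ on $[0,t]$. Thus $w = e^h > 0$ on $[0,T^\ast)\times\T$; were $\tilde T<T^\ast$ we would get $\min_x w(\tilde T,x)=e^{\min_x h(\tilde T,x)}>0$, contradicting $\min_x w(\tilde T,x)=0$. Hence $T^\ast = \tilde T = \inf\{t: \min_{x}\exp(h(t,x)) = 0\}$, the first identity.

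Finally, for the $L^\infty$ characterization: since $w=e^h$ on $[0,T^\ast)$ and $w$ is continuous, hence bounded on every $[0,T]\times\T$ with $T<\infty$, the function $h$ is bounded above on $[0,T^\ast)\times\T$; if $T^\ast<\infty$ and $\sup_{t<T^\ast}\|h(t)\|_{L^\infty}$ were finite, then $w=e^h$ would be bounded below by a positive constant there, so $\min_x w(T^\ast,x)>0$ by continuity, contradicting $\min_x w(T^\ast,x)=0$. Therefore $\|h(t)\|_{L^\infty}\to\infty$ as $t\uparrow T^\ast$ when $T^\ast<\infty$, while for $T^\ast=\infty$ we have $\|h(t)\|_{L^\infty}<\infty$ for all $t$ because $h$ is globally defined and continuous; this identifies $T^\ast$ with $\inf\{t:\|h(t)\|_{L^\infty}=\infty\}$.

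I do not expect a serious obstacle --- the whole argument is soft once the earlier results are in place. The one thing that needs genuine care is the bookkeeping: checking that the initial conditions line up consistently across the three equations ($h(0)=Y(0)+h_0$, $w(0)=e^{Y(0)}w_0$, $w_0=e^{h_0}$) and that the uniqueness statements are invoked only within the paracontrolled class and only on the common interval where both objects are defined. The conceptual point worth stating explicitly is that, because the rough heat equation is linear and globally solvable, the sole mechanism by which the KPZ solution can fail to exist is for $e^{h}$ to reach zero.
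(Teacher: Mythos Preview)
Your proposal is correct and follows exactly the route the paper intends: the corollary is stated there as ``an immediate consequence'' of Theorem~\ref{thm:heat to kpz} with no explicit proof, and you have simply spelled out that consequence using the global well-posedness of the linear heat equation (Theorem~\ref{thm:she smooth initial}) together with both directions of the Cole--Hopf correspondence and uniqueness. One cosmetic slip: $w(0)=e^{Y(0)+h_0}$, so the initial minimum is $e^{\min_x(Y(0,x)+h_0(x))}$ rather than $e^{\min_x h_0(x)}$, but this does not affect the argument since all that is needed is strict positivity.
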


\section{Interpretation of the nonlinearity and energy solutions}\label{sec:interpretation}

The purpose of this section is to give a more natural interpretation of the nonlinearity $\mathD u^2$ that appears in the formulation of Burgers equation. We will show that if $\xi$ is a space-time white noise, then $u$ is the only stochastic process for which there exists $u'$ with $(u, u') \in \CD_{\tmop{rbe}}$ almost surely and such that almost surely
\[
   \partial_t u = \Delta u + \lim_{\varepsilon \rightarrow 0} \mathD (\varphi(\varepsilon \mathD) u)^2 + \mathD \xi, \hspace{2em} u (0) = u_0,
\]
whenever $\varphi$ is an even smooth function of compact support with $\varphi(0) = 1$, where the convergence holds in probability in $\CD'(\R_+ \times \T)$.

This is a simple consequence of the following theorem.

\begin{theorem}\label{thm:natural square}
  Let $\X \in \mathcal{X}_{\tmop{rbe}}$ and $\varphi \in C^{\infty}(\R)$ with compact support and $\varphi(0) = 0$. Define for $\varepsilon > 0$ the mollification $X^\tau_\varepsilon = \varphi(\varepsilon \mathD) X^\tau$, $Q_\varepsilon = \varphi(\varepsilon \mathD) Q$, and then
  \[
     \X_{\varepsilon} = \big( X_\varepsilon, I (\mathD [(X_\varepsilon)^2]), I (\mathD [X^{\zzone}_\varepsilon X_\varepsilon]), I( \mathD[X^{\zztwo}_\varepsilon \reso X_\varepsilon]), I(\mathD [( X^{\zzone}_\varepsilon)^2]), Q_\varepsilon \reso X_\varepsilon \big).
  \]
  Let $T>0$ and assume that $\|\X_{\varepsilon} - \X\|_{\Xrbe(T)}$ converges to 0 as $\varepsilon \to 0$. Let $u \in \CD_{\tmop{rbe}} (T)$. Then
  \begin{equation}\label{eq:natural paracontrolled square}
     \lim_{\varepsilon \rightarrow 0} I (\mathD [(\varphi(\varepsilon \mathD) u)^2]) = I (\mathD u^2),
  \end{equation}
  where $\mathD u^2$ on the right hand side denotes the paracontrolled square and where the convergence takes place in $C \left( [0,T], \CD' \right)$.
\end{theorem}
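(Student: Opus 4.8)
The plan is to expand the classical square of $u_\varepsilon := \varphi(\varepsilon\mathD)u$ along exactly the decomposition used to build the paracontrolled square $\mathD u^2$ in Lemma~\ref{lem:burgers a priori}, and to check that every resulting term converges after applying $I\mathD$. Writing $u = X + X^{\zzone} + 2X^{\zztwo} + u'\mpara Q + u^{\sharp}$ (here $\delta=\alpha$, so $u'\in\LL^{\alpha}$, $u^{\sharp}\in\LL^{2\alpha}$, and $u^Q := u'\mpara Q + u^\sharp$) and applying $\varphi(\varepsilon\mathD)$ gives $u_\varepsilon = X_\varepsilon + X^{\zzone}_\varepsilon + 2X^{\zztwo}_\varepsilon + \varphi(\varepsilon\mathD)(u'\mpara Q) + u^{\sharp}_\varepsilon$ with $X^\tau_\varepsilon = \varphi(\varepsilon\mathD)X^\tau$, $u^\sharp_\varepsilon=\varphi(\varepsilon\mathD)u^\sharp$; squaring and using bilinearity of the pointwise product produces a finite list of terms. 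Since $\|\X_\varepsilon - \X\|_{\Xrbe(T)}\to 0$ by hypothesis, $X_\varepsilon\to X$ in $C_T\CC^{\alpha-1}$, $Q_\varepsilon\reso X_\varepsilon = (\X_\varepsilon)_6\to Q\reso X$ in $C_T\CC^{2\alpha-1}$, and $X^{\tau}_\varepsilon$, $u^{\sharp}_\varepsilon$, $u^Q_\varepsilon := \varphi(\varepsilon\mathD)u^Q$, $\varphi(\varepsilon\mathD)(u'\mpara Q)$ converge to their unmollified counterparts with an arbitrarily small loss $\kappa>0$ in the Besov exponent.

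Most of the list is then routine. By construction $I\mathD[X_\varepsilon^2]$, $2\,I\mathD[X_\varepsilon X^{\zzone}_\varepsilon]$, $4\,I\mathD[X^{\zztwo}_\varepsilon\reso X_\varepsilon]$ and $I\mathD[(X^{\zzone}_\varepsilon)^2]$ are the second through fifth components of $\X_\varepsilon$, hence converge to $X^{\zzone}$, $2X^{\zztwo}$, $4X^{\zzthreereso}$, $X^{\zzfour}$. All remaining products appearing in the expansion --- $X_\varepsilon\para X^{\zztwo}_\varepsilon$, $X^{\zztwo}_\varepsilon\para X_\varepsilon$, $X^{\zzone}_\varepsilon X^{\zztwo}_\varepsilon$, $(X^{\zztwo}_\varepsilon)^2$, $X_\varepsilon u^{\sharp}_\varepsilon$, $X^{\zzone}_\varepsilon u^Q_\varepsilon$, $X^{\zztwo}_\varepsilon u^Q_\varepsilon$, $(u^Q_\varepsilon)^2$, together with the paraproducts $X_\varepsilon\para\varphi(\varepsilon\mathD)(u'\mpara Q)$ and $\varphi(\varepsilon\mathD)(u'\mpara Q)\para X_\varepsilon$ --- have strictly positive sum of regularities (using $\alpha>1/3$, so e.g. $(2\alpha-1)+\alpha>0$, $(\alpha-1)+2\alpha>0$), hence converge in some $C_T\CC^{\gamma-}$ by Lemma~\ref{thm:paraproduct} and the continuity of the resonant product, and $I\mathD$ preserves convergence in $C([0,T],\CD')$.

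The delicate contribution is the resonant part of $2X_\varepsilon\varphi(\varepsilon\mathD)(u'\mpara Q)$, namely $X_\varepsilon\reso\varphi(\varepsilon\mathD)(u'\mpara Q)$. I would split
\[
  \varphi(\varepsilon\mathD)(u'\mpara Q) = u'\para Q_\varepsilon + \mathcal{R}_\varepsilon + \varphi(\varepsilon\mathD)\big(u'\mpara Q - u'\para Q\big), \qquad \mathcal{R}_\varepsilon := \sum_i \big[\varphi(\varepsilon\mathD),\, S_{i-1}u'\,\cdummy\big]\,\Delta_i Q .
\]
Here $u'\mpara Q - u'\para Q\in C_T\CC^{2\alpha}$ by Lemma~\ref{lem:modified paraproduct commutators}, so $X_\varepsilon\reso\varphi(\varepsilon\mathD)(u'\mpara Q - u'\para Q)$ is again a regular resonance ($(\alpha-1)+2\alpha>0$) and converges; for the paracontrolled piece, Remark~\ref{rmk:paracontrolled commutator} gives $X_\varepsilon\reso(u'\para Q_\varepsilon) = u'(Q_\varepsilon\reso X_\varepsilon) + C(u',Q_\varepsilon,X_\varepsilon) \to u'(Q\reso X) + C(u',Q,X)$ since $C$ is continuous, $Q_\varepsilon\reso X_\varepsilon\to Q\reso X$, and $\alpha+(2\alpha-1)>0$. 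The crux is $X_\varepsilon\reso\mathcal{R}_\varepsilon$: $\mathcal{R}_\varepsilon$ is only bounded in $C_T\CC^{\alpha-}$, and since $2\alpha-1<0$ this is \emph{not} enough to resonate against $X\in\CC^{\alpha-1}$. The two key observations are that the Littlewood--Paley piece of $\mathcal{R}_\varepsilon$ at frequency $2^i$ obeys $\|\mathcal{R}_\varepsilon^{(i)}\|_{C_TL^\infty}\lesssim\varepsilon\,2^{i(1-\alpha)}\,\|u'\|_{C_TL^\infty}\|Q\|_{C_T\CC^\alpha}$ (by the elementary commutator bound $\|[\varphi(\varepsilon\mathD),g\cdummy]h\|_{L^\infty}\lesssim\varepsilon\|\nabla g\|_{L^\infty}\|h\|_{L^\infty}$ and Bernstein), and that $\mathcal{R}_\varepsilon^{(i)}$ \emph{vanishes identically} once $2^i\gtrsim\varepsilon^{-1}$ because $\varphi$ has compact support. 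Summing the resonances $\Delta_k X_\varepsilon\,\mathcal{R}_\varepsilon^{(i)}$ over the finitely many surviving scales then yields $\|X_\varepsilon\reso\mathcal{R}_\varepsilon\|_{C_TL^\infty}\lesssim\sum_{2^i\lesssim\varepsilon^{-1}}\varepsilon\,2^{2i(1-\alpha)}\lesssim\varepsilon^{2\alpha-1}\to 0$, using $\alpha>1/3$, so $I\mathD[X_\varepsilon\reso\mathcal{R}_\varepsilon]\to 0$ in $C([0,T],\CD')$.

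Collecting the pieces, $I(\mathD[(\varphi(\varepsilon\mathD)u)^2])$ converges in $C([0,T],\CD')$, and it remains to identify the limit with $I(\mathD u^2)$. I would do this by matching the collection of limits term by term with the decomposition of $\mathD u^2$ in the proof of Lemma~\ref{lem:burgers a priori}: the four $\X$-components yield $\LL(X^{\zzone} + 2X^{\zztwo} + X^{\zzfour} + 4X^{\zzthreereso})$, the regular products and paraproducts reproduce the $\mathD[\cdots]$ terms, and the resonant analysis of the third paragraph produces exactly $u^Q\para\mathD X + u'(Q\reso X) + C(u',Q,X)$ modulo $C_T\CC^{2\alpha-1}$, which is how $\mathD(X\cdummy u^Q)$ is defined via Theorem~\ref{thm:paracontrolled product}; alternatively, one notes that both the constructed limit and $\mathD u^2$ depend continuously on $(\X,u',u^{\sharp})$ and agree for smooth driving data, where $(\varphi(\varepsilon\mathD)u)^2\to u^2$ classically, and concludes by density. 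I expect the frequency-cutoff estimate for $X_\varepsilon\reso\mathcal{R}_\varepsilon$ to be the only real obstacle: every other term is covered by the continuity properties already recorded in Section~\ref{sec:paracontrolled}, whereas that term genuinely forces one to exploit the compact support of $\varphi$ rather than any gain of regularity.
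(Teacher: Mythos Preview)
Your strategy is the same as the paper's: expand the square, identify the only dangerous term as $X_\varepsilon \reso \varphi(\varepsilon\mathD)(u'\mpara Q)$, pass from $\mpara$ to $\para$ via Lemma~\ref{lem:modified paraproduct commutators}, and then reduce $\varphi(\varepsilon\mathD)(u'\para Q)$ to $u'\para Q_\varepsilon$ by a commutator estimate. The paper invokes an external lemma stating
\[
   \|\varphi(\varepsilon\mathD)(u'\para Q)-u'\para Q_\varepsilon\|_{C_T\CC^{2\alpha-\delta}}
   \lesssim \varepsilon^\delta\,\|u'\|_{C_T\CC^\alpha}\|Q\|_{C_T\CC^\alpha},
\]
so that for $\delta<3\alpha-1$ the resonance with $X_\varepsilon\in C_T\CC^{\alpha-1}$ is well defined and its norm is $\lesssim\varepsilon^\delta\to 0$.

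Your hand-made version of this estimate, however, is too crude and the crux step fails as written. You bound $\|\mathcal R_\varepsilon^{(i)}\|_{L^\infty}\lesssim\varepsilon\,\|\nabla S_{i-1}u'\|_{L^\infty}\|\Delta_i Q\|_{L^\infty}$ and then use only $\|\nabla S_{i-1}u'\|_{L^\infty}\lesssim 2^i\|u'\|_{L^\infty}$, yielding $\|\mathcal R_\varepsilon^{(i)}\|\lesssim\varepsilon\,2^{i(1-\alpha)}$. After resonating with $\|\Delta_i X_\varepsilon\|\lesssim 2^{i(1-\alpha)}$ and summing over $2^i\lesssim\varepsilon^{-1}$ you get $\varepsilon^{2\alpha-1}$, but since $\alpha\in(1/3,1/2)$ throughout this paper, $2\alpha-1<0$ and this \emph{diverges}; the appeal to ``$\alpha>1/3$'' does not help here. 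The missing ingredient is the $\CC^\alpha$ regularity of $u'$: using $\|\nabla S_{i-1}u'\|_{L^\infty}\lesssim 2^{i(1-\alpha)}\|u'\|_{\CC^\alpha}$ (sum the Bernstein bound $\|\nabla\Delta_j u'\|_{L^\infty}\lesssim 2^{j(1-\alpha)}\|u'\|_\alpha$ over $j<i$) gives the sharper estimate $\|\mathcal R_\varepsilon^{(i)}\|\lesssim\varepsilon\,2^{i(1-2\alpha)}$. Now the sum becomes $\sum_{2^i\lesssim\varepsilon^{-1}}\varepsilon\,2^{i(2-3\alpha)}\sim\varepsilon^{3\alpha-1}\to 0$, which is exactly where $\alpha>1/3$ enters and which matches the paper's bound for $\delta$ close to $1$. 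With this correction your argument goes through and coincides with the paper's proof.
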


\begin{proof}
  The right hand side of (\ref{eq:natural paracontrolled square}) is defined
  as
  \[ X^{\zzone} + 2 X^{\zztwo} + X^{\zzfour} + 4 X^{\zzthree} + I \left( 2
     \mathD (u^Q X) + 2 \mathD (X^{\zzone} (u^Q + 2 X^{\zztwo})) + \mathD
     ((u^Q + 2 X^{\zztwo})^2) \right), \]
  where
  \begin{equation}
    \label{eq:natural paracontrolled square pr1} u^Q X = u \para X + u \lpara X
    + u^{\sharp} \reso X + ((u' \mpara Q) - u' \para Q) \reso X + C (u', Q,
    X) + u' (Q \reso X) .
  \end{equation}
  On the other side we have $\varphi(\varepsilon \mathD) u = (X_\varepsilon + X^{\zzone}_\varepsilon + 2 X^{\zztwo}_\varepsilon + \varphi(\varepsilon \mathD) u^Q)$, and our assumptions are chosen exactly so that the convergence of all terms of $I(\mathD [(\varphi(\varepsilon \mathD) u)^2])$ is trivial, except that of $I (\mathD [(\varphi(\varepsilon \mathD) (u' \mpara Q)) \reso X_\varepsilon])$ to $I (\mathD [(u' \mpara Q) \reso X])$, where the second resonant product is interpreted in the paracontrolled sense. Since $f \mapsto I (\mathD f)$ is a continuous operation on $C ( [0,T], \CD')$, it suffices to show that $(\varphi(\varepsilon \mathD) (u' \mpara Q)) \reso X_\varepsilon$ converges in $C( [0,T], \CD')$ to $(u' \mpara Q) \reso X$. We decompose 
  \[
     (\varphi(\varepsilon \mathD) u^Q) \reso X_{\varepsilon} = (\varphi(\varepsilon \mathD) [(u' \mpara Q) - (u' \para Q)]) \reso X_{\varepsilon} +  (\varphi(\varepsilon \mathD) (u' \para Q)) \reso X_{\varepsilon}
  \]
  and use the continuity properties of the resonant term in combination with Lemma~\ref{lem:modified paraproduct commutators} to conclude that the first term on the right hand side converges to its ``without $\varepsilon$ counterpart''. It remains to treat $(\varphi(\varepsilon \mathD) (u' \para Q)) \reso X_{\varepsilon}$. But now Lemma 5.3.20 of~\cite{Perkowski2014} states that
  \[
     \| \varphi(\varepsilon \mathD) (u' \para Q) - u' \para Q_{\varepsilon}  \|_{C_T \CC^{2 \alpha - \delta}} \lesssim \varepsilon^{\delta} \| u' \|_{C_T \CC^{\alpha}} \| Q \|_{C_T \CC^{\alpha}}
  \]
  whenever $\delta \leqslant 1$. If we choose $\delta > 0$ small enough so that $3 \alpha - \delta > 1$, this allows us to replace $(\varphi(\varepsilon \mathD) (u' \para Q)) \reso X_{\varepsilon}$ by $(u' \para Q_\varepsilon) \reso X_{\varepsilon}$. Then we get
  \[
     (u' \para Q_{\varepsilon}) \reso X_{\varepsilon} = C (u',Q_{\varepsilon}, X_{\varepsilon}) + u' (Q_{\varepsilon} \reso X_{\varepsilon}),
  \]
  and since $C$ is continuous and by assumption $Q_{\varepsilon} \reso X_{\varepsilon}$ converges to $Q \reso X$ in $C_T \CC^{2 \alpha - 1}$, this completes the proof.
\end{proof}

\begin{corollary}\label{cor:unique energy solution}
   Let $\xi$ be a space-time white noise on $\R \times \T$, let $\X \in \Xrbe$ be constructed from $\xi$ as described in Theorem~\ref{thm:kpz data}, and let $u_0 \in \CC^{2\alpha}$ almost surely. Assume that $u$ is a stochastic process for which there exists $u'$ with $(u, u') \in \CD_{\tmop{rbe}}$ almost surely and such that almost surely
\[
   \partial_t u = \Delta u + \lim_{\varepsilon \rightarrow 0} \mathD (\varphi(\varepsilon \mathD) u)^2 + \mathD \xi, \hspace{2em} u (0) = u_0 + Y(0),
\]
whenever $\varphi$ is an even smooth function of compact support with $\varphi(0) = 1$, where the convergence holds in probability in $\CD'(\R_+ \times \T)$. Then $u$ is almost surely equal to the unique paracontrolled solution to the rough Burgers equation driven by $\X$ and started in $u_0 + Y(0)$.
\end{corollary}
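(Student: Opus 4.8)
The plan is to feed the process $u$ into Theorem~\ref{thm:natural square} in order to identify the limiting nonlinearity with the paracontrolled square, deduce that $u$ satisfies the mild formulation of the rough Burgers equation, and then conclude via the uniqueness statement of Theorem~\ref{thm:burgers smooth initial}. The one point requiring care is that $u$ is only assumed to come with \emph{some} paracontrolled derivative $u'$, whereas the uniqueness in Theorem~\ref{thm:burgers smooth initial} is phrased for the canonical choice; so part of the argument is to show that $u$ also lies in $\CD_{\rbe}$ with the canonical derivative $2 u^Q + 4 X^{\zztwo}$.

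First I would reduce to a pathwise statement. Fix one even $\varphi \in C^\infty_c(\R)$ with $\varphi(0) = 1$ and set $\xi_\varepsilon = \varphi(\varepsilon \mathD)\xi$. Since $\varphi(\varepsilon \mathD)$ commutes with $\mathD$ and $\LL$ and maps the stationary initial condition $X(0)$ to $\varphi(\varepsilon\mathD)X(0)$, the mollified tree terms $\varphi(\varepsilon\mathD)X^\tau$ and $\varphi(\varepsilon\mathD)Q$ that enter the definition of $\X_\varepsilon$ in Theorem~\ref{thm:natural square} are precisely the components of $\Theta_{\rbe}(\xi_\varepsilon)$, so $\X_\varepsilon = \Theta_{\rbe}(\xi_\varepsilon)$. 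By Theorem~\ref{thm:kpz data} we then have $\|\X_\varepsilon - \X\|_{\Xrbe(T)} \to 0$ in probability for every $T>0$; passing to a subsequence, I work on the full-measure event on which this convergence holds, on which $(u,u') \in \CD_{\rbe}$, and on which the equation in the statement holds with this fixed $\varphi$. On this event Theorem~\ref{thm:natural square} applies and gives $\lim_{\varepsilon \to 0} I(\mathD[(\varphi(\varepsilon\mathD)u)^2]) = I(\mathD u^2)$ in $C([0,T],\CD')$, with $\mathD u^2$ the paracontrolled square. Combining this with the equation, with $I(\mathD\xi)(t) = X(t) - P_t X(0)$, and with the initial condition $u(0) = X(0) + u_0$, Duhamel's formula yields $u = P_\cdot u_0 + X + I(\mathD u^2)$.

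The key step is to upgrade this to membership in $\CD_{\rbe}$ with the canonical structure. Because the left-hand side $\lim_{\varepsilon\to0} I(\mathD[(\varphi(\varepsilon\mathD)u)^2])$ depends only on $u$ and $I$ is injective on time-continuous distributions, the square $\mathD u^2$ is in fact determined by $u$ together with $Q \reso X$, independently of the choice of $u'$. Set $u^Q = u - X - X^{\zzone} - 2 X^{\zztwo}$, $\hat u' = 2 u^Q + 4 X^{\zztwo}$ and $\hat u^\sharp = u^Q - \hat u' \mpara Q$. From $u^Q = u' \mpara Q + u^\sharp$ together with Lemmas~\ref{lem:paraproduct parabolic space} and~\ref{lemma:schauder} and the bound $\|Q\|_{\LL^\alpha_T} \lesssim \|\X\|_{\Xrbe(T)}$ one gets $u^Q \in \LL^\alpha_T$, hence $\hat u' \in \LL^\alpha_T$. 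For $\hat u^\sharp$ I would redo the computation from the proof of Lemma~\ref{lem:burgers a priori}: expanding $\LL \hat u^\sharp = \LL u^Q - \LL(\hat u' \mpara Q)$, inserting $\LL u^Q = \mathD u^2 - \mathD X^2 - 2\mathD(X X^{\zzone})$, and using the commutator estimates (Lemma~\ref{lem:modified paraproduct commutators}, Remark~\ref{rmk:paracontrolled commutator}), all the borderline paraproducts collapse into $(2 u^Q + 4 X^{\zztwo} - \hat u') \para \mathD X = 0$, precisely because of the choice of $\hat u'$, leaving $\LL(\hat u^\sharp - X^{\zzfour} - 4 X^{\zzthreereso}) \in C_T \CC^{2\alpha - 2}$; since $\hat u^\sharp(0) = u_0 \in \CC^{2\alpha}$, the Schauder estimates give $\hat u^\sharp \in \LL^{2\alpha}_T$. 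Thus $(u, \hat u', \hat u^\sharp) \in \CD_{\rbe}(T)$, and the Duhamel identity above says exactly that it is the fixed point of the map described in the Remark after Theorem~\ref{thm:burgers smooth initial}. By the uniqueness statement there, $u$ coincides on $[0,T]$ with the paracontrolled solution $\bar u$ started in $X(0) + u_0$, for every $T$ below the blow-up time, so $u = \bar u$ almost surely; iterating across $T^\ast$ as in the proof of Theorem~\ref{thm:burgers smooth initial} (or invoking the global existence obtained in Section~\ref{sec:hjb}) extends this to all of $\R_+$.

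I expect the only genuine difficulty to be this third step, the passage to the canonical derivative $\hat u'$ and the verification that $\hat u^\sharp \in \LL^{2\alpha}$. This is exactly the place, flagged in the Remark after Theorem~\ref{thm:burgers smooth initial}, where the weak form of uniqueness (among triples $(u,u',u^\sharp)$) has to be strengthened to uniqueness of $u$ itself; the mechanism is that the limit produced by Theorem~\ref{thm:natural square} is intrinsic to $u$, which pins down the paracontrolled square and forces the equation for $\hat u^\sharp$ to close with the canonical choice. Everything else — the commutator and Schauder bookkeeping, interchanging the $\varepsilon$-limit with $I$, the almost-sure subsequence, and the iteration over the blow-up time — is routine given the results recalled above.
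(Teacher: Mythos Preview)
Your approach is correct and is essentially the paper's own argument, carried out with more detail. The paper's three-line proof applies Theorem~\ref{thm:natural square} once to obtain $\LL u = \mathD u^2 + \mathD\xi$ with the square computed via $(u,u')$, asserts that this forces $(u,\,2u^Q+4X^{\zztwo})$ to be paracontrolled, and then applies Theorem~\ref{thm:natural square} a second time to this new structure: since the left-hand limit depends only on $u$, both paracontrolled squares coincide, and $u$ is the unique solution. Your intrinsicness argument (``the limit depends only on $u$, hence the square is independent of $u'$'') is exactly this second application, and your explicit verification that $\hat u^\sharp\in\LL^{2\alpha}$ via the computation of Lemma~\ref{lem:burgers a priori} is precisely what the paper hides behind ``This implies that also $(u,\,2u^Q+4X^{\zztwo})$ is almost surely paracontrolled.''

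One small correction: your identification $\X_\varepsilon=\Theta_{\rbe}(\xi_\varepsilon)$ is not right. While $\varphi(\varepsilon\mathD)$ does commute with $\LL$ and $\mathD$, giving $X_\varepsilon=X(\xi_\varepsilon)$, it does \emph{not} commute with the nonlinearities, so e.g.\ $\varphi(\varepsilon\mathD)X^{\zzone}\neq X^{\zzone}(\xi_\varepsilon)$ and the third component of $\X_\varepsilon$ in Theorem~\ref{thm:natural square} is $I(\mathD[\varphi(\varepsilon\mathD)X^{\zzone}\cdot X_\varepsilon])$, not $X^{\zztwo}(\xi_\varepsilon)$. The object $\X_\varepsilon$ of Theorem~\ref{thm:natural square} is the $\tilde\X_\varepsilon$ of Theorem~\ref{thm:rbe data}, and it is the second convergence statement there (for $\tilde\X_\varepsilon$) that gives you $\|\X_\varepsilon-\X\|_{\Xrbe(T)}\to 0$. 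This does not affect the rest of your argument.
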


\begin{proof}
   Theorem~\ref{thm:natural square} shows that almost surely
   \[
      \LL u = \mathD u^2 + \mathD \xi, \qquad u(0) = u_0 + Y(0),
   \]
   where the square $\mathD u^2$ is interpreted using the paracontrolled structure of $(u,u')$. This implies that also $(u, 2 u^Q + 4 X^\zztwo)$ is almost surely paracontrolled, where $u^Q = u - X - X^\zzone - 2 X^\zztwo$. However, another application of Theorem~\ref{thm:natural square} shows that almost surely the paracontrolled square $\mathD u^2$ is the same for $(u, u')$ and for $(u, 2u^Q, 4 X^\zztwo)$, and therefore $u$ is the paracontrolled solution of the equation.
\end{proof}

\begin{remark}\label{rmk:energy solution}
   There have been several efforts to find formulations of the Burgers equation based on interpretations like~\eqref{eq:natural paracontrolled square}, see for example~\cite{Assing2002, GoncalvesJara2014, Assing2012, Catuogno2012, GubinelliJara2013}. In~\cite{GoncalvesJara2014} the concept of \emph{energy solutions} is introduced. A process $u$ is an energy solution to the stationary Burgers equation if $u(t,\cdot)$ is ($1/\sqrt{2}$ times) a space white noise for all times $t \geqslant 0$, if it satisfies some ``energy estimates'', and if for any $\varphi$ as in Theorem~\ref{thm:natural square} we have
   \[
      u(t) = P_t u_0 + \lim_{\varepsilon \to 0} I (\mathD [(\varphi(\varepsilon \mathD) u)^2])(t) + I(\xi)(t)
   \]
   almost surely for all $t \geqslant 0$, where the convergence takes place in $L^2$ in $\CD'(\T)$. In~\cite{GoncalvesJara2014} it was also shown that energy solutions are a useful tool for studying the convergence of rescaled particle systems to the stochastic Burgers equation: it can often be shown that a given rescaled particle system is tight, and that any limit point in distribution is an energy solution of Burgers equation. However, so far it is not known whether energy solutions are unique and this appears to be a difficult question. On the other side, Corollary~\ref{cor:unique energy solution} shows that there exists at most one energy solution which is almost surely paracontrolled.
\end{remark}

\begin{remark}
   For simplicity we formulated the result under the assumption $u(0) = Y(0) + u_0$ for some relatively regular perturbation $u_0$. Using the techniques of Section~\ref{sec:singular initial} it is possible to extend this to general $u(0) \in \CC^{-1+\varepsilon}$.
\end{remark}

\section{Singular initial conditions}\label{sec:singular initial}

In this section we extend the results of Section~\ref{sec:burgers} to allow for more general initial conditions. For that purpose we adapt the definition of paracontrolled distributions. To allow for initial conditions that are not necessarily regular perturbations of $X(0)$, we will introduce weighted norms which permit a possible explosion of the paracontrolled norm at 0. Moreover, as was kindly pointed out to us by Khalil Chouk, in the case of the rough heat equation it is very convenient to work on
\[
   \CC_p^\alpha = B_{p,\infty}^\alpha
\]
for $\alpha \in \R$, rather than restricting ourselves to $p = \infty$. For example the Dirac delta has regularity $\CC^{1/p-1}_p$ on $\T$, so by working in $\CC^\alpha_1$ spaces we will be able to start the rough heat equation from $(-\Delta)^{\beta} \delta_0$ for any $\beta < 1/4$. In fact for the same reasons and in the setting of regularity structures an approach based on Besov spaces with finite index was developed recently in~\cite{HairerLabbe2015}.

For $p \in [1,\infty]$ and $\gamma \geqslant 0$ we define $\mathcal{M}^{\gamma}_T L^p = \{ v \colon [0, T] \rightarrow \CD'(\T) : \| v \|_{\mathcal{M}^{\gamma}_T L^p} < \infty \}$, where
\[
   \| v \|_{\mathcal{M}^{\gamma}_T L^p} = \sup_{t \in [0, T]} \{ \| t^{\gamma} v(t) \|_{L^p} \} .
\]
In particular $\| v \|_{\mathcal{M}^{\gamma}_T L^p} = \| v \|_{C_T L^p}$ for $\gamma=0$. If further $\alpha \in (0,
2)$ and $T > 0$ we define the norm
\[
   \| f \|_{\LL^{\gamma, \alpha}_p(T)} = \max \big\{ \| t \mapsto t^{\gamma} f(t) \|_{C^{\alpha / 2}_T L^{p}}, \| f \|_{\mathcal{M}^{\gamma}_T \CC_p^{\alpha}} \big\}
\]
and the space $\LL^{\gamma, \alpha}_p(T) = \{ f \colon [0, T] \rightarrow
\CD': \| f \|_{\LL^{\gamma, \alpha}_p(T)} < \infty \}$ as well as
\[
   \LL^{\gamma,\alpha}_p = \big\{f\colon \R_+\to \CD': f|_{[0,T]} \in \LL^{\gamma,\alpha}_p(T) \text{ for all } T > 0\big\}.
\]
In particular, we have $\LL^{0,\alpha}_\infty(T) = \LL^{\alpha}_T$.

\subsection{Preliminary estimates}\label{sec:general initial preliminary}

Here we translate the results of Section~\ref{sec:preliminaries} to the setting of $\LL_p^{\gamma,\alpha}$ spaces. The estimates involving only one fixed time remain essentially unchanged, but we carefully have to revisit every estimate that involves the modified paraproduct $\mpara$, as well as the Schauder estimates for the Laplacian.

Let us start with the paraproduct estimates:
\begin{lemma}\label{thm:paraproduct exp} For any $\beta \in \mathbb{R}$ and $p \in [1,\infty]$ we have
  \[
    \|f \para g\|_{\CC^\beta_p} \lesssim_{\beta} \min\{ \|f\|_{L^{\infty}} \|g\|_{\CC^\beta_p}, \|f \|_{L^p} \|g\|_\beta \},
  \]
  and for $\alpha < 0$ furthermore
  \[
     \|f \para g\|_{\CC^{\alpha + \beta}_p} \lesssim_{\alpha, \beta} \min\{ \|f\|_{\CC^\alpha_p} \|g\|_{\beta}, \|f\|_{\alpha} \|g\|_{\CC^\beta_p}\}.
  \]
  For $\alpha + \beta > 0$ we have
  \[
    \|f \reso g\|_{\CC^{\alpha + \beta}_p} \lesssim_{\alpha, \beta} \min\{ \|f\|_{\CC_p^\alpha} \|g\|_{\beta}, \|f\|_{\alpha} \|g\|_{\CC_p^\beta} \} .
  \]
\end{lemma}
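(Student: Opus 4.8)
The plan is to run the classical Littlewood--Paley argument that proves Lemma~\ref{thm:paraproduct} essentially verbatim, the only modification being that one replaces the single product bound $\|fg\|_{L^\infty}\leqslant\|f\|_{L^\infty}\|g\|_{L^\infty}$ by the two H\"older inequalities $\|fg\|_{L^p}\leqslant\|f\|_{L^\infty}\|g\|_{L^p}$ and $\|fg\|_{L^p}\leqslant\|f\|_{L^p}\|g\|_{L^\infty}$, carrying the $L^p$ norm on whichever of the two factors the desired $\CC^\bullet_p$ regularity should be read off from. Two standard facts make this go through: first, on $\T$ every Littlewood--Paley block $\Delta_j$ and every partial sum $S_{j-1}$ is convolution against a kernel whose $L^1$ norm is bounded uniformly in $j$, so that $\|S_{j-1}f\|_{L^q}\lesssim\|f\|_{L^q}$ and $\|\Delta_jf\|_{L^q}\lesssim\|f\|_{L^q}$ for every $q\in[1,\infty]$ (the periodic Bernstein/Young inequalities, see~\cite{Bahouri2011}); and second, the usual spectral bookkeeping, namely that $S_{j-1}f\,\Delta_jg$ has Fourier support in an annulus $2^j\CA$, while $\Delta_if\,\Delta_jg$ with $|i-j|\leqslant 1$ has Fourier support in a ball $2^{\max(i,j)}\CB$.

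For the paraproduct I would write $\Delta_i(f\para g)=\sum_{j:\,|i-j|\leqslant N}\Delta_i(S_{j-1}f\,\Delta_jg)$ for a fixed integer $N$ and bound each summand either by $\|S_{j-1}f\|_{L^\infty}\|\Delta_jg\|_{L^p}$ or by $\|S_{j-1}f\|_{L^p}\|\Delta_jg\|_{L^\infty}$. In the first case $\|S_{j-1}f\|_{L^\infty}\lesssim\|f\|_{L^\infty}$ and $\|\Delta_jg\|_{L^p}\lesssim 2^{-j\beta}\|g\|_{\CC^\beta_p}$, so summing the finitely many $j\sim i$ gives $\|\Delta_i(f\para g)\|_{L^p}\lesssim 2^{-i\beta}\|f\|_{L^\infty}\|g\|_{\CC^\beta_p}$; the second case is symmetric and produces the $\|f\|_{L^p}\|g\|_{\beta}$ bound. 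For the refined estimate when $\alpha<0$ the only change is in the bound on $S_{j-1}f$: since $-\alpha>0$ the series $\|S_{j-1}f\|_{L^\infty}\leqslant\sum_{k<j-1}\|\Delta_kf\|_{L^\infty}\lesssim 2^{-j\alpha}\|f\|_{\alpha}$, and likewise $\lesssim 2^{-j\alpha}\|f\|_{\CC^\alpha_p}$ in $L^p$, is dominated by its top term, and the same two H\"older splittings then yield $\|\Delta_i(f\para g)\|_{L^p}\lesssim 2^{-i(\alpha+\beta)}\min\{\|f\|_{\alpha}\|g\|_{\CC^\beta_p},\,\|f\|_{\CC^\alpha_p}\|g\|_{\beta}\}$.

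For the resonant term I would regroup $f\reso g=\sum_k\Delta_kf\,\widetilde\Delta_kg$ with $\widetilde\Delta_k=\sum_{|\ell-k|\leqslant 1}\Delta_\ell$; since $\Delta_kf\,\widetilde\Delta_kg$ has spectrum in a ball $2^k\CB$, only the terms with $k\gtrsim q$ contribute to $\Delta_q(f\reso g)$, and each of them is bounded by $\|\Delta_kf\|_{L^p}\|\widetilde\Delta_kg\|_{L^\infty}\lesssim 2^{-k(\alpha+\beta)}\|f\|_{\CC^\alpha_p}\|g\|_{\beta}$, or, swapping the roles of the two factors, by $2^{-k(\alpha+\beta)}\|f\|_{\alpha}\|g\|_{\CC^\beta_p}$. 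The hypothesis $\alpha+\beta>0$ is precisely what makes $\sum_{k\gtrsim q}2^{-k(\alpha+\beta)}\lesssim 2^{-q(\alpha+\beta)}$, which gives $\|\Delta_q(f\reso g)\|_{L^p}\lesssim 2^{-q(\alpha+\beta)}\min\{\|f\|_{\CC^\alpha_p}\|g\|_{\beta},\,\|f\|_{\alpha}\|g\|_{\CC^\beta_p}\}$ and hence the asserted bound.

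I do not expect any real obstacle: the argument is identical to the $p=\infty$ case of Lemma~\ref{thm:paraproduct}, and the only points deserving a word of care are the uniform-in-$j$ and uniform-in-$q$ boundedness of $S_{j-1}$ and $\Delta_j$ on $L^q(\T)$ and the separate --- but trivial --- treatment of the low-frequency block $\Delta_{-1}$, both of which are classical.
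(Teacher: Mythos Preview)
Your proposal is correct and is precisely the natural argument: the paper itself does not give a proof for this lemma, treating it as an immediate extension of Lemma~\ref{thm:paraproduct} obtained by replacing the $L^\infty$--$L^\infty$ product bound with the two H\"older splittings you describe. Your sketch fills in exactly those details.
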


To adapt the multiplication theorem of paracontrolled distributions to our setting, we first adapt the meta-definition of a paracontrolled distribution:

\begin{definition}
  Let $\beta > 0$, $\alpha \in \R$, and $p \in [1,\infty]$. A distribution $f \in
  \CC^{\alpha}_p$ is called paracontrolled by $u \in \CC^{\alpha}$ and we write $f \in \CD^{\beta}(u)$, if there
  exists $f' \in \CC^{\beta}_p$ such that $f^{\sharp} = f - f' \para u \in
  \CC^{\alpha + \beta}_p$.
\end{definition}

Let us stress that $u \in \CC^\alpha$ is not a typo, we do not weaken the integrability assumptions on the reference distribution $u$.

\begin{theorem}\label{thm:paracontrolled product exp}
   Let $\alpha, \beta \in (1 / 3, 1 / 2)$.
  Let $u \in \CC^{\alpha}$, $v \in \CC^{\alpha - 1}$, and let $(f, f') \in \CD^\alpha_\infty(u)$ and $(g, g') \in \CD^\alpha_{p}(v)$. Assume that $u \reso v \in \CC^{2 \alpha - 1}$ is given as limit of $(u_n \reso v_n)$ in
  $\CC^{2 \alpha - 1}$, where $(u_n)$ and $(v_n)$ are sequences of smooth
  functions that converge to $u$ in $\CC^{\alpha}$ and to $v$ in $\CC^{\alpha - 1}$ respectively. Then $f g$ is well defined and satisfies
  \[
     \| f g - f \para g \|_{\CC^{2 \alpha - 1}_p} \lesssim (\| f' \|_{\beta} \| u \|_{\alpha} + \| f^{\sharp} \|_{\alpha + \beta}) (\| g' \|_{\CC_p^\beta} \| v  \|_{\alpha - 1} + \| g^{\sharp} \|_{\CC_p^{\alpha + \beta - 1}}) + \| f' g' \|_{\CC_p^\beta} \| u \reso v \|_{2 \alpha - 1} .
  \]
  Furthermore, the product is locally Lipschitz continuous: Let $\tilde{u} \in
  \CC^{\alpha}$, $\tilde{v} \in \CC^{\alpha - 1}$ with $\tilde{u} \reso
  \tilde{v} \in \CC^{2 \alpha - 1}$ and let $(\tilde{f}, \tilde{f}')\in \CD^\alpha_\infty(\tilde{u})$ and $(\tilde{g}, \tilde{g}') \in \CD^\alpha_{p}(\tilde{v})$. Assume that $M > 0$ is an upper bound for the
  norms of all distributions under consideration. Then
  \begin{align*}
     \| (f g - f \para g) - (\tilde{f} \tilde{g} - \tilde{f} \para \tilde{g}) \|_{\CC_p^{2 \alpha - 1}} &  \lesssim (1 + M^3) \Big[\| f' - \tilde{f}' \|_{\beta} + \| g' - \tilde{g}' \|_{\CC_p^\beta} \nobracket + \| u - \tilde{u} \|_{\alpha} \\
     &\hspace{65pt} + \| v - \tilde{v} \|_{\alpha - 1} + \| f^{\sharp} - \tilde{f}^{\sharp} \|_{\alpha + \beta}  \\
     &\hspace{65pt} +  \| g^{\sharp} - \tilde{g}^{\sharp} \|_{\CC_p^{\alpha + \beta - 1}} \| u \reso v - \tilde{u} \reso \tilde{v} \|_{2 \alpha - 1}\Big].
  \end{align*}
  If $f' = \tilde{f}' = 0$ or $g' = \tilde{g}' = 0$, then $M^3$ can be replaced by $M^2$.
\end{theorem}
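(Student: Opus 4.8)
The plan is to reproduce the proof of Theorem~\ref{thm:paracontrolled product} essentially verbatim, the only difference being that at every step one must keep track of which of the two factors carries the $L^p$--integrability, so that all error terms land in $\CC^{2\alpha-1}_p$ rather than merely in $\CC^{2\alpha-1}$. Writing the Bony decomposition $fg = f\para g + f\lpara g + f\reso g$ we have $fg - f\para g = f\lpara g + f\reso g$. The paraproduct $f\lpara g = g\para f$, with $f\in\CC^\alpha$ sitting at high frequency and $g\in\CC^{\alpha-1}_p$ modulating at low frequency, is controlled in $\CC^{2\alpha-1}_p$ by the second estimate of Lemma~\ref{thm:paraproduct exp}. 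As in the $L^\infty$ case the whole difficulty is the resonant term $f\reso g$, which is a priori ill defined because $\alpha+(\alpha-1)=2\alpha-1<0$.

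For the resonant term I would insert both paracontrolled structures, $f=f'\para u+f^\sharp$ and $g=g'\para v+g^\sharp$, and use the commutator $C$ of Remark~\ref{rmk:paracontrolled commutator} twice:
\begin{align*}
   f\reso g &= (f'\para u)\reso g + f^\sharp\reso g = C(f',u,g) + f'\,(u\reso g) + f^\sharp\reso g,\\
   u\reso g &= u\reso g^\sharp + u\reso(g'\para v) = u\reso g^\sharp + C(g',v,u) + g'\,(u\reso v),
\end{align*}
so that
\[
   fg - f\para g = f\lpara g + C(f',u,g) + f'\,(u\reso g^\sharp) + f'\,C(g',v,u) + f'g'\,(u\reso v) + f^\sharp\reso g.
\]
Every summand on the right is now well defined: $f^\sharp\reso g$ and $u\reso g^\sharp$ are handled by the resonant estimate of Lemma~\ref{thm:paraproduct exp} using $2\alpha+\beta-1>0$ (which holds since $\alpha,\beta>1/3$) and land in $\CC^{2\alpha+\beta-1}_p\subseteq\CC^{2\alpha-1}_p$; the products $f'\,(u\reso g^\sharp)$, $f'\,C(g',v,u)$, $f'g'\,(u\reso v)$ are controlled by the Bony multiplication bounds of Lemma~\ref{thm:paraproduct exp}, always keeping the $L^p$ factor as the one dictating the integrability; and the only term that actually uses the extra datum is $f'g'\,(u\reso v)$, which is responsible for the $\|f'g'\|_{\CC^\beta_p}\|u\reso v\|_{2\alpha-1}$ contribution in the claimed bound. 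Collecting these estimates gives the displayed inequality. The local Lipschitz estimate then follows by writing each of the six terms above as a fixed multi-linear form evaluated at the data, subtracting the tilded version, and using multilinearity exactly as in the proof of Theorem~\ref{thm:paracontrolled product}; and if $f'=0$ (resp.\ $g'=0$) then only $f\lpara g$, $f^\sharp\reso g$ and the analogous shorter list survive, none of which is cubic in $M$, which yields the last assertion.

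The one genuinely new ingredient, and the main obstacle, is that we need two $L^p$--versions of the commutator estimate of Remark~\ref{rmk:paracontrolled commutator}: $C$ bounded from $\CC^\beta\times\CC^\alpha\times\CC^{\alpha-1}_p$ to $\CC^{2\alpha+\beta-1}_p$ (used for $C(f',u,g)$, where the $L^p$ factor occupies the last slot) and from $\CC^\beta_p\times\CC^{\alpha-1}\times\CC^\alpha$ to $\CC^{2\alpha+\beta-1}_p$ (used for $C(g',v,u)$, where it occupies the first slot). Both are obtained by revisiting the proof in~\cite{gubinelli_paraproducts_2012}: there $C(f,g,h)$ is rewritten as an absolutely summable series whose generic term is a product of a single Littlewood--Paley block of one factor with (localised) blocks of the other two, and in each such term one simply applies Hölder's inequality so that the designated $L^p$ block governs the integrability while the remaining two are measured in $L^\infty$; the frequency bookkeeping is unchanged. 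I would record this as a short lemma preceding the theorem (or, more economically, simply remark that the argument of~\cite{gubinelli_paraproducts_2012} goes through unchanged with $L^\infty$ replaced by $L^p$ for the single factor in the corresponding slot), after which the proof above is pure bookkeeping with indices, all of which close because $\alpha,\beta\in(1/3,1/2)$ forces $2\alpha+\beta-1>0$ and $2\alpha-1<\beta$.
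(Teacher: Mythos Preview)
Your proposal is correct and is precisely the approach the paper indicates: the paper does not spell out a proof but simply remarks that ``the proof is a straightforward adaptation of the arguments in~\cite{gubinelli_paraproducts_2012}, see also~\cite{Allez2015}'' and points to~\cite{Proemel2015} for general Besov spaces. Your write-up supplies exactly this adaptation---the Bony decomposition, the double use of the commutator $C$, and the observation that the only new input is the $\CC^\beta_p$-variant of the commutator bound obtained by inserting H\"older at the right place in the proof from~\cite{gubinelli_paraproducts_2012}---so you have in fact written out more than the paper does.
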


In this setting, the proof is a straightforward adaptation of the arguments in~\cite{gubinelli_paraproducts_2012}, see also~\cite{Allez2015}. For an extension to much more general Besov spaces see~\cite{Proemel2015}.

Next we get to the modified paraproduct, which we recall was defined as
\[
   f \mpara g = \sum_i (Q_i S_{i - 1} f) \Delta_i g \qquad \text{with} \qquad Q_i f (t) = \int_{\R} 2^{- 2 i} \varphi (2^{2 i} (t - s)) f (s \vee 0) \mathd s.
\]
Since for $f \in \CM^\gamma_T L^p$ we have in general $f(0) \notin L^p$, this definition might lead to problems. Recalling that $f \in \LL^{\gamma,\alpha}_\infty(T)$ if and only if $t \mapsto t^\gamma f(t) \in \LL^\gamma_T$, it seems reasonable to replace $f$ by $ f(t) \1_{t > 0}$, so that $\int_{-\infty}^0 \varphi(2^{2i}(t-s))f(s\vee 0) \mathd s = 0$. So in what follows we shall always silently perform this replacement when evaluating $\mpara$ on elements of $\LL^{\gamma,\alpha}_p(T)$ or $\CM^\gamma_T L^p$.

\begin{lemma}\label{lem:modified paraproduct exp}
  For any $\beta \in \mathbb{R}$, $p \in [1,\infty]$, and $\gamma \in [0, 1)$ we have
  \begin{equation}\label{eq:mod para-1 exp}
    t^{\gamma} \|f \mpara g (t) \|_{\CC_p^\beta} \lesssim \min\{\|f\|_{\mathcal{M}^{\gamma}_t L^{p}} \|g (t) \|_{\beta},\|f\|_{\mathcal{M}^\gamma_t L^\infty} \|g(t)\|_{\CC^\beta_p} \}
  \end{equation}
  for all $t > 0$, and for $\alpha < 0$ furthermore
  \begin{equation}\label{eq:mod para-2 exp}
     t^{\gamma} \|f \mpara g (t) \|_{\CC_p^{\alpha + \beta}} \lesssim \min\{ \|f\|_{\mathcal{M}^{\gamma}_t \CC^{\alpha}_p} \|g (t) \|_{\beta}, \|f\|_{\mathcal{M}^{\gamma}_t \CC^{\alpha}} \|g (t) \|_{\CC_p^\beta} \} .
  \end{equation}
\end{lemma}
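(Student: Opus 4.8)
The plan is to run the classical proof of the (modified) paraproduct estimates, Lemma~\ref{thm:paraproduct} and Lemma~\ref{thm:paraproduct exp}, the only genuinely new input being a uniform-in-$i$ bound for the temporally averaged block $Q_i S_{i-1} f(t)$ against the weighted norm of $f$. First I would note that the spectral bookkeeping is unchanged from the unweighted case: $Q_i S_{i-1} f$ has Fourier support in a ball $2^i \CB$ and $\Delta_i g$ in an annulus $2^i \CA$, so $\Delta_j (f \mpara g)$ receives contributions only from indices $i \sim j$, and it suffices to estimate
\[
   \| \Delta_j (f \mpara g)(t) \|_{L^p} \lesssim \sum_{i \sim j} \| Q_i S_{i-1} f(t) \|_{L^{q_1}} \| \Delta_i g(t) \|_{L^{q_2}}
\]
with the two Hölder pairings $(q_1,q_2) \in \{(\infty,p),(p,\infty)\}$, which produce the two entries of each $\min$.

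The heart of the matter is the claim that, for $q \in [1,\infty]$ and $\gamma \in [0,1)$,
\[
   t^\gamma \| Q_i S_{i-1} f(t) \|_{L^q} \lesssim \| f \|_{\CM^\gamma_t L^q}
\]
uniformly in $i$ and $t > 0$, together with its negative-regularity variant, in which the right-hand side becomes $2^{-i\alpha} \| f\|_{\CM^\gamma_t \CC^\alpha}$ (resp.\ $2^{-i\alpha} \| f\|_{\CM^\gamma_t \CC^\alpha_p}$) for $\alpha < 0$, obtained from $\| S_{i-1} h\|_{L^q} \lesssim 2^{-i\alpha} \| h\|_{B^\alpha_{q,\infty}}$. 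By the uniform $L^q$-boundedness of $S_{i-1}$ and the silent replacement that sets $f(s) = 0$ for $s \leqslant 0$, this reduces to the scalar inequality
\[
   \int_0^t \varrho_i(t-s)\, s^{-\gamma}\, \mathd s \lesssim t^{-\gamma}, \qquad \varrho_i := 2^{-2i}\varphi(2^{2i}\cdummy),
\]
where $\varrho_i \geqslant 0$ is supported in $[a 2^{-2i}, b 2^{-2i}]$, with $\| \varrho_i\|_{L^1} \lesssim 1$ and $\| \varrho_i\|_{L^\infty} \lesssim 2^{-2i}$. I would prove this by a dichotomy comparing $b 2^{-2i}$ with $t$: if $b 2^{-2i} \leqslant t/2$, then every $s$ in the effective range of integration satisfies $s \geqslant t/2$, so $s^{-\gamma} \lesssim t^{-\gamma}$ comes out and the remaining mass $\int \varrho_i$ is $\lesssim 1$; if $b 2^{-2i} > t/2$, i.e.\ $t \lesssim 2^{-2i}$, I bound $\varrho_i \leqslant \| \varrho_i\|_{L^\infty} \lesssim 2^{-2i}$ and use $\int_0^t (t-s)^{-\gamma}\, \mathd s = t^{1-\gamma}/(1-\gamma)$, which is finite precisely because $\gamma < 1$, to get $2^{-2i} t^{1-\gamma} = (2^{-2i}t)\, t^{-\gamma} \lesssim t^{-\gamma}$, since $2^{-2i} t \lesssim 2^{-4i} \lesssim 1$ for the contributing indices. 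This dichotomy, and in particular the use of $\gamma < 1$ in its second branch, is the only new ingredient and hence the main (mild) obstacle.

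With the core estimate available the rest is routine. For \eqref{eq:mod para-1 exp} I would insert it into the block bound above together with $\| \Delta_i g(t)\|_{L^{q_2}} \lesssim 2^{-i\beta} \| g(t)\|_{B^\beta_{q_2,\infty}}$ and $i \sim j$, obtaining $\| \Delta_j(f \mpara g)(t)\|_{L^p} \lesssim t^{-\gamma} 2^{-j\beta} \| f\|_{\CM^\gamma_t L^{q_1}} \| g(t)\|_{B^\beta_{q_2,\infty}}$; multiplying by $2^{j\beta}$ and taking the supremum over $j$ gives \eqref{eq:mod para-1 exp}, the two choices of $(q_1,q_2)$ yielding the two members of the $\min$. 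For \eqref{eq:mod para-2 exp} one argues identically, except that $S_{i-1} f$ now carries the extra factor $2^{-i\alpha}$ — this is where $\alpha < 0$ is used, through $\sum_{k<i-1} 2^{-k\alpha} \lesssim 2^{-i\alpha}$, exactly as in Lemma~\ref{thm:paraproduct exp} — so the sum over $i \sim j$ telescopes to $2^{-j(\alpha+\beta)}$ and recovers the $\CC^{\alpha+\beta}_p$-norm. Specializing to $\gamma = 0$, where $\CM^0_t L^q = C_t L^q$ and the scalar integral is trivially $\lesssim 1$, gives back the earlier modified-paraproduct estimates, a useful consistency check.
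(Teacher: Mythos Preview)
Your argument is correct and follows the paper's overall route: both reduce to the scalar inequality $\int_0^t 2^{2i}\varphi(2^{2i}(t-s))\, s^{-\gamma}\, \mathd s \lesssim t^{-\gamma}$ and use $\gamma<1$ exactly to integrate $s^{-\gamma}$ near the origin. The difference lies in how this inequality is proved. The paper splits the integration domain at $t/2$: on $[t/2,t]$ one pulls out $s^{-\gamma} \lesssim t^{-\gamma}$ and integrates the kernel; on $[0,t/2]$ one uses the decay $|\varphi(r)| \lesssim r^{-1}$ to bound the kernel by $(t-s)^{-1} \lesssim t^{-1}$ and then integrates $s^{-\gamma}$. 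Your dichotomy instead compares $t$ with the support scale $2^{-2i}$ and uses only the $L^1$ and $L^\infty$ norms of the kernel together with its compact support; this is slightly more direct since it does not invoke any decay estimate on $\varphi$. Two harmless slips: the kernel is $\varrho_i = 2^{2i}\varphi(2^{2i}\cdot)$, not $2^{-2i}\varphi(2^{2i}\cdot)$ (the paper's displayed definition of $Q_i$ carries the same sign typo, but its proof in the appendix uses $2^{2j}$), so $\|\varrho_i\|_{L^\infty} \lesssim 2^{2i}$ and your second branch becomes $2^{2i} t^{1-\gamma} = (2^{2i}t)\,t^{-\gamma} \lesssim t^{-\gamma}$ via $t \lesssim 2^{-2i}$; and the integral you write as $\int_0^t (t-s)^{-\gamma}\,\mathd s$ should read $\int_0^t s^{-\gamma}\,\mathd s$.
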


\begin{lemma}\label{lem:modified paraproduct commutators exp}
   Let $\alpha \in (0, 2)$, $\beta \in \R$, $p \in [1,\infty]$, and $\gamma \in [0, 1)$. Then
   \[
      t^{\gamma} \| (f \mpara g - f \para g) (t) \|_{\CC_p^{\alpha + \beta}} \lesssim \| f \|_{\LL^{\gamma, \alpha}_p(t)} \| g (t) \|_{\beta}
   \]
  for all $t > 0$, as well as
  \[
     t^{\gamma} \left\| \left( \LL (f \mpara g) - f \mpara \left( \LL g \right) \right) (t) \right\|_{\CC_p^{\alpha + \beta - 2}} \lesssim \| f   \|_{\LL^{\gamma, \alpha}_p(t)} \| g (t) \|_{\beta}.
  \]
\end{lemma}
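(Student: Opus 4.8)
The plan is to revisit the proof of Lemma~\ref{lem:modified paraproduct commutators} from~\cite{gubinelli_paraproducts_2012} and to insert the weight $t^{\gamma}$, paying attention to its effect near $t=0$; recall the convention stated just before the statement that inside $\mpara$ the map $f$ is silently replaced by $f(\cdot)\1_{(0,\infty)}(\cdot)$, which is what makes the time integrals defining $Q_i$ converge. Fix $t>0$ and use throughout the Littlewood--Paley characterisation of $\CC_p^{\sigma}$: it suffices to exhibit a decomposition of the left-hand side as $\sum_{i\geqslant -1}u_i$ with $\widehat{u_i}$ supported in a ball or annulus of radius $\simeq 2^i$ and $2^{i\sigma}\|u_i\|_{L^p}\lesssim t^{-\gamma}\|f\|_{\LL^{\gamma,\alpha}_p(t)}\|g(t)\|_{\beta}$ uniformly in $i$. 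For the first estimate, write $(f\mpara g-f\para g)(t)=\sum_i\big((Q_i-\mathrm{id})S_{i-1}f(t)\big)\Delta_i g(t)$; since $Q_i$ acts only in time, each summand is spectrally supported in an annulus $2^i\CA$, so it remains to prove
\[
   t^{\gamma}\big\|(Q_i-\mathrm{id})S_{i-1}f(t)\big\|_{L^p}\lesssim 2^{-i\alpha}\|f\|_{\LL^{\gamma,\alpha}_p(t)}.
\]
The key facts are that $Q_i$ averages $S_{i-1}f$ over a time window of length $\simeq 2^{-2i}$ (tuned to the parabolic scaling) and that $s\mapsto s^{\gamma}S_{i-1}f(s)=S_{i-1}(s^{\gamma}f(s))$ is bounded in $L^p$ by $\|f\|_{\LL^{\gamma,\alpha}_p(t)}$ and $(\alpha/2)$-Hölder in time with the same norm. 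One then separates the regimes $t\geqslant 2^{-2i}$ and $t<2^{-2i}$: in the former, the averaging window lies in $[t/2,t]$, the weight $s^{-\gamma}$ is comparable to $t^{-\gamma}$ there, and the time-Hölder bound yields the gain $(2^{-2i})^{\alpha/2}=2^{-i\alpha}$; in the latter one bounds $Q_iS_{i-1}f(t)$ and $S_{i-1}f(t)$ separately, using $\int_0^t s^{-\gamma}\,\mathrm{d}s<\infty$ (this is where $\gamma<1$ enters) together with $2^{2i}t\lesssim 1$.

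For the second estimate (where, as in Lemma~\ref{lem:modified paraproduct commutators}, one restricts to $\alpha<1$), use the elementary identity $\LL(uv)-u\,\LL v=v\,\LL u-2\nabla u\cdot\nabla v$ with $u=Q_iS_{i-1}f$, $v=\Delta_i g$, together with $\Delta_i\LL g=\LL\Delta_i g$, to obtain
\[
   \big(\LL(f\mpara g)-f\mpara(\LL g)\big)(t)=\sum_i\Big[\Delta_i g(t)\,\LL(Q_iS_{i-1}f)(t)-2\,\nabla(Q_iS_{i-1}f)(t)\cdot\nabla\Delta_i g(t)\Big],
\]
each summand again spectrally supported in $2^i\CA$. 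The spatial pieces of $\LL(Q_iS_{i-1}f)$ are treated by commuting $Q_i$ with $\Delta$ and $\nabla$ and applying Bernstein's inequality to $S_{i-1}$: for $s\leqslant t$ one has $\|\Delta S_{i-1}(s^{\gamma}f(s))\|_{L^p}\lesssim 2^{i(2-\alpha)}\|f\|_{\LL^{\gamma,\alpha}_p(t)}$ and $\|\nabla S_{i-1}(s^{\gamma}f(s))\|_{L^p}\lesssim 2^{i(1-\alpha)}\|f\|_{\LL^{\gamma,\alpha}_p(t)}$ (the latter forcing $\alpha<1$), and integrating these against the positive kernel of $Q_i$ with respect to the measure $s^{-\gamma}\,\mathrm{d}s$ costs a further $t^{-\gamma}$ by the $t\gtrsim 2^{-2i}$ analysis above. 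The temporal piece $\partial_t(Q_iS_{i-1}f)(t)$ is handled as in the first estimate but with an extra factor $2^{2i}$ from $\partial_t$ landing on the scale-$2^{-2i}$ kernel: one rewrites $\partial_t Q_i$ using $\varphi'$, uses $\int\varphi'=0$ to subtract a constant, and invokes the weighted time-Hölder bound on $S_{i-1}f$, again with the case split $t\gtrsim 2^{-2i}$ versus $t\lesssim 2^{-2i}$, the latter producing a boundary contribution at $s=0$ controlled via $\gamma<1$.

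The main obstacle — everything else being a routine rerun of~\cite{gubinelli_paraproducts_2012} with $t^{\gamma}$ inserted — is the bookkeeping near $t=0$: because the weight makes $f$ genuinely unbounded there, every estimate involving $Q_i$ or $\partial_t Q_i$ must be split into the ``bulk'' regime $t\gtrsim 2^{-2i}$, where the weight is locally comparable to a constant and the classical arguments go through after multiplying by $t^{\gamma}$, and the ``initial'' regime $t\lesssim 2^{-2i}$, where the mollification window reaches $0$ and one exploits $\int_0^t s^{-\gamma}\,\mathrm{d}s<\infty$ together with $2^{2i}t\lesssim 1$. Keeping the exponents balanced across this case distinction — in particular in the $\partial_t$-commutator, where the non-constancy of $s\mapsto s^{-\gamma}$ itself contributes a term that must be absorbed — is the delicate point.
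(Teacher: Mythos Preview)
Your proposal is correct and follows essentially the same strategy as the paper: reduce to per-block estimates via spectral support, exploit the weighted time-H\"older control on $s\mapsto s^\gamma f(s)$, and for the $\LL$-commutator use $\int\varphi'=0$ to subtract a constant before repeating the argument. The organization differs only cosmetically: instead of your case split $t\gtrsim 2^{-2i}$ versus $t\lesssim 2^{-2i}$, the paper writes $(Q_i-\mathrm{id})S_{i-1}f(t)$ uniformly as a ``tail'' $\int_{-\infty}^0 2^{2i}\varphi(2^{2i}(t-s))S_{i-1}f(t)\,\mathd s$ (handled via $|\varphi(r)|\lesssim|r|^{-1-\alpha/2}$) plus $\int_0^t 2^{2i}\varphi(2^{2i}(t-s))S_{i-1}f_{t,s}\,\mathd s$ (split at $t/2$), and for the second estimate it dispatches the spatial pieces $-(\Delta f)\mpara g$ and $-2(\nabla f)\mpara(\nabla g)$ directly to Lemma~\ref{lem:modified paraproduct exp} rather than redoing Bernstein, so that only the $\partial_t$-term needs explicit work.
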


These two lemmas are not very difficult to show, but at least the proof for the second one is slightly technical; the proofs can be found in Appendix~\ref{app:some proofs}.

Recall the definition of the operator $I f (t) = \int_0^t P_{t - s} f (s) \mathd s$. In the singular case we can adapt the Schauder estimates for $I$ as follows:

\begin{lemma}[Schauder estimates]\label{lemma:schauder exp}
   Let $\alpha \in (0, 2)$, $p \in [1,\infty]$, and $\gamma \in [0, 1)$. Then
  \begin{equation}\label{eq:schauder-heat exp}
     \|I f\|_{\LL^{\gamma, \alpha}_p(T)} \lesssim \| f \|_{\mathcal{M}^{\gamma}_T \CC^{\alpha - 2}_p}
  \end{equation}
  for all $T > 0$. If further $\beta \geqslant - \alpha$, then
  \begin{equation}\label{eq:schauder initial contribution exp}
     \| s \mapsto P_s u_0 \|_{\LL^{(\beta + \alpha) / 2, \alpha}_p(T)} \lesssim \| u_0 \|_{\CC_p^{- \beta}} .
  \end{equation}
  For all $\alpha \in \R$, $\gamma \in [0, 1)$, and $T > 0$ we have
  \begin{equation}\label{eq:schauder without time hoelder exp}
     \|I f\|_{\mathcal{M}^{\gamma}_T \CC^{\alpha}_p} \lesssim \| f \|_{\mathcal{M}^{\gamma}_T \CC^{\alpha - 2}_p}.
  \end{equation}
\end{lemma}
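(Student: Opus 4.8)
The plan is to prove the three Schauder estimates of Lemma~\ref{lemma:schauder exp} by reducing everything to Littlewood--Paley blocks and using the standard kernel estimates for the heat semigroup $(P_t)_{t \geqslant 0}$. Recall that $\|P_t \Delta_j f\|_{L^p} \lesssim e^{-c 2^{2j} t} \|\Delta_j f\|_{L^p}$ for $j \geqslant 0$ (with an analogous bound for $j = -1$), and that $\|(P_t - \mathrm{Id})\Delta_j f\|_{L^p} \lesssim (2^{2j} t)^{\kappa} \|\Delta_j f\|_{L^p}$ for any $\kappa \in [0,1]$; these are the only inputs needed, and they hold on $\mathbb{T}$ uniformly in the dimension-one torus. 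Since the $L^p$-based Besov norm $\|\cdot\|_{\CC^\alpha_p}$ is built from exactly the same Littlewood--Paley blocks, the classical proofs for $p = \infty$ (as in~\cite{gubinelli_paraproducts_2012,Bahouri2011}) carry over with only cosmetic changes; the weight $t^\gamma$ is the only genuinely new ingredient.

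First I would prove~\eqref{eq:schauder without time hoelder exp}: fix $j \geqslant -1$ and $t \in (0,T]$, and estimate $2^{j\alpha}\|\Delta_j I f(t)\|_{L^p} \leqslant 2^{j\alpha}\int_0^t \|P_{t-s}\Delta_j f(s)\|_{L^p}\,\mathd s \lesssim 2^{j(\alpha-2)}\int_0^t 2^{2j}e^{-c2^{2j}(t-s)}\,s^{-\gamma}\,\|s^\gamma f(s)\|_{\CC^{\alpha-2}_p}\,\mathd s$, so that after multiplying by $t^\gamma$ it suffices to bound $t^\gamma \int_0^t 2^{2j}e^{-c2^{2j}(t-s)} s^{-\gamma}\,\mathd s$ uniformly in $j$ and $t$. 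Splitting the integral at $s = t/2$ and using $\int 2^{2j}e^{-c2^{2j}r}\,\mathd r \lesssim 1$ on $[0,t/2]$ together with $\int_{t/2}^t s^{-\gamma}\mathd s \lesssim t^{1-\gamma}$ and $2^{2j}e^{-c2^{2j}(t-s)} \leqslant$ its sup, gives the claimed bound since $\gamma < 1$ makes $\int_0^{t/2}s^{-\gamma}\mathd s$ finite. For~\eqref{eq:schauder-heat exp} one additionally needs the time-Hölder seminorm of $t \mapsto t^\gamma I f(t)$: one writes $(t^\gamma I f)(t) - (t'^\gamma I f)(t')$ for $0 \leqslant t' < t \leqslant T$, splits it as $(t^\gamma - t'^\gamma) I f(t) + t'^\gamma(I f(t) - I f(t'))$, bounds the first piece by $|t-t'|^{\alpha/2}$ times the $\CM^\gamma_T\CC^\alpha_p$ norm just obtained (using $|t^\gamma - t'^\gamma| \lesssim |t-t'|^\gamma \lesssim T^{\gamma - \alpha/2}|t-t'|^{\alpha/2}$ when $\gamma \geqslant \alpha/2$, and absorbing the boundary layer $t' < |t-t'|$ separately when $\gamma < \alpha/2$ by estimating $t'^\gamma \|If(t')\|_{L^p}$ directly), and for the second piece decomposes $I f(t) - I f(t') = (P_{t-t'} - \mathrm{Id})I f(t') + \int_{t'}^t P_{t-s}f(s)\,\mathd s$ and applies the $(2^{2j}|t-t'|)^{\alpha/2}$ kernel bound to the first summand and an integral estimate as above to the second.

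Finally, for~\eqref{eq:schauder initial contribution exp} I would compute directly: $2^{j\alpha}\|P_t \Delta_j u_0\|_{L^p} \lesssim 2^{j\alpha}e^{-c2^{2j}t}\|\Delta_j u_0\|_{L^p} \leqslant 2^{j(\alpha + \beta)}e^{-c2^{2j}t}\,\|u_0\|_{\CC^{-\beta}_p}$, and since $\alpha + \beta \geqslant 0$ and $\sup_{x \geqslant 0} x^{(\alpha+\beta)/2}e^{-cx} < \infty$ we get $\|P_t u_0\|_{\CC^\alpha_p} \lesssim t^{-(\alpha+\beta)/2}\|u_0\|_{\CC^{-\beta}_p}$, which is the $\CM^{(\beta+\alpha)/2}_T L^p$-in-$\CC^\alpha_p$ bound; the time-Hölder part is obtained from the same semigroup estimates by interpolation, writing $t^{(\beta+\alpha)/2}(P_t - P_{t'})u_0$ and distributing the extra powers of $2^{2j}$ between the decay factor and the $(2^{2j}|t-t'|)^{\alpha/2}$ increment bound. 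The main obstacle, and the only place requiring real care, is the bookkeeping of the time weight $t^\gamma$ near $t = 0$: one must check that the single power $\gamma \in [0,1)$ is always compensated, either by the integrable singularity $s^{-\gamma}$ in the heat-kernel convolution (which works precisely because $\gamma < 1$) or, in the Hölder-in-time estimate, by distinguishing the regime $t' \gtrsim |t-t'|$ (where $t'^{-\gamma}$ is comparable to $t^{-\gamma}$) from the boundary layer $t' \lesssim |t-t'|$ (where one estimates the two times separately rather than their difference). None of this is deep, but it is the step where the proof genuinely departs from the $p=\infty$, $\gamma = 0$ case.
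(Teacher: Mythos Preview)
Your overall strategy coincides with the paper's: block-level heat-kernel bounds, the decomposition $If(t)-If(t')=(P_{t-t'}-\mathrm{Id})If(t')+\int_{t'}^t P_{t-s}f(s)\,\mathd s$, and the observation that going from $p=\infty$ to general $p$ is cosmetic. Your arguments for \eqref{eq:schauder without time hoelder exp} and for the spatial part of \eqref{eq:schauder initial contribution exp} are correct, and the two pieces $t'^\gamma(P_{t-t'}-\mathrm{Id})If(t')$ and $t'^\gamma\int_{t'}^t P_{t-s}f(s)\,\mathd s$ in the time-H\"older estimate are handled exactly as in the paper.

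There is however a real gap in your bound on the remaining piece $(t^\gamma-t'^\gamma)If(t)$. You propose to control it by $|t-t'|^{\alpha/2}\,\|If\|_{\CM^\gamma_T\CC^\alpha_p}$, invoking $|t^\gamma-t'^\gamma|\lesssim|t-t'|^\gamma\leqslant T^{\gamma-\alpha/2}|t-t'|^{\alpha/2}$ when $\gamma\geqslant\alpha/2$. But the $\CM^\gamma_T\CC^\alpha_p$ norm only yields $\|If(t)\|_{L^p}\lesssim\|If(t)\|_{\CC^\alpha_p}\lesssim t^{-\gamma}\|If\|_{\CM^\gamma_T\CC^\alpha_p}$, and this factor $t^{-\gamma}$ is unbounded near zero; combined with $|t^\gamma-t'^\gamma|\lesssim|t-t'|^{\alpha/2}$ you obtain at best $(1-(t'/t)^\gamma)\,|t-t'|^{\alpha/2}/|t-t'|^{\alpha/2}\cdot(\text{stuff})$, which does not close. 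The case $\gamma<\alpha/2$ has the dual problem away from the boundary layer: $|t-t'|^\gamma$ is \emph{larger} than $|t-t'|^{\alpha/2}$ for small increments, so the exponent is too weak.

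The paper's fix is to use the sharper $L^p$ smoothing bound $\|P_r g\|_{L^p}\lesssim r^{\alpha/2-1}\|g\|_{\CC^{\alpha-2}_p}$ (which follows from your block estimates by summing $\sum_j e^{-c2^{2j}r}2^{j(2-\alpha)}$) to get directly
\[
   \|If(t)\|_{L^p}\lesssim\int_0^t (t-r)^{\alpha/2-1}r^{-\gamma}\,\mathd r\,\|f\|_{\CM^\gamma_T\CC^{\alpha-2}_p}\simeq t^{\alpha/2-\gamma}\|f\|_{\CM^\gamma_T\CC^{\alpha-2}_p},
\]
and then the single elementary inequality $(t^\gamma-t'^\gamma)\,t^{\alpha/2-\gamma}\lesssim|t-t'|^{\alpha/2}$, proved by distinguishing $t'\leqslant t/2$ (where $t\lesssim|t-t'|$, so $t^{\alpha/2}\lesssim|t-t'|^{\alpha/2}$) and $t'>t/2$ (where the mean value theorem gives $t^\gamma-t'^\gamma\lesssim t^{\gamma-1}|t-t'|$, and $t^{\alpha/2-1}|t-t'|\leqslant|t-t'|^{\alpha/2}$ since $|t-t'|\leqslant t$). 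This replaces your case analysis on $\gamma$ versus $\alpha/2$ entirely and works uniformly.
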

To a large extent, the proof is contained in~\cite{gubinelli_paraproducts_2012}. We indicate in Appendix~\ref{app:some proofs} how to adapt the arguments therein to obtain the estimates above.

Just as in the ``non-explosive'' setting, the Schauder estimates allow us to bound $f\mpara g$ in the parabolic space $\LL^{\gamma,\alpha}_p(T)$:

\begin{lemma}\label{lem:paraproduct parabolic space exp}
Let $\alpha \in (0, 2)$, $p \in [1,\infty]$, $\gamma \in [0,1)$ and $\delta > 0$. Let $f \in \LL^{\gamma,\delta}_p(T)$, $g \in C_T \CC^{\alpha}$, and $\LL g \in C_T \CC^{\alpha - 2}$. Then
  \[
     \| f \mpara g \|_{\LL^{\gamma,\alpha}_p(T)} \lesssim \| f \|_{\LL^{\gamma,\delta}_p(T)} ( \| g \|_{C_T \CC^{\alpha}} + \left\| \LL g \right\|_{C_T \CC^{\alpha - 2}} ).
  \]
\end{lemma}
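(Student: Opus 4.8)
The plan is to mirror exactly the proof of Lemma~\ref{lem:paraproduct parabolic space} in the non-explosive setting, substituting the weighted versions of the Schauder estimates (Lemma~\ref{lemma:schauder exp}) and of the modified-paraproduct commutators (Lemma~\ref{lem:modified paraproduct commutators exp}) for their unweighted counterparts. The key observation is that $\LL^{\gamma,\alpha}_p(T)$ is characterized by two quantities: the parabolic Hölder seminorm of $t \mapsto t^\gamma f(t)$ in $L^p$, and $\|f\|_{\CM^\gamma_T \CC^\alpha_p}$; and that $\| \cdot \|_{\CM^\gamma_T \CC^\alpha_p}$ behaves under $I$ exactly like $\|\cdot\|_{C_T \CC^\alpha}$ behaves under $I$ in the old proof, thanks to~\eqref{eq:schauder-heat exp} and~\eqref{eq:schauder without time hoelder exp}.

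First I would apply the Schauder estimates~\eqref{eq:schauder-heat exp} to $f \mpara g$ (noting that $(f\mpara g)(0) = 0$ by the silent replacement of $f$ by $f(t)\1_{t>0}$, so there is no initial-data contribution to worry about), obtaining
\[
   \| f \mpara g \|_{\LL^{\gamma,\alpha}_p(T)} \lesssim \big\| \LL(f \mpara g) \big\|_{\CM^\gamma_T \CC^{\alpha-2}_p}.
\]
Then I would insert and subtract $f \mpara (\LL g)$:
\[
   \big\| \LL(f \mpara g) \big\|_{\CM^\gamma_T \CC^{\alpha-2}_p} \leqslant \big\| \LL(f\mpara g) - f \mpara (\LL g) \big\|_{\CM^\gamma_T \CC^{\alpha+\delta-2}_p} + \big\| f \mpara (\LL g) \big\|_{\CM^\gamma_T \CC^{\alpha-2}_p}.
\]
For the first term on the right I would invoke the weighted commutator bound in Lemma~\ref{lem:modified paraproduct commutators exp} (with $\beta$ there equal to $\alpha-2$ and $\alpha$ there equal to $\delta$, using $\delta \in (0,2)$, which is fine since we may assume $\delta < 2$ — if $\delta \geqslant 2$ the statement only gets easier by the embedding $\LL^{\gamma,\delta}_p(T) \hookrightarrow \LL^{\gamma,\delta'}_p(T)$ for $\delta' < \delta$, cf. the analogue of Lemma~\ref{lem:scaling factor smaller norm}), which gives the bound $\lesssim \|f\|_{\LL^{\gamma,\delta}_p(T)} \|\LL g\|_{C_T \CC^{\alpha-2}}$ pointwise in $t$ after multiplying by $t^\gamma$, hence after taking the supremum over $t \in [0,T]$. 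For the second term I would use the weighted modified paraproduct estimate~\eqref{eq:mod para-2 exp} from Lemma~\ref{lem:modified paraproduct exp}, in the form $t^\gamma \|f \mpara (\LL g)(t)\|_{\CC^{\alpha-2}_p} \lesssim \|f\|_{\CM^\gamma_t \CC^{0}_p}\,\|\LL g(t)\|_{\alpha-2}$ — here one needs a small negative regularity on $f$; since $f \in \LL^{\gamma,\delta}_p(T)$ with $\delta > 0$ we certainly have $f \in \CM^\gamma_T \CC^{0-}_p$, so~\eqref{eq:mod para-2 exp} applies with a negative exponent arbitrarily close to $0$ (alternatively use the first estimate~\eqref{eq:mod para-1 exp} with $\beta = \alpha - 2$, $p$ on the $\LL g$ side, which avoids negativity altogether). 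Combining the three displays and bounding $\|f\|_{\CM^\gamma_T \CC^\delta_p} \leqslant \|f\|_{\LL^{\gamma,\delta}_p(T)}$ yields the claim.

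The main obstacle, such as it is, is bookkeeping rather than conceptual: one must be careful that the commutator lemma and the paraproduct lemma are invoked with the weight exponent $\gamma \in [0,1)$ in the allowed range (they are, by hypothesis), and that the silent extension $f \mapsto f(t)\1_{t>0}$ used in the definition of $\mpara$ on $\LL^{\gamma,\delta}_p(T)$ is consistent across all three invoked estimates — which it is, since all of Lemmas~\ref{lem:modified paraproduct exp}, \ref{lem:modified paraproduct commutators exp} and~\ref{lemma:schauder exp} are stated for exactly this extended object. No time-Hölder regularity of $g$ is ever needed beyond $g \in C_T\CC^\alpha$ with $\LL g \in C_T \CC^{\alpha-2}$, exactly as in the unweighted case, because the parabolic time regularity of the output $f\mpara g$ is supplied entirely by the Schauder smoothing. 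Hence the proof is as short as the original; I would present it as a three-line display chain followed by the two one-line justifications.
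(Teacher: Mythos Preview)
Your approach is exactly the paper's (which in fact just says ``completely analogous to Lemma~\ref{lem:paraproduct parabolic space}''). One small slip: in the commutator step you should take $\beta=\alpha$ (the regularity of $g$), not $\alpha-2$, so the bound there reads $\|f\|_{\LL^{\gamma,\delta}_p(T)}\|g\|_{C_T\CC^{\alpha}}$; the $\|\LL g\|_{C_T\CC^{\alpha-2}}$ contribution comes from the \emph{other} term $f\mpara(\LL g)$ via~\eqref{eq:mod para-1 exp}, just as you planned.
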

The proof is completely analogous to the one of Lemma~\ref{lem:paraproduct parabolic space}. Finally, we will need a lemma which allows us to pass between different $\LL^{\gamma,\alpha}_p$ spaces, the proof of which can be also found in Appendix~\ref{app:some proofs}.

\begin{lemma}\label{lem:lower explosive regularity}
   Let $\alpha \in (0, 2)$, $\gamma \in (0, 1)$, $p\in [1,\infty]$, $T > 0$, and let $f \in \LL^{\gamma, \alpha}_T$. Then
   \[
      \| f \|_{\LL^{\gamma - \varepsilon / 2, \alpha - \varepsilon}_p(T)} \lesssim \| f \|_{\LL^{\gamma, \alpha}_p(T)} . \]
  for all $\varepsilon \in [0, \alpha \wedge 2 \gamma)$.
\end{lemma}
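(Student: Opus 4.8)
The plan is to transfer everything onto the auxiliary map $g(t):=t^{\gamma}f(t)$ and to use only two consequences of the hypothesis $f\in\LL^{\gamma,\alpha}_p(T)$. Write $A:=\|f\|_{\LL^{\gamma,\alpha}_p(T)}$. On the one hand, $t\mapsto g(t)$ is an $L^p$-valued function on $[0,T]$ with $\|g\|_{C^{\alpha/2}_T L^p}\leqslant A$, and since $f(0)\in\CD'(\T)$ and $\gamma>0$ one has $g(0)=0$ (this is how $t\mapsto t^{\gamma}f(t)$ is read at $t=0$); hence $\|g(t)\|_{L^p}=\|g(t)-g(0)\|_{L^p}\leqslant A\,t^{\alpha/2}$ for all $t\in[0,T]$. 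On the other hand, $\sup_{t\in(0,T]}\|g(t)\|_{\CC^{\alpha}_p}=\|f\|_{\mathcal{M}^{\gamma}_T\CC^{\alpha}_p}\leqslant A$. In terms of $f$ this reads
\[
   \|f(t)\|_{L^p}\leqslant A\,t^{\alpha/2-\gamma},\qquad \|f(t)\|_{\CC^{\alpha}_p}\leqslant A\,t^{-\gamma},\qquad t\in(0,T],
\]
the first bound being the decisive extra decay near the origin.

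First I would estimate the weighted supremum part of the target norm. By the elementary Littlewood--Paley interpolation inequality $\|h\|_{\CC^{\alpha-\varepsilon}_p}\lesssim\|h\|_{L^p}^{\varepsilon/\alpha}\|h\|_{\CC^{\alpha}_p}^{1-\varepsilon/\alpha}$ (which needs $0\leqslant\varepsilon\leqslant\alpha$), applied to $h=f(t)$,
\[
   \|f(t)\|_{\CC^{\alpha-\varepsilon}_p}\lesssim\big(A\,t^{\alpha/2-\gamma}\big)^{\varepsilon/\alpha}\big(A\,t^{-\gamma}\big)^{1-\varepsilon/\alpha}=A\,t^{\varepsilon/2-\gamma},
\]
so that $\sup_{t\in(0,T]}t^{\gamma-\varepsilon/2}\|f(t)\|_{\CC^{\alpha-\varepsilon}_p}\lesssim A$, i.e.\ $\|f\|_{\mathcal{M}^{\gamma-\varepsilon/2}_T\CC^{\alpha-\varepsilon}_p}\lesssim A$. (Equivalently one may split the Littlewood--Paley series at the frequency $2^{j}\sim t^{-1/2}$, using the $L^p$-bound on the low block and the $\CC^{\alpha}_p$-bound on the high block.)

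Next I would bound the time-H\"older seminorm $\|t\mapsto t^{\gamma-\varepsilon/2}f(t)\|_{C^{(\alpha-\varepsilon)/2}_T L^p}$. Set $\tilde g(t)=t^{\gamma-\varepsilon/2}f(t)=t^{-\varepsilon/2}g(t)$; since $\varepsilon<2\gamma$ we have $\gamma-\varepsilon/2>0$, hence $\tilde g(0)=0$, consistently with $\|\tilde g(t)\|_{L^p}\leqslant A\,t^{(\alpha-\varepsilon)/2}\to 0$ (here $\varepsilon<\alpha$ is used), which already settles the pairs with $s=0$. For $0<s<t\leqslant T$ I would write
\[
   \tilde g(t)-\tilde g(s)=t^{-\varepsilon/2}\big(g(t)-g(s)\big)+\big(t^{-\varepsilon/2}-s^{-\varepsilon/2}\big)g(s).
\]
The first term is controlled by $t^{-\varepsilon/2}\|g(t)-g(s)\|_{L^p}\leqslant t^{-\varepsilon/2}A|t-s|^{\alpha/2}\leqslant A|t-s|^{(\alpha-\varepsilon)/2}$, using $|t-s|=t-s\leqslant t$ and therefore $t^{-\varepsilon/2}\leqslant|t-s|^{-\varepsilon/2}$. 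For the second term I would use the elementary bound $s^{-\varepsilon/2}-t^{-\varepsilon/2}\leqslant s^{-\varepsilon/2}(t-s)/t$ (valid for $0<s<t$ because $\varepsilon/2\leqslant 1$) together with $\|g(s)\|_{L^p}\leqslant A\,s^{\alpha/2}$, giving
\[
   \big|t^{-\varepsilon/2}-s^{-\varepsilon/2}\big|\,\|g(s)\|_{L^p}\leqslant A\,s^{(\alpha-\varepsilon)/2}\,\frac{t-s}{t}\leqslant A\,t^{(\alpha-\varepsilon)/2-1}(t-s)\leqslant A\,|t-s|^{(\alpha-\varepsilon)/2},
\]
where the last two inequalities use $s\leqslant t$, $0<\alpha-\varepsilon<2$, and $t-s\leqslant t$. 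Dividing the sum of the two contributions by $|t-s|^{(\alpha-\varepsilon)/2}$ and taking the supremum completes the bound, and together with the previous paragraph yields $\|f\|_{\LL^{\gamma-\varepsilon/2,\alpha-\varepsilon}_p(T)}\lesssim A$ with a constant independent of $T$. The case $\varepsilon=0$ is trivial.

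The computations themselves are routine; the one point that genuinely carries the argument, and the only place where I expect a real subtlety, is the observation in the first step that being in $\LL^{\gamma,\alpha}_p(T)$ forces $g(t)=t^{\gamma}f(t)$ to \emph{vanish} like $t^{\alpha/2}$ as $t\downarrow 0$, rather than merely to stay bounded. It is precisely this vanishing that produces the surplus power $t^{\varepsilon/2}$ needed to offset the smaller weight $t^{\gamma-\varepsilon/2}$; dropping it makes the statement false (consider $f(t)=t^{-\gamma}\phi$ with a fixed $\phi\in\CC^{\alpha}_p$). One must also keep track of the restriction $\varepsilon<2\gamma$, which is exactly what guarantees $\gamma-\varepsilon/2>0$ and hence $\tilde g(0)=0$, and of $\varepsilon<\alpha$, used both for the interpolation inequality and to ensure $\|\tilde g(t)\|_{L^p}\to 0$.
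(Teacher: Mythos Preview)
Your proof is correct and follows essentially the same route as the paper: both arguments hinge on the observation that $g(t)=t^{\gamma}f(t)$ vanishes at $0$ (so $\|g(t)\|_{L^p}\lesssim t^{\alpha/2}$), then interpolate for the spatial bound and show that $t^{-\varepsilon/2}g(t)$ retains $(\alpha-\varepsilon)/2$-H\"older continuity for the temporal bound. Your handling of the temporal part via the single inequality $s^{-\varepsilon/2}-t^{-\varepsilon/2}\le s^{-\varepsilon/2}(t-s)/t$ is in fact slightly cleaner than the paper's version, which splits into the cases $t>2s$ and $t\le 2s$ and uses a Taylor expansion in the latter.
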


\subsection{Burgers and KPZ equation with singular initial conditions}\label{sec:rbe singular}

Let us indicate how to modify our arguments to solve Burgers equation with
initial condition $u_0 \in \CC^{- \beta}$ for arbitrary $\beta < 1$. Throughout this section we fix $\alpha \in (1/3,1/2)$ and $\beta \in (1-\alpha,2\alpha)$. For $\varepsilon \geqslant 0$ we write
\[
    \gamma_{\varepsilon} = \frac{\beta + \varepsilon}{2}.
\]

\begin{definition}\label{def:paracontrolled singular}
  Let $\X \in \mathcal{X}_{\tmop{rbe}}$ and $\delta > 1 - 2 \alpha$.
  We define the space $\CD^{\tmop{exp},\delta}_{\tmop{rbe}} = \DD^{\tmop{exp},\delta}_{\tmop{rbe},
  \X}$ of distributions paracontrolled by $\X$ as the set of
  all $(u,u',u^\sharp) \in C \CC^{\beta} \times \LL^{\gamma_\delta,\delta}_\infty \times \LL^{\gamma_{\alpha+\delta},(\alpha + \delta)}_\infty$ such that
  \[
     u = X + X^{\zzone} + 2 X^{\zztwo} + u' \mpara Q + u^{\sharp}.
  \]
  For $\delta = \alpha$ we will usually write $\DD^{\tmop{exp}}_{\tmop{rbe}} =
  \CD^{\tmop{exp},\alpha}_{\tmop{rbe}}$. For $T > 0$ we set
  $\CD^{\tmop{exp},\delta}_{\tmop{rbe}} (T) = \CD^{\tmop{exp},\delta}_{\tmop{rbe}} |_{[0, T]}$, and
  we define
  \[ \| u \|_{\DD^{\tmop{exp},\delta}_{\tmop{rbe}} (T)} = \| u' \|_{\LL^{\gamma_\delta,\delta}_\infty(T)} + \|
     u^{\sharp} \|_{\LL_\infty^{\gamma_{\alpha+\delta},(\alpha + \delta)}(T)} . \]
  We will often use the notation $u^Q = u' \mpara Q + u^{\sharp}$, and we will also write $(u',u^\sharp) \in \CD^{\tmop{exp}}_{\mathrm{rbe}}$ or $u \in \CD^{\tmop{exp}}_{\mathrm{rbe}}$. We call $u'$ the \emph{derivative} and $u^\sharp$ the \emph{remainder}.
\end{definition}

The $\tmop{exp}$ in $\CD^{\tmop{exp}}_{\tmop{rbe}}$ stands for ``explosive'', by which we mean that the paracontrolled norm of $u(t)$ is allowed to blow up as $t$ approaches 0. Throughout this section we work under the following assumption:

\begin{assumption}[T,M] Assume that $\theta, \tilde{\theta} \in \LL C (\R, C^{\infty} (\T))$ and $u_0, \tilde{u}_0 \in \CC^{-\beta}$, and that $u$ is the unique global in time solution to the Burgers equation
  \begin{equation}\label{eq:rbe general initial}
     \LL u = \mathD u^2 + \mathD \xi, \hspace{2em} u (0) = u_0.
  \end{equation}
  We define $\X= \Theta_{\tmop{rbe}} (\theta)$, $u^Q = u - X -
  X^{\zzone} - 2 X^{\zztwo}$, and $u' = 2 u^Q + 4 X^{\zztwo}$, and we set
  $u^{\sharp} = u^Q - u' \mpara Q$. Similarly we define $\tilde{\X}, \tilde{u}, \tilde{u}^Q, \tilde{u}', \tilde{u}^{\sharp}$. Finally we assume that $T,M>0$ are such that
  \[
     \max \big\{ \|u_0\|_{-\beta}, \|\tilde{u}_0\|_{-\beta}, \|\X\|_{\mathcal{X}_{\tmop{rbe}} (T)}, \| \tilde{\X} \|_{\mathcal{X}_{\tmop{rbe}} (T)} \big\} \leqslant M.
  \]
\end{assumption}

From the paracontrolled ansatz we derive the following equation for
$u^{\sharp}$:
\[ \LL u^{\sharp} = \mathD u^2 - \mathD X^2 - 2 \mathD X^{\zzone} X - \LL (u'
   \mpara Q), \hspace{2em} u^{\sharp} (0) = u_0 - X (0) - X^{\zzone} (0) -
   2 X^{\zztwo} (0), \]
where for the initial condition we used that $u' \mpara Q (0) = 0$. Using
similar arguments as for Lemma~\ref{lem:burgers a priori}, we deduce the
following result.

\begin{lemma}\label{lem:u sharp bound exp}
  Under Assumption (T,M) we have
  \[ \|(\LL u^{\sharp} - \LL X^{\zzfour} - \LL X^\zzthreereso) (t) \|_{2 \alpha - 2} \lesssim t^{-
     \gamma_\beta}  (1 + M^2) \big( 1
     + \| u \|_{\CD^{\tmop{exp}}_{\tmop{rbe}} (T)}^2 \big) . \]
  for all $t \in (0, T]$. If further also $\| u \|_{\CD^{\tmop{exp}}_{\tmop{rbe}, \X} (T)}, \| \tilde{u} \|_{\CD^{\tmop{exp}}_{\tmop{rbe}, \tilde{\X}} (T)} \leqslant M$, then
  \begin{align}\label{eq:explosive a priori difference} 
      &\|((\LL u^{\sharp} - \LL X^{\zzfour} - \LL X^\zzthreereso) - (\LL \tilde{u}^{\sharp} - \LL \tilde{X}^{\zzfour} - \LL \tilde X^\zzthreereso)) (t) \|_{2 \alpha - 2} \\ \nonumber
      &\hspace{50pt} \lesssim t^{- \gamma_\beta} (1 + M^2) \big( d_{\mathcal{X}_{\tmop{rbe}} (T)} (\X, \tilde{\X})  + \| u' - \tilde{u}' \|_{\LL^{\gamma_{\alpha}, \alpha}_\infty(T)} + \| u^{\sharp} - \tilde{u}^{\sharp} \|_{\LL^{\gamma_{2 \alpha, 2 \alpha}}_\infty(T)} \big).
  \end{align}
\end{lemma}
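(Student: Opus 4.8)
The plan is to re-run the proof of Lemma~\ref{lem:burgers a priori} verbatim at the algebraic level and to replace every analytic estimate by its weighted counterpart from Section~\ref{sec:general initial preliminary}, keeping careful track of the explosion rate of each term as $t\downarrow 0$. First I would recall from that proof the decomposition of $\LL u^\sharp$, which writes $\LL u^\sharp$ minus $\LL X^\zzfour$ and a multiple of $\LL X^\zzthreereso$ as a sum of three families of terms: (i) bilinear expressions built only from the enhanced datum $\X$ (such as $\mathD[X^\zztwo\lpara X]$, $\mathD(X^\zztwo\para X)-X^\zztwo\para\mathD X$, and the polynomial-in-$X$ part $\mathD[4X^\zzone X^\zztwo+(2X^\zztwo)^2]$); (ii) expressions linear in $u^Q=u'\mpara Q+u^\sharp$, namely $\mathD[u^Q X-u^Q\para X]$, $\mathD(u^Q\para X)-u^Q\para\mathD X$, $u'\para\mathD X-u'\mpara\mathD X$, $\mathD(X^\zzone u^Q)$ and $\LL(u'\mpara Q)-u'\mpara\LL Q$; and (iii) the quadratic term $\mathD[(u^Q+2X^\zztwo)^2]$. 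The goal is then to bound each summand at a fixed time $t\in(0,T]$ in $\CC^{2\alpha-2}$ by $t^{-\gamma_\beta}$ times the claimed right-hand side.

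The family-(i) terms lie in $C_T\CC^{2\alpha-2}$ with norm $\lesssim M^2$, so on $[0,T]$ they are $\lesssim t^{-\gamma_\beta}M^2$ for free. For the family-(ii) terms I would apply, at each fixed $t$, the weighted paracontrolled product Theorem~\ref{thm:paracontrolled product exp} to handle $u^Q\cdot X$ (with reference distributions $Q$ and $X$ and paracontrolled derivative $u'$), together with the weighted paraproduct and modified-paraproduct commutator estimates of Lemmas~\ref{thm:paraproduct exp} and~\ref{lem:modified paraproduct commutators exp} and the Schauder bound $\|Q\|_{\LL^\alpha_T}\lesssim M$. Inserting $\|u'(t)\|_\alpha\lesssim t^{-\gamma_\alpha}\|u\|_{\CD^{\tmop{exp}}_{\tmop{rbe}}(T)}$ and, after lowering $u^\sharp$ from regularity $2\alpha$ to $\alpha+\eta$ by Lemma~\ref{lem:lower explosive regularity} for a small $\eta$ with $\alpha+\eta<\beta$, $\|u^\sharp(t)\|_{\alpha+\eta}\lesssim t^{-\gamma_{\alpha+\eta}}\|u\|_{\CD^{\tmop{exp}}_{\tmop{rbe}}(T)}$, every explosion rate produced is at most $\gamma_{\alpha+\eta}$; since $\beta>1-\alpha>\alpha$ one has $\gamma_{\alpha+\eta}<\gamma_\beta$ for $\eta$ small, so these terms obey the required bound with a linear factor in $\|u\|_{\CD^{\tmop{exp}}_{\tmop{rbe}}(T)}$.

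The family-(iii) term is the heart of the argument and the main obstacle, since estimating $(u^Q)^2$ directly in $\CC^\alpha$ would cost explosion rate $2\gamma_\alpha>\gamma_\beta$. Instead I would exploit that the singular part of $u^Q$ at $t=0$ is carried by its $L^\infty$ norm: iterating Lemma~\ref{lem:lower explosive regularity} gives $u^\sharp\in\LL^{\gamma_\rho,\rho}_\infty$ for all $\rho\in(0,2\alpha]$, and combining this with the bound $\|u'\mpara Q(t)\|_{\CC^\alpha}\lesssim\|u'(t)\|_{L^\infty}\|Q(t)\|_{\CC^\alpha}$ from Lemma~\ref{lem:modified paraproduct exp} (together with $\CC^\alpha\hookrightarrow L^\infty$) yields $u^Q\in\mathcal{M}^{\gamma_\rho}_T L^\infty$ with norm $\lesssim(1+M)\|u\|_{\CD^{\tmop{exp}}_{\tmop{rbe}}(T)}$ for every $\rho\in(0,\alpha]$. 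Multiplying in $L^\infty$ and using $L^\infty\hookrightarrow\CC^0\hookrightarrow\CC^{2\alpha-1}$ (as $2\alpha-1<0$) gives $(u^Q)^2\in\mathcal{M}^{2\gamma_\rho}_T\CC^{2\alpha-1}$, hence $\mathD[(u^Q)^2]\in\mathcal{M}^{2\gamma_\rho}_T\CC^{2\alpha-2}$ with $2\gamma_\rho=\beta+\rho$; letting $\rho$ be small brings the exponent down to $\gamma_\beta$, and the factor is quadratic in $\|u\|_{\CD^{\tmop{exp}}_{\tmop{rbe}}(T)}$. The cross term $u^Q X^\zztwo$ and the pure $X^\zztwo$ term are treated the same way and are easier, since $X^\zztwo\in C_T\CC^{1/2-}$ carries no weight. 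Summing the three families and bounding the auxiliary norms of $u^Q$ by $\|u\|_{\CD^{\tmop{exp}}_{\tmop{rbe}}(T)}$ via Lemma~\ref{lem:paraproduct parabolic space exp} gives the first estimate.

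For the difference bound~\eqref{eq:explosive a priori difference} I would run the same decomposition for $u^\sharp-\tilde u^\sharp$, using the multilinearity of all the operators involved and the local Lipschitz estimate in Theorem~\ref{thm:paracontrolled product exp}: each resulting term then carries at least one of the differences $\X-\tilde\X$, $u'-\tilde u'$ or $u^\sharp-\tilde u^\sharp$, and the explosion-rate bookkeeping is word for word the same. Throughout, the one genuinely new difficulty with respect to the regular-initial-data Lemma~\ref{lem:burgers a priori} is precisely this weight bookkeeping for the quadratic term, where the regularity-versus-weight trade-off of Lemma~\ref{lem:lower explosive regularity} is indispensable.
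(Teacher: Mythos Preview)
Your strategy matches the paper's sketch exactly: reuse the decomposition from Lemma~\ref{lem:burgers a priori} and replace each estimate by its weighted analogue, with the quadratic term $(u^Q)^2$ handled via an $L^\infty$ bound to beat the naive rate $2\gamma_\alpha$. Two points need sharpening, though.

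For the quadratic term you write ``letting $\rho$ be small brings the exponent down to $\gamma_\beta$'', but Lemma~\ref{lem:lower explosive regularity} as stated only gives $\rho>0$, so your argument yields $t^{-(\gamma_\beta+\rho)}$, not $t^{-\gamma_\beta}$. The paper hits $\gamma_\beta$ on the nose by taking $\rho=0$: from the time-H{\"o}lder part of the $\LL^{\gamma_\alpha,\alpha}_\infty$ norm one has $\|t^{\gamma_\alpha}u'(t)\|_{L^\infty}\leqslant t^{\alpha/2}\|u'\|_{\LL^{\gamma_\alpha,\alpha}_\infty}$ (the value at $t=0$ being zero), hence $\|u'(t)\|_{L^\infty}\lesssim t^{-\gamma_0}$ with $\gamma_0=\beta/2$, and similarly for $u^\sharp$; then $2\gamma_0=\gamma_\beta$ exactly.

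For the family-(ii) terms, the constraint ``small $\eta$'' is wrong. The dangerous piece hidden in $u^Q\cdot X$ is $u^\sharp\reso X$, and for this resonant product to make sense after lowering $u^\sharp$ to $\CC^{\alpha+\eta}$ you need $(\alpha+\eta)+(\alpha-1)>0$, i.e.\ $\eta>1-2\alpha$, which is not small when $\alpha$ is near $1/3$. The correct window is $\eta\in(1-2\alpha,\beta-\alpha)$, nonempty precisely because $\beta>1-\alpha$. The paper sidesteps this bookkeeping by singling out $\mathD(u^\sharp\reso X)$ and lowering $u^\sharp$ all the way to $\CC^\beta$ (rate $\gamma_\beta$), so that $u^\sharp\reso X\in\CC^{\alpha+\beta-1}\hookrightarrow\CC^{2\alpha-1}$ directly.
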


\begin{proof}[Sketch of Proof]
   We use the same decomposition for $(\LL u^{\sharp} - \LL X^{\zzfour} - \LL X^\zzthreereso) (t)$ as in the proof of Lemma~\ref{lem:burgers a priori}. The most tricky term to bound is
   \[
      \| \mathD (u^Q(t))^2 \|_{2\alpha -2} \lesssim \| (u^Q(t))^2 \|_{2\alpha-1} \lesssim \| (u^Q(t)) \|^2_{L^\infty} \lesssim (t^{-\gamma_0} \|u^Q\|_{\mathcal{M}^{\gamma_0}_T L^\infty})^2.
   \]
   Now $2\gamma_0 = \gamma_\beta$ and Lemma~\ref{lem:modified paraproduct exp} and Lemma~\ref{lem:lower explosive regularity} yield
   \begin{align*}
      \|u^Q\|_{\mathcal{M}^{\gamma_0}_T L^\infty} & \leqslant \|u' \mpara Q \|_{\mathcal{M}^{\gamma_0}_T L^\infty} + \| u^\sharp \|_{\mathcal{M}^{\gamma_0}_T L^\infty} \lesssim \| u' \|_{\CM^{\gamma_0}_T L^\infty} \|Q\|_{C_T \CC^\alpha} + \| u^\sharp \|_{\LL^{\gamma_{2\alpha},2\alpha}_\infty(T)} \\
      & \lesssim (\| u' \|_{\LL^{\gamma_{\alpha},\alpha}_\infty(T)} + \| u^\sharp \|_{\LL^{\gamma_{2\alpha},2\alpha}_\infty(T)}) (1 + \| \X \|_{\mathcal{X}_{\mathrm{rbe}}(T)}).
   \end{align*}
   Every other term in the decomposition of $(\LL u^{\sharp} - \LL X^{\zzfour} - \LL X^\zzthreereso) (t)$ that does not explicitly depend on $u^\sharp$ can be estimated with a factor $t^{-\gamma_\alpha}$ and since $2\gamma_0 = \gamma_\beta$ and $\alpha<\beta$ we get $t^{-\gamma_\alpha} \lesssim_T t^{-2\gamma_0}$ for all $t \in [0,T]$. The last remaining term is then
   \[
       \| \mathD (u^\sharp \reso X(t)) \|_{\CC^{\alpha+\beta-2}} \lesssim t^{-\gamma_{\beta}} \|u^\sharp \|_{\LL^{\gamma_{\beta},\beta}_\infty(T)} \|X(t) \|_{\alpha-1} \lesssim t^{-\gamma_{\beta}} \|u^\sharp \|_{\LL^{\gamma_{2\alpha},2\alpha}_\infty (T)} \| \X \|_{\mathcal{X}_{\mathrm{rbe}}(T)},
   \]
   where we used that $\alpha+\beta>1$ and that $2\alpha > \beta$.
\end{proof}

\begin{corollary}
  Under Assumption (T,M), we have
  \begin{align*}
     \| u \|_{\CD^{\tmop{exp}}_{\tmop{rbe}} (T)} & \lesssim M+ T^{\gamma_{2 \alpha} - \gamma_\beta} (1 + M^2) \big( 1 +\| u \|_{\CD^{\tmop{exp}}_{\tmop{rbe}} (T)}^2 \big)
  \end{align*}
  If further also $\| u \|_{\CD^{\mathrm{exp}}_{\tmop{rbe}, \X} (T)}, \| \tilde{u} \|_{\CD^{\mathrm{exp}}_{\tmop{rbe}, \tilde{\X}} (T)} \leqslant M$, then
  \begin{align*}
     & \|u^{\sharp} - \tilde{u}^{\sharp} \|_{\LL^{\gamma_{2 \alpha}, 2 \alpha}_\infty(T)} + \| u' - \tilde{u}' \|_{\LL^{\gamma_{\alpha}, \alpha}_\infty(T)} \lesssim \| u_0 - \tilde{u}_0 \|_{- \beta} + d_{\mathcal{X}_{\tmop{rbe}}(T)} (\X, \tilde{\X}) \\
     &\hspace{50pt} + T^{\gamma_{2 \alpha} - \gamma_\beta} (1 + M^2) \big(d_{\mathcal{X}_{\tmop{rbe}} (T)} (\X, \tilde{\X}) + \|
     u' - \tilde{u}' \|_{\LL^{\gamma_{\alpha}, \alpha}_\infty(T)} + \| u^{\sharp} - \tilde{u}^{\sharp} \|_{\LL^{\gamma_{2 \alpha, 2 \alpha}}_\infty(T)} \big).
  \end{align*}
\end{corollary}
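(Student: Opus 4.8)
Run the fixed-time bound of Lemma~\ref{lem:u sharp bound exp} through the heat semigroup, mimicking the non-explosive computation of Lemma~\ref{lem:burgers a priori} but now tracking the time weights. Set $R = \LL u^{\sharp} - \LL X^{\zzfour} - \LL X^{\zzthreereso}$; since $X^{\zzfour}$ and $X^{\zzthreereso}$ have vanishing initial conditions, inverting $\LL$ gives
\[
   u^{\sharp} = P_\cdot u^{\sharp}(0) + X^{\zzfour} + X^{\zzthreereso} + I R, \qquad u^{\sharp}(0) = u_0 - X(0) - X^{\zzone}(0) - 2 X^{\zztwo}(0),
\]
and Lemma~\ref{lem:u sharp bound exp} says exactly that $R \in \CM^{\gamma_\beta}_T \CC^{2\alpha-2}$ with $\|R\|_{\CM^{\gamma_\beta}_T \CC^{2\alpha-2}} \lesssim (1+M^2)(1 + \|u\|_{\CD^{\tmop{exp}}_{\tmop{rbe}}(T)}^2)$.

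The first thing I would do is bound $\|u^{\sharp}\|_{\LL^{\gamma_{2\alpha},2\alpha}_\infty(T)}$ term by term. Because $\beta \in (1-\alpha,2\alpha)$, the subtracted terms $X(0), X^{\zzone}(0), X^{\zztwo}(0)$ lie in $\CC^{\alpha-1}, \CC^{2\alpha-1}, \CC^{\alpha}$, all embedded in $\CC^{-\beta}$, so $u^{\sharp}(0) \in \CC^{-\beta}$ with norm $\lesssim M$; then \eqref{eq:schauder initial contribution exp} with spatial regularity $2\alpha$ (and $\gamma_{2\alpha} = (\beta+2\alpha)/2$) gives $\|P_\cdot u^{\sharp}(0)\|_{\LL^{\gamma_{2\alpha},2\alpha}_\infty(T)} \lesssim M$. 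The terms $X^{\zzfour}, X^{\zzthreereso} \in \LL^{2\alpha} = \LL^{0,2\alpha}_\infty$ embed into $\LL^{\gamma_{2\alpha},2\alpha}_\infty(T)$ at the cost of a harmless power of $T$, hence are $\lesssim M$. For the last term, note that $\gamma_{2\alpha} > \gamma_\beta$ (as $\beta < 2\alpha$), so $\|R\|_{\CM^{\gamma_{2\alpha}}_T\CC^{2\alpha-2}} \le T^{\gamma_{2\alpha}-\gamma_\beta}\|R\|_{\CM^{\gamma_\beta}_T\CC^{2\alpha-2}}$, and since $\gamma_{2\alpha} < 1$ the Schauder estimate \eqref{eq:schauder-heat exp} yields $\|IR\|_{\LL^{\gamma_{2\alpha},2\alpha}_\infty(T)} \lesssim T^{\gamma_{2\alpha}-\gamma_\beta}(1+M^2)(1+\|u\|_{\CD^{\tmop{exp}}_{\tmop{rbe}}(T)}^2)$. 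Altogether $\|u^{\sharp}\|_{\LL^{\gamma_{2\alpha},2\alpha}_\infty(T)} \lesssim M + T^{\gamma_{2\alpha}-\gamma_\beta}(1+M^2)(1+\|u\|_{\CD^{\tmop{exp}}_{\tmop{rbe}}(T)}^2)$.

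It remains to control $\|u'\|_{\LL^{\gamma_\alpha,\alpha}_\infty(T)}$. From $u' = 2u^Q + 4X^{\zztwo}$ and $u^Q = u'\mpara Q + u^{\sharp}$ one has $u' = 2u'\mpara Q + 2u^{\sharp} + 4X^{\zztwo}$: the term $X^{\zztwo}$ is $\lesssim M$; for $u^{\sharp}$, Lemma~\ref{lem:lower explosive regularity} with $\varepsilon = \alpha$ (note $\gamma_{2\alpha}-\alpha/2 = \gamma_\alpha$ and $2\alpha-\alpha = \alpha$) gives $\|u^{\sharp}\|_{\LL^{\gamma_\alpha,\alpha}_\infty(T)} \lesssim \|u^{\sharp}\|_{\LL^{\gamma_{2\alpha},2\alpha}_\infty(T)}$, already bounded by the previous step. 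For $u'\mpara Q$, I would apply Lemma~\ref{lem:paraproduct parabolic space exp} with a spatial exponent $\delta < \alpha$, so that $\|u'\mpara Q\|_{\LL^{\gamma_\alpha,\alpha}_\infty(T)} \lesssim M\,\|u'\|_{\LL^{\gamma_\alpha,\delta}_\infty(T)}$ (using $\LL Q = \mathD X$ and the Schauder bound $\|Q\|_{C_T\CC^{\alpha}} \lesssim \|X\|_{C_T\CC^{\alpha-1}} \le M$), and then trade spatial regularity for a power of $T$, by interpolation as in Lemma~\ref{lem:scaling factor smaller norm} and using that $u^Q$ blows up in $L^\infty$ only like $t^{-\gamma_0}$ with $\gamma_0 = \beta/2$ (Lemma~\ref{lem:modified paraproduct exp}), reducing $\|u'\|_{\LL^{\gamma_\alpha,\delta}_\infty(T)}$ to a $T$-small multiple of $\|u'\|_{\LL^{\gamma_\alpha,\alpha}_\infty(T)}$ plus lower-order weighted norms of $u^Q$. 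Since $\|u'\|_{\LL^{\gamma_\alpha,\alpha}_\infty(T)} \le 1 + \|u\|^2_{\CD^{\tmop{exp}}_{\tmop{rbe}}(T)}$, this self-referential contribution is absorbed into the stated quadratic term, which gives the first inequality.

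I expect the main obstacle to be precisely this last step: keeping the family of weighted parabolic spaces $\LL^{\gamma,\delta}_p(T)$ under control while extracting a positive power of $T$ from the $u'\mpara Q$ self-interaction. Everything else is a literal translation of Lemma~\ref{lem:burgers a priori}, with the time weights now supplied by Lemmas~\ref{lem:modified paraproduct exp}, \ref{lem:modified paraproduct commutators exp}, \ref{lemma:schauder exp} and \ref{lem:paraproduct parabolic space exp}. Finally, the difference estimate follows in the same way: the multilinearity of all the operators involved combined with the second bound of Lemma~\ref{lem:u sharp bound exp} reduces it to the same heat-semigroup computations applied to $u^{\sharp}-\tilde u^{\sharp}$ and $u'-\tilde u'$, now producing the initial-data term $\|u_0-\tilde u_0\|_{-\beta}$, the data term $d_{\mathcal{X}_{\tmop{rbe}}(T)}(\X,\tilde\X)$, and a $T^{\gamma_{2\alpha}-\gamma_\beta}$-small remainder of the same shape.
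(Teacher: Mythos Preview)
Your treatment of $u^\sharp$ matches the paper exactly: Lemma~\ref{lem:u sharp bound exp} combined with the explosive Schauder estimate (Lemma~\ref{lemma:schauder exp}), with the factor $T^{\gamma_{2\alpha}-\gamma_\beta}$ coming from the trivial inclusion $\CM^{\gamma_\beta}_T \hookrightarrow \CM^{\gamma_{2\alpha}}_T$.

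The step you flag as the obstacle is, however, a genuine gap in your sketch. Applying Lemma~\ref{lem:paraproduct parabolic space exp} with blow-up exponent $\gamma_\alpha$ gives $\|u'\mpara Q\|_{\LL^{\gamma_\alpha,\alpha}_\infty(T)} \lesssim M\,\|u'\|_{\LL^{\gamma_\alpha,\delta}_\infty(T)}$, but then $\|u'\|_{\LL^{\gamma_\alpha,\delta}_\infty(T)} \leqslant \|u'\|_{\LL^{\gamma_\alpha,\alpha}_\infty(T)}$ is the trivial embedding and carries no power of $T$. Lemma~\ref{lem:scaling factor smaller norm} does not help here: it is a statement about the non-explosive spaces $\LL^\alpha_T$ and relies on an initial-value term $\|f(0)\|_\delta$, which is precisely what is unavailable for $u'$ in the singular setting.

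The paper resolves this by reversing the order of operations. First apply Lemma~\ref{lem:lower explosive regularity} to $u'$ with $\varepsilon = 2\alpha - \beta$, obtaining
\[
   \|u'\|_{\LL^{\gamma_{\beta-\alpha},\,\beta-\alpha}_\infty(T)} \lesssim \|u'\|_{\LL^{\gamma_\alpha,\alpha}_\infty(T)},
\]
which lowers the spatial regularity \emph{and} the blow-up exponent simultaneously. Then Lemma~\ref{lem:paraproduct parabolic space exp}, now invoked with the \emph{smaller} weight $\gamma = \gamma_{\beta-\alpha}$ and $\delta = \beta-\alpha$, yields $\|u'\mpara Q\|_{\LL^{\gamma_{\beta-\alpha},\alpha}_\infty(T)} \lesssim M\,\|u'\|_{\LL^{\gamma_{\beta-\alpha},\,\beta-\alpha}_\infty(T)}$. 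Finally the trivial inclusion $\LL^{\gamma_{\beta-\alpha},\alpha}_\infty(T) \hookrightarrow \LL^{\gamma_\alpha,\alpha}_\infty(T)$ costs exactly $T^{\gamma_\alpha-\gamma_{\beta-\alpha}} = T^{\gamma_{2\alpha}-\gamma_\beta}$. The point is that the paraproduct with $Q$ restores the spatial regularity to $\alpha$ while keeping the improved (smaller) time weight, and it is this mismatch that produces the factor of $T$. With this step in place the difference estimate follows by the same chain.
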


\begin{proof}
  Lemma~\ref{lem:u sharp bound exp} and Lemma~\ref{lemma:schauder exp} yield
  \[
     \| u^{\sharp}\|_{\LL^{\gamma_{2\alpha},2\alpha}_\infty(T)}  \lesssim \|u_0\|_{-\beta} + \| \X \|_{\mathcal{X}_{\tmop{rbe}} (T)} + T^{\gamma_{2 \alpha} - \gamma_\beta} (1 + M^2) \big( 1 +\| u \|_{\CD^{\tmop{exp}}_{\tmop{rbe}} (T)}^2 \big)
  \]
  and similarly for $u^{\sharp} - \tilde{u}^{\sharp}$. It remains to control $u'$ and $u' - \tilde{u}'$. We get
  \begin{align*}
     \| u' \|_{\LL^{\gamma_{\alpha}, \alpha}_\infty(T)} & \leqslant 2 \| u^Q  \|_{\LL^{\gamma_{\alpha}, \alpha}_\infty(T)} + 4 \|X^{\zztwo}  \|_{\LL^{\gamma_{\alpha}, \alpha}_\infty(T)} \\
     &\lesssim \| u' \mpara Q  \|_{\LL^{\gamma_{\alpha}, \alpha}_\infty(T)} + \| u^{\sharp} \|_{\LL^{\gamma_{2 \alpha}, 2 \alpha}_\infty(T)} + \| \X \|_{\mathcal{X}_{\tmop{rbe}} (T)},
  \end{align*}
  where we used Lemma~\ref{lem:lower explosive regularity} to bound $\| u^{\sharp} \|_{\LL^{\gamma_{\alpha}, \alpha}_\infty(T)} \lesssim \| u^{\sharp} \|_{\LL^{\gamma_{2 \alpha}, 2 \alpha}_\infty(T)}$. Now Lemma~\ref{lem:paraproduct parabolic space exp} and then once more Lemma~\ref{lem:lower explosive regularity} yield
  \begin{align*}
     \| u' \mpara Q \|_{\LL^{\gamma_{\alpha}, \alpha}_\infty(T)} & \lesssim T^{\gamma_{\alpha}-\gamma_{\beta-\alpha}} \| u' \mpara Q \|_{\LL^{\gamma_{\beta-\alpha}, \alpha}_\infty(T)} \\
     & \lesssim T^{\gamma_{\alpha}-\gamma_{\beta-\alpha}}  \| u' \|_{\LL^{\gamma_{\beta-\alpha}, \beta-\alpha}_\infty(T)} \| \X \|_{\mathcal{X}_{\tmop{rbe}} (T)} \\
     & \lesssim     T^{\gamma_{2 \alpha} - \gamma_\beta} \| u' \|_{\LL^{\gamma_{\alpha}, \alpha}_\infty(T)}  \| \X \|_{\mathcal{X}_{\tmop{rbe}} (T)}.
  \end{align*}
  Combining this with our estimate for $\| u^{\sharp} \|_{\LL^{\gamma_{2
  \alpha}, 2 \alpha}_\infty(T)}$, we obtain the bound for $\| u'
  \|_{\LL^{\gamma_{\alpha}, \alpha}_\infty(T)}$. The same arguments allow us to treat
  $\| u' - \tilde{u}' \|_{\LL^{\gamma_{\alpha}, \alpha}_\infty(T)}$.
\end{proof}

The main result of this section now immediately follows. Before we state it, let us recall that the distance $d_{\mathcal{X}_\rbe}(\X, \tilde \X)$ was defined in~\eqref{eq:local Xrbe distance} and define for $u \in \CD^\expp_{\rbe}$ and $c>0$
\[
   \tau^\expp_c(u) = \inf\{ t \geqslant 0: \| u\|_{\CD^{\expp}_\rbe(t)} \geqslant c\}
\]
and then for $u \in \CD^\expp_{\rbe,\X}$ and $\tilde u \in \CD^\expp_{\rbe,\tilde \X}$
\[
   d^{\expp}_{\rbe,c}(u,\tilde u) = \| u' - \tilde u' \|_{\LL^{\gamma_\alpha,\alpha}_\infty(c \wedge \tau_c(u) \wedge \tau_c (\tilde u))} + \| u^\sharp - \tilde u^\sharp \|_{\LL^{\gamma_{2\alpha},2\alpha}_\infty(c \wedge \tau_c(u) \wedge \tau_c (\tilde u))}
\]
and
\[
   d^{\expp}_{\rbe}(u,\tilde u) = \sum_{m=1}^\infty 2^{-m} (1 \wedge (d^{\expp}_{\rbe,m}(u,\tilde u) + |\tau_m(u) - \tilde \tau_m(\tilde u)|)).
\]

\begin{theorem}\label{thm:burgers general initial}
  For every $(\X,u_0) \in \mathcal{X}_{\tmop{rbe}} \times \CC^{-\beta}$ there exists $T^\ast \in (0, \infty]$ such that for all $T<T^\ast$ there is a unique solution $(u,u',u^\sharp) \in \CD^{\tmop{exp}}_{\mathrm{rbe}}(T)$ to the rough Burgers equation~\eqref{eq:rbe general initial} on the interval $[0,T]$. The map that sends $(\X, u_0) \in \Xrbe \times \CC^{-\beta}$ to the solution $(u,u',u^\sharp) \in \CD^{\tmop{exp}}_{\mathrm{rbe}}$ is continuous with respect to the distances $(d_{\Xrbe} + \| \cdot \|_{-\beta}, d_{\rbe})$, and we can choose
  \begin{equation}\label{eq:Tstar burgers general initial}
     T^\ast = \inf\{t \geqslant 0: \|u\|_{C_t\CC^{-\beta}} = \infty \}.
  \end{equation}
\end{theorem}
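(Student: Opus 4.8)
The plan is to run the weighted-space analogue of the proofs of Theorem~\ref{thm:burgers smooth initial} and Theorem~\ref{thm:rbe continuous from data}: replace $\LL^{\alpha}$, $\LL^{2\alpha}$ by $\LL^{\gamma_{\alpha},\alpha}_{\infty}$, $\LL^{\gamma_{2\alpha},2\alpha}_{\infty}$, and the a priori bounds of Lemma~\ref{lem:burgers a priori} by those of the Corollary above, whose crucial feature is the gain $\tau^{\gamma_{2\alpha}-\gamma_{\beta}}$ with $\gamma_{2\alpha}-\gamma_{\beta}=(2\alpha-\beta)/2>0$ (standing assumption $\beta<2\alpha$).

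\emph{Local solution.} First I would fix $M=2\max\{\|u_{0}\|_{-\beta},\|\X\|_{\mathcal{X}_{\mathrm{rbe}}(1)}\}$ and approximate $\X$ by $\X_{n}:=\Theta_{\mathrm{rbe}}(\theta_{n})$ with $\theta_{n}$ smooth and $\X_{n}\to\X$ in $\mathcal{X}_{\mathrm{rbe}}(T)$ for every $T$; Assumption~$(1,M)$ of this section provides corresponding solutions $u_{n}$ of~\eqref{eq:rbe general initial} started in $u_{0}$, which are classical (hence smoother than $\CD^{\expp}_{\rbe}$ requires) for $t>0$. The Corollary together with Lemma~\ref{lemma:schauder exp} gives, for $\tau\in(0,1]$,
\[
   \|u_{n}\|_{\CD^{\expp}_{\rbe}(\tau)}\lesssim M+\tau^{(2\alpha-\beta)/2}(1+M^{2})\bigl(1+\|u_{n}\|_{\CD^{\expp}_{\rbe}(\tau)}^{2}\bigr),
\]
and since $\tau\mapsto\|u_{n}\|_{\CD^{\expp}_{\rbe}(\tau)}$ is continuous and stays $\lesssim M$ as $\tau\to0$ (using $t^{\gamma_{2\alpha}}\|P_{t}v\|_{2\alpha}\lesssim\|v\|_{-\beta}$ from~\eqref{eq:schauder initial contribution exp}), the same bootstrap as in Theorem~\ref{thm:burgers smooth initial} yields $\tau^{\ast}=\tau^{\ast}(M)>0$ and $C(M)$ with $\|u_{n}\|_{\CD^{\expp}_{\rbe}(\tau^{\ast})}\leqslant C(M)$ uniformly in $n$. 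The difference estimate of the Corollary (with both initial conditions equal to $u_{0}$, and $\tau^{\ast}$ possibly shrunk to absorb the $\|u'-\tilde u'\|+\|u^{\sharp}-\tilde u^{\sharp}\|$ term) then makes $(u_{n}',u_{n}^{\sharp})_{n}$ Cauchy in $\LL^{\gamma_{\alpha},\alpha}_{\infty}(\tau^{\ast})\times\LL^{\gamma_{2\alpha},2\alpha}_{\infty}(\tau^{\ast})$; by continuity of all multilinear operations entering the fixed point (Theorem~\ref{thm:paracontrolled product exp} and the commutator/Schauder estimates of Section~\ref{sec:general initial preliminary}) the limit is a paracontrolled solution $u\in\CD^{\expp}_{\rbe}(\tau^{\ast})$ of~\eqref{eq:rbe general initial}, and uniqueness on $[0,\tau^{\ast}]$ follows by rerunning the difference estimate.

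\emph{Globalisation and identification of $T^{\ast}$.} Next I would iterate. For any $t_{0}\in(0,\tau^{\ast}]$ one has $u^{\sharp}(t_{0})\in\CC^{2\alpha}$ and $u^{Q}(t_{0})\in\CC^{\alpha}$, so the problem restarted at $t_{0}$ with driver $\X(\cdot+t_{0})$ falls into the non-explosive framework — precisely the iteration step of Theorem~\ref{thm:burgers smooth initial}, including the bookkeeping that replaces $\mpara$ by $\mpara_{t_{0}}$ and controls the difference in $\CC^{2\alpha}$ via Lemma~\ref{lem:modified paraproduct commutators exp}. This builds a solution on a maximal $[0,T^{\ast})$ with $T^{\ast}=\inf\{t:\|u\|_{\CD^{\expp}_{\rbe}(t)}=\infty\}$; since $\|u\|_{C_{t}\CC^{-\beta}}\lesssim\|u\|_{\CD^{\expp}_{\rbe}(t)}+\|\X\|_{\mathcal{X}_{\mathrm{rbe}}(t)}$ this already gives ``$\leqslant$'' in~\eqref{eq:Tstar burgers general initial}. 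For ``$\geqslant$'', if $\|u\|_{C_{T'}\CC^{-\beta}}=:K<\infty$ for some $T'<\infty$, then restarting at $t_{0}<T'$ needs only $\|u(t_{0})\|_{-\beta}\leqslant K$ and $\|\X\|_{\mathcal{X}_{\mathrm{rbe}}(T'+1)}$, so the resulting existence time $\tau(M')$, $M'=M'(K,\X)$, is bounded below uniformly in $t_{0}<T'$; taking $t_{0}$ close to $T'$ extends $u$ past $T'$, whence $T^{\ast}\geqslant T'$.

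\emph{Continuity, and the main obstacle.} Finally, continuity of $(\X,u_{0})\mapsto u$ in $(d_{\mathcal{X}_{\mathrm{rbe}}}+\|\cdot\|_{-\beta},d^{\expp}_{\rbe})$ follows as in Theorem~\ref{thm:rbe continuous from data}: on a first interval the difference estimate gives $d^{\expp}_{\rbe,m}(u,u_{n})\lesssim_{m,M}d_{\mathcal{X}_{\mathrm{rbe}}(m)}(\X,\X_{n})+\|u_{0}-u_{0}^{n}\|_{-\beta}$, continuity on later intervals comes from the globalisation step, and $\tau^{\expp}_{m}(u_{n})\to\tau^{\expp}_{m}(u)$ is obtained by feeding the difference estimate into the equation satisfied by $u^{\sharp}-u_{n}^{\sharp}$ and $u^{Q}-u_{n}^{Q}$, whose right-hand side tends to $0$ in the relevant norms. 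I expect the genuine difficulty to be organisational rather than analytic: closing the weighted a priori bound into a contraction despite $\|u_{n}\|_{\CD^{\expp}_{\rbe}(\tau)}$ sitting on both sides (the bootstrap via continuity in $\tau$), and correctly matching the paracontrolled structure $\mpara$ versus $\mpara_{t_{0}}$ across each positive restart time; all genuinely new analytic input — the explosive Schauder and modified-paraproduct estimates — has already been isolated in Section~\ref{sec:general initial preliminary}.
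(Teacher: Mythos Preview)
Your proposal is correct and follows essentially the same route as the paper: run the weighted analogue of Theorem~\ref{thm:burgers smooth initial} using the Corollary preceding the statement, then iterate from any positive time using the non-explosive framework (exactly the $\mpara$ versus $\mpara_{t_0}$ bookkeeping you flag), and finally identify $T^\ast$ by a restart argument.

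The only point where your sketch is looser than the paper is the step ``taking $t_0$ close to $T'$ extends $u$ past $T'$''. You invoke the explosive restart (since you use only $\|u(t_0)\|_{-\beta}$), but an explosive restart produces a solution whose paracontrolled norm may blow up as $t\to t_0^+$, so it does not directly give an element of $\CD^{\expp}_{\rbe}$ (which allows blow-up only at $0$). The paper closes this by the smoothing observation you have already used in the iteration step: the restarted explosive solution, restricted to $[t_0+\varepsilon/2,\,t_0+\tau(M')]$, lies in the \emph{non-explosive} spaces $\LL^{\alpha}\times\LL^{2\alpha}$; by uniqueness it agrees with the original $u$ on the overlap, which contradicts $\|u\|_{\CD^{\expp}_{\rbe}(\tilde T^\ast)}=\infty$. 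You have all the ingredients for this, it just needs to be said.
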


\begin{proof}
   The proof is essentially the same as the one for Theorem~\ref{thm:burgers smooth initial}. The only difference is in the iteration argument. Assume that we constructed a paracontrolled solution $u$ on $[0,\tau]$ for some $\tau>0$. Then $\tau^{\gamma_{\alpha}}(u^Q(\tau) - u'\mpara Q(\tau)) \in \CC^{2\alpha}$ by assumption, and Lemma~\ref{lem:modified paraproduct commutators exp} shows that
   \begin{equation}\label{eq:burgers general initial pr1}
      \tau^{\gamma_{\alpha}} (u' \mpara Q(\tau) - u' (\tau)\para Q(\tau)) \in \CC^{2\alpha}.
   \end{equation}
   The initial condition for $u^\sharp$ in the iteration on $[\tau, \tau+1]$ is given by $u^Q(\tau) - u'(\tau)\para Q(\tau)$, which as we just saw is in $\CC^{2\alpha}$, and therefore we can use the arguments of Section~\ref{sec:burgers smooth initial} to construct a paracontrolled solution $(\tilde u, \tilde u', \tilde u^\sharp) \in C_{\tilde \tau}\CC^{\alpha-1} \times \LL^\alpha_{\tilde \tau} \times \LL^{2\alpha}_{\tilde \tau}$ on the time interval $[0,\tilde \tau]$ for some $\tilde \tau >0$. Extend now $u$ and $u'$ from $[0,\tau]$ to $[0,\tau + \tilde \tau]$ by setting $u(t) = \tilde u(t-\tau)$ for $t \geqslant \tau$ and similarly for $u'$. Since on $[\tau, \tau + \tilde \tau]$ the function $t \mapsto t^{\gamma_\alpha}$ is infinitely differentiable, we obtain that $(u,u') \in C_{\tau+\tilde\tau} \CC^{\beta} \times \LL^{\gamma_\alpha,\alpha}_{\tau+\tilde \tau}$. Moreover, $u^Q - u'\mpara_{\tau} Q \in \LL^{2\alpha}$ on the interval $[\tau,\tau+\tilde\tau]$, where
  \[
      u'\mpara_{\tau} Q(t) = \sum_j \int_{-\infty}^t 2^{2j} \varphi(2^{2j}(t-s)) S_{j-1} u'(s \vee \tau) \mathd s \Delta_j Q(t).
  \]
  Using the smoothness of $t \mapsto t^{\gamma_{2\alpha}}$ on $[\tau,\tau+ \tilde \tau]$, it therefore suffices to show that $(u'\mpara_{\tau}Q - u'\mpara Q)|_{[\tau,\tau+\tilde \tau]} \in \LL^{2\alpha}$. But we already saw in equation~\eqref{eq:burgers general initial pr1} above that $(u'\mpara_{\tau}Q - u'\mpara Q)(\tau) \in \CC^{2\alpha}$, and for $t \in [\tau,\tau+\tilde \tau]$ we get from the second estimate of Lemma~\ref{lem:modified paraproduct commutators exp} that
  \[
     \LL(u'\mpara_{\tau}Q - u'\mpara Q)(t) = (u' \mpara_{\tau}\mathD X - u'\mpara \mathD X)(t) + t^{-\gamma_\alpha}\CC^{2\alpha-2}.
  \]
  On $[\tau,\tau+\tilde \tau]$ the factor $t^{-\gamma_\alpha}$ poses no problem and therefore it suffices to control the difference between the two modified paraproducts. 
  Now the first estimate of Lemma~\ref{lem:modified paraproduct commutators exp} shows that $(u'\mpara \mathD X - u' \para \mathD X)|_{[\tau,\tau+\tilde \tau]} \in C \CC^{2\alpha-2} $, and from Lemma~\ref{lem:modified paraproduct commutators} it follows that also $(u'\mpara_{\tau} \mathD X - u' \para \mathD X)|_{[\tau,\tau+\tilde \tau]} \in C\CC^{2\alpha-2}$, so that the $\LL^{2\alpha}$ regularity of $u^\sharp|_{[\tau,\tau+\tilde\tau]}$ follows from the Schauder estimates for the heat flow, Lemma~\ref{lemma:schauder}. Uniqueness and continuous dependence on the data can be handled along the same lines.
  
  However, so far the construction only works up to time $\tilde T^\ast = \inf\{t \geqslant 0: \|u\|_{\CD^{\tmop{exp}}_{\mathrm{rbe}}(t)} = \infty\}$, and it remains to show that $\tilde T^\ast = T^\ast$ for $T^\ast$ defined in~\eqref{eq:Tstar burgers general initial}. Assume therefore that $\|u\|_{C_{\tilde T^\ast} \CC^{-\beta}} < \infty$. Then we can solve the equation starting in $u(\tilde T^\ast - \varepsilon)$ for some very small $\varepsilon$ for which we can perform the Picard iteration on an interval $[\tilde T^\ast - \varepsilon, \tilde T^\ast + \varepsilon]$, and in particular the solution $(u,u',u^\sharp)$ restricted to the time interval $[\tilde T^\ast - \varepsilon/2, \tilde T^\ast + \varepsilon/2]$ is in $C([\tilde T^\ast - \varepsilon/2, \tilde T^\ast + \varepsilon/2],\CC^{\alpha-1}) \times \LL^\alpha([\tilde T^\ast - \varepsilon/2, \tilde T^\ast + \varepsilon/2]) \times \LL^{2\alpha}([\tilde T^\ast - \varepsilon/2, \tilde T^\ast + \varepsilon/2])$ (with the natural interpretation for the $\LL^\delta([a,b])$ spaces). Using the uniqueness of the solution we get a contradiction to $\|u\|_{\CD^{\tmop{exp}}_{\mathrm{rbe}}(\tilde T^\ast)} = \infty$, and the proof is complete.
\end{proof}

For the KPZ equation and $Y\in \Ykpz$ we can introduce analogous spaces $\CD^\expp_{\kpz,\Y}$ with distance $d^\expp_\kpz(h,\tilde h)$ and obtain the analogous result:

\begin{theorem}\label{thm:kpz general initial}
  For every $(\Y,h_0) \in \Ykpz \times \CC^{1-\beta}$ there exists $T^\ast \in (0, \infty]$ such that for all $T<T^\ast$ there is a unique solution $(h,h',h^\sharp) \in \CD^{\tmop{exp}}_{\mathrm{kpz}}(T)$ to the rough KPZ equation
  \[
     \LL h = (\mathD h)^{\diamond 2} + \xi, \qquad h(0) = h_0
  \]
   on the interval $[0,T]$. The map that sends $(\Y, h_0) \in \Xrbe \times \CC^{1-\beta}$ to the solution $(h,h',h^\sharp) \in \CD^{\tmop{exp}}_{\mathrm{rbe}}$ is continuous with respect to the distances $(d_{\Ykpz} + \| \cdot\|_{1-\beta}, d_{\kpz})$, and we can choose
  \begin{equation}\label{eq:Tstar burgers general initial}
     T^\ast = \inf\{t \geqslant 0: \|u\|_{C_t\CC^{1-\beta}} = \infty \}.
  \end{equation}
\end{theorem}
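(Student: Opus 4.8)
The plan is to transfer Theorem~\ref{thm:burgers general initial} to the KPZ setting exactly the way Theorem~\ref{thm:kpz smooth initial} was deduced from Theorem~\ref{thm:burgers smooth initial}, but now carrying the weights $t^{\gamma_\varepsilon}$ through all estimates. First I would introduce, in parallel to Definition~\ref{def:paracontrolled singular}, the space $\CD^{\expp}_{\kpz,\Y}$ of triples $(h,h',h^\sharp)$ with $h = Y + Y^{\zzone} + 2 Y^{\zztwo} + h'\mpara P + h^\sharp$, where $h \in C\CC^{1-\beta}$, $h' \in \LL^{\gamma_\alpha,\alpha}_\infty$ and $h^\sharp \in \LL^{\gamma_{\alpha+1+\delta},\alpha+1+\delta}_\infty$ for suitable $\delta$; here the extra $+1$ in the regularity of $P$ and $h^\sharp$ (compared to $Q$ and $u^\sharp$) accounts for the fact that one integrates rather than differentiates. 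The key structural observation, already used in Section~\ref{sec:kpz}, is that $\mathD$ maps $\CD^{\expp}_{\kpz,\Y}$ into $\CD^{\expp,\delta}_{\rbe,\mathD\Y}$ for every $\delta < \alpha$, and that this map is continuous with the appropriate weighted norms; the regularity loss from $\alpha+1$ to $\alpha$ in the remainder is paid for by the interpolation $\LL^{\gamma,\alpha+1}_\infty \subseteq C^{(\alpha-\delta)/2}\CM^\gamma\CC^{1+\delta}$, exactly as in the smooth-initial-data case but now with weights.

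The argument then proceeds as follows. Given $(\Y, h_0) \in \Ykpz \times \CC^{1-\beta}$, set $\X = \mathD\Y \in \Xrbe$ and $u_0 = \mathD h_0 \in \CC^{-\beta}$, and let $u \in \CD^{\expp}_{\rbe,\X}$ be the solution from Theorem~\ref{thm:burgers general initial} with blow-up time $T^\ast$. Since $u = \mathD\tilde h + f$ for some $\tilde h \in \CD^{\expp}_{\kpz}$ and $f \in C\CM^{\gamma}\CC^{2\alpha}$ (again the same decomposition as in Section~\ref{sec:kpz}, now with weight), the renormalized square $u^{\diamond 2} = u^2 - c^{\zzone}$ is a well-defined element of $\CM^{\gamma_\beta}_T\CC^{2\alpha-1}$, independently of the decomposition. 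I would then \emph{define} $h$ to be the unique solution of the linear equation $\LL h = u^{\diamond 2} + \theta$ with $h(0) = h_0$; this has a unique solution by the weighted Schauder estimates (Lemma~\ref{lemma:schauder exp}, using $\beta < 2\alpha < 1$ so $\gamma_\beta < 1$). Setting $h^P = h - Y - Y^{\zzone} - 2Y^{\zztwo}$ and $h' = 2u^Q + 4X^{\zztwo}$, one checks by the same algebraic manipulation as in Section~\ref{sec:kpz} — cancelling the $c^{\zzfour}$ contributions via the chosen link between the renormalizations of $Y^{\zzfour}$ and $Y^\zzthreereso$, and replacing $\LL(h'\mpara P)$ by $h'\mpara\LL P$ using the weighted commutator Lemma~\ref{lem:modified paraproduct commutators exp} — that $h^\sharp := h^P - h'\mpara P$ lands in the required weighted parabolic space, so $(h,h',h^\sharp) \in \CD^{\expp}_{\kpz,\Y}(T)$. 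Finally $\LL(\mathD h - u) = 0$ with zero initial value gives $\mathD h = u$, hence $h$ solves the KPZ equation.

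For uniqueness: if $(\bar h, \bar h', \bar h^\sharp) \in \CD^{\expp}_{\kpz,\Y}(T)$ is any solution, then $\mathD\bar h \in \CD^{\expp,\delta}_{\rbe,\X}(T)$ solves Burgers with initial condition $\mathD h_0$, so by the uniqueness part of Theorem~\ref{thm:burgers general initial} we have $\mathD\bar h = u = \mathD h$; since both $\bar h$ and $h$ solve the same linear heat equation with the same (spatial-mean-corrected) initial condition — the zero Fourier mode being handled by $Y(0) = \int_{-\infty}^0 P_{-s}\Pi_{\neq 0}\theta(s)\,\mathd s$ plus the explicit ODE for the mean — they coincide. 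Continuity in $(d_{\Ykpz} + \|\cdot\|_{1-\beta}, d_{\kpz})$ follows from the continuity statement in Theorem~\ref{thm:burgers general initial} together with the continuity of the linear solution map and of $\mathD$, $\int$ between the relevant weighted spaces; one defines $\tau^{\expp}_c$, $d^{\expp}_{\kpz,c}$ and $d^{\expp}_{\kpz}$ verbatim as in the Burgers case. The characterization $T^\ast = \inf\{t : \|h\|_{C_t\CC^{1-\beta}} = \infty\}$ is inherited from the corresponding statement for $u$ via $\mathD h = u$ and the fact that $\|h(t)\|_{1-\beta} \simeq |\widehat{h(t)}(0)| + \|u(t)\|_{-\beta}$, the mean staying finite as long as $u^{\diamond2}$ is integrable in time.

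The main obstacle I expect is bookkeeping the weights consistently: every lemma from Section~\ref{sec:preliminaries} that was upgraded in Section~\ref{sec:general initial preliminary} must be applied with exponents $\gamma_\varepsilon = (\beta+\varepsilon)/2$ chosen so that the time-integrable singularities $t^{-\gamma_\bullet}$ produced by the nonlinearity (worst case $t^{-\gamma_\beta}$ from $\mathD(u^Q)^2$, cf.\ the sketch after Lemma~\ref{lem:u sharp bound exp}) are still integrable against the heat kernel, i.e.\ $\gamma_\beta < 1$, which forces $\beta < 2$ and in fact the working range $\beta \in (1-\alpha, 2\alpha)$; and the gain of a small power $T^{\gamma_{2\alpha+1}-\gamma_\beta}$ needed for the contraction has to survive the differentiation step $h \mapsto \mathD h$, which is where Lemma~\ref{lem:lower explosive regularity} does the real work. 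None of this is conceptually new beyond Section~\ref{sec:singular initial}, so I would present it compactly, pointing to the Burgers proof for the Picard iteration and the iteration-to-blow-up argument, and spelling out only the $\mathD h \leftrightarrow u$ correspondence and the initial-condition regularity of $h^\sharp$.
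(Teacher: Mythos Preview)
Your proposal is correct, but it takes a somewhat different route from the paper. The paper treats Theorem~\ref{thm:kpz general initial} as a direct analogue of Theorem~\ref{thm:burgers general initial}: one introduces the weighted space $\CD^{\expp}_{\kpz,\Y}$ and reruns the Picard iteration of Section~\ref{sec:rbe singular} with the KPZ ansatz (so $P$, $Y^\tau$, $h^\sharp \in \LL^{\gamma_{2\alpha+1},2\alpha+1}_\infty$ in place of $Q$, $X^\tau$, $u^\sharp \in \LL^{\gamma_{2\alpha},2\alpha}_\infty$); no reference to the Burgers solution is needed. You instead lift the existing Burgers solution $u$ through the linear equation $\LL h = u^{\diamond 2} + \theta$, exactly mirroring the correspondence established in Section~\ref{sec:kpz} for smooth initial data, and carry the weights $t^{\gamma_\varepsilon}$ through that correspondence via Lemmas~\ref{lemma:schauder exp}, \ref{lem:modified paraproduct commutators exp} and~\ref{lem:lower explosive regularity}. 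Both arguments work. Your approach avoids repeating the contraction estimates and gets uniqueness and the blow-up characterization essentially for free from the Burgers side (with the zero-mode discussion you sketch); the paper's approach is shorter to state because it is literally ``same proof, one more derivative integrated'', but it formally duplicates the fixed-point machinery. The one place where you should be careful is the uniqueness step: the Burgers uniqueness in Theorem~\ref{thm:burgers general initial} is for the triple $(u,u',u^\sharp)$, so you need to check (as in Section~\ref{sec:kpz}) that the induced Burgers triple from any KPZ solution $(\bar h,\bar h',\bar h^\sharp)$ has the fixed-point form $\bar h' = 2u^Q + 4X^{\zztwo}$, which it does by the KPZ equation itself.
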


\subsection{Heat equation with singular initial conditions}\label{sec:heat singular}

The rough heat equation is linear, and therefore in that case it sometimes turns out to be advantageous to work in $\CC^\alpha_p$ spaces for general $p$ in order to allow for more general initial conditions. More precisely, we will be able to handle initial conditions $w_0 \in \CC^{-\beta}_p$ for arbitrary $\beta < 1/2$ and $p \in [1,\infty]$. Taking $p = 1$, this allows us for example to start in $(-\Delta)^{\gamma} \delta$ for $\gamma < 1/4$, where $\delta$ denotes the Dirac delta.

We fix $p \in [1,\infty]$, $\alpha \in (1/3,1/2)$, and $\beta \in (0,\alpha)$, and we want to solve the paracontrolled equation
\[
   \LL w = w\diamond \xi, \qquad w(0) = w_0,
\]
for $w_0 \in \CC^{-\beta}$. For $\varepsilon \geqslant 0$ let us write
\[
    \gamma_{\varepsilon} = \frac{\beta + \varepsilon}{2}.
\]

\begin{definition}
  Let $\Y \in \mathcal{Y}_{\tmop{kpz}}$ and $\delta \in (1 - 2 \alpha, 1 - \alpha - \beta)$.
  We define the space $\CD^{\tmop{exp},\delta}_{\tmop{rhe}} = \DD^{\tmop{exp},\delta}_{\tmop{rhe},
  \Y}$ of distributions paracontrolled by $\Y$ as the set of
  all $(u,u',u^\sharp) \in C \CC^{\beta}_p \times \LL^{\gamma_\delta,\delta}_p \times \LL^{\gamma_{1+\alpha+\delta},(1+\alpha + \delta)}_p$ such that
  \[
     u = e^{Y + Y^{\zzone} + 2 Y^{\zztwo}} (w' \mpara P + w^{\sharp}).
  \]
  For $T > 0$ we set
  $\CD^{\tmop{exp},\delta}_{\tmop{rhe}} (T) = \CD^{\tmop{exp},\delta}_{\tmop{rhe}} |_{[0, T]}$, and
  we define
  \[ \| w \|_{\DD^{\tmop{exp},\delta}_{\tmop{rhe}} (T)} = \| w' \|_{\LL^{\gamma_\delta,\delta}_p(T)} + \|
     w^{\sharp} \|_{\LL_p^{\gamma_{1+\alpha+\delta},(1+\alpha + \delta)}(T)} . \]
  We will often write $w^P = w' \mpara P + w^{\sharp}$.
\end{definition}
Recall from~\eqref{eq:heat equation product definition} the definition of the renormalized product $w\diamond \xi$ for $\Y = \Theta_{\tmop{kpz}}(\xi, c^\zzone, c^\zzfour)$:
\begin{align*}
      w \diamond \xi = \LL w - e^{Y + Y^{\zzone} + 2 Y^{\zztwo}} & \Big[- [4 (\LL Y^\zzthreereso + \mathD Y^\zztwo \para \mathD Y + \mathD Y^\zztwo \lpara \mathD Y) + \LL Y^\zzfour ] w^p \\ \nonumber
   &\qquad + [4 \mathD Y^{\zzone} \mathD Y^{\zztwo} + (2 \mathD Y^{\zztwo})^2] w^P \\
   &\qquad + \LL w^P - 2 \mathD (Y + Y^{\zzone} + 2 Y^{\zztwo}) \mathD w^P \Big].
\end{align*}
It is then a simple exercise to apply the results of Section~\ref{sec:general initial preliminary} to see that $w\diamond \xi$ depends continuously on $(\Y,w) \in \mathcal{Y}_{kpz} \times \DD^{\tmop{exp},\delta}_{\tmop{rhe}, \Y}$. Moreover, $w \in \DD^{\tmop{exp},\delta}_{\tmop{rhe}, \Y}$ solves
\[
   \LL w = w \diamond \xi, \qquad w(0) = w_0,
\]
if and only if
\begin{align*}
   \LL w^P & = [4 (\LL Y^\zzthreereso + \mathD Y^\zztwo \para \mathD Y + \mathD Y^\zztwo \lpara \mathD Y) + \LL Y^\zzfour + 4 \mathD Y^{\zzone} \mathD Y^{\zztwo} + (2 \mathD Y^{\zztwo})^2] w^p \\
   &\quad + 2 \mathD (Y + Y^{\zzone} + 2 Y^{\zztwo}) \mathD w^P,
\end{align*}
$w^P(0) = w_0 e^{-Y(0) - Y^\zzone(0) - 2 Y^\zztwo(0)}$. Since $w_0 \in \CC_p^{-\beta}$ and $e^{-Y(0) - Y^\zzone(0) - 2 Y^\zztwo(0)} \in \CC^\alpha$ with $\alpha > \beta$, the latter product is well defined and in $\CC^{-\beta}_p$.

We define the distance $d^\expp_\rhe$ analogously to the case of the KPZ or Burgers equation. Solutions to the rough heat construction can now be constructed using the same arguments as in the previous section, so that we end up with the following result:

\begin{theorem}\label{thm:she general initial}
  Let $\delta \in (1-2\alpha, 1-\alpha-\beta)$. For all $(\Y,w_0,T) \in \mathcal{Y}_{\tmop{kpz}} \times \CC^{-\beta}_p \times [0,\infty)$ there is a unique solution $(w,w',w^\sharp) \in \CD^{\tmop{exp,\delta}}_{\mathrm{rhe},\Y}(T)$ to the rough heat equation~\eqref{eq:rbe general initial} on the interval $[0,T]$. The solution depends continuously on $(\Y,w_0)$ with respect to $(d_{\Ykpz} + \| \cdot \|_{\CC^{-\beta}_p}, d^\expp_\rhe)$. Moreover, if there exist $(w_0^n) \subset C^\infty$ with $\| w_0 - w_0^n \|_{-\beta} \to 0$ as $n \to \infty$ and such that $w_0^n \geqslant 0$ for all $n$, then $w(t,x) \geqslant 0$ for all $(t,x)$.
\end{theorem}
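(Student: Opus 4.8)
The plan is to run the scheme of the proof of Theorem~\ref{thm:burgers general initial} to obtain well-posedness and continuous dependence, and then to derive the positivity statement by an approximation argument combining Theorem~\ref{thm:she smooth initial}, the continuity of the solution map in the initial condition, and the smoothing of the heat semigroup. Since the rough heat equation is \emph{linear} in the unknown, the first two points are actually simpler than in the Burgers case (there is no blow-up) and essentially transcribe the arguments of Section~\ref{sec:rbe singular}; I would keep that part short and concentrate on positivity.

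\textbf{Well-posedness and continuity.} First I would note that $w = e^{Y+Y^{\zzone}+2Y^{\zztwo}}(w'\mpara P+w^{\sharp}) \in \CD^{\expp,\delta}_{\rhe,\Y}(T)$ solves the equation precisely when $w^P=w'\mpara P+w^{\sharp}$ satisfies the reinterpreted equation~\eqref{eq:rhe reinterpretation} with $w^P(0)=w_0\,e^{-Y(0)}$ and $w'=2\mathD w^P+4\mathD Y^{\zztwo}w^P$; here $w_0\,e^{-Y(0)}\in\CC^{-\beta}_p$ is well defined because $e^{-Y(0)}\in\CC^{\alpha}$ with $\alpha>\beta$. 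Deriving the equation for $w^{\sharp}$ and decomposing its right-hand side into bounded multilinear operators exactly as in the smooth-data section --- using Bony's commutator Lemma~\ref{lem:bony commutator}, the paracontrolled product Theorem~\ref{thm:paracontrolled product exp}, the modified-paraproduct estimates Lemma~\ref{lem:modified paraproduct exp}, the commutators Lemma~\ref{lem:modified paraproduct commutators exp}, and Young integration for the terms carrying $\partial_tY^{\zzthreereso}$ and $\partial_tY^{\zzfour}$ --- and then applying the weighted Schauder estimates Lemma~\ref{lemma:schauder exp}, the paraproduct bound Lemma~\ref{lem:paraproduct parabolic space exp} and the interpolation Lemma~\ref{lem:lower explosive regularity} while tracking the weights $t^{-\gamma_{\bullet}}$, would yield, for $\|\Y\|_{\Ykpz(T)}\le M$, an a priori bound of the form $\|w\|_{\CD^{\expp,\delta}_{\rhe}(T)}\lesssim_M \|w_0\|_{\CC^{-\beta}_p}+T^{\kappa}\|w\|_{\CD^{\expp,\delta}_{\rhe}(T)}$ for some $\kappa>0$, together with the analogous Lipschitz bound for two solutions. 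Absorbing the last term for $T$ small (depending only on $M$) gives existence, uniqueness and continuous dependence of $(w,w',w^{\sharp})$ on $(\Y,w_0)$ with respect to $(d_{\Ykpz}+\|\cdot\|_{\CC^{-\beta}_p},d^{\expp}_{\rhe})$; and since the restarted datum $w(T_0)$ is a genuine function in $\CC^{1+\alpha}_p$ and $\|\Y\|_{\Ykpz(T)}<\infty$ for every $T$, iterating the construction reaches every time without blow-up, which is the global existence.

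\textbf{Positivity.} Given smooth $w_0^n\ge0$ with $w_0^n\to w_0$ in $\CC^{-\beta}_p$, I would first move these data into the space handled by Theorem~\ref{thm:she smooth initial}: set $g^n=e^{-Y(0)}w_0^n\ge0\in\CC^{\alpha}$ and mollify with a nonnegative kernel to obtain $g^{n,k}\in C^{\infty}$ with $g^{n,k}\ge0$ and $g^{n,k}\to g^n$ in $\CC^{\alpha-\varepsilon}$, so that $e^{Y(0)}g^{n,k}\to e^{Y(0)}g^n=w_0^n$ in $\CC^{\alpha-\varepsilon}\hookrightarrow\CC^{-\beta}_p$; a diagonal argument then produces smooth nonnegative $f^k$ with $e^{Y(0)}f^k\to w_0$ in $\CC^{-\beta}_p$. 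By Theorem~\ref{thm:she smooth initial}, applied with the nonnegative initial datum $e^{Y(0)}f^k$ --- whose $w^P$-component $f^k\in\CC^{2\alpha+1}$ is nonnegative --- the corresponding solution $w^k$ is nonnegative; it lies in $\CD^{\expp,\delta}_{\rhe,\Y}$ (for $\delta\le\alpha$, hence for all admissible $\delta$ since the solution does not depend on the choice of $\delta$), so by uniqueness it coincides with the solution constructed above. The continuous dependence then gives $w^k\to w$ in $d^{\expp}_{\rhe}$; and for each $t>0$, Lemma~\ref{lem:modified paraproduct exp} gives $w'\mpara P(t)\in\CC^{1+\alpha}_p$ while $w^{\sharp}(t)\in\CC^{1+\alpha+\delta}_p$, so $w^P(t)\in\CC^{1+\alpha}_p\hookrightarrow C(\T)$ (since $1+\alpha>1\ge 1/p$) and $w(t)=e^{(Y+Y^{\zzone}+2Y^{\zztwo})(t)}w^P(t)\in C(\T)$, with $w^k(t)\to w(t)$ uniformly on $\T$. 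Since $w^k(t,x)\ge0$ for all $k$, we conclude $w(t,x)\ge0$ for all $t>0$ and $x\in\T$, while $w(0)=w_0=\lim_n w_0^n\ge0$ as a distribution.

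\textbf{Expected obstacle.} Everything outside the positivity statement is a routine adaptation of Section~\ref{sec:rbe singular} and the smooth-data case. Within the positivity argument, the one delicate point is the tension between two competing requirements in the limiting procedure: to invoke the known positivity one must push the initial datum into $\CC^{2\alpha+1}$ (on which Theorem~\ref{thm:she smooth initial} is phrased), which forces the detour $g^n=e^{-Y(0)}w_0^n$ together with a nonnegativity-preserving mollification, in order to compensate for the loss of regularity caused by the rough factor $e^{Y(0)}\in\CC^{\alpha}$; whereas to conclude the \emph{pointwise} inequality $w(t,x)\ge0$ one needs the convergence $w^k\to w$ to hold in a topology that embeds into $C(\T)$, which is exactly what the parabolic regularization supplies through the gain $w^P(t)\in\CC^{1+\alpha}_p$ for $t>0$. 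Reconciling these two demands is where the care is needed.
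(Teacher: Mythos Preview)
Your proposal is correct and follows essentially the same route as the paper, which does not give an explicit proof but simply points to the arguments of Section~\ref{sec:rbe singular}; your well-posedness and continuity sketch is precisely that transcription, and the linearity observation that kills the blow-up is the same one the paper makes in Section~\ref{sec:rhe}.

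Your positivity argument is more careful than anything the paper spells out, and the care is warranted: the obstruction you identify is genuine, since $e^{-Y(0)}w_0^n\in\CC^{\alpha}$ only, so one cannot feed $w_0^n$ directly into Theorem~\ref{thm:she smooth initial}. Your fix via nonnegative mollification of $e^{-Y(0)}w_0^n$ and a diagonal extraction is clean; an alternative in the spirit of the paper would be to approximate $\Y$ simultaneously by smooth $\Theta_{\kpz}(\theta_\varepsilon,c^{\zzone}_\varepsilon,c^{\zzfour}_\varepsilon)$, for which the classical heat equation with smooth nonnegative data is manifestly nonnegative, and then pass to the limit jointly in $(\Y,w_0)$ using the continuity you just established. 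Either route works. Two small remarks: the claim that the solution does not depend on the admissible $\delta$ is justified via Lemma~\ref{lem:lower explosive regularity}, which gives $\CD^{\expp,\delta}_{\rhe}\subset\CD^{\expp,\delta_0}_{\rhe}$ for $\delta_0<\delta$, so uniqueness at one admissible $\delta_0\le\alpha$ suffices; and the embedding $\CC^{1+\alpha}_p\hookrightarrow C(\T)$ you use for $t>0$ is the Besov embedding $B^{1+\alpha}_{p,\infty}\hookrightarrow B^{1+\alpha-1/p}_{\infty,\infty}$, valid since $1+\alpha-1/p\ge\alpha>0$.
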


\section{Variational representation and global existence of solutions}\label{sec:hjb}

\subsection{KPZ as Hamilton-Jacobi-Bellman equation}\label{sec:HJB1}

Here we show that for every $\Y \in \mathcal{Y}_{\mathrm{rbe}}$ and every $h_0 \in \CC^\beta$ for $\beta > 0$ there are global in time solutions to the KPZ equation. The idea is to interpret the solution as value function of an optimal control problem, and to ``guess'' an expansion of the optimal control. This can be made rigorous in the case of smooth data and allows us to obtain a priori bounds which show that no explosion occurs as we let the smooth data converge to $\Y$.

Let $h$ solve the KPZ equation
\begin{equation}\label{eq:kpz hjb section}
   \LL h = (\mathD h)^2 - c^\zzone + \theta,\qquad h(0) = \bar{h},
\end{equation}
for $\theta \in \LL C^{\alpha/2}_{\mathrm{loc}}(\R, C^\infty)$, $c^\zzone \in \R$, and $\bar{h} \in C^\infty$. Then by the Cole--Hopf transform we have $h = \log w$, where
\begin{equation}\label{eq:rhe hjb section}
   \LL w = w(\theta - c^\zzone), \qquad w(0) = e^{\bar h}.
\end{equation}
We specified in Section~\ref{sec:rhe} how to interpret this equation. But as long as $Y$ (see Definition~\ref{def:kpz rough distribution}) is in $C(\R_+, C^\infty)$, we have the following simpler characterization. The function $w \colon \R_+ \times \T \to \R$ solves~\eqref{eq:rhe hjb section} in the sense of Section~\ref{sec:rhe} if and only if $w = e^Y w^1$, where $w^1$ is a classical solution to
\[
   \LL w^1 = (|\mathD Y|^2 - c^\zzone) w^1 + 2 \mathD Y \mathD w^1, \qquad w^1(0) = e^{\bar h - Y(0)}.
\]
For the rest of this section it will be convenient to reverse time. So fix $T>0$ and define $\arr{\varphi} = \varphi(T-t)$ for all appropriate $\varphi$. Then
\[
   (\partial_t + \Delta) \arr h = -((\mathD \arr h)^2 - c^\zzone) - \arr \theta, \qquad \arr h(T) = \bar{h},
\]
and $\arr w = e^{\arr Y} \arr w^1$ with
\[
   (\partial_t + \Delta + 2 \mathD \arr Y \mathD) \arr w^1 = -(|\mathD \arr Y|^2 - c^\zzone) \arr w^1, \qquad \arr w^1(T) = e^{\bar h - \arr Y(T)}.
\]
The Feynman--Kac formula (\cite{Karatzas1988}, Theorem 5.7.6) shows that for $t \in [0,T]$ we have
\begin{equation}\label{eq:feynman-kac}
   \arr w^1(t,x) = \E_{t,x}\Big[e^{\bar{h}(\gamma_{T}) - \arr Y(T,\gamma_T) + \int_{t}^{T} (|\mathD \arr Y|^2(s,\gamma_s) - c^\zzone) \mathd s}\Big],
\end{equation}
where under $\P_{t,x}$ the process $\gamma$ solves
\[
   \gamma_s = \int_t^s 2 \mathD \arr Y(r, \gamma_r) \dd r + B_s, \quad s \geqslant t, 
\]
with a Brownian motion $B$ with variance 2 (that is $\mathd \langle B \rangle_s = 2 \mathd s$), started in $B_t = x$. An application of Girsanov's theorem gives
\begin{align*}
   \arr w(t,x) &= e^{\arr Y(t,x)} \E_{t,x}\Big[e^{\bar{h}(B_{T}) - \arr Y(T,B_T) + \int_{t}^{T} (|\mathD \arr Y|^2(s,B_s) - c^\zzone) \mathd s} e^{\int_t^T \mathD \arr Y(s,B_s) \dd B_s - \int_t^T |\mathD \arr Y|^2(s,B_s) \dd s}\Big] \\
   & = \E_{t,x}\Big[e^{\bar{h}(B_{T}) - (\arr Y(T,B_T)  - \arr Y(t,x) - \int_t^T \mathD \arr Y(s,B_s) \dd B_s)- c^\zzone(T-t)}\Big].
\end{align*}
Now if we formally apply It\^o's formula, we get
\[
   \arr Y(T,B_T)  - \arr Y(t,x) - \int_t^T \mathD \arr Y(s,B_s) \dd B_s = \int_t^T (\partial_s + \Delta) \arr Y(s,B_s) \dd s = - \int_t^T \arr \theta(s,B_s) \dd s.
\]
Using the temporal continuity of $\arr Y$ (and thus of $\arr W(t,x) = \int_t^T \arr \theta(s,x) \dd s$), it is actually possible to show that the right hand side makes sense as an $L^2(\P_{t,x})$--limit. But we will not need this and simply take the left hand side as the definition of the right hand side. We will however need the following generalization of the Bou\'e--Dupuis~\cite{Boue1998} formula which has been recently established by \"Ust\"unel:
\begin{theorem}[\cite{Ustunel2014}, Theorem~7]\label{thm:boue-dupuis}
  Let $B \colon [0, T] \rightarrow \R^d$ be a Brownian motion with variance $\sigma^2$ and let $F$ be a measurable functional on $C ([0, T] ; \R^d)$ such that $F(B) \in L^2$ and $e^{-F(B)} \in L^2$. Then
  \[
     - \log \E [e^{- F (B)}] = \inf_v \E \Big[ F\Big(B + \int_0^{\cdot} v_s \mathd s\Big) + \frac{1}{2\sigma^2} \int_0^T | v_s|^2 \mathd s \Big],
  \]
  where the infimum runs over all processes $v$ that are progressively measurable with respect to $(\F_t)$, the augmented filtration generated by $B$, and that are such that $\omega \mapsto v_s(\omega)$ is $\F_s$--measurable for Lebesgue--almost all $s \in [0,T]$.
\end{theorem}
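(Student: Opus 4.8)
The plan is to obtain the identity from two classical facts: the Gibbs (Donsker--Varadhan) variational principle for relative entropy, and the Girsanov parametrization of laws equivalent to Wiener measure by adapted drifts. By the rescaling $B = \sigma W$ with $W$ a standard $d$--dimensional Brownian motion and $v \mapsto \sigma^{-1} v$ it suffices to treat $\sigma = 1$; write $\P$ for the law of $W$ and $Z = \E[e^{-F(W)}] \in (0,\infty)$, the finiteness being immediate from $e^{-F(W)} \in L^2 \subset L^1$. The conceptual core is the Gibbs identity $-\log Z = \inf_{\Q}\{\E_\Q[F(W)] + H(\Q\,|\,\P)\}$, with the infimum over probability measures $\Q \ll \P$, $H(\Q\,|\,\P) = \E_\P[\tfrac{\mathd\Q}{\mathd\P}\log\tfrac{\mathd\Q}{\mathd\P}]$, attained at the tilted measure $\mathd\P^F = Z^{-1}e^{-F(W)}\,\mathd\P$: the inequality ``$\le$'' is the entropy inequality, obtained by writing $-F - \log Z = \log(\mathd\P^F/\mathd\P)$ and using $H(\Q\,|\,\P) - H(\Q\,|\,\P^F) = -\E_\Q[F] - \log Z$ with $H(\Q\,|\,\P^F) \ge 0$; the reverse is the case $\Q = \P^F$, whose finiteness (of both $\E_{\P^F}[F]$ and $H(\P^F\,|\,\P)$) is exactly where $F(W) \in L^2$ and $e^{-F(W)} \in L^2$ enter, via $F e^{-F} \in L^1$ by Cauchy--Schwarz.

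It then remains to turn the infimum over measures into the infimum over drifts. To an admissible $v$ with $\E\int_0^T|v_s|^2\,\mathd s < \infty$ one associates, through Girsanov's theorem applied to the pathwise shift $\omega \mapsto \omega + \int_0^\cdot v_s(\omega)\,\mathd s$, a measure $\Q_v \ll \P$ obeying the dictionary $H(\Q_v\,|\,\P) = \tfrac{1}{2}\E_{\Q_v}\int_0^T|v_s|^2\,\mathd s$ and $\E_{\Q_v}[F(W)] = \E_\P\big[F\big(W + \int_0^\cdot v_s\,\mathd s\big)\big]$ (after re--expressing the drift along the shifted path); combined with the entropy inequality this yields ``$\le$'' in the theorem. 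For the reverse I would read off the optimal drift from the martingale representation of the positive martingale $N_t = \E_\P[Z^{-1}e^{-F(W)}\,|\,\mathcal F_t]$: writing $N_t = 1 + \int_0^t \phi_s\cdot\mathd W_s$ and $v^\ast_s = \phi_s/N_s$, one has $N_t = \exp\big(\int_0^t v^\ast_s\cdot\mathd W_s - \tfrac{1}{2}\int_0^t|v^\ast_s|^2\,\mathd s\big)$, so $\P^F = \Q_{v^\ast}$ and the attained value $-\log Z$ becomes, through the dictionary, $\E_\P\big[F\big(W + \int_0^\cdot v^\ast_s\,\mathd s\big) + \tfrac{1}{2}\int_0^T|v^\ast_s|^2\,\mathd s\big]$, giving ``$\ge$'' once $v^\ast$ is admitted as a competitor.

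The main obstacle lies in the two limiting passages that make the above rigorous. First, the Boué--Dupuis argument is transparent for bounded $F$ and bounded drifts (where Novikov's condition holds outright); to reach the stated integrability one truncates $F$ to $F_K = (F\wedge K)\vee(-K)$, runs the identity for $F_K$, and lets $K \to \infty$, using $F(W), e^{-F(W)} \in L^2$ to control $\E[e^{-F_K(W)}]$, $\E_{\P^{F_K}}[F_K]$ and the entropy cost uniformly in $K$. Second --- and this is the delicate point --- the optimal drift $v^\ast = \phi/N$ is only locally square--integrable, since $N$ may approach $0$, and $N$ need not be uniformly integrable; this is handled by a localization at the times $\tau_n$ when $N$ exits $[1/n, n]$, establishing the identity for the bounded, admissible drifts $v^\ast\1_{[0,\tau_n]}$ and passing to the limit with monotone/dominated convergence for $\int_0^T|v^\ast_s|^2\,\mathd s$ and for the boundary term. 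The weak admissibility notion in the statement --- progressive measurability together with $\mathcal F_s$--measurability of $v_s$ for Lebesgue--a.e.\ $s$, rather than, e.g., continuity in time --- is precisely what makes the truncated and limiting drifts legitimate members of the competing class, so one should work with that class throughout.
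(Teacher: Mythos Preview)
The paper does not prove this theorem at all: it is quoted verbatim from \"Ust\"unel~\cite{Ustunel2014} (an extension of Bou\'e--Dupuis~\cite{Boue1998}) and used as a black box in Section~\ref{sec:HJB1}. So there is no ``paper's own proof'' to compare against.

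That said, your sketch is the standard route and is essentially how Bou\'e--Dupuis and \"Ust\"unel argue: the Donsker--Varadhan/Gibbs variational identity $-\log Z = \inf_\Q\{\E_\Q[F]+H(\Q\,|\,\P)\}$ combined with the Girsanov dictionary $H(\Q_v\,|\,\P) = \tfrac12 \E_{\Q_v}\!\int_0^T |v_s|^2\,\mathd s$, with the optimal drift read off from the martingale representation of $N_t=\E[Z^{-1}e^{-F}\,|\,\F_t]$. You have correctly identified where the integrability hypotheses $F(W)\in L^2$, $e^{-F(W)}\in L^2$ enter (Cauchy--Schwarz for $Fe^{-F}\in L^1$) and where the real technical work lies (truncation of $F$ and localization of the optimal drift). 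One caution: the change of variables between $\E_{\Q_v}[F(W)]$ and $\E_\P[F(W+\int_0^\cdot v_s\,\mathd s)]$ hides a subtlety, since after the shift the drift $v$ is evaluated along the original path, not the shifted one; making this precise requires either working with the ``strong'' formulation (drift as a functional of the driving Brownian motion) from the outset or invoking a measurable selection/Yamada--Watanabe type argument. \"Ust\"unel handles this and the limiting passages carefully, which is why the paper simply cites the result rather than reproving it.
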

\begin{corollary}
   Let $h$ solve~\eqref{eq:kpz hjb section}, let $T>0$, and let $B$ be a Brownian motion with variance $\langle B \rangle_t = 2 t$, started in $B_0 = 0$. Then
    \begin{align}\label{eq:Dupuis-KPZ} \nonumber
       h(T,x) = \log \arr w(0,x) & = \log \E\Big[\exp\Big( \bar{h}(x+B_{T}) + \int_{0}^{T} (\arr \theta(s, x+ B_s) - c^\zzone) \mathd s \Big)\Big] \\ \nonumber
       & = - \inf_v \E \left[ - \bar{h}(\gamma^v_T) - \int_0^T (\arr \theta (s, \gamma^v_s) - c^\zzone) \mathd s + \frac{1}{4} \int_0^T | v_s |^2 \mathd s \right] \\
       & = \sup_v \E \left[ \bar{h}(\gamma^v_T) + \int_0^T \Big(\arr \theta (s, \gamma^v_s) - c^\zzone - \frac{1}{4} | v_s |^2 \Big) \mathd s \right],
    \end{align}
    where we wrote $\gamma^v_t = x + B_t + \int_0^t v_s \mathd s$ and the supremum runs over the same processes $v$ as in Theorem~\ref{thm:boue-dupuis}.
\end{corollary}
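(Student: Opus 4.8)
The plan is to assemble the four displayed equalities in turn: the first two are essentially contained in the computations preceding the statement (and under the smoothness hypotheses of the corollary all the formal steps there become rigorous), while the last two follow from a single application of Theorem~\ref{thm:boue-dupuis}. For the identity $h(T,x)=\log\arr w(0,x)$ I would invoke the Cole--Hopf transform as in Section~\ref{sec:rhe}: since $\theta$, $c^{\zzone}$ and $\bar h$ are smooth in space, $w:=e^h$ is the classical solution of $\LL w=w(\theta-c^{\zzone})$, $w(0)=e^{\bar h}$, and the time reversal $\arr w(t)=w(T-t)$ gives $\arr w(0,x)=w(T,x)=e^{h(T,x)}$; strict positivity of $w$, needed for the logarithm, is automatic since $w=e^h$.

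For the second equality I would reproduce the computation carried out just above the statement: writing $\arr w=e^{\arr Y}\arr w^1$, the Feynman--Kac formula~\eqref{eq:feynman-kac} together with Girsanov's theorem gives, for a Brownian motion $B$ with $\langle B\rangle_t=2t$ started at $0$,
\[
   \arr w(0,x)=\E\Big[\exp\Big(\bar h(x+B_T)-\big(\arr Y(T,x+B_T)-\arr Y(0,x)-\textstyle\int_0^T\mathD\arr Y(s,x+B_s)\,\mathd B_s\big)-c^{\zzone}T\Big)\Big].
\]
The one step that deserves to be spelled out is It\^o's formula for $\arr Y$: since $\LL Y=\theta$ with $\theta$ continuous in time and smooth in space, $\partial_t Y=\Delta Y+\theta$ is continuous while $Y(s,\cdot)\in C^\infty$, so It\^o's formula applies in the classical sense and yields
\[
   \arr Y(T,x+B_T)-\arr Y(0,x)-\int_0^T\mathD\arr Y(s,x+B_s)\,\mathd B_s=\int_0^T(\partial_s+\Delta)\arr Y(s,x+B_s)\,\mathd s=-\int_0^T\arr\theta(s,x+B_s)\,\mathd s.
\]
Plugging this into the previous display produces the stated integral representation for $\arr w(0,x)$, hence for $h(T,x)$.

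Finally I would apply Theorem~\ref{thm:boue-dupuis} with $d=1$, $\sigma^2=2$ and
\[
   F(B)=-\bar h(x+B_T)-\int_0^T\big(\arr\theta(s,x+B_s)-c^{\zzone}\big)\,\mathd s.
\]
Since $\bar h\in C^\infty(\T)$ and $\arr\theta\in C([0,T],C^\infty(\T))$ are bounded and $c^{\zzone}$ is a constant, $F(B)$ is bounded, so $F(B)\in L^2$ and $e^{-F(B)}\in L^2$ and the theorem applies, giving $-\log\E[e^{-F(B)}]=\inf_v\E[F(B+\int_0^\cdot v_s\,\mathd s)+\tfrac14\int_0^T|v_s|^2\,\mathd s]$. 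Using $\E[e^{-F(B)}]=\arr w(0,x)$ and, with $\gamma^v_t=x+B_t+\int_0^t v_s\,\mathd s$, the identity $F(B+\int_0^\cdot v_s\,\mathd s)=-\bar h(\gamma^v_T)-\int_0^T(\arr\theta(s,\gamma^v_s)-c^{\zzone})\,\mathd s$, a rearrangement gives
\[
   h(T,x)=\log\arr w(0,x)=-\inf_v\E[\cdots]=\sup_v\E\Big[\bar h(\gamma^v_T)+\int_0^T\big(\arr\theta(s,\gamma^v_s)-c^{\zzone}-\tfrac14|v_s|^2\big)\,\mathd s\Big],
\]
which is the claim.

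In this smooth regime there is no genuine analytic difficulty; the only points requiring a word of care are the classical It\^o formula for $\arr Y$ above and the fact that the Girsanov density $\exp(\int_0^T\mathD\arr Y(s,B_s)\,\mathd B_s-\int_0^T|\mathD\arr Y|^2(s,B_s)\,\mathd s)$ is a true martingale, which holds by Novikov's criterion since $\mathD\arr Y$ is bounded on $[0,T]\times\T$. The substantial work — making sense of the right-hand side of the variational formula when $\theta$ only has the regularity of space--time white noise — is postponed to the following subsections.
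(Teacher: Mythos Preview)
Your overall strategy matches the paper's exactly: the corollary is just the preceding Feynman--Kac/Girsanov computation plugged into Theorem~\ref{thm:boue-dupuis}, and you have assembled these ingredients correctly. There is, however, one technical slip. The hypothesis is $\theta\in\LL C^{\alpha/2}_{\mathrm{loc}}(\R,C^\infty)$, i.e.\ $\theta=\LL v$ for some $v$ that is merely $\alpha/2$-H\"older in time (think of a spatially mollified white noise, which is still white in time). Consequently $Y$ is only $C^{\alpha/2}$ in time, $\partial_t Y$ need not exist pointwise, and $\theta$ need not be continuous in $t$. So neither ``It\^o's formula applies in the classical sense'' nor ``$F(B)$ is bounded'' is justified as stated.

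The paper deals with this by not insisting on the classical It\^o step at all: it \emph{defines}
\[
   \int_0^T \arr\theta(s,x+B_s)\,\mathd s\;:=\;-\big(\arr Y(T,x+B_T)-\arr Y(0,x)-\textstyle\int_0^T\mathD\arr Y(s,x+B_s)\,\mathd B_s\big),
\]
and uses the analogous expression for $\gamma^v$ via $\mathd\gamma^v_s=\mathd B_s+v_s\,\mathd s$ (this is made explicit in the definition of $\Phi(\gamma,v)$ immediately after the corollary). With this convention the Girsanov identity you wrote becomes exact. For the Bou\'e--Dupuis hypotheses, $F(B)$ is then a bounded functional of $Y$ plus the stochastic integral $\int_0^T\mathD\arr Y(s,x+B_s)\,\mathd B_s$; boundedness fails, but $F(B)\in L^2$ and $e^{-F(B)}\in L^2$ still hold because $\mathD\arr Y$ is bounded on $[0,T]\times\T$ (BDG for the former, Novikov for the exponential martingale giving the latter). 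If you replace your continuity claim by this interpretation, your argument goes through and coincides with the paper's.
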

Define now for any $\gamma$ of the form $\dd \gamma_t = v_t \dd t + \dd B_t$ the payoff functional
\begin{align*}
   \Phi(\gamma,v) & = \bar{h}(\gamma_T) + \int_0^T \Big( \arr \theta (s, \gamma_s) - c^\zzone - \frac{1}{4} | v_s |^2 \Big) \mathd s \\
   & := \bar{h}(\gamma_T) - \arr Y(T,\gamma_T)  + \arr Y(0,x) + \int_0^T \mathD \arr Y(s,\gamma_s) \dd \gamma_s - \int_0^T (c^\zzone + \frac{1}{4} | v_s |^2) \mathd s,
\end{align*}
so that $h(T,x) = \sup_v \E[\Phi(\gamma^v,v)]$. Plugging in $\dd \gamma^v_t = v_t \dd t + \dd B_t$, we get
\begin{align*}
   \Phi(\gamma^v,v) & = \bar{h}(\gamma^v_T) - \arr Y(T,\gamma^v_T)  + \arr Y(0,x) - \int_0^T \Big( - v \mathD \arr Y + c^\zzone + \frac{1}{4} | v |^2 \Big)(s, \gamma^v_s) \mathd s + \text{mart.} \\
   & = \bar{h}(\gamma^v_T) - \arr Y(T,\gamma^v_T)  + \arr Y(0,x) - \int_0^T \Big( - |\mathD \arr Y|^2 + c^\zzone + \frac{1}{4} | v - 2 \mathD \arr Y|^2 \Big)(s, \gamma^v_s) \mathd s \\
   &\quad + \text{mart.},
\end{align*}
where we write ``mart.'' for an arbitrary martingale term whose expectation vanishes under $\E$. Now change the optimization variable to $v^1_t = v_t - 2\mathD \arr Y(t,\gamma^v_t)$, so that
\[
   \gamma^v_t = x+ B_t + \int_0^t (v^1 + 2 \mathD \arr Y)(s,\gamma^v_s) \mathd s,
\]
and the payoff becomes
\[
   \Phi(\gamma^v,v) = \bar{h}(\gamma^v_T) - \arr Y(T,\gamma^v_T)  + \arr Y(0,x) + \int_0^T \Big( | \mathD \arr Y |^2 - c^\zzone - \frac{1}{4} | v^1 |^2 \Big)(s,\gamma^v_s) \mathd s + \tmop{mart}.
\]
In the following denote $X^i = \mathD Y^i$. We can iterate the process by considering $\arr Y^\zzone$, where $Y^\zzone$ is as in Definition~\ref{def:kpz rough distribution} (i.e. $\arr Y^\zzone$ solves $(\partial_t + \Delta) \arr Y^\zzone = - (| \arr X |^2 - c^\zzone)$ with terminal condition $\arr Y^\zzone_T = 0$), which allows us to represent the payoff function as
\begin{align*}
   \Phi(\gamma^v,v) & =  \bar{h}(\gamma^v_T) - \arr Y(T,\gamma^v_T)  + \arr Y(0,x) + \arr Y^\zzone(0,x) \\
   &\quad + \int_0^T \Big( v^1 \arr X^\zzone + 2 \arr X \arr X^\zzone - \frac{1}{4} | v^1 |^2 \Big)(s,\gamma^v_s) \mathd s + \tmop{mart}. \\
   & = \bar{h}(\gamma^v_T) - \arr Y(T,\gamma^v_T)  + \arr Y(0,x) + \arr Y^\zzone(0,x) \\
   &\quad + \int_0^T \Big( |\arr X^\zzone |^2 + 2 \arr X \arr X^\zzone - \frac{1}{4} | v^1 - 2 \arr X^\zzone |^2 \Big)(s,\gamma^v_s) \mathd s + \tmop{mart}.
\end{align*}
We change the optimization strategy to $v^2_t = v^1_t - 2 \arr X^\zzone(t,\gamma^v_t)$, so that the
dynamics of $\gamma^v$ read $\mathd \gamma^v_t = \mathd B_t +(v^2 + 2\arr X + 2\arr X^\zzone)(t,\gamma^v_t) \mathd t$. Now let
\begin{equation}\label{eq:YR}
   \LL Y^R = | X^\zzone |^2 + 2 X X^\zzone + 2(X + X^\zzone) \mathD Y^R,\qquad Y^R(0) = 0.
\end{equation}
This is a linear paracontrolled equation whose solution is of the form $Y^R = Y^\zztwo + Y'\mpara P + Y^\sharp$ and depends continuously on $\Y$. Moreover, another application of It\^o's formula yields
\begin{align*}
   \Phi(\gamma^v,v) & =  \bar{h}(\gamma^v_T) - \arr Y(T,\gamma^v_T)  + \arr Y(0,x) + \arr Y^\zzone(0,x) + \arr Y^R(0,x) \\
   &\quad + \int_0^T \Big( v^2 \arr X^R - \frac{1}{4} | v^2 |^2 \Big)(s,\gamma^v_s) \mathd s + \tmop{mart}.
\end{align*}

Let us summarize the result of our calculation:

\begin{theorem}\label{thm:hjb}
    Let $\theta \in \LL C^{\alpha/2}_{\mathrm{loc}}(\R, C^\infty)$ and $c^\zzone, c^\zzfour \in \R$, and let $\Y = \Theta_{\mathrm{kpz}} (\theta, c^\zzone, c^\zzfour) \in \mathcal{Y}_{\mathrm{kpz}}$ (see Definition~\ref{def:kpz rough distribution}). Let $h$ solve the KPZ equation~\eqref{eq:kpz hjb section} driven by $\Y$, started in $\bar{h} \in L^\infty$, let $Y^R$ solve~\eqref{eq:YR}, and let $T>0$. Then
    \begin{align}\label{eq:HJB} \nonumber
       &(h - Y - Y^\zzone - Y^R)(T,x) \\
       &\hspace{60pt}= \sup_v \E \Big[\bar{h}(\zeta^v_T) - Y(0,\zeta^v_T) + \int_0^T \Big(  |\arr X^R|^2 -  \frac{1}{4} | v - 2 \arr X^R |^2 \Big)(s,\zeta^v_s) \mathd s  \Big],
    \end{align}
    where
    \begin{equation}\label{eq:singular diffusion}
       \zeta^v_t = x + \int_0^t (2\arr X + 2\arr X^\zzone + v)(s,\zeta^v_s) \mathd s + B_t
    \end{equation}
    and the supremum is taken over the same $v$ as in Theorem~\ref{thm:boue-dupuis}. 
\end{theorem}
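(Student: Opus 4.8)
\emph{Step 1: reduction to a variational formula.} The plan is first to observe that the identity is the end point of the computation already carried out in the paragraphs preceding the theorem, so that the proof consists in assembling that computation and checking a handful of measurability and integrability points, all of which are routine because the data is smooth. I would start from the Cole--Hopf transform $h = \log w$ and the equation~\eqref{eq:rhe hjb section}, and invoke the Feynman--Kac formula~\eqref{eq:feynman-kac} together with Girsanov's theorem to obtain the representation of $\arr w(t,x)$ displayed before the theorem; since $Y \in C(\R_+, C^\infty(\T))$, every quantity occurring there is smooth and bounded on $[0,T]\times\T$, so these are classical applications of the cited theorems. Itô's formula applied to $\arr Y(s,B_s)$ together with $(\partial_t+\Delta)\arr Y = -\arr\theta$ identifies the stochastic integral term with $-\int_t^T\arr\theta(s,B_s)\,\mathd s$, and taking $t=0$ yields $h(T,x) = \log\E[\exp(\bar h(x+B_T) + \int_0^T(\arr\theta(s,x+B_s) - c^\zzone)\,\mathd s)]$. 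The Boué--Dupuis/Üstünel formula of Theorem~\ref{thm:boue-dupuis} then applies, its hypotheses holding because $\bar h$ and $\arr\theta$ are bounded, and gives $h(T,x) = \sup_v\E[\Phi(\gamma^v,v)]$ with $\Phi$ the payoff functional defined just before the theorem.

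\emph{Step 2: changes of optimization variable.} Next I would carry out the two successive substitutions $v\mapsto v^1 = v - 2\arr X(\cdot,\gamma^v_\cdot)$ and $v^1\mapsto v^2 = v^1 - 2\arr X^\zzone(\cdot,\gamma^v_\cdot)$, writing $X^i = \mathD Y^i$ as in the text. For each of them one must verify that it is a bijection of the class of admissible controls (progressively measurable, with $\omega\mapsto v_s(\omega)$ being $\F_s$-measurable for a.e.\ $s$, and with $\E\int_0^T|v_s|^2\,\mathd s < \infty$ — controls failing this last condition give payoff $-\infty$ and may be discarded) onto itself. Going forward: for admissible $v$, the SDE $\mathd\gamma^v_t = v_t\,\mathd t + \mathd B_t$ has a unique strong solution adapted to $(\F_t)$, so $v^1$ is admissible, the $L^2$ bound being preserved since $\arr X$ is bounded. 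Going backward: for admissible $v^1$, the SDE $\mathd\gamma_t = (v^1_t + 2\arr X(t,\gamma_t))\,\mathd t + \mathd B_t$ has a unique strong solution because $\arr X$ is Lipschitz in space uniformly in time, and $v_t := v^1_t + 2\arr X(t,\gamma_t)$ inverts the map. The second substitution is treated identically, now with the drift $2\arr X + 2\arr X^\zzone$. Consequently the suprema over $v$, over $v^1$ and over $v^2$ coincide.

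\emph{Step 3: Itô's formula and collecting terms.} Along the way I would use Itô's formula to transfer the drift-dependent part of the payoff into boundary terms. Itô applied to $\arr Y^\zzone(s,\gamma^v_s)$, using $(\partial_t+\Delta)\arr Y^\zzone = -(|\arr X|^2 - c^\zzone)$ and $\arr Y^\zzone(T) = 0$, replaces $\int_0^T(|\arr X|^2 - c^\zzone)(s,\gamma^v_s)\,\mathd s$ by $\arr Y^\zzone(0,x) + \int_0^T(v^1\arr X^\zzone + 2\arr X\arr X^\zzone)(s,\gamma^v_s)\,\mathd s$ up to a martingale; after the second substitution, Itô applied to $\arr Y^R(s,\gamma^v_s)$ — where $Y^R$ solves the linear paracontrolled equation~\eqref{eq:YR}, whose well-posedness, structure $Y^R = Y^\zztwo + Y'\mpara P + Y^\sharp$ and continuous dependence on $\Y$ follow from the arguments of Section~\ref{sec:kpz} and which is a classical solution since the data is smooth — produces $\arr Y^R(0,x)$ and, after a final completion of the square in the control and the relabelling of $v^2$ as $v$, leaves the payoff in the form $\bar h(\zeta^v_T) - \arr Y(T,\zeta^v_T) + \arr Y(0,x) + \arr Y^\zzone(0,x) + \arr Y^R(0,x) + \int_0^T(|\arr X^R|^2 - \tfrac14|v - 2\arr X^R|^2)(s,\zeta^v_s)\,\mathd s + \tmop{mart}$, with $\zeta^v$ as in~\eqref{eq:singular diffusion}. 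All the martingale terms are true martingales — bounded integrands against $\mathd B$, with the integrability of $v$ supplied by admissibility — so their expectations vanish under $\E$. Taking the supremum over $v$ and moving the deterministic boundary terms to the left, using $\arr Y(0,x) = Y(T,x)$, $\arr Y^\zzone(0,x) = Y^\zzone(T,x)$, $\arr Y^R(0,x) = Y^R(T,x)$ and $\arr Y(T,\cdot) = Y(0,\cdot)$, we arrive at~\eqref{eq:HJB}.

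\emph{Main obstacle.} The only part requiring genuine care is Step 2: checking that the two changes of optimization variable are bijections of the admissible class and that they preserve the $L^2$-integrability of the control, together with the verification that all the successive ``$\tmop{mart}$'' terms are honest martingales with vanishing expectation. Everything else is a direct application of the classical Feynman--Kac, Girsanov and Boué--Dupuis theorems to the smooth data $\theta$, $c^\zzone$, $\bar h$.
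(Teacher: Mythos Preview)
Your proposal is correct and follows essentially the same route as the paper: the theorem is stated there as a summary of the computation carried out in the preceding paragraphs, and your three steps (Feynman--Kac/Girsanov/Bou\'e--Dupuis, the two successive control substitutions $v\mapsto v^1\mapsto v^2$, and the It\^o applications to $\arr Y^{\zzone}$ and $\arr Y^R$ with a final square completion) reproduce exactly that computation. Your explicit check that the changes of optimization variable are bijections of the admissible class and that the martingale terms have vanishing expectation is a useful addition---the paper leaves these points implicit, relying on the smoothness of the data.
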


This representation is very useful for deriving a priori bounds on $h$.

\begin{corollary}\label{cor:hjb bound}
   In the setting of Theorem~\ref{thm:hjb}, for all $T> 0$ there exists a constant $C>0$ depending only on $T$ and $\| \Y\|_{\mathcal{Y}_{\tmop{kpz}}(T)}$, such that
   \[
      \| h \|_{C_T L^\infty} \leqslant C(1 + \| \bar{h}\|_{L^\infty}).
   \]
   In particular, if $(\Theta_{\mathrm{kpz}} (\theta_n, c^\zzone_n, c^\zzfour_n))_n$ is a converging sequence in $\mathcal{Y}_{\mathrm{kpz}}$ and every $\theta_n$ is as above, then the corresponding solutions $(h_n)$ stay uniformly bounded in $C_T L^\infty$ for all $T>0$.
\end{corollary}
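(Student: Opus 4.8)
The plan is to read the bound off the variational identity of Theorem~\ref{thm:hjb}. Applying that theorem with $T$ replaced by an arbitrary $T'\in(0,T]$ — which is legitimate and only improves constants, since $\|\Y\|_{\mathcal{Y}_{\tmop{kpz}}(T')}\leqslant\|\Y\|_{\mathcal{Y}_{\tmop{kpz}}(T)}$ as restriction to a shorter interval decreases all the norms involved — identity~\eqref{eq:HJB} writes $h(T',x)$ as $(Y+Y^{\zzone}+Y^R)(T',x)$ plus the supremum over admissible $v$ of
\[
   \E\Big[\bar h(\zeta^v_{T'})-Y(0,\zeta^v_{T'})+\int_0^{T'}\Big(|\arr X^R|^2-\tfrac14|v-2\arr X^R|^2\Big)(s,\zeta^v_s)\,\mathd s\Big].
\]
It therefore suffices to bound this expectation, uniformly over $v$, over $x\in\T$, and over $T'\in(0,T]$, from above and below by $\pm C(1+\|\bar h\|_{L^\infty})$ with $C$ depending only on $T$ and $\|\Y\|_{\mathcal{Y}_{\tmop{kpz}}(T)}$ (the case $T'=0$, where $h(0)=\bar h$, being trivial).

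For the \emph{upper} bound I would discard the penalty term $-\tfrac14|v-2\arr X^R|^2\leqslant 0$ (legitimate for each $v$, hence for the supremum), and estimate the remaining terms crudely by quantities that do not involve $v$: $\bar h(\zeta^v_{T'})\leqslant\|\bar h\|_{L^\infty}$, $-Y(0,\zeta^v_{T'})\leqslant\|Y(0)\|_{L^\infty}$, and $\int_0^{T'}|\arr X^R|^2(s,\zeta^v_s)\,\mathd s\leqslant T\|X^R\|_{C_TL^\infty}^2$.

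For the \emph{lower} bound I would simply evaluate the functional at the admissible control $v\equiv 0$. The point is that the running cost then cancels exactly, $|\arr X^R|^2-\tfrac14|2\arr X^R|^2=0$ — this identity is precisely the structural feature the representation in Theorem~\ref{thm:hjb} was engineered to produce — so that the supremum is at least $\E[\bar h(\zeta^0_{T'})-Y(0,\zeta^0_{T'})]\geqslant-\|\bar h\|_{L^\infty}-\|Y(0)\|_{L^\infty}$, with $\zeta^0$ the solution of~\eqref{eq:singular diffusion} for $v=0$ (whose well-posedness is already part of Theorem~\ref{thm:hjb}).

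It remains to control the auxiliary objects by $\|\Y\|_{\mathcal{Y}_{\tmop{kpz}}(T)}$. Since $\alpha>0$ we have $\CC^\alpha\hookrightarrow L^\infty$, hence $\|Y\|_{C_TL^\infty}+\|Y(0)\|_{L^\infty}+\|Y^{\zzone}\|_{C_TL^\infty}\lesssim\|\Y\|_{\mathcal{Y}_{\tmop{kpz}}(T)}$; moreover $Y^R=Y^{\zztwo}+Y'\mpara P+Y^\sharp\in\LL^{\alpha+1}$ depends continuously on $\Y$ with $Y^R(0)=0$ (as recorded after~\eqref{eq:YR}), so both $\|Y^R\|_{C_TL^\infty}$ and $\|X^R\|_{C_TL^\infty}=\|\mathD Y^R\|_{C_TL^\infty}\lesssim\|Y^R\|_{C_T\CC^{\alpha+1}}$ are bounded by a continuous function of $\|\Y\|_{\mathcal{Y}_{\tmop{kpz}}(T)}$. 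Combining the upper and lower estimates gives $|h(T',x)|\leqslant C(1+\|\bar h\|_{L^\infty})$ with the asserted dependence of $C$, and taking the supremum over $x$ and $T'\leqslant T$ proves the first claim. The final assertion is then immediate: a convergent sequence $(\Theta_{\tmop{kpz}}(\theta_n,c^{\zzone}_n,c^{\zzfour}_n))_n$ in $\mathcal{Y}_{\tmop{kpz}}$ is bounded, so $\sup_n\|\Y_n\|_{\mathcal{Y}_{\tmop{kpz}}(T)}<\infty$ and $C$ can be taken uniform in $n$, whence (for uniformly bounded initial data) $\sup_n\|h_n\|_{C_TL^\infty}<\infty$. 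The only things needing care are keeping the sign of the penalty straight and recognising $\mathD Y^R$ as a genuine $L^\infty$ function; there is no real obstacle here, since all the analytic work was already invested in Theorem~\ref{thm:hjb}.
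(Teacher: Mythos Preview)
Your proof is correct and follows essentially the same approach as the paper: discard the nonpositive penalty $-\tfrac14|v-2\arr X^R|^2$ for the upper bound, and choose $v\equiv 0$ for the lower bound (exploiting the exact cancellation $|\arr X^R|^2-\tfrac14|2\arr X^R|^2=0$). The paper uses $\|X^R\|_{L^2_TL^\infty}^2$ where you use $T\|X^R\|_{C_TL^\infty}^2$, and you are more explicit about varying $T'\in(0,T]$ to get the full $C_TL^\infty$ bound, but these are cosmetic differences.
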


\begin{proof}
    First, we have $-  \frac{1}{4} | v - 2 \arr X^R |^2(s,\zeta^v_s) \leqslant 0$, independently of $v$, and therefore
    \begin{align}\label{eq:hjb comparison with 0 initial} \nonumber
       &\sup_v \E \Big[\bar{h}(\zeta^v_T) - Y(0,\zeta^v_T) + \int_0^T \Big(  |\arr X^R|^2 -  \frac{1}{4} | v - 2 \arr X^R |^2 \Big)(s,\zeta^v_s) \mathd s \Big] \\
        &\hspace{150pt} \leqslant \| \bar h \|_{L^\infty} + \|Y(0)\|_{L^\infty} + \|X^R\|_{L^2_T L^\infty}^2.
    \end{align}
    Moreover, choosing the specific control $v(t) \equiv 0$ we get
    \[
       \sup_v \E \Big[\bar{h}(\zeta^v_T) - Y(0,\zeta^v_T) + \int_0^T \Big(  |\arr X^R|^2 -  \frac{1}{4} | v - 2 \arr X^R |^2 \Big)(s,\zeta^v_s) \mathd s \Big] \geqslant - \|\bar h\|_{L^\infty} - \|Y(0)\|_{L^\infty}.
    \]
    In combination with Theorem~\ref{thm:hjb} and~\eqref{eq:hjb comparison with 0 initial}, this yields
    \begin{equation}
       \|h(T)\|_{L^\infty} \leqslant \|(Y + Y^\zzone + Y^R)(T)\|_{L^\infty} + \|Y(0)\|_{L^\infty} + \|\bar h\|_{L^\infty} + \|X^R\|_{L^2_T L^\infty}^2,
    \end{equation}
    which gives us a bound on the solution of the KPZ equation which is linear in terms of the data $\Y \in \mathcal{Y}_{\tmop{kpz}}$, and quadratic in terms of the solution $Y^R$ to a linear paracontrolled equation that can in turn be bounded by the data $\Y$.
\end{proof}

An immediate consequence is the global in time existence of solutions to the KPZ equation:

\begin{corollary}\label{cor:no explosion}
   In Theorem~\ref{thm:kpz smooth initial} and Theorem~\ref{thm:heat to kpz} we have $T^\ast = \infty$. If in Theorem~\ref{thm:burgers smooth initial} and Theorem~\ref{thm:burgers general initial} the initial condition is $u_0 = \mathD h_0$ for some $h_0 \in \CC^{2\alpha+1}$ (respectively $h_0 \in \CC^{-\beta+1}$), then also here $T^\ast = \infty$.
\end{corollary}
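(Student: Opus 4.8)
The plan is to upgrade the $L^\infty$ a priori bound of Corollary~\ref{cor:hjb bound} from smooth data to arbitrary $\Y \in \Ykpz$, and then to combine it with the characterization $T^\ast = \inf\{t : \|h(t)\|_{L^\infty} = \infty\}$ from Corollary~\ref{cor:kpz explosion time} to rule out a finite blow-up time.

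First I would treat the KPZ equation of Theorem~\ref{thm:kpz smooth initial}. Suppose, for contradiction, that $T^\ast < \infty$, and fix $T < T^\ast$. By construction the solution $h$ on $[0,T]$ is the limit of the classical solutions $h_n$ driven by smooth $(\theta_n, h_0^n)$ with $\Y_n := \Theta_{\mathrm{kpz}}(\theta_n, c_n^\zzone, c_n^\zzfour) \to \Y$ in $\Ykpz(T)$ and $h_0^n \to h_0$ in $\CC^{2\alpha+1}$; each $h_n$ is global via Cole--Hopf, since $e^{h_n}$ solves a linear parabolic equation and remains strictly positive. The second statement of Corollary~\ref{cor:hjb bound} then gives $\sup_n \|h_n\|_{C_T L^\infty} =: K_T < \infty$. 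Next, since $\|h\|_{\CD_{\mathrm{kpz}}(T)} < \infty$, for $m$ large enough the stopping time $\tau_m(h)$ exceeds any fixed $T' < T$; the convergence $d_{\mathrm{kpz}}(h_n, h) \to 0$ forces $\tau_m(h_n) > T'$ for $n$ large (so the $h_n$ are genuine paracontrolled solutions on $[0,T']$) and $h_n \to h$ in the paracontrolled metric on $[0,T']$, which together with $\Y_n \to \Y$ yields $h_n \to h$ in $C_{T'}\CC^{\gamma}$ for some $\gamma > 0$, hence in $C_{T'}L^\infty$. Thus $\|h\|_{C_{T'}L^\infty} \leqslant K_T$ for all $T' < T$, and letting $T' \uparrow T \uparrow T^\ast$ we obtain $\sup_{t < T^\ast} \|h(t)\|_{L^\infty} < \infty$ (the bound $K_T$ remaining finite because $\|\Y\|_{\Ykpz(T)} < \infty$ for every finite $T$). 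By Corollary~\ref{cor:kpz explosion time} this contradicts $T^\ast < \infty$; hence $T^\ast = \infty$.

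The remaining statements follow from this together with the equivalences already established. For the rough heat equation (Theorem~\ref{thm:heat to kpz}): Theorem~\ref{thm:she smooth initial} already provides a global solution $w$, and the converse part of Theorem~\ref{thm:heat to kpz} shows $e^h$ solves the same equation for the now-global KPZ solution $h$, so $w = e^h$ by uniqueness; since $h(t) \in L^\infty$ for every $t$, $\min_x w(t,x) = e^{\min_x h(t,x)} > 0$, so the blow-up time $\inf\{t : \min_x w(t,x) = 0\}$ is infinite. For Burgers equation with $u_0 = \mathD h_0$ (Theorem~\ref{thm:burgers smooth initial}, and Theorem~\ref{thm:burgers general initial} with $h_0 \in \CC^{1-\beta}$): the solution is $u = \mathD h$ with $h$ the global KPZ solution, so it exists for all time; its norm stays finite on every $[0,T]$ either from $\|h\|_{\CD_{\mathrm{kpz}}(T)} < \infty$ via the boundedness of $\mathD : \CD_{\mathrm{kpz}} \to \CD^{\delta}_{\mathrm{rbe}}$ and the a priori estimate of Lemma~\ref{lem:burgers a priori} (transferred to general data by the same approximation), or, in the singular case, from $h \in C_T\CC^{1-\beta}$ together with $\|\mathD h(t)\|_{\CC^{-\beta}} \lesssim \|h(t)\|_{\CC^{1-\beta}}$ --- here one uses $1-\beta > 0$, so that $\CC^{1-\beta} \hookrightarrow L^\infty$ and the $L^\infty$ a priori bound (hence the analogue of the explosion-time characterization) applies to the KPZ equation with $\CC^{1-\beta}$ initial data as well.

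The main obstacle is the upgrade step: transferring the sup-norm bound from the classical approximants to the paracontrolled solution requires care with the topology encoded in $d_{\mathrm{kpz}}$ (and $d_{\mathrm{kpz}}^{\mathrm{exp}}$), which is built from the blow-up stopping times $\tau_m$, so that convergence of the data genuinely forces the approximants to exist on a common interval and to converge there strongly enough to retain the uniform bound. Everything else is a matter of assembling results already proved.
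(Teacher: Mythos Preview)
Your approach is correct and follows the same strategy as the paper: pass the uniform $L^\infty$ bound of Corollary~\ref{cor:hjb bound} from smooth approximants to the paracontrolled solution via continuous dependence on the data, then invoke the characterization $T^\ast = \inf\{t:\|h(t)\|_{L^\infty}=\infty\}$ from Corollary~\ref{cor:kpz explosion time}. The paper's proof is considerably terser---it just says to ``combine Theorem~\ref{thm:hjb} with Corollary~\ref{cor:kpz explosion time}''---so you have in fact spelled out the approximation/limit step that the paper leaves implicit, and you correctly identify the care needed with the $d_{\mathrm{kpz}}$ topology.

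For the singular Burgers case (Theorem~\ref{thm:burgers general initial}) there is a minor difference in route. Rather than extending the KPZ global existence and the explosion-time characterization to $\CC^{1-\beta}$ initial data as you do, the paper argues more directly on the Burgers side: by the iteration in the proof of Theorem~\ref{thm:burgers general initial}, after any positive time $T$ the remainder $u^Q(T)-u'(T)\para Q(T)$ lies in $\CC^{2\alpha}$, so one is back in the setting of Theorem~\ref{thm:burgers smooth initial}; since $u_0=\mathD h_0$ forces $u(T)=\mathD h(T)$, the first part of the corollary then extends $u$ globally from $T$. Your route is also valid, but the ``analogue of the explosion-time characterization'' you invoke for $\CC^{1-\beta}$ data is not stated in the paper and would itself be justified by exactly this restart/smoothing observation---so the paper's direct restart is a bit cleaner.
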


\begin{proof}
   To see that $T^\ast = \infty$ in Theorem~\ref{thm:kpz smooth initial}, Theorem~\ref{thm:heat to kpz} and in Theorem~\ref{thm:burgers smooth initial} under the condition $u_0 = \mathD h_0$, it suffices to combine Theorem~\ref{thm:hjb} with Corollary~\ref{cor:kpz explosion time}. As for Theorem~\ref{thm:burgers general initial}, recall from the proof of this theorem that in order to extend a solution $u \in \CD^{\tmop{exp}}_{\mathrm{rbe}}(T)$ from $[0,T]$ to $[0,T']$ with $T'>T$, it suffices to solve Burgers equation on $[0,T'-T]$ with the initial condition $u^Q(T) - u'(T)\para Q(T) \in \CC^{2\alpha}$ for the remainder. Moreover, if $u_0 = \mathD h_0$, then also $u(T) = \mathD h(T)$, so that the first part of the theorem tells us that $u$ can be extended from $[0,T]$ to $[0,\infty)$.
\end{proof}

\begin{remark}\label{rmk:explosion with nongradient initial}
   The condition $u_0 = \mathD h_0$ is equivalent to $\int_\T u_0(x) \dd x = 0$. In case the integral is equal to $c \in \R \setminus \{ 0 \}$ we can consider $\tilde u = u - c$ which solves
   \[
      \LL \tilde u = \mathD \tilde u^2 + 2 c \mathD \tilde u + \mathD \xi, \qquad \tilde u(0) = \mathD h_0
   \]
   for some $h_0$. This is a paracontrolled equation which we can solve up to some explosion time, and we have $\tilde u = \mathD \tilde h$ for the solution $\tilde h$ to
   \[
      \LL \tilde h = |\mathD \tilde h|^{\diamond 2} + 2 c \mathD \tilde h + \xi, \qquad \tilde h(0) = h_0.
   \]
   The Cole--Hopf transform then shows that $\tilde h = \log \tilde w$, where
   \[
      \LL \tilde w = 2 c \mathD w + w \diamond \xi, \qquad \tilde w(0) = e^{h_0},
   \]
   and based on these observations we could perform the same analysis as above to show that the explosion time of $\tilde u$ (and thus of $u$) is infinite. We would only have to replace the Brownian motion $B$ by the process $(B_t + 2 c t)_{t \geqslant 0}$ which corresponds to the generator $\Delta + 2 c \nabla$.
\end{remark}

Another simple consequence of~\eqref{eq:HJB} is a quantitative comparison result for the KPZ equation.
\begin{lemma}[``Comparison principle'']
   In the setting of Theorem~\ref{thm:kpz general initial} let $\bar h_1, \bar h_2 \in \CC^{1-\beta}$, and let $h_1$ solve
   \[
      \LL h_1 = |\mathD h_1|^{\diamond 2} + \xi, \qquad h_1(0) = \bar h_1,
   \]
   and $h_2$ analogously with $\bar h_1$ replaced by $\bar h_2$. Then
   \begin{equation}\label{eq:KPZ-comparison}
      h_1(t,x) + \inf_{x} (\bar h_2(x) - \bar h_1(x)) \leqslant h_2(t,x) \leqslant h_1(t,x) + \sup_x (\bar h_2(x) - \bar h_1(x))
   \end{equation}
   for all $(t,x)$. In particular, $\| h_1 - h_2\|_{C_T L^\infty} \leqslant \| \bar h_1 - \bar h_2\|_{L^\infty}$ for all $T>0$.
\end{lemma}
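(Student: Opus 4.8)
The plan is to read the comparison principle off the stochastic control representation~\eqref{eq:HJB}, first for smooth driving data and then by approximation. The structural point is that in~\eqref{eq:HJB} the initial condition $\bar h$ enters \emph{only} through the terminal reward $\bar h(\zeta^v_T)$: the controlled trajectory $\zeta^v$ of~\eqref{eq:singular diffusion}, the running cost $|\arr X^R|^2 - \frac14 |v - 2\arr X^R|^2$, the term $Y(0,\zeta^v_T)$, and the additive corrector $(Y + Y^\zzone + Y^R)(T,x)$ all depend on $\Y$ (and on $v$, $x$) but not on $\bar h$. So assume first that $\Y = \Theta_{\tmop{kpz}}(\theta, c^\zzone, c^\zzfour)$ with $\theta \in \LL C^{\alpha/2}_{\mathrm{loc}}(\R, C^\infty)$ and $\bar h_1, \bar h_2 \in C^\infty$, and let $\Xi(v)$ denote the part of the payoff functional in~\eqref{eq:HJB} that does not involve $\bar h$. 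Then Theorem~\ref{thm:hjb} gives, for the solution $h_i$ started in $\bar h_i$,
\[
   h_i(T,x) = (Y + Y^\zzone + Y^R)(T,x) + \sup_v \E\bigl[\Xi(v) + \bar h_i(\zeta^v_T)\bigr], \qquad T > 0,\ x \in \T.
\]
Since $\bar h_1(y) + \inf_x(\bar h_2(x) - \bar h_1(x)) \leqslant \bar h_2(y) \leqslant \bar h_1(y) + \sup_x(\bar h_2(x) - \bar h_1(x))$ for every $y \in \T$, the difference $\E[\Xi(v) + \bar h_2(\zeta^v_T)] - \E[\Xi(v) + \bar h_1(\zeta^v_T)]$ lies in the interval $[\inf_x(\bar h_2 - \bar h_1), \sup_x(\bar h_2 - \bar h_1)]$ for every admissible control $v$; since a constant shift commutes with $\sup_v$, taking the supremum over $v$ turns this into~\eqref{eq:KPZ-comparison} for $t = T > 0$, and for $t = 0$ the inequality is just the ordering of the initial data.

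For general $\Y \in \Ykpz$ and $\bar h_1, \bar h_2 \in \CC^{1-\beta}$ one approximates. Choose smooth $\theta_n$ and constants $c^\zzone_n, c^\zzfour_n \in \R$ with $\Theta_{\tmop{kpz}}(\theta_n, c^\zzone_n, c^\zzfour_n) \to \Y$ in $\Ykpz$, and $\bar h_i^n \in C^\infty$ with $\bar h_i^n \to \bar h_i$ in $\CC^{1-\beta}$; since $1-\beta > 0$ this convergence is in particular uniform, so $\inf$ and $\sup$ of $\bar h_2^n - \bar h_1^n$ converge to those of $\bar h_2 - \bar h_1$. By the continuity of the solution map from Theorem~\ref{thm:kpz general initial} and the global existence provided by Corollary~\ref{cor:no explosion}, $h_i^n$ converges to $h_i$ in the associated paracontrolled distance, which (away from time $0$) is strong enough to force $h_i^n(t,x) \to h_i(t,x)$ for every $t > 0$ and $x \in \T$, because a paracontrolled solution is there a genuine continuous function of $x$. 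Applying the first step to $h_1^n, h_2^n$ and letting $n \to \infty$ yields~\eqref{eq:KPZ-comparison} in general. Finally,~\eqref{eq:KPZ-comparison} implies $|h_2(t,x) - h_1(t,x)| \leqslant \|\bar h_1 - \bar h_2\|_{L^\infty}$ for all $(t,x)$, which is the stated bound $\|h_1 - h_2\|_{C_T L^\infty} \leqslant \|\bar h_1 - \bar h_2\|_{L^\infty}$.

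The analytic substance --- making sense of and proving~\eqref{eq:HJB} --- is already contained in Theorem~\ref{thm:hjb}, so the only genuine work here is the bookkeeping of the approximation: checking that convergence of the enhanced data in $\Ykpz$ and of the initial data in $\CC^{1-\beta}$ forces pointwise convergence of the solutions at fixed space--time points, and that Theorem~\ref{thm:hjb} (together with the $L^\infty$-bound of Corollary~\ref{cor:hjb bound}) applies to each smooth approximation. I expect this approximation bookkeeping, rather than any new estimate, to be the main obstacle.
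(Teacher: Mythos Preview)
Your proposal is correct and follows essentially the same approach as the paper: use the control representation~\eqref{eq:HJB} for smooth data to obtain the comparison inequality from the fact that $\bar h$ enters only through the additive terminal reward, then pass to the limit via the continuity of the solution map. The paper's proof is more terse but identical in substance, phrasing the key step as the decomposition $\bar h_2^n = \bar h_1^n + (\bar h_2^n - \bar h_1^n)$ inside~\eqref{eq:HJB}.
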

\begin{proof}
   Consider regular data $(\Y^n, \bar h^n_1, \bar h^n_2)$ that converges to $(\Y, \bar h_1, \bar h_2)$ in $\mathcal{Y}_{\tmop{kpz}} \times \CC^{1-\beta} \times \CC^{1-\beta}$, and denote the corresponding solutions by $h_1^n$ and $h_2^n$ respectively. For every $n$ the representation~\eqref{eq:HJB} and the decomposition $\bar h_n^2 = \bar h_n^1 + (\bar h_n^2 - \bar h_n^1)$ gives
   \[
      h^n_1(t,x) + \inf_{x} (\bar h^n_2(x) - \bar h^n_1(x)) \leqslant h^n_2(t,x) \leqslant h^n_1(t,x) + \sup_x (\bar h^n_2(x) - \bar h^n_1(x)).
   \]
   Letting $n$ tend to infinity, we get~\eqref{eq:KPZ-comparison}.
\end{proof}

\begin{remark}
   Corollary~\ref{cor:no explosion} gives a pathwise proof for the strict positivity of solutions to the rough heat equation started in strictly positive initial data. The classical proof for the stochastic heat equation is due to Mueller~\cite{Mueller1991}, whose arguments are rather involved; see also the recent works~\cite{Moreno-Flores2014, ChenKim2014}. Compared to these, our proof has the advantage that it does not use the structure of the white noise at all, so that it is applicable in a wide range of scenarios. The disadvantage is that we need to start in strictly positive data, whereas Mueller's result allows to start in nonpositive, nonzero data.
\end{remark}

\subsection{Partial Girsanov transform and random directed polymers}\label{sec:partial girsanov}

To formulate the optimization problem~\eqref{eq:HJB} for non-smooth elements $\Y$ of $\mathcal Y_{\mathrm{kpz}}$, we first need to make sense of the diffusion equation~\eqref{eq:singular diffusion} for $\zeta^v$ in that case. This can be done with the techniques of~\cite{DelarueDiel2014} or~\cite{CannizzaroChouk2015}, which we will apply with a slight twist.

Let us start by formally deriving the dynamics of the coordinate process under the random directed polymer measure. This is the measure given by
\[
   \dd \Q_{T,x} = \exp\Big( \int_0^T (\arr \xi(t, B_t) - \infty) \dd t \Big) \dd \P_x,
\]
where $\xi$ is a space-time white noise (and thus $\arr \xi$ as well), and under $\P_x$ the process $B$ is a Brownian motion started in $x$ and with variance $2$. The term $-\infty T$ is chosen so that $\Q_{T,x}$ has total mass $1$. If now $h$ solves the KPZ equation with $h(0) = 0$ and if $\arr h(s) = h(T-s)$, then we can write
\begin{align*}
   0 & = \arr h(0,x) + \int_0^T (\partial_t + \Delta) \arr h(t, B_t) \dd t + \int_0^T \mathD \arr h(t,B_t) \dd B_t \\
   & = \arr h(0,x) + \int_0^T (-\arr \xi(t, B_t) + \infty) \dd t + \int_0^T \mathD \arr h(t,B_t) \dd B_t - \int_0^T |\mathD \arr h(t,B_t)|^2 \dd t,
\end{align*}
which shows that under $\Q_{T,x}$ the process $B$ solves the SDE
\[
   B_t = x + \int_0^T 2 \mathD \arr h(t,B_t) \dd t + \dd W_t,
\]
where $W$ is a $\Q_{T,x}$-Brownian motion with variance 2.

Of course, a priori such a diffusion equation does not make any sense because $\mathD \arr h(t,\cdot)$ is a distribution and not a function. But let $\arr Y$, $\arr Y^\zzone$, and $\arr Y^R$ be as in Section~\ref{sec:HJB1}. Then we can rewrite the term in the exponential as
\begin{align*}
    \int_0^T (\arr \xi(t, B_t) - \infty) \dd t & = Y(0,x) + Y^\zzone(0,x) + Y^R(0,x) - Y(T,B_T) \\
    &\quad + \int_0^T (\arr X + \arr X^\zzone)(t,B_t) \dd B_t - \int_0^T | (\arr X + \arr X^\zzone)(t,B_t)|^2 \dd t \\
    &\quad + \int_0^T \arr X^R(s,B_s) (\dd B_s - 2 (X + X^\zzone)(s,B_s)\dd s).
\end{align*}
Let us define
\begin{equation}\label{eq:Z def}
   Z = (X + X^\zzone)
\end{equation}
   and set
\[
   \dd \P^Z_{T,x} = \exp\Big(\int_0^T \arr Z (t,B_t) \dd B_t - \int_0^T | \arr Z(t,B_t)|^2 \dd t \Big) \dd \P.
\]
Then under $\P^Z_{T,x}$ the coordinate process $B$ solves
\[
   B_t = x + \int_0^t 2 \arr Z(s,B_s) \dd s + \tilde B_t,
\]
where $\tilde B$ is a (variance 2) Brownian motion under $\P_{T,x}$. The advantage of this splitting of the Radon-Nikodym density is that now we singled out the singular part of the measure, and $\Q_{T,x}$ is absolutely continuous with respect to $\P_{T,x}^Z$, with
\[
   \dd \Q_{T,x} = \frac{\exp(- Y(T,B_T) +  \int_0^T \arr X^R(s,B_s) \dd \tilde B_s )}{\E_{\P^Z_{T,x}}[\exp(- Y(T,B_T) +  \int_0^T \arr X^R(s,B_s) \dd \tilde B_s )]} \dd \P^Z_{T,x}.
\]
This density is strictly positive and in $L^p(\Q_{T,x})$ for all $p \in [1,\infty)$, even for general $\Y \in \mathcal{Y}_{\mathrm{kpz}}$ (not necessarily smooth). It remains to construct the measure $\P^Z_{T,x}$ for general $\Y \in \mathcal{Y}_{\mathrm{kpz}}$. In general $\P^Z_{T,x}$ will be singular with respect to the Wiener measure.

\begin{theorem}\label{thm:partial girsanov}
   Let $\Y \in \mathcal{Y}_{\mathrm{kpz}}$ and $T>0$. Consider for given $\varphi_T \in \CC^{\alpha+1}$ and $f \in C([0,T], L^\infty)$ the solution $\varphi$ to the paracontrolled equation
   \[
      (\partial_t + \Delta) \varphi = - 2 \arr Z \mathD \varphi + f, \qquad \varphi(T) = \varphi_T.
   \]
   Then for every $x \in \T$ there exists a unique probability measure $\P^Z_{T,x}$ on $\Omega = C([0,T], \T)$, such that $\P^Z_{T,x}(\gamma_0 = x) = 1$ and for all $\varphi$ as above the process
   \[
      \varphi(t,\gamma_t) - \int_0^t f(s,\gamma_s) \dd s, \qquad t \in [0,T],
   \]
   is a square integrable martingale. Here $\gamma$ denotes the coordinate process on $\Omega$. Moreover, assume that $\Y_n = \Theta_{\mathrm{kpz}} (\theta_n, c^\zzone_n, c^\zzfour_n)$ for a sequence $\theta_n \in \LL C^{\alpha/2}_{\mathrm{loc}}(\R, C^\infty)$ and $c^\zzone_n, c^\zzfour_n \in \R$ is such that $(\Y_n)$ converges to $\Y$ in $\mathcal{Y}_{\mathrm{kpz}}$. Let for every $n$ the process $\gamma^n$ solve the SDE
   \[
      \gamma^n_t = x + \int_0^t 2 \arr Z_n(s, \gamma^n_s) \dd s + B_t
   \]
   for $Z_n = X_n + X_n^\zzone = \mathD(Y_n + Y_n^\zzone)$, which is well posed because $Z_n$ is Lipschitz continuous by assumption, uniformly in $t \in [0,T]$. Here $B$ denotes a variance 2 Brownian motion. Then there exists a Brownian motion $\tilde B$ on $\Omega$ with respect to the measure $\P^Z_{T,x}$, such that $(\gamma_n, B)$ converges weakly to $(\gamma, \tilde B)$. Finally, $\gamma$ is a time-inhomogeneous strong Markov process under the family of measures $(\P^Z_{T,x})_{x \in \T}$.
\end{theorem}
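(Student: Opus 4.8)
The plan is to follow the program of \cite{DelarueDiel2014, CannizzaroChouk2015}: the law $\P^Z_{T,x}$ will be characterised as the unique solution of a martingale problem whose test functions are the solutions of a singular backward Kolmogorov equation, the only genuinely new point being that this equation carries a drift of negative regularity and therefore has to be treated with paracontrolled calculus, exactly as the KPZ equation itself. First I would set up and solve this backward equation. Reversing time via $\psi(s,\cdot)=\varphi(T-s,\cdot)$ turns the terminal value problem for $\varphi$ into $\LL\psi = 2Z\mathD\psi - \arr f$ with $\psi(0)=\varphi_T\in\CC^{\alpha+1}$ and $f\in C([0,T],L^\infty)$, where $Z=\mathD Y+\mathD Y^{\zzone}\in C\CC^{\alpha-1}$. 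One makes a paracontrolled ansatz in which $\mathD\psi$ has precisely the structure of a solution of Burgers' equation, $\mathD\psi=\psi'\mpara Q+(\mathD\psi)^{\sharp}$ with $Q=\mathD P$; the only delicate product is then $Z\mathD\psi$, which is well defined by Theorem~\ref{thm:paracontrolled product} and Remark~\ref{rmk:paracontrolled commutator} once the resonance $\mathD Y\reso Q=\mathD P\reso\mathD Y$ — the sixth component of $\Y$ — is given. Since the equation is linear, the arguments of Theorems~\ref{thm:burgers smooth initial} and~\ref{thm:kpz smooth initial} yield, for every $\Y\in\Ykpz$ and every admissible $(\varphi_T,f)$, a unique global solution with $\psi\in\LL^{\alpha+1}$, depending continuously on $(\Y,\varphi_T,f)$; moreover if $\Y_n=\Theta_{\mathrm{kpz}}(\theta_n,c^{\zzone}_n,c^{\zzfour}_n)\to\Y$ then the classical solutions $\varphi^n$ driven by the smooth drift $Z_n$ converge to $\varphi$, and because $\alpha+1>1$ this convergence holds in $C([0,T]\times\T)$ together with $\mathD\varphi^n\to\mathD\varphi$, all of these functions being bounded uniformly in $n$.

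The second step is tightness of $(\gamma^n,B)$. The drifts $Z_n$ are spatially smooth, so the SDE for $\gamma^n$ is classically well posed, but $\|Z_n\|_{C_TL^\infty}$ is not bounded in $n$ and tightness cannot be read off directly. I would remove the drift by a Zvonkin transform: let $\phi^n$ solve the backward equation with $\varphi_T=0$ and $f=-2\arr Z_n$ — a source only in $C\CC^{\alpha-1}$, so outside the scope of the general statement, but a routine extension of the solution theory handles sources of this regularity and gives $\phi^n\to\phi$ as $-2\arr Z_n\to-2\arr Z$. Working with the $\R$-valued lift of $\gamma^n$ and applying It\^o's formula to $t\mapsto\gamma^n_t+\phi^n(t,\gamma^n_t)$, the identity $(\partial_t+\Delta)\phi^n+2\arr Z_n\mathD\phi^n=-2\arr Z_n$ cancels every drift term and leaves
\[
  \gamma^n_t+\phi^n(t,\gamma^n_t)=x+\phi^n(0,x)+\int_0^t\bigl(1+\mathD\phi^n(s,\gamma^n_s)\bigr)\,\dd B_s,
\]
a martingale with quadratic variation at most $2t\,(1+\|\mathD\phi^n\|_{C([0,T]\times\T)})^2$, uniformly in $n$. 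Since $\phi^n$ is uniformly bounded, $(\gamma^n)$ has uniformly controlled increment moments and is tight in $C([0,T],\T)$, whence $(\gamma^n,B)$ is tight.

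The third step is the identification of subsequential limits and well-posedness of the martingale problem. Let $(\gamma,\tilde B)$ be any weak limit along a subsequence; then $\tilde B$ is a variance-$2$ Brownian motion, $\gamma_0=x$ almost surely, and for every $\varphi$ as in the statement It\^o's formula gives that $\varphi^n(t,\gamma^n_t)-\int_0^t f(s,\gamma^n_s)\,\dd s=\varphi^n(0,x)+\int_0^t\mathD\varphi^n(s,\gamma^n_s)\,\dd B_s$ is a square integrable martingale; using the uniform convergence $\varphi^n\to\varphi$, $\mathD\varphi^n\to\mathD\varphi$ from the first step together with standard stability of stochastic integrals under weak convergence (as in \cite{CannizzaroChouk2015}), one passes to the limit and obtains that $\varphi(t,\gamma_t)-\int_0^t f(s,\gamma_s)\,\dd s=\varphi(0,x)+\int_0^t\mathD\varphi(s,\gamma_s)\,\dd\tilde B_s$ is a square integrable martingale for the filtration generated by $(\gamma,\tilde B)$. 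Thus every subsequential limit solves the martingale problem. To see the problem is well posed, take two solutions and $0\leqslant t_0<t$: solving the backward equation on $[t_0,t]$ with $f=0$ and terminal datum $g\in\CC^{\alpha+1}$ (dense in $C(\T)$) yields $\E[g(\gamma_t)\mid\sigma(\gamma_r:r\leqslant t_0)]=\varphi(t_0,\gamma_{t_0})$, a function of $\gamma_{t_0}$ alone and independent of the solution; an induction over finitely many time points then determines all marginals, so the solution from any starting point is unique and, by the very same identity, Markov, with transition operators $g\mapsto\varphi(t_0,\cdot)$. The standard theory of martingale problems then supplies measurable dependence on the starting point and the time-inhomogeneous strong Markov property, and uniqueness of the limit law upgrades the subsequential convergence to convergence of the whole sequence $(\gamma^n,B)$.

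The step I expect to be the main obstacle is the first one, together with the uniform $C^1$-in-space bounds it must deliver: one has to push the paracontrolled solution theory through for a backward equation whose drift $2\arr Z=2\mathD(\arr Y+\arr Y^{\zzone})$ has critical negative regularity $\CC^{\alpha-1}$, and, crucially, one needs the solutions $\varphi^n$ and their gradients to converge in $C([0,T]\times\T)$ so that composing them with the merely H\"older, and only tight, paths $\gamma^n$ is both meaningful and stable under the limit. This is exactly where the paracontrolled description of $\mathD\varphi$ and the presence of $\mathD P\reso\mathD Y$ inside $\Y$ — the ``slight twist'' over the construction in \cite{DelarueDiel2014, CannizzaroChouk2015} — become indispensable; everything else is standard weak-convergence and martingale-problem machinery.
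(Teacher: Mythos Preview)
Your proposal is correct and follows essentially the same route as the paper: both reduce the result to the martingale-problem framework of \cite{CannizzaroChouk2015}, with the key observation being that the only non-classically-defined resonance needed to make sense of the singular backward Kolmogorov equation is $Q\reso X=\mathD P\reso\mathD Y$, which is precisely the sixth component of $\Y$. The paper's proof is simply shorter because it directly invokes Theorem~2.6 and Section~3 of \cite{CannizzaroChouk2015} after verifying that $Z=X+X^{\zzone}$ is a ``ground drift'' in their sense (i.e., that $Q^Z\reso Z$ is given, which boils down to $Q\reso X$ since $Q^{\zzone}\in C\CC^{2\alpha}$ makes all other pieces classical), whereas you have essentially re-sketched the content of that reference --- solving the paracontrolled Kolmogorov equation, Zvonkin-type tightness, and uniqueness via the backward equation --- in this specific setting.
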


\begin{proof}
   Everything follows from Theorem~2.6 and Section~3 in~\cite{CannizzaroChouk2015}, provided we can show that $Z$ is a ``ground drift'' as defined in that paper. In other words we need to show that $Q^Z \reso Z$ is given, for the solution $Q^Z$ to $\LL Q^Z = \mathD Z$, $Q^Z(0) = 0$. But $Q^Z = Q + Q^\zzone$, where $\LL Q^\zzone = \mathD X^\zzone$. Since $X^\zzone \in C \CC^{2\alpha-1}$ and thus $Q^\zzone \in C \CC^{2\alpha}$, the only problematic term in the definition of $Q^Z \reso Z$ is $Q \reso X$. And since $Q \reso X$ is contained in $\Y$, the proof is complete.
\end{proof}

\begin{remark}
   We expect that for $0 \le s < t \le T$ under $(\P^Z_{T,x})_{x \in \T}$ the transition function
   \[
      T_{s,t} g(x) = \E_{\P^Z_{T}}[g(\gamma_t)| \gamma_s = x]
   \]
   has a density $p_{s,t}(x,y)$ which is jointly continuous in $(s,t,x,y)$. For that purpose we would have to solve the generator equation
   \begin{equation}\label{eq:polymer density discussion} 
      (\partial_r + \Delta + \arr Z \mathD) \varphi = f, \, r \in [0,t), \qquad \varphi(t) = \varphi_t
   \end{equation}
   for general $t \in [0,T]$. Taking $f \equiv 0$, we would have $T_{s,t} \varphi_t(x) = \varphi(s,x)$. Then we could enlarge the class of terminal conditions $\varphi^t$, and using the techniques of Section~\ref{sec:heat singular} it should be possible to take $\varphi_t$ as the Dirac delta in an arbitrary point $y \in \T$. But then the transition density $p_{s,t}(x,y)$ of $T_{s,t}$ must be given by $\varphi(s,x)$, where $\varphi$ solves~\eqref{eq:polymer density discussion} with terminal condition $\delta(y)$.
   
   We also expect that the density is strictly positive: In the upcoming work~\cite{Cannizzaro2015} it will be shown that if $u$ solves a linear paracontrolled equation and $u(0) \in C(\T)$ is a continuous, nonnegative and non-zero function, then $u(t,x) > 0$ for all $t> 0$, $x \in \T$. Combining this with the smoothing effect of the linear equation~\eqref{eq:polymer density discussion}, which takes as the terminal condition at time $t$ the Dirac delta and returns a nonnegative non-zero continuous function for all sufficiently large $r<t$, the strict positivity of the density should follow.
\end{remark}

\begin{remark}
   In the setting of Theorem~\ref{thm:partial girsanov}, we can construct the ``full random directed polymer measure'' $\Q_{T,x}$ by setting
   \[
      \frac{\dd \Q_{T,x}}{\dd \P^Z_{T,x}} = \frac{\exp(- Y(T,\gamma_T) +  \int_0^T \arr X^R(s,\gamma_s) \dd \tilde B_s )}{\E_{\P^Z_{T,x}}[\exp(- Y(T,\gamma_T) +  \int_0^T \arr X^R(s,\gamma_s) \dd \tilde B_s )]},
   \]
   which is now a perfectly well defined expression.
   
   Similarly we can also construct the measure $\Q_{T,(t,x)}$ under which the coordinate $\gamma$ process formally solves
   \[
      \gamma_s = x + \int_t^s 2 \mathD \arr h(r, \gamma_r) \dd r + (B_s - B_t), \qquad s \in [t,T].
   \]
   With respect to $\Q_{T,(t,x)}$ we have the following Feynman-Kac representation for the solution $w$ to the paracontrolled equation heat equation driven by $\Y$ and started in $w(0) = \bar w$:
   \[
      w(t,x) = \E_{\Q_{T,(t,x)}}[\bar w(\gamma_T)] \tilde w(t,x),
   \]
   where $\tilde w(t,x)$ solves the same equation as $w$ but started in $\tilde w(0) \equiv 1$.
\end{remark}

\subsection{Rigorous control problem}\label{sec:HJB2}

The control problem that we formulated in Section~\ref{sec:HJB1} worked only for ``smooth'' elements of $\Ykpz$, that is for $\Y = \Theta_{\mathrm{kpz}} (\theta, c^\zzone, c^\zzfour)$ with $\theta \in \LL C^{\alpha/2}_{\mathrm{loc}}(\R, C^\infty)$ and $c^\zzone, c^\zzfour \in \R$. Let us use the construction of Section~\ref{sec:partial girsanov} to make it rigorous also for general $\Y \in \Ykpz$.

For that purpose fix $T>0$ and set $\Omega_T = C([0,T], \T)$, equipped with the canonical filtration $(\F_t)_{t \in [0,T]}$. Recall that a function $v\colon [0,T] \times \Omega_T \to \R$ is called \emph{progressively measurable} if for all $t \in [0,T]$ the map $[0,t] \times \Omega_T \ni (s,\omega) \mapsto v(s,\omega)$ is $\CB[0,t] \otimes \F_t$--measurable.

From now on we also fix $\Y \in \Ykpz$ and define $Z$ as in~\eqref{eq:Z def}.

\begin{definition}
   Let $v$ be a progressively measurable process and let $(\tilde \Omega, \tilde \F, (\tilde \F_t)_{t \in [0,T]}, \P)$ be a filtered probability space. A stochastic process on this space is called a \emph{martingale solution} to the equation
   \begin{equation}\label{eq:martingale solution}
      \gamma_t = x + \int_0^t (2\arr Z(s, \gamma_s) + v(s, \gamma)) \dd s + B_t
   \end{equation}
   if $\P(\gamma_0 = x) = 1$ and whenever $\varphi_T \in \CC^{\alpha+1}$ and $f \in C([0,T], L^\infty)$ and $\varphi$  solves the paracontrolled equation
   \[
      (\partial_t + \Delta + 2 \arr Z \mathD) \varphi = f, \qquad \varphi(T) = \varphi_T.
   \]
   on $[0,T]$, then
   \[
      \varphi(t,\gamma_t) - \int_0^t (f(s,\gamma_s) + \mathD \varphi(s,\gamma_s) v(s,\gamma)) \dd s, \qquad t \in [0,T],
   \]
   is a martingale.
   
   We write $\prog$ for the set of progressively measurable processes, and for a given $v \in \prog$ and $x \in \T$ we write $\FM(v,x)$ for the collection of all martingale solutions to~\eqref{eq:martingale solution}. Note that here we explicitly allow the probability space to vary for different martingale solutions, and also that we do not make any claim about the existence or uniqueness (in law) of martingale solutions.
\end{definition}

\begin{theorem}\label{thm:variational kpz}
   Let $\bar h - Y(0) \in \CC^{2\alpha+1}$ and let $h$ be a paracontrolled solution to the KPZ equation with initial condition $\bar h$. Then
   \begin{align}\label{eq:rigorous hjb}
      &(h - Y - Y^\zzone - Y^R)(T,x) \\ \nonumber
      &\hspace{10pt} = \sup_{v \in \prog} \sup_{\gamma \in \FM(v,x)} \E \Big[\bar h(\gamma_T) - Y(0,\gamma_T) + \int_0^T \Big(  |\arr X^R|^2 -  \frac{1}{4} | v - 2 \arr X^R |^2 \Big)(s,\gamma) \mathd s  \Big]
    \end{align}
    and the optimal $v$ is
    \[
       v(t,\gamma) = 2 \mathD (\arr h - \arr X - \arr X^\zzone) (t, \gamma_t) = 2(\arr X^R + \mathD \arr h^R)(t, \gamma_t),
    \]
    where $h^R$ solves the paracontrolled equation
    \begin{equation}\label{eq:hR}
       \LL h^R = |X^R|^2 + 2( X + X^\zzone + X^R) \mathD h^R + | \mathD h^R|^2,\qquad h^R(0) = \bar h - Y(0).
    \end{equation}
    For this $v$ and all $x \in \T$, the set $\FM(v,x)$ is non-empty and every $\gamma \in \FM(v,x)$ has the same law.
\end{theorem}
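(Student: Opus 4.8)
\emph{Strategy.} The plan is to reduce the identity to a verification statement for the linear--parabolic ``remainder'' $h^R$ from~\eqref{eq:hR}, and then to run a completing--the--square argument against the martingale--solution framework introduced in Section~\ref{sec:partial girsanov}.

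\emph{Step 1: reduction to $h^R$.} First I would subtract the equations for $Y$, $Y^\zzone$ and $Y^R$ (the latter being the paracontrolled solution of~\eqref{eq:YR}, so that $h = Y + Y^\zzone + Y^R + h^R$) from the KPZ equation for $h$, using $\mathD h = X + X^\zzone + X^R + \mathD h^R$ with $X^R = \mathD Y^R$, and expanding $(\mathD h)^{\diamond 2} = (X^R + \mathD h^R)^2 + 2(X + X^\zzone)(X^R + \mathD h^R) + \big((\mathD Y)^2 - c^\zzone\big) + |X^\zzone|^2 + 2 X X^\zzone$. After the cancellations this shows that $h^R := h - Y - Y^\zzone - Y^R$ is precisely the paracontrolled solution of~\eqref{eq:hR}, with $h^R(0) = \bar h - Y(0) \in \CC^{2\alpha+1}$ since $Y^\zzone(0) = Y^R(0) = 0$. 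From the paracontrolled structure one has $h^R \in C\CC^{\alpha+1}$ (the worst term being the modified paraproduct part), hence $\mathD h^R \in C\CC^{\alpha}\subset C([0,T],L^\infty)$, and likewise $X^R \in C\CC^{2\alpha}\subset C([0,T],L^\infty)$; therefore the source $f := -|X^R|^2 - 2 X^R\mathD h^R - |\mathD h^R|^2 = -(X^R + \mathD h^R)^2$ lies in $C([0,T],L^\infty)$. Reversing time and moving $2\arr Z\mathD = 2(\arr X + \arr X^\zzone)\mathD$ to the left-hand side, $\varphi := \arr h^R$ is the paracontrolled solution of $(\partial_t + \Delta + 2\arr Z\mathD)\varphi = f$ with terminal datum $\varphi(T) = \bar h - Y(0)\in\CC^{\alpha+1}$, i.e. $\arr h^R$ is an \emph{admissible test function} in the definition of $\FM(v,x)$.

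\emph{Step 2: the upper bound.} Fix $v\in\prog$, $x\in\T$ and $\gamma\in\FM(v,x)$. Testing the martingale--solution property against $\varphi=\arr h^R$ and evaluating the resulting martingale at $t=0$ and $t=T$ gives, using $\gamma_0 = x$ a.s.\ and $\arr h^R(T) = h^R(0) = \bar h - Y(0)$,
\[
   h^R(T,x) = \E\big[\bar h(\gamma_T) - Y(0,\gamma_T)\big] + \E\!\int_0^T\!\Big(\big(\arr X^R + \mathD\arr h^R\big)^2 - \mathD\arr h^R\, v\Big)(s,\gamma)\,\mathd s .
\]
Completing the square pointwise in $(s,\omega)$ — with $a=\arr X^R(s,\gamma_s)$, $p=\mathD\arr h^R(s,\gamma_s)$ one checks $(a+p)^2 - pv = \big[a^2 - \tfrac14(v-2a)^2\big] + \big(a + p - \tfrac12 v\big)^2$ — yields
\begin{align*}
   h^R(T,x) &= \E\Big[\bar h(\gamma_T) - Y(0,\gamma_T) + \int_0^T\Big(|\arr X^R|^2 - \tfrac14|v - 2\arr X^R|^2\Big)(s,\gamma)\,\mathd s\Big] \\
   &\quad + \E\!\int_0^T\!\big(\arr X^R + \mathD\arr h^R - \tfrac12 v\big)^2(s,\gamma)\,\mathd s .
\end{align*}
The last term is nonnegative, so $h^R(T,x) \ge \sup_{v\in\prog}\sup_{\gamma\in\FM(v,x)}\E[\cdot]$, which after Step~1 is ``$\ge$'' in~\eqref{eq:rigorous hjb}. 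Equality forces the pointwise optimum $v(s,\gamma) = 2(\arr X^R + \mathD\arr h^R)(s,\gamma_s)$; since $X^R + \mathD h^R = \mathD(h - Y - Y^\zzone)$, reversing time identifies this with the claimed optimal feedback, and adding $2\arr X^R = 2\mathD(\arr h^R - \arr X - \arr X^\zzone)$ correction one gets $v(t,\gamma) = 2\mathD(\arr h - \arr X - \arr X^\zzone)(t,\gamma_t)$.

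\emph{Step 3: the optimizer exists, is unique in law, and attains the bound.} For $v = v^\ast$ the drift in~\eqref{eq:martingale solution} becomes $2\arr Z + v^\ast = 2(\arr X + \arr X^\zzone + \arr X^R) + 2\mathD\arr h^R = 2\mathD\arr h$, the heuristic directed--polymer drift. I would construct a martingale solution by the \emph{partial} Girsanov transform: Theorem~\ref{thm:partial girsanov} provides the law $\P^Z_{T,x}$ of the coordinate process with the singular drift $2\arr Z$ (it applies because $Z = X + X^\zzone$ is a ground drift, $Q^Z\reso Z$ being encoded in $\Y$), and the remaining drift $b := 2(\arr X^R + \mathD\arr h^R)\in C([0,T],C(\T))$ is \emph{bounded}, hence can be added by a classical Girsanov transform $\mathd\P^\ast = \mathcal E\big(\tfrac12\int_0^\cdot b(s,\gamma_s)\,\mathd\tilde B_s\big)_T\,\mathd\P^Z_{T,x}$ (Novikov is trivial since $b$ is bounded). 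One checks $\P^\ast\in\FM(v^\ast,x)$: if $N_t = \varphi(t,\gamma_t) - \int_0^t f(s,\gamma_s)\,\mathd s$ is a $\P^Z_{T,x}$--martingale (the defining property of $\P^Z_{T,x}$), its bracket against $\int_0^\cdot b\,\mathd\tilde B$ equals $\int_0^\cdot 2 b\,\mathD\varphi(s,\gamma_s)\,\mathd s$, so under $\P^\ast$ the process $N_t - \int_0^t \mathD\varphi(s,\gamma_s) b(s,\gamma_s)\,\mathd s = \varphi(t,\gamma_t) - \int_0^t (f + \mathD\varphi\, v^\ast)(s,\gamma)\,\mathd s$ is a martingale. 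Running the same computation backwards shows that \emph{every} element of $\FM(v^\ast,x)$ is the image, under this bounded (hence reversible, with all moments) Girsanov change, of a solution of the $\P^Z_{T,x}$--martingale problem; since the latter is well posed by Theorem~\ref{thm:partial girsanov}, $\FM(v^\ast,x)\neq\emptyset$ and all its elements share the law of $\P^\ast$. Finally, evaluating the identity of Step~2 along any $\gamma^\ast\in\FM(v^\ast,x)$ kills the nonnegative remainder, because $\big(\arr X^R + \mathD\arr h^R - \tfrac12 v^\ast\big)(s,\gamma^\ast_s)\equiv 0$; this gives the reverse inequality, hence~\eqref{eq:rigorous hjb}, and re-inserting $h^R = h - Y - Y^\zzone - Y^R$ yields the stated formula.

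\emph{Main obstacle.} The algebra (the completed square, the drift bookkeeping $2\arr Z + v^\ast = 2\mathD\arr h$) is routine; the genuinely delicate point is the analytic compatibility: one must check that $\arr h^R$, together with the source $f = -(\arr X^R + \mathD\arr h^R)^2$ which only lies in $C([0,T],L^\infty)$, is a legitimate test function for the singular martingale problem in the sense of~\cite{CannizzaroChouk2015} — i.e.\ that the paracontrolled It\^o formula underpinning the definition of $\FM(v,x)$ applies to it — and that the classical Girsanov manipulations above are valid at the level of the measure $\P^Z_{T,x}$ (in particular that the bracket computation $\langle N,\int b\,\mathd\tilde B\rangle = \int 2b\,\mathD\varphi(s,\gamma_s)\,\mathd s$ holds). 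These are exactly the steps where I would lean on Theorem~\ref{thm:partial girsanov} and the techniques of~\cite{CannizzaroChouk2015,DelarueDiel2014}; everything else reduces to bookkeeping and the linear paracontrolled theory already developed.
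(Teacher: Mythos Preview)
Your proof is correct and follows essentially the same strategy as the paper's: reduce to $h^R$, use $\arr h^R$ as a test function in the martingale problem, and complete the square (the paper organizes the algebra via the substitution $\tilde v = v - 2\arr X^R - 2\mathD\arr h^R$, which is equivalent to your identity $(a+p)^2 - pv = [a^2 - \tfrac14(v-2a)^2] + (a+p-\tfrac12 v)^2$).

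The one substantive difference is in Step~3. Rather than the two--stage Girsanov (first Theorem~\ref{thm:partial girsanov} for the singular part $2\arr Z$, then a classical Girsanov for the bounded remainder $b = 2(\arr X^R + \mathD\arr h^R)$), the paper observes directly that the full optimal drift $\arr Z + \arr X^R + \mathD\arr h^R$ is itself a \emph{ground drift} in the sense of~\cite{CannizzaroChouk2015}: the extra pieces $X^R\in C\CC^{2\alpha}$ and $\mathD h^R\in C\CC^{\alpha}$ are regular enough that they create no new resonant--product obstruction beyond $Q\reso X$, which is already part of $\Y$. Hence~\cite{CannizzaroChouk2015} gives existence and uniqueness in law for $\FM(v^\ast,x)$ in one stroke. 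This shortcut bypasses precisely the bracket identity $\langle N,\int b\,\mathd\tilde B\rangle = \int 2b\,\mathD\varphi\,\mathd s$ that you correctly flag as the delicate point of your route; that identity is provable (it amounts to identifying the martingale part of $\varphi(t,\gamma_t)$ under $\P^Z_{T,x}$ as $\int\mathD\varphi\,\mathd\tilde B$, which follows from the approximation in Theorem~\ref{thm:partial girsanov}), but the paper's argument simply doesn't need it. A minor remark: your sentence ``adding $2\arr X^R = 2\mathD(\arr h^R - \arr X - \arr X^\zzone)$ correction'' is garbled; the clean identity is $\arr X^R + \mathD\arr h^R = \mathD\arr h - \arr X - \arr X^\zzone$, from which the claimed feedback form follows directly.
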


\begin{proof}
   Clearly $h$ solves the KPZ equation driven by $\Y$ and started in $\bar h$ if and only if $h^R = h - X - X^\zzone - X^R$ solves~\eqref{eq:hR}. Moreover, the paracontrolled structure of $h^R$ is
   \[
      h^R = 2 (\mathD h^R) \mpara P + h^{R,\sharp}
   \]
   with $h^{R,\sharp} \in \LL^{2\alpha+1}$, and in particular $h \in C\CC^{\alpha+1}$. Reversing time, we get
   \[
      (\partial_t + \Delta + 2 \arr Z \mathD) \arr h^R = - |\arr X^R|^2 - 2 \arr X^R \mathD \arr h^R - | \mathD \arr h^R|^2,\qquad \arr h^R(T) = \bar h - Y(0).
   \]
   Let now $v \in \prog$ and let $\gamma$ be a martingale solution to
   \[
      \gamma_t = x + \int_0^t (2 \arr Z(s, \gamma_s) + v(s,\gamma)) \dd s + B_t.
   \]
   Since $\bar{h} - Y(0) \in \CC^{2\alpha+1}$, we can take $\arr h^R$ as test function in the martingale problem and get
   \begin{align*}
      \bar{h}(\gamma_T) - Y(0,\gamma_T) & = \arr h^R(0,x) + \int_0^T (- |\arr X^R|^2 - 2 \arr X^R \mathD \arr h^R - | \mathD \arr h^R|^2)(s,\gamma_s) \dd s \\
      &\quad + \int_0^T \mathD \arr h^R(s,\gamma_s) v(s,\gamma) \dd s + \text{mart.},
   \end{align*}
   so writing $\tilde v(s,\gamma) = v(s,\gamma) - 2 \arr X^R(s, \gamma_s) - 2 \mathD \arr h^R(s,\gamma_s)$ we obtain
   \begin{align*}
      & \E \Big[\bar h(\gamma_T) - Y(0,\gamma_T) + \int_0^T \Big(  |\arr X^R|^2 -  \frac{1}{4} | v - 2 \arr X^R |^2 \Big)(s,\gamma) \mathd s  \Big] - \arr h^R(0,x) \\
      &\hspace{20pt} = \E \Big[ \int_0^T (- |\arr X^R|^2 - 2 \arr X^R \mathD \arr h^R - | \mathD \arr h^R|^2 + \mathD \arr h^R(2 \arr X^R + 2 \mathD \arr h^R + \tilde v))(s,\gamma)  \dd s  \Big] \\
      &\hspace{20pt}\quad + \E\Big[ \int_0^T \Big(  |\arr X^R|^2 -  \frac{1}{4} | \tilde v + 2 \mathD \arr h^R |^2 \Big)(s,\gamma_s) \mathd s  \Big] \\
      &\hspace{20pt} = \E\Big[ \int_0^T - \frac{1}{4} |\tilde v(s,\gamma)|^2 \dd s\Big].
   \end{align*}
   This shows that
   \[
      \sup_{v \in \prog} \sup_{\gamma \in \FM(v,x)} \E \Big[\bar h(\gamma_T) - Y(0,\gamma_T) + \int_0^T \Big(  |\arr X^R|^2 -  \frac{1}{4} | v - 2 \arr X^R |^2 \Big)(s,\gamma) \mathd s  \Big] \leqslant h^R(T,x).
   \]
   On the other side, taking $v = 2 \arr X^R + 2 \mathD \arr h^R$ we obtain a ground drift in the terminology of~\cite{CannizzaroChouk2015}, and therefore for all $x \in \T$ there exists a $\gamma \in \FM(v,x)$ and its law is unique. For any such $\gamma$ we obtain
   \[
      \E \Big[\bar h(\gamma_T) - Y(0,\gamma_T) + \int_0^T \Big(  |\arr X^R|^2 -  \frac{1}{4} | v - 2 \arr X^R |^2 \Big)(s,\gamma) \mathd s  \Big] = h^R(T,x),
   \]
   and thus the proof is complete.
\end{proof}

\begin{remark}
   We only needed $\bar{h} - Y(0) \in \CC^{2\alpha+1}$ to apply the results of~\cite{CannizzaroChouk2015}. By using similar arguments as in Section~\ref{sec:singular initial} it will be possible to weaken the assumptions on the ground drift (allowing a possible singularity at 0) and on the terminal condition in the martingale problem in~\cite{CannizzaroChouk2015}, and then the variational representation of Theorem~\ref{thm:variational kpz} will extend   to $\bar h \in \CC^{1-\beta}$ for $\beta < 1$ as in Theorem~\ref{thm:kpz general initial}.
   
   Actually, the extension to $\bar h \in \CC^\alpha$ is immediate because we can simply start $Y^R$ in $\bar h - Y(0)$ (which still gives us a ground drift $\arr Z + 2 \arr X^R + 2 \mathD \arr h^R$, where $h^R$ is now started in 0), and change the control problem to
   \[
      \sup_{v \in \prog} \sup_{\gamma \in \FM(v,x)} \E \Big[ \int_0^T \Big(  |\arr X^R|^2 -  \frac{1}{4} | v - 2 \arr X^R |^2 \Big)(s,\gamma) \mathd s  \Big].
   \]
   Similarly, if we start $Y^R$ in $- Y(0)$ the control problem has the more appealing form
   \[
      \sup_{v \in \prog} \sup_{\gamma \in \FM(v,x)} \E \Big[ \bar{h}(\gamma_T) + \int_0^T \Big(  |\arr X^R|^2 -  \frac{1}{4} | v - 2 \arr X^R |^2 \Big)(s,\gamma) \mathd s  \Big].
   \]
\end{remark}

\section{Convergence of Sasamoto-Spohn lattice models}\label{sec:SS}

In this section we consider the weak universality conjecture in the context of weakly asymmetric interface models $\varphi_N\colon \R_+ \times \Z_N \to \R$ (where $\Z_N = \Z/ (N\Z)$) with
\begin{align}\label{eq:WASS}\nonumber
   \mathd \varphi_{N}(t,x) &= \Delta_{\Z_N} \varphi_N(t,x) \mathd t + \sqrt{\varepsilon} \big(B_{\Z_N} (\mathD_{\Z_N} \varphi_N(t), \mathD_{\Z_N} \varphi_N(t))\big)(x) \mathd t + \mathd W_N(t,x), \\
   \varphi_N(0,x) & = \varphi_0^N(x),
\end{align}
where $\Delta_{\Z_N}$ and $\mathD_{\Z_N}$ are discrete versions of Laplacian and spatial derivative respectively, $B_{\Z_N}$ is
a bilinear form taking the role of the pointwise product, $(W_{N}(t,x))_{t \in \R_+, x \in \Z_N}$ is an $N$--dimensional standard Brownian motion, and $\varphi_0^N$ is independent of $W_N$. We assume throughout this section that
\[
   \varepsilon = \frac{2\pi}{N}.
\]
Equation~\eqref{eq:WASS} is a generalization of the Sasamoto-Spohn discretization of the KPZ equation, see Remark~\ref{rmk:sasamoto-spohn} below. To simplify things (eliminating the need to introduce renormalization constants), let us look at the flux $\mathD_{\Z_N} \varphi_N$. Assume that there exists $\beta<1$ for which $(x \mapsto (\mathD_{\Z_N} \varphi_0^N)( x /\varepsilon))_N$ converges weakly to 0 in $\CC^{-\beta}$ with rate of convergence $\varepsilon^{1/2}$. Then $((t,x)\mapsto \mathD_{\Z_N} \varphi_N(t/\varepsilon^2,x/\varepsilon))$ converges to 0 (this will be a consequence of our analysis below), and we can study the fluctuations defined by
\[
   u_N(t,x) = \varepsilon^{-1/2} \mathD_{\Z_N} \varphi_N(t/\varepsilon^2,x/\varepsilon).
\]
This is a stochastic process on $\R_+ \times \T_N$ with $\T_N = (\varepsilon \Z) / (2 \pi \Z)$ which solves the SDE
\begin{align}\label{eq:lattice burgers} \nonumber
       \mathd u_{N}(t,x) & = \Delta_N u_N(t,x) \mathd t + \big(\mathD_{N} B_{N} (u_N(t), u_N(t))\big)(x) \mathd t +  \mathd (\mathD_{N} \varepsilon^{-1/2} W_N(t,x)) \\
       u_N(0) & = u_0^N.
\end{align}
where $\Delta_{N}$, $\mathD_{N}$, and $B_{N}$ are approximations of Laplacian, spatial derivative, and pointwise product respectively,  $\partial_t \mathD_{N} \varepsilon^{-1/2} W_N$ converges to $\mathD \xi$, where $\xi$ is a space-time white noise and $u_0^N(x) = \mathD_N \varepsilon^{1/2} \varphi_0^N(x / \varepsilon)$. We show that if $ \mathD_N \varepsilon^{1/2} \varphi_0^N(x / \varepsilon)$ converges in distribution in $\CC^{-\beta}$, then $(u_N)$ converges in distribution to the solution of a modified Burgers equation involving a sort of It\^o-Stratonovich corrector. 

Another way of reading our result is that~\eqref{eq:lattice burgers} is a lattice discretization of the Burgers equation and we show that it might converge to a different equation in the limit, depending on how we choose $\Delta_N$, $\mathD_N$, and $B_N$.

\bigskip
There are two problems that we have to deal with before we can study its convergence. First, it is not obvious whether $u_N$ blows up in finite time, because the equation contains a quadratic nonlinearity. Let therefore $\zeta$ be a cemetery state and define the space
\[
   C_N = \{ \varphi\colon \R_+ \to \R^{\T_N} \cup \{\zeta\}, \varphi \text{ is continuous on } [0,\tau_\zeta(\varphi)) \text{ and } \varphi(\tau_\zeta(\varphi)+t) = \zeta, t \geqslant 0\},
\]
where
\[
   \tau_\zeta(\varphi) = \inf\{t \geqslant 0: \varphi(t) = \zeta\} \quad \text{and for } c>0 \quad \tau_c(\varphi) = \inf\{t \geqslant 0: \|\varphi(t)\|_{L^\infty (\T_N)} \geqslant c\},
\]
with $\|\zeta\|_{L^\infty (\T_N)} = \infty$. Then a stochastic process $u_N$ with values in $C_N$ is a solution to~\eqref{eq:lattice burgers} if $\tau_\zeta(u_N) = \sup_{c>0} \tau_c(u_N)$ and $u_N|_{[0,\tau_\zeta(u_N))}$ solves~\eqref{eq:lattice burgers} on $[0,\tau_\zeta(u_N))$. It is a classical result that there exists a unique solution in that sense (which is always adapted to the filtration generated by $u_0^N$ and $W_N$, but we will not need this).

The next problem that we face is that $u_N(t)$ is only defined on the grid $\T_N$ and not on the entire torus $\T$. Since we will only obtain convergence in a space of distributions and not of continuous functions, some care has to be exercised when choosing an extension of $u_N$ to $\T$. For $\delta >0$ one can easily define sequences of smooth functions $(f_N)$ and $(g_N)$ on $\T$ such that $f_N$ and $g_N$ agree in the lattice points $\T_N$, but both converge to different limits in $\CC^{-\delta}$. Here we will work with a particularly convenient extension of $(u_N)$ that can be constructed using discrete Fourier transforms~{\cite{Hairer_Maas_2012,Hairer_Maas_2014}}. Since it will simplify the notation, we make the following assumption from now on:
\[
   N \text{ is odd.}
\]
Of course, our results do not depend on that assumption and we only make it for convenience. We define for $\varphi \colon \T_N \rightarrow \C$ the discrete Fourier transform
\[
   \CF_{N} \varphi (k) = \varepsilon \sum_{|\ell | <N/2} \varphi (\varepsilon\ell) e^{- i \varepsilon \ell k}, \hspace{2em} k \in \Z_N,
\]
(for even $N$ we would have to adapt the domain of summation), and then
\[
   \mathcal{E}_N \varphi (x) = (2 \pi)^{- 1} \sum_{| k | < N / 2} \CF_{N} \varphi (k) e^{i k x}, \hspace{2em} x \in \T.
\]
Then
$\mathcal{E}_N \varphi (x) = \varphi (x)$ for all $x \in \T_N$, and by construction $\mathcal{E}_N \varphi$ is a smooth function with Fourier transform $\CF \mathcal{E}_N \varphi (k) = \CF_{N} \varphi (k)
\1_{| k | < N / 2}$. If $\varphi$ is real valued, then
so is $\mathcal{E}_N \varphi$.

\begin{remark}
   A quite generic method of extending a function $\varphi\colon \T_N \to \R$ to $\T$ is as follows: write
   \[
       \psi = \sum_{x \in \T_N} \varphi(x) \varepsilon \delta_x,
   \]
   let $\eta_N \in \CD'(\T)$, and define $\bar \varphi(x) = \psi \ast \eta_N$, which yields for all $k \in \Z$
   \[
      \CF \bar \varphi(k) = \CF \psi(k) \CF \eta_N(k) = \CF_N \varphi(k) \CF \eta_N(k).
   \]
   Now assume that $(\varphi_N)$ is a sequence of functions on $\T_N$ such that $(\mathcal{E}_N \varphi_N)$ converges in $\CD'(\T)$ to some limit $\varphi_\infty$. Convergence in $\CD'(\T)$ is equivalent to the convergence of all Fourier modes, together with a uniform polynomial bound on their growth, and thus we get $\lim_{N} \CF_N \varphi_N(k) = \CF\varphi_\infty(k)$ for all $k \in \Z$. So if $(\eta_N)$ converges to $\delta_0$ in $\CD'(\T)$, then $(\bar \varphi_N)$ converges in $\CD'(\T)$ to $\varphi_\infty$. Typical examples for interpolations that can be constructed in this way are the ``Dirac delta extension'' (take $\eta_N = \delta_0$ for all $N$), the ``piecewise constant extension'' (take $\eta_N = \varepsilon^{-1} \1_{[0,\varepsilon)}$), or the ``piecewise linear extension'' (take $\eta_N(x) = \varepsilon^{-1} ( (\varepsilon^{-1} x+1)\1_{[-\varepsilon,0]}(x) + (1-\varepsilon^{-1} x) \1_{(0,\varepsilon]}(x))$). In particular, our convergence result below also implies the convergence of all these interpolations.
\end{remark}

For $\varphi \in C_N$ we then get $\mathcal{E}_N \varphi \colon \R_+ \to \CD'(\T) \cup \{\zeta\}$ and for general $\psi \colon \R_+ \to \CD'(\T) \cup \{\zeta\}$ we define
\[
   \tau_c^\beta(\psi) = \inf\{ t \geqslant 0: \|\psi(t)\|_{\beta} \ge c\}
\]
whenever $\beta \in \R$, with $\| \zeta \|_{\beta} = \infty$. Then write $d^\beta_c(\psi,\psi') = \| \psi - \psi' \|_{C_{c \wedge \tau^\beta_c(\psi) \wedge \tau^\beta_c(\psi')} \CC^\beta}$ as well as
\[
   d_{\beta}(\psi,\psi') = \sum_{m=1}^\infty 2^{-m} (1 \wedge (d_m^\beta(\psi,\psi') + |\tau^\beta_m(\psi) - \tau^\beta_m(\psi')|)).
\]

We will need some assumptions on the operators $\Delta_N$, $\mathD_N$, $B_N$: Let
\begin{gather}\label{eq:discrete operators def}
   \Delta_{N} \varphi (x) = \varepsilon^{-2} \int_{\Z} \varphi (x + \varepsilon y) \pi (\mathd y), \qquad \mathD_{N} \varphi (x) = \varepsilon^{-1} \int_{\Z} \varphi (x + \varepsilon y) \nu (\mathd y), \\ \nonumber
   B_{N} (\varphi, \psi) (x) = \int_{\Z^2} \varphi (x + \varepsilon y) \psi (x + \varepsilon z) {\mu} (\mathd y, \mathd z),
\end{gather}
where $\pi$, $\nu$, and $\mu$ are finite signed measures on $\Z$ and a probability measure on $\Z^2$, respectively. We define
\[
   f (x) = \frac{\int_{\Z} e^{i x y} \pi (\mathd y)}{- x^2}, \hspace{1em} g (x) = \frac{\int_{\Z} e^{i x y} \nu (\mathd y)}{i x}, \hspace{1em} h (x_1, x_2) = \int_{\Z^2} e^{i (x_1 z_1 + x_2 z_2)} {\mu} (\mathd z_1, \mathd z_2),
\]
and make the following assumptions on the measures $\pi$, $\nu$, $\mu$:
\begin{description}
  \item[(H$_f$)] The finite signed measure $\pi$ on $\Z$ is symmetric, has total mass zero, finite fourth moment, and satisfies $\int_{\Z} y^2 \pi (\mathd y) = 2$. Moreover, there exists $c_f > 0$ such that $f (x) > c_f$ for all $x \in [- \pi, \pi]$.
  \item[(H$_g$)] The finite signed measure $\nu$ on $\Z$ has total mass zero, finite second moment, and satisfies $\int_{\Z} y \nu (\mathd y) = 1$.
  \item[(H$_h$)] The probability measure ${\mu}$ has a finite first moment on $\Z^2$ and satisfies ${\mu} (A \times B) ={\mu} (B \times A)$ for all $A, B \subseteq \Z$.
\end{description}
The constant $c_f$ in (H$_f$) exists for example if $\pi = \sum_{k \geqslant 1} p_k (\delta_k + \delta_{- k}) - c \delta_0$ where $p_k \geqslant 0$,
$\sum_{k \geqslant 1} 2 p_k = c$ and $p_1 > 0$.

\begin{theorem}\label{thm:SS Burgers}
   Make assumption (H$_f$), (H$_g$), (H$_h$) and let $\beta \in (-1,-1/2)$. Consider for all $N\in \N$ an $N$-dimensional standard Brownian motion $(W_N(t,x))_{t\geqslant 0, x \in \T_N}$ and an independent random variable $(u_0^N(x))_{x \in \T_N}$ and denote the solution to~\eqref{eq:lattice burgers} by $u_N$. If there exists a random variable $u_0$ such that $\mathcal{E}_N u_0^N$ converges to $u_0$ in distribution in $\CC^\beta$, then $(\mathcal{E}_N u_N)$ converges in distribution with respect to the metric $d_\beta$ to $u$, the unique paracontrolled solution of
   \begin{equation}\label{eq:burgers with transport}
      \LL u = \mathD u^2 + 4 c \mathD u + \mathD \xi,\qquad u(0) = u_0,
   \end{equation}
   where $\xi$ is a space-time white noise which is independent of $u_0$, and where
  \begin{equation*}
     c = - \frac{1}{4\pi} \int_0^\pi \frac{\mathrm{Im} (g (x) \bar{h} (x))}{x} \frac{h (x, - x) | g (x) |^2}{| f (x) |^2} \dd x \in \R.
  \end{equation*}
\end{theorem}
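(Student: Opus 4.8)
The plan is to follow the same paracontrolled strategy that was used throughout the paper for continuous equations, but now keeping careful track of the discretization. First I would reduce the problem to the convergence of the \emph{discrete enhancement}: one defines $\X_N = \Theta_{\rbe,N}(\text{noise},u_0^N)$ as a tuple of lattice-analogues of $(X,X^{\zzone},X^{\zztwo},X^{\zzthreereso},X^{\zzfour},Q\reso X)$, built by replacing $\LL$ with $\partial_t - \Delta_N$, $\mathD$ with $\mathD_N$, the pointwise product with $B_N$, and the Fourier projections with their discrete counterparts acting on the range $|k| < N/2$. The proof then splits, exactly as in Section~\ref{sec:burgers}, into (i) a deterministic continuity statement: the solution map $\X_N \mapsto u_N$ (now including the transport term) is continuous in a suitable metric, uniformly in $N$; and (ii) a probabilistic statement: $\mathcal{E}_N \X_N$ converges in distribution to $\Theta_{\rbe}(\xi)$ together with the additional data needed to produce the drift $4c\,\mathD u$. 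Part (ii) is exactly what Section~\ref{sec:discrete stochastics} is advertised to establish, including the identification of the constant $c$ via the integral formula; I would invoke that as a black box here (it is the discrete analogue of Section~\ref{sec:stochastics}).

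For the deterministic part I would mimic the a priori estimates of Lemma~\ref{lem:burgers a priori} and Theorem~\ref{thm:burgers general initial} at the discrete level. The key structural point is that $\mathcal{E}_N u_N$ should be shown to satisfy a paracontrolled equation of the form
\[
   \LL (\mathcal{E}_N u_N) = \mathD (\mathcal{E}_N u_N)^2 + 4c_N \mathD(\mathcal{E}_N u_N) + \mathD \xi_N + (\text{error}_N),
\]
where $c_N \to c$, $\xi_N$ is a discretized white noise, and $\text{error}_N \to 0$ in the relevant negative-regularity space; this requires comparing the discrete operators $\Delta_N, \mathD_N, B_N$ with their continuous counterparts via the symbols $f,g,h$ (using (H$_f$), (H$_g$), (H$_h$) to get the right leading behaviour and decay), and showing that the discrepancies are lower-order. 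The presence of the transport term $4c\,\mathD u$ forces a slightly enlarged paracontrolled ansatz — the "paracontrolled derivative" picks up a contribution from the drift — but since $\mathD u$ has the same regularity as the linear solution $X$, the fixed-point argument goes through after the a priori bound, exactly as in Theorem~\ref{thm:burgers general initial} with singular initial data $u_0 \in \CC^\beta$, $\beta < 1$. One must also handle the blow-up/cemetery bookkeeping: since the discrete SDE can in principle explode in finite time, one shows that the explosion time of $\mathcal{E}_N u_N$ converges (in distribution) to $\infty$, using the uniform a priori bounds and the stopping-time metric $d_\beta$.

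The main obstacle, as the authors flag in the introduction to Section~\ref{sec:SS}, is that the discrete "paracontrolled derivative" is \emph{not constant} — unlike in the earlier works of Hairer--Maas on discretizations of $\LL u = g(u)\partial_x u + \xi$ — so the commutator and resonance estimates that underlie Theorem~\ref{thm:paracontrolled product} must be established with $N$-uniform constants in the discrete setting, where the Littlewood--Paley calculus is truncated at scale $N$. I would address this by introducing a random operator encoding the problematic resonance $Q_N \reso X_N$ against the $N$-dependent paraproduct, and bounding it by stochastic (second-moment / hypercontractivity) arguments, pushing the genuinely $N$-dependent analysis into Section~\ref{sec:discrete stochastics}. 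Once the uniform-in-$N$ discrete paracontrolled estimates are in place, tightness in $C\CC^\beta$ (with the stopping-time topology) follows from the a priori bounds, and every limit point is identified, via the continuity of the solution map and the convergence of the enhanced data, with the unique paracontrolled solution of~\eqref{eq:burgers with transport}; this gives convergence in distribution and completes the proof. The computation of $c$ itself is then a matter of evaluating a single explicit Gaussian integral built from the symbols $f,g,h$, which I would defer to Section~\ref{sec:discrete stochastics}.
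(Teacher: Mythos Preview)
Your overall architecture---discrete enhancement, a random operator for the problematic resonance, separation into deterministic continuity and probabilistic convergence of the data---matches the paper. Two differences are worth flagging.

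First, the paper does not argue via tightness plus identification of limit points. It observes that $\partial_t\varepsilon^{-1/2}\mathcal{E}_N W_N$ has the same law as $\PC_N\xi$ for a \emph{fixed} space-time white noise $\xi$; together with a Skorokhod-type coupling of the initial data this places all the $u_N$ on a single probability space, where the enhanced data and the random operator converge in $L^p$ (Theorems~\ref{thm:discrete stochastic} and~\ref{thm:random operator}) and the deterministic Theorem~\ref{thm:discrete burgers} upgrades this to convergence \emph{in probability} of $\tilde u_N$ in $d_\beta$. This is more direct than a compactness argument.

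Second, and more substantively, the paper does not rewrite $\mathcal{E}_N u_N$ as a solution of a continuous paracontrolled equation plus a vanishing error. The operator differences $\Delta_N-\Delta$, $\mathD_N-\mathD$, $\Pi_N B_N-(\cdot)$ are small only after a loss of regularity (Lemmas~\ref{lem:discrete laplacian}--\ref{lem:discrete product}), and since $u_N$ sits at the borderline regularity $\CC^{-1/2-}$ this route would be delicate to close. Instead Section~\ref{sec:paracontrolled SS} works \emph{intrinsically} with the discrete operators: discrete Schauder estimates for $e^{t\Delta_N}$ on spectrally supported functions, discrete paraproduct commutators, and a discrete ansatz $u_N^Q=\Pi_N(u_N'\mpara Q_N)+u_N^\sharp$ that keeps the projection $\Pi_N$ in place. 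Relatedly, the drift $4c\,\mathD u$ does not arise from an ``enlarged ansatz''; the ansatz is the standard Burgers one. The drift appears because the discrete stochastic data converge to \emph{shifted} limits ($X_N^{\zztwo}\to X^{\zztwo}+2cQ$, $B_N(Q_N\reso X_N)\to Q\reso X+c$, and so on, Theorem~\ref{thm:discrete stochastic}), and the proof of Theorem~\ref{thm:discrete burgers} then shows by algebraic rearrangement that these corrections assemble into $4c\,\mathD u$ in the limiting equation.
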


\begin{proof}
   We have
   \begin{equation*}
   \CF_{N} B_{N} (\varphi, \psi) (k) = (2 \pi)^{- 1} \sum_{ \ell \in \Z_N} \CF_{N} \varphi (\ell) \CF_{N} \psi (k - \ell) \int_{\Z^2} e^{i \left( \varepsilon \ell y + \varepsilon (k - \ell) z \right)} {\mu}(\mathd y, \mathd z),
\end{equation*}
from where we deduce that $\mathcal{E}_N B_N(\varphi,\psi) = \Pi_N B_N(\mathcal E_N \varphi, \mathcal E_N \psi)$ for all $\varphi,\psi \in \R^{\T_N}$, with
\[
   \Pi_N \varphi (x) = (2\pi)^{- 1} \sum_{k} e^{i k^N x} \CF \varphi (k),
\]
where $k^N = \arg\min\{ |\ell| : \ell \in \Z,  \ell = k + j N \text{ for some } j \in \Z\} \in (-N/2, N/2)$. Moreover, we obtain for $|k| < N/2$
\begin{align*}
   &\E[\varepsilon^{-1 / 2} \CF \mathcal{E}_N W_N(t,k_1) \varepsilon^{-1 / 2} \CF \mathcal{E}_N W_N (t,k_2)] \\
   &\hspace{80pt} = \varepsilon \sum_{|\ell_1|, |\ell_2| < N/2}  \E [W_N(t,\varepsilon \ell_1) W_N(t,\varepsilon \ell_2)] e^{- i \varepsilon (\ell_1 k_1 + \ell_2 k_2)} \\
   &\hspace{80pt} = \varepsilon \sum_{|\ell| < N/2} t e^{- i \ell (k_1 + k_2) \varepsilon} = t 2 \pi \delta_{k_1 + k_2 = 0},
\end{align*}
which shows that $\partial_t \varepsilon^{-1 / 2} \mathcal{E}_N W_N(t)$ has the same distribution as $\PC_N \xi$, where $\xi$ is a space-time white noise and $\PC_N \varphi = \1_{(-N/2,N/2)}(\mathD) = \CF^{-1} (\1_{(-N/2,N/2)} \CF \varphi)$ is a Fourier cutoff operator.

We now place ourselves on a probability space where such a space-time white noise is given and where $(\tilde u_0^N)$ is a sequence of random variables with values in $\CD'(\T)$, which is independent of $\xi$, such that $\tilde u_0^N$ has the same distribution as $\mathcal E_N u_0^N$ for all $N$, and such that $u_0^N$ converges to a random variable $u_0$ in probability in $\CC^\beta$ (which is then also independent of $\xi$). Then $u_N$ has the same distribution as $\tilde u_N$, the solution to
\[
   (\partial_t - \Delta_N) \tilde u_N (t, x) = \mathD_N \Pi_N B_N (\tilde u_N,\tilde u_N) (t, x) + \mathD_N \PC_N \xi (t, x), \qquad \tilde u_N(0) = \tilde u_0^N,
\]
and therefore it suffices to study $\tilde u_N$. The pathwise analysis of this equation is carried out in Section~\ref{sec:paracontrolled SS} below, and the convergence result assuming convergence of the data $(\X_N(\xi))$ and the random operators $(A_N)$ is formulated in Theorem~\ref{thm:discrete burgers}. The convergence of $(\X_N(\xi))$ and $(A_N)$ in $L^p$ spaces is shown in Theorem~\ref{thm:discrete stochastic} and Theorem~\ref{thm:random operator}, respectively. Thus, we get that $(\tilde u_N)$ converges in probability with respect to $d_{\beta}$ to $u$, the solution of~\eqref{eq:burgers with transport}.
\end{proof}

\begin{remark}\label{rmk:sasamoto-spohn}
  If we take $B_N$ as the pointwise product for all $N$, and $\Delta_N$ as the discrete Laplacian, $\Delta_N f(x) = \varepsilon^{-2} (f(x+\varepsilon) + f(x-\varepsilon) - 2f(x))$ and $\mathD_{N} f (x) = \varepsilon^{-1}(f (x) - f (x - \varepsilon))$, then we get $c = 1/8$, so the additional term in equation~\eqref{eq:burgers with transport} is $1/2 \mathD u$.
   
   However, if we take the same $\Delta_N$ and $\mathD_N$ but replace the pointwise product by
   \[
      B_{N} (\varphi, \psi) (x) = \frac{1}{2 (\kappa + \lambda)} (\kappa \varphi (x) \psi (x) + \lambda (\varphi (x) \psi (x + \varepsilon) + \varphi (x + \varepsilon) \psi (x)) + \kappa \varphi (x + \varepsilon) \psi (x + \varepsilon))
   \]
   for some $\kappa, \lambda \in [0, \infty)$ with $\kappa + \lambda > 0$, then one can check that $c=0$. Here the Sasamoto--Spohn discretization~\cite{Krug1991,Lam1998,SasamotoSpohn2009} corresponds to $\kappa = 1$, $\lambda = 1/2$. In that case one furthermore has
   \[
      \langle \varphi, \mathD_N B_N(\varphi,\varphi) \rangle_{\T_N} = \sum_{x \in \T_N} \varphi(x) \mathD_N B_N(\varphi,\varphi)(x) = 0,
   \]
   which entails that already for fixed $N$ there is no blow up in the system, i.e. $u_N$ is well defined for all times. Moreover, now we can explicitly write down a family of stationary measures for $u_N$: For all $m \in \R$, the evolution of $u_N$ is invariant under
   \[
      \mu_m^\varepsilon (\dd x) = \prod_{j=0}^{N-1} \frac{\exp(- \varepsilon x_j^2 + m x_j)}{Z^\varepsilon_{m}} \dd x_j,
   \]
   where $Z^\varepsilon_{m}$ is a constant normalizing the mass of $\mu_m^\varepsilon$ to 1; see~\cite{SasamotoSpohn2009} or simply verify that the $L^2(\T)$--adjoint of the generator of $u_N$ applied to the density of $\mu^\varepsilon_m$ equals 0 and then use Echeverr\'ia's criterion to obtain the invariance of $\mu^\varepsilon_m$ from its infinitesimal invariance~\cite{Echeverria1982}. If $u_N(0) \sim \mu_m^\varepsilon$, then for all $t \geqslant 0$ the vector $(u_N(t,x))_{x \in \T_N}$ consists of independent Gaussian random variables with variance $\varepsilon^{-1}/2$ and mean $m$. Therefore, for all $t>0$ the $\CD'$-valued random variable $(\mathcal{E}_N u_N(t,\cdot))$ converges in distribution to a space white noise with mean $m$ and variance $1/2$. It is also straightforward to verify that $\sup_N \E[\| \mathcal{E}_N u_N(0,\cdot)\|_{B_{p,p}^\alpha}^p] < \infty$ whenever $\alpha < -1/2$, and then the Besov embedding theorem shows that the convergence actually takes place in distribution in $\CC^\beta$, for $\beta$ as required in Theorem~\ref{thm:SS Burgers}. But if $\mathcal{E}_N u_N$ is a stationary process for all $N$, then any limit in distribution must be stationary as well, and this shows that the white noise with mean $m$ and variance $1/2$ is an invariant distribution for the stochastic Burgers equation. This is of course well known, see for example~\cite{BertiniGiacomin1997} or~\cite{FunakiQuastel2014}. But to the best of our knowledge, ours is the first proof which does not rely on the Cole--Hopf transform.
\end{remark}

We now take
\begin{equation}\label{eq:discrete sbe}
   \LL_N u_N (t, x) = (\partial_t - \Delta_N) u_N (t, x) = \mathD_N \Pi_N B_N (u_N, u_N) (t, x) + \mathD_N \PC_N \xi (t, x)
\end{equation}
as the starting point for our analysis, where we assume that $u_N(0) = \PC_N u_N(0)$. Recall that
\[
   \Pi_N \varphi (x) = (2\pi)^{- 1} \sum_{k} e^{i k^N x} \CF \varphi (k)
\]
with
\[
   k^N = \arg\min\{ |\ell| : \ell \in \Z,  \ell = k + j N \text{ for some } j \in \Z\} \in (-N/2, N/2),
\]
and that
\[
   \PC_N \varphi = \1_{(-N/2,N/2)}(\mathD) = \CF^{-1} (\1_{(-N/2,N/2)} \CF \varphi).
\]
The operators $\Delta_N$, $\mathD_N$, and $B_N$ are given in terms of finite signed measures $\pi,\nu,\mu$ as described in~\eqref{eq:discrete operators def}.

\begin{lemma}\label{lem:f nice}
  Let $\pi$ be a finite signed measure on $\R$ that satisfies (H$_f$). Then the function
  \[ f (x) = - \frac{\int_{\R} e^{i x y} \pi (\mathd y)}{x^2} =
     \int_{\R} \frac{1 - \cos (x y)}{(\nobracket x y \nobracket)^2}
     y^2 \pi (\mathd y) \]
  is in $C^2_b$ and such that $f (0) = 1$.
\end{lemma}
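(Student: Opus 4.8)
\textbf{Proof plan for Lemma~\ref{lem:f nice}.}
The plan is to verify the claimed identity for $f$ first, and then read off the regularity and the value $f(0)=1$ directly from the integral representation. For the identity: by assumption (H$_f$) the measure $\pi$ is symmetric with total mass zero, so $\int_\R \pi(\mathd y) = 0$ and $\int_\R \sin(xy)\,\pi(\mathd y) = 0$ for every $x$ (the latter because $y\mapsto \sin(xy)$ is odd and $\pi$ is symmetric). Hence $\int_\R e^{ixy}\pi(\mathd y) = \int_\R \cos(xy)\,\pi(\mathd y) = \int_\R(\cos(xy)-1)\,\pi(\mathd y)$, using the zero mass. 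Dividing by $-x^2$ and writing $\cos(xy)-1 = -(1-\cos(xy)) = -\frac{1-\cos(xy)}{(xy)^2}\,x^2 y^2$ gives, for $x\neq 0$,
\[
   f(x) = -\frac{\int_\R e^{ixy}\pi(\mathd y)}{x^2} = \int_\R \frac{1-\cos(xy)}{(xy)^2}\, y^2\,\pi(\mathd y),
\]
which is the asserted formula; note the integrand is bounded in absolute value by $\tfrac12 y^2$, which is $\pi$-integrable since $\pi$ has finite fourth (in particular second) moment, so the integral is absolutely convergent and defines $f$ also at $x=0$ by continuity.

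Next I would establish $f(0)=1$: the function $\varphi(t) := \frac{1-\cos t}{t^2}$, extended by $\varphi(0)=\tfrac12$, is continuous and bounded on $\R$, so by dominated convergence (dominating function $\tfrac12 y^2$)
\[
   \lim_{x\to 0} f(x) = \int_\R \varphi(0)\, y^2\,\pi(\mathd y) = \frac12 \int_\R y^2\,\pi(\mathd y) = \frac12 \cdot 2 = 1,
\]
using the normalization $\int_\R y^2\pi(\mathd y)=2$ from (H$_f$). This simultaneously shows $f$ is continuous at $0$; away from $0$ continuity of $f$ is clear from the original expression $-x^{-2}\int e^{ixy}\pi(\mathd y)$.

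Finally, for the $C^2_b$ claim I would differentiate under the integral sign in the representation $f(x) = \int_\R \varphi(xy)\,y^2\,\pi(\mathd y)$ with $\varphi(t)=\frac{1-\cos t}{t^2}$ (extended smoothly at $0$). One checks that $\varphi \in C^\infty(\R)$ with $\varphi$, $\varphi'$, $\varphi''$ all bounded: indeed $\varphi$ is real-analytic near $0$ with $\varphi(0)=\tfrac12$, and for large $|t|$ the numerator is bounded while the denominator grows, and the same holds after differentiating (each derivative gains at most a factor that is dominated by the polynomial decay). Then $f'(x) = \int_\R \varphi'(xy)\, y^3\,\pi(\mathd y)$ and $f''(x) = \int_\R \varphi''(xy)\, y^4\,\pi(\mathd y)$, and the differentiation is justified by the domination $|\varphi'(xy)y^3|\lesssim |y|^3$ and $|\varphi''(xy)y^4|\lesssim |y|^4$, both $\pi$-integrable by the finite fourth moment assumption. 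The resulting formulas give $\|f\|_\infty \lesssim \int y^2 |\pi|(\mathd y)$, $\|f'\|_\infty \lesssim \int |y|^3|\pi|(\mathd y)$, $\|f''\|_\infty \lesssim \int y^4|\pi|(\mathd y)$, all finite, so $f\in C^2_b$.

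The only mildly delicate point — and the one I would be most careful about — is checking that $\varphi$ and its first two derivatives are genuinely bounded on all of $\R$ (the behavior at $t=0$ requires the Taylor expansion $1-\cos t = t^2/2 - t^4/24 + \dots$, so that $\varphi$, $\varphi'$, $\varphi''$ extend continuously across $0$, while the decay as $|t|\to\infty$ is elementary); everything else is routine dominated convergence and differentiation under the integral, powered by the finite fourth moment hypothesis in (H$_f$).
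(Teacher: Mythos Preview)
Your proof is correct and follows essentially the same route as the paper: both write $f(x)=\int_\R \varphi(xy)\,y^2\,\pi(\mathd y)$ with $\varphi(t)=(1-\cos t)/t^2$, use $\varphi\in C^2_b$ with $\varphi(0)=\tfrac12$ to get $f(0)=1$, and differentiate under the integral sign with domination by $|y|^3$ and $|y|^4$ via the finite fourth moment. You are slightly more explicit in deriving the integral representation from symmetry and zero mass of $\pi$, which the paper simply states.
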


\begin{proof}
  The function $\varphi (x) = (1 - \cos (x)) / x^2 = 2 \sin^2 (x / 2) / x^2$ is nonnegative, bounded by $1 / 2$, and satisfies $\varphi (0) = 1 / 2$. Therefore, $f$ is bounded and $f (0) = 1 / 2 \int_{\R} y^2 \pi(\mathd y) = 1$. Furthermore, it is easy to check that $\varphi \in C^2_b$, and thus
\[
     | f' (x) | \leqslant \int_{\R} | \varphi' (x y) | | y |^3 | \pi | (\mathd y) \lesssim \int_{\R} | y |^3 | \pi | (\mathd y),   \hspace{2em} | f'' (x) | \lesssim \int_{\R} | y |^4 | \pi | (\mathd y) .
  \]
  As $\pi$ has a finite fourth moment, this shows that $f \in C^2_b$.
\end{proof}

\begin{lemma}
  Let $\nu$ be a finite signed measure on $\R$ that satisfies (H$_g$). Then the function
  \[
     g (x) = \frac{\int_{\R} e^{i x y} \nu (\mathd y)}{i x} =  \int_{\R} \frac{e^{i x y} - 1}{i x y} y \nu (\mathd y)
  \]
  is in $C^1_b$ and such that $g (0) = 1$.
\end{lemma}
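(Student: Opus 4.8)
The statement to prove is a routine analogue of Lemma~\ref{lem:f nice}: that under hypothesis (H$_g$) the function $g(x) = \int_{\R} e^{ixy}\nu(\mathd y)/(ix)$ is in $C^1_b$ with $g(0) = 1$. The plan is to mimic exactly the structure of the preceding proof.

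\begin{proof}
  The function $\varphi (x) = (e^{i x} - 1) / (i x)$ extends to an entire function with $\varphi (0) = 1$, and it is bounded on $\R$ since $| \varphi (x) | = |e^{ix}-1|/|x| \leqslant \min\{2/|x|, 1\} \leqslant 1$. Writing
  \[
     g (x) = \int_{\R} \varphi (x y) \, y \, \nu (\mathd y),
  \]
  which is licit because (H$_g$) guarantees that $\nu$ has a finite first moment, we see that $g$ is bounded with
  \[
     | g (x) | \leqslant \int_{\R} | \varphi (x y) | | y | | \nu | (\mathd y) \lesssim \int_{\R} | y | | \nu | (\mathd y) < \infty.
  \]
  Moreover, using that $\nu$ has total mass zero and $\int_{\R} y\, \nu(\mathd y) = 1$, we compute
  \[
     g (0) = \int_{\R} \varphi (0) y \, \nu (\mathd y) = \int_{\R} y \, \nu (\mathd y) = 1.
  \]
  For the regularity, note that $\varphi \in C^1_b$: indeed $\varphi'(x) = (i x e^{ix} - e^{ix} + 1)/(ix^2) = (e^{ix}(ix-1)+1)/(ix^2)$ extends to an entire function, and a Taylor expansion at $0$ together with the bound for large $|x|$ shows $\varphi'$ is bounded on $\R$. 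Differentiating under the integral sign (justified by dominated convergence, since $|\varphi'(xy) y^2| \leqslant \|\varphi'\|_{L^\infty} y^2$ and $\nu$ has a finite second moment by (H$_g$)), we obtain
  \[
     | g' (x) | \leqslant \int_{\R} | \varphi' (x y) | | y |^2 | \nu | (\mathd y) \lesssim \int_{\R} | y |^2 | \nu | (\mathd y) < \infty.
  \]
  Hence $g \in C^1_b$.
\end{proof}

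The approach is a direct transcription of the argument for $f$; the only point requiring the slightest care is checking that $\varphi(x) = (e^{ix}-1)/(ix)$ and its derivative are genuinely bounded on all of $\R$ (not just near $0$), which follows from the elementary bound $|e^{ix}-1| \leqslant \min\{2, |x|\}$ for the function itself and from combining a Taylor estimate near the origin with the large-$|x|$ decay for the derivative. I do not anticipate any real obstacle: the moment hypotheses in (H$_g$) are exactly what is needed to differentiate under the integral and to bound the resulting integrals, and the normalizations $\int \nu = 0$, $\int y\,\nu(\mathd y) = 1$ are used precisely once, to evaluate $g(0) = 1$.
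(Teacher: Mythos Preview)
Your proof is correct and follows essentially the same approach as the paper's: the paper's proof simply states that $\varphi(x) = (e^{ix}-1)/(ix)$ is in $C^1_b$ with $\varphi(0)=1$ and then refers back to the argument for Lemma~\ref{lem:f nice}, which is exactly what you have written out in full. (A tiny quibble: in your computation of $g(0)$ you invoke ``total mass zero'' but do not actually use it there; that hypothesis is what justifies the identity $g(x)=\int \varphi(xy)\,y\,\nu(\mathd y)$ in the first place, as already recorded in the lemma statement.)
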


\begin{proof}
  It suffices to observe that the function $\varphi (x) = (e^{i x} - 1) / (i x)$ is in $C^1_b$ and satisfies $\varphi (0) = 1$, and then to copy the proof of Lemma~\ref{lem:f nice}.
\end{proof}

The next lemma is a simple and well known statement about characteristic
functions of probability measures.

\begin{lemma}
  Let ${\mu}$ be a probability measure on $\R^2$ that satisfies (H$_h$). Then the
  function
  \[ h (x, y) = \int_{\R^2} e^{i (x z_1 + y z_2)} {\mu} (\mathd
     z_1, \mathd z_2) \]
  is in $C^{1}_b$ and such that $h (0, 0) = 1$.
\end{lemma}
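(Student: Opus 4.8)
The plan is to mimic the two preceding lemmas almost verbatim, since the statement to be proved is the exact two-dimensional analogue. The core observation is that the characteristic function of a probability measure is always continuous and bounded by $1$, and that additional moment assumptions translate directly into differentiability via differentiation under the integral sign.

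First I would write $h(x,y) = \int_{\R^2} e^{i(x z_1 + y z_2)} \mu(\mathd z_1, \mathd z_2)$ and immediately read off $h(0,0) = \mu(\R^2) = 1$ and $\|h\|_{L^\infty} \leqslant \int_{\R^2} |e^{i(xz_1+yz_2)}| \mu(\mathd z_1, \mathd z_2) = 1$ since $\mu$ is a probability measure. Next, to obtain $h \in C^1_b$, I would differentiate formally under the integral sign:
\[
   \partial_x h(x,y) = \int_{\R^2} i z_1 e^{i(xz_1+yz_2)} \mu(\mathd z_1, \mathd z_2), \qquad \partial_y h(x,y) = \int_{\R^2} i z_2 e^{i(xz_1+yz_2)} \mu(\mathd z_1, \mathd z_2),
\]
and these expressions are well defined with $|\partial_x h(x,y)| \leqslant \int_{\R^2} |z_1| \mu(\mathd z_1, \mathd z_2) < \infty$ (and likewise for $\partial_y h$) precisely because of the finite first moment assumption in (H$_h$). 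The dominated convergence theorem — with dominating function $(z_1,z_2) \mapsto (|z_1| + |z_2|)$, which is $\mu$-integrable by (H$_h$) — justifies both the interchange of derivative and integral and the continuity of the partial derivatives in $(x,y)$. Hence $h \in C^1_b$.

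I do not expect any genuine obstacle here; the symmetry condition $\mu(A\times B) = \mu(B\times A)$ is not even needed for this particular statement (it will matter elsewhere, e.g.\ for the structure of the corrector constant $c$), and the proof is a one-line reduction to the standard fact about characteristic functions plus an invocation of dominated convergence, exactly parallel to the proofs of Lemma~\ref{lem:f nice} and the lemma on $g$. The only mild care needed is to phrase the differentiation-under-the-integral step cleanly, but since only first moments are required (unlike the fourth-moment bookkeeping for $f$), this is if anything simpler than the $f$ case. I would therefore present the proof in two short sentences: observe $h$ is a characteristic function hence bounded by $1$ with $h(0,0)=1$, and observe that the finite first moment of $\mu$ permits differentiation under the integral, yielding bounded continuous first derivatives.
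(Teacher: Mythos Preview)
Your proposal is correct and is exactly the standard argument one would expect; the paper in fact does not give a proof at all, merely remarking that the statement is ``a simple and well known statement about characteristic functions of probability measures.'' Your write-up supplies precisely the routine verification the paper omits.
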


Our general strategy is to find a paracontrolled structure
for~\eqref{eq:discrete sbe} and then to follow the same steps as in the
continuous setting. To do so, we need to translate all steps of our continuous
analysis to the discrete setting.

\subsection{Preliminary estimates}\label{sec:preliminary sasamoto-spohn}

\paragraph{Fourier cutoff.}

The cutoff operator $\PC_N$ is not a bounded operator on $\CC^{\alpha}$ spaces (at least not uniformly in $N$) and will lead to a small loss of regularity.

\begin{lemma}\label{lem:cutoff bound}
  We have
  \[ \| \PC_N \varphi \|_{L^{\infty}} \lesssim \log N \| \varphi
     \|_{L^{\infty}}, \]
  and in particular we get for all $\delta > 0$
  \[ \| \PC_N \varphi - \varphi \|_{\alpha - \delta} \lesssim  N^{- \delta} \log N
     \| \varphi \|_{\alpha} \lesssim \| \varphi \|_{\alpha} . \]
\end{lemma}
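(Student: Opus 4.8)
The plan is to realise $\PC_N$ as convolution against a Dirichlet kernel and to invoke the classical logarithmic growth of the Lebesgue constants. On $\T = \R/(2\pi\Z)$, the definitions of $\CF$ and of its inverse give, for $\varphi \in L^\infty(\T)$,
\[
   \PC_N \varphi(x) = (2\pi)^{-1}\sum_{|k| < N/2} e^{ikx}\CF\varphi(k) = (2\pi)^{-1}(D_N \ast \varphi)(x), \qquad D_N(z) = \sum_{|k| < N/2} e^{ikz},
\]
so that $\|\PC_N\varphi\|_{L^\infty} \le (2\pi)^{-1}\|D_N\|_{L^1(\T)}\,\|\varphi\|_{L^\infty}$. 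Since $N$ is odd, $D_N$ is the Dirichlet kernel of degree $(N-1)/2$, whose $L^1(\T)$-norm (the Lebesgue constant) is of order $\log N$; this is the first bound. First I would recall or cite this Lebesgue-constant estimate rather than reprove it.

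For the second estimate I would exploit that $\PC_N$ is a Fourier multiplier, hence commutes with every Littlewood--Paley block $\Delta_i$, and that $\PC_N\Delta_i = \Delta_i$ whenever the Fourier support of $\rho_i$ lies inside $(-N/2,N/2)$, i.e. for all $i$ with $2^i \lesssim N$. Thus $\Delta_i(\PC_N\varphi - \varphi) = 0$ for such $i$, and only the frequencies $i$ with $2^i \gtrsim N$ contribute to $\|\PC_N\varphi - \varphi\|_{\alpha-\delta}$. For those $i$, using $\Delta_i\PC_N = \PC_N\Delta_i$ together with the first part of the lemma applied to $\Delta_i\varphi$,
\[
   \|\Delta_i(\PC_N\varphi - \varphi)\|_{L^\infty} \le \|\PC_N(\Delta_i\varphi)\|_{L^\infty} + \|\Delta_i\varphi\|_{L^\infty} \lesssim \log N\,\|\Delta_i\varphi\|_{L^\infty} \lesssim \log N\, 2^{-i\alpha}\|\varphi\|_{\alpha}.
\]
Multiplying by $2^{i(\alpha-\delta)}$ and using $2^{-i\delta}\lesssim N^{-\delta}$ — valid precisely because $2^i \gtrsim N$ — gives $2^{i(\alpha-\delta)}\|\Delta_i(\PC_N\varphi-\varphi)\|_{L^\infty}\lesssim N^{-\delta}\log N\,\|\varphi\|_{\alpha}$, and taking the supremum over $i$ yields the claim; the final bound $\lesssim \|\varphi\|_\alpha$ follows from $\sup_{N\geq 2} N^{-\delta}\log N < \infty$.

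The only delicate point is the logarithmic loss itself: $\PC_N$ is genuinely unbounded on $L^\infty$ (and on each $\CC^\alpha$) as $N\to\infty$, with operator norm comparable to $\log N$, so no uniform bound is available; the loss is harmless here only because it is accompanied by the polynomial gain $N^{-\delta}$ coming from the spectral gap between the cutoff scale $N$ and the support of $\Delta_i$. A minor bookkeeping issue is the boundary blocks $2^i \sim N$, where $\rho_i$ straddles $\pm N/2$: there are only $O(1)$ of them and each is controlled by $\log N\, 2^{-i\alpha}\|\varphi\|_\alpha$ exactly via the displayed estimate, so they require no separate treatment.
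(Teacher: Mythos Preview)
Your proof is correct and follows essentially the same approach as the paper: both realise $\PC_N$ as convolution with the Dirichlet kernel and invoke the $\log N$ growth of its $L^1$-norm, then use that $\PC_N$ commutes with $\Delta_i$ and acts as the identity on low-frequency blocks. The only cosmetic differences are that the paper computes the Lebesgue constant explicitly rather than citing it, and observes that $\PC_N\Delta_j$ is either the identity or zero except when $2^j\simeq N$ (so the $\log N$ factor is only needed for $O(1)$ blocks), whereas you apply the $\log N$ bound uniformly to all $2^i\gtrsim N$; this is slightly less sharp but makes no difference to the result.
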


\begin{proof}
  We have
  \[ \| \PC_N \varphi \|_{L^{\infty} (\T)} \lesssim \left\| \CF^{- 1}
     \1_{(- N / 2, N / 2)} \right\|_{L^1 (\T)} \| \varphi
     \|_{L^{\infty} (\T)}, \]
  and using that $N$ is odd we get
  \[ \CF^{- 1} \1_{(- N / 2, N / 2)} (x) = (2 \pi)^{- 1} \sum_{| k |
     < N / 2} e^{i k x} = (2 \pi)^{- 1} \frac{\cos (x (N - 1) / 2) - \cos (x
     (N + 1) / 2)}{1 - \cos (x)} . \]
  Now
  \[ \left| \frac{\cos (x (N - 1) / 2) - \cos (x (N + 1) / 2)}{1 - \cos (x)}
     \right| \lesssim \min \left\{ \frac{N x^2}{x^2}, \frac{| x |}{x^2}
     \right\}, \]
  and therefore
  \[ \int_{- \pi}^{\pi} \left| \CF^{- 1} \1_{(- N / 2, N / 2)} (x)
     \right| \mathd x \lesssim \log N. \]
  To obtain the bound for $\PC_N \varphi - \varphi$ it suffices to note that $\PC_N$ acts trivially on $\Delta_j$
  (either as identity or as zero) unless $2^j \simeq N$.
\end{proof}

\begin{lemma}\label{lem:periodic cutoff bound}
  Let $\alpha \geqslant 0$ and $\varphi \in \CC^{\alpha}$. Then for any $\delta
  \geqslant 0$
  \[ \| \Pi_N \varphi - \varphi \|_{\alpha - \delta} \lesssim N^{- \delta} \log N \| \varphi
     \|_{\alpha} . \]
  If $\tmop{supp} ( \CF \varphi ) \subset [- c N, c N]$ for some $c \in
  (0, 1)$, then this inequality extends to general $\alpha \in \R$.
\end{lemma}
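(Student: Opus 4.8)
\textbf{Proof plan for Lemma~\ref{lem:periodic cutoff bound}.}
The operator $\Pi_N$ acts on Fourier modes by $\CF(\Pi_N \varphi)(k) = \CF\varphi(k^N)$ where $k^N \in (-N/2,N/2)$ is the representative of $k$ modulo $N$ of smallest absolute value; in particular $k^N = k$ precisely when $|k|<N/2$. The plan is therefore to split $\Pi_N\varphi - \varphi$ into the contribution of the low modes $|k|<N/2$, on which $\Pi_N$ acts as the identity and the difference vanishes, and the contribution of the high modes $|k|\geqslant N/2$. First I would localize using Littlewood--Paley blocks: since $\Delta_j$ only sees frequencies of size $\simeq 2^j$, and $\Pi_N - \mathrm{Id}$ only sees frequencies $|k|\geqslant N/2$, the composition $\Delta_j(\Pi_N-\mathrm{Id})$ is zero unless $2^j \gtrsim N$, i.e. unless $j \gtrsim \log_2 N$. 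So only finitely many blocks near the cutoff scale contribute.

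For those blocks I would estimate $\|\Delta_j(\Pi_N\varphi - \varphi)\|_{L^\infty}$ in two pieces. The piece $\|\Delta_j\varphi\|_{L^\infty} \lesssim 2^{-j\alpha}\|\varphi\|_\alpha \lesssim N^{-\alpha}\|\varphi\|_\alpha$ (for $j \sim \log_2 N$ and $\alpha \geqslant 0$) is immediate from the definition of $\CC^\alpha$. The piece $\|\Delta_j \Pi_N\varphi\|_{L^\infty}$ requires understanding the folding map $k\mapsto k^N$: the high modes $k$ with $|k|\geqslant N/2$ get folded back into $(-N/2,N/2)$, so $\Delta_j\Pi_N\varphi$ is a sum over a bounded number of "aliased copies" of pieces of $\varphi$, each of which is controlled by $\|\varphi\|_{L^\infty} \lesssim \|\varphi\|_\alpha$ up to a logarithmic factor coming from the $L^1$ norm of the relevant Fourier multiplier (exactly as in the bound $\|\PC_N\varphi\|_{L^\infty}\lesssim \log N\,\|\varphi\|_{L^\infty}$ of Lemma~\ref{lem:cutoff bound}, whose Dirichlet-kernel computation I would reuse). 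Collecting, $\|\Delta_j(\Pi_N\varphi-\varphi)\|_{L^\infty}\lesssim N^{-\alpha}\log N\,\|\varphi\|_\alpha$ for $j$ in the relevant finite range, and this translates into $\|\Pi_N\varphi-\varphi\|_{\alpha-\delta} = \sup_j 2^{j(\alpha-\delta)}\|\Delta_j(\Pi_N\varphi-\varphi)\|_{L^\infty}$: using $2^{j(\alpha-\delta)} \simeq N^{\alpha-\delta}$ and the bound just obtained gives $N^{\alpha-\delta}\cdot N^{-\alpha}\log N = N^{-\delta}\log N$, as claimed.

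For the final sentence, when $\tmop{supp}(\CF\varphi)\subset[-cN,cN]$ with $c<1$, the point is that the folding map $k\mapsto k^N$ no longer sends supported frequencies $k$ to arbitrarily small $|k^N|$: if $N/2 \leqslant |k| \leqslant cN$ then $|k^N| = N - |k| \geqslant (1-c)N$, so the aliased frequencies stay comparably large, $|k^N|\simeq N$. Hence $\Delta_j\Pi_N\varphi$ is again a bounded sum of pieces of $\varphi$ localized near frequency $N$, and $\|\Delta_j\varphi\|_{L^\infty}\lesssim 2^{-j\alpha}\|\varphi\|_\alpha$ now holds for \emph{any} real $\alpha$ on this frequency range because there is no low-frequency obstruction; the same Dirichlet-kernel estimate supplies the $\log N$ and we conclude exactly as before without the sign restriction on $\alpha$.

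\textbf{Main obstacle.} The routine part is the Littlewood--Paley bookkeeping; the one genuinely delicate point is controlling $\|\Delta_j\Pi_N\varphi\|_{L^\infty}$, i.e. showing that the frequency-folding does not amplify the $L^\infty$ norm by more than a logarithm. This is where one must identify $\Delta_j\Pi_N$ as convolution against a kernel that is a finite combination of (modulated) Dirichlet-type kernels and reuse the $L^1$ bound $\lesssim \log N$ from the proof of Lemma~\ref{lem:cutoff bound}; getting the uniformity in $N$ and tracking that only $O(1)$ aliased copies appear is the crux.
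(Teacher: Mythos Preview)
There is a genuine gap in your localization step. The claim that ``$\Delta_j(\Pi_N-\mathrm{Id})$ is zero unless $2^j\gtrsim N$'' is false: the operator $\Pi_N$ folds the high frequencies of $\varphi$ back into $(-N/2,N/2)$, and these aliased modes can land at \emph{arbitrarily} low output frequencies. Concretely, for $|\ell|<N/2$ one has $\CF(\Pi_N\varphi-\varphi)(\ell)=\sum_{m\neq 0}\CF\varphi(\ell+mN)$, which is generically nonzero. So every block $\Delta_q$ with $2^q\lesssim N$ contributes, and restricting to $q\sim\log_2 N$ misses most of them. Relatedly, your bound on the aliased piece is only ``controlled by $\|\varphi\|_{L^\infty}\lesssim\|\varphi\|_\alpha$'', which after multiplying by the weight $2^{q(\alpha-\delta)}$ does not produce the required factor $N^{-\delta}$; what is needed is the sharper bound $N^{-\alpha}\|\varphi\|_\alpha$, coming from the fact that only the blocks $\Delta_j\varphi$ with $2^j\simeq N$ get aliased.

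This is precisely where the hypothesis $\alpha\geqslant 0$ enters, and the paper's argument handles it more directly than tracking aliased copies by hand. The paper writes $\Pi_N\varphi=\PC_N\bigl((1+e^{iN\cdot}+e^{-iN\cdot})\varphi\bigr)$; the piece $\PC_N\varphi-\varphi$ is already covered by Lemma~\ref{lem:cutoff bound}, and for the modulated pieces one observes that $\PC_N(e^{\pm iN\cdot}\varphi)$ only sees $\Delta_j\varphi$ with $2^j\simeq N$ (the modulation shifts frequencies by $\pm N$ and $\PC_N$ then cuts back to $(-N/2,N/2)$). Hence for \emph{every} $q$ with $2^q\lesssim N$ one obtains the uniform bound $\|\Delta_q\PC_N(e^{\pm iN\cdot}\varphi)\|_{L^\infty}\lesssim\log(N)\,N^{-\alpha}\|\varphi\|_\alpha$ via Lemma~\ref{lem:cutoff bound}, and multiplying by $2^{q(\alpha-\delta)}\lesssim N^{\alpha-\delta}$ gives the claim.

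Your discussion of the second statement is correct and matches the paper: under the support condition $\tmop{supp}(\CF\varphi)\subset[-cN,cN]$ the aliased frequencies satisfy $|k^N|\geqslant(1-c)N$, so the spectrum of the modulated piece lies in an annulus of scale $N$; the indicator $\1_{2^q\lesssim N}$ then becomes $\1_{2^q\simeq N}$, and the weight $2^{q(\alpha-\delta)}\simeq N^{\alpha-\delta}$ causes no loss for any real $\alpha$.
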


\begin{proof}
  We already know that $\| \PC_N \varphi - \varphi \|_{\alpha - \delta} \lesssim N^{-
  \delta} \log N \| \varphi \|_{\alpha}$. So since $\Pi_N \varphi = \PC_N( (1 + e^{i N \cdot} + e^{-i N \cdot}) \varphi)$ we get for $\alpha \geqslant 0$
  \begin{align*}
     \| \Delta_q \PC_N ((e^{- i N \cdummy} + e^{i N \cdummy}) \varphi) \|_{L^{\infty}}  & \leqslant \1_{2^q \lesssim N} \| \PC_N ((e^{- i N \cdummy} + e^{i  N \cdummy}) \varphi) \|_{L^{\infty}} \\
     & \lesssim \sum_{j : 2^j \simeq N} \1_{2^q \lesssim N} \log (N) \| (e^{- i N \cdummy} + e^{i N \cdummy}) \Delta_j \varphi \|_{L^{\infty}} \\
     & \lesssim  \sum_{j : 2^j \simeq N} \1_{2^q \lesssim N} \log (N) 2^{- j  \alpha} \| \varphi \|_{\alpha} \\
     & \lesssim \1_{2^q \lesssim N} \log (N) N^{- \alpha} \| \varphi \|_{\alpha} \lesssim 2^{- q (\alpha - \delta)} \log (N) N^{- \delta} \| \varphi  \|_{\alpha}.
  \end{align*}
  If also $\tmop{supp} \left( \CF u \right) \subset [- c N, c N]$, then the
  spectrum of $2 \cos (N x) u$ is contained in an annulus $N \CA$, and
  therefore we can replace the indicator function $\1_{2^q \lesssim
  N}$ by $\1_{2^q \simeq N}$ in the calculation above, from where the
  claim follows.
\end{proof}

\begin{remark}\label{rmk:PiN on product}
  There exists $c \in (0, 1)$, independent of $N$, such that if $\tmop{supp}
  \left( \CF \psi \right) \subset [- N / 2, N / 2]$, then $\tmop{supp} \left( \CF
  (\varphi \para \psi) \right) \subset [- c N, c N]$. This means that we can always
  bound $\Pi_N (\varphi \para \psi) - \varphi \para \psi$, even if the paraproduct has negative
  regularity. On the other side the best statement we can make about the
  resonant product is that if $\varphi$ and $\psi$ are both spectrally supported in $[-
  N / 2, N / 2]$, then $\tmop{supp} \left( \CF (\varphi \reso \psi) \right) \subset [-N, N]$. A simple consequence is that if $\alpha + \beta > 0$, $\varphi \in
  \CC^{\alpha}$, $\psi \in \CC^{\beta}$, and $\mathrm{supp}( \CF \varphi) \cup \mathrm{supp}( \CF \psi) \subset [- N / 2, N / 2]$, then
  \[
     \| \Pi_N (\varphi\psi) - \varphi\psi \|_{\alpha \wedge \beta - \delta} \lesssim N^{- \delta} \log (N) \| \varphi \|_{\alpha} \| \psi \|_{\beta} .
  \]
\end{remark}

\paragraph{Estimates for the discrete Laplacian.}

\begin{lemma}\label{lem:discrete laplacian}
   Let $\pi$ satisfy (H$_f$). Let $\alpha < 1$,
  $\beta \in \R$, and let $\varphi \in \CC^{\alpha}$ and $\psi \in
  \CC^{\beta}$. Then for all $\delta \in [0, 1]$ and $N \in \N$
  \[ \| \Delta_N \psi - \Delta \psi \|_{\beta - 2 - \delta} \lesssim N^{-
     \delta} \| \psi \|_{\beta} \hspace{2em} \tmop{and} \]
  \[ \| \Delta_N (\varphi \para \psi) - \varphi \para \Delta_N \psi \|_{\alpha
     + \beta - 2} \lesssim \| \varphi \|_{\alpha} \| \psi \|_{\beta} . \]
\end{lemma}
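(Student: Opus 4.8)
The plan is to prove both estimates by localising in frequency, applying each side to a single Littlewood--Paley block and exploiting the moment conditions on $\pi$ in (H$_f$): since $\pi$ has total mass zero, vanishing first moment (by symmetry) and $\int_\Z y^2\pi(\mathd y)=2$, the operator $\Delta_N$ reproduces the three leading Taylor coefficients of $\Delta$. Concretely, for a function $g$ spectrally supported in $2^j\CA$ with $j\ge 0$ (the case $j=-1$ being analogous, $g$ then being supported in a fixed ball), I would write
\[
   \Delta_N g(x)=\varepsilon^{-2}\!\int_\Z\!\big[g(x+\varepsilon y)-g(x)-\varepsilon y\,g'(x)\big]\pi(\mathd y),\qquad (\Delta_N-\Delta)g(x)=\varepsilon^{-2}\!\int_\Z\!\big[g(x+\varepsilon y)-g(x)-\varepsilon y\,g'(x)-\tfrac{(\varepsilon y)^2}{2}g''(x)\big]\pi(\mathd y),
\]
bound the bracketed remainders by $\tfrac{(\varepsilon|y|)^2}{2}\|g''\|_{L^\infty}$ and $\tfrac{(\varepsilon|y|)^3}{6}\|g'''\|_{L^\infty}$, and combine with Bernstein ($\|g''\|_{L^\infty}\lesssim 2^{2j}\|g\|_{L^\infty}$, $\|g'''\|_{L^\infty}\lesssim 2^{3j}\|g\|_{L^\infty}$) and the finiteness of the second and third absolute moments of $\pi$ (both finite since $\pi$ has a finite fourth moment). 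This gives the two blockwise bounds $\|\Delta_N g\|_{L^\infty}\lesssim 2^{2j}\|g\|_{L^\infty}$ and $\|(\Delta_N-\Delta)g\|_{L^\infty}\lesssim\varepsilon\,2^{3j}\|g\|_{L^\infty}$.

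For the first estimate I would apply this to $g=\Delta_j\psi$, using that $\Delta_N$ and $\Delta$ are Fourier multipliers so that $\Delta_j(\Delta_N-\Delta)\psi=(\Delta_N-\Delta)\Delta_j\psi$, together with the trivial bound $\|\Delta\Delta_j\psi\|_{L^\infty}\lesssim 2^{2j}\|\Delta_j\psi\|_{L^\infty}$, to get
\[
   \|\Delta_j(\Delta_N-\Delta)\psi\|_{L^\infty}\lesssim 2^{2j}\min\{1,\varepsilon 2^j\}\,2^{-j\beta}\|\psi\|_\beta\le 2^{(2-\beta)j}(\varepsilon 2^j)^\delta\|\psi\|_\beta
\]
for every $\delta\in[0,1]$, since $\min\{1,x\}\le x^\delta$ for $x\ge0$. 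Multiplying by $2^{(\beta-2-\delta)j}$ and taking the supremum over $j\ge-1$ yields $\|\Delta_N\psi-\Delta\psi\|_{\beta-2-\delta}\lesssim\varepsilon^\delta\|\psi\|_\beta\lesssim N^{-\delta}\|\psi\|_\beta$ because $\varepsilon=2\pi/N$.

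For the commutator estimate, write $\varphi\para\psi=\sum_j f_j g_j$ with $f_j=S_{j-1}\varphi$ and $g_j=\Delta_j\psi$; since $\Delta_N$ commutes with $\Delta_j$, one has $\Delta_N(\varphi\para\psi)-\varphi\para\Delta_N\psi=\sum_j(\Delta_N(f_jg_j)-f_j\Delta_N g_j)$, where
\[
   \Delta_N(f_jg_j)(x)-f_j(x)\Delta_N g_j(x)=\varepsilon^{-2}\!\int_\Z\![f_j(x+\varepsilon y)-f_j(x)]\,g_j(x+\varepsilon y)\,\pi(\mathd y).
\]
Splitting $g_j(x+\varepsilon y)=g_j(x)+[g_j(x+\varepsilon y)-g_j(x)]$ and using $\int_\Z\pi=0$, this equals $g_j(x)\Delta_N f_j(x)+\varepsilon^{-2}\int_\Z[f_j(x+\varepsilon y)-f_j(x)][g_j(x+\varepsilon y)-g_j(x)]\pi(\mathd y)$. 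Both summands are spectrally supported in a fixed dilated annulus $2^j\CA'$ (a product of something spectrally supported in a ball of radius $\lesssim2^j$ with something supported in $2^j\CA$, uniformly in $y$). For the first summand I would use $\|\Delta_N f_j\|_{L^\infty}\lesssim\sum_{i<j-1}2^{2i}\|\Delta_i\varphi\|_{L^\infty}\lesssim 2^{(2-\alpha)j}\|\varphi\|_\alpha$ (geometric sum dominated by its top term since $\alpha<2$) and $\|g_j\|_{L^\infty}\lesssim 2^{-\beta j}\|\psi\|_\beta$; for the second, $|f_j(x+\varepsilon y)-f_j(x)|\le\varepsilon|y|\|f_j'\|_{L^\infty}$, $|g_j(x+\varepsilon y)-g_j(x)|\le\varepsilon|y|2^j\|g_j\|_{L^\infty}$, $\int y^2|\pi|<\infty$, and $\|f_j'\|_{L^\infty}\lesssim\sum_{i<j-1}2^i\|\Delta_i\varphi\|_{L^\infty}\lesssim 2^{(1-\alpha)j}\|\varphi\|_\alpha$, where the hypothesis $\alpha<1$ is used so that this geometric sum is again dominated by its top term. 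In both cases the $j$-th summand is bounded in $L^\infty$ by $2^{(2-\alpha-\beta)j}\|\varphi\|_\alpha\|\psi\|_\beta$, and summing over $j$ via the standard estimate for series of spectrally annulus-supported functions gives $\|\Delta_N(\varphi\para\psi)-\varphi\para\Delta_N\psi\|_{\alpha+\beta-2}\lesssim\sup_j 2^{(\alpha+\beta-2)j}2^{(2-\alpha-\beta)j}\|\varphi\|_\alpha\|\psi\|_\beta\lesssim\|\varphi\|_\alpha\|\psi\|_\beta$.

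The routine part is the first estimate (Taylor expansion plus Bernstein). The point that needs care is the second estimate: one must verify that after the algebraic splitting both error terms are genuinely spectrally localised in annuli, so that the Hölder--Besov norm of the sum can be read off from the blockwise $L^\infty$ bounds, and one must track that the single extra factor $2^j$ produced by differencing the low-frequency factor $S_{j-1}\varphi$ is compensated exactly because $\alpha<1$, which keeps the $j$-th term at size $2^{(2-\alpha-\beta)j}$ rather than worse.
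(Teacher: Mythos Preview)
Your proof is correct and follows essentially the same approach as the paper: Taylor expansion plus moment conditions on $\pi$ combined with Bernstein for the first estimate, and a blockwise commutator computation with annulus-localised summation for the second. The only cosmetic difference is in the algebraic splitting of the commutator term---the paper Taylor-expands $S_{j-1}\varphi(x+\varepsilon y)-S_{j-1}\varphi(x)$ to second order and then uses $\int y\,\pi(\mathd y)=0$ on the linear piece, whereas you split $g_j(x+\varepsilon y)=g_j(x)+[g_j(x+\varepsilon y)-g_j(x)]$ to isolate $g_j\Delta_N f_j$---but both decompositions distribute the two derivatives between the low- and high-frequency factors in the same way and arrive at the identical blockwise bound $2^{(2-\alpha-\beta)j}$.
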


\begin{proof}
  Using that $\pi$ has zero mass, zero first moment, and finite second moment,
  we have
  \begin{align*}
     | \Delta_j \Delta_N \psi (x) | & = \left| \varepsilon^{- 2} \int_{\R} \Delta_j \psi (x + \varepsilon y) \pi (\mathd y)\right| \\
     & = \left| \varepsilon^{- 2} \int_{\R} (\Delta_j \psi (x + \varepsilon y) - \Delta_j \psi (x) - \mathD \Delta_j \psi (x) \varepsilon y) \pi (\mathd y) \right| \\
     & \leqslant \| \mathD^2 \Delta_j \psi \|_{L^{\infty}} \int_{\R} y^2 | \pi | (\mathd y) \lesssim 2^{j (2 - \beta)} \| \psi \|_{\beta}.
  \end{align*}
  On the other side,
  \begin{align*}
     | \Delta_j (\Delta_N \psi - \Delta \psi) (x) | & = \left| \varepsilon^{- 2} \int_{\R} (\Delta_j \psi (x + \varepsilon y) - \Delta \Delta_j \psi (x)) \pi (\mathd y) \right| \\
     & \leqslant \varepsilon \| \mathD^3 \Delta_j \psi \|_{L^{\infty}} \int_{\R} | y |^3 | \pi | (\mathd y) \lesssim 2^{- j     (\beta - 3)} \varepsilon \| \psi \|_{\beta},
  \end{align*}
  and thus by interpolation $\| \Delta_N \psi - \Delta \psi \|_{\beta - 2 - \delta} \lesssim \varepsilon^\delta \| \psi \|_{\beta}$.
  
  For the commutator between discrete Laplacian and paraproduct it suffices to
  control $\Delta_N (S_{j - 1} \varphi \Delta_j \psi) - S_{j - 1} \varphi \Delta_j \Delta_N \psi$ in $L^\infty$, which is given by
  \begin{align*}
     &| (\Delta_N (S_{j - 1} \varphi \Delta_j \psi) - S_{j - 1} \varphi \Delta_j \Delta_N \psi) (x) | \\
     &\hspace{80pt} = \left| \varepsilon^{- 2}     \int_{\R} (S_{j - 1} \varphi (x + \varepsilon y) - S_{j - 1}  \varphi (x)) \Delta_j \psi (x + \varepsilon y) \pi (\mathd y) \right| \\
     &\hspace{80pt} \leqslant \| \mathD^2 S_{j - 1} \varphi \|_{L^{\infty}} \| \Delta_j \psi  \|_{L^{\infty}} \int_{\R} y^2 | \pi | (\mathd y) \\
     &\hspace{80pt}\quad + \varepsilon^{- 1} \left| \int_{\R} \mathD S_{j - 1} \varphi (x) y (\Delta_j \psi (x + \varepsilon y) - \Delta_j \psi (x)) \pi (\mathd y)
     \right| \\
     &\hspace{80pt} \lesssim 2^{j (2 - \alpha - \beta)} \| \varphi \|_{\alpha} \| \psi  \|_{\beta} \int_{\R} y^2 | \pi | (\mathd y).
  \end{align*}
  
\end{proof}

While the semigroup generated by the discrete Laplacian $\Delta_N$ does not
have good regularizing properties, we will only apply it to functions with
spectral support contained in $[- N / 2, N / 2]$, where it has the same
regularizing effect as the heat flow. It is here that we need the assumption
that $f (x) \geqslant c_f > 0$ for $x \in [- \pi, \pi]$.

\begin{lemma}\label{lem:discrete heat flow}
  Assume that $\pi$ satisfies (H$_f$). Let $\alpha \in \R$, $\beta \geqslant 0$, and let $\varphi \in \CD'$ with $\tmop{supp} \left( \CF \varphi \right) \subset [-N / 2, N / 2]$. Then we have for all $T > 0$ uniformly in $t \in (0, T]$
  \begin{equation}\label{eq:discrete heat flow}
     \| e^{t \Delta_N} \varphi \|_{\alpha + \beta} \lesssim t^{- \beta / 2} \| \varphi \|_{\alpha}.
  \end{equation}
  If $\alpha<0$, then we also have
  \[
     \| e^{t \Delta_N} \varphi \|_{L^{\infty}} \lesssim t^{\alpha / 2} \| \varphi \|_{\alpha}
  \]
\end{lemma}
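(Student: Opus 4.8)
The plan is to reduce both estimates to a pointwise Fourier multiplier bound for the operator $e^{t\Delta_N}$ restricted to frequencies in $[-N/2,N/2]$, and then to combine this with standard Littlewood--Paley arguments exactly as one proves the Schauder estimates for the continuous heat semigroup. First I would compute the Fourier symbol of $\Delta_N$: from the definition $\Delta_N\varphi(x)=\varepsilon^{-2}\int_{\Z}\varphi(x+\varepsilon y)\pi(\dd y)$ we get $\CF(\Delta_N\varphi)(k)=\varepsilon^{-2}\big(\int_\Z e^{i\varepsilon k y}\pi(\dd y)\big)\CF\varphi(k)=-k^2 f(\varepsilon k)\CF\varphi(k)$, using the notation $f(x)=-x^{-2}\int_\Z e^{ixy}\pi(\dd y)$ from Lemma~\ref{lem:f nice}. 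Hence $e^{t\Delta_N}$ has symbol $m_t(k)=\exp(-t k^2 f(\varepsilon k))$. For $|k|<N/2$ we have $\varepsilon k=(2\pi/N)k\in(-\pi,\pi)$, so assumption (H$_f$) gives $f(\varepsilon k)\geqslant c_f>0$, and therefore $|m_t(k)|\leqslant e^{-c_f t k^2}$. In other words, on the relevant frequency range the discrete heat kernel is dominated by a genuine Gaussian multiplier.

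Next I would run the usual Littlewood--Paley estimate. Since $\tmop{supp}(\CF\varphi)\subset[-N/2,N/2]$, we have $\Delta_j\varphi=0$ once $2^j\gtrsim N$, so in the bound $\|e^{t\Delta_N}\varphi\|_{\alpha+\beta}=\sup_j 2^{j(\alpha+\beta)}\|\Delta_j e^{t\Delta_N}\varphi\|_{L^\infty}$ only blocks with $2^j\lesssim N$ contribute. For each such $j$, Bernstein's inequality together with the pointwise bound on $m_t$ and the fact that $\Delta_j$ localizes to an annulus $2^j\CA$ gives $\|\Delta_j e^{t\Delta_N}\varphi\|_{L^\infty}\lesssim e^{-c t 2^{2j}}\|\Delta_j\varphi\|_{L^\infty}$ for some $c>0$ (here one writes $e^{t\Delta_N}\Delta_j=\tilde\Delta_j e^{t\Delta_N}\Delta_j$ with a fattened projector $\tilde\Delta_j$ and estimates the $L^1$ norm of the corresponding kernel, exactly as in Lemma~2.3 of~\cite{Bahouri2011}). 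Multiplying by $2^{j(\alpha+\beta)}$ and using $\|\Delta_j\varphi\|_{L^\infty}\leqslant 2^{-j\alpha}\|\varphi\|_\alpha$ yields $2^{j(\alpha+\beta)}\|\Delta_j e^{t\Delta_N}\varphi\|_{L^\infty}\lesssim 2^{j\beta}e^{-ct2^{2j}}\|\varphi\|_\alpha$, and since $\sup_{\lambda>0}\lambda^\beta e^{-ct\lambda^2}\lesssim t^{-\beta/2}$ for $\beta\geqslant0$, taking the supremum over $j$ gives~\eqref{eq:discrete heat flow}. For the low-frequency block $j=-1$ one simply uses $\|m_t\|_\infty\leqslant 1$. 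For the second estimate with $\alpha<0$, I would instead bound $\|e^{t\Delta_N}\varphi\|_{L^\infty}\leqslant\sum_{j:2^j\lesssim N}\|\Delta_j e^{t\Delta_N}\varphi\|_{L^\infty}\lesssim\sum_j 2^{-j\alpha}e^{-ct2^{2j}}\|\varphi\|_\alpha$; splitting the sum at $2^j\sim t^{-1/2}$ and using $\alpha<0$ so that the low-frequency part is summable gives a bound $\lesssim t^{\alpha/2}\|\varphi\|_\alpha$, with the high-frequency tail controlled by the Gaussian decay.

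The main obstacle is making the kernel estimate for $\Delta_j e^{t\Delta_N}$ clean: one must check that the multiplier $\varrho(2^{-j}k)\,e^{-tk^2 f(\varepsilon k)}$, viewed as a function of a continuous variable $k$ (so that one can invoke the standard Fourier multiplier lemma on the torus), has $\CF^{-1}$ with $L^1$ norm bounded by $C e^{-c t 2^{2j}}$ uniformly in $N$, $j$ and $t\in(0,T]$. This requires control of derivatives of $f(\varepsilon k)=f(2\pi k/N)$ up to order two on the support of $\varrho(2^{-j}\cdot)$; but $f\in C^2_b$ by Lemma~\ref{lem:f nice}, and on $\tmop{supp}\varrho(2^{-j}\cdot)$ with $2^j\lesssim N$ the argument $\varepsilon k$ stays in a fixed compact subset of $(-\pi,\pi)$ where $f\geqslant c_f$, so the needed bounds follow by a routine computation (the $\varepsilon$-prefactors from each derivative of $k\mapsto f(\varepsilon k)$ only help). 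A minor bookkeeping point is that $e^{t\Delta_N}$ is only defined on functions supported in $[-N/2,N/2]$ as stated, but this is precisely the hypothesis on $\varphi$, so no extension is needed; and the loss of the $\log N$ factor seen elsewhere (Lemma~\ref{lem:cutoff bound}) does not occur here because we never apply the crude Fourier cutoff, only smooth Littlewood--Paley blocks.
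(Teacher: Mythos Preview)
Your approach is essentially the paper's: both compute the symbol $m_t(k)=e^{-tk^2 f(\varepsilon k)}$, use the lower bound $f\geqslant c_f$ on $[-\pi,\pi]$ together with $f\in C^2_b$ from Lemma~\ref{lem:f nice}, and reduce to a Fourier-multiplier estimate; the paper simply packages your block-by-block $L^1$-kernel bound into a single appeal to Lemma~2.2 of \cite{Bahouri2011} applied to the full symbol, and derives the $L^\infty$ statement by the same summation you sketch (phrased as ``interpolation'').

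There is one technical slip you should fix. Your claim that on $\tmop{supp}\rho(2^{-j}\cdot)$ with $2^j\lesssim N$ the argument $\varepsilon k$ stays in a fixed compact subset of $(-\pi,\pi)$ is false for the top blocks: if $2^j$ is close to $N/2$ the annulus $2^j\CA$ extends past $|k|=N/2$, so $|\varepsilon k|$ can exceed $\pi$, and (H$_f$) gives no lower bound on $f$ there --- the symbol $e^{-tk^2 f(\varepsilon k)}$ could even grow. The paper handles this by writing $e^{t\Delta_N}\varphi = \psi_\varepsilon(\mathD)\varphi$ with $\psi_\varepsilon(x)=e^{-t f(\varepsilon x)x^2}\chi(\varepsilon x)$, where $\chi$ is a smooth cutoff with $\chi\equiv 1$ on $[-\pi,\pi]$ and support contained in the (slightly larger) region where $f\geqslant c_f/2$; since $\CF\varphi$ is supported in $[-N/2,N/2]$ the cutoff changes nothing, and now the multiplier and its first two derivatives are controlled globally. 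With this amendment your derivative computation goes through verbatim.
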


\begin{proof}
  Let $\chi$ be a compactly supported smooth function with $\chi \equiv 1$ on $[-\pi, \pi]$ and such that $f(x) \geqslant c_f /2$ for all $x \in \mathrm{supp}(\chi)$. Then $e^{t \Delta_N} \varphi = e^{- t f (\varepsilon \mathD) \mathD^2} \chi (\varepsilon \mathD) \varphi = : \psi_{\varepsilon} (\mathD)\varphi$. According to Lemma~2.2 in {\cite{Bahouri2011}}, it suffices to show that
  \[
     \max_{k = 0, 1, 2} \sup_{x \in \R} | \mathD^k \psi_{\varepsilon} (x) | t^{\beta / 2} | x |^{\beta + k} \leqslant C < \infty,
  \]
  uniformly in $\varepsilon \in (0, 1]$. For $\psi_{\varepsilon}$ itself we
  have
  \[
     | \psi_{\varepsilon} (x) | t^{\beta / 2} | x |^{\beta} \lesssim e^{- \frac{c_f}{2} | \sqrt{t} x |^2} | \sqrt{t} x |^{\beta} \lesssim 1.
  \]
  To calculate the derivatives, note that
  \begin{gather*}
      (e^{- \varphi (x)} \rho (x))' = e^{- \varphi (x)} [- \varphi' (x) \rho(x) + \rho' (x)], \\
      (e^{- \varphi (x)} \rho (x))'' = e^{- \varphi (x)} [\varphi' (x)^2 \rho(x) - 2 \varphi' (x) \rho' (x) - \varphi'' (x) \rho (x) + \rho'' (x)].
  \end{gather*}
  In our case we set $\varphi_{\varepsilon} (x) = - t f (\varepsilon x) x^2$
  and $\rho_{\varepsilon} (x) = \chi (\varepsilon x)$, and obtain
  \[ \varphi_{\varepsilon}' (x) = - t x (f' (\varepsilon x) \varepsilon x +
     2 f (\varepsilon x)), \hspace{2em} \varphi_{\varepsilon}'' (x) = - t (f''
     (\varepsilon x) (\varepsilon x)^2 + 4 f' (\varepsilon x) \varepsilon x
     + 2 f (\varepsilon x)) . \]
  Since $| \varepsilon x \chi (\varepsilon x) | \lesssim 1$ and similarly for
  $(\varepsilon x)^2 \chi (\varepsilon x)$ and $\varepsilon x \chi'(\varepsilon x)$, we get
  \[
     | x | | \varphi'_{\varepsilon} (x) \rho_{\varepsilon} (x) | \lesssim | \sqrt{t} x |^2 \1_{\chi(\varepsilon x) \neq 0} \hspace{2em} \tmop{and} \hspace{2em} | x | | \rho'_{\varepsilon} (x) | \lesssim \1_{\chi(\varepsilon x) \neq 0},
  \]
  from where we deduce that
  \[
     | \mathD_x \psi_{\varepsilon} (x) | t^{\beta / 2} | x |^{\beta + 1}  \lesssim e^{- \frac{c_f}{2} ( \sqrt{t} x )^2} \left( | \sqrt{t} x |^{2 + \beta} + | \sqrt{t} x |^{\beta} \right) \lesssim 1.
  \]
  Similar arguments show that also $| \mathD^2_x \psi_{\varepsilon} (x) |t^{\beta / 2} | x |^{\beta + 2} \lesssim 1$, which concludes the proof of~\eqref{eq:discrete heat flow}. The $L^\infty$ estimate now follows from an interpolation argument.
\end{proof}

\begin{corollary}\label{cor:discrete heat flow time}
  Let $\pi$ satisfy (H$_f$), let $\alpha \in (0, 2)$, and let $\varphi \in \CC^{\alpha}$ with spectral support in $[- N / 2, N / 2]$. Then
  \[
     \| (e^{t \Delta_N} - \tmop{id}) \varphi \|_{L^{\infty}} \lesssim t^{\alpha / 2} \| \varphi \|_{\alpha}.
  \]
\end{corollary}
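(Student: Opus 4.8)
The plan is to reduce the estimate to a Fourier--multiplier bound and then sum over Littlewood--Paley blocks, which is the discrete counterpart of the classical fact $\|(P_t-\mathrm{id})\varphi\|_{L^\infty}\lesssim t^{\alpha/2}\|\varphi\|_\alpha$. Recall from the proof of Lemma~\ref{lem:discrete heat flow} that there is a fixed smooth cutoff $\chi$ with $\chi\equiv 1$ on $[-\pi,\pi]$ and $f\geqslant c_f/2>0$ on $\operatorname{supp}\chi$, such that on distributions with spectral support in $[-N/2,N/2]$ the operator $e^{t\Delta_N}$ coincides with the Fourier multiplier with symbol $\sigma_t(x)=e^{-tf(\varepsilon x)x^2}\chi(\varepsilon x)$. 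Since $\chi\equiv 1$ on $[-\pi,\pi]$ and $|\varepsilon k|\leqslant\pi$ on the spectral support of such a $\varphi$, one has $(e^{t\Delta_N}-\mathrm{id})\varphi=m_t(\mathD)\varphi$ with the genuine smooth symbol $m_t(x)=(e^{-tf(\varepsilon x)x^2}-1)\chi(\varepsilon x)$.

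The first step is the symbol estimate. Writing $\phi(x)=tf(\varepsilon x)x^2$ and using that $f\in C^2_b$ (Lemma~\ref{lem:f nice}), that $0<c_f/2\leqslant f\lesssim 1$ and $|\varepsilon x|\lesssim 1$ on $\operatorname{supp} m_t$, together with the elementary bounds $|e^{-y}-1|\leqslant\min\{1,y\}$ and $y^ke^{-y}\lesssim\min\{1,y\}$ for $k=1,2$ and $y\geqslant 0$, one checks by exactly the computation carried out in the proof of Lemma~\ref{lem:discrete heat flow} (now keeping track of the subtracted constant $1$) that
\[
   \max_{k=0,1,2}\ \sup_{|x|\simeq 2^j}|\mathD^k m_t(x)|\,|x|^k\lesssim\min\{1,t2^{2j}\}
\]
for all $j\geqslant -1$, uniformly in $\varepsilon\in(0,1]$ and $t>0$. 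Applying the multiplier lemma invoked in the proof of Lemma~\ref{lem:discrete heat flow} (Lemma~2.2 of~\cite{Bahouri2011}) separately to each dyadic block then gives
\[
   \|(e^{t\Delta_N}-\mathrm{id})\Delta_j\varphi\|_{L^\infty}=\|m_t(\mathD)\Delta_j\varphi\|_{L^\infty}\lesssim\min\{1,t2^{2j}\}\,\|\Delta_j\varphi\|_{L^\infty}.
\]

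The second step is the summation. Since $\varphi$ is spectrally supported in $[-N/2,N/2]$ we have $\Delta_j\varphi=0$ for $2^j\gg N$, and $\|\Delta_j\varphi\|_{L^\infty}\leqslant 2^{-j\alpha}\|\varphi\|_\alpha$, so that
\begin{align*}
   \|(e^{t\Delta_N}-\mathrm{id})\varphi\|_{L^\infty}&\leqslant\sum_{j\geqslant -1}\|(e^{t\Delta_N}-\mathrm{id})\Delta_j\varphi\|_{L^\infty}\lesssim\sum_{j\geqslant -1}\min\{1,t2^{2j}\}\,2^{-j\alpha}\|\varphi\|_\alpha\\
   &=\Big(t\sum_{2^{2j}\leqslant 1/t}2^{j(2-\alpha)}+\sum_{2^{2j}>1/t}2^{-j\alpha}\Big)\|\varphi\|_\alpha.
\end{align*}
Because $\alpha<2$ the first geometric sum is controlled by its largest term $\simeq t^{-(2-\alpha)/2}$, so the first contribution is $\lesssim t\cdot t^{-(2-\alpha)/2}=t^{\alpha/2}$; because $\alpha>0$ the second sum is controlled by its smallest term $\simeq t^{-1/2}$ (and when $t^{-1/2}<1/2$ simply by the convergent series $\sum_{j\geqslant -1}2^{-j\alpha}\lesssim 1\lesssim t^{\alpha/2}$), so the second contribution is again $\lesssim t^{\alpha/2}$. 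This proves the claim, with constant depending only on $\alpha$ (and on $T$, used merely to absorb the trivial regime $t>1$, where $t^{\alpha/2}\geqslant 1$).

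I expect the only slightly delicate point to be the uniformity in $\varepsilon$ (equivalently in $N$) of the symbol bounds, but this is handled in the proof of Lemma~\ref{lem:discrete heat flow} in essentially the same way; here the factor $\min\{1,t2^{2j}\}$ simply replaces the smoothing factor $t^{-\beta/2}2^{j\beta}$ appearing there. It is worth emphasising that the hypothesis $\alpha\in(0,2)$ is used in full: $\alpha>0$ makes the high-frequency tail summable and $\alpha<2$ makes the low-frequency sum converge at the correct rate, exactly as in the continuous estimate for $(P_t-\mathrm{id})$ on Hölder spaces.
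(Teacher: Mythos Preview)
Your argument is correct, but it takes a different route from the paper. The paper's proof is a one-line application of the fundamental theorem of calculus: write $(e^{t\Delta_N}-\tmop{id})\varphi=\int_0^t e^{s\Delta_N}\Delta_N\varphi\,\mathd s$, then apply the $L^\infty$ smoothing estimate of Lemma~\ref{lem:discrete heat flow} (namely $\|e^{s\Delta_N}\psi\|_{L^\infty}\lesssim s^{(\alpha-2)/2}\|\psi\|_{\alpha-2}$) together with $\|\Delta_N\varphi\|_{\alpha-2}\lesssim\|\varphi\|_\alpha$ from Lemma~\ref{lem:discrete laplacian}, and integrate in $s$. Your approach instead reproves a sharper blockwise multiplier estimate $\|(e^{t\Delta_N}-\tmop{id})\Delta_j\varphi\|_{L^\infty}\lesssim\min\{1,t2^{2j}\}\|\Delta_j\varphi\|_{L^\infty}$ and sums over $j$. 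The paper's proof is shorter and cleanly reuses the two preceding lemmas; your proof is more self-contained, makes the role of both endpoints $\alpha>0$ and $\alpha<2$ explicit in the dyadic summation, and avoids invoking Lemma~\ref{lem:discrete laplacian}. One minor wording slip: in the high-frequency sum $\sum_{2^{2j}>1/t}2^{-j\alpha}$ you say it is ``controlled by its smallest term $\simeq t^{-1/2}$''; you mean the term at the smallest $j$ in this range, whose value is $\simeq t^{\alpha/2}$ (not $t^{-1/2}$), which dominates the geometric tail.
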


\begin{proof}
  By definition of $e^{t \Delta_N}$ we have
  \[
     \| (e^{t \Delta_N} - \tmop{id}) \varphi \|_{L^{\infty}} \leqslant \int_0^t \| e^{s \Delta_N} \Delta_N \varphi \|_{L^{\infty}} \dd s \lesssim \int_0^t s^{- (2 - \alpha) / 2} \| \Delta_N \varphi \|_{\alpha - 2} \mathd s \lesssim t^{\alpha / 2} \| \varphi \|_{\alpha},
  \]
  where we used Lemma~\ref{lem:discrete heat flow} and Lemma~\ref{lem:discrete laplacian} in the second step.
\end{proof}

Combining Lemma~\ref{lem:discrete heat flow} and Corollary~\ref{cor:discrete heat flow time}, we can apply the same arguments as in the continuous setting to derive analogous Schauder estimates for $(e^{t\Delta_N})$ as in Lemma~\ref{lemma:schauder} or Lemma~\ref{lemma:schauder exp} -- of course always restricted to elements of $\CS'$ that are spectrally supported in $[- N / 2, N / 2]$.

\paragraph{Estimate for the discrete derivative.}

\begin{lemma}\label{lem:discrete derivative}
   Let $\nu$ satisfy (H$_g$). Let $\alpha \in
  (0, 1)$, $\beta \in \R$, and let $\varphi \in \CC^{\alpha}$ and
  $\psi \in \CC^{\beta}$. Then for all $\delta \in [0, 1]$ and $N \in
  \N$
  \[
     \| \mathD_N \psi - \mathD \psi \|_{\beta - 1 - \delta} \lesssim  N^{- \delta} \| \psi \|_{\beta} \hspace{2em} \tmop{and}
  \]
  \[
     \| \mathD_N (\varphi \para \psi) - \varphi \para \mathD_N \psi \|_{\alpha + \beta - 1} \lesssim \| \varphi \|_{\alpha} \| \psi \|_{\beta}.
  \]
\end{lemma}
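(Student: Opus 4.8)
The plan is to mirror, almost line for line, the proof of Lemma~\ref{lem:discrete laplacian}, now exploiting the hypotheses (H$_g$): $\nu$ has total mass zero, $\int_\Z y\,\nu(\mathd y)=1$, and $\nu$ has finite second moment (hence also finite first moment on $\Z$). I would first record that $\mathD_N$ is the Fourier multiplier with symbol $m_N(k)=\varepsilon^{-1}\int_\Z e^{i\varepsilon k y}\nu(\mathd y)=ik\,g(\varepsilon k)$, where $g$ is the function from the preceding lemma; in particular $\mathD_N$ commutes with the Littlewood--Paley blocks $\Delta_j$ and does not enlarge spectral supports, which I use repeatedly.

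For the first estimate I would produce two bounds on each dyadic block $\Delta_j(\mathD_N\psi-\mathD\psi)$. The rough one: since $\nu$ has zero mass, $\mathD_N\Delta_j\psi(x)=\varepsilon^{-1}\int_\Z(\Delta_j\psi(x+\varepsilon y)-\Delta_j\psi(x))\,\nu(\mathd y)$, so $|\mathD_N\Delta_j\psi(x)|\le\|\mathD\Delta_j\psi\|_{L^\infty}\int_\Z|y|\,|\nu|(\mathd y)\lesssim 2^{j(1-\beta)}\|\psi\|_\beta$, and the same bound holds trivially for $\mathD\Delta_j\psi$, whence $\|\Delta_j(\mathD_N\psi-\mathD\psi)\|_{L^\infty}\lesssim 2^{j(1-\beta)}\|\psi\|_\beta$. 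The fine one: a second order Taylor expansion gives $\Delta_j\psi(x+\varepsilon y)-\Delta_j\psi(x)=\varepsilon y\,\mathD\Delta_j\psi(x)+R_j(x,y)$ with $|R_j(x,y)|\le\tfrac12(\varepsilon y)^2\|\mathD^2\Delta_j\psi\|_{L^\infty}$, and using $\int_\Z y\,\nu(\mathd y)=1$ one gets $\mathD_N\Delta_j\psi(x)-\mathD\Delta_j\psi(x)=\varepsilon^{-1}\int_\Z R_j(x,y)\,\nu(\mathd y)$, so that $\|\Delta_j(\mathD_N\psi-\mathD\psi)\|_{L^\infty}\lesssim\varepsilon\,\|\mathD^2\Delta_j\psi\|_{L^\infty}\int_\Z y^2|\nu|(\mathd y)\lesssim\varepsilon\,2^{j(2-\beta)}\|\psi\|_\beta$ (for $j=-1$ both bounds hold with $2^j$ replaced by a $\beta$-dependent constant). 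Taking the minimum of the two bounds yields $\|\Delta_j(\mathD_N\psi-\mathD\psi)\|_{L^\infty}\lesssim 2^{j(1-\beta)}\min\{1,\varepsilon 2^j\}\|\psi\|_\beta\le 2^{-j(\beta-1-\delta)}\varepsilon^\delta\|\psi\|_\beta$ for every $\delta\in[0,1]$, and since $\varepsilon\simeq N^{-1}$, summing over $j$ gives the first inequality.

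For the commutator estimate I would argue block by block with $f=S_{j-1}\varphi$ and $g=\Delta_j\psi$. Because $f$ is spectrally supported in a ball of radius $\lesssim 2^j$ and $g$ in an annulus $2^j\CA$, the functions $fg$ and $f\,\mathD_N g$ (and hence their difference, using that $\mathD_N$ preserves spectral supports) are spectrally supported in a fixed dilate $2^j\tilde{\CA}$ of an annulus, so by the standard characterisation of Hölder--Besov spaces it suffices to show $\|\mathD_N(fg)-f\,\mathD_N g\|_{L^\infty}\lesssim 2^{-j(\alpha+\beta-1)}\|\varphi\|_\alpha\|\psi\|_\beta$ and sum over $j$. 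One has $(\mathD_N(fg)-f\,\mathD_N g)(x)=\varepsilon^{-1}\int_\Z(f(x+\varepsilon y)-f(x))\,g(x+\varepsilon y)\,\nu(\mathd y)$, and bounding $|f(x+\varepsilon y)-f(x)|\le\varepsilon|y|\,\|\mathD f\|_{L^\infty}$ gives $|(\mathD_N(fg)-f\,\mathD_N g)(x)|\le\|\mathD S_{j-1}\varphi\|_{L^\infty}\|\Delta_j\psi\|_{L^\infty}\int_\Z|y|\,|\nu|(\mathd y)$. Since $\alpha\in(0,1)$, $\|\mathD S_{j-1}\varphi\|_{L^\infty}\le\sum_{i<j-1}\|\mathD\Delta_i\varphi\|_{L^\infty}\lesssim\sum_{i<j-1}2^{i(1-\alpha)}\|\varphi\|_\alpha\lesssim 2^{j(1-\alpha)}\|\varphi\|_\alpha$, and $\|\Delta_j\psi\|_{L^\infty}\lesssim 2^{-j\beta}\|\psi\|_\beta$; multiplying yields exactly the weight $2^{-j(\alpha+\beta-1)}$, which finishes the argument.

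The computation is entirely routine and in fact easier than the Laplacian case: the only bookkeeping that must be done carefully is the tracking of spectral supports, so that the Besov characterisation can be invoked for the commutator term. In contrast to the discrete Laplacian, where the cancellation $\int y\,\pi(\mathd y)=0$ is essential to avoid a divergence, here only one derivative is involved and the naive commutator bound already produces the correct regularity exponent $\alpha+\beta-1$, so no delicate first-order cancellation in $\nu$ is needed. I do not foresee a genuine obstacle.
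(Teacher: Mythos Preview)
Your proposal is correct and is precisely the approach the paper intends: the paper's proof consists of the single sentence ``The proof is the same as the one of Lemma~\ref{lem:discrete laplacian}, and we omit it,'' and you have carried out exactly that analogy. Your observation that the commutator bound is in fact simpler here---because with only $\varepsilon^{-1}$ in front, the first-order Taylor estimate already suffices and no splitting off of the linear term is needed---is correct and a useful clarification.
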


The proof is the same as the one of Lemma~\ref{lem:discrete laplacian}, and we omit it.

\paragraph{Estimates for the bilinear form.}

Let us define paraproduct and resonant term with respect to $B_N$:
\[ B_N (\varphi \para \psi) = \sum_j B_N (S_{j - 1} \varphi, \Delta_j \psi),
   \hspace{2em} B_N (\varphi \lpara \psi) = \sum_j B_N (\Delta_j \varphi, S_{j
   - 1} \psi), \]
\[ B_N (f \reso g) = \sum_{| i - j | \leqslant 1} B_N (\Delta_i f, \Delta_j g)
   . \]
\[ \  \]
We have the same estimates as for the usual product:

\begin{lemma}
  \label{lem:discrete product}Let ${\mu}$ satisfy (H$_h$). For any $\beta
  \in \mathbb{R}$ and $\delta \in [0, 1]$ we have
  \[ \| B_N (\varphi \para \psi) - \varphi \para \psi \|_{\beta - \delta}
     \lesssim N^{- \delta} \| \varphi \|_{L^{\infty}} \| \psi \|_{\beta}, \]
  and for $\alpha < 0$ furthermore
  \[ \|B_N (\varphi \para \psi) - \varphi \para \psi \|_{\alpha + \beta -
     \delta} \lesssim N^{- \delta} \| \varphi \|_{\alpha} \| \psi \|_{\beta} .
  \]
  For $\alpha + \beta - \delta > 0$ we have
  \[ \|B_N (\varphi \reso \psi) - \varphi \reso \psi \|_{\alpha + \beta -
     \delta} \lesssim N^{- \delta} \| \varphi \|_{\alpha} \| \psi \|_{\beta} .
  \]
\end{lemma}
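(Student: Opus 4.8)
\textbf{Proof plan for Lemma~\ref{lem:discrete product}.}
The strategy is to compare $B_N(\varphi \para \psi)$, $B_N(\varphi \reso \psi)$ with the corresponding continuous operations exactly as in the proof of Lemma~\ref{lem:discrete laplacian} and Lemma~\ref{lem:discrete derivative}: freeze the Littlewood--Paley block, expand $B_N(\Delta_i\varphi, \Delta_j\psi)(x)$ as a double integral against $\mu$, and Taylor-expand the two arguments around $x$. The crucial structural input is hypothesis (H$_h$): $h(0,0)=1$, so that $B_N(\varphi,\psi)$ really approximates the pointwise product, and the symmetry ${\mu}(A\times B)={\mu}(B\times A)$ together with the finite first moment of ${\mu}$ on $\Z^2$, which is what will let us gain one factor of $\varepsilon = 2\pi/N$ per Taylor step.

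First I would treat the paraproduct term. Write
\[
   B_N(\varphi\para\psi) - \varphi\para\psi = \sum_j \big( B_N(S_{j-1}\varphi,\Delta_j\psi) - S_{j-1}\varphi\,\Delta_j\psi \big),
\]
and note that each summand has Fourier support in a ball $2^j\CB$ (the $B_N$-paraproduct, like the usual one, only shifts frequencies by a bounded amount relative to the $\Delta_j$ factor, because the measure $\mu$ acts by translation). Hence it suffices to bound each block in $L^\infty$ with a factor $N^{-\delta}2^{-j(\beta-\delta)}$ (resp.\ $N^{-\delta}2^{-j(\alpha+\beta-\delta)}$ when $\alpha<0$). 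For a fixed block we write
\[
   B_N(S_{j-1}\varphi,\Delta_j\psi)(x) - S_{j-1}\varphi(x)\Delta_j\psi(x) = \int_{\Z^2}\big( S_{j-1}\varphi(x+\varepsilon y)\Delta_j\psi(x+\varepsilon z) - S_{j-1}\varphi(x)\Delta_j\psi(x)\big)\,{\mu}(\mathd y,\mathd z),
\]
using ${\mu}(\Z^2)=1$ to subtract the constant. For the estimate with loss $\delta=1$ one Taylor-expands both factors to first order; the zeroth order cancels, and the first-order terms are bounded using $\|\mathD S_{j-1}\varphi\|_{L^\infty}\lesssim 2^{j(1-\alpha)}\|\varphi\|_\alpha$ (or $\lesssim 2^j\|\varphi\|_{L^\infty}$ for the first bound) and $\|\mathD \Delta_j\psi\|_{L^\infty}\lesssim 2^{j(1-\beta)}\|\psi\|_\beta$, each coming with a factor $\varepsilon\simeq N^{-1}$ and a factor $\int_{\Z^2}(|y|+|z|)\,{\mu}(\mathd y,\mathd z)<\infty$ from (H$_h$). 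Interpolating this $\delta=1$ estimate with the trivial $\delta=0$ bound $\|B_N(\varphi\para\psi)-\varphi\para\psi\|_\beta\lesssim \|\varphi\|_{L^\infty}\|\psi\|_\beta$ (which follows from Lemma~\ref{thm:paraproduct} plus a crude bound $\|B_N(S_{j-1}\varphi,\Delta_j\psi)\|_{L^\infty}\lesssim \|S_{j-1}\varphi\|_{L^\infty}\|\Delta_j\psi\|_{L^\infty}$, using that ${\mu}$ is a probability measure) gives the claim for all $\delta\in[0,1]$.

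The resonant term is handled the same way: for $|i-j|\le 1$,
\[
   B_N(\Delta_i\varphi,\Delta_j\psi)(x) - \Delta_i\varphi(x)\Delta_j\psi(x) = \int_{\Z^2}\big(\Delta_i\varphi(x+\varepsilon y)\Delta_j\psi(x+\varepsilon z) - \Delta_i\varphi(x)\Delta_j\psi(x)\big)\,{\mu}(\mathd y,\mathd z),
\]
and a first-order Taylor expansion of the two factors produces a gain $\varepsilon$ times $2^{i(1-\alpha)}2^{j\cdot(-\beta)} + 2^{-i\alpha}2^{j(1-\beta)}\lesssim 2^{i(1-\alpha-\beta)}$ (using $i\sim j$), so that summing over $i$ converges precisely when $\alpha+\beta-1>0$, i.e.\ $\alpha+\beta-\delta>0$ with $\delta=1$; the Fourier support of $B_N(\Delta_i\varphi,\Delta_j\psi)$ lies in a ball $2^i\CB$, so one gets $\CC^{\alpha+\beta-1}$ regularity. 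Interpolating again with the $\delta=0$ endpoint $\|B_N(\varphi\reso\psi)-\varphi\reso\psi\|_{\alpha+\beta}\lesssim\|\varphi\|_\alpha\|\psi\|_\beta$ (valid for $\alpha+\beta>0$ by Lemma~\ref{thm:paraproduct}) yields the full range $\delta\in[0,1]$ with $\alpha+\beta-\delta>0$. The one point requiring a little care — and the main obstacle — is the interpolation step: the $\delta=0$ and $\delta=1$ bounds hold in slightly different regimes of $(\alpha,\beta)$, so one must check that for the stated range $\alpha+\beta-\delta>0$ there is always an admissible pair of endpoints to interpolate between (splitting into the cases $\alpha+\beta>0$ and $\alpha+\beta\le 0<\alpha+\beta-\delta+1$ as needed), and likewise for the paraproduct bound with $\alpha<0$ one interpolates the $\delta=1$ estimate with the $\delta=0$ bound from \eqref{eq:para-2}. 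Everything else is the routine Taylor-expansion-in-$\varepsilon$ bookkeeping already carried out for $\Delta_N$ and $\mathD_N$.
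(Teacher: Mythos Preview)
Your proposal is correct and follows essentially the same route as the paper. The paper's proof is more terse but rests on the identical two observations you isolate: (i) $\CF B_N(\Delta_i\varphi,\Delta_j\psi)$ and $\CF(\Delta_i\varphi\,\Delta_j\psi)$ have the \emph{same} Fourier support (so the annulus/ball structure is preserved exactly, which is slightly stronger than your ``ball'' formulation and is what one needs for the $\alpha<0$ paraproduct bound), and (ii) the blockwise difference bound $\|B_N(\Delta_i\varphi,\Delta_j\psi)-\Delta_i\varphi\,\Delta_j\psi\|_{L^\infty}\lesssim N^{-1}(2^i+2^j)\|\Delta_i\varphi\|_{L^\infty}\|\Delta_j\psi\|_{L^\infty}$ coming from the first-order Taylor expansion you describe; interpolating this with the trivial bound $\|B_N(\Delta_i\varphi,\Delta_j\psi)\|_{L^\infty}\le\|\Delta_i\varphi\|_{L^\infty}\|\Delta_j\psi\|_{L^\infty}$ gives the full range of $\delta$, and the interpolation step you flag as the main obstacle is in fact routine here since both endpoints are $L^\infty$ bounds on the same block.
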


\begin{proof}
  It suffices to note that $\CF B_N (\Delta_i \varphi, \Delta_j \psi)$ and
  $\CF (\Delta_i \varphi \Delta_j \psi)$ have the same support and that $\|
  B_N (\Delta_i \varphi, \Delta_j \psi) \|_{L^{\infty}} \leqslant \| \Delta_i
  \varphi \|_{L^{\infty}} \| \Delta_j \psi \|_{L^{\infty}}$, whereas
  \[
     \| B_N (\Delta_i \varphi, \Delta_j \psi) - \Delta_i \varphi \Delta_j \psi \|_{L^{\infty}} \lesssim N^{- 1} (2^i + 2^j) \| \Delta_i \varphi \|_{L^{\infty}} \| \Delta_j \psi \|_{L^{\infty}}.
  \]
\end{proof}

To invoke our commutator estimate, we have to pass from $B_N (\cdummy \para \cdummy)$ to the usual paraproduct, which can be done using the following commutator lemma. Here we write
\[
   \overline{B}_N \varphi = \overline{B}_N (\varphi) = B_N (\varphi, 1) = B_N(1, \varphi).
\]
   
\begin{lemma}\label{lem:bilinear paraproduct to normal}
  Let ${\mu}$ satisfy (H$_h$). Let $\alpha < 1$, $\beta \in \R$, and $\varphi \in \CC^{\alpha}$, $\PC_N \psi \in \CC^{\beta}$. Then
  \[
     \left\| B_N (\varphi \para \PC_N \psi) - \varphi \para \overline{B}_N  (\PC_N \psi) \right\|_{\alpha + \beta} \lesssim \| \varphi \|_{\alpha} \| \PC_N \psi \|_{\beta}.
  \]
\end{lemma}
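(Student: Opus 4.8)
The idea is to write the difference blockwise and to replace, at the level of each Littlewood--Paley piece, the bilinear form $B_N(S_{j-1}\varphi,\Delta_j\PC_N\psi)$ by $S_{j-1}\varphi \cdot \overline B_N(\Delta_j\PC_N\psi)$, estimating the error by a discrete Taylor expansion in the low-frequency argument. Concretely, by definition
\[
   B_N(\varphi\para \PC_N\psi) - \varphi\para\overline B_N(\PC_N\psi) = \sum_j \Big( B_N(S_{j-1}\varphi,\Delta_j\PC_N\psi) - S_{j-1}\varphi\cdot \overline B_N(\Delta_j\PC_N\psi)\Big),
\]
and since $\overline B_N(\Delta_j\PC_N\psi)(x) = \int_{\Z^2} \Delta_j\PC_N\psi(x+\varepsilon z_2)\,\mu(\dd z_1,\dd z_2)$, the $j$-th summand equals
\[
   \int_{\Z^2} \big(S_{j-1}\varphi(x+\varepsilon z_1) - S_{j-1}\varphi(x)\big)\,\Delta_j\PC_N\psi(x+\varepsilon z_2)\,\mu(\dd z_1,\dd z_2).
\]
Since $\tmop{supp}(\CF S_{j-1}\varphi)$ and $\tmop{supp}(\CF \Delta_j\PC_N\psi)$ are both contained in balls of radius $\lesssim 2^j$, and $\mu(A\times B) = \mu(B\times A)$ forces $\int z_1\,\mu(\dd z_1,\dd z_2) = \int z_2\,\mu(\dd z_1,\dd z_2)$ to be finite, we estimate $|S_{j-1}\varphi(x+\varepsilon z_1)-S_{j-1}\varphi(x)| \le \varepsilon|z_1|\,\|\mathD S_{j-1}\varphi\|_{L^\infty} \lesssim \varepsilon|z_1| 2^{j(1-\alpha)}\|\varphi\|_\alpha$ (using $\alpha\in(0,1)$), and $\|\Delta_j\PC_N\psi\|_{L^\infty}\lesssim 2^{-j\beta}\|\PC_N\psi\|_\beta$. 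Because $\mu$ has a finite first moment by (H$_h$), integrating against $\mu$ gives $\|\cdot\|_{L^\infty}\lesssim \varepsilon 2^{j(1-\alpha-\beta)}\|\varphi\|_\alpha\|\PC_N\psi\|_\beta \simeq N^{-1}2^{j(1-\alpha-\beta)}\|\varphi\|_\alpha\|\PC_N\psi\|_\beta$.

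It then remains to sum over $j$ in the $\CC^{\alpha+\beta}$ norm. Because each $j$-th summand has Fourier support in an annulus $2^j\CA$ (inherited from $\Delta_j\PC_N\psi$, whose spectrum dominates; here $\alpha<1$ ensures the low-frequency factor does not spoil the annular localization, exactly as in the proof of the usual paraproduct estimate), we may apply the standard Besov summation criterion: $\|\sum_j f_j\|_{\alpha+\beta}\lesssim \sup_j 2^{j(\alpha+\beta)}\|f_j\|_{L^\infty}$ when $\tmop{supp}(\CF f_j)\subset 2^j\CA$. This yields
\[
   \big\|B_N(\varphi\para \PC_N\psi) - \varphi\para\overline B_N(\PC_N\psi)\big\|_{\alpha+\beta} \lesssim \sup_j 2^{j(\alpha+\beta)} N^{-1} 2^{j(1-\alpha-\beta)}\|\varphi\|_\alpha\|\PC_N\psi\|_\beta = \sup_j N^{-1}2^{j}\|\varphi\|_\alpha\|\PC_N\psi\|_\beta.
\]
Here the point is that $\PC_N\psi$ is spectrally supported in $[-N/2,N/2]$, so only $j$ with $2^j\lesssim N$ contribute, and hence $\sup_j N^{-1}2^j \lesssim 1$, giving the claimed bound $\lesssim \|\varphi\|_\alpha\|\PC_N\psi\|_\beta$. (Strictly speaking one should also treat separately the finitely many low blocks coming from $S_{-1}$, but these are handled trivially by the $L^\infty$ bound on $\overline B_N$.)

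The main obstacle is purely bookkeeping: one has to verify that the partial-summation argument in $\CC^{\alpha+\beta}$ is legitimate, i.e.\ that each corrected block is genuinely annulus-supported, which uses $\alpha<1$ together with the fact that multiplication by the fixed, slowly varying factor $S_{j-1}\varphi$ does not destroy the spectral localization of $\Delta_j\PC_N\psi$ --- this is the standard mechanism behind Bony's paraproduct and requires no new idea. A secondary subtlety is keeping track of the restriction $\PC_N\psi \in \CC^\beta$ (rather than $\psi\in\CC^\beta$): since $\overline B_N$ and the paraproduct only ever see $\PC_N\psi$, everything is consistent, but one must resist the temptation to pull $\PC_N$ outside and incur the logarithmic loss of Lemma~\ref{lem:cutoff bound}. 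No renormalization or stochastic input is needed, and the gain of the full factor $N^{-1}$ (before being absorbed by the spectral cutoff) is what makes the estimate clean.
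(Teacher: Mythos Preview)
Your proof is correct and follows essentially the same route as the paper: write the difference blockwise, express each block as $\int (S_{j-1}\varphi(x+\varepsilon z_1)-S_{j-1}\varphi(x))\,\Delta_j\PC_N\psi(x+\varepsilon z_2)\,\mu(\dd z_1,\dd z_2)$, bound by a first-order increment estimate on $S_{j-1}\varphi$ together with the finite first moment of $\mu$, and then absorb the resulting factor $\varepsilon 2^j$ using the spectral cutoff $\Delta_j\PC_N\psi\equiv 0$ for $2^j\gg N$. One cosmetic point: your parenthetical ``using $\alpha\in(0,1)$'' is too restrictive --- the bound $\|\mathD S_{j-1}\varphi\|_{L^\infty}\lesssim 2^{j(1-\alpha)}\|\varphi\|_\alpha$ follows for all $\alpha<1$ from the geometric sum $\sum_{i<j-1}2^{i(1-\alpha)}$, and the lemma is stated for $\alpha<1$.
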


\begin{proof}
  By spectral support properties it suffices to control
  \begin{align*}
     &\left| \left( B_N (S_{j - 1} \varphi, \Delta_j \PC_N \psi) - S_{j - 1} \varphi \overline{B}_N (\Delta_j \PC_N \psi) \right) (x) \right| \\
     &\hspace{80pt} = \left| \int_{\R^2} (S_{j - 1} \varphi (x + \varepsilon y) - S_{j - 1} \varphi (x)) \Delta_j \PC_N \psi (x + \varepsilon z){\mu} (\mathd y, \mathd z) \right| \\
     &\hspace{80pt} \lesssim \varepsilon 2^{j (1 - \alpha - \beta)} \| \varphi \|_{\alpha} \| \PC_N \psi \|_{\beta}.
  \end{align*}
  But now $\Delta_j \PC_N \equiv 0$ unless $2^j \lesssim N$, and therefore we may estimate $\varepsilon 2^j \lesssim 1$.
\end{proof}

\begin{lemma}\label{lem:bilinear form reso}
   Let ${\mu}$ satisfy (H$_h$). Let $\alpha + \beta + \gamma > 0$, $\beta+\gamma<0$, assume that $\alpha \in (0,1)$, and let $\varphi \in \CC^{\alpha}$, $\PC_N \psi \in \CC^{\beta}$, $\chi \in \CC^\gamma$. Define the operator
   \begin{align*}
      A_N^{\psi, \chi}(\varphi) := &\, \int \big( \Pi_N (\Pi_N ( \varphi \para \tau_{-\varepsilon y} \PC_N \psi )) \reso \tau_{-\varepsilon z} \chi ) - \PC_N((\varphi \para \tau_{-\varepsilon y} \PC_N \psi ) \reso \tau_{-\varepsilon z} \chi) \big) \mu(\dd y, \dd z).
   \end{align*}
   Then for all $\delta \in [0,\alpha + \beta + \gamma)$
   \begin{align*}
       &\| \Pi_N B_N( \Pi_N  (\varphi \para \PC_N \psi) \reso \chi) - \PC_N C(\varphi, \PC_N \psi, \chi) - \PC_N (\varphi B_N( \PC_N \psi \reso \chi)) \|_{\beta+\gamma} \\
       &\hspace{50pt} \lesssim N^{-\delta} \log(N)^2 \| \varphi \|_{\alpha} \| \PC_N \psi \|_{\beta} \| \gamma \|_\gamma + \| A_N^{\psi, \chi}\|_{L(\CC^\alpha,\CC^{\beta+\gamma})} \| \varphi \|_\alpha ,
   \end{align*}
   where $C$ is the commutator of Remark~\ref{rmk:paracontrolled commutator} and $L(U,V)$ denotes the space of bounded operators between the Banach spaces $U$ and $V$, equipped with the operator norm.
\end{lemma}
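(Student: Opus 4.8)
The strategy is to mimic the continuous computation from Remark~\ref{rmk:paracontrolled commutator}, namely the identity $(\varphi\para\psi)\reso\chi = C(\varphi,\psi,\chi) + \varphi(\psi\reso\chi)$, but now with every product replaced by its $B_N$-analogue and with the various Fourier cutoffs $\Pi_N$ and $\PC_N$ inserted. The error we collect while rearranging the $B_N$-products into ordinary products should be controllable using the commutator estimates proved above, namely Lemma~\ref{lem:discrete product}, Lemma~\ref{lem:bilinear paraproduct to normal}, together with the cutoff estimates Lemma~\ref{lem:cutoff bound}, Lemma~\ref{lem:periodic cutoff bound} and Remark~\ref{rmk:PiN on product}; the one genuinely new contribution, which cannot be handled by these soft arguments, is packaged into the random operator $A_N^{\psi,\chi}$.

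\textbf{Step 1: unfold the definition of $B_N$.} Write out $\Pi_N B_N(\Pi_N(\varphi\para\PC_N\psi)\reso\chi)$ using the integral representation $B_N(F,G)(x)=\int F(x+\varepsilon y)G(x+\varepsilon z)\mu(\dd y,\dd z)$, applied with $F = \Pi_N(\varphi\para\PC_N\psi)$ and $G = \chi$. By definition of the resonant product with respect to $B_N$, this becomes $\int \Pi_N\big((\text{shift by }\varepsilon y\text{ of }\Pi_N(\varphi\para\PC_N\psi))\reso(\text{shift by }\varepsilon z\text{ of }\chi)\big)\mu(\dd y,\dd z)$, after commuting the translations with the Littlewood--Paley blocks (translations are Fourier multipliers, so they commute with $\Delta_j$ and with $\reso$). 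This is exactly the structure appearing inside $A_N^{\psi,\chi}$, except that in $A_N^{\psi,\chi}$ the bracket carries a factor $\PC_N$ rather than $\Pi_N$ on the outer layer and on the resonant product. So the first move is to replace these $\Pi_N$'s by $\PC_N$'s up to a remainder. Since $\varphi\para\tau_{-\varepsilon y}\PC_N\psi$ is spectrally supported in a set of the form $[-cN,cN]$ (by Remark~\ref{rmk:PiN on product}, using that $\PC_N\psi$ has spectrum in $[-N/2,N/2]$), while $\tau_{-\varepsilon z}\chi$ has no such restriction, the resonant product has spectrum in $[-2N,2N]$ roughly, and Lemma~\ref{lem:periodic cutoff bound} gives $\|\Pi_N G - \PC_N G\|\lesssim N^{-\delta}\log N\,\|G\|$ for the relevant $G$. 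Collecting the $N^{-\delta}\log(N)^2$ factors (the square of the log coming from two applications of the cutoff bounds) gives the first error term on the right-hand side.

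\textbf{Step 2: rearrange the inner $B_N$-paraproduct and apply the classical commutator.} At this point the expression is, up to the advertised error, $\PC_N$ applied to $\int \big((\varphi\para\tau_{-\varepsilon y}\PC_N\psi)\reso\tau_{-\varepsilon z}\chi\big)\mu(\dd y,\dd z)$ plus $A_N^{\psi,\chi}(\varphi)$ by definition of $A_N^{\psi,\chi}$. Wait --- more carefully: the difference between the $\Pi_N$-version and the $\PC_N$-version is precisely $A_N^{\psi,\chi}(\varphi)$ (that is how $A_N^{\psi,\chi}$ is defined), so after Step 1 we are left with $\PC_N$ of the $\mu$-averaged $\PC_N$-expression plus $A_N^{\psi,\chi}(\varphi)$, and the contribution of $A_N^{\psi,\chi}$ is estimated by its operator norm times $\|\varphi\|_\alpha$, which is the second error term. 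It then remains to analyze $\PC_N\int\big((\varphi\para\tau_{-\varepsilon y}\PC_N\psi)\reso\tau_{-\varepsilon z}\chi\big)\mu(\dd y,\dd z)$. Here I would shift back: replace $\tau_{-\varepsilon y}\PC_N\psi$ by $\PC_N\psi$ and $\tau_{-\varepsilon z}\chi$ by $\chi$, incurring again $O(N^{-1}(2^i+2^j))$-type errors exactly as in the proof of Lemma~\ref{lem:discrete product}, which are absorbed into the $N^{-\delta}\log(N)^2$ term since the frequencies are $\lesssim N$. Then the integral collapses (as $\mu$ is a probability measure), and we apply the classical identity $(\varphi\para\PC_N\psi)\reso\chi = C(\varphi,\PC_N\psi,\chi) + \varphi(\PC_N\psi\reso\chi)$; the first summand gives $\PC_N C(\varphi,\PC_N\psi,\chi)$. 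For the second, we must pass from $\varphi(\PC_N\psi\reso\chi)$ back to $\varphi B_N(\PC_N\psi\reso\chi)$ inside $\PC_N$, which costs $\|B_N(\PC_N\psi\reso\chi)-\PC_N\psi\reso\chi\|\lesssim N^{-\delta}\|\PC_N\psi\|_\beta\|\chi\|_\gamma$ times $\|\varphi\|_\alpha$ by Lemma~\ref{lem:discrete product} and the product estimate in $\CC^\alpha$, again absorbed.

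\textbf{Main obstacle.} The bookkeeping of which cutoff ($\Pi_N$ vs.\ $\PC_N$) sits where and the careful tracking of spectral supports --- so that Remark~\ref{rmk:PiN on product} and the various cutoff lemmas apply with the correct sign of regularity and produce genuinely decaying factors $N^{-\delta}$ rather than merely bounded ones --- is the delicate part. The translations $\tau_{-\varepsilon y}$, $\tau_{-\varepsilon z}$ inside the $\mu$-integral must be handled uniformly in $y,z$, which is where the finite first moment of $\mu$ in (H$_h$) enters; the shifts are bounded operators on every Besov space but they interact with the frequency truncations, and one must check that commuting them past $\Delta_j$ and the cutoffs costs at worst the factors already present. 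The truly non-elementary object, $A_N^{\psi,\chi}$, is deliberately \emph{not} estimated here: its operator-norm bound (uniform in $N$, or with a controlled loss) requires genuine stochastic computations and is the content of Theorem~\ref{thm:random operator}. So the proof of this lemma is purely deterministic harmonic analysis, and its only role is to isolate $A_N^{\psi,\chi}$ cleanly; the hard work is postponed.
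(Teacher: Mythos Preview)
Your overall plan is right in spirit, but Step~2 contains a genuine gap in the order of operations. You propose to first ``shift back'' the translations on $\PC_N\psi$ and $\chi$ in the expression $(\varphi\para\tau_{-\varepsilon y}\PC_N\psi)\reso\tau_{-\varepsilon z}\chi$, and only afterwards apply the commutator identity. But the resonant product $(\varphi\para g)\reso h$ is not uniformly bounded in $\CC^{\beta+\gamma}$ when $\beta+\gamma<0$: the resonant estimate requires the sum of the regularities of its two arguments to be positive, and here those arguments sit in $\CC^\beta$ and $\CC^\gamma$. So you cannot control the translation increment of this object, and for the same reason your final step --- passing from $\varphi(\PC_N\psi\reso\chi)$ back to $\varphi\, B_N(\PC_N\psi\reso\chi)$ via Lemma~\ref{lem:discrete product} --- also fails, since that lemma needs $\beta+\gamma-\delta>0$.

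The paper's proof reverses the order: apply the commutator identity \emph{with the translations still in place}, writing $(\varphi\para\tau_{-\varepsilon y}\PC_N\psi)\reso\tau_{-\varepsilon z}\chi = C(\varphi,\tau_{-\varepsilon y}\PC_N\psi,\tau_{-\varepsilon z}\chi) + \varphi\,(\tau_{-\varepsilon y}\PC_N\psi\reso\tau_{-\varepsilon z}\chi)$. Integrating against $\mu$ makes the second summand collapse \emph{exactly} to $\varphi\, B_N(\PC_N\psi\reso\chi)$, with no error and no regularity condition needed --- it cancels identically against the target term. Only the commutator $C$, which is a bounded trilinear map into $\CC^{\alpha+\beta+\gamma}$, then has to be ``shifted back'', and there the estimate $\|\tau_{-\varepsilon y}u-u\|_{\kappa-\delta}\lesssim N^{-\delta}|y|^\delta\|u\|_\kappa$ applies cleanly. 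There is also a step you glossed over: when you unfold $B_N$, the shift $\tau_{-\varepsilon y}$ acts on the whole paraproduct, hence on $\varphi$ as well as on $\PC_N\psi$; you silently wrote $\varphi\para\tau_{-\varepsilon y}\PC_N\psi$ afterwards. Replacing $\tau_{-\varepsilon y}\varphi$ by $\varphi$ is a separate error term, handled by the argument of Lemma~\ref{lem:bilinear paraproduct to normal}, and it is this term that carries the factor $N^{-\delta}\log(N)^2$ from the two appearances of $\Pi_N$.
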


\begin{proof}
   We decompose the difference as follows:
   \begin{align*}
      &\| \Pi_N B_N( \Pi_N  ( \varphi \para \PC_N \psi) \reso \chi) - \PC_N C(\varphi, \PC_N \psi, \chi) - \PC_N(\varphi B_N( \PC_N \psi \reso \chi)) \|_{\beta+\gamma} \\
      &\hspace{15pt}\leqslant \Big\| \Pi_N \Big( B_N( \Pi_N ( \varphi \para \PC_N \psi ) \reso \chi) - \int (\Pi_N ( \varphi \para \tau_{-\varepsilon y} \PC_N \psi )) \reso \tau_{-\varepsilon z} \chi \mu(\dd y, \dd z) \Big) \Big\|_{\beta+\gamma} \\
      &\hspace{15pt} \quad + \| A_N^{\psi,\chi} (\varphi) \|_{\beta+\gamma-\delta} + \int \| \PC_N(C(\varphi, \tau_{-\varepsilon y} \PC_N \psi, \tau_{-\varepsilon z} \chi) - C(\varphi, \PC_N \psi, \chi) ) \|_{\beta+\gamma} \mu(\dd y, \dd z).
   \end{align*}
   For the first term on the right hand side the same arguments as in the proof of Lemma~\ref{lem:bilinear paraproduct to normal} show that for all $\delta \in [0,1]$ with $\alpha + \beta + \gamma - \delta > 0$
  \begin{align*}
     &\left\| \Pi_N \Big(B_N (\Pi_N (\varphi \para \PC_N \psi))\reso \chi) - \int (\Pi_N (\varphi \para \tau_{-\varepsilon y}\PC_N \psi))\reso \tau_{-\varepsilon z}\chi) \mu(\dd y, \dd z)\Big) \right\|_{\alpha + \beta + \gamma - \delta} \\
     &\hspace{130pt} \lesssim N^{-\delta} \log(N)^2 \| \varphi \|_{\alpha} \| \PC_N \psi \|_{\beta} \| \chi \|_\gamma
  \end{align*}
  (the factor $\log(N)^2$ is due to the operator $\Pi_N$ which appears twice, see Lemma~\ref{lem:periodic cutoff bound} and Remark~\ref{rmk:PiN on product}). The second term is trivial to bound, and for the last term we simply use that
  \[
      \|\tau_{-\varepsilon y} u - u \|_{\kappa - \delta} \lesssim N^{-\delta} |y|^\delta \| u \|_{\kappa}
  \]
  whenever $\kappa \in \R$, $u \in \CC^\kappa$, and $\delta \in [0,1]$.
\end{proof}

\subsection{Paracontrolled analysis of the discrete equation}\label{sec:paracontrolled SS}

We now have all tools at hand that are required to describe the paracontrolled structure of the solution $u_N$ to equation~\eqref{eq:discrete sbe} which as we recall is given by
\[
   \LL_N u_N = \mathD_N \Pi_N B_N (u_N, u_N) + \mathD_N \PC_N \xi, \hspace{2em} u_N (0) = \PC_N u_0^N.
\]
We set
\begin{equation*}
     \X_N(\xi) = (X_N (\xi), X^{\zzone}_N(\xi), X^{\zztwo}_N(\xi), X^{\zzthreereso}_N(\xi), X^{\zzfour}_N (\xi), B_N(Q_N \reso X_N) (\xi)), 
  \end{equation*}
  where
  \begin{equation}\label{eq:discrete data}
    \begin{array}{rll} 
      \LL X_N (\xi) & = & \mathD_N \PC_N \xi,\\
      \LL X^{\zzone}_N (\xi) & = & \mathD_N \Pi_N B_N (X_N(\xi),X_N(\xi)),\\
      \LL X^{\zztwo}_N (\xi) & = & \mathD_N \Pi_N B_N (X_N (\xi), X^{\zzone}_N (\xi)),\\
      \LL X^{\zzthreereso}_N (\xi) & = & \mathD_N \Pi_N B_N(X^{\zztwo}_N(\xi) \reso X_N(\xi)), \\
      \LL X^{\zzfour}_N (\xi) & = & \mathD_N \Pi_N B_N (X^{\zzone}_N (\xi), X^{\zzone}_N (\xi)),\\
      \LL Q_N (\xi) & = & \mathD_N B_N (X_N (\xi), 1),
    \end{array}
  \end{equation}
    all with zero initial conditions except $X_N(\xi)$ for which we choose the ``stationary'' initial condition
    \[
       X_N(\xi)(0) = \int_{-\infty}^0 e^{-s f_\varepsilon | \cdot|^2}(\mathrm{D}) \mathD_N \PC_N \xi(s) \dd s.
    \]
As in Section~\ref{sec:rbe singular} we fix $\alpha \in (1/3,1/2)$ and $\beta \in (1-\alpha, 2\alpha)$. We will consider initial conditions that converge in $\CC^{-\beta}$, which means that we expect the solutions $(u_N)$ to converge in $C_T \CC^{-\beta}$ whenever $T>0$ and the stochastic data $(\X_N)$ converges in an appropriate sense. However, we have to be careful because while by now we know that no blow up can occur for solutions to the continuous Burgers equation, this is not so obvious in the discrete setting. So let $\zeta$ be a cemetery state and denote by $(u_N)$ the unique solution to~\eqref{eq:discrete sbe}, defined up to the blow up time $T^\ast_N = \inf\{ t \geqslant 0: \|u_N(t)\|_{L^\infty} = \infty\}$ and extended via $u_N|_{[T^\ast_N,\infty)} = \zeta$. Note that the spectrum of $u_N$ is contained in $[-N/2,N/2]$, and therefore all $\CC^\gamma$ norms for $\gamma \in \R$ are equivalent to its $L^\infty$ norm, so that we could as well have defined $T^\ast_N$ as the blow up time of $\| u_N(t)\|_{-\beta}$.

\begin{theorem}\label{thm:discrete burgers}
   Let $(\X_N)$ be as in~\eqref{eq:discrete data} and assume that the sequence is uniformly bounded in $C_T \CC^{\alpha-1} \times C_T \CC^{2\alpha-1} \times \LL_T^{\alpha} \times \LL_T^{2\alpha} \times \LL^{2\alpha}_T \times C_T \CC^{2\alpha-1}$ for all $T>0$, and converges to
   \[
      (X, X^\zzone, X^\zztwo + 2cQ, X^\zzthreereso + c Q^\zzone + 2c Q^{Q\reso X}, X^\zzfour, Q\reso X + c)
   \]
   in $C_T \CC^{\alpha-1} \times C_T \CC^{2\alpha-1} \times \LL_T^{\alpha} \times \LL_T^{2\alpha} \times \LL^{2\alpha}_T \times C([\delta,T], \CC^{2\alpha-1})$ for all $0< \delta < T$, where $c \in \R$,
   \[
      \X = (X, X^\zzone, X^\zztwo, X^\zzthreereso, X^\zzfour, Q\reso X) \in \X_{\rbe},
   \]
   and
  \[
     \LL Q^{\zzone} = \mathD X^\zzone, \qquad \LL Q^{Q\reso X} = \mathD (Q\reso X),
  \]
  both with $0$ initial condition. Assume also that the operator $A_N = A_N^{Q_N,X_N}$ in Lemma~\ref{lem:bilinear form reso} converges to 0 in $C_T(L(\CC^{\bar{\alpha}}, \CC^{2\bar{\alpha}-1}))$ for all $T>0$, $\bar{\alpha} \in (1/3,\alpha)$. Finally, let $(\PC_N u_0^N), u_0 \in \CC^{-\beta}$ and assume that $\lim_N \| \PC_N u_0^N - u_0\|_{-\beta} = 0$.
  
  Let $(u_N)$ be the solution to~\eqref{eq:discrete sbe}. Then $\lim_N d_{-\beta}(u_N,u) = 0$, where $u \in \CD_{\rbe,\X}^{\exp}$ (see Definition~\ref{def:paracontrolled singular}) is the unique solution to
  \begin{equation}\label{eq:sbe additional transport}
     \LL u = \mathD u^2 + 4c \mathD u + \mathD \xi, \qquad u(0) = u_0.
  \end{equation}
\end{theorem}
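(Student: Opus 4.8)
The plan is to follow the strategy already established for the continuous Burgers equation in Section~\ref{sec:burgers} and its singular-initial-condition refinement in Section~\ref{sec:rbe singular}, transplanting every step to the discrete setting using the preliminary estimates of Section~\ref{sec:preliminary sasamoto-spohn}. First I would set up the discrete paracontrolled ansatz. Expanding $u_N$ around $X_N$ and performing the same changes of variables as in~\eqref{eq:u-expansion}--\eqref{eq:u-ge-2}, but now keeping track of the operators $\Delta_N$, $\mathD_N$, $B_N$, $\Pi_N$, $\PC_N$, one writes
\[
   u_N = X_N + X_N^{\zzone} + 2 X_N^{\zztwo} + u_N' \mpara Q_N + u_N^{\sharp},
\]
with $u_N' = 2 u_N^Q + 4 X_N^{\zztwo}$ and $u_N^Q = u_N - X_N - X_N^{\zzone} - 2X_N^{\zztwo}$, exactly mirroring Assumption~(T,M). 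The key point is that the discrete nonlinearity $\mathD_N \Pi_N B_N(u_N,u_N)$ produces, after subtracting the explicit tree terms, a resonant contribution of the form $\Pi_N B_N(\Pi_N(u_N'\para Q_N)\reso X_N)$ which by Lemma~\ref{lem:bilinear form reso} equals $\PC_N C(u_N', Q_N, X_N) + \PC_N(u_N' B_N(Q_N \reso X_N)) + A_N(u_N') + (\text{lower order})$. Since $\X_N$ contains $B_N(Q_N\reso X_N)$ by assumption, and $A_N \to 0$, this term is controlled uniformly in $N$, giving the discrete analogue of the a priori bound of Lemma~\ref{lem:u sharp bound exp} (or Lemma~\ref{lem:burgers a priori} if one only wants regular initial data first).

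Second, I would prove a uniform-in-$N$ a priori estimate. Using the discrete Schauder estimates obtained by combining Lemma~\ref{lem:discrete heat flow} and Corollary~\ref{cor:discrete heat flow time} (valid because $u_N$ is spectrally supported in $[-N/2,N/2]$), together with Lemma~\ref{lem:discrete laplacian}, Lemma~\ref{lem:discrete derivative}, Lemma~\ref{lem:discrete product}, Lemma~\ref{lem:bilinear paraproduct to normal}, and Lemma~\ref{lem:modified paraproduct commutators}, one derives
\[
   \| u_N \|_{\CD^{\tmop{exp}}_{\tmop{rbe}}(\tau)} \lesssim M + \tau^{\gamma_{2\alpha} - \gamma_\beta}(1+M^2)\big(1 + \| u_N\|^2_{\CD^{\tmop{exp}}_{\tmop{rbe}}(\tau)}\big) + o(1),
\]
where the $o(1)$ collects the errors $N^{-\delta}\log N$ from the cutoff operators and $\|A_N\|$, all of which vanish as $N\to\infty$; the appearance of $4c\mathD u$ in the limit equation comes precisely from the fact that $B_N(Q_N\reso X_N)$ converges to $Q\reso X + c$ and $X_N^{\zztwo}$ converges to $X^{\zztwo} + 2cQ$, i.e. the renormalization-type shift spelled out in the hypotheses. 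This gives a deterministic short-time bound $\| u_N\|_{\CD^{\tmop{exp}}_{\tmop{rbe}}(\tau^\ast)} \leqslant C(1+M^2)$ for $\tau^\ast = \tau^\ast(M)$ independent of $N$, which in particular rules out blow-up of $u_N$ before $\tau^\ast$ for $N$ large; an iteration argument identical to the one in the proof of Theorem~\ref{thm:burgers general initial} then propagates this up to any time below the blow-up time of $\|u\|_{C_t\CC^{-\beta}}$, which by Corollary~\ref{cor:no explosion} (applied to the limit equation~\eqref{eq:sbe additional transport}, using Remark~\ref{rmk:explosion with nongradient initial} to handle the transport term $4c\mathD u$) is infinite.

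Third, I would prove convergence by running the difference estimate. Writing $z_N = u_N^{\sharp} - u^{\sharp}$, $y_N = u_N^Q - u^Q$ (with $u$ the solution of~\eqref{eq:sbe additional transport} embedded via the continuous paracontrolled structure and after applying $\PC_N$ to its data where needed), the multilinearity of all operators shows that $(y_N, z_N)$ satisfies a linear paracontrolled equation of the form $\LL z_N = \nabla(y_N^2) + 4c\nabla y_N + A_N^{\rbe}(y_N) + B_N^{\rbe}(z_N) + \nabla h_N$ with $z_N(0) = \PC_N u_0^N - u_0$, where the operators $A_N^{\rbe}, B_N^{\rbe}$ have operator norms bounded uniformly in $N$ and $I(\nabla h_N)\to 0$, exactly as in the proof of Theorem~\ref{thm:rbe continuous from data}; the source terms $\nabla h_N$ absorb all the discretization errors ($\X_N \to \X$, $A_N\to 0$, cutoff errors $N^{-\delta}\log N$). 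Repeating the fixed-point contraction on each interval $[0,\tau^\ast]$ gives $d^{\tmop{exp}}_{\rbe,m}(u_N,u)\to 0$, and then convergence of the explosion times $\tau_m(u_N)\to\tau_m(u)$ follows as in Theorem~\ref{thm:rbe continuous from data}, yielding $\lim_N d_{-\beta}(u_N,u)=0$. Finally one must check uniqueness of the paracontrolled solution to~\eqref{eq:sbe additional transport} in $\CD^{\tmop{exp}}_{\rbe,\X}$: this is the same argument as for Theorem~\ref{thm:burgers general initial}, since $4c\mathD u$ is a bounded linear term of regularity $\CC^{\alpha-2}$ that causes no new difficulty.

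The main obstacle is the bookkeeping around the bilinear form $B_N$ and the two cutoff operators $\Pi_N, \PC_N$: unlike in the continuous case, $B_N$ does not commute with paraproducts or with $\mathD_N$, and $\PC_N$ is not uniformly bounded on $\CC^\alpha$ (Lemma~\ref{lem:cutoff bound} only gives a $\log N$ loss on $L^\infty$). The delicate point is that the ``paracontrolled derivative'' $u_N'$ is genuinely $N$-dependent and not a fixed reference object, so Lemma~\ref{lem:bilinear form reso} must be applied with the random operator $A_N = A_N^{Q_N,X_N}$ acting on $u_N'$ itself; controlling this requires both the deterministic estimate of that lemma and the probabilistic input $\|A_N\|_{L(\CC^{\bar\alpha},\CC^{2\bar\alpha-1})}\to 0$ from Theorem~\ref{thm:random operator}, and one has to be careful that the $\bar\alpha < \alpha$ loss is compatible with the regularities needed to close the fixed point. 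Everything else is a matter of systematically replacing each continuous estimate by its discrete counterpart and tracking that the accumulated $N^{-\delta}\log N$ and $\log(N)^2$ factors still tend to zero.
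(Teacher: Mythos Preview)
Your proposal is correct and follows essentially the same route as the paper's proof. The paper is more explicit on two points you gloss over: the ansatz carries the projection, $u_N^Q = \Pi_N(u_N' \mpara Q_N) + u_N^\sharp$ (consistent with the inner $\Pi_N$ you already place in the resonant term), and the emergence of $4c\,\mathD u$ is verified by an explicit substitution of the shifted limits $\tilde X^{\zztwo} = X^{\zztwo} + 2cQ$, $\tilde X^{\zzthreereso} = X^{\zzthreereso} + cQ^{\zzone} + 2cQ^{Q\reso X}$, $\tilde\eta = Q\reso X + c$ into the limiting equation for $\tilde u^Q$, after which all correction terms collapse; the paper also remarks that the merely local-in-time convergence of $B_N(Q_N\reso X_N)$ on $[\delta,T]$ is harmless thanks to its uniform boundedness.
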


\begin{remark}
   According to Remark~\ref{rmk:explosion with nongradient initial}, equation~\eqref{eq:sbe additional transport} has a unique solution in $\CD_{\rbe,\X}^{\exp}$ which does not blow up. In particular $\lim_N T_N^\ast = \infty$, even if for fixed $N$ we cannot exclude the possibility of a blow up.
\end{remark}

\begin{proof}
    Throughout the proof we fix $\bar{\alpha} \in (1/3,\alpha)$ and we define $\gamma_\delta = (\beta + \delta)/2$ whenever $\delta \geqslant 0$. We would like to perform a paracontrolled analysis of the equation, working in spaces modeled after the $\CD_{\rbe}^{\exp}$ of Definition~\ref{def:paracontrolled singular}. For that purpose, we decompose the nonlinearity as follows:
    \begin{align*}
       \mathD_N \Pi_N B_N (u_N, u_N)  & = \LL_N ( X_N^{\zzone} + 2 X_N^{\zztwo} + 4 X_N^{\zzthreereso} + X_N^{\zzfour}) + 4 \mathD_N \Pi_N B_N (X_N^{\zztwo} \para X_N) \\
        &\qquad  + 4 \mathD_N \Pi_N B_N (X_N^{\zztwo} \lpara X_N) + 2 \mathD_N \Pi_N B_N (u_N^{Q}, X_N) \\
        &\qquad + 2 \mathD_N \Pi_N B_N ( X_N^\zzone, 2 X_N^{\zztwo} + u_N^Q) \\
        &\qquad + \mathD_N \Pi_N B_N (2 X_N^{\zztwo} + u_N^Q, 2 X_N^{\zztwo} + u_N^Q).
    \end{align*}
    The term $\mathD_N \Pi_N B_N (u_N^Q, X_N)$ can be further decomposed as
    \begin{align*}
       \mathD_N \Pi_N B_N (u_N^Q, X_N) & = \mathD_N \Pi_N B_N (u_N^Q \para X_N) + \mathD_N \Pi_N B_N (u_N^Q \lpara X_N) \\
       &\qquad + \mathD_N \Pi_N B_N (u_N^Q \reso X_N),
    \end{align*}
    and the critical term is of course $\mathD_N \Pi_N B_N (u_N^Q \reso X_N)$. Using Lemma~\ref{lem:discrete derivative}, Lemma~\ref{lem:bilinear paraproduct to normal}, Lemma~\ref{lem:cutoff bound}, and Remark~\ref{rmk:PiN on product} we have for all $T>0$
    \begin{align*}
       &\| \mathD_N \Pi_N B_N ((2 u_N^Q + 4 X^\zztwo_N) \para X_N) - \Pi_N ((2 u_N^Q + 4 X^\zztwo_N) \mpara \overline{B}_N (\mathD_N X_N)) \|_{\CM^{\gamma_{\bar{\alpha}}}_T \CC^{2 \bar{\alpha} - 2}} \\
       &\hspace{70pt} \lesssim \big(\| u_N^Q \|_{\LL^{\gamma_{\bar{\alpha}},\bar{\alpha}}_\infty(T)} + \|X^\zztwo_N\|_{\LL^{\alpha}_T}\big) \| X_N \|_{\alpha - 1}.
    \end{align*}
    However, the Fourier cutoff operator $\Pi_N$ does not commute with the paraproduct (at least not allowing for bounds that are uniform in $N$), and in particular $u^Q_N$ is not paracontrolled. Rather, we have
    \[
       u_N^Q = \Pi_N ( u'_N \mpara Q_N ) + u_N^{\sharp}
    \]
    with
    \[
       u'_N = 2 u_N^Q + 4X_N^\zztwo \in \LL^{\gamma_{\bar{\alpha}},\bar{\alpha}}_\infty(T),\qquad u_N^\sharp \in \LL^{\gamma_{2\bar{\alpha}},2\bar{\alpha}}_\infty(T).
    \]
    This means that we need an additional ingredient beyond the paracontrolled tools in order to control the term $\mathD_N \Pi_N B_N ( \Pi_N ( u'_N \para Q_N ) \reso X_N)$, and it is here that we need our assumption on the operator $A_N$. Under this assumption we can apply Lemma~\ref{lem:bilinear form reso} to write
    \begin{equation}\label{eq:sasamoto-spohn pr1}
       \mathD_N \Pi_N B_N ( \Pi_N ( u'_N \para Q_N ) \reso X_N) = R_N + \PC_N C(u'_N,Q_N,X_N) + \PC_N( u_N' B_N(Q_N\reso X_N)),
    \end{equation}
    where $R_N$ is a term that converges to zero in $\CM^{\gamma_{\bar{\alpha}}}_T \CC^{2 \bar{\alpha} - 2}$ if $u'_N$ stays uniformly bounded in $\LL^{\gamma_{\bar{\alpha}},\bar{\alpha}}_\infty(T)$. Denote now
    \[
       (\tilde X, \tilde X^\zzone, \tilde X^\zztwo, \tilde X^\zzthreereso, \tilde X^\zzfour, \tilde \eta, \tilde Q) = \lim_N (X_N, X^\zzone_N, X^\zztwo_N, X^\zzthreereso_N, X^\zzfour_N, B_N(Q_N\reso X_N), Q_N).
    \]
    Based on the above representation of the nonlinearity, it is not difficult to repeat the arguments of Section~\ref{sec:rbe singular} in order to show that $d_{-\beta}(u_N,u)$ converges to 0, where
    \[
       u = \tilde X + \tilde X^\zzone + 2 \tilde X^\zztwo + \tilde u^Q,\qquad \tilde u^Q = \tilde u'\mpara \tilde Q + \tilde u^\sharp,
    \]
    with $(\tilde u^Q, \tilde u', \tilde u^\sharp) = \lim_N (u_N^Q, u_N', u_N^\sharp)$, where $u(0) = u_0$ and
    \begin{align}\label{eq:sasamoto-spohn pr2} \nonumber
       \LL \tilde u^Q & = \LL \tilde X^\zzfour + 4 \LL \tilde X^\zzthreereso + 2 \mathD ((2 \tilde X^\zztwo + \tilde u^Q) \para \tilde X) + 2 \mathD ((2 \tilde X^\zztwo + \tilde u^Q) \lpara \tilde X)\\ \nonumber
       &\quad + 2 \mathD (u^\sharp \reso \tilde X) + 2 \mathD ( (\tilde u' \mpara \tilde Q - \tilde u' \para \tilde Q) \reso \tilde X) + 2 \mathD C(\tilde u', \tilde Q, \tilde X) + 2 \mathD (\tilde u' \tilde \eta) \\
       &\quad + 2 \mathD (\tilde X^\zzone (2 \tilde X^\zztwo + \tilde u^Q)) + \mathD (2 \tilde X^\zztwo + \tilde u^Q)^2.
    \end{align}
    We also have
    \[
       \tilde u' = 2\tilde u^Q + 4 \tilde X^\zztwo.
    \]
    The fact that $(B_N(Q_N \reso X_N))$ converges not uniformly but only uniformly on intervals $[\delta,T]$ for $\delta>0$ poses no problem, because at the same time the sequence is uniformly bounded, so that given $\kappa>0$ we can fix a small $\delta >0$ with $\sup_N \| u_N(\delta) - \PC_N u_0^N\|_{-\beta} < \kappa$, and then use the uniform convergence of the data on $[\delta,T]$ and the convergence of $(\PC_N u_0^N)$ to $u_0$.
    
    Now observe that
    \[
       u = X + X^\zzone + 2 X^\zztwo + u^Q
    \]
    with $u^Q - \tilde u^Q = 2 \tilde X^\zztwo - 2 X^\zztwo = 4cQ$, and moreover
    \[
       \tilde u' = 2 \tilde u^Q + 4 X^\zztwo + 8cQ = 2 u^Q + 4 X^\zztwo.
    \]
    Plugging this as well as the specific form of the $\tilde X^\tau$ into~\eqref{eq:sasamoto-spohn pr2}, we get
    \begin{align*}
       \LL u^Q & = 4 c \mathD X + \LL X^\zzfour + 4 \LL X^\zzthreereso + 4 c \mathD X^\zzone + 8 c \mathD (Q\reso X) + 2 \mathD ((2  X^\zztwo + u^Q) \para X) \\
       &\quad + 2 \mathD ((2 X^\zztwo + u^Q) \lpara X)+ 2 \mathD (u^\sharp \reso X) + 2 \mathD ( (\tilde u' \mpara Q - \tilde u' \para Q) \reso X) \\
       &\quad + 2 \mathD C(\tilde u', Q, X) + 2 \mathD (\tilde u' (Q\reso X + c)) + 2 \mathD ( X^\zzone (2 X^\zztwo + u^Q)) + \mathD (2 X^\zztwo + u^Q)^2.
    \end{align*}
    Since
    \begin{align*}
       & 8 c \mathD (Q\reso X)  + 2 \mathD (u^\sharp \reso X) + 2 \mathD ( (\tilde u' \mpara Q - \tilde u' \para Q) \reso X) + 2 \mathD C(\tilde u', Q, X) + 2 \mathD (\tilde u' (Q\reso X)) \\
       &\hspace{70pt }= 8 c \mathD (Q\reso X)  + 2 \mathD (\tilde u^Q \reso X) = 2 \mathD (u^Q \reso X),
    \end{align*}
    we end up with
    \begin{align*}
       \LL u^Q & = \mathD u^2 - \LL X - \LL X^\zzone - 2 \LL X^\zztwo + 4 c \mathD X + 4 c \mathD X^\zzone + 2 c \mathD \tilde u' \\
       & = \mathD u^2 - \LL X - \LL X^\zzone - 2 \LL X^\zztwo + 4 c \mathD u,
    \end{align*}
    which completes the proof.
\end{proof}

It remains to study the convergence of the discrete stochastic data, which will be done in Section~\ref{sec:discrete stochastics} below.

\section{The stochastic driving terms}\label{sec:stochastics}

In this section we study the random fields
\[
   X, X^{\zzone}, X^{\zztwo}, X^{\zzthreereso}, X^{\zzfour}, Q\reso X
\]
which appear in the definition of $\X \in \Xrbe$. Our main results are the following two theorems, whose proofs will cover the next subsections.

\begin{theorem}\label{thm:rbe data}
   Let $\xi$ be a space-time white noise on $\R \times \T$ and define
   \begin{equation*}
     \begin{array}{rll}
       \LL X & = & \mathD \xi,\\
       \LL X^{\zzone} & = & \mathD (X^2),\\
       \LL X^{\zztwo} & = & \mathD (X  X^{\zzone} ),\\
       \LL X^{\zzthreereso} & = & \mathD (X^\zztwo  \reso X ),\\
       \LL X^{\zzfour} & = & \mathD (X^{\zzone}  X^{\zzone}),\\
       \LL Q & = & \mathD X ,
     \end{array}
   \end{equation*}
   all with zero initial condition except $X$ for which we choose the stationary initial condition
   \[
      X(0) = \int_{- \infty}^0 P_{- s} \mathD \xi (s) \mathd s.
   \]
   Then almost surely $\X = (X, X^\zzone, X^\zztwo, X^\zzthreereso, X^\zzfour, Q\circ X) \in \Xrbe$. If $\varphi \colon \R \to \R$ is a measurable, bounded, even function of compact support, such that $\varphi(0) = 1$ and $\varphi$ is continuous in a neighborhood of $0$, and if
   \[
      \xi_\varepsilon = \CF^{-1} ( \varphi(\varepsilon \cdot) \CF \xi) = \varphi(\varepsilon\mathD) \xi
   \]
   (here $\CF$ denotes the spatial Fourier transform) and $\X_\varepsilon = \Theta_{\mathrm{rbe}}(\xi_\varepsilon)$, then for all $T,p>0$
   \[
      \lim_{\varepsilon \to 0} \E[\| \X - \X_\varepsilon \|_{\Xrbe(T)}^p] = 0.
   \]
   Similarly, if $\tilde \X_\varepsilon = (\tilde X_\varepsilon, \tilde X^\zzone_\varepsilon, \tilde X^\zztwo_\varepsilon, \tilde X^\zzthreereso_\varepsilon, \tilde X^\zzfour_\varepsilon, \tilde Q_\varepsilon \circ \tilde X_\varepsilon)$ for $\tilde X_\varepsilon = \varphi(\varepsilon\mathD) X$ and
   \begin{equation*}
     \begin{array}{rll}
       \LL \tilde X^{\zzone}_\varepsilon & = & \mathD ((\tilde X_\varepsilon)^2),\\
       \LL \tilde X^{\zztwo}_\varepsilon & = & \mathD ( \tilde X_\varepsilon  \varphi(\varepsilon\mathD) X^{\zzone} ),\\
       \LL \tilde X^{\zzthreereso}_\varepsilon & = & \mathD (\varphi(\varepsilon\mathD) X^\zztwo  \reso \tilde X_\varepsilon ),\\
       \LL \tilde X^{\zzfour}_\varepsilon & = & \mathD (\varphi(\varepsilon\mathD) X^{\zzone} \varphi(\varepsilon\mathD) X^{\zzone}),\\
       \LL \tilde Q_\varepsilon & = & \mathD X_\varepsilon ,
     \end{array}
   \end{equation*}
   all with zero initial conditions, then for all $T,p > 0$
   \[
      \lim_{\varepsilon \to 0} \E[\| \X - \tilde \X_\varepsilon \|_{\Xrbe(T)}^p] = 0.
   \]
\end{theorem}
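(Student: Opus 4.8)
The plan is to reduce everything to moment estimates on a finite list of explicit stochastic objects, exploiting the fact that all the $X^\tau$ and $Q\reso X$ live in a fixed Wiener chaos of bounded order. First I would recall the general principle: if a sequence of random variables $(Z_\varepsilon)$ in the $n$-th inhomogeneous Wiener chaos over the white noise converges in $L^2$ of the relevant chaos to some $Z$, then Gaussian hypercontractivity upgrades this to $L^p$ convergence for every $p$, and a Besov-space version of the Kolmogorov continuity/embedding criterion (as in \cite{gubinelli_paraproducts_2012, GubinelliImkeller2014}) then yields convergence in the path spaces $C_T\CC^\gamma$, $\LL^\gamma_T$ that make up $\mathcal X_{\rbe}(T)$. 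So the entire problem is to establish $L^2$ bounds of the form $\E[\|\Delta_j(Z-Z_\varepsilon)(t)\|_{L^2_x}^2]\lesssim \varepsilon^{2\kappa}2^{-j(2\gamma+\kappa')}$ and the corresponding time-increment bounds, uniformly in $\varepsilon$, for each of the six components, and similarly for the tilde-approximation.

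Next I would set up the Fourier/Wick-product bookkeeping. Writing $X=J(\xi)$ with the stationary initial datum, each $X^\tau$ has an explicit Wiener--It\^o chaos expansion: $X^\tau = \sum_{k\le d(\tau)} I_k(\hat f^\tau_{k,t})$ with kernels $\hat f^\tau$ built recursively by convolving the heat kernel and the derivative symbol along the binary tree, and taking Wick contractions where two leaves meet in a resonant product. The point of the ``stationary'' initial condition is exactly that $X(t)$ is a stationary Gaussian field in space, so the kernels depend on time only through differences and the frequency integrals decouple nicely. For the mollified versions one simply inserts factors $\varphi(\varepsilon k_i)$ (for $\X_\varepsilon$, on \emph{every} leaf frequency, since the whole noise is mollified) or $\varphi(\varepsilon k_i)$ only on the frequencies that were mollified before differentiation (for $\tilde\X_\varepsilon$, where one mollifies $X$, $X^\zzone$, $X^\zztwo$ separately, as spelled out in the displayed system). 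In both cases $\varphi(0)=1$, boundedness, compact support and continuity near $0$ give: pointwise convergence $\varphi(\varepsilon k)\to1$, a uniform bound $|\varphi(\varepsilon k)|\le\|\varphi\|_\infty$, and — crucially for a rate — $|1-\varphi(\varepsilon k)|\lesssim (\varepsilon|k|)^\kappa\wedge 1$ for some small $\kappa>0$ (using the continuity at $0$ plus the compact support to handle large $|k|$). Plugging these into the kernel differences $\hat f^\tau-\hat f^\tau_\varepsilon$ and $\hat f^\tau-\tilde f^\tau_\varepsilon$ produces exactly the gain $\varepsilon^{\kappa}$ in front of the otherwise-convergent sums.

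I would then organize the actual estimates so as to reuse a single mechanism. Following the strategy announced in the paper (``reducing everything to a few basic integrals that can be controlled by a simple recursive algorithm''), I would isolate a small collection of scalar kernel integrals — the heat-kernel-times-derivative building blocks, the resonant contraction integral producing the logarithmically divergent ``renormalization'' piece (which here, because we differentiate after multiplying, combines into a convergent derivative), and the covariance integral for $Q\reso X$ — prove for each a bound of the type $\int (\text{integrand}) \lesssim 2^{-j\sigma}$ with the right exponent $\sigma$ matching \eqref{eq:reg-X}, and then assemble the six components by the recursive tree structure, tracking regularities additively: $X\in C\CC^{-1/2-}$, $X^\zzone\in C\CC^{0-}$, $X^\zztwo,X^\zzthreereso\in C\CC^{1/2-}$, $X^\zzfour\in C\CC^{1-}$, $Q\reso X\in C\CC^{0-}$, and the parabolic (time-H\"older) counterparts. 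The temporal regularity needed for the $\LL^\gamma_T$ norms comes from the standard ``factor out $|t-s|^{\gamma}$ from the heat semigroup'' trick combined with the same frequency integrals. Once the single-component bounds are in place, membership $\X\in\Xrbe$ almost surely follows from the Besov--Kolmogorov criterion applied to $\X$ itself (take $Z_\varepsilon$ genuinely convergent, e.g.\ a dyadic mollification), and the two convergence statements follow by applying the criterion to the differences with the extra $\varepsilon^\kappa$.

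The main obstacle I expect is the sixth component, the resonant product $Q\reso X$ (and its discrete analogue elsewhere), together with the near-critical terms $X^\zztwo$ and $X^\zzthreereso$: these sit right at the regularity threshold $2\alpha-1$, they involve the operator $Q=I(\mathD X)$ which has the \emph{same} regularity as $X$ (no smoothing gain), so naive power counting fails and one genuinely needs the stochastic cancellation in the resonant contraction. Concretely one must show that the Wick-ordered part of $Q_\varepsilon\reso X_\varepsilon$ converges in $C\CC^{0-}$ while the contraction part (a deterministic function of time) is, after the derivative structure is accounted for, finite and convergent rather than logarithmically divergent — this is precisely the mechanism that the paper phrases as ``we did not see this renormalization because we differentiated after multiplying.'' Making the $\varepsilon$-uniform bound and the rate survive through this cancellation, for both the fully-mollified ($\X_\varepsilon$) and the partially-mollified ($\tilde\X_\varepsilon$) schemes, where the mollifiers sit on different frequency slots, is the delicate point; everything else is bookkeeping on top of the basic integral estimates.
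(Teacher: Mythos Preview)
Your overall strategy matches the paper's: reduce to second-moment bounds via hypercontractivity and a Besov--Kolmogorov argument, expand each $X^\tau$ into Wiener chaoses with recursive kernels indexed by the tree, bound the resulting frequency integrals by a small stock of ``basic integrals'' (this is the paper's Lemma~\ref{lemma:basic-estimate-integrals}), and then observe that the mollified kernels satisfy the same bounds uniformly in $\varepsilon$. Three points, however, need correction.

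First, you write that $Q=I(\mathD X)$ has ``the same regularity as $X$ (no smoothing gain)''. This is false: $I$ gains two derivatives and $\mathD$ costs one, so $Q\in C\CC^{\alpha}$ while $X\in C\CC^{\alpha-1}$. The product $Q\reso X$ is critical because $\alpha+(\alpha-1)=2\alpha-1<0$, not because both factors are equally rough. Relatedly, $X^{\zzthreereso}$ lives in $\LL^{2\alpha}$ (regularity $1-$), not $1/2-$; you have conflated it with the $X^\zzthree$ of~\eqref{eq:reg-X}. Second, you mislocate the hard cancellation. The zeroth-chaos contribution to $Q\reso X$ is not ``finite rather than logarithmically divergent''; it is \emph{identically zero} by the odd symmetry $H_{t-\sigma}(-k)=-H_{t-\sigma}(k)$ and needs no work. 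The genuine obstacle is the vertex function $V^{\zzfive}$ arising in the first-chaos contraction of $X^{\zztwo}$ (and then in $X^{\zzthreereso}$): naive estimation gives $\int_0^\infty|V^{\zzfive}(\sigma,k)|\,\mathd\sigma=+\infty$, and one must use the additional symmetry $V^{\zzfive}(\sigma,0)=0$ (the integrand is odd in the internal momentum) together with a first-order Taylor expansion to obtain $\int_0^\infty|V^{\zzfive}(\sigma,k)|\,\mathd\sigma\lesssim|k|^{\varepsilon}$. This is the content of Lemma~\ref{lem:Vzzfive} and is precisely where your plan is vague. Third, the quantitative bound $|1-\varphi(\varepsilon k)|\lesssim(\varepsilon|k|)^{\kappa}$ requires H\"older continuity of $\varphi$ at $0$, which the hypotheses do not give (only continuity near $0$). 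The paper does not extract a rate: once the uniform-in-$\varepsilon$ bounds are established it invokes dominated convergence, after noting that evenness of $\varphi$ preserves all the parity symmetries (in particular the one behind $V^{\zzfive}(\sigma,0)=0$). The theorem does not claim a rate, so this suffices.
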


\begin{remark}
   The theorem would be easier to formulate if we assumed $\varphi$ to be continuous, of compact support, and with $\varphi(0)=1$. The reason why we chose the complicated formulation above is that we do not want to exclude the function $\varphi(x) = \1_{[-1,1]}(x)$.
\end{remark}

\begin{theorem}\label{thm:kpz data}
   Let $\xi$ be a space-time white noise on $\R \times \T$. Then there exists an element $\Y \in \Ykpz$ such that for every even, compactly supported function $\varphi \in C^1(\R,\R)$ with $\varphi(0) = 1$ there are diverging constants $c_\varepsilon^\zzone$ and $c_\varepsilon^\zzfour$ for which
   \[
      \lim_{\varepsilon \to 0} \E[\| \Y - \Y_\varepsilon \|_{\Ykpz(T)}^p] = 0
   \]
   for all $T,p>0$, where $\Y_\varepsilon = \Theta_{\mathrm{kpz}}( \varphi(\varepsilon \mathD) \xi, c_\varepsilon^\zzone, c_\varepsilon^\zzfour)$. Moreover, we have
   \[
      c_\varepsilon^\zzone = \frac{1}{4 \pi \varepsilon} \int_\R \varphi^2(x) \dd x.
   \]
\end{theorem}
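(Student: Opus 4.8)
The plan is to prove Theorem~\ref{thm:kpz data} by reducing the convergence of the renormalized KPZ-enhancement $\Y_\varepsilon$ to a collection of $L^p$-estimates for the individual random fields $Y_\varepsilon^\tau$ and the resonant term $(\mathD P_\varepsilon \reso \mathD Y_\varepsilon)$, all of which will be controlled via Wiener chaos decompositions. Since $\mathD Y_\varepsilon^\tau = X_\varepsilon^\tau$ and the spatial derivative only lowers regularity by one, most of the required bounds follow directly from Theorem~\ref{thm:rbe data}: the terms $Y_\varepsilon^\zzone$, $Y_\varepsilon^\zztwo$, $Y_\varepsilon^\zzthreereso$, $Y_\varepsilon^\zzfour$ differ from the rough-Burgers data only by an integration in space and by the choice of renormalization constants, and the resonant term $\mathD P_\varepsilon \reso \mathD Y_\varepsilon = Q_\varepsilon \reso X_\varepsilon$ is literally the sixth component of $\X_\varepsilon$. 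So the genuinely new content is: (i) the divergence and explicit form of $c_\varepsilon^\zzone$; (ii) the existence of a suitable $c_\varepsilon^\zzfour$ making $Y_\varepsilon^\zzfour$ and $Y_\varepsilon^\zzthreereso$ converge; (iii) the upgrade from spatial regularity (the $C\CC^\alpha$ scale) to the parabolic spaces $\LL^\alpha = C^{\alpha/2}L^\infty \cap C\CC^\alpha$ that appear in the definition of $\Ykpz$, i.e.\ control of the temporal Hölder norm; and (iv) the stationary initial condition $Y(\theta)(0) = \int_{-\infty}^0 P_{-s}\Pi_{\neq 0}\theta(s)\,\dd s$, which requires the projection onto nonzero Fourier modes to be integrable.

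First I would set up the stationary space-time objects on $\R\times\T$ by extending the white noise to negative times and defining $Y$ through the stationary convolution, so that $Y$ and all $Y^\tau$ become stationary (in time) Gaussian-chaos processes; this makes the $L^p$ bounds uniform in $t$ and is exactly why the $\Pi_{\neq 0}$ projection appears — it removes the zero mode for which $\int_{-\infty}^0 P_{-s}\,\dd s$ diverges, while on the nonzero modes $e^{-s|k|^2}$ gives integrability. Then for each $\tau$ I would compute, using the graphical/Feynman-diagram calculus announced in Section~\ref{sec:stochastics}, the covariance of $\Delta_j$-blocks of $Y_\varepsilon^\tau - Y_{\varepsilon'}^\tau$ and of $(\mathD P_\varepsilon\reso \mathD Y_\varepsilon) - (\mathD P_{\varepsilon'}\reso \mathD Y_{\varepsilon'})$, reducing everything to a small number of basic integrals controlled by the recursive algorithm; Gaussian hypercontractivity then promotes second-moment bounds to $L^p$ bounds, and a Besov-embedding / Kolmogorov-type argument (as in Theorem~\ref{thm:rbe data}) converts the block estimates into convergence in $C_T\CC^\alpha$. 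The constant $c_\varepsilon^\zzone$ is identified by isolating the deterministic (zeroth-chaos) part of $(\mathD Y_\varepsilon)^2$: one has $\E[(\mathD Y_\varepsilon)^2(t,x)] = \frac{1}{2\pi}\sum_k \frac{k^2 \varphi(\varepsilon k)^2}{2k^2}(1+o(1)) = \frac{1}{4\pi}\sum_k \varphi(\varepsilon k)^2$, and a Riemann-sum approximation gives $\frac{1}{4\pi\varepsilon}\int_\R \varphi^2(x)\,\dd x + O(1)$, which fixes $c_\varepsilon^\zzone$ up to a bounded error that can be absorbed. The constant $c_\varepsilon^\zzfour$ is chosen as the zeroth-chaos part of $(\mathD Y_\varepsilon^\zzone)^2$; the point — already flagged in the remark after Definition~\ref{def:kpz rough distribution} — is that the renormalizations of $Y^\zzfour$ and $Y^\zzthreereso$ are linked precisely so that the $c^\zzfour$ contributions cancel at the level of the equation, so one only needs $Y_\varepsilon^\zzfour + \LL^{-1}(\text{const})$ and $Y_\varepsilon^\zzthreereso - \LL^{-1}(\text{const}/4)$ to converge, with no constraint on the joint convergence beyond what the Wiener-chaos estimates already give.

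For the temporal regularity I would use the standard Schauder-type trick: writing $Y^\tau(t) - Y^\tau(s) = (P_{t-s} - \mathrm{id})Y^\tau(s) + I(\dots)(t) - I(\dots)(s)$, one bounds the first piece using $\|(P_{t-s}-\mathrm{id})v\|_{L^\infty} \lesssim |t-s|^{\alpha/2}\|v\|_\alpha$ and the second via the time-regularizing effect of $I$; since the right-hand side of each $\LL Y^\tau$ equation lives in $C\CC^{\gamma}$ for an appropriate $\gamma$ once the resonant products are handled, the bound $\|Y^\tau\|_{C^{\alpha/2}_T L^\infty}$ follows, and combined with the $C\CC^\alpha$ bound this yields membership in the relevant $\LL$-space. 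For the sixth component $\mathD P\reso \mathD Y$ one works in $C_T\CC^{2\alpha-1}$ directly, as in $\Xrbe$. Finally I would assemble these into convergence in probability of $\Y_\varepsilon$ in $\Ykpz(T)$ for every $T$, then use a Cauchy argument along a sequence $\varepsilon_n\to 0$ together with uniform $L^p$ bounds to produce the limit $\Y\in\Ykpz$ and upgrade to $\E[\|\Y-\Y_\varepsilon\|_{\Ykpz(T)}^p]\to 0$. The main obstacle I expect is bookkeeping rather than conceptual: correctly organizing the second-moment computations for the higher trees $Y^\zzthreereso$ and $Y^\zzfour$ — where the relevant stochastic integrals sit in several Wiener chaoses simultaneously, and one must carefully track which divergences are killed by $c_\varepsilon^\zzone$, which by $c_\varepsilon^\zzfour$, and which cancel against each other — so that the residual pieces are genuinely in the positive-regularity scale uniformly in $\varepsilon$. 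This is precisely the point where one must lean on the recursive reduction to basic integrals of Section~\ref{sec:stochastics} rather than attempting a direct estimate.
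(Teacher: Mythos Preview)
Your overall strategy is the paper's: reduce to the RBE data via $\mathD Y^\tau = X^\tau$, handle the nonzero chaoses by the kernel estimates of Section~\ref{sec:stochastics}, compute $c_\varepsilon^\zzone$ as the Riemann-sum approximation of $\frac{1}{4\pi\varepsilon}\int\varphi^2$, and get temporal regularity via the $(P_{t-s}-\mathrm{id})$ decomposition. That part is fine.

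The gap is in your treatment of $Y_\varepsilon^\zzthreereso$. You write that once $c_\varepsilon^\zzfour$ is fixed as the zeroth-chaos part of $(\mathD Y_\varepsilon^\zzone)^2$, the convergence of $Y_\varepsilon^\zzthreereso$ follows ``with no constraint on the joint convergence beyond what the Wiener-chaos estimates already give.'' This is not true. The Wiener-chaos estimates handle only the nonzero chaoses of $\mathD Y_\varepsilon^\zztwo\reso\mathD Y_\varepsilon$; its zeroth-chaos part is $t\,c_\varepsilon^\zzthree$ with $c_\varepsilon^\zzthree = t^{-1}\E[Y_\varepsilon^\zzthree(t,x)]$ (the resonant cutoff $\psi_\circ(k_{[123]},k_4)$ equals $1$ on the zeroth chaos because there $k_{[123]}=-k_4$). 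By Definition~\ref{def:kpz rough distribution} the renormalization of $Y^\zzthreereso$ is \emph{forced} to be $+c_\varepsilon^\zzfour/4$, so convergence of $Y_\varepsilon^\zzthreereso$ requires
\[
   c_\varepsilon^\zzfour + 4\, c_\varepsilon^\zzthree \ \longrightarrow\ \text{finite limit}.
\]
Both constants are separately divergent, so this is a genuine cancellation, not bookkeeping. The remark after Definition~\ref{def:kpz rough distribution} that you cite says only that $c^\zzfour$ drops out of the \emph{equation} for the remainder (it enters as $4\LL Y^\zzthreereso + \LL Y^\zzfour$); it does not say, and cannot say, that the two components of $\Y_\varepsilon$ converge separately in $\Ykpz$ without a further identity.

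The paper proves this identity explicitly (Section~9.6): writing both zeroth-chaos contributions as Feynman integrals, one symmetrizes first the inner time variables $\sigma'\leftrightarrow\sigma''$ in the $c^\zzthree$ term and then the momenta $k_1\leftrightarrow k_2$, after which the two integrands cancel pointwise, giving $c^\zzfour + 4c^\zzthree = 0$ for the unregularized kernels. With the cutoff $\varphi(\varepsilon\cdot)$ the second symmetrization leaves a difference $\varphi^2(\varepsilon k_{[12]}) - \varphi^2(\varepsilon k_2)$; a rescaling $\sigma\mapsto\varepsilon^2\sigma$ then turns the sum into a Riemann integral and $I_\varepsilon$ converges to a finite $\varphi$-dependent constant. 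This symmetrization is the step your outline is missing; once you supply it, the rest goes through.
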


\begin{remark}
   We will only worry about the construction of $\X$ and $\Y$. The convergence result then follows easily from the dominated convergence theorem, because since $\varphi$ is an even function all the symmetries in the kernels that we will use below also hold for the kernels corresponding to $\X_\varepsilon$, $\tilde \X_\varepsilon$, $\Y_\varepsilon$.
\end{remark}

\subsection{Kernels}

We can represent the white noise in terms of its spatial Fourier transform. More precisely, let $E = \Z \setminus \{ 0\}$ and let $\tilde W$ be a complex valued centered Gaussian process on $\R \times E$ defined by the covariance
\[
   \E \left[ \int_{\R \times E} f (\eta) \tilde W (\mathd \eta) \int_{\R \times E} g (\eta') \tilde W (\mathd \eta') \right] = (2 \pi)^{-1}   \int_{\R \times E} g (\eta_1) f (\eta_{- 1}) \mathd \eta_1,
\]
where $\eta_a = (s_a, k_a)$, $s_{- a} = s_a$, $k_{- a} = - k_a$ and the measure $\mathd \eta_a = \mathd s_a \mathd k_a$ is the product of the Lebesgue measure $\mathd s_a$ on $\R$ and of the counting measure $\mathd k_a$ on $E$. The functions $f, g$ are complex valued and in $L^2 (\R \times E)$. Then the process $\tilde \xi(\varphi) = \tilde W(\CF \varphi)$, where $\varphi \in L^2(\R \times \T)$ and $\CF$ denotes the spatial Fourier transform, is a white noise on $L^2_0(\R \times \T)$, the space of all $L^2$ functions $\varphi$ with $\int_\T \varphi(x,y) \dd y = 0$ for almost all $x$.

\textbf{Convention:} To eliminate many constants of the type $(2\pi)^p$ in the following calculations, let us rather work with $\sqrt{2\pi} \tilde W$, which we denote by the same symbol $\tilde W$. Of course all qualitative results that we prove for this transformed noise stay true for the original noise, and we only have to pay attention in Theorem~\ref{thm:kpz data} to get the constant $c_\varepsilon^\zzone$ right.

The process $X$ then has the following representation as an integral
\[
   X (t, x) = \int_{\R \times E} e^{i kx} H_{t - s} (-k) \tilde W (\mathd\eta),
\]
where $\eta = (s, k) \in \R \times E$ and
\[
    h_t (k) = e^{- k^2 t} \1_{t \geqslant 0}, \hspace{2em} H_t (k) = i k h_t (k).
 \]
 This means that $H_{t-s}(-k) = - H_{t-s}(k)$, and it will simplify the notation if we work with $W = -\tilde W$ and $\xi =  -\tilde \xi$, which of course have the same distribution as $\tilde W$ and $\tilde \xi$ and for which
 \[
   X (t, x) = \int_{\R \times E} e^{i kx} H_{t - s} (k) W (\mathd\eta).
\]
The space Fourier transform $\hat{X} (t, k) = \hat{X}_t (k)$ of $X(t, \cdot)$ reads
\[
   \hat{X} (t, k) = \int_{\R} e^{i kx} H_{t - s} (k) W_k (\mathd s),
\]
where $W_{k'} (\mathd s) = \int_E \delta_{k, k'} W (\mathd s \mathd k)$ is
just a countable family of complex time white noises satisfying $W_k (\mathd
s)^{\ast} = W_{- k} (\mathd s)$ and $\E [W_{k'} (\mathd s) W_k
(\mathd s')] =  \delta_{k,- k'} \delta (s - s') \mathd s \mathd s'$.

Note that if $s \leqslant t$
\begin{equation}\label{eq:ou-cov-int}
  \int_{\R} H_{s - \sigma} (k) H_{t - \sigma} (- k) \mathd \sigma = \frac{e^{- k^2 | t - s |}}{2},
\end{equation}
from where we read the covariance of $X$:
\begin{align*}
   \E [X (t, x) X (s, y)] & = \E \left[ \int_{\R \times E} e^{i k_1 x} H_{t - s_1} (k_1) W (\mathd \eta_1) \int_{\R \times E} e^{i k_2 y} H_{s - s_2} (k_2) W (\mathd \eta_2) \right] \\
   & = \int_E \mathd k_1 e^{i k_1 (x - y)} \int_{\R} H_{t - s_1} (k_1) H_{s - s_1} (- k_1) \mathd s_1 \\
   & = \int_E e^{i k_1 (x - y)} \frac{e^{- k_1^2 | t - s |}}{2} \mathd k_1 = \frac{1}{2} p_{| t - s |} (x - y),
\end{align*}
where $p_t (x)$ is the kernel of the heat semigroup: $P_t f = p_t \ast f =
\int_{\T} p_t (\cdummy - y) f (y) \mathd y$. In Fourier space we have
\[ \E [\hat{X}_t (k) \hat{X}_s (k')] = \delta_{k + k' = 0} \frac{e^{-
   k^2 | t - s |}}{2} \]
as expected.

These notations and preliminary results will be useful below in relation to
the representation of elements in the chaos of $W$ and the related Gaussian
computations. Recall that $X^{\bullet} = X$ and $X^{(\tau_1 \tau_2)} = B
(X^{\tau_1}, X^{\tau_2})$. Then
\[ X^{\tau} (t, x) = \int_{(\R \times E)^n} G^{\tau} (t, x,
   \eta_{\tau}) \prod_{i = 1}^n W (\mathd \eta_i) \]
where $n = d (\tau) + 1$, $\eta_{\tau} = \eta_{1 \cdots n} = (\eta_1, \ldots,
\eta_n) \in (\R \times E)^n$ and $\mathd \eta_{\tau} = \mathd \eta_{1
\cdots n} = \mathd \eta_1 \cdots \mathd \eta_n$. Here we mean that each of the
$X^{\tau}$ is a polynomial in the Gaussian variables $W (\mathd \eta_i)$, and in
the next section we study how these polynomials decompose into the chaoses of
$W$. For the moment we are interested in the analysis of the kernels
$G^{\tau}$ involved in this representation. These kernels are defined
recursively by
\[
   G^{\bullet} (t, x, \eta) = e^{i kx} H_{t - s} (k),
\]
and then
\begin{align*}
   G^{(\tau_1 \tau_2)} (t, x, \eta_{(\tau_1 \tau_2)}) & = B (G^{\tau_1} (\cdot, \cdot, \eta_{\tau_1}), G^{\tau_2} (\cdot, \cdot, \eta_{\tau_2})) (t, x) \\
   & = \int_0^t \mathd \sigma \mathD P_{t - \sigma} (G^{\tau_1} (\sigma, \cdot, \eta_{\tau_1}) G^{\tau_2} (\sigma, \cdot, \eta_{\tau_2})) (x).
\end{align*}
In the first few cases this gives
\begin{align*}
   G^{\zzone} (t, x, \eta_{12}) & = \int_0^t \mathd \sigma \mathD P_{t - \sigma} (G^{\bullet} (\sigma, \cdot, \eta_1) G^{\bullet} (\sigma, \cdot, \eta_2))(x) \\
   & = e^{i k_{[12]} x} \int_0^t \mathd \sigma H_{t - \sigma} (k_{[12]}) H_{\sigma - s_1} (k_1) H_{\sigma - s_2} (k_2),
\end{align*}
where we set $k_{[1 \cdots n]} = k_1 + \cdots + k_n$, and
\begin{align*}
   &G^{\zztwo} (t, x, \eta_{123}) = \int_0^t \mathd \sigma \mathD P_{t - \sigma} (G^{\zzone} (\sigma, \cdot, \eta_{12}) G^{\bullet} (\sigma, \cdot,
   \eta_3)) (x) \\
   &\hspace{15pt} = e^{i k_{[123]} x} \int_0^t \mathd \sigma H_{t - \sigma} (k_{[123]}) \left( \int_0^{\sigma} \mathd \sigma' H_{\sigma - \sigma'} (k_{[12]})
   H_{\sigma' - s_1} (k_1) H_{\sigma' - s_2} (k_2) \right) H_{\sigma - s_3} (k_3).
\end{align*}
In both cases, the kernel has the factorized form
\begin{equation}
  G^{\tau} (t, x, \eta_{\tau}) = e^{i k_{[\tau]} x} H^{\tau} (t, \eta_{\tau}),
  \label{eq:G-kernel-factoriz}
\end{equation}
where we further denote $k_{[\tau]} = k_{[1 \cdots n]} = k_1 + \cdots + k_n$,
and this factorization holds for all $G^{\tau}$. In fact, it is easy to show
inductively that
\[
   G^{(\tau_1 \tau_2)} (t, x, \eta_{(\tau_1 \tau_2)}) = e^{i k_{[\tau]} x} \int_0^t \mathd \sigma H_{t - \sigma} (k_{[\tau]}) H^{\tau_1} (\sigma, \eta_{\tau_1}) H^{\tau_2} (\sigma, \eta_{\tau_2}),
\]
from where we read
\begin{align*}
   G^{\zzthree} (t, x, \eta_{1234}) & = e^{i k_{[1234]} x} \int_0^t \mathd \sigma H_{t - \sigma} (k_{[1234]}) \int_0^{\sigma} \mathd \sigma' H_{\sigma - \sigma'} (k_{[123]}) \times \\
   &\quad\times \Big( \int_0^{\sigma'} \mathd \sigma'' H_{\sigma' - \sigma''} (k_{[12]}) H_{\sigma'' - s_1} (k_1) H_{\sigma'' - s_2} (k_2) \Big) H_{\sigma' - s_3} (k_3) H_{\sigma - s_4} (k_4)
\end{align*}
and thus
\begin{align*}
   G^{\zzthreereso} (t, x, \eta_{1234}) & = e^{i k_{[1234]} x} \int_0^t \mathd \sigma H_{t - \sigma} (k_{[1234]}) \psi_\circ(k_{[123]},k_4) \int_0^{\sigma} \mathd \sigma' H_{\sigma - \sigma'} (k_{[123]}) \times \\
   &\quad\times \Big( \int_0^{\sigma'} \mathd \sigma'' H_{\sigma' - \sigma''} (k_{[12]}) H_{\sigma'' - s_1} (k_1) H_{\sigma'' - s_2} (k_2) \Big) H_{\sigma' - s_3} (k_3) H_{\sigma - s_4} (k_4),
\end{align*}
where we recall that $\psi_\circ(k,\ell) = \sum_{|i-j| \leqslant 1} \rho_i(k) \rho_j (\ell)$. Similarly, we have
\begin{align*}
   G^{\zzfour} (t, x, \eta_{1234}) & = e^{i k_{[1234]} x} \int_0^t \mathd \sigma
   \int_0^{\sigma} \mathd \sigma' \int_0^{\sigma} \mathd \sigma'' H_{t -
   \sigma} (k_{[1234]}) H_{\sigma - \sigma'} (k_{[12]}) H_{\sigma - \sigma''}
   (k_{[34]}) \times \\
   &\quad \times H_{\sigma' - s_1} (k_1) H_{\sigma' - s_2} (k_2) H_{\sigma'' - s_3}  (k_3) H_{\sigma'' - s_4} (k_4).
\end{align*}

\subsection{Chaos decomposition and diagrammatic representation}\label{sec:chaos}

The representation
\[ X^{\tau} (t, x) = \int_{(\R \times E)^n} G^{\tau} (t, x,
   \eta_{\tau}) \prod_{i = 1}^n W (\mathd \eta_i) \]
is not very useful for the analysis of the properties of the random fields
$X$. It is more meaningful to separate the components in different chaoses.
Denote by
\[ \int_{(\R \times E)^n} f (\eta_{1 \cdots n}) W (\mathd \eta_{1
   \cdots n}) \]
a generic element of the $n$-th chaos of the white noise $W$ on $(\R
\times E)$. We find convenient not to symmetrize the kernels in the chaos
decomposition. If we follow this convention then we should recall that the
variance of the chaos elements will be given by
\[
   \E \left[ \left| \int_{(\R \times E)^n} f (\eta_{1 \cdots n}) W (\mathd \eta_{1 \cdots n}) \right|^2 \right] =  \sum_{\sigma \in  \mathcal{S}_n} \int_{(\R \times E)^n} f (\eta_{1 \cdots n}) f  (\eta_{(- \sigma (1)) \cdots (- \sigma (n))}) \mathd \eta_{1 \cdots n}
\]
where the sum runs over all the permutations of $\{ 1, \ldots, n \}$ and where we introduce the notation $\eta_{- 1} = (s_1, - k_1)$ to describe the contraction of the Gaussian variables. By the Cauchy-Schwarz inequality
\[
   \E \left[ \left| \int_{(\R \times E)^n} f (\eta_{1 \cdots n}) W (\mathd \eta_{1 \cdots n}) \right|^2 \right] \leqslant (n!)   \int_{(\R \times E)^n} | f (\eta_{1 \cdots n}) |^2 \mathd \eta_{1 \cdots n},
\]
so that for the purpose of bounding the variance of the chaoses it is enough
to bound the $L^2$ norm of the unsymmetrized kernels. The general explicit
formula for the chaos decomposition of a polynomial
\[ \int_{(\R \times E)^n} f (\eta_{1 \cdots n}) \prod_{i = 1}^n W
   (\mathd \eta_i) \]
is given by
\[ \int_{(\R \times E)^n} f (\eta_{1 \cdots n}) \prod_{i = 1}^n W
   (\mathd \eta_i) = \sum_{\ell = 0}^n \int_{(\R \times E)^k}
   f_{\ell} (\eta_{1 \cdots \ell}) W (\mathd \eta_{1 \cdots \ell}) \]
with $f_{\ell} (\eta_{1 \cdots \ell}) = 0$ if $n - \ell$ is odd. To give the expression for $f_{\ell}$ with $n - \ell$ even, let us introduce some notation: We write $\mathcal{S} (\ell, n)$ for the shuffle permutations of $\{1, \ldots, n \}$ which leave the order of the first $\ell$ and the last $n -\ell$ terms intact. We also write
\[
   \eta \sqcup \upsilon = \eta_{1 \ldots \ell} \sqcup \upsilon_{1 \ldots m} = (\eta_1, \ldots, \eta_{\ell}, \upsilon_1, \ldots, \upsilon_m)
\]
for the concatenation of $\eta$ and $\upsilon$. So if $n - \ell = 2 m$ for
some $m$, then
\[
   f_{\ell} (\eta_{1 \ldots \ell}) = 2^{-m}\sum_{\sigma \in \mathcal{S} (\ell, n)} \sum_{\substack{\tau \in \mathcal{S}(m, 2m),  \\ \tilde{\tau} \in \mathcal{S} (m)}} \int_{(\R \times E)^m} f (\sigma [\eta_{1 \ldots \ell} \sqcup \tau(\eta_{(\ell + 1) \ldots (\ell + m)} \sqcup \tilde{\tau} (\eta_{- (\ell + 1) \ldots - (\ell + m)}))]) \mathd \eta_{(\ell + 1) \ldots (\ell + m)},
\]
where $\sigma (\eta_{1 \ldots n}) = \eta_{\sigma (1) \ldots \sigma (n)}$. This is just a formal manner of writing that we sum over all ways in which $2m$ of the $2m + \ell$ variables can be paired and integrated out. For example, we have
\[ \text{} W (\mathd \eta_1) W (\mathd \eta_2) = W (\mathd \eta_{12}) + \delta
   (\eta_1 + \eta_{- 2}) \mathd \eta_1 \mathd \eta_2 . \]
In general we will denote by $G^{\tau}_{\ell}$ the kernel of the $\ell$-th
chaos arising from the decomposition of $X^{\tau}$:
\[ X^{\tau} (t, x) = \sum_{\ell = 0}^n \int_{(\R \times E)^{\ell}}
   G^{\tau}_{\ell} (t, x, \eta_{1 \cdots \ell}) W (\mathd \eta_{1 \cdots
   \ell}) . \]
Terms $X^{\tau}$ of odd degree have zero mean by construction while the terms
of even degree have zero mean due to the fact that if $d (\tau) = 2 n$ we have
\[
   \E [X^{\tau} (t, x)] = 2^{-n} \sum_{\substack{\tau \in \mathcal{S}(n,2n) \\ \tilde \tau \in \mathcal{S}_n}} \int_{(\R \times E)^n} G^{\tau} (t, x, \tau (\eta_{1 \cdots n }\sqcup \tilde \tau(\eta_{(- 1) \cdots (- n)}))) \mathd \eta_{1 \ldots n}.
\]
But now $k_{[1 \cdots n (- \tilde \tau(1)) \cdots (- \tilde \tau(n))]} = k_1 + \cdots + k_n - k_{\tilde \tau(1)} \cdots
- k_{\tilde \tau(n)} = 0$ and we always have $G^{\tau} (t, x, \eta_{1 \cdots 2 n}) \propto
k_{[1 \cdots (2 n)]}$, which implies that
\[
   G^{\tau} (t, x, \tau (\eta_{1 \cdots n }\sqcup \tilde \tau(\eta_{(- 1) \cdots (- n)}))) = 0.
\]
This is a special simplification of considering the stochastic Burgers
equation instead of the KPZ equation. Later we will study the kernel functions
for the KPZ equation to understand some subtle cancellations which appear in
the terms belonging to the $0$-th chaos.

Applying these considerations to the first nontrivial case given by
$X^{\zzone}$, we obtain
\[ X^{\zzone} (t, x) = \int_{(\R \times E)^2} G^{\zzone} (t, x,
   \eta_{12}) W (\mathd \eta_1 \mathd \eta_2) + G^{\zzone}_0 (t, x) \]
with
\[ G^{\zzone}_0 (t, x) = \int_{(\R \times E)^2} G^{\zzone} (t, x,
   \eta_{1 (- 1)}) \mathd \eta_1 . \]
But as already remarked
\[ G^{\zzone} (t, x, \eta_{1 (- 1)}) = e^{i k_{[1 (- 1)]} x} \int_0^t H_{t -
   \sigma} (k_{[1 (- 1)]}) H_{\sigma - s_1} (k_1) H_{\sigma - s_2} (k_{- 1})
   \mathd \sigma = 0 \]
since $H_{t - \sigma} (0) = 0$. Consider the next term
\[
   X^{\zztwo} (t, x) = \int_{(\R \times E)^3} G^{\zztwo} (t, x, \eta_{123}) W (\mathd \eta_1 \mathd \eta_2 \mathd \eta_3) + \int_{\R \times E} G^{\zztwo}_1 (t, x, \eta_1) W (\mathd \eta_1).
\]
In this case we have three possible contractions contributing to
$G^{\zztwo}_1$, which result in
\[ G^{\zztwo}_1 (t, x, \eta_1) =  \int_{\R \times E} (G^{\zztwo} (t,
   x, \eta_{12 (- 2)}) + G^{\zztwo} (t, x, \eta_{21 (- 2)}) + G^{\zztwo} (t,
   x, \eta_{2 (- 2) 1})) \mathd \eta_2 . \]
But note that \ $G^{\zztwo} (t, x, \eta_{2 (- 2) 1}) = 0$ since, as above,
this kernel is proportional to $k_{2 (- 2)} = 0$. Moreover, by symmetry
$G^{\zztwo} (t, x, \eta_{12 (- 2)}) = G^{\zztwo} (t, x, \eta_{21 (- 2)})$ and
we remain with
\[
   G^{\zztwo}_1 (t, x, \eta_1) = 2 G^{\zzfive} (t, x, \eta_1) = 2 \int_{\R \times E} G^{\zztwo} (t, x, \eta_{12 (- 2)}) \mathd \eta_2,
\]
where we introduced the intuitive notation $G^{\zzfive} (t, x, \eta_1)$ which
is useful to keep track graphically of the Wick contractions of the kernels
$G^{\tau}$ by representing them as arcs between leaves of the binary tree.

Now an easy computation gives
\[ G^{\zzfive} (t, x, \eta_1) = e^{i k_1 x} \int_0^t \mathd \sigma
   \int_0^{\sigma} \mathd \sigma' H_{t - \sigma} (k_1) H_{\sigma' - s_1} (k_1)
   V^{\zzfive} (\sigma - \sigma', k_1), \]
where
\[
   V^{\zzfive} (\sigma, k_1) = \int H_{\sigma} (k_{[12]}) H_{\sigma - s_2} (k_2) H_{- s_2} (k_{- 2}) \mathd \eta_2 = \int \mathd k_2 H_{\sigma} (k_{[12]}) \frac{e^{- | \sigma | k_2^2}}{2}.
\]
We call the functions $V_n^{\tau}$ vertex functions. They are useful to
compare the behavior of different kernels.

By similar arguments we can establish the decomposition for the last two
terms, that is
\[
   X^{\zzthreereso} (t, x) = \int_{(\R \times E)^3} G^{\zzthreereso} (t, x, \eta_{1234}) W (\mathd \eta_{1234}) + \int_{(\R \times E)^2} G^{\zzthreereso}_2 (t, x, \eta_{12}) W (\mathd \eta_{12})
\]
and
\[ X^{\zzfour} (t, x) = \int_{(\R \times E)^3} G^{\zzfour} (t, x,
   \eta_{1234}) W (\mathd \eta_{1234}) + \int_{(\R \times E)^2}
   G^{\zzfour}_2 (t, x, \eta_{12}) W (\mathd \eta_{12}) \]
with
\begin{align*}
   G^{\zzthreereso}_2 (t, x, \eta_{12}) & = \int_{\R \times E} (G^{\zzthreereso} (t, x, \eta_{123 (- 3)}) + 2 G^{\zzthreereso} (t, x, \eta_{13 (- 3) 2}) + 2 G^{\zzthreereso} (t, x, \eta_{132 (- 3)})) \mathd \eta_3 \\
   & = (G^{\zzsixreso} (t, x, \eta_{12}) + 2 G^{\zzsevenreso} (t, x, \eta_{12}) + 2 G^{\zzeightreso} (t, x, \eta_{12}))
\end{align*}
and
\[
   G^{\zzfour}_2 (t, x, \eta_{12}) = 4 \int_{\R \times E} G^{\zzfour}(t, x, \eta_{13 (- 3) 2}) \mathd \eta_3 = 4 G^{\zznine} (t, x, \eta_{12}) .
\]
Here the contributions associated to $G^{\zzsixreso} (t, x, \eta_{12})$ and $G^{\zzsevenreso} (t, x, \eta_{12})$ are ``reducible'' since they can be conveniently factorized as follows:
\begin{align*}
   G^{\zzsixreso} (t, x, \eta_{12}) & = \int_{\R \times E} G^{\zzthreereso} (t, x, \eta_{123 (- 3)}) \mathd \eta_3 \\
   & = e^{i k_{[12]} x} \int_0^t \mathd \sigma \int_0^{\sigma} \mathd \sigma' \int_0^{\sigma'} \mathd \sigma'' H_{t - \sigma} (k_{[12]}) H_{\sigma' - \sigma''} (k_{[12]}) H_{\sigma'' - s_1} (k_1) \\
   &\hspace{130pt} \times H_{\sigma'' - s_2} (k_2)V^{\zzfivereso} (\sigma - \sigma', k_{[12]})
\end{align*}
with
\begin{align*}
   V^{\zzfivereso} (\sigma, k_1) & = \int  \psi_\circ(k_{[12]}, k_{-2}) H_{\sigma} (k_{[12]}) H_{\sigma - s_2} (k_2) H_{- s_2} (k_{- 2}) \mathd \eta_2 \\
   &= \int \mathd k_2 \psi_\circ(k_{[12]}, k_{-2}) H_{\sigma} (k_{[12]}) \frac{e^{- | \sigma | k_2^2}}{2}.
\end{align*}
Also,
\begin{align*}
   G^{\zzsevenreso} (t, x, \eta_{12}) & = \int_{\R \times E} G^{\zzthreereso} (t, x, \eta_{13 (- 3) 2}) \mathd \eta_3 \\
   & = e^{i k_{[12]} x} \psi_\circ(k_1, k_2) \int_0^t \mathd \sigma H_{t - \sigma} (k_{[12]}) H_{\sigma - s_2} (k_2) \int_0^{\sigma} \mathd \sigma' \int_0^{\sigma'} \mathd \sigma'' H_{\sigma - \sigma'} (k_1) \\
   &\hspace{220pt} \times H_{\sigma'' - s_1} (k_1) V^{\zzfive} (\sigma' - \sigma'', k_1) \\
   & = e^{i k_{[12]} x} \psi_\circ(k_1, k_2) \int_0^t \mathd \sigma H_{t - \sigma} (k_{[12]}) H_{\sigma - s_2} (k_2) e^{- i k_1 x} G^{\zzfive} (\sigma, x, \eta_1).
\end{align*}
On the other side, the term $G^{\zzeightreso} (t, x, \eta_{12})$ cannot be reduced to a form involving $V^{\zzfive}$ or $V^{\zzfivereso}$, and instead we have for it
\begin{align*}
   G^{\zzeightreso} (t, x, \eta_{12}) & = \int_{\R \times E} G^{\zzthreereso} (t, x, \eta_{132 (- 3)}) \mathd \eta_3 \\
   & = e^{i k_{[12]} x} \int_0^t \mathd \sigma \int_0^{\sigma} \mathd \sigma' \int_0^{\sigma'} \mathd \sigma'' H_{t - \sigma} (k_{[12]}) H_{\sigma' -  s_2} (k_2) H_{\sigma'' - s_1} (k_1) \times \\
   &\hspace{150pt} \times V^{\zzeightreso} (\sigma - \sigma', \sigma - \sigma'', k_{12})
\end{align*}
with
\begin{align*}
   V^{\zzeightreso} (\sigma - \sigma', \sigma - \sigma'', k_{12}) & = \int_{\R \times E} \mathd \eta_3 \psi_\circ(k_{[132]}, k_{-3}) H_{\sigma - s_3} (k_{- 3}) H_{\sigma - \sigma'} (k_{[132]}) \times \\
   &\hspace{150pt} \times H_{\sigma' - \sigma''} (k_{[13]}) H_{\sigma'' - s_3} (k_3) \\
   & = \int_E \mathd k_3 \psi_\circ(k_{[123]}, k_{-3}) H_{\sigma - \sigma'} (k_{[123]}) H_{\sigma' - \sigma''} (k_{[13]}) \frac{e^{- k_3^2 | \sigma - \sigma'' |}}{2}.
\end{align*}
Similarly we have for $G^{\zznine}$
\begin{align*}
   G^{\zznine} (t, x, \eta_{12}) & = \int_{\R \times E} G^{\zzfour} (t, x, \eta_{13 (- 3) 2}) \mathd \eta_3 \\
   & = e^{i k_{[12]} x} \int_0^t \mathd \sigma \int_0^{\sigma} \mathd \sigma' \int_0^{\sigma} \mathd \sigma'' H_{t - \sigma} (k_{[12]}) H_{\sigma' - s_1} (k_1) H_{\sigma'' - s_2} (k_2) \times \\
   &\hspace{150pt} \times V^{\zznine} (\sigma - \sigma', \sigma - \sigma'', k_{12})
\end{align*}
with a vertex function which we choose to write in symmetrized form:
\begin{align*}
   V^{\zznine} (\sigma - \sigma', \sigma - \sigma'', k_{12}) & = \frac{1}{2} \int_{\R \times E} \mathd \eta_3 H_{\sigma - \sigma'} (k_{[13]}) H_{\sigma - \sigma''} (k_{[2 (- 3)]}) H_{\sigma' - s_3} (k_3) H_{\sigma'' - s_3} (k_{- 3}) \\
   &\quad + \frac{1}{2} \int_{\R \times E} \mathd \eta_3 H_{\sigma - \sigma''} (k_{[13]}) H_{\sigma - \sigma'} (k_{[2 (- 3)]}) H_{\sigma' - s_3}(k_3) H_{\sigma'' - s_3} (k_{- 3}) \\
   & = \frac{1}{2} \int_E \mathd k_3 H_{\sigma - \sigma'} (k_{[13]}) H_{\sigma - \sigma''} (k_{[2 (- 3)]})  \frac{e^{- k_3^2 | \sigma' - \sigma'' |}}{2} \\
   &\quad + \frac{1}{2} \int_E \mathd k_3 H_{\sigma - \sigma''} (k_{[13]}) H_{\sigma - \sigma'} (k_{[2 (- 3)]})  \frac{e^{- k_3^2 | \sigma' - \sigma'' |}}{2}.
\end{align*}

\subsection{Feynman diagrams}

While here we only need to analyze few of the random fields $X^{\tau}$, we
think it useful to give a general perspective on their structure and in
particular on the structure of the kernels $G^{\tau}$ and on the Wick
contractions. Doing this will build a link with the theoretical physics
methods and with quantum field theory (QFT), in particular with the Martin--Siggia--Rose response
field formalism which has been used in the QFT analysis of the stochastic Burgers equations 
since the seminal work of Forster, Nelson, and Stephen~\cite{Forster1977}.

The explicit form of the kernels $G^{\tau}$ can be described in terms of
Feynman diagrams and the associated rules. To each kernel $G^{\tau}$ we can
associate a graph which is isomorphic to the tree $\tau$ and this graph can be
mapped with Feynman rules to the explicit functional form of $G^{\tau}$. The
algorithm goes as follows: consider $\tau$ as a graph, where each edge and
each internal vertex (i.e. not a leaf) are drawn as
\[
   \vcenter{\hbox{\resizebox{1cm}{!}{\includegraphics{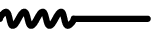}}}}
   \hspace{1em} \text{and} \hspace{1em}
   \vcenter{\hbox{\resizebox{1cm}{!}{\includegraphics{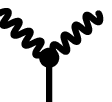}}}}.
\]
To the trees \resizebox{1em}{!}{\includegraphics{trees-1.eps}},
\resizebox{1em}{!}{\includegraphics{trees-2.eps}},
\resizebox{1em}{!}{\includegraphics{trees-3.eps}},
\resizebox{1.2em}{!}{\includegraphics{trees-4.eps}} we associate,
respectively, the diagrams
\[ \text{$\begin{array}{c|c|c|c}
     &  &  & \\
     \vcenter{\hbox{\resizebox{1em}{!}{\includegraphics{trees-1.eps}}}} &
     \vcenter{\hbox{\resizebox{1em}{!}{\includegraphics{trees-2.eps}}}} &
     \vcenter{\hbox{\resizebox{1em}{!}{\includegraphics{trees-3.eps}}}} &
     \vcenter{\hbox{\resizebox{1em}{!}{\includegraphics{trees-4.eps}}}}\\
     &  &  & \\
     \hline
     &  &  & \\
     \vcenter{\hbox{\resizebox{.5cm}{!}{\includegraphics{diagrams-4.eps}}}} &
     \vcenter{\hbox{\resizebox{!}{1.5cm}{\includegraphics{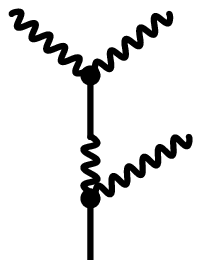}}}} &
     \vcenter{\hbox{\resizebox{!}{1.5cm}{\includegraphics{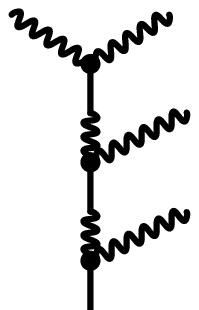}}}} &
     \vcenter{\hbox{\resizebox{!}{1.5cm}{\includegraphics{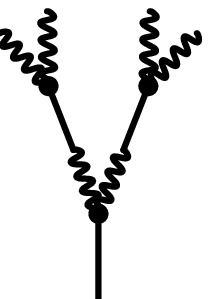}}}}\\
     &  &  & 
   \end{array}$} . \]
These diagrams correspond to kernels via the following rules: each internal
vertex comes with a time integration and a factor $(i k)$,
\[ \vcenter{\hbox{\includegraphics{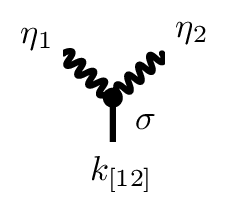}}} \hspace{2em}
   \longrightarrow \hspace{2em} (i k_{[12]}) \int_{\R_+} \mathd
   \sigma . \]
Each external wiggly line is associated to a variable $\eta_i$ and a factor
$H_{\sigma - s_i} (k_i)$, where $\sigma$ is the integration variable of the
internal vertex to which the line is attached. Each response line is
associated to a factor $h_{\sigma - \sigma'} (k)$, where $k$ is the moment
carried by the line and $\sigma, \sigma'$ are the time labels of the vertices
to which it is attached:
\[ \vcenter{\hbox{\includegraphics{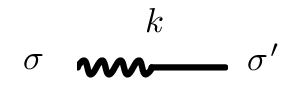}}} \hspace{2em}
   \longrightarrow \hspace{2em} h_{\sigma - \sigma'} (k) . \]
Note that these lines carry information about the casual propagation. Finally,
the outgoing line always carries a factor $h_{t - \sigma} (k)$, where $k$ is
the outgoing momentum and is $\sigma$ the time label of the vertex to which
the line is attached. For example:
\[ \vcenter{\hbox{\includegraphics{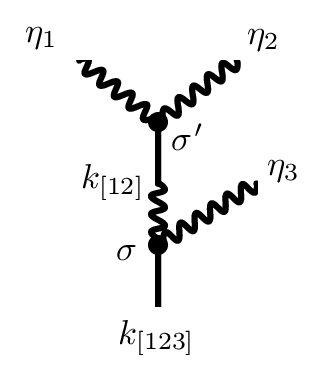}}} \hspace{1em}
   \longrightarrow \hspace{1em} \begin{array}{c}
     \int_{\R_+^2} \mathd \sigma \mathd \sigma' (i k_{[123]}) h_{t -
     \sigma} (k_{[123]}) (i k_{[12]}) h_{\sigma - \sigma'} (k_{[12]}) \times\\
     \times H_{\sigma' - s_1} (k_1) H_{\sigma' - s_2} (k_2) H_{\sigma - s_3}
     (k_3) .
   \end{array} \]
Once given a diagram, the associated Wick contractions are obtained by all
possible pairings of the wiggly lines. To each of these pairings we associate
the corresponding correlation function of the Ornstein-Uhlenbeck process and
an integration over the momentum variable carried by the line:
\[ \vcenter{\hbox{\includegraphics{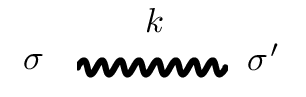}}} \hspace{2em}
   \longrightarrow \hspace{2em} \int_E \mathd k \frac{e^{- k^2 | \sigma -
   \sigma' |}}{2} . \]
So for example we have
\[
   G^{\zztwo}_1 (t, x, \eta_1) = 2 \times \vcenter{\hbox{\includegraphics{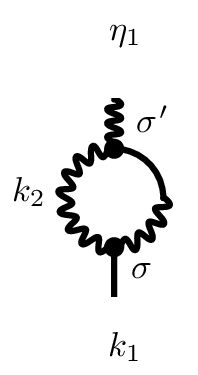}}} \small{= 2 \int_{\R_+^2} \mathd \sigma \mathd \sigma' H_{t - \sigma} (k_1) \int_E \mathd k_2  \frac{e^{-  k^2_2 | \sigma - \sigma' |}}{2} H_{\sigma - \sigma'} (k_{[1 (- 2)]}) H_{\sigma' - s_1} (k_1)},
\]
which coincides with the expression obtained previously. Note that we also
have to take into account the multiplicities of the different ways in which
each graph can be obtained. The contractions arising from $G^{\zzthree}$ and
$G^{\zzfour}$ result in the following set of diagrams:

\[ G^{\zzfour}_2 = 4 G^{\zznine} = 4 \times
   \vcenter{\hbox{\resizebox{!}{1.5cm}{\includegraphics{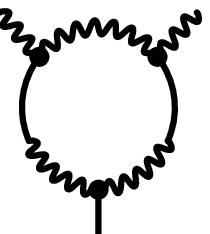}}}} \hspace{1em}
   \text{and} \hspace{1em} G^{\zzseven} =
   \vcenter{\hbox{\resizebox{!}{2cm}{\includegraphics{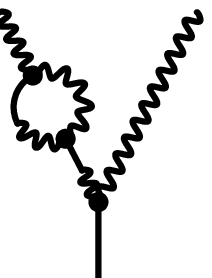}}}}, \hspace{2em}
   G^{\zzsix} =
   \vcenter{\hbox{\resizebox{!}{2.5cm}{\includegraphics{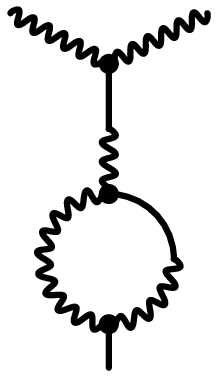}}}},
   \hspace{2em} G^{\zzeight} =
   \vcenter{\hbox{\resizebox{!}{1.5cm}{\includegraphics{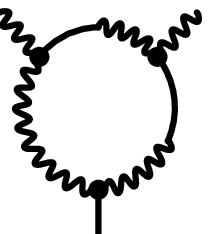}}}} . \]

The diagrammatic representation makes pictorially evident what we already have
remarked with explicit computations: $G^{\zzseven}$ and $G^{\zzsix}$ are
formed by the union of two graphs:
\[ \vcenter{\hbox{\resizebox{!}{1.0cm}{\includegraphics{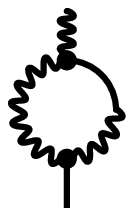}}}} \ \ and \ \
   \vcenter{\hbox{\resizebox{!}{.7cm}{\includegraphics{diagrams-4.eps}}}}, \]
while the kernel $G^{\zzeight}$ cannot be decomposed in such a way and it has
a shape very similar to that of $G^{\zznine}$.

\subsection{Bounds on the vertex functions}\label{ssec:vertex functions}

Up to now we have obtained explicit expressions for the kernels appearing in
the Wick contractions. These kernels feature vertex functions. Let us start
the analysis by bounding them. Consider for example the first non-trivial one,
\[ G^{\zzfive} (t, x, \eta_1) = \int_0^t \mathd \sigma \int_0^{\sigma} \mathd
   \sigma' H_{t - \sigma} (k_1) H_{\sigma' - s_1} (k_1) V^{\zzfive} (\sigma -
   \sigma', k_1) . \]
We have
\[ |G^{\zzfive} (t, x, \eta_1) | \leqslant \int_0^t \mathd \sigma
   \int_0^{\sigma} \mathd \sigma' M_{t - \sigma} (k_1) M_{\sigma' - s_1} (k_1)
   |V^{\zzfive} (\sigma - \sigma', k_1) |, \]
where $M_t (k) = | H_t (k) | = | k | \exp (- k^2 t) \1_{t \geqslant
0}$. Since the integrand is positive, we can extend the domain of integration
to obtain an upper bound:
\[ |G^{\zzfive} (t, x, \eta_1) | \leqslant \int_{\R}^{} \mathd \sigma
   \int_{\R}^{} \mathd \sigma' M_{t - \sigma} (k_1) M_{\sigma' - s_1}
   (k_1) |V^{\zzfive} (\sigma - \sigma', k_1) | = Z (t, x, \eta_1). \]
The quantity $Z (t, x, \eta_1)$ is given by a multiple convolution, and since
we are interested in $L^2$ bounds of $G^{\zzfive} (t, x, \eta_1)$, we can pass
to Fourier variables in time to decouple the various dependencies:
\[ \int_{\R \times E} |G^{\zzfive} (t, x, \eta_1) |^2 \mathd \eta_1
   \leqslant \int_{\R \times E} |Z (t, x, \eta_1) |^2 \mathd \eta_1 =
   (2 \pi)^{- 1} \int_{\R \times E} | \hat{Z} (t, x, \theta_1) |^2
   \mathd \theta_1, \]
where $\theta_1 = (\omega_1, k_1)$ and
\[ \hat{Z} (t, x, \theta_1) = \int_{\R} e^{- i \omega_1 s_1} Z (t, x,
   \eta_1) \mathd s_1 = \hat{M} (- \theta_1) \hat{M} (- \theta_1)
   \mathcal{V}^{\zzfive} (- \omega_1, k_1) \]
with
\[ \mathcal{V}^{\zzfive} (\omega_1, k_1) = \int_{\R} e^{- i \omega_1
   \sigma} |V^{\zzfive} (\sigma, k_1) | \mathd \sigma . \]
Then
\begin{align*}
   \int_{\R \times E} | \hat{Z} (t, x, \theta_1) |^2 \mathd \theta_1 & \leqslant \int_{\R \times E} | \hat{M} (\theta_1) \hat{M} (\theta_1) |^2 |\mathcal{V}^{\zzfive} (\omega_1, k_1) |^2 \mathd \theta_1\\
   & \leqslant \int_{\R \times E} |Q (\theta_1) Q (\theta_1) |^2 \Big( \int_{\R} |V^{\zzfive} (\sigma, k_1) | \mathd \sigma \Big)^2 \mathd \theta_1,
\end{align*}
where $Q (\theta) = \hat{H} (\theta)$ is the time Fourier transform of $H$:
\[ Q (\theta) = \frac{i k}{i \omega + k^2} . \]
This computation hints to the fact that the relevant norm on the vertex
functions is given by the supremum norm in the wave vectors of the $L^1$ norm
in the time variable. Summing up, we have in this case
\[ \int_{\R \times E} |G^{\zzfive} (t, x, \eta_1) |^2 \mathd \eta_1
   \leqslant \int_{\R \times E} |Q (\theta_1) Q (\theta_1) |^2
   \|V^{\zzfive} (\sigma, k_1)\|_{L^1_{\sigma}}^2 \mathd \theta_1 . \]
For the other contraction kernels we can proceed similarly. Consider first
\begin{align*}
   G^{\zzsixreso} (t, x, \eta_{12}) & = \int_{\R \times E} G^{\zzthreereso} (t, x, \eta_{123 (- 3)}) \mathd \eta_3 \\
   & = e^{i k_{[12]} x} \int_0^t \mathd \sigma \int_0^{\sigma} \mathd \sigma' \int_0^{\sigma'} \mathd \sigma'' H_{t - \sigma} (k_{[12]}) H_{\sigma' - \sigma''} (k_{[12]}) H_{\sigma'' - s_1} (k_1) \times \\
   &\hspace{150pt} \times H_{\sigma'' - s_2} (k_2) V^{\zzfivereso} (\sigma - \sigma', k_{[12]}),
\end{align*}
for which we have
\[ \int_{(\R \times E)^2} |G^{\zzsixreso} (t, x, \eta_{12}) |^2 \mathd
   \eta_{12} \leqslant \int_{(\R \times E)^2} |Q (\theta_{[12]}) Q
   (\theta_{[12]}) Q (\theta_1) Q (\theta_2) |^2 \|V^{\zzfivereso} (\sigma,
   k_{[12]})\|_{L^1_{\sigma}}^2 \mathd \theta_{12} . \]
The next term is
\[ G^{\zzsevenreso} (t, x, \eta_{12}) = e^{i k_{[12]} x} \psi_\circ(k_1,k_2) \int_0^t \mathd \sigma
   H_{t - \sigma} (k_{[12]}) H_{\sigma - s_2} (k_2) e^{- i k_1 x} G^{\zzfive}
   (\sigma, x, \eta_1), \]
for which
\[
   \int_{(\R \times E)^2} |G^{\zzsevenreso} (t, x, \eta_{12}) |^2 \mathd \eta_{12} \leqslant \int_{(\R \times E)^2} |\psi_\circ(k_1,k_2) Q (\theta_{[12]}) Q (\theta_1) Q (\theta_1) Q (\theta_2) |^2 \|V^{\zzfive} (\sigma, k_1)\|_{L^1_{\sigma}}^2 \mathd \theta_{12} . \]
And finally
\begin{align*}
   G^{\zzeightreso} (t, x, \eta_{12}) & = e^{i k_{[12]} x} \int_0^t \mathd \sigma \int_0^{\sigma'} \mathd \sigma' \int_0^{\sigma'} \mathd \sigma'' H_{t - \sigma} (k_{[12]}) H_{\sigma' - s_2} (k_2) H_{\sigma'' - s_1} (k_1) \times \\
   &\hspace{150pt} \times V^{\zzeightreso} (\sigma - \sigma', \sigma - \sigma'', k_{12}),
\end{align*}
for which
\[ \int_{(\R \times E)^2} |G^{\zzeightreso}_{} (t, x, \eta_{12}) |^2
   \mathd \eta_{12} \leqslant \int_{(\R \times E)^2} |Q
   (\theta_{[12]}) Q (\theta_1) Q (\theta_2) |^2 \|V_{}^{\zzeightreso} (\sigma,
   \sigma', k_{12})\|_{L^1_{\sigma} L^1_{\sigma'}}^2 \mathd \theta_{12} . \]
Similarly, we get
\[ \int_{(\R \times E)^2} |G^{\zznine}_{} (t, x, \eta_{12}) |^2
   \mathd \eta_{12} \leqslant \int_{(\R \times E)^2} |Q
   (\theta_{[12]}) Q (\theta_1) Q (\theta_2) |^2 \|V_{}^{\zznine} (\sigma,
   \sigma', k_{12})\|_{L^1_{\sigma} L^1_{\sigma'}}^2 \mathd \theta_{12} . \]
For these computations to be useful it remains to obtain explicit
bounds for the norms of the vertex functions.
   
\begin{lemma}\label{lem:Vzzfive}
   For any $\varepsilon > 0$ we have
   \[
   \int_{\R} \mathd \sigma |V^{\zzfive} (\sigma, k_1) | + \int_{\R} \mathd \sigma |V^{\zzfivereso} (\sigma, k_1) \lesssim | k_1 |^{\varepsilon}.
   \]
\end{lemma}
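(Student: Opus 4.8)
The plan is to extract the cancellation hidden in the vertex functions \emph{before} taking absolute values. Bounding the kernels pointwise, i.e. moving the modulus inside the momentum sum, leads to a bound of the form $\sum_{k_2}\frac{|k_1+k_2|}{(k_1+k_2)^2+k_2^2}$, which diverges logarithmically, so some symmetrization is unavoidable.

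First I would restrict to $\sigma\geqslant 0$ (for $\sigma<0$ both vertex functions vanish, since $H_\sigma=ik h_\sigma$ with $h_\sigma=e^{-k^2\sigma}\1_{\sigma\geqslant 0}$) and record the counting-measure representation valid for $k_1\in E$,
\[
  V^{\zzfive}(\sigma,k_1)=\frac{i}{2}\sum_{k_2\in\Z\setminus\{0,-k_1\}}(k_1+k_2)\,e^{-[(k_1+k_2)^2+k_2^2]\sigma},
\]
the index $k_2=-k_1$ being dropped because $H_\sigma(0)=0$, and likewise for $V^{\zzfivereso}$ with an extra factor $\psi_\circ(k_1+k_2,-k_2)$ in each summand. (When $k_1=0$ both sums vanish by the oddness $k_2\mapsto-k_2$, using $\rho_i(-k_2)=\rho_i(k_2)$, so that case is trivial and anyway $0\notin E$.)

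The key step is to use the involution $\phi(k_2)=-k_1-k_2$ on the index set $\Z\setminus\{0,-k_1\}$, which is invariant under $\phi$. Under $\phi$ the exponent $(k_1+k_2)^2+k_2^2$ is invariant, the cutoff $\psi_\circ(k_1+k_2,-k_2)$ is invariant (from $\psi_\circ(a,b)=\psi_\circ(b,a)$ and the evenness of the $\rho_i$), while the linear prefactor transforms as $(k_1+k_2)\mapsto-k_2$. Averaging each term over its $\phi$-orbit therefore replaces $(k_1+k_2)$ by $\tfrac12[(k_1+k_2)+(-k_2)]=\tfrac12 k_1$, a gain of a full frequency; the fixed point of $\phi$, present only for $k_1$ even, needs no separate treatment because the orbit-averaging identity $\tfrac12\sum_{k_2}[f(k_2)+f(\phi(k_2))]=\sum_{k_2}f(k_2)$ holds verbatim at fixed points. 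Thus
\[
  V^{\zzfive}(\sigma,k_1)=\frac{ik_1}{4}\sum_{k_2}e^{-[(k_1+k_2)^2+k_2^2]\sigma},\qquad V^{\zzfivereso}(\sigma,k_1)=\frac{ik_1}{4}\sum_{k_2}\psi_\circ(k_1+k_2,-k_2)\,e^{-[(k_1+k_2)^2+k_2^2]\sigma},
\]
the sums being over $k_2\in\Z\setminus\{0,-k_1\}$.

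Finally I would integrate term by term in $\sigma$: since $\int_{\R}e^{-c\sigma}\1_{\sigma\geqslant 0}\,\mathd\sigma=1/c$ and $0\leqslant\psi_\circ\leqslant 1$,
\[
  \int_{\R}|V^{\zzfive}(\sigma,k_1)|\,\mathd\sigma+\int_{\R}|V^{\zzfivereso}(\sigma,k_1)|\,\mathd\sigma\lesssim |k_1|\sum_{k_2\in\Z}\frac{1}{(k_1+k_2)^2+k_2^2}.
\]
Completing the square gives $(k_1+k_2)^2+k_2^2=2(k_2+k_1/2)^2+k_1^2/2\gtrsim(k_2+k_1/2)^2+k_1^2$, and for $|k_1|\geqslant 1$ the shifted sum $\sum_{k_2\in\Z}\big((k_2+k_1/2)^2+k_1^2\big)^{-1}$ is bounded, uniformly in the shift, by $\int_{\R}(x^2+k_1^2)^{-1}\,\mathd x+|k_1|^{-2}\lesssim|k_1|^{-1}$ (comparison of a unimodal function with its integral). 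Hence both integrals are $\lesssim 1$, which for $k_1\in E$ is $\leqslant|k_1|^{\varepsilon}$, proving the claim.

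I expect the only genuine obstacle to be the first one: recognizing that the estimate must be obtained by symmetrizing the momentum sum rather than bounding the kernel pointwise. Once the involution $k_2\mapsto-k_1-k_2$ is in place, the rest is an elementary sum–integral comparison, and in fact the stated loss $|k_1|^{\varepsilon}$ is not needed — one obtains a constant.
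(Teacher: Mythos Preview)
Your proof is correct and in fact sharper than the paper's: you obtain a bound $\lesssim 1$ rather than $|k_1|^{\varepsilon}$. Both arguments recognize that taking absolute values inside the $k_2$-sum leads to a logarithmic divergence and that a cancellation must be exploited first, but the mechanisms differ. The paper subtracts the vanishing quantity $V^{\zzfive}(\sigma,0)$ to produce the increment $H_\sigma(k_{[12]})-H_\sigma(k_2)$, then applies a first-order Taylor expansion in $k_1$; this introduces an auxiliary parameter $\tau\in[0,1]$ whose integral forces the $|k_1|^{\varepsilon}$ loss via interpolation. Your involution $k_2\mapsto -k_1-k_2$ instead yields an \emph{exact} identity $V^{\zzfive}(\sigma,k_1)=\tfrac{ik_1}{4}\sum_{k_2}e^{-[(k_1+k_2)^2+k_2^2]\sigma}$, after which everything is nonnegative and a single sum--integral comparison closes the argument. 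The paper's method has the advantage of being more robust to perturbations of the kernel: in Section~\ref{sec:discrete stochastics} the discrete vertex function $V^{\zzfive}_N$ carries asymmetric factors $h_\varepsilon$, $g_\varepsilon$, $f_\varepsilon$ that break your involution symmetry, and there the subtraction-plus-Taylor approach is what survives. For the present continuous lemma, however, your route is the cleaner one.
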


\begin{proof}
Some care has to be exercised since a too bold bounding would fail to give a finite result. Indeed, a direct estimation would result in
\[
   \int_{\R} \mathd \sigma \left| V^{\zzfive} (\sigma, k_1) \right| \leqslant \int_E \mathd k_2 \int_0^{\infty} \mathd \sigma | k_{[1 2]} | e^{- \sigma (k_2^2 + k_{[1 2]}^2)} \leqslant \int_E \mathd k_2 \frac{| k_{[1 2]} |}{k_2^2 + k_{[1 2]}^2} = + \infty,
\]
due to the logarithmic divergence at infinity (recall that $\int_E$ stands for $\sum_{\Z \setminus \{0\}}$). To overcome this problem, observe that
\[
   V^{\zzfive} (\sigma, 0) = 2 \int_E \mathd k_2 H_{\sigma} (k_2) \frac{e^{- | \sigma | k_2^2}}{2} = 0
\]
since the integrand is an odd function of $k_2$. So we can write instead
\[
   V^{\zzfive} (\sigma, k_1) = 2 \int_E \mathd k_2 [H_{\sigma} (k_{[1 2]}) - H_{\sigma} (k_{2})] \frac{e^{- | \sigma | k_2^2}}{2},
\]
and at this point it is easy to verify that
\[
   \int_{\R} \mathd \sigma |V^{\zzfive} (\sigma, k_1) | \lesssim | k_1 |^{\varepsilon}
\]
for arbitrarily small $\varepsilon > 0$. Indeed,
\[
   \int_{\R} \mathd \sigma |V^{\zzfive} (\sigma, k_1) | \lesssim \int_E \mathd k_2 \int_0^{\infty} \mathd \sigma | H_{\sigma} (k_{[1 2]}) - H_{\sigma} (k_2) | e^{- \sigma k_2^2}
\]
and a first order Taylor expansion gives
\[
   | H_{\sigma} (k_{[1 2]}) - H_{\sigma} (k_{2}) | \lesssim | k_1 | \int_0^1 \mathd \tau e^{- c (k_2 + \tau k_1)^2 \sigma} . \]
Therefore,
\begin{align*}
   \int_{\R} \mathd \sigma |V^{\zzfive} (\sigma, k_1) | & \lesssim | k_1 | \int_0^1 \mathd \tau \int_E \mathd k_2 \int_0^{ \infty} \mathd \sigma e^{- c (k_2 + \tau k_1)^2 \sigma - k_2^2 \sigma} \\
   & \lesssim | k_1 | \int_0^1 \mathd \tau \int_E \frac{\mathd k_2}{(k_2 + \tau k_1)^2+ k_2^2},
\end{align*}
but now
\[
   \int_E \frac{\mathd k_2}{(k_2 + \tau k_1)^2 + k_2^2} \lesssim \int_E \frac{\mathd k_2}{k_2^2} \lesssim 1.
\]
On the other side, the sum over $E$ is bounded by the corresponding integral over $\R$, so a change of variables gives
\[
   \int_E \frac{\mathd k_2}{(k_2 + \tau k_1)^2_{} + k_2^2} \lesssim \int_{\R} \frac{\mathd k_2}{(k_2 + \tau k_1)^2 + k_2^2} \lesssim \frac{1}{\tau | k_1 |} \int_{\R} \frac{\mathd k_2}{(k_2 + 1)^2 + k_2^2} \lesssim \frac{1}{\tau | k_1 |}.
\]
By interpolating these two bounds, we obtain
\[
   \int_{\R} \mathd \sigma |V^{\zzfive} (\sigma, k_1) | \lesssim | k_1 |^{\varepsilon} \int_0^1 \frac{\mathd \tau}{\tau^{1 - \varepsilon}} \lesssim | k_1 |^{\varepsilon}
\]
for arbitrarily small $\varepsilon > 0$. The same arguments also give the bound for $\int_{\R} \mathd \sigma |V^{\zzfivereso} (\sigma, k_1) |$.
\end{proof}

\begin{lemma}\label{lem:Vzzeight}
   For any $\varepsilon > 0$ we have
   \[
      \int_{\R^2} \left| V^{\zzeightreso} (\sigma, \sigma', k_{12}) \right| \mathd \sigma \mathd \sigma'  \lesssim | k_{[12]} |^{- 1 + \varepsilon}.
   \]
\end{lemma}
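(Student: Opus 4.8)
The plan is to perform the two time integrations defining $V^{\zzeightreso}$ explicitly, turning the claimed estimate into a bound on a pure momentum sum, and then to exploit the resonant cutoff $\psi_\circ$ to restrict that sum to the range $|k_3|\gtrsim|k_{[12]}|$ --- which is exactly what produces decay in $|k_{[12]}|$.

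First I would note that, since $H_t(k)=ik\,e^{-k^2t}\1_{t\geqslant 0}$, the kernel $V^{\zzeightreso}$, written as a function of its two time arguments $a,b$ (so that $a=\sigma-\sigma'$ and $b=\sigma-\sigma''$ in the notation of the chaos expansion), vanishes unless $0\leqslant a\leqslant b$. Substituting $c=b-a=\sigma'-\sigma''\geqslant 0$ (unit Jacobian), the integral to be estimated becomes $\int_0^\infty\!\int_0^\infty|V^{\zzeightreso}(a,a+c,k_{12})|\,\mathd a\,\mathd c$ with
\[
   V^{\zzeightreso}(a,a+c,k_{12}) = \frac12\sum_{k_3\in E}\psi_\circ(k_{[123]},-k_3)\,H_a(k_{[123]})\,H_c(k_{[13]})\,e^{-k_3^2(a+c)}.
\]
Bounding $|H_t(k)|\leqslant M_t(k)=|k|e^{-k^2t}\1_{t\geqslant 0}$, moving the absolute value inside the sum and applying Tonelli, the $a$- and $c$-integrations decouple and are elementary, $\int_0^\infty M_t(k)e^{-k_3^2t}\,\mathd t=|k|/(k^2+k_3^2)$. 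Since $k_{[123]}=k_{[12]}+k_3$, this yields
\[
   \int_{\R^2}|V^{\zzeightreso}(\sigma,\sigma',k_{12})|\,\mathd\sigma\,\mathd\sigma'\ \lesssim\ \sum_{k_3\in E}\psi_\circ(k_{[12]}+k_3,-k_3)\,\frac{|k_{[12]}+k_3|}{(k_{[12]}+k_3)^2+k_3^2}\cdot\frac{|k_{[13]}|}{k_{[13]}^2+k_3^2},
\]
uniformly in $k_1$ (recall $k_2=k_{[12]}-k_1$ and $k_{[13]}=k_1+k_3$).

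For the momentum sum I would use $u^2+v^2\geqslant 2|uv|$ to bound each of the two fractions by $\tfrac{1}{2|k_3|}$ --- this is uniform and also absorbs the degenerate cases $k_{[12]}+k_3=0$ and $k_{[13]}=0$, where the relevant factor simply vanishes --- leaving $\sum_{k_3\in E}\psi_\circ(k_{[12]}+k_3,-k_3)\,|k_3|^{-2}$. The key point is that on the support of $\psi_\circ$ one has $|k_{[12]}+k_3|\lesssim|k_3|$: with $\psi_\circ(k,\ell)=\sum_{|i-j|\leqslant 1}\rho_i(k)\rho_j(\ell)$ and $\ell=-k_3$, $|k_3|\geqslant 1$, every surviving dyadic pair forces $|k|\lesssim|\ell|$ (the ball-supported block $\rho_{-1}=\chi$ causing no trouble precisely because $|k_3|\geqslant 1$). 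Hence $|k_{[12]}|\leqslant|k_{[12]}+k_3|+|k_3|\lesssim|k_3|$ there, so the sum runs effectively over $|k_3|\gtrsim|k_{[12]}|$, and $\sum_{|k_3|\gtrsim|k_{[12]}|}|k_3|^{-2}\lesssim|k_{[12]}|^{-1}$ when $k_{[12]}\neq 0$ (the case $k_{[12]}=0$ being vacuous since the claimed bound is then infinite). This actually gives the stronger estimate $\lesssim|k_{[12]}|^{-1}$, hence a fortiori the stated one for any $\varepsilon\geqslant 0$.

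The computation is essentially mechanical; the one genuinely necessary input is the last observation, that dropping $\psi_\circ$ would leave the convergent but \emph{non-decaying} sum $\sum_{k_3\in E}|k_3|^{-2}\simeq 1$, so it is exactly the resonance constraint encoded in $\psi_\circ$ that forces $|k_3|\gtrsim|k_{[12]}|$ and thereby produces the gain in $|k_{[12]}|$.
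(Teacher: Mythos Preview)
Your proof is correct and follows the same overall strategy as the paper: pull the absolute value inside the $k_3$-sum, integrate out the two time variables, and then estimate the resulting momentum sum. The one substantive difference is in that last step. You bound each of the two fractions $\frac{|k|}{k^2+k_3^2}$ by $\tfrac{1}{2|k_3|}$ and then invoke the resonant cutoff $\psi_\circ$ to restrict the sum to $|k_3|\gtrsim|k_{[12]}|$, which yields the sharp $|k_{[12]}|^{-1}$. The paper instead discards $\psi_\circ$ altogether, uses the finer pointwise bound $\frac{|k|}{k^2+k_3^2}\leqslant(k^2+k_3^2)^{-1/2}$, and then estimates $\sum_{k_3\in E}\frac{1}{|k_3|\,((k_3+k_{[12]})^2+k_3^2)^{1/2}}\lesssim|k_{[12]}|^{-1+\varepsilon}$ directly, the $\varepsilon$ absorbing a logarithm coming from the range $|k_3|\lesssim|k_{[12]}|$. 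So your closing remark that the resonance constraint is ``genuinely necessary'' slightly overstates the case: it is necessary once you have passed to the cruder summand $|k_3|^{-2}$, but the paper shows that even without $\psi_\circ$ the momentum sum decays in $|k_{[12]}|$. Your route is a bit cleaner and removes the $\varepsilon$; the paper's route is more robust in that it does not rely on the structure of $\psi_\circ$.
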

\begin{proof}
We get
\begin{align*}
   \int_{\R^2} \left| V^{\zzeightreso} (\sigma, \sigma', k_{12}) \right| \mathd \sigma \mathd \sigma' & \lesssim \int_{\R^2} \left| \int_E \mathd k_3 \psi_\circ(k_{[123]}, k_{-3}) H_{\sigma} (k_{[132]}) H_{\sigma'} (k_{[13]}) e^{- k_3^2 (\sigma' + \sigma)} \right| \mathd \sigma \mathd \sigma' \\
   & \lesssim \int_E \mathd k_3  \frac{| k_{[123]} |  | k_{[13]} |}{(k_3^2 + k_{[123]}^2) (k_3^2 + k_{[13]}^2)} \\
   & \lesssim \int_E \mathd k_3 \frac{1}{(k_3^2 + k_{[123]}^2)^{1 / 2} (k_3^2 + k_{[13]}^2)^{1 / 2}} \\
   & \lesssim \int_E \mathd k_3  \frac{1}{(k_3^2 + k_{[123]}^2)^{1 / 2} | k_3 |} \lesssim | k_{[12]} |^{- 1 + \varepsilon}
\end{align*}
for arbitrarily small $\varepsilon > 0$.
\end{proof}

\begin{lemma}\label{lem:Vzznine}
   For any $\varepsilon > 0$ we have
   \[
      \int_{\R^2} \left| V_{}^{\zznine} (\sigma, \sigma', k_{12}) \right| \mathd \sigma \mathd \sigma'  \lesssim | k_{[12]} |^{- 1 + \varepsilon}.
   \]
\end{lemma}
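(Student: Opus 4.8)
The plan is to follow exactly the template of the proof of Lemma~\ref{lem:Vzzeight}: first reduce the two time integrals to a completely explicit computation, and then estimate the remaining momentum sum by comparison with a standard convolution-type sum.

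First I would rewrite the vertex function. Setting $a=\sigma-\sigma'$ and $b=\sigma-\sigma''$, so that $\sigma'-\sigma''=b-a$ and the factors $H_{\sigma'-s_3}(k_3)$, $H_{\sigma''-s_3}(k_{-3})$ contract via~\eqref{eq:ou-cov-int} to $\tfrac12 e^{-k_3^2|a-b|}$, the expression for $V^{\zznine}$ given before the statement becomes
\[
   V^{\zznine}(a,b,k_{12})=\frac14\int_E\mathd k_3\,\big[H_a(k_1+k_3)H_b(k_2-k_3)+H_b(k_1+k_3)H_a(k_2-k_3)\big]e^{-k_3^2|a-b|},
\]
which vanishes unless $a,b\geqslant 0$ (recall $k_{[13]}=k_1+k_3$, $k_{[2(-3)]}=k_2-k_3$). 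Using $|H_t(k)|=M_t(k)=|k|e^{-k^2t}\1_{t\geqslant 0}$, applying the triangle inequality, and noting that the two summands are exchanged under $a\leftrightarrow b$, it suffices to bound $\int_{\R_+^2}\mathd a\,\mathd b\int_E\mathd k_3\,M_a(k_1+k_3)M_b(k_2-k_3)e^{-k_3^2|a-b|}$. I would split the $(a,b)$-domain into $\{a\geqslant b\}$ and $\{b\geqslant a\}$ and carry out the two (now purely Gaussian) integrals, first in $a$ then in $b$. On $\{a\geqslant b\}$ one gets the contribution
\[
   \int_E\mathd k_3\,\frac{|k_1+k_3|\,|k_2-k_3|}{\big((k_1+k_3)^2+k_3^2\big)\big((k_1+k_3)^2+(k_2-k_3)^2\big)},
\]
and on $\{b\geqslant a\}$ the analogous expression with the first denominator $(k_1+k_3)^2+k_3^2$ replaced by $(k_2-k_3)^2+k_3^2$; the terms with $k_1+k_3=0$ or $k_2-k_3=0$ contribute zero and may be dropped.

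Next I would use the crude bounds $\frac{|k_1+k_3|}{(k_1+k_3)^2+k_3^2}\leqslant\frac{1}{|k_1+k_3|}$ and $\frac{|k_2-k_3|}{(k_1+k_3)^2+(k_2-k_3)^2}\leqslant\frac{1}{|k_2-k_3|}$ (and symmetrically for the other region), which bound both contributions by $\int_E\frac{\mathd k_3}{|k_1+k_3|\,|k_2-k_3|}$. Substituting $m=k_1+k_3$, so that $k_2-k_3=k_{[12]}-m$ and the excluded value $k_3=0$ becomes $m=k_1$, this is $\sum_{m\in\Z\setminus\{0,k_1,k_{[12]}\}}\frac{1}{|m|\,|k_{[12]}-m|}\leqslant\sum_{m\in\Z\setminus\{0,k_{[12]}\}}\frac{1}{|m|\,|k_{[12]}-m|}$, and the latter is $\lesssim\frac{\log(1+|k_{[12]}|)}{1+|k_{[12]}|}\lesssim|k_{[12]}|^{-1+\varepsilon}$ for every $\varepsilon>0$, by splitting according to whether $|m|\leqslant|k_{[12]}|/2$, $|k_{[12]}-m|\leqslant|k_{[12]}|/2$, or $|m|\geqslant\tfrac32|k_{[12]}|$. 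Combining the two regions yields the claim. (If $k_{[12]}=0$ the asserted bound is vacuous, so one may assume $|k_{[12]}|\geqslant 1$.)

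The computation is entirely routine; the only step requiring a little care is the bookkeeping in the two elementary $(a,b)$-integrations — keeping track of the regions $\{a\geqslant b\}$, $\{b\geqslant a\}$ and of which exponents combine — together with the small observation that one does not need to retain any decay in $k_3$ from the $k_3^2$ terms, since sufficient decay is already supplied by the factor $|k_2-k_3|^{-1}$ after the substitution. This makes Lemma~\ref{lem:Vzznine} a direct analogue of Lemma~\ref{lem:Vzzeight} rather than a genuinely new estimate.
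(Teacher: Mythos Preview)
Your proof is correct and follows essentially the same route as the paper: both carry out the two time integrals explicitly after splitting the $(a,b)$-domain, arriving at the same rational expressions in $k_1,k_2,k_3$. The only difference is cosmetic --- in the final momentum sum the paper keeps a factor $1/|k_3|$ and uses $k_{[13]}^2+k_{[(-3)2]}^2\gtrsim k_{[12]}^2$, whereas you substitute $m=k_1+k_3$ and reduce directly to the standard convolution sum $\sum_m |m|^{-1}|k_{[12]}-m|^{-1}$; both give $|k_{[12]}|^{-1+\varepsilon}$.
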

\begin{proof}
We can estimate
\begin{align*}
   &\int_{\R^2} \left| V_{}^{\zznine} (\sigma, \sigma', k_{12}) \right| \mathd \sigma \mathd \sigma' \\
   &\hspace{30pt} \lesssim \int_{\R^2} \left| \int_E \mathd k_3 H_{\sigma} (k_{[13]}) H_{\sigma'} (k_{[(- 3) 2]}) e^{- k_3^2 | \sigma' - \sigma |} \right| \mathd \sigma \mathd \sigma' \\
   &\hspace{30pt} \lesssim \int_0^{\infty} \mathd \sigma \int_0^{\infty} \mathd \sigma' \int_E \mathd k_3  | H_{\sigma} (k_{[13]}) H_{\sigma + \sigma'} (k_{[(- 3)
   2]}) | e^{- k_3^2 \sigma'} \\
   &\hspace{30pt} \quad + \int_0^{\infty} \mathd \sigma \int_0^{\infty} \mathd \sigma' \int_E \mathd k_3  | H_{\sigma + \sigma'} (k_{[13]}) H_{\sigma} (k_{[(- 3) 2]}) |   e^{- k_3^2 \sigma'} \\
   &\hspace{30pt} \lesssim \int_E \mathd k_3  \frac{| k_{[13]} |  | k_{[(- 3) 2]} |}{(k_3^2 + k_{[(- 3) 2]}^2) (k_{[13]}^2 + k_{[(- 3) 2]}^2)} + \int_E \mathd k_3    \frac{| k_{[13]} |  | k_{[(- 3) 2]} |}{(k_3^2 + k_{[13]}^2) (k_{[(- 3) 2]}^2 + k_{[13]}^2)} \\
   &\hspace{30pt} \lesssim \int_E \mathd k_3  \frac{1}{(k_3^2 + k_{[(- 3) 2]}^2)^{1 / 2} (k_{[13]}^2 + k_{[(- 3) 2]}^2)^{1 / 2}} \\
   &\hspace{30pt} \quad+ \int_E \mathd k_3 \frac{1}{(k_3^2 + k_{[13]}^2)^{1 / 2} (k_{[(- 3) 2]}^2 + k_{[13]}^2)^{1 / 2}} \\
   &\hspace{30pt} \lesssim \int_E \mathd k_3  \frac{1}{| k_3 | (k_{[13]}^2 + k_{[(- 3) 2]}^2)^{1 / 2}} \lesssim | k_{[12]} |^{- 1 + \varepsilon}
\end{align*}
whenever $\varepsilon > 0$.
\end{proof}

\subsection{Regularity of the driving terms}\label{sec:stochastic regularity}

In this section we will determine the Besov regularity of the random fields
$X^{\tau}$. Below we will derive estimates of the form
\begin{equation}\label{eq:basic Xtau estimate}
   \sup_{q \geqslant 0, x \in \T, s,t \geqslant 0} 2^{2 \gamma_1 (\tau) q} |t-s|^{-\gamma_2(\tau)} \E [(\Delta_q X^{\tau} (t, x) - \Delta_q X^\tau(s,x) )^2] \lesssim 1
\end{equation}
for any $\gamma_1(\tau), \gamma_2(\tau) \geqslant 0$ with $\gamma_1(\tau) + \gamma_2(\tau) = \gamma(\tau)$. Each $\Delta_q X^{\tau} (t, x)$ is a random variable with a finite chaos decomposition, so Gaussian hypercontractivity implies that
\[
   \sup_{q \geqslant 0, x \in \T, s,t \geqslant 0} 2^{p \gamma_1 (\tau) q} |t-s|^{-p \gamma_2(\tau)/2}\E [(\Delta_q X^{\tau} (t, x) - \Delta_q X^\tau(s,x) )^p] \lesssim 1
\]
for any $p \geqslant 2$, and then
\[
   \sup_{q \geqslant 0, s,t \geqslant 0} 2^{p \gamma_1 (\tau) q} |t-s|^{-p \gamma_2(\tau)/2} \E [\| \Delta_q X^{\tau} (t, \cdot) - \Delta_q X^\tau(s, \cdot) \|_{L^p (\T)}^p] \lesssim 1.
\]
From here we derive that for all $\varepsilon > 0$
\begin{align*}
   &\sup_{s,t \geqslant 0} |t-s|^{-p \gamma_2(\tau)/2} \E [\|X^{\tau} (t, \cdot) - X^\tau(s,\cdot) \|_{B^{\gamma_1 (\tau) - \varepsilon}_{p, p}}^p] \\
   &\hspace{50pt} = \sup_{s,t \geqslant 0}  |t-s|^{-p \gamma_2(\tau)/2} \E \Big[ \sum_{q \geqslant - 1} 2^{(\gamma_1 (\tau) - \varepsilon) q} \| \Delta_q X^{\tau} (t, \cdot) - \Delta_q X^{\tau} (s, \cdot) \|_{L^p (\T)}^p \Big] \lesssim 1,
\end{align*}
and by the Besov embedding theorem we get
\[
   \|X^{\tau} (t, \cdot) - X(s, \cdot) \|_{B^{\gamma_1 (\tau) - 2 \varepsilon}_{\infty, \infty}} \lesssim \|X^{\tau} (t, \cdot) - X^{\tau} (s, \cdot) \|_{B^{\gamma_1 (\tau) -  \varepsilon}_{p, p}},
\]
where $\varepsilon > 0$ can be chosen arbitrarily small since $p$ is arbitrarily large. Thus an application of Kolmogorov's continuity criterion gives
\[
   \E[\| X^\tau \|_{C^{\gamma_2(\tau)/2 - \varepsilon}_T \CC^{\gamma_1(\tau) - \varepsilon}}^p] \lesssim 1
\]
whenever $\varepsilon > 0$, $\gamma_1(\tau) + \gamma_2(\tau) \leqslant \gamma(\tau)$, and $p,T>0$. This argument reduces the regularity problem to second moment estimations.

\

Next, observe that every $X^\tau$ solves a parabolic equation started in 0 (except for $\tau = \bullet$ which is easy to treat by hand). Thus we get for $0 \leqslant s \leqslant t$
\begin{align}\label{eq:reducing time difference to fixed time} \nonumber
   X^\tau(t,\cdot) - X^\tau(s, \cdot) & =  \int_0^t P_{t-r} u^\tau(r) \dd r - \int_0^s P_{s-r} u^\tau(r) \dd r \\
   & = (P_{t-s} - 1) X^\tau(s) + \int_s^t P_{t-r} u^\tau(r) \dd r.
\end{align}
We estimate the first term by
\[
   |\Delta_q (P_{t-s} - 1) X^\tau(s,x)| \lesssim |e^{c (t-s) 2^{2q}} - 1| |\Delta_q X^\tau(s,x)| \lesssim |t-s|^{\gamma_2(\tau)/2} 2^{q\gamma_2(\tau)} |\Delta_q X^\tau(s,x)|
\]
whenever $\gamma_2(\tau) \in [0,2]$. The first estimate may appear rather formal, but it is not difficult to prove it rigorously, see for example Lemma~2.4 in~\cite{Bahouri2011}. So if we can show that
\[
   \E[ |\Delta_q X^\tau(s,x)|^2] \lesssim 2^{-q\gamma(\tau)},
\]
for $\gamma(\tau) \leqslant 2$, then the estimate~\eqref{eq:basic Xtau estimate} follows for the first term on the right hand side of~\eqref{eq:reducing time difference to fixed time}. For the second term, we will see below that when estimating $\E[ |\Delta_q X^\tau(t,x)|^2]$ we can extend the domain of integration of $\int_0^t P_{t-r} \Delta_q u^\tau(r) \dd r$ until $-\infty$, and this gives us a factor $2^{-2q}$. If instead we integrate only over the time interval $[s,t]$, then we get an additional factor $1 - e^{c (t-s) 2^{2q}}$, which we can then treat as above. We will therefore only prove bounds of the form $\E[ |\Delta_q X^\tau(t,x)|^2] \lesssim 2^{-2q\gamma(\tau)}$.

\

Let us start by analyzing $Q\reso X$, whose kernel is given by
\[
   G^{Q \reso X} (t, x, \eta_{12}) = e^{ik_{[12]} x} \psi_{\circ} (k_1, k_2) H_{t - s_1} (k_1) \int_0^t \mathd \sigma H_{t - \sigma} (k_2) H_{\sigma -  s_2} (k_2).
\]
Now note that, by symmetry under the change of variables $k_1 \rightarrow -
k_1$ we have
\[ G^{Q \reso X}_0 (t, x) = \int_{(\R \times E)} G^{Q \reso X} (t, x,
   \eta_{1 (- 1)}) \mathd \eta_1 = 0 \]
since
\[ G^{Q \reso X} (t, x, \eta_{1 (- 1)}) = \psi_{\circ} (k_1, - k_1) H_{t -
   s_1} (k_1) \int_0^t \mathd \sigma H_{t - \sigma} (- k_1) H_{\sigma - s_2}
   (- k_1) \]
and $H_{t - \sigma} (- k_1) = - H_{t - \sigma} (k_1)$. So only the second chaos is involved in the following computation:
\begin{align*}
   \E [(\Delta_q (Q \reso X)(t,x))^2] & \lesssim \int_{E^2} \mathd k_{12} \rho_q (k_{[12]})^2 \psi_{\circ} (k_1, k_2)^2 \int_0^t \int_0^t \mathd \sigma \mathd \sigma' H_{t - \sigma} (k_2) H_{t - \sigma'} (k_2) \frac{e^{- k_2^2 | \sigma - \sigma' |}}{2} \\
  & \lesssim \int_{E^2} \mathd k_{12} \rho_q (k_{[12]})^2 \psi_{\circ} (k_1, k_2)^2 | k_2 |^2 \int_0^t \int_0^t \mathd \sigma \mathd \sigma' e^{- k_2^2 [(t - \sigma) + (t - \sigma') + | \sigma - \sigma' |]} \\
  & \lesssim \int_{E^2} \mathd k_{12} \rho_q (k_{[12]})^2 \psi_{\circ} (k_1, k_2)^2 | k_2 |^{- 2} \lesssim \int_{E^2} \mathd k \mathd k' \rho_q (k)^2  \psi_{\circ} (k - k', k')^2 | k' |^{- 2} \\
  & \lesssim 2^q \sum_{i \gtrsim q} 2^{i - 2 i} \lesssim 1.
\end{align*}
Similarly we see that $\E [(\Delta_q (Q \reso X)(t,x) - \Delta_q (Q \reso X)(s,x))^2] \lesssim 2^{q \kappa} |t-s|^{\kappa/2}$ whenever $\kappa \in [0,2]$, from where we get the required temporal regularity.

\

Next we treat the $X^\tau$. As we have seen in the case of the vertex functions it will be convenient to
pass to Fourier variables. In doing so we will establish uniform bounds for
the kernel functions $(G^{\tau})_{\tau}$ in terms of their stationary versions
$(\Gamma^{\tau})_{\tau}$: that is the kernels which govern the statistics of
the random fields $X^{\tau} (t, \cdot)$ when $t \rightarrow + \infty$.

Let recursively $\Gamma^{\bullet} = G^{\bullet}$ and
\[ \Gamma^{(\tau_1 \tau_2)} (t, x, \eta_{(\tau_1 \tau_2)}) = \int_{- \infty}^t
   \mathd s \partial_x P_{t - s} (\Gamma^{\tau_1} (s, \cdot, \eta_{\tau_1}),
   \Gamma^{\tau_2} (s, \cdot, \eta_{\tau_2})) (x) . \]
Like the $G$ kernels they have the factorized form $\Gamma^{\tau} (t, x,
\eta_{\tau}) = e^{i k_{[\tau]} x} \gamma_t^{\tau} (\eta_{\tau})$. The
advantage of the kernels $\Gamma$ is that their Fourier transform $Q$ in the
time variables $(s_1, \ldots, s_n)$ is very simple. Letting $\theta_i =
(\omega_i, k_i)$ we have
\[
   Q^{\tau} (t, x, \theta_{\tau}) = \int_{(\R \times E)^n} \mathd s_{\tau} e^{i \omega_{\tau} \cdot s_{\tau}} \Gamma^{\tau} (t, x, \eta_{\tau}) = e^{i k_{[\tau]} x + i \omega_{[\tau]} t} q^{\tau}  (\theta_{\tau}),
\]
where
\[ q^{\bullet} (\theta) = \frac{i k}{i \omega + k^2}, \hspace{2em}
   q^{(\tau_1 \tau_2)} (\theta_{(\tau_1 \tau_2)}) = q^{\bullet}
   (\theta_{[(\tau_1 \tau_2)]}) q^{\tau_1} (\theta_{\tau_1}) q^{\tau_2}
   (\theta_{\tau_2}) . \]
The kernels for the $\Gamma$ terms bound the corresponding kernels for the $G$ terms:
\[
   | G^{(\tau_1 \tau_2)} (t, x, \eta_{(\tau_1 \tau_2)}) | = | H^{\tau} (t,\eta_{\tau}) | \leqslant | \gamma_t^{\tau} (\eta_{\tau}) | = | \Gamma^{(\tau_1 \tau_2)} (t, x, \eta_{(\tau_1 \tau_2)}) |.
\]
This is true for $\tau = \bullet$ and by induction
\begin{align*}
   | G^{(\tau_1 \tau_2)} (t, x, \eta_{(\tau_1 \tau_2)}) | & = |H^{(\tau_1 \tau_2)} (t, \eta_{(\tau_1 \tau_2)}) | = | k_{[(\tau_1 \tau_2)]} |  \int_0^t h_{t - s} (k_{[(\tau_1 \tau_2)]}) | H^{\tau_1} (s, \eta_{\tau_1}) | | H^{\tau_2} (s,\eta_{\tau_2}) | \mathd s \\
   & \leqslant | k_{[(\tau_1 \tau_2)]} |  \int_{- \infty}^t h_{t - s} (k_{[(\tau_1 \tau_2)]}) | g^{\tau_1}_s (\eta_{\tau_1}) | | g^{\tau_2}_s (\eta_{\tau_2}) | \mathd s \\
   & \leqslant | k_{[(\tau_1 \tau_2)]} |  \int_{- \infty}^t h_{t - s} (k_{[(\tau_1 \tau_2)]}) | \gamma^{\tau_1}_s (\eta_{\tau_1}) | | \gamma^{\tau_2}_s (\eta_{\tau_2}) | \mathd s \\
   & = | \gamma_t^{(\tau_1 \tau_2)}(\eta_{(\tau_1 \tau_2)}) | = | \Gamma^{(\tau_1 \tau_2)} (t, x, \eta_{(\tau_1 \tau_2)}) |.
\end{align*}
These bounds and the computation of the Fourier transform imply the following
estimation for the $L^2$ norm of the kernels $G$ uniformly in $t \geqslant 0$:
\[
   \int_{\R^n} | G^{\tau} (t, x, \eta_{\tau}) |^2 \mathd s_{\tau} \leqslant \int_{\R^n} | \Gamma^{\tau} (t, x, \eta_{\tau}) |^2 \mathd s_{\tau} \leqslant \int_{\R^n} | Q^{\tau} (t, x, \theta_{\tau}) |^2 \mathd \omega_{\tau},
\]
that is
\begin{equation}\label{eq:G-unif-bound}
  \int_{\R^n} | G^{\tau} (t, x, \eta_{\tau}) |^2 \mathd s_{\tau} \leqslant \int_{\R^n} | q^{\tau} (\theta_{\tau}) |^2 \mathd \omega_{\tau}.
\end{equation}
This observation simplifies many computations of the moments of the $X^{\tau}$'s and gives estimates that are uniform in $t \geqslant 0$. Actually it shows that the statistics of the $X^{\tau}$'s are bounded by the statistics in the stationary state.

Now, note first that due to the factorization~(\ref{eq:G-kernel-factoriz}) we have that
\[
   \Delta_q G^{\tau} (t, x, \eta_{\tau}) = \rho_q (k_{[\tau]}) G^{\tau} (t, x, \eta_{\tau}),
\]
so the Littlewood-Paley blocks of $X^{\tau}$ have the expression
\[ \Delta_q X^{\tau} (t, x) = \int_{(\R \times E)^n} \rho_q
   (k_{[\tau]}) G^{\tau} (t, x, \eta_{\tau}) \prod_{i = 1}^n W (\mathd
   \eta_i) \]
which we rewrite in terms of the chaos expansion as
\[ \Delta_q X^{\tau} (t, x) = \sum_{\ell = 0}^{d (\tau)} \int_{(\R
   \times E)^n} \rho_q (k_{[\tau]}) G^{\tau}_{\ell} (t, x, \eta_{1 \cdots
   \ell}) W (\mathd \eta_{1 \cdots \ell}) . \]
By the orthogonality of the different chaoses and because of the bound~(\ref{eq:G-unif-bound}) we have
\begin{align*}
   \E [(\Delta_q X^{\tau} (t, x))^2] & \lesssim \sum_{\ell = 0}^{d (\tau)} \int_{(\R \times E)^n} \rho_q (k_{[\tau]}) | G^{\tau}_{\ell} (t, x, \eta_{1 \cdots \ell}) |^2 \mathd \eta_{1 \cdots \ell} \\
   & \lesssim \sum_{\ell = 0}^{d (\tau)} \int_{(\R \times E)^n} \rho_q (k_{[\tau]}) | q^{\tau}_{\ell} (\eta_{1 \cdots \ell}) |^2 \mathd \eta_{1 \cdots \ell}.
\end{align*}
By proceeding recursively from the leaves to the root and using the bounds on the vertex functions that we already proved, the problem of the estimation of the above integrals is reduced to estimate at each step an integral of the form
\[
   \int_{\R \times E} | \theta_{} |^{- \alpha} | \theta' - \theta |^{- \beta} \mathd \theta_{},
\]
where we have a joining of two leaves into a vertex, each leave carrying a
factor proportional either to $| \theta |^{- \alpha}$ or $| \theta |^{-
\beta}$ with $\alpha, \beta \geqslant 2$ and where the length $| \theta |$ of
the $\theta$ variables is conveniently defined as 
\[
   | \theta | = | \omega |^{1/ 2} + | k |,
\]
so that the estimate $| q (\theta) | \sim | \theta |^{- 1}$ holds for $q = q^\bullet$.

\begin{lemma}\label{lemma:basic-estimate-integrals}
   For this basic integral we have the estimate
   \[
     \int_{\R \times E} | \theta_{} |^{- \alpha} | \theta' - \theta |^{- \beta} \mathd \theta_{} \lesssim | \theta' |^{- \rho},
  \]
  where $\rho = \alpha + \beta - 3$ if $\alpha, \beta < 3$ and $\alpha+\beta>3$, and where $\rho = \alpha - \varepsilon$ for an arbitrarily small $\varepsilon > 0$ if $\beta \geqslant 3$ and  $\alpha \in (0,\beta]$. Similarly, if $\beta \geqslant 3$ and $\alpha \in (0,\beta]$, then
   \[
     \int_{\R \times E}  \psi_\circ(k, k' - k)^2 | \theta_{} |^{- \alpha} | \theta' - \theta |^{- \beta} \mathd \theta_{} \lesssim | \theta' |^{- \alpha + \varepsilon} |k'|^{3-\beta}.
  \]
\end{lemma}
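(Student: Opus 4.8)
\textbf{Proof proposal for Lemma~\ref{lemma:basic-estimate-integrals}.}

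The plan is to treat the two assertions separately, handling the first by a standard ``convolution of power weights'' argument adapted to the mixed parabolic length $|\theta| = |\omega|^{1/2} + |k|$, and the second by exploiting the extra localization provided by $\psi_\circ$. First I would record the elementary scaling/covering facts I need: the measure $\dd\theta = \dd\omega\,\dd k$ (counting measure in $k$, Lebesgue in $\omega$) satisfies $\int_{|\theta|\le R} \dd\theta \lesssim R^3$, the ``ball'' $\{|\theta|\le R\}$ is comparable to the product $\{|\omega|\le R^2\}\times\{|k|\le R\}$, and for $\gamma>3$ one has $\int_{|\theta|\ge R}|\theta|^{-\gamma}\dd\theta \lesssim R^{3-\gamma}$ while $\int_{|\theta|\le R}|\theta|^{-\gamma}\dd\theta \lesssim R^{3-\gamma}$ for $\gamma<3$. (The discreteness in $k$ only helps, since sums are dominated by the corresponding integrals once $|k|\ge 1$, and the $k=0$ slice is excluded because $k\in E$.) These are exactly the one-line facts already used repeatedly in Section~\ref{ssec:vertex functions}, so I would just cite that style of argument.

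For the first estimate I split the region of integration into the three pieces $\{|\theta|\le |\theta'|/2\}$, $\{|\theta-\theta'|\le|\theta'|/2\}$, and the complement where both $|\theta|\gtrsim|\theta'|$ and $|\theta-\theta'|\gtrsim|\theta'|$. On the first piece $|\theta-\theta'|\simeq|\theta'|$, so the integral is $\lesssim |\theta'|^{-\beta}\int_{|\theta|\le|\theta'|/2}|\theta|^{-\alpha}\dd\theta \lesssim |\theta'|^{-\beta}|\theta'|^{3-\alpha} = |\theta'|^{3-\alpha-\beta}$ when $\alpha<3$, and when $\alpha\ge 3$ this inner integral is $O(1)$ (the discreteness in $k$ cures the borderline $\alpha=3$ logarithm after losing an $\varepsilon$), giving $\lesssim|\theta'|^{-\beta}\lesssim|\theta'|^{-\alpha+\varepsilon}$ since $\beta\ge\alpha$. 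On the second piece, symmetrically, $|\theta|\simeq|\theta'|$ and the integral is $\lesssim|\theta'|^{-\alpha}\int_{|\theta-\theta'|\le|\theta'|/2}|\theta-\theta'|^{-\beta}\dd\theta$, which is $\lesssim|\theta'|^{-\alpha}$ when $\beta\ge 3$ (again up to $\varepsilon$ at the threshold) and $\lesssim|\theta'|^{3-\alpha-\beta}$ when $\beta<3$. On the third piece both factors are $\lesssim|\theta'|^{-\min(\alpha,\beta)}$ times a summable tail; concretely $\lesssim \int_{|\theta|\ge|\theta'|/2}|\theta|^{-(\alpha+\beta)}\dd\theta\lesssim|\theta'|^{3-\alpha-\beta}$ using $\alpha+\beta>3$. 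Collecting the three contributions and noting $3-\alpha-\beta = -(\alpha+\beta-3)$ yields $\rho=\alpha+\beta-3$ in the case $\alpha,\beta<3$, $\alpha+\beta>3$, and $\rho=\alpha-\varepsilon$ in the case $\beta\ge 3$, $\alpha\in(0,\beta]$, since in that regime $|\theta'|^{-\beta}$ and $|\theta'|^{3-\alpha-\beta}$ are both $\lesssim|\theta'|^{-\alpha+\varepsilon}$ for $|\theta'|\gtrsim 1$ (and $|\theta'|\gtrsim 1$ always, as $k'\in E$).

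For the second estimate the extra input is the support of $\psi_\circ(k,k'-k)$, which forces $|k|\simeq|k'-k|$, hence in particular $|k|\gtrsim|k'|$ on a nontrivial dyadic range and, crucially, constrains the $k$-integration to a union of dyadic annuli $2^i\CA$ with $2^i\gtrsim|k'|$. I would first bound $\psi_\circ\le 1$ to keep the $\omega$-integration and the $|\theta-\theta'|^{-\beta}$ factor as in the first part, producing a factor $|\theta'|^{-\alpha+\varepsilon}$ from the $\omega$-direction together with the same power of $|k|$; then I perform the remaining $k$-sum $\sum_{|k|\gtrsim|k'|} |k|^{-\beta}\cdot(\text{something summable})$, which because $\beta\ge 3>1$ is dominated by its largest term and gives $\lesssim |k'|^{3-\beta}$ — more carefully, writing the post-$\omega$-integration integrand as $\lesssim |\theta'|^{-\alpha+\varepsilon}|k|\,|k-k'|^{-\beta}$ on the support and summing $\sum_{|k|\simeq|k-k'|\gtrsim|k'|}|k|^{1-\beta}\lesssim|k'|^{3-\beta}$ for $\beta\ge 3$. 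Combining gives the claimed $\lesssim|\theta'|^{-\alpha+\varepsilon}|k'|^{3-\beta}$.

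The main obstacle I expect is bookkeeping at the threshold exponents $\alpha=3$ or $\beta=3$, where a naive bound produces a logarithmic divergence; the fix is to absorb it into the $\varepsilon$ loss, using that the discreteness of the $k$-variable (equivalently $|k'|\ge 1$) makes $\log|\theta'|\lesssim_\varepsilon|\theta'|^{\varepsilon}$ harmless, exactly as in the proofs of Lemmas~\ref{lem:Vzzfive}--\ref{lem:Vzznine}. A secondary point requiring a little care is making precise that the ``parabolic balls'' $\{|\theta|\le R\}$ behave like genuine balls under the splitting $|\theta-\theta'|$ vs.\ $|\theta'|$ — i.e.\ the quasi-triangle inequality $|\theta+\theta'|\lesssim|\theta|+|\theta'|$ for the length $|\theta|=|\omega|^{1/2}+|k|$ — but this is immediate from $|\omega+\omega'|^{1/2}\le|\omega|^{1/2}+|\omega'|^{1/2}$.
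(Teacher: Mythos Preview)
Your treatment of the first estimate is correct and follows the classical three--region decomposition for convolutions of power weights (near $\theta=0$, near $\theta=\theta'$, and the tail). The paper instead performs a full dyadic decomposition in the scales $|\theta|\sim 2^i$ and $|\theta'-\theta|\sim 2^j$ and sums over the three regimes $\ell\lesssim i\sim j$, $i\lesssim j\sim\ell$, $j\lesssim i\sim\ell$; the two arguments are equivalent and yours is arguably more direct.

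For the second estimate, however, your ``more careful'' computation is wrong as written. You claim that the post--$\omega$--integration quantity
\[
   I_k \;:=\; \int_\R (|\omega|^{1/2}+|k|)^{-\alpha}\,(|\omega-\omega'|^{1/2}+|k-k'|)^{-\beta}\,\dd\omega
\]
is bounded by $|\theta'|^{-\alpha+\varepsilon}\,|k|\,|k-k'|^{-\beta}$, but this is off by a power of $|k|$: for $|k|\sim|k-k'|\ll|\theta'|$ the region $|\omega-\omega'|\lesssim|k|^2$ alone already contributes $\sim |\theta'|^{-\alpha}\,|k|^2\,|k|^{-\beta}=|\theta'|^{-\alpha}|k|^{2-\beta}$, which exceeds your stated bound. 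Correspondingly, the sum you write, $\sum_{|k|\gtrsim|k'|}|k|^{1-\beta}$, evaluates to $|k'|^{2-\beta}$ rather than $|k'|^{3-\beta}$; you have two errors that happen to cancel, which is not a proof. Your overall strategy of integrating in $\omega$ first and then summing over $k$ \emph{can} be carried out, but it requires a genuine case split for $I_k$ (according to whether $|\omega|\lesssim|k|^2$, $|\omega-\omega'|\lesssim|k|^2$, or neither), producing the two contributions $|\theta'|^{-\alpha}|k|^{2-\beta}$ and $|\theta'|^{-\beta}|k|^{2-\alpha}$; summing each over $|k|\gtrsim|k'|$ then yields $|\theta'|^{-\alpha}|k'|^{3-\beta}$ and $|\theta'|^{-\beta}|k'|^{3-\alpha}$, both of which are $\lesssim|\theta'|^{-\alpha+\varepsilon}|k'|^{3-\beta}$ under the hypotheses. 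The paper bypasses this by performing an additional dyadic decomposition in the spatial frequencies $|k|\sim 2^{i'}$, $|k-k'|\sim 2^{j'}$ and carrying the constraint $i'\sim j'\gtrsim \ell'$ coming from $\psi_\circ$ through the resulting sums, which is heavier but more systematic.
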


\begin{proof}
  Let $\ell \in \N$ such that $| \theta' | \sim 2^\ell$:
  \[
     \int_{\R \times E} | \theta_{} |^{- \alpha} | \theta' - \theta |^{- \beta} \mathd \theta \lesssim \sum_{i, j \geqslant 0} 2^{- \alpha i - \beta j} \int_{\R \times E} \1_{| \theta' | \sim 2^\ell, | \theta_{} | \sim 2^i, | \theta' - \theta | \sim 2^j} \mathd \theta.
  \]
  Then there are three possibilities, either $\ell \lesssim i \sim j$ or $i
  \lesssim j \sim \ell$ or $j \lesssim i \sim \ell$. In the first case we bound
  \[ \int_{\R \times E} \1_{| \theta' | \sim 2^\ell, |
     \theta_{} | \sim 2^i, | \theta' - \theta | \sim 2^j} \mathd \theta
     \lesssim \int_{\R \times E} \1_{| \theta | \sim 2^i}
     \mathd \theta \lesssim 2^{3 i},
  \]
  and in the second one
  \[ \int_{\R \times E} \1_{| \theta' | \sim 2^\ell, |
     \theta_{} | \sim 2^i, | \theta' - \theta | \sim 2^j} \mathd \theta
     \lesssim \int_{\R \times E} \1_{| \theta | \sim 2^i}
     \mathd \theta \lesssim 2^{3 i}, \]
  and similarly in the third case
  \[ \int_{\R \times E} \1_{| \theta' | \sim 2^\ell, |
     \theta_{} | \sim 2^i, | \theta' - \theta | \sim 2^j} \mathd \theta
     \lesssim \int_{\R \times E} \1_{| \theta_{} | \sim 2^j}
     \mathd \theta \lesssim 2^{3 j}. \]
  So if $\alpha + \beta > 3$ we have
  \begin{align*}
     \int_{\R \times E} | \theta_{} |^{- \alpha} | \theta' - \theta  |^{- \beta} \mathd \theta_{} & \lesssim \sum_{\ell \lesssim i \sim j} 2^{-  \alpha i - \beta j + 3 i} + \sum_{i \lesssim j \sim \ell} 2^{- \alpha i -  \beta j + 3 i} + \sum_{j \lesssim i \sim \ell} 2^{- \alpha i - \beta j + 3 j} \\
     & \lesssim 2^{- \ell (\alpha + \beta - 3)} + 2^{- \beta \ell + (3 - \alpha)_+ \ell} + 2^{- \alpha \ell + (3 - \beta)_+ \ell} \lesssim 2^{- \rho \ell},
  \end{align*}
  where $\rho$ can be chosen as announced and where we understand that $(\delta)_+ = \varepsilon$ if $\delta = 0$.
  
  Let us get to the estimate for the integral with $\psi_\circ$. Let $\ell' \leqslant \ell$ be such that $|k'| \sim 2^{\ell'}$ and write
  \begin{align*}
     &\int_{\R \times E} |\psi_\circ(k,k'-k)| | \theta_{} |^{- \alpha} | \theta' - \theta |^{- \beta} \mathd \theta \\
     &\hspace{50pt} \lesssim \sum_{i, j \geqslant 0} \sum_{i' \leqslant i, j' \leqslant j} 2^{- \alpha i - \beta j } \int_{\R \times E} \1_{|k| \sim |k'-k|} \1_{ | \theta_{} | \sim 2^i, | \theta' - \theta | \sim 2^j} \1_{|k| \sim 2^{i'}, |k-k'| \sim 2^{j'} }  \mathd \theta \\
     &\hspace{50pt} \lesssim \sum_{i, j \geqslant 0} \sum_{i' \lesssim i \wedge j} 2^{- \alpha i - \beta j } \int_{\R \times E}  \1_{i'\gtrsim \ell} \1_{ | \theta_{} | \sim 2^i, | \theta' - \theta | \sim 2^j} \1_{|k| \sim 2^{i'}}  \mathd \theta.
  \end{align*}
  Now we have to consider again the three possibilities $\ell \lesssim i \sim j$ or $i \lesssim j \sim \ell$ or $j \lesssim i \sim \ell$. In the first and second case we bound the integral on the right hand side by $2^{2i + i'}$, and in the third case by $2^{2 j + i'}$. Then we end up with
   \begin{align*}
     &\int_{\R \times E} |\psi_\circ(k,k'-k)| | \theta_{} |^{- \alpha} | \theta' - \theta |^{- \beta} \mathd \theta \\
     &\hspace{60pt} \lesssim \sum_{\substack{ \ell \lesssim i \sim j \\ \ell' \lesssim i' \lesssim i}}  2^{- \alpha i - \beta j + 2i + i'} + \sum_{\substack{i \lesssim j \sim \ell \\ \ell' \lesssim i' \lesssim i}} 2^{- \alpha i - \beta j + 2i + i'}  + \sum_{\substack{j \lesssim i \sim \ell \\ \ell' \lesssim i' \lesssim j}} 2^{- \alpha i - \beta j + 2j + i'} \\
     &\hspace{60pt} \lesssim 2^{-\ell(\alpha+\beta-3) } + 2^{-\ell \beta + (3 - \alpha)_+ \ell + (3 - \alpha)_- \ell' } + 2^{-\ell \alpha + (3 - \beta)_+ \ell + (3 - \beta)_- \ell' },
  \end{align*}  
  with the same convention for $(\delta)_+$ as above.
\end{proof}

We will also need the following simple observation:
\begin{lemma}\label{lem:simple integral}
   Let $\alpha > 2$. Then
   \[
      \int_{\R} \dd \omega (|\omega|^{1/2} + |k|)^{-\alpha} \lesssim |k|^{2-\alpha}.
   \]
\end{lemma}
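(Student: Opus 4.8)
The estimate is purely a matter of parabolic scaling, so the plan is to rescale the frequency variable $\omega$ by $k^2$ and reduce to a fixed, finite, dimensionless integral. Concretely, since $k \in E = \Z \setminus \{0\}$ we have $k \neq 0$, so the change of variables $\omega = k^2 u$, $\dd\omega = k^2\, \dd u$, is admissible and gives $|\omega|^{1/2} + |k| = |k|(|u|^{1/2} + 1)$. Hence
\[
   \int_{\R} \dd\omega\, (|\omega|^{1/2} + |k|)^{-\alpha} = |k|^{2-\alpha} \int_{\R} \dd u\, (|u|^{1/2} + 1)^{-\alpha},
\]
and it remains only to observe that the last integral is a finite constant depending only on $\alpha$.

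For that finiteness claim I would split the $u$-integral into $\{|u| \leqslant 1\}$, where the integrand is bounded by $1$ and the region has finite measure, and $\{|u| > 1\}$, where $(|u|^{1/2}+1)^{-\alpha} \lesssim |u|^{-\alpha/2}$; since $\alpha > 2$ we have $\alpha/2 > 1$, so $\int_{|u|>1} |u|^{-\alpha/2}\, \dd u < \infty$. This yields $\int_{\R}(|u|^{1/2}+1)^{-\alpha}\,\dd u \lesssim_\alpha 1$ and completes the proof.

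An equivalent route, matching the dyadic bookkeeping used elsewhere in this section, is to decompose $\R = \{|\omega|^{1/2} \lesssim |k|\} \cup \bigcup_{j : 2^j \gtrsim |k|} \{|\omega|^{1/2} \sim 2^j\}$: the first piece contributes $\lesssim |k|^2 \cdot |k|^{-\alpha} = |k|^{2-\alpha}$, and each dyadic shell contributes $\lesssim 2^{2j} 2^{-\alpha j}$, so summing the geometric series over $2^j \gtrsim |k|$ (convergent because $\alpha > 2$) again gives $\lesssim |k|^{2-\alpha}$. There is essentially no obstacle here; the only point requiring a (trivial) check is the convergence of the $u$-integral at infinity, which is exactly where the hypothesis $\alpha > 2$ enters.
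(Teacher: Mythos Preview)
Your proof is correct and follows exactly the same scaling argument as the paper: substitute $\omega = |k|^2 u$ to pull out the factor $|k|^{2-\alpha}$ and observe that the remaining integral $\int_{\R}(|u|^{1/2}+1)^{-\alpha}\,\dd u$ is finite precisely when $\alpha>2$. Your additional justification of the finiteness and the alternative dyadic decomposition are fine but not needed beyond what the paper records.
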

\begin{proof}
   We have
   \[
      \int_{\R} \dd \omega (|\omega|^{1/2} + |k|)^{-\alpha} = |k|^{-\alpha} \int_{\R} \dd \omega (|\omega |k|^{-2} |^{1/2} + 1)^{-\alpha} = |k|^{2-\alpha} \int_{\R} \dd \omega (|\omega|^{1/2} + 1)^{-\alpha},
   \]
   and if $\alpha > 2$ the integral on the right hand side is finite.
\end{proof}

Consider now $X^{\zzone}$. Combining the bound~(\ref{eq:G-unif-bound}) with Lemma~\ref{lemma:basic-estimate-integrals} and Lemma~\ref{lem:simple integral}, we get
\begin{align*}
   \E [ (\Delta_q X^{\zzone} (t, x))^2 ] & \lesssim \int_{(\R \times E)^2} \rho_q (k_{[12]})^2 | q (\theta_{[12]}) q (\theta_1) q (\theta_2) |^2 \mathd \theta_{12} \\
   & = \int_{(\R \times E)^2} \dd \theta_{[12]} \dd \theta_2 \rho_q (k_{[12]})^2 | q (\theta_{[12]}) q (\theta_{[12]}-\theta_2) q (\theta_2) |^2 \\
   & \lesssim \int_{(\R \times E)^2} \dd \theta_{[12]} \dd \theta_2 \rho_q (k_{[12]})^2 | q (\theta_{[12]})|^2 |\theta_{[12]}-\theta_2|^{-2} |\theta_2|^{-2} \\
   & \lesssim \int_{\R \times E} \dd \theta_{[12]} \rho_q (k_{[12]})^2 | q (\theta_{[12]})|^2 |\theta_{[12]}|^{-1} \lesssim  \int_{E} \dd k_{[12]} \frac{\rho_q (k_{[12]})^2}{|k_{[12]}|^{-1}} \lesssim 1. 
\end{align*}

As far as $X^{\zztwo}$ is concerned, we have
\begin{align*}
   \E [ ( \Delta_q X^{\zztwo} (t, x) )^2 ] & \lesssim \int_{(\R \times E)^3} \rho_q (k_{[123]})^2 \left| G^{\zztwo} (t, x, \eta_{123}) \right|^2 \mathd \eta_{123} \\
  &\quad + \int_{\R \times E} \rho_q (k_1)^2 \left| G^{\zzfive} (t, x, \eta_1) \right|^2 \mathd \eta_1 \\
  & \lesssim \int_{(\R \times E)^3} \rho_q (k_{[123]})^2 | q (\theta_{[123]}) q (\theta_3) q (\theta_{[12]}) q (\theta_1) q (\theta_2) |^2 \mathd \theta_{123} \\
  &\quad + \int_{\R \times E} \rho_q (k_1)^2 |q (\theta_1) q (\theta_1) |^2 \left( \int_{\R} |V^{\zzfive} (\sigma, k_1) | \mathd \sigma \right)^2 \mathd \theta_1.
\end{align*}
For the contraction term we already know that $\int_{\R} |V^{\zzfive} (\sigma, k_1) | \mathd \sigma \lesssim | k_1 |^{\varepsilon} \lesssim | \theta_1 |^{\varepsilon}$, so
\begin{align*}
   \int_{\R \times E} \rho_q (k_1)^2 |q (\theta_1) q (\theta_1) |^2 \Big( \int_{\R} |V^{\zzfive} (\sigma, k_1) | \mathd \sigma \Big)^2 \mathd \theta_1 & \lesssim \int_{\R \times E} \rho_q (k_1)^2 | \theta_1 |^{- 4 + 2 \varepsilon} \mathd \theta_1 \\
   & \lesssim \int_E \rho_q (k_1)^2 | k_1 |^{- 2 + 2 \varepsilon} \dd k_1 \lesssim 2^{(2 \varepsilon - 1) q} .
\end{align*}
The contribution of the third chaos can be estimated by
\begin{align*}
   &\int_{(\R \times E)^3} \rho_q (k_{[123]})^2 | q (\theta_{[123]}) q (\theta_3) q (\theta_{[12]}) q (\theta_1) q (\theta_2) |^2 \mathd \theta_{123} \\
   &\hspace{100pt} \lesssim \int_{(\R \times E)^2} \rho_q (k_{[12]})^2 | q (\theta_{[12]}) q (\theta_1) q (\theta_2) |^2 | \theta_2^{} |^{- 1} \mathd \theta_{12} \\
   &\hspace{100pt} \lesssim \int_{\R \times E} \rho_q (k_1)^2 | q (\theta_1) |^2 | \theta_1^{} |^{- 2 + \varepsilon} \mathd \theta_1 \lesssim 2^{(\varepsilon - 1) q},
\end{align*}
which is enough to conclude that
\[
   \E [ (\Delta_q X^{\zztwo} (t, x))^2 ] \lesssim 2^{-q(1 - \varepsilon)}
\]
for arbitrarily small $\varepsilon > 0$.

The next term is
\begin{align*}
   &\E [ (\Delta_q X^{\zzthreereso} (t, x))^2 ] \\
   &\hspace{20pt} \lesssim \int_{(\R \times E)^4} \rho_q (k_{[1234]})^2 | \psi_\circ(k_{[123]}, k_4) q (\theta_{[1234]}) q (\theta_4) q (\theta_{[123]}) q (\theta_3) q (\theta_{[12]}) q (\theta_1) q (\theta_2) |^2 \mathd \theta_{1234} \\
   &\hspace{20pt} \quad + \int_{(\R \times E)^2} \rho_q (k_{[12]})^2 |G^{\zzthreereso}_2 (t, x, \eta_{12}) |^2 \mathd \eta_{12}.
\end{align*}
By proceeding inductively we bound
\begin{align*}
   &\int_{(\R \times E)^4} \rho_q (k_{[1234]})^2 | \psi_\circ(k_{[234]}, k_1) q (\theta_{[1234]}) q (\theta_1) q (\theta_{[234]}) q (\theta_2) q (\theta_{[34]}) q (\theta_3) q (\theta_4) |^2 \mathd \theta_{1234} \\
   &\hspace{40pt} \lesssim \int_{(\R \times E)^3} \rho_q (k_{[123]})^2 | \psi_\circ(k_{[23]}, k_1) q (\theta_{[123]}) q (\theta_1) q (\theta_{[23]}) q (\theta_2) q (\theta_3) |^2 | \theta_3 |^{- 1} \mathd \theta_{123} \\
   &\hspace{40pt} \lesssim \int_{(\R \times E)^2} \rho_q (k_{[12]})^2 | \psi_\circ(k_2,k_1) q(\theta_{[12]}) q (\theta_1) q (\theta_2) |^2 | \theta_2 |^{- 2 + \varepsilon} \mathd \theta_{12} \\
   &\hspace{40pt} \lesssim \int_{\R \times E} \rho_q (k_{1})^2 |q(\theta_{1})|^2 | \theta_1 |^{- 2 + \varepsilon} |k_1|^{-1+\varepsilon} \mathd \theta_{1} \lesssim  \int_{E} \rho_q (k_{1})^2 | k_1 |^{- 2 + \varepsilon} |k_1|^{-1+\varepsilon} \mathd k_{1} \lesssim 2^{q(-2+2\varepsilon)}.
\end{align*}
For the contractions we have
\begin{align*}
   &\int_{(\R \times E)^2} \rho_q(k_{[12]})^2 |G^{\zzsixreso} (t, x, \eta_{12}) |^2 \mathd \eta_{12} \\
   &\hspace{40pt} \leqslant \int_{(\R \times E)^2} \rho_q(k_{[12]})^2 |q (\theta_{[12]}) q (\theta_{[12]}) q (\theta_1) q (\theta_2) |^2 \|V^{\zzfivereso} (\sigma, k_{[12]})\|_{L^1_{\sigma}}^2 \mathd \theta_{12} \\
   &\hspace{40pt}  \leqslant \int_{(\R \times E)^2} \rho_q(k_{1})^2 |q (\theta_{1})|^4 |\theta_1 - \theta_2|^{-2} |\theta_2|^{-2} |k_1|^{2\varepsilon} \mathd \theta_{12} \\
   &\hspace{40pt} \lesssim \int_{\R \times E} \rho_q(k_{1})^2 |\theta_{1}|^{-4} |\theta_1|^{-1} |k_1|^{2\varepsilon} \mathd \theta_{1} \lesssim \int_{E} \rho_q(k_{1})^2 |k_1|^{-3+2\varepsilon} \mathd k_{1} \lesssim 2^{q(-2+2\varepsilon)}.
\end{align*}
The following term is
\begin{align*}
   &\int_{(\R \times E)^2} \rho_q(k_{[12]})^2 |G^{\zzsevenreso} (t, x, \eta_{12}) |^2 \mathd \eta_{12} \\
   &\hspace{40pt} \leqslant \int_{(\R \times E)^2} \rho_q(k_{[12]})^2 \psi_\circ(k_1,k_2)^2 |q (\theta_{[12]}) q (\theta_{1}) q (\theta_1) q (\theta_2) |^2 \|V^{\zzfivereso} (\sigma, k_{[12]})\|_{L^1_{\sigma}}^2 \mathd \theta_{12} \\
   &\hspace{40pt} \leqslant \int_{(\R \times E)^2} \rho_q(k_{1})^2 \psi_\circ(k_1-k_2, k_2)^2 |q (\theta_{1})|^2 |\theta_1 - \theta_2|^{-4} |\theta_2|^{-2} |k_1|^{2\varepsilon} \mathd \theta_{12} \\
   &\hspace{40pt} \lesssim \int_{\R \times E} \rho_q(k_{1})^2 |q (\theta_{1})|^2 |\theta_1|^{-2+\varepsilon} |k_1|^{-1+2\varepsilon} \mathd \theta_{1} \lesssim \int_{\R \times E} \rho_q(k_{1})^2  |k_1|^{-3+3\varepsilon} \mathd k_{1} \lesssim 2^{q(-2+3\varepsilon)}.
\end{align*}
Next, we have
\begin{align*}
   &\int_{(\R \times E)^2} \rho_q(k_{[12]})^2 |G^{\zzeightreso} (t, x, \eta_{12}) |^2 \mathd \eta_{12} \\
   &\hspace{40pt} \leqslant \int_{(\R \times E)^2} \rho_q(k_{[12]})^2 |q(\theta_{[12]}) q (\theta_1) q (\theta_2) |^2 \|V_{}^{\zzeightreso} (\sigma, \sigma', k_{12})\|_{L^1_{\sigma} L^1_{\sigma'}}^2 \mathd \theta_{12} \\
   &\hspace{40pt} \lesssim \int_{(\R \times E)^2} \rho_q(k_{1})^2  |q(\theta_{1})|^2 |\theta_1 - \theta_2|^{-2} |\theta_2|^{-2} |k_{1}|^{-2+2\varepsilon} \mathd \theta_{12} \\
   &\hspace{40pt} \lesssim  \int_{\R \times E} \rho_q(k_{1})^2  |\theta_1|^{-3} |k_{1}|^{-2+2\varepsilon} \mathd \theta_{1} \lesssim \int_E \rho_q(k_{1})^2 |k_{1}|^{-3+2\varepsilon} \mathd k_{1} \lesssim 2^{q(-2 + 2\varepsilon)},
\end{align*}
and therefore $\E [ (\Delta_q X^{\zzthreereso} (t, x))^2 ] \lesssim 2^{-q(2 - 3\varepsilon)}$.

The last term is then
\begin{align*}
   &\E [ (\Delta_q X^{\zzfour} (t, x))^2 ] \\
   &\hspace{20pt} \lesssim \int_{(\R \times E)^4} \rho_q (k_{[1234]})^2 | q(\theta_{[1234]}) q (\theta_{[12]}) q (\theta_1) q (\theta_2) q(\theta_{[34]}) q (\theta_3) q (\theta_4) |^2 \mathd \theta_{1234}  \\
   &\hspace{20pt} \quad + \int_{(\R \times E)^2} \rho_q (k_{[12]})^2 |G^{\zzfour}_2 (t, x, \eta_{12}) |^2 \mathd \eta_{12}.
\end{align*}
The first term on the right hand side can be bounded as follows:
\begin{align*}
   &\int_{(\R \times E)^4} \rho_q (k_{[1234]})^2 | q(\theta_{[1234]}) q (\theta_{[12]}) q (\theta_1) q (\theta_2) q(\theta_{[34]}) q (\theta_3) q (\theta_4) |^2 \mathd \theta_{1234} \\
   &\hspace{80pt} \lesssim \int_{(\R \times E)^4} \rho_q (k_{[12]})^2 | q(\theta_{[12]}) q (\theta_1) q (\theta_2) |^2 | \theta_1 |^{- 1} | \theta_2 |^{- 1} \mathd \theta_{12} \\
   &\hspace{80pt} \lesssim \int_{(\R \times E)^4} \rho_q (k_1)^2 | q (\theta_1) |^2 | \theta_1 |^{- 3 + \varepsilon} \mathd \theta_1 \lesssim 2^{(\varepsilon - 2) q}.
\end{align*}
On the other side, the contraction term is given by
\begin{align*}
    &\int_{(\R \times E)^2} \rho_q (k_{[12]})^2 |G^{\zzfour}_2 (t, x, \eta_{12}) |^2 \mathd \eta_{12} \\
    &\hspace{40pt} \lesssim \int_{(\R \times E)^2} \rho_q (k_{[12]})^2 |q (\theta_{[12]}) q (\theta_1) q (\theta_2) |^2 \|V_{}^{\zznine} (\sigma, \sigma', k_{12})\|_{L^1_{\sigma} L^1_{\sigma'}}^2 \mathd \theta_{12} \\
    &\hspace{40pt} \lesssim \int_{(\R \times E)^2} \rho_q (k_{1})^2 |q (\theta_{1})|^2 |\theta_1 - \theta_2|^{-2} |\theta_2|^{-2} |k_1|^{-2+2\varepsilon} \mathd \theta_{12} \\
    &\hspace{40pt}  \lesssim \int_{\R \times E} \rho_q (k_{1})^2 |\theta_1|^{-3} |k_1|^{-2+2\varepsilon} \mathd \theta_{1} \lesssim \int_{E} \rho_q (k_{1})^2 |k_1|^{-3+2\varepsilon} \mathd k_{1} \lesssim 2^{q(-2+2\varepsilon)},
\end{align*}
so we can conclude that  $\E [ (\Delta_q X^{\zzfour} (t, x))^2 ] \lesssim 2^{-q(2 - 2\varepsilon)}$.

\subsection{Divergences in the KPZ equation}

The data we still need to control for the KPZ equation is
\[
   Y, Y^{\zzone}, Y^{\zztwo}, Y^{\zzthreereso}, Y^{\zzfour}
\]
since $q\circ X$ was already dealt with. The kernels for the chaos
decomposition of these random fields are given by
\[
   \tilde{G}^{\bullet}_t(k) = \1_{t \geqslant 0} e^{- t k^2}, \hspace{2em} \tilde{G}^{(\tau_1, \tau_2)} (t, x, \eta_{\tau}) = \int_0^t \dd \sigma P_{t-\sigma} (G^{\tau_1} (\sigma, \cdot , \eta_{\tau_1}) G^{\tau_2} (\sigma, \cdot, \eta_{\tau_2})) (x),
\]
so they enjoy similar estimates as the kernels $G^\tau$ and therefore all the chaos components different from the $0$-th are under control. The only difference is the missing derivative which in the case of the $X^{\tau}$ is responsible for the fact that the constant component in the chaos expansion vanishes. The $0$-th
component it given by
\[
   t c^{\tau} = \E [Y^{\tau}(t,x)].
\]
Some of these expectations happen to be infinite which will force us to renormalize $Y^\tau$ by subtracting its mean.

\

For $Y^\zzone_\varepsilon$ we have
\begin{align*}
   \E[ Y^\zzone_\varepsilon(t,x)] & = \int_0^t \E[P_{t-\sigma} (X_\varepsilon(\sigma,\cdot)^2)(x)] \dd \sigma \\
   & = \int_0^t \dd \sigma \int_{\R \times E} \dd \eta_1 e^{i k_{1(-1)}x} e^{-(t-\sigma) k_{1(-1)}^2} \varphi(\varepsilon k_1) H_{\sigma - s_1} (k_1) \varphi(\varepsilon k_{-1}) H_{\sigma - s_1} (k_{-1}) \\
   & = \int_0^t \dd \sigma \int_{\R \times E} \dd \eta_1 \varphi^2(\varepsilon k_1) H_{\sigma - s_1} (k_1) H_{\sigma - s_1} (k_{-1}) = \frac{t}{2} \int_E \dd k_1 \varphi^2(\varepsilon k_1),
\end{align*}
and since $\varphi \in C^1$ we get
\begin{align*}
   \frac{1}{\varepsilon} \Big( \int_\R \varphi^2(x) \dd x - \varepsilon \int_E \dd k_1 \varphi^2(\varepsilon k_1) \Big) & = \frac{1}{\varepsilon} \Big(\sum_k \int_{\varepsilon k}^{\varepsilon(k+1)} (\varphi^2(x) - \varphi^2(\varepsilon k)) \dd x \Big) \\
   & = \frac{1}{\varepsilon} \sum_k \int_0^1 \int_{0}^{\varepsilon} \mathD (\varphi^2)(\varepsilon k + \lambda x) x \dd x \dd \lambda,
\end{align*}
which converges to $\int_\R \mathD \varphi^2(x) \dd x = 0$ as $\varepsilon \to 0$. Now recall that here we are dealing with $(2\pi)^{1/2}$ times the white noise, so that the constant $c_\varepsilon^\zzone$ of Theorem~\ref{thm:kpz data} can be chosen as
\[
   c_\varepsilon^\zzone = \frac{1}{4\pi \varepsilon} \int_\R \varphi^2(x) \dd x.
\]

\

The next term is $Y^{\zztwo}$, which has mean zero since it belongs to the odd chaoses and thus $c^{\zztwo} = 0$.

\

What we want to show now is that a special symmetry of the equation induces
cancellations which are responsible for the fact that while $c^{\zzfour}$ and
$c^{\zzthree}$ are separately divergent, the particular combination
\[ c^{\zzfour} + 4 c^{\zzthree} \]
is actually finite. If this is true, then we can renormalize $Y^\zzthreereso$ and $Y^\zzfour$ as announced in Theorem~\ref{thm:kpz data}.
In terms of Feynman diagrams (which have the same translation into kernels as for Burgers equation, except that for the outgoing line the factor $i k_{[\tau]}$ is suppressed), this quantity is given by
\[
   I = c^{\zzfour} + 4 c^{\zzthree} = 2 \times \vcenter{\hbox{\resizebox{!}{1.5cm}{\includegraphics{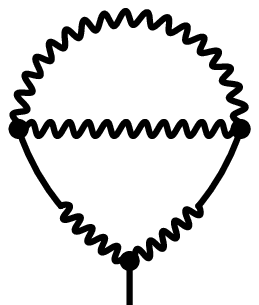}}}} + 8 \times \vcenter{\hbox{\resizebox{!}{1.5cm}{\includegraphics{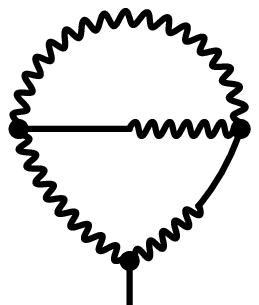}}}},
\]
because all other contractions vanish since they involve contractions of the topmost leaves (e.g. $\tilde{G}^{\zzthree} (t, x, \eta_{1 (- 1) 2 (- 2)}) = 0$). Moreover, writing explicitly the remaining two contributions and fixing the
integration variables according to the following picture
\[ I = 2 \times \vcenter{\hbox{\includegraphics{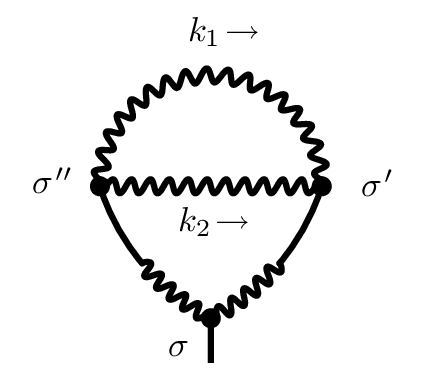}}} + 8
   \times \vcenter{\hbox{\includegraphics{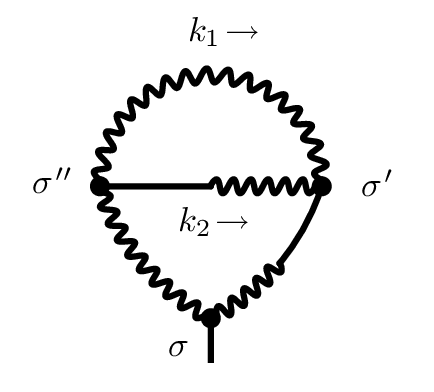}}},
 \]
we get
\begin{align*}
   I & = 2 \int \mathd k_1 \mathd k_2 \int_0^t \mathd \sigma \int_0^{\sigma} \mathd \sigma' \int_0^{\sigma} \mathd \sigma''  (i k_{[12]}) (- i k_{[12]}) e^{- | \sigma' - \sigma'' | (k_1^2 + k_2^2) - (\sigma - \sigma') k_{[12]}^2 - (\sigma - \sigma'') k_{[12]}^2} \\
   &\quad + 8 \int \mathd k_1 \mathd k_2 \int_0^t \mathd \sigma \int_0^{\sigma} \mathd \sigma' \int_0^{\sigma'} \mathd \sigma''  (i k_{[12]}) (i k_2)  e^{- (\sigma' - \sigma'') (k_1^2 + k_2^2) - (\sigma - \sigma') k_{[12]}^2 - (\sigma - \sigma'') k_{[12]}^2}.
\end{align*}
Now note that the second term can be symmetrized over $\sigma', \sigma''$ to get
\begin{align*}
   I &= 2 \int \mathd k_1 \mathd k_2 \int_0^t \mathd \sigma \int_0^{\sigma} \mathd \sigma' \int_0^{\sigma} \mathd \sigma''  (i k_{[12]}) (- i k_{[12]}) e^{- | \sigma' - \sigma'' | (k_1^2 + k_2^2) - (\sigma - \sigma') k_{[12]}^2 - (\sigma - \sigma'') k_{[12]}^2} \\
   &\quad + 4 \int \mathd k_1 \mathd k_2 \int_0^t \mathd \sigma \int_0^{\sigma} \mathd \sigma' \int_0^{\sigma} \mathd \sigma''  (i k_{[12]}) (i k_2) e^{- | \sigma' - \sigma'' | (k_1^2 + k_2^2) - (\sigma - \sigma') k_{[12]}^2 - (\sigma - \sigma'') k_{[12]}^2}.
\end{align*}
At this point the second term can still be symmetrized over $k_2, k_1$ to finally get
\begin{align*}
   I &= 2 \int \mathd k_1 \mathd k_2 \int_0^t \mathd \sigma \int_0^{\sigma} \mathd \sigma' \int_0^{\sigma} \mathd \sigma''  (i k_{[12]}) (- i k_{[12]}) e^{- | \sigma' - \sigma'' | (k_1^2 + k_2^2) - (\sigma - \sigma') k_{[12]}^2 - (\sigma - \sigma'') k_{[12]}^2} \\
   &\quad + 2 \int \mathd k_1 \mathd k_2 \int_0^t \mathd \sigma \int_0^{\sigma} \mathd \sigma' \int_0^{\sigma} \mathd \sigma''  (i k_{[12]}) (i k_{[12]}) e^{- | \sigma' - \sigma'' | (k_1^2 + k_2^2) - (\sigma - \sigma') k_{[12]}^2 - (\sigma - \sigma'') k_{[12]}^2}
\end{align*}
and conclude that $I = 0$. For the moment this computation is only
formal since we did not take properly into account the regularization.
Introducing the regularization $\varphi$ on the noise, the integral to be
considered is
\begin{align*}
   I_{\varepsilon} & = 2 \int \mathd k_1 \mathd k_2 \varphi^2 (\varepsilon k_1) \varphi^2 (\varepsilon k_2) \int_0^t \mathd \sigma \int_0^{\sigma} \mathd \sigma' \int_0^{\sigma} \mathd \sigma'' \times \\
   &\hspace{150pt} \times (i k_{[12]}) (- i k_{[12]}) e^{- | \sigma' - \sigma'' | (k_1^2 + k_2^2) - (\sigma - \sigma') k_{[12]}^2 - (\sigma - \sigma'') k_{[12]}^2} \\
   &\quad + 8 \int \mathd k_1 \mathd k_2 \varphi^2 (\varepsilon k_1) \varphi^2(\varepsilon k_{[12]}) \int_0^t \mathd \sigma \int_0^{\sigma} \mathd \sigma' \int_0^{\sigma'} \mathd \sigma''  (i k_{[12]}) (i k_2) \times \\
   &\hspace{150pt} \times e^{- (\sigma' - \sigma'') (k_1^2 + k_2^2) - (\sigma - \sigma') k_{[12]}^2 - (\sigma - \sigma'') k_{[12]}^2}.
\end{align*}
Here the symmetrization of $\sigma'$ and $\sigma''$ can still be performed, giving
\begin{align*}
   I_{\varepsilon} & = 2 \int_{E^2} \mathd k_1 \mathd k_2 \int_0^t \mathd \sigma \int_0^{\sigma} \mathd \sigma' \int_0^{\sigma} \mathd \sigma''  (i k_{[12]}) [\varphi^2 (\varepsilon k_1) \varphi^2 (\varepsilon k_2) (- i k_{[12]}) + 2 \varphi^2 (\varepsilon k_1) \varphi^2(\varepsilon k_{[12]}) (i k_2)] \times \\
   &\hspace{150pt} \times e^{- | \sigma' - \sigma'' | (k_1^2 + k_2^2) - (\sigma - \sigma') k_{[12]}^2 - (\sigma - \sigma'') k_{[12]}^2},
\end{align*}
which is equivalent to
\begin{align*}
   I_{\varepsilon} & = 4 \int_{E^2} \mathd k_1 \mathd k_2 \int_0^t \mathd \sigma \int_0^{\sigma} \mathd \sigma' \int_0^{\sigma} \mathd \sigma'' (i k_{[12]}) \varphi^2 (\varepsilon k_1) (i k_2) [\varphi^2(\varepsilon k_{[12]}) - \varphi^2 (\varepsilon k_2)] \times \\
   &\hspace{150pt}  \times  e^{- | \sigma' - \sigma'' | (k_1^2 + k_2^2) - \sigma' k_{[12]}^2 - \sigma'' k_{[12]}^2}.
\end{align*}
Now perform the change of variables $\sigma' \rightarrow \varepsilon^2 \sigma'$, $\sigma'' \rightarrow \varepsilon^2 \sigma''$ to obtain
\begin{align*}
   I_{\varepsilon} & = 4 \varepsilon^2 \int_{E^2} \mathd k_1 \mathd k_2 \int_0^t \mathd \sigma \int_0^{\sigma / \varepsilon^2} \mathd \sigma' \int_0^{\sigma / \varepsilon^2} \mathd \sigma'' (i \varepsilon k_{[12]}) \varphi^2 (\varepsilon k_1) (i \varepsilon k_2) [\varphi^2(\varepsilon k_{[12]}) - \varphi^2 (\varepsilon k_2)] \times \\
   &\hspace{170pt} \times  e^{- | \sigma' - \sigma'' | (\varepsilon^2 k_1^2 + \varepsilon^2 k_2^2) - (\sigma' + \sigma'') \varepsilon^2 k_{[12]}^2}.
\end{align*}
By taking the limit $\varepsilon \rightarrow 0$, the two sums over $k_1$ and $k_2$ become integrals:
\begin{align*}
   \lim_{\varepsilon \rightarrow 0} I_{\varepsilon} & = 4 t \int_0^{\infty} \mathd \sigma' \int_0^{\infty} \mathd \sigma'' \int_{\R^2} \mathd k_1 \mathd k_2  (i k_{[12]}) \varphi^2 (k_1) (i k_2) [\varphi^2(k_{[12]}) - \varphi^2 (k_2)] \times \\
   &\hspace{150pt} \times e^{- | \sigma' - \sigma'' | (k_1^2 + k_2^2) - (\sigma' + \sigma'') k_{[12]}^2} \\
   & = 4 t \int_{\R^2} \mathd k_1 \mathd k_2  (i k_{[12]}) \varphi^2 (k_1) (i k_2) [\varphi^2(k_{[12]}) - \varphi^2 (k_2)] \frac{1}{k_{[12]^{}}^2} \frac{1}{k_1^2 + k^2_2 + k_{[12]}^2} \\
   & = 2 t \int_{\R^2} \mathd k_1 \mathd k_2  (i k_{[12]}) [2 \varphi^2 (k_1) \varphi^2(k_{[12]}) (i k_2) - \varphi^2 (k_1) (i k_{[12]}) \varphi^2 (k_2)] \frac{1}{k_{[12]^{}}^2}  \frac{1}{k_1^2 + k^2_2 + k_{[12]}^2} \\
   & = 2 t \int_{\R^2} \mathd k_1 \mathd k_2  (i k_{[12]}) \frac{\varphi^2 (k_1) (i k_{[12]}) (\varphi^2(k_{[12]}) - \varphi^2 (k_2)) + \varphi^2 (k_1) \varphi^2(k_{[12]}) i (k_2 - k_1) }{k_{[12]^{}}^2(k_1^2 + k^2_2 +  k_{[12]}^2)} \\
   & = - 2 t \int_{\R^2} \mathd k_1 \mathd k_2  \left[ \varphi^2 (k_1) (\varphi^2(k_{[12]}) - \varphi^2 (k_2)) + \varphi^2 (k_1) \varphi^2(k_{[12]}) \frac{(k_2 - k_1)}{k_{[12]}} \right] \frac{1}{k_1^2 + k^2_2 + k_{[12]}^2},
\end{align*}
which is indeed finite.

\section{Stochastic data for the Sasamoto-Spohn model}\label{sec:discrete stochastics}

\subsection{Convergence of the RBE-enhancement}

Here we study the convergence of the data
\[
   X_N, X^{\zzone}_N, X^{\zztwo}_N, X^{\zzthreereso}_N, X^{\zzfour}_N, Q_N, B_N(Q_N \reso X_N)
\]
for the discrete Burgers equation~\eqref{eq:discrete sbe}. We will pick up some correction terms as we pass to the limit, which is due to the fact that in the continuous setting not all kernels were absolutely integrable and at some points we used certain symmetries that are violated now. We work in the following setting:

\begin{assumption}[f,g,h] Let $f, g, h \in C^1_b(\R,\C)$ satisfy $f (0) = g (0) = h (0, 0) = 0$ and assume that $f$ is a real valued even function with $f(x) \geqslant c_f > 0$ for all $| x | \leqslant \pi$, that $g (- x) = g(x)^{\ast}$, and that $h(x,y)=h(y,x)$ and $h(-x,-y) = h(x,y)^\ast$; we write $\bar h(x) = h(x,0)$. Define for $N \in \N$
  \begin{gather*}
     \CF \Delta_N \varphi (k) = - | k |^2 f_\varepsilon (k) \CF \varphi (k), \hspace{2em} \CF \mathD_N \varphi (k) = i k g_\varepsilon (k) \CF \varphi (k), \\
     \CF B_N (\varphi, \psi) (k) = (2 \pi)^{- 1} \sum_{\ell} \CF \varphi (\ell) \CF \psi (k - \ell) h_\varepsilon (\ell, (k - \ell)),
  \end{gather*}
  where $\varepsilon = 2\pi/N$ and $f_\varepsilon(x) = f(\varepsilon x)$ and similarly for $g$ and $h$. Let $\xi$ be a space-time white noise and set
  \begin{equation*}
     \X_N(\xi) = (X_N (\xi), X^{\zzone}_N(\xi), X^{\zztwo}_N(\xi), X^{\zzthreereso}_N(\xi), X^{\zzfour}_N (\xi), B_N(Q_N \reso X_N) (\xi)), 
  \end{equation*}
  where
  \begin{equation}\label{eq:discrete data}
    \begin{array}{rll} 
      \LL X_N (\xi) & = & \mathD_N \PC_N \xi,\\
      \LL X^{\zzone}_N (\xi) & = & \mathD_N \Pi_N B_N (X_N(\xi),X_N(\xi)),\\
      \LL X^{\zztwo}_N (\xi) & = & \mathD_N \Pi_N B_N (X_N (\xi), X^{\zzone}_N (\xi)),\\
      \LL X^{\zzthreereso}_N (\xi) & = & \mathD_N \Pi_N B_N(X^{\zztwo}_N(\xi) \reso X_N(\xi)), \\
      \LL X^{\zzfour}_N (\xi) & = & \mathD_N \Pi_N B_N (X^{\zzone}_N (\xi), X^{\zzone}_N (\xi)),\\
      \LL Q_N (\xi) & = & \mathD_N B_N (X_N (\xi), 1),
    \end{array}
  \end{equation}
    all with zero initial conditions except $X_N(\xi)$ for which we choose the ``stationary'' initial condition
    \[
       X_N(\xi)(0) = \int_{-\infty}^0 e^{-s f_\varepsilon | \cdot|^2}(\mathrm{D}) \mathD_N \PC_N \xi(s) \dd s.
    \]
\end{assumption}

We are ready to state the main result of this section:

\begin{theorem}\label{thm:discrete stochastic}
   Let $\xi$ be a space-time white noise and make Assumption (f,g,h). Then for any $0 < \delta < T$ and $p \ge q$ the sequence $\X_N = \X_N(\xi)$ converges to
   \[
      \tilde \X = (X, X^{\zzone}, X^{\zztwo} + 2 c Q,  X^{\zzfour}, X^\zzthreereso + c Q^\zzone + 2 c Q^{Q\reso X}, Q\reso X + c),
  \]
  in $L^p[\Omega, C_T \CC^{\alpha-1} \times C_T \CC^{2\alpha-1} \times \LL_T^{\alpha} \times \LL_T^{2\alpha} \times \LL^{2\alpha}_T \times C([\delta,T], \CC^{2\alpha-1})]$ and is uniformly bounded in $L^p[\Omega, C_T \CC^{\alpha-1} \times C_T \CC^{2\alpha-1} \times \LL_T^{\alpha} \times \LL_T^{2\alpha} \times \LL^{2\alpha}_T \times C_T \CC^{2\alpha-1}]$. Here we wrote
  \begin{equation}\label{eq:correction constant}
     c = - \frac{1}{4\pi} \int_0^\pi \frac{\mathrm{Im} (g (x) \bar{h} (x))}{x} \frac{h (x, - x) | g (x) |^2}{| f (x) |^2} \dd x \in \R.
  \end{equation}
  and
  \[
     \LL Q^{\zzone} = \mathD X^\zzone, \qquad \LL Q^{Q\reso X} = \mathD (Q\reso X),
  \]
  both with $0$ initial condition.
\end{theorem}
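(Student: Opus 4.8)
The plan is to replay, at the lattice level, the construction of Section~\ref{sec:stochastics} (in particular the proof of Theorem~\ref{thm:rbe data}), and to keep track of the finitely many points where a symmetry used in the continuum computation is destroyed by the operators $f_\varepsilon,g_\varepsilon,h_\varepsilon$ or by the Fourier cutoffs $\PC_N$ and $\Pi_N$; the surviving contributions then assemble into the single finite correction $c$ of~\eqref{eq:correction constant}.

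\emph{Uniform bounds.} As for the $X^\tau$, each $X_N^\tau$ is a polynomial in $\xi$ with an explicit kernel $G_N^\tau$, built recursively by replacing the continuum propagator $H_t(k)=ik\,h_t(k)$ with its discrete analogue $H_t^N(k)=ik\,g_\varepsilon(k)\,e^{-tk^2 f_\varepsilon(k)}$, the pointwise product with $B_N$ (Fourier symbol $h_\varepsilon$), and by inserting $\PC_N$ on each noise and $\Pi_N$ after each application of $B_N$. Decomposing $\Delta_q X_N^\tau$ into Wiener chaos produces kernels $G_{N,\ell}^\tau$ exactly as in Section~\ref{sec:chaos}. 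Since $f\geqslant c_f>0$ on $[-\pi,\pi]$, the discrete heat flow has the same smoothing properties as the continuum one on distributions spectrally supported in $[-N/2,N/2]$ (Lemma~\ref{lem:discrete heat flow}, Corollary~\ref{cor:discrete heat flow time}), and the elementary bound $|H_t^N(k)|\leqslant \|g\|_\infty\,|H_t(k)|$ together with the factorised form of the kernels gives, uniformly in $N$, the stationary majorant estimate $\int |G_N^\tau(t,x,\eta_\tau)|^2\,\mathd s_\tau\lesssim \int|q^\tau(\theta_\tau)|^2\,\mathd\omega_\tau$, in complete analogy with~\eqref{eq:G-unif-bound}. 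The Fourier projectors only shrink domains of integration, apart from the aliasing caused by $\Pi_N$, which is harmless by Lemma~\ref{lem:periodic cutoff bound} and Remark~\ref{rmk:PiN on product} (it costs factors $N^{-\delta}\log N$). Feeding these $L^2$ bounds through Gaussian hypercontractivity and Kolmogorov's continuity criterion, exactly as in Section~\ref{sec:stochastic regularity}, yields $\sup_N\E[\|\X_N\|_{\Xrbe(T)}^p]<\infty$ in $C_T\CC^{\alpha-1}\times C_T\CC^{2\alpha-1}\times\LL^\alpha_T\times\LL^{2\alpha}_T\times\LL^{2\alpha}_T\times C_T\CC^{2\alpha-1}$, which already gives the uniform boundedness statement.

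\emph{Convergence and identification of the correction.} Because $f_\varepsilon,g_\varepsilon,h_\varepsilon\to 1$ pointwise and the projectors converge strongly, the kernels $G_{N,\ell}^\tau$ converge pointwise to $G_\ell^\tau$, and the uniform square-integrable majorants from the previous step allow us to pass to the limit by dominated convergence for every chaos component. The subtle point, already present in the continuum analysis, is that several objects are only conditionally integrable: the vertex functions $V^\zzfive, V^\zzfivereso$ and the zeroth-order (contraction) pieces of $X^\zztwo$, $X^\zzthreereso$ and $Q\reso X$. In the continuum these are killed by the antisymmetry $H_t(-k)=-H_t(k)$, which forces the relevant kernels to vanish on the contracted momenta; at the lattice level one only has $H_t^N(-k)=-ik\,g_\varepsilon(k)^{*}e^{-tk^2f_\varepsilon(k)}$, which is not the exact opposite of $H_t^N(k)$ unless $g$ is real, and $h_\varepsilon(\ell,-\ell)$ need not vanish, so these contractions survive. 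One checks that every other contribution still converges with no extra term, and that each surviving contraction, after the parabolic rescaling $\sigma\mapsto\varepsilon^2\sigma$ which decouples the time integrations from the momenta and turns the $k$-sums into Riemann sums of an absolutely integrable integrand---precisely the mechanism of the computation of $\lim_{\varepsilon\to0}I_\varepsilon$ in the subsection on divergences in the KPZ equation---converges to an explicit multiple of the corresponding reduced object: $X_N^\zztwo\to X^\zztwo+2cQ$, $X_N^\zzthreereso\to X^\zzthreereso+cQ^\zzone+2cQ^{Q\reso X}$, $B_N(Q_N\reso X_N)\to Q\reso X+c$, while $X_N\to X$, $X_N^\zzone\to X^\zzone$, $X_N^\zzfour\to X^\zzfour$ with no correction. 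Tracking the combinatorial prefactors of the Wick pairings together with the symbols $g,h,f$ pins down $c$ as in~\eqref{eq:correction constant}. The last component converges only on $[\delta,T]$ for $\delta>0$ since the time integral producing $c$ saturates to $c$ only once $t\gg\varepsilon^2$; on $[0,\delta]$ one retains only the uniform bound in $C_T\CC^{2\alpha-1}$.

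\emph{Main obstacle.} The heart of the argument, and the place most prone to error, is the bookkeeping in the second step: deciding exactly which Wick contractions cease to vanish once the symmetries are broken, verifying that all remaining contributions converge without spurious corrections, and executing the $\varepsilon\to0$ scaling limit of the surviving contractions rigorously (uniform domination of the Riemann sums, and the change of variables separating time from the frequency variables). This is the discrete counterpart of the KPZ renormalisation computation $\lim_{\varepsilon\to0}I_\varepsilon$ but heavier, because here $X^\zztwo$, $X^\zzthreereso$ and $Q\reso X$ all contribute and their prefactors must be combined consistently to recover one and the same constant $c$.
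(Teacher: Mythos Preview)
Your overall strategy matches the paper's: build discrete kernels $G_N^\tau$ recursively, control their $L^2$ norms uniformly via stationary majorants as in~\eqref{eq:G-unif-bound}, then pass to the limit by dominated convergence, and isolate the finitely many contractions where the continuum antisymmetry $H_t(-k)=-H_t(k)$ is broken by $g_\varepsilon$ and $h_\varepsilon$. The identification of which terms acquire a correction ($X^\zztwo$, $X^\zzthreereso$, $Q\reso X$) and which do not is also right.

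There is, however, a real gap in the step you flag as the main obstacle. The paper does \emph{not} compute the corrections by a global parabolic rescaling $\sigma\mapsto\varepsilon^2\sigma$ of the contraction integrals. Instead it proves (Lemma~\ref{lem:discrete vertex}) that the discrete vertex function $V_N^\zzfive(\cdot,k)$ converges \emph{weakly} to $V^\zzfive(\cdot,k)+2\pi c\,\delta_0$, together with the uniform bound $\sup_N\int_0^\infty|V_N^\zzfive(\sigma,k)|\,\mathd\sigma\lesssim|k|^\delta$. Both parts require splitting $V_N^\zzfive(\sigma,k)=\bigl(V_N^\zzfive(\sigma,k)-V_N^\zzfive(\sigma,0)\bigr)+V_N^\zzfive(\sigma,0)$: the first piece is handled by a first-order Taylor bound exactly as for $V^\zzfive$ in Lemma~\ref{lem:Vzzfive}, while only the second piece produces the Dirac mass (and the constant $c$ emerges as a Riemann sum after the explicit $\sigma$-integration, not after a time rescaling). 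Your proposed mechanism would recover the value of $c$ but does not supply the uniform $L^1$ bound on $V_N^\zzfive$ that is needed to pass to the limit inside the convolutions defining $G_N^\zzfive$, $G_N^\zzsixreso$, $G_N^\zzsevenreso$; without that bound the dominated-convergence step for these kernels is not justified.

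A second, smaller point: the aliasing from $\Pi_N$ is not disposed of by Lemma~\ref{lem:periodic cutoff bound} and Remark~\ref{rmk:PiN on product} alone, because those estimates act on functions, whereas the kernels contain internal occurrences of $k_{[\tau']}^N$. The paper handles this by the decomposition $G_N^\tau=K_N^\tau+F_N^\tau$ according to whether all $|k_i|\leqslant N/(2d(\tau))$: on $K_N^\tau$ the wrapping disappears and the analysis reduces to the continuum one, while $F_N^\tau$ carries a factor $\1_{|k_i|\gtrsim N}$ for some $i$, which yields $N^{-\delta}$ after borrowing from the corresponding $q_N(\theta_i)$. This is also where the paper verifies the discrete analogue of Lemma~\ref{lemma:basic-estimate-integrals} for the basic integral with $(\theta'-\theta)^N$.
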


The proof of this theorem will occupy us for the remainder of the section. Define the kernel
\[
   H^N_t (k) =  \1_{t \geqslant 0} \1_{|k|<N/2} i k g_\varepsilon (k) e^{- t k^2 f_\varepsilon ( k)}
\]
which satisfies
\begin{equation}\label{eq:discrete ou covariance}
   \int_{\R} H^N_{s - \sigma} (k) H^N_{t - \sigma} (- k) \mathd \sigma = \1_{|k| < N/2} \frac{| g_\varepsilon (k) |^2}{f_\varepsilon (k)} \frac{e^{- k^2 f_\varepsilon (k) | t - s |}}{2}
\end{equation}
for all $k \in E$. Let us start then by analyzing the resonant product $B_N(Q_N \reso X_N)$.

\begin{lemma}\label{lem:discrete resonant}
  For all $0 < \delta < T$, the resonant products $(B_N (Q_N \reso X_N))$ are bounded in $L^p ( \Omega, C_T\CC^{2 \alpha - 1})$ and converge to $Q \reso X + c$ in $L^p ( \Omega,C ( [\delta, T], \CC^{2 \alpha - 1} ))$.
\end{lemma}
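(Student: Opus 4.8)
## Proof Plan for Lemma~\ref{lem:discrete resonant}

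The plan is to mimic the continuous analysis of $Q \reso X$ from Section~\ref{sec:stochastic regularity}, keeping track of the discrete modifications that break the exact cancellations used there. First I would write down the chaos decomposition of $B_N(Q_N \reso X_N)$. Since $Q_N$ and $X_N$ are each first-chaos objects built from $\xi$ via the kernels $H^N$ (with the Fourier multiplier $g_\varepsilon$, the cutoff $\1_{|k|<N/2}$, and the discrete heat semigroup associated to $f_\varepsilon$), the product $B_N(Q_N \reso X_N)$ decomposes into a second-chaos part and a zeroth-chaos (deterministic) part. The second-chaos part has a kernel analogous to $G^{Q\reso X}$ but with every occurrence of $H$ replaced by $H^N$, every $k_{[\tau]}$ replaced by $k_{[\tau]}^N$ in the relevant places (through $\Pi_N$ and $B_N$), and the resonance cutoff $\psi_\circ(k_1,k_2)$ possibly replaced by its discrete analogue. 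For this part I would repeat the $L^2$ estimate of $\E[(\Delta_q (Q\reso X)(t,x))^2]$ verbatim: the extra bounded factors $g_\varepsilon, f_\varepsilon^{-1}, h_\varepsilon$ are uniformly bounded (using $f \geqslant c_f$ on $[-\pi,\pi]$, which is exactly why Assumption (H$_f$) / (f,g,h) is imposed), so the same dyadic summation gives a bound $\lesssim 1$ uniformly in $N$, and the temporal Hölder estimate $\E[(\Delta_q \cdot(t) - \Delta_q\cdot(s))^2]\lesssim 2^{q\kappa}|t-s|^{\kappa/2}$ follows as in Section~\ref{sec:stochastic regularity} via the decomposition~\eqref{eq:reducing time difference to fixed time}. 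Gaussian hypercontractivity plus Kolmogorov's criterion then upgrades this to $L^p(\Omega, C_T\CC^{2\alpha-1})$ boundedness, and the difference kernel between $N$ and $\infty$ (or between $N$ and $M$) converges to zero in $L^2$ by dominated convergence — here one uses that the discrete kernels $H^N$ converge pointwise to $H$ and are dominated by the stationary kernel $\Gamma$-type bound as in~\eqref{eq:G-unif-bound}, invoking Lemma~\ref{lem:f nice}, its analogue for $g$, and the $C^1_b$ bounds of Assumption (f,g,h).

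The genuinely new contribution is the zeroth-chaos term $c_N(t,x) := \E[B_N(Q_N\reso X_N)(t,x)]$, which in the continuous case vanished identically by the symmetry $k_1\mapsto -k_1$ together with $H_{t-\sigma}(-k) = -H_{t-\sigma}(k)$. In the discrete setting this cancellation is spoiled: contracting $\eta_1$ with $\eta_2$ in the kernel produces, after using~\eqref{eq:discrete ou covariance}, an expression of the form
\[
   c_N(t,x) = (2\pi)^{-1}\sum_{|k|<N/2} \psi_\circ(k,-k)\, \frac{i k\, g_\varepsilon(k)\, \overline{h_\varepsilon(k,-k)}\, |g_\varepsilon(k)|^2}{\text{(denominator involving }f_\varepsilon(k))}\, (\text{time integral}),
\]
where the time integral converges, as $N\to\infty$, to a finite constant times $k^{-2}$. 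The point is that $\psi_\circ(k,-k)$ restricts the sum to $|k|\gtrsim 1$ (no small-$k$ divergence), while the summand decays like $k^{-1}$ after the time integration produces the $k^{-2}$ and one power of $|k|$ from the $ik$ in the numerator — so the sum is only \emph{conditionally} convergent and its limit is a Riemann sum converging to $\frac{1}{2\pi}\int_{|x|\gtrsim 0}(\cdots)\,dx$, which after collecting constants and using $g(-x)=\overline{g(x)}$, $\overline{h(x,-x)} = h(-x,x) = h(x,-x)$ plus the reality/imaginary-part bookkeeping yields exactly the constant $c$ of~\eqref{eq:correction constant}, i.e. $c = -\frac{1}{4\pi}\int_0^\pi \frac{\mathrm{Im}(g(x)\bar h(x))}{x}\frac{h(x,-x)|g(x)|^2}{|f(x)|^2}\,dx$. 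I would carry out this computation carefully: rescale $\sigma\mapsto\varepsilon^2\sigma$ as in the ``Divergences in the KPZ equation'' subsection, take $\varepsilon\to 0$ so the sum over $k\in E$ becomes an integral over $\R$ (justified because the summand is absolutely summable after the $\psi_\circ$ cutoff), fold $k$ and $-k$ together to extract the imaginary part $\mathrm{Im}(g(x)\bar h(x))$, and collapse $\psi_\circ$ to the full half-line $\int_0^\pi$ (the dyadic overlap contributes only finitely many $O(1)$-modes near the fixed cutoff scale, which in the $\varepsilon\to 0$ limit contribute a negligible set, or more precisely: the $\psi_\circ$ resolution is at a fixed frequency scale while the sum ranges over all $|k|<N/2$, so only the low-frequency window matters and $\psi_\circ$ there is effectively $\1_{|x|\leqslant \pi}$ up to a boundary set of measure zero). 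Since $c_N(t,x)$ is deterministic, constant in $x$ by translation invariance, and converges (uniformly on $[\delta,T]$, with a possible $t^{-\gamma}$-type blow-up only near $t=0$ which is why $\delta>0$ is needed — actually here the integrand is bounded so $c_N(t)\to ct$... one should check: in fact $c_N$ is linear in $t$ in the stationary approximation, so convergence is even on all of $[0,T]$, but the restriction to $[\delta,T]$ is harmless and matches the statement) to $ct$, and since $\LL^{-1}$ applied to a time-dependent constant is harmless, I then identify the limit: the second-chaos part converges to $Q\reso X$ and the zeroth-chaos part to the constant $c$, giving $Q\reso X + c$.

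The main obstacle is the zeroth-chaos computation: getting the constant $c$ \emph{exactly} right, including the factor $1/(4\pi)$ and the sign, requires scrupulous tracking of (i) the $(2\pi)^{-1}$ normalizations in $\CF_N$, $\mathcal E_N$, and the white-noise covariance, (ii) the $\sqrt{2\pi}$ rescaling convention adopted in Section~\ref{sec:stochastics} for $W$, (iii) the fact that $Q_N$ carries the extra $\overline{B}_N$ (i.e. $h_\varepsilon(k,0)=\bar h_\varepsilon(k)$) factor coming from $B_N(X_N,1)$ in~\eqref{eq:discrete data}, and (iv) the conditional nature of the $k$-sum, which means one cannot split it but must fold $\pm k$ before passing to the integral. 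A secondary technical point is justifying the interchange of the $\varepsilon\to 0$ limit with the (conditionally convergent) $k$-sum and the $\sigma$-integrals; this I would handle by first performing the $\sigma$-integrals exactly at finite $\varepsilon$ (they are elementary exponential integrals), obtaining an absolutely convergent $k$-sum with summand $O(|k|^{-1-\kappa})$ for any small $\kappa>0$ once the $\psi_\circ$ cutoff and the derivative gain are both used, and then applying dominated convergence to the resulting Riemann sum. Everything else — the $L^p$ bounds, the $C_T\CC^{2\alpha-1}$ regularity, the convergence of the second-chaos component — is a routine transcription of the arguments already in Section~\ref{sec:stochastic regularity}, using the discrete Schauder estimates (Lemma~\ref{lem:discrete heat flow}, Corollary~\ref{cor:discrete heat flow time}) and the discrete product/paraproduct estimates (Lemmas~\ref{lem:discrete product}--\ref{lem:bilinear form reso}) in place of their continuous counterparts.
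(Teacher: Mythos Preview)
Your overall strategy --- chaos decomposition into a second-chaos piece handled by the continuous estimates plus a deterministic zeroth-chaos piece computed explicitly as a Riemann sum --- is exactly the paper's approach, and your treatment of the second-chaos part is correct.

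There is, however, a genuine gap in your handling of the zeroth-chaos term $c_N(t) := \E[B_N(Q_N\reso X_N)(t,x)]$, and it is precisely the point that explains why $\delta>0$ is needed. You speculate that $c_N$ is ``linear in $t$ in the stationary approximation'' and then invoke an $\LL^{-1}$ that is not present (we are computing $B_N(Q_N\reso X_N)$ itself, not an $X^\tau_N$). In fact, performing the $\sigma$-integral directly using~\eqref{eq:discrete ou covariance} gives
\[
   c_N(t) \;=\; (2\pi)^{-1}\,\varepsilon \sum_{0<|k_1|<N/2} i\,\frac{g(\varepsilon k_1)\bar h(\varepsilon k_1)}{\varepsilon k_1}\,\frac{h(\varepsilon k_1,-\varepsilon k_1)|g(\varepsilon k_1)|^2}{4f^2(\varepsilon k_1)}\,\bigl(1-e^{-2k_1^2 f_\varepsilon(k_1)t}\bigr).
\]
The factor $(1-e^{-2k_1^2 f_\varepsilon(k_1)t})$ is the whole story: it is bounded by $1$ (so $c_N$ is uniformly bounded on $[0,T]$, giving the $C_T\CC^{2\alpha-1}$ bound), it tends to $1$ for each fixed $t>0$ as $N\to\infty$, but $c_N(0)=0$ for every $N$ because $Q_N(0)=0$. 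Hence $c_N(t)\to c$ pointwise for $t>0$ and uniformly on $[\delta,T]$, but \emph{not} uniformly on $[0,T]$ --- the limit function jumps from $0$ to $c$ at $t=0$. There is no $t^{-\gamma}$ blow-up and no linear-in-$t$ behaviour; the obstruction is a transient, not a singularity.

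Two smaller points: (i) the factor $\psi_\circ(k_1,-k_1)$ equals $1$ identically (since $|k_1|=|-k_1|$ forces the same Littlewood--Paley block and the partition of unity squares to $1$ via $\sum_{|i-j|\le 1}\rho_i\rho_j = (\sum_i\rho_i)^2$), so your discussion of ``collapsing $\psi_\circ$ to the half-line'' is unnecessary; (ii) after folding $\pm k_1$ the summand is $\varepsilon$ times a bounded continuous function of $\varepsilon k_1$ (because $\mathrm{Im}(g\bar h)(0)=0$ and $g\bar h\in C^1_b$), so the Riemann sum is absolutely convergent and dominated convergence applies directly --- no $\sigma\mapsto\varepsilon^2\sigma$ rescaling is needed here.
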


\begin{proof}
  We have the chaos decomposition
  \[
     B_N (Q_N \reso X_N) (t,x) = \int_{(\R \times E)^2} K_N (t, x, \eta_{12}) W (\mathd \eta_{12}) + (2\pi)^{-1} \int_{\R \times E} K_N (t, x, \eta_{1 (- 1)}) \mathd \eta_1,
  \]
  where
  \[
     K_N (t, x, \eta_{12}) = \int_0^t \dd \sigma e^{i k_{[12]} x} \psi_\circ(k_1, k_2) h_\varepsilon (k_1, k_2) H^N_{t - \sigma} (k_1) \bar{h}_\varepsilon (k_1) H^N_{\sigma - s_1} (k_1) H^N_{t - s_2} (k_2). 
  \]
  Now $K_N$ satisfies the same bounds as the kernel in the definition of $Q\reso X$ and converges pointwise to it, so that the convergence of the second order Wiener-It\^o integral over $K_N$ to $Q\reso X$ in $L^p ( \Omega, C_T\CC^{2 \alpha - 1})$ follows from the dominated convergence theorem. For the term in the chaos of order 0 we use~\eqref{eq:discrete ou covariance} to obtain
  \begin{align}\label{eq:discrete resonant pr1} \nonumber
     &\int_{\R \times E} K^N (t, x, \eta_{1 (- 1)}) \mathd \eta_1 \\ \nonumber
     &\hspace{30pt} = \int_{E} \dd k_1 \int_0^t \dd \sigma h_\varepsilon (k_1, -k_1) H^N_{t - \sigma} (k_1) \bar{h}_\varepsilon (k_1) \frac{| g_\varepsilon ( k_1) |^2}{f_\varepsilon ( k_1)} \frac{e^{- k_1^2 f_\varepsilon ( k_1) ( t - \sigma )}}{2} \\ \nonumber
     &\hspace{30pt} = \int_{E} \dd k_1 \int_0^t \dd \sigma h_\varepsilon (k_1, -k_1) \1_{|k_1| < N/2} i k_1 g_\varepsilon (k_1) \bar{h}_\varepsilon (k_1) \frac{| g_\varepsilon ( k_1) |^2}{f_\varepsilon ( k_1)} \frac{e^{- 2k_1^2 f_\varepsilon ( k_1) ( t - \sigma )}}{2} \\
     &\hspace{30pt} = \varepsilon \sum_{|k_1|<N/2} \1_{k_1\neq 0} i \frac{g (\varepsilon k_1) \bar{h} (\varepsilon k_1)}{\varepsilon k_1} \frac{h (\varepsilon k_1, -\varepsilon k_1) | g (\varepsilon k_1) |^2}{4 f^2 (\varepsilon k_1)} (1 - e^{- 2k_1^2 f_\varepsilon ( k_1) t}).
  \end{align}
  It is not hard to see that the sum involving the exponential correction term converges to zero uniformly in $t \in [\delta,T]$ whenever $\delta>0$. For the remaining sum, note that
  \[
    i \frac{g (-\varepsilon k_1) \bar{h} (-\varepsilon k_1)}{-\varepsilon k_1} = - i\Big( \frac{g (\varepsilon k_1) \bar{h} (\varepsilon k_1)}{\varepsilon k_1} \Big)^\ast,
  \]
  while the second fraction on the right hand side of~\eqref{eq:discrete resonant pr1} is an even function of $k_1$, and thus
  \begin{align*}
     &\varepsilon \sum_{|k_1|<N/2} \1_{k_1 \neq 0} i \frac{g (\varepsilon k_1) \bar{h} (\varepsilon k_1)}{\varepsilon k_1} \frac{h (\varepsilon k_1, -\varepsilon k_1) | g (\varepsilon k_1) |^2}{4 f^2 (\varepsilon k_1)} \\
     &\hspace{50pt} = - \varepsilon \sum_{0 < k_1 < N/2} \frac{\mathrm{Im} (g (\varepsilon k_1) \bar{h} (\varepsilon k_1))}{\varepsilon k_1} \frac{h (\varepsilon k_1, -\varepsilon k_1) | g (\varepsilon k_1) |^2}{2 f^2 (\varepsilon k_1)}.
  \end{align*}
  Now $\mathrm{Im} (g\bar{h}) \in C^1_b$ with $\mathrm{Im} (g\bar{h})(0) = 0$, and therefore $\mathrm{Im} (g(x)\bar{h}(x))/x$ is a bounded continuous function. Since furthermore $f(\varepsilon k_1) \geqslant c_f > 0$ for all $|k_1| < N/2$, the right hand side is a Riemann sum and converges to $2\pi c$ which is what we wanted to show.
\end{proof}

Next, define
\[
   E_N = E \cap (-N/2, N/2)
\]
and the vertex function
\begin{equation}\label{eq:discrete vertex}
   V^{\zzfive}_N (\sigma, k_1) = \1_{|k_1| \leqslant N / 2} \int_{E_N} h_\varepsilon (k_{[12]}^N, k_{-2}) h_\varepsilon (k_1, k_2) H^N_{\sigma} (k_{[12]}^N) \frac{| g_\varepsilon ( k_2) |^2}{f_\varepsilon ( k_2)} \frac{e^{- k_2^2 f_\varepsilon ( k_2) | \sigma |}}{2} \mathd k_2.
\end{equation}
We also set
\begin{align*}
   V^{\zzfivereso}_N (\sigma, k_1) &= \1_{|k_1| \leqslant N / 2} \int_{E_N} \mathd k_2 \psi_\circ(k_{[12]}^N,k_{-2}) h_\varepsilon (k_{[12]}^N, k_{-2}) \\
   &\hspace{70pt} \times h_\varepsilon (k_1, k_2) H^N_{\sigma} (k_{[12]}^N) \frac{| g_\varepsilon ( k_2) |^2}{f_\varepsilon ( k_2)} \frac{e^{- k_2^2 f_\varepsilon ( k_2) | \sigma |}}{2}.
\end{align*}

\begin{lemma}\label{lem:discrete vertex}
  Let $c \in \R$ be the constant defined in~\eqref{eq:correction constant}. Then $V_N^\zzfive(\cdot, k)$ converges weakly to $V^\zzfive(\sigma, k) + 2\pi c \delta(\sigma)$ for all $k \in E$, in the sense that
  \[
     \lim_{N \rightarrow \infty} \int_0^{\infty} \varphi (\sigma) V^{\zzfive}_N(\sigma, k) \dd \sigma = \int_0^{\infty} \varphi (\sigma) V^{\zzfive} (\sigma, k) \dd \sigma + 2\pi c \varphi (0)
  \]
  for all measurable $\varphi\colon [0,\infty) \to \R$ with $|\varphi(\sigma) - \varphi(0)| \lesssim |\sigma|^\kappa$ for some $\kappa>0$. Similarly, $V_N^{\zzfivereso}(\cdot, k)$ converges weakly to $V^{\zzfivereso}(\sigma, k) + 2\pi c \delta(\sigma)$ for all $k \in E$. Moreover, for all $\delta> 0$ we have
  \[
     \sup_N \int_0^{\infty} |V_N^{\zzfive} (\sigma, k) | \dd \sigma + \sup_N \int_0^{\infty} |V_N^{\zzfivereso} (\sigma, k) | \dd \sigma  \lesssim | k |^{\delta} .
  \]
\end{lemma}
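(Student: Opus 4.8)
The plan is to exploit the decomposition $V_N^{\zzfive}(\cdot, k) = \bigl(V_N^{\zzfive}(\cdot, k) - V_N^{\zzfive}(\cdot, 0)\bigr) + V_N^{\zzfive}(\cdot, 0)$, where $V_N^{\zzfive}(\sigma, 0)$ is the integrand of~\eqref{eq:discrete vertex} evaluated at $k_1 = 0$ (so that $k_{[12]}^N = k_2$). In the continuum $V^{\zzfive}(\sigma, 0) = 0$ by oddness in $k_2$ — this is the cancellation used in Lemma~\ref{lem:Vzzfive} — but on the lattice this symmetry is broken by the multipliers $f_\varepsilon, g_\varepsilon, h_\varepsilon$ and by the aliasing map $k \mapsto k^N$, and the residue is a ``boundary layer'' essentially supported at frequencies $|k_2| \sim N/2$ which, as $\varepsilon \to 0$, concentrates at $\sigma = 0$ and produces the mass $2\pi c\,\delta(\dd\sigma)$. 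The complementary term $V_N^{\zzfive}(\cdot, k) - V_N^{\zzfive}(\cdot, 0)$ will, by contrast, be shown to be absolutely summable in $k_2$ and to converge to $V^{\zzfive}(\cdot, k)$.

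For the bulk term I would write $V_N^{\zzfive}(\sigma, k_1) - V_N^{\zzfive}(\sigma, 0) = \sum_{k_2 \in E_N}\bigl(\text{summand}(k_1, k_2, \sigma) - \text{summand}(0, k_2, \sigma)\bigr)$. The $\lesssim |k_1|$ values of $k_2$ for which $k_1 + k_2$ aliases contribute $\lesssim |k_2| N^{-2} \lesssim N^{-1}$ each after integration in $\sigma$ and are harmless; for the remaining $k_2$ one has $k_{[12]}^N = k_1 + k_2$, and a first order Taylor expansion in $k_1$ of $h_\varepsilon(k_1 + k_2, -k_2)\, i(k_1 + k_2)\, g_\varepsilon(k_1 + k_2)\, e^{-\sigma(k_1 + k_2)^2 f_\varepsilon(k_1 + k_2)}$, combined with $|h_\varepsilon(k_1, k_2) - h_\varepsilon(0, k_2)| \lesssim \varepsilon |k_1|$ (from $h \in C^1_b$) and $f_\varepsilon \ge c_f$, bounds the $k_2$-summand — after integrating $\sigma$ against $\varphi$ — by $C |k_1| (1 + |k_2|^2)^{-1}$ plus an error of total mass $\lesssim \varepsilon |k_1| \log N$. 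The first bound is $N$-independent and summable, the $k_2$-summand converges pointwise to $\int_0^\infty \varphi(\sigma) [H_\sigma(k_1 + k_2) - H_\sigma(k_2)] \tfrac{e^{-|\sigma| k_2^2}}{2}\, \dd\sigma$, and the error tends to $0$, so dominated convergence together with $V^{\zzfive}(\cdot, 0) = 0$ gives $\int_0^\infty \varphi(\sigma) \bigl(V_N^{\zzfive} - V_N^{\zzfive}(\cdot, 0)\bigr)(\sigma, k_1)\, \dd\sigma \to \int_0^\infty \varphi(\sigma) V^{\zzfive}(\sigma, k_1)\, \dd\sigma$. Keeping instead the full $\min(1, |\tau k_1|^{-1})$ structure of $\sum_{k_2 \in E_N} \int_0^1 [(k_2 + \tau k_1)^2 + k_2^2]^{-1}\, \dd\tau$ exactly as in the proof of Lemma~\ref{lem:Vzzfive} gives the uniform estimate $\int_0^\infty |V_N^{\zzfive}(\sigma, k_1) - V_N^{\zzfive}(\sigma, 0)|\, \dd\sigma \lesssim |k_1|^\delta$ for every $\delta > 0$ (here $\varepsilon |k_1| \log N \lesssim |k_1| N^{-\delta} \log N \lesssim |k_1|^\delta$ because $|k_1| < N/2$).

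For the boundary layer I would use that at $k_1 = 0$ the sum runs over the symmetric set $E_N$; pairing $k_2 \leftrightarrow -k_2$ and invoking the symmetry hypotheses on $f, g, h$ — $f$ even, $g(-x) = g(x)^\ast$, $h(x, y) = h(y, x)$, $h(-x, -y) = h(x, y)^\ast$, which force $h(x, -x) \in \R$ and $h_\varepsilon(0, k_2) = \bar h_\varepsilon(k_2)$ — collapses $V_N^{\zzfive}(\sigma, 0)$ to
\[
   V_N^{\zzfive}(\sigma, 0) = - \sum_{0 < k_2 < N/2} h_\varepsilon(k_2, -k_2)\, k_2\, \frac{|g_\varepsilon(k_2)|^2}{f_\varepsilon(k_2)}\, e^{-2\sigma k_2^2 f_\varepsilon(k_2)}\, \mathrm{Im}\bigl(g_\varepsilon(k_2)\bar h_\varepsilon(k_2)\bigr).
\]
Since $g, h \in C^1_b$ and $g(0)\bar h(0) = g(0) h(0, 0)$ is real, $\mathrm{Im}\bigl(g_\varepsilon(k_2)\bar h_\varepsilon(k_2)\bigr) = O(\varepsilon |k_2|)$ uniformly for $|k_2| < N/2$; this extra $\varepsilon |k_2|$ is exactly what defeats the otherwise logarithmic divergence, giving $\int_0^\infty |V_N^{\zzfive}(\sigma, 0)|\, \dd\sigma \lesssim \varepsilon \sum_{0 < k_2 < N/2} 1 \lesssim \varepsilon N \lesssim 1$ uniformly in $N$. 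For the weak limit, split $\varphi(\sigma) = \varphi(0) + (\varphi(\sigma) - \varphi(0))$: integrating $e^{-2\sigma k_2^2 f_\varepsilon(k_2)}$ in $\sigma$ and substituting $x = \varepsilon k_2$ turns the $\varphi(0)$-part into $-\varphi(0)$ times the Riemann sum $\sum_{0 < k_2 < N/2} \varepsilon\, \psi(\varepsilon k_2)$ of the bounded continuous function $\psi(x) = \tfrac{h(x, -x)|g(x)|^2}{2 f(x)^2} \cdot \tfrac{\mathrm{Im}(g(x)\bar h(x))}{x}$ on $[0, \pi]$ ($\psi$ is bounded near $0$ since $\mathrm{Im}(g(x)\bar h(x)) = O(x)$, and $f \ge c_f$ there), hence converges to $-\varphi(0) \int_0^\pi \psi(x)\, \dd x = 2\pi c\, \varphi(0)$ by~\eqref{eq:correction constant}; the $(\varphi(\sigma) - \varphi(0))$-part has a $k_2$-summand dominated, uniformly in $N$, by $C |k_2|^{-1 - 2\kappa}$ (using $|\varphi(\sigma) - \varphi(0)| \lesssim \sigma^\kappa$ and $f_\varepsilon \ge c_f$), whose pointwise-in-$k_2$ limit is odd in $k_2$, hence sums to $0$ by dominated convergence.

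For $V_N^{\zzfivereso}$ the argument is the same once one notes $\psi_\circ(k, -k) = 1$ for every $k$: since the $\rho_j$ are even, $\psi_\circ(k, -k) = \sum_{|i - j| \le 1} \rho_i(k) \rho_j(k) = \sum_i \rho_i(k) \sum_{|j - i| \le 1} \rho_j(k) = \sum_i \rho_i(k) = 1$, the middle equality because $\rho_i \rho_j \equiv 0$ for $|i - j| > 1$ and $\sum_j \rho_j \equiv 1$. Hence $V_N^{\zzfivereso}(\sigma, 0) = V_N^{\zzfive}(\sigma, 0)$, so the boundary analysis carries over verbatim, and in the bulk estimate the $k_1$-increment of $\psi_\circ(k_{[12]}^N, -k_2)$ is $\lesssim |k_1|/|k_2|$ (as $|\nabla \psi_\circ(k, \ell)| \lesssim (|k| + |\ell|)^{-1}$) and is absorbed like the $h_\varepsilon$-increment. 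I expect the main obstacle to be precisely the uniform-in-$N$ control of $\int_0^\infty |V_N^{\zzfive}(\sigma, 0)|\, \dd\sigma$: the crude bound only gives $O(\log N)$, and one must track the reality and symmetry hypotheses carefully to reduce the summand to something carrying the factor $\mathrm{Im}(g_\varepsilon(k_2)\bar h_\varepsilon(k_2)) = O(\varepsilon |k_2|)$ — the same cancellation that turns the logarithm into an $O(1)$ also pins the limiting constant to $2\pi c$.
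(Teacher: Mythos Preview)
Your proposal is correct and follows essentially the same route as the paper's proof: the same splitting $V_N^{\zzfive}(\cdot,k) = \bigl(V_N^{\zzfive}(\cdot,k) - V_N^{\zzfive}(\cdot,0)\bigr) + V_N^{\zzfive}(\cdot,0)$, the same symmetry reduction of the boundary layer to a Riemann sum for $2\pi c$, and the same Taylor-expansion-plus-dominated-convergence treatment of the bulk (the paper packages the increments into an auxiliary function $\psi_\varepsilon(\ell,m) = h_\varepsilon((\ell+m)^N,-m)h_\varepsilon(\ell,m)g_\varepsilon((\ell+m)^N)$ rather than splitting off the aliased $k_2$ first, but this is cosmetic).

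One small slip: in your handling of the $(\varphi(\sigma)-\varphi(0))$ piece of the boundary layer you say the pointwise-in-$k_2$ limit is ``odd in $k_2$, hence sums to $0$''. Since you have already paired $k_2\leftrightarrow -k_2$ and are summing only over $0<k_2<N/2$, oddness is no longer available. The correct observation is simply that for each fixed $k_2$ the summand contains the factor $\mathrm{Im}\bigl(g_\varepsilon(k_2)\bar h_\varepsilon(k_2)\bigr)\to \mathrm{Im}\bigl(g(0)\bar h(0)\bigr)=0$ as $\varepsilon\to 0$, so the pointwise limit is $0$; your uniform bound $C|k_2|^{-1-2\kappa}$ then gives the conclusion by dominated convergence. (The paper instead bounds this piece directly by $\varepsilon^{\kappa'}$ times a convergent Riemann sum.) This does not affect the validity of your argument.
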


\begin{proof}
  We write $V^{\zzfive}_N (\sigma, k) = (V^{\zzfive}_N (\sigma, k) - \1_{|k| \leqslant N/2} V^{\zzfive}_N (\sigma, 0)) + \1_{|k| \leqslant N/2} V^{\zzfive}_N (\sigma, 0)$. Let us first concentrate on the second term:
  \begin{align}\label{eq:discrete vertex pr1}
     &\int_0^\infty \mathd \sigma \varphi (\sigma) V^{\zzfive}_N (\sigma, 0)\\ \nonumber
    &\hspace{25pt} = \int_{E_N} \dd k_2 h_\varepsilon (k_2, - k_2) \bar{h}_\varepsilon (k_2) (i k_2) g_\varepsilon (k_2) \frac{| g_\varepsilon (k_2) |^2}{2 f_\varepsilon (k_2)} \int_0^\infty \mathd \sigma \varphi (\sigma) e^{- 2 k_2^2 f_\varepsilon (k_2) \sigma} \\ \nonumber
    &\hspace{25pt} = - \sum_{0 < |k_2| \leqslant N/2} h_\varepsilon (k_2, - k_2) k_2 \mathrm{Im}( \bar{h}_\varepsilon (k_2) g_\varepsilon (k_2)) \frac{| g_\varepsilon (k_2) |^2}{f_\varepsilon (k_2)} \int_0^\infty \mathd \sigma \varphi (\sigma) e^{- 2 k_2^2 f_\varepsilon (k_2) \sigma}.
  \end{align}
  Now add and subtract $\varphi(0)$ in the integral over $\sigma$ and observe that
  \begin{dmath*}
     \Big| \int_0^\infty \mathd \sigma (\varphi (\sigma) - \varphi(0)) e^{- 2 k_2^2 f_\varepsilon (k_2) \sigma} \Big| \lesssim  \int_0^\infty \mathd \sigma \sigma^\kappa e^{- 2 k_2^2 c_f \sigma} \lesssim |k_2|^{-2\kappa} \int_0^\infty \dd \sigma e^{-c k_2^2 \sigma} \lesssim |k_2|^{-2-2\kappa},
  \end{dmath*}
  and then
  \begin{dmath*}
     \Big| \sum_{0 < |k_2| \leqslant N/2} h_\varepsilon (k_2, - k_2) k_2 \mathrm{Im}( \bar{h}_\varepsilon (k_2) g_\varepsilon (k_2)) \frac{| g_\varepsilon (k_2) |^2}{f_\varepsilon (k_2)} |k_2|^{-2-2\kappa} \Big| \\
     \lesssim \varepsilon^{\kappa'} \Big(\varepsilon \sum_{0 < |k_2| \leqslant N/2} | h (\varepsilon k_2, - \varepsilon k_2) \mathrm{Im}( \bar{h} (\varepsilon k_2) g (\varepsilon k_2))| \frac{| g (\varepsilon k_2) |^2}{f (\varepsilon k_2)} |\varepsilon k_2|^{-1-\kappa'}\Big)
  \end{dmath*}
  whenever $\kappa' \in [0,1 \wedge 2\kappa)$. The term in the brackets is a Riemann sum and since $|Im(\bar{h} g)(x)| \lesssim |x|$ and $\kappa' < 1$, it converges to a finite limit. Thus we may replace $\varphi(\sigma)$ by $\varphi(0)$ and end up with
  \begin{dmath*}
     - \varphi(0) \sum_{0 < |k_2| \leqslant N/2} h_\varepsilon (k_2, - k_2) \mathrm{Im}( \bar{h}_\varepsilon (k_2) g_\varepsilon (k_2)) \frac{| g_\varepsilon (k_2) |^2}{2 f_\varepsilon^2 (k_2) k_2},
  \end{dmath*}
  which converges to $2\pi c \varphi(0)$.
  
  It remains to treat the term
  \[
     V^{\zzfive}_N (\sigma, k) - \1_{|k| \leqslant N/2} V^{\zzfive}_N (\sigma, 0) = \1_{|k| \leqslant N / 2} \int_{E_N} \dd k_2 (W_N^{\zzfive} (\sigma, k, k_2) - W_N^{\zzfive} (\sigma, 0, k_2)),
  \]
  where the right hand side is to be understood as the (indirect) definition of $W_N^{\zzfive}$.  Since for fixed $(k, k_2,\sigma)$ the integrand converges to $(H_{\sigma} (k + k_2) - H_\sigma(k_2)) e^{- k_2^2 | \sigma |}/2$, it suffices to bound $|W_N^{\zzfive} (\sigma, k, k_2) - W_N^{\zzfive} (\sigma, 0, k_2)|$ uniformly in $N$ by an expression which is integrable over $(\sigma,k_2) \in [0,\infty) \times E$. For the remainder of the proof let us write $\psi_\varepsilon (\ell, m) = h_\varepsilon ( (\ell + m)^N, - m) h_\varepsilon (\ell, m) g_\varepsilon ((\ell + m)^N)$, which satisfies uniformly over $|\ell|, |m| < N/2$
  \[
     | \psi_\varepsilon (\ell, m) - \psi_\varepsilon (0, m) | \lesssim \varepsilon |\ell | \lesssim (\varepsilon |\ell |)^\delta
  \]
  for all $\delta \in [0,1]$ (to bound $|(\ell + m)^N - m|$ note that $(\ell + m)^N = \ell + m + j(m,\ell)N$ for some $j(m,\ell) \in \Z$ and that if $|\ell| < N/2$, then $| \ell + j N| \geqslant |\ell |$ for all $j \in \Z$). We now have to estimate the term
  \begin{align*}
     &| \psi_\varepsilon (k, k_2) (k + k_2)^N e^{- \sigma f_\varepsilon ((k + k_2)^N) ((k+k_2)^N)^2} - \psi_\varepsilon (0, k_2) k_2 e^{- \sigma f_\varepsilon (k_2) k_2^2} | \1_{|k|, | k_2 | < N / 2} e^{- k_2^2 f_\varepsilon (k_2) \sigma} \\
     &\hspace{20pt} \leqslant | \psi_\varepsilon (k, k_2) - \psi_\varepsilon (0, k_2) | | k_2 | e^{- \sigma f_\varepsilon (k_2) k_2^2} \1_{|k|, | k_2 | < N / 2} e^{- k_2^2 c_f \sigma} \\
     &\hspace{20pt}\quad + | \psi_\varepsilon (k, k_2) |  | (k + k_2)^N e^{- \sigma f_\varepsilon ((k + k_2)^N) ((k+k_2)^N)^2} - k_2 e^{- \sigma f_\varepsilon (k_2) k_2^2} | \1_{|k|, | k_2 | < N / 2} e^{- k_2^2 c_f \sigma} \\
     &\hspace{20pt} \lesssim \big( (\varepsilon |k|)^\delta | k_2 | + | (k + k_2)^N e^{- \sigma f_\varepsilon ((k + k_2)^N) ((k+k_2)^N)^2} - k_2 e^{- \sigma f_\varepsilon (k_2) k_2^2} | \big) \1_{|k|, | k_2 | < N / 2} e^{- k_2^2 c_f \sigma}.
  \end{align*}
  The first addend on the right hand side is bounded by $| k |^{\delta} | k_2 |^{1 - \delta} e^{- \sigma c_f k_2^2}$, and for $\delta>0$ this is integrable in $(\sigma,k_2)$ and the integral is $\lesssim |k|^\delta$. To bound the second addend, let us define $\varphi_{\varepsilon,\sigma} (x) = x e^{- \sigma f_\varepsilon (x) x^2}$ and note that for $| x | < N / 2$
  \[
     | \varphi_{\varepsilon,\sigma}' (x) | = | 1 - x \sigma f' (\varepsilon x) \varepsilon x^2 - x \sigma f_\varepsilon (x) 2 x | e^{- \sigma f_\varepsilon (x) x^2} \lesssim (1+\sigma |x|^2) e^{-c_f \sigma x^2} \lesssim e^{- \sigma c x^2}
  \]
  for some $c > 0$. From here we can use the same arguments as in the proof of Lemma~\ref{lem:Vzzfive} to show that
  \[
     | (k + k_2)^N e^{- \sigma f_\varepsilon ((k + k_2)^N) ((k+k_2)^N)^2} - k_2 e^{- \sigma f_\varepsilon (k_2) k_2^2} |) \1_{|k|, | k_2 | < N / 2} e^{- k_2^2 c_f \sigma} \leqslant F(k,k_2,\sigma)
  \]
  for a function $F$ with $\int_E \dd k_2 \int_0^\infty \dd \sigma F(k,k_2,\sigma) \lesssim |k|^\delta$, and thus we get the convergence and the bound for $V^\zzfive_N$. The term $V^{\zzfivereso}_N$ can be treated using the same arguments.
\end{proof}

\begin{proof}[Proof of Theorem~\ref{thm:discrete stochastic}]
   We introduce analogous kernels as in the continuous setting: Define $G_N^{\bullet}(t,x,\eta) = e^{i k x} H^N_{t-s}(k)$ and and then inductively for $\tau = (\tau_1 \tau_2)$
\begin{align*}
   G^{\tau}_N (t, x, \eta_{\tau}) &= e^{i k^N_{[\tau]} x} H^{\tau}_N (t, \eta_{\tau}) \\
   & = e^{i k^N_{[\tau]} x} \int_0^t \mathd \sigma H^N_{t - \sigma} (k^N_{[\tau]}) h_\varepsilon (k^N_{[\tau_1]}, k^N_{[\tau_2]}) H^{\tau_1}_N (\sigma, \eta_{\tau_1}) H^{\tau_2}_N (\sigma,  \eta_{\tau_2}).
\end{align*}
A first consequence of this recursive description is that the contribution to the 0-th chaos always vanishes: just as for the kernels $G^{\tau}$ of Section~\ref{sec:stochastics} we have
\[
   G^{\tau}_N (t, x, \sigma (\eta_{1 \ldots n (- 1) \ldots (- n)})) = 0
\]
whenever $\sigma \in \mathcal{S}_{2n}$, because $G^{\tau}_N (t, x, \eta_{1 \ldots 2 n}) \propto [k_{1 \ldots 2 n}^N]$.

We decompose every $G^\tau_N$ into two parts:
\begin{align}\label{eq:GN decomposition} \nonumber
   G^{\tau}_N (t, x, \eta_{\tau}) & = G^{\tau}_N (t, x, \eta_{\tau}) \1_{|k_1|,\dots, |k_{d(\tau)}| \leqslant N/(2d(\tau))} + G^{\tau}_N (t, x, \eta_{\tau}) (1 -  \1_{|k_1|,\dots, |k_{d(\tau)}| \leqslant N/(2d(\tau))}) \\
   & = K^\tau_N(t, x, \eta_{\tau}) + F^\tau_N(t, x, \eta_{\tau}).
\end{align}
Let us first indicate how to deal with $F^\tau_N$, which gives a vanishing contribution in the limit. Define
\[
   q_N(\theta) = \1_{|k|\leqslant N/2} \frac{i k g_\varepsilon(k)}{i \omega + f_\varepsilon(k) k^2},
\]
which satisfies $|q_N(\theta)| \lesssim |\theta|^{-1}$, uniformly in $N$ (recall that we defined $|\theta| = |\omega|^{1/2} + |k|$). Similarly as in Section~\ref{sec:stochastic regularity} we can bound every integrand that we need to control by a product of terms of the form $q_N(\theta_{[\tau']}^N)$, discrete vertex functions, and factors like $\psi_\circ(k_{[\tau']}^N, k_{[\tau'']}^N)$, where $\theta^N = (\omega, k^N)$. Moreover, every integrand contains a factor $|q_N(\theta_i)|$ for each its integration variables $\theta_i$, and we can decompose $1 - \1_{|k_1|,\dots, |k_{d(\tau)}| \leqslant N/(2d(\tau))}$ into a finite sum of terms that each contain a factor $\1_{|\theta_i| > N/(2d(\tau))}$ for some $i$. We can therefore estimate $|q_N(\theta_i)| \1_{|\theta_i| > N/(2d(\tau))} \lesssim N^{-\delta} |\theta_i|^{1-\delta}$ for an arbitrarily small $\delta > 0$, which gives us a small factor. We now only have to show that the remaining integral stays bounded. To do so, we need to control the basic integral
\[
   \int_{\R \times E_N} |\theta|^{-\alpha} |(\theta' - \theta)^N|^{-\beta} \dd \theta \1_{|k'|\leqslant N/2}.
\]
But $(k' - k)^N = k' - k + jN$ for some $|j| \leqslant 1$ and if $|k'|\leqslant N/2$, then $|k' + jN| \geqslant |k'|$ for all $j \in \Z$. So by Lemma~\ref{lemma:basic-estimate-integrals} we have
\begin{align*}
   \int_{\R \times E_N} |\theta|^{-\alpha} |(\theta' - \theta)^N|^{-\beta} \dd \theta \1_{|k'|\leqslant N/2} & \leqslant \sum_{|j| \leqslant 1} \int_{\R \times E_N} |\theta|^{-\alpha} |\theta' + (0,jN) - \theta|^{-\beta} \dd \theta \1_{|k'|\leqslant N/2} \\
   & \lesssim \sum_{|j| \leqslant 1} |\theta' + (0,jN)|^{-\rho} \1_{|k'|\leqslant N/2} \lesssim |\theta'|^{-\rho} \1_{|k'|\leqslant N/2},
\end{align*}
where $\alpha,\beta,\rho$ are as in the announcement of the lemma. The second integral in Lemma~9.8 is estimated using the same argument, which also allows us to show that all discrete vertex functions apart from $V^\zzfive_N$ and $V^\zzfivereso_N$ satisfy the same bounds as their continuous counterparts. Since we estimated $V^\zzfive_N$ and $V^\zzfivereso_N$ in Lemma~\ref{lem:discrete vertex}, we now simply need to repeat the calculations of Section~\ref{sec:stochastic regularity} to show that the Wiener-It\^o integral over $F_N^\tau$ converges to 0 in $L^p$ in the appropriate Besov space.

We still have to treat the term $K_N^\tau$ in the decomposition~\eqref{eq:GN decomposition} of $G_N^\tau$. In the description of $K_N^\tau$ we can now replace every $k_{[\tau']}^N$ by the usual sum $k_{[\tau']}$ (where $\tau'$ is an arbitrary subtree of $\tau$). Then $K_N^\tau$ satisfies the same bounds as $G^\tau$, uniformly in $N$, and converges pointwise to it. So whenever $G^\tau$ is absolutely integrable we can apply the dominated convergence theorem to conclude. However, in the continuous case we used certain symmetries to derive the bounds for $Q \reso X$, $V^{\zzfive}$, and $V^{\zzfivereso}$, and these symmetries turn out to be violated in the discrete case. This is why we separately studied the convergence of $B_N(Q_N \reso X_N)$ in Lemma~\ref{lem:discrete resonant}, and in Lemma~\ref{lem:discrete vertex} we showed that $V^\zzfive_N$ and $V^{\zzfivereso}_N$ satisfy the same bounds as $V^\zzfive$ and $V^{\zzfivereso}$, uniformly in $N$, and converge to $V^\zzfive + 2\pi c \delta(\sigma)$ and $V^{\zzfivereso} + 2\pi c \delta(\sigma)$ respectively. It thus remains to see which correction terms we pick up from the additional Dirac deltas.

Let us start with the contraction of $G_N^{\zztwo}$. Here we have
\[
   X^{\zztwo}_N (t, x) = \int_{(\R \times E)^3} G^{\zztwo}_N (t, x, \eta_{123}) W (\mathd \eta_{123}) + \int_{\R \times E} G^{\zztwo}_{N,1} (t, x, \eta_1) W (\mathd \eta_1)
\]
with
\[
   G^{\zztwo}_{N,1} (t, x, \eta_1) = \pi^{-1} G^{\zzfive}_N (t, x, \eta_1) = \pi^{-1} e^{i k_1 x} \int_0^t \mathd \sigma \int_0^{\sigma} \mathd \sigma' H^N_{t - \sigma} (k_1) H^N_{\sigma' - s_1} (k_1) V^{\zzfive}_N (\sigma - \sigma', k_1)
\]
for $V^\zzfive_N$ as defined in~\eqref{eq:discrete vertex}. Now by Lemma~\ref{lem:discrete vertex}, the right hand side converges to
\[
   \pi^{-1} G^{\zzfive} (t, x, \eta_1) + 2 c e^{i k_1 x} \int_0^t \mathd \sigma H_{t - \sigma} (k_1) H_{\sigma - s_1} (k_1),
\]
and we have
\[
   \int_{\R \times E} \Big( 2 c e^{i k_1 x} \int_0^t \mathd \sigma H_{t - \sigma} (k_1) H_{\sigma - s_1} (k_1) \Big) W(\dd \eta_1) = 2 c Q(t,x).
\]
In conclusion, $X^\zztwo_N$ converges to $X^\zztwo + 2 c Q$ in $L^p(\Omega, \LL^\alpha_T)$.

The only other place where $V_N^\zzfive$ appears is in the contractions of $X_N^\zzthreereso$. We have
\[
   X_N^\zzthreereso (t, x) = \int_{(\R \times E)^4} G^{\zzthreereso}_N (t, x, \eta_{1234}) W (\mathd \eta_{1234}) + \int_{(\R \times E)^2} G^{\zzthreereso}_{N,2} (t, x, \eta_{12}) W (\mathd \eta_{12})
\]
with
\begin{align*}
   G^{\zzthreereso}_{N,2} (t, x, \eta_{12}) & = (2\pi)^{-1} (G^{\zzsixreso}_N (t, x, \eta_{12}) + 2 G^{\zzsevenreso}_N (t, x, \eta_{12}) + 2 G^{\zzeightreso}_N (t, x, \eta_{12})).
\end{align*}
Now $G^{\zzsixreso}_N (t, x, \eta_{12})$ can be factorized as
\begin{align*}
   G^{\zzsixreso}_N (t, x, \eta_{12}) & = e^{i k_{[12]} x} h_\varepsilon(k_1, k_2) \int_0^t\dd \sigma \int_0^{\sigma} \mathd \sigma' \int_0^{\sigma'} \mathd \sigma'' H^N_{t-\sigma}(k_{[12]}) H^N_{\sigma' - \sigma''} (k_{[12]})  \\
   &\hspace{130pt} \times H^N_{\sigma'' - s_1} (k_1) H^N_{\sigma'' - s_2} (k_2) V^{\zzfivereso}_N (\sigma - \sigma', k_{[12]}),
\end{align*}
and Lemma~\ref{lem:discrete vertex} shows that the right hand side converges to
\[
   G^{\zzsixreso} (t, x, \eta_{12}) + 2\pi c e^{i k_{[12]} x} \int_0^t \dd \sigma \int_0^\sigma \mathd \sigma'' H_{t - \sigma} (k_{[12]}) H_{\sigma - \sigma''} (k_{[12]}) H_{\sigma'' - s_1} (k_1) H_{\sigma'' - s_2} (k_2),
\]
with
\begin{align*}
   &\int_{(\R\times E)^2} \Big(e^{i k_{[12]} x} \int_0^t \dd \sigma \int_0^\sigma \mathd \sigma'' H_{t - \sigma} (k_{[12]}) H_{\sigma - \sigma''} (k_{[12]}) H_{\sigma'' - s_1} (k_1) H_{\sigma'' - s_2} (k_2)\Big) W(\dd \eta_{12}) \\
   &\hspace{70pt} = Q^\zzone(t,x).
\end{align*}
Similarly, we factorize $G^{\zzsevenreso}_N (t, x, \eta_{12})$ as
\begin{align*}
   G^{\zzsevenreso}_N (t, x, \eta_{12}) & = e^{i k_{[12]} x} h_\varepsilon(k_1, k_2) \psi_\circ(k_1,k_2) \int_0^t \dd \sigma \int_0^{\sigma} \mathd \sigma' \int_0^{\sigma'} \mathd \sigma'' H^N_{t-\sigma}(k_{[12]}) H^N_{\sigma - s_2} (k_2) \\
   &\hspace{130pt} \times H^N_{\sigma - \sigma'} (k_1) H^N_{\sigma'' - s_1} (k_1) V^{\zzfive}_N (\sigma' - \sigma'', k_1),
\end{align*}
and Lemma~\ref{lem:discrete vertex} shows that the right hand side converges to
\[
   G^{\zzsevenreso} (t, x, \eta_{12}) + 2\pi c e^{i k_{[12]} x} \psi_\circ(k_1,k_2) \int_0^t \dd \sigma \int_0^\sigma \mathd \sigma' H_{t-\sigma}(k_{[12]}) H_{\sigma - s_2} (k_2) H_{\sigma - \sigma'} (k_1) H_{\sigma' - s_1} (k_1)
\]
with
\begin{align*}
   & \int_{(\R\times E)^2} \Big( e^{i k_{[12]} x} \psi_\circ(k_1,k_2) \int_0^t \dd \sigma \int_0^\sigma \mathd \sigma' H_{t-\sigma}(k_{[12]}) H_{\sigma - s_2} (k_2) H_{\sigma - \sigma'} (k_1) H_{\sigma' - s_1} (k_1) \Big) W(\dd \eta_{12}) \\
   &\hspace{70pt}= Q^{Q\reso X}(t,x).
\end{align*}
In conclusion, $X_N^\zzthreereso$ converges to $ X^\zzthreereso + c Q^\zzone + 2 cQ^{Q\reso X}$ in $L^p(\Omega, \LL_T^{2\alpha})$.
\end{proof}

\subsection{Convergence of the random operator}

The purpose of this subsection is to prove that the random operator
\begin{align}\label{eq:random operator def sec10} \nonumber
   A_N (f) & \assign \Pi_N \Big(\int (\Pi_{N} (f \para \tau_{- \varepsilon y} Q_N) \reso (\tau_{- \varepsilon z} X_N) \mu (\mathd y, \mathd z) \Big) \\
   &\quad - \PC_N \Big(\int ( (f \para \tau_{- \varepsilon y} Q_N)) \reso (\tau_{- \varepsilon z} X_N ) \mu (\mathd y, \mathd z) \Big)
\end{align}
converges to zero in probability.

\begin{theorem}\label{thm:random operator}
   Let $\alpha \in (1/4, 1/2)$. Then we have for all $\delta, T > 0$ and $r \geqslant 1$
   \begin{equation}\label{eq:random operator bound}
      \E[ \| A_N \|^r_{C_T L ( \CC^{\alpha}, \CC^{2\alpha-1} )} ]^{1/r} \lesssim N^{\alpha - 1/2 + \delta} .
   \end{equation}
\end{theorem}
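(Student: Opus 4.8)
The plan is to reduce the bound on the operator norm of $A_N$ to a single second--moment estimate for $A_N$ evaluated on a frequency--localised bump, uniformly over the scale and position of the bump, and then to recover the operator norm by Gaussian hypercontractivity together with an atomic decomposition of the input. The structural reason why $A_N$ is small is that, by Lemma~\ref{lem:bilinear form reso} (applied with $\psi=Q_N$, $\chi=X_N$), $A_N(f)$ is precisely the part of $\Pi_N B_N(\Pi_N(f\para Q_N)\reso X_N)$ that remains after one subtracts the commutator $\PC_N C(f,Q_N,X_N)$, controlled deterministically via Remark~\ref{rmk:paracontrolled commutator}, and the ``diagonal'' term $\PC_N(f\,B_N(Q_N\reso X_N))$, controlled by the already established boundedness and convergence of $B_N(Q_N\reso X_N)$ in Lemma~\ref{lem:discrete resonant} and Theorem~\ref{thm:discrete stochastic}. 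Thus $A_N$ measures nothing but the failure of the aliasing cutoff $\Pi_N$ and the hard cutoff $\PC_N$ to agree after the resonant product is formed: since $f\para Q_N$ and $X_N$ are spectrally supported in $[-N/2,N/2]$ while their resonant product reaches frequencies $<N$, and both cutoffs act as the identity below $N/2$, this discrepancy lives in the band $|k|\in[N/2,N)$ and its aliases; in particular $A_N(f)$ depends on $f$ only through the Littlewood--Paley blocks $\Delta_i f$ with $2^i\lesssim N$. The natural decay rate of a resonant product of $\CC^\alpha$ and $\CC^{-1/2-}$ objects at frequency $2^q$ is $2^{-q(1/2-\alpha)}$, and evaluating this at $2^q\simeq N$ produces the factor $N^{-(1/2-\alpha)}$ in the statement.

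Next I would set up the reduction to a countable dictionary. Because $A_N$ is linear and, by the previous step, only sees $S_M f$ with $2^M\simeq N$, it suffices to bound $\E[\|\Delta_q A_N(g_{j,y})(t)-\Delta_q A_N(g_{j,y})(s)\|_{L^\infty}^{2r}]$ uniformly in $q$, in $-1\le j\le M$, and over a $2^{-j}$--net of centres $y\in\T$, where $g_{j,y}$ is an $L^\infty$--normalised smooth bump with Fourier support in $2^j\CA$ (or in a ball for $j=-1$); decomposing an arbitrary $f$ into its blocks $\Delta_j f$, writing each block as a superposition of such atoms with coefficients of size $\lesssim 2^{-j\alpha}\|f\|_\alpha$, and summing over the $O(\log N)$ relevant scales then yields $\E[\|A_N\|^{r}_{C_T L(\CC^\alpha,\CC^{2\alpha-1})}]^{1/r}\lesssim N^{\alpha-1/2+\delta}$, the extra $N^\delta$ absorbing the logarithmic losses of Lemma~\ref{lem:cutoff bound}, Lemma~\ref{lem:periodic cutoff bound} and Remark~\ref{rmk:PiN on product}, the errors from the shifts $\tau_{-\varepsilon y},\tau_{-\varepsilon z}$ in \eqref{eq:random operator def sec10} (each costing at most $N^{-\delta}|y|^\delta$ against the finite measure $\mu$), and the scale summation. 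The hypercontractivity step is routine: each $\Delta_q A_N(g_{j,y})(t,x)$ lies in the sum of the zeroth and second Wiener chaos, so all moments are comparable to the second moment, and Kolmogorov's criterion upgrades the second--moment control over the dictionary and over a dyadic time net to the $C_T\CC^{2\alpha-1}$ norm, the time increments being reduced to fixed--time estimates exactly as in \eqref{eq:reducing time difference to fixed time}.

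The heart of the argument is therefore the single estimate $\E[|\Delta_q A_N(g_{j,y})(t,x)|^2]\lesssim N^{2\alpha-1+\delta}\,2^{-q(2\alpha-1)}$, uniform in $(t,x)$ and in $j\le M$. To prove it I would expand $A_N(g_{j,y})$ into its chaos components, write the kernels in the factorised form of Section~\ref{sec:discrete stochastics} (built from $H^N$, the discrete vertex functions and the cutoff symbols), and use that the subtraction of the commutator and of the diagonal term cancels exactly the pieces that, in the continuous limit, would produce the finite constant $c$; what survives is either multiplied by the characteristic function of the band $|k|\in[N/2,N)$ or by the aliasing defect, and in both cases one gains the algebraic factor $N^{2\alpha-1}$ over the borderline bound for $Q_N\reso X_N$. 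The frequency and time--Fourier integrals are then reduced, proceeding from the leaves of the trees to the root, to the basic integral of Lemma~\ref{lemma:basic-estimate-integrals}, Lemma~\ref{lem:simple integral} and the vertex bounds of Lemma~\ref{lem:discrete vertex}, exactly as in Section~\ref{sec:stochastic regularity}; the only genuinely new bookkeeping is that of the restriction $|k|<N/2$ and of the aliasing maps $k\mapsto k^N$.

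The main obstacle I expect is precisely this reduction from a fixed--input estimate to the operator norm. Since the paraproduct couples every low frequency of $f$ to the single scale--$N$ block of $Q_N$, the operator $A_N$ is not diagonal in $f$, so one cannot estimate $\E[\|A_N\|_{\mathrm{op}}^{2r}]$ directly; one must choose the dictionary $\{g_{j,y}\}$ so that it genuinely spans $\CC^\alpha$ with the correct coefficient control, is countable enough for the hypercontractivity--and--chaining argument, and gives per--atom bounds that are uniform in the centre $y$ and carry the right scale weights $2^{-j\alpha}$. Reconciling the shifts, the two cutoffs, the logarithmic losses and the $O(\log N)$ scales into the clean rate $N^{\alpha-1/2+\delta}$ is where the real care is needed.
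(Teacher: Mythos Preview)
Your structural reading of $A_N$ is correct: the discrepancy is supported where aliasing occurs, namely where the $Q_N$--frequency $k_1$ satisfies $|k_1|\simeq N$, and this is exactly where the $N^{\alpha-1/2}$ gain comes from. Your second--moment heuristics and the use of hypercontractivity to pass to higher moments are also sound, and the time increments can indeed be handled as in~\eqref{eq:reducing time difference to fixed time}.

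The gap is precisely the one you flag yourself, and it is genuine: the passage from per--atom bounds $\E[\|A_N(g_{j,y})\|^{2r}]$ to a bound on $\E[\|A_N\|_{\mathrm{op}}^{r}]$ does not go through as written. If you resolve $\Delta_j f$ into $\sim 2^j$ atoms on a $2^{-j}$--net, the outputs $A_N(g_{j,y})$ are \emph{not} spatially localised (the paraproduct with $Q_N$ and the resonant product with $X_N$ are global operations), so summing them costs a factor $2^j$; with $j$ up to $\sim\log_2 N$ this is a factor $N$, which destroys the estimate. A chaining argument over the unit ball of $\CC^\alpha$ would need increment bounds $\E[|A_N(g)-A_N(g')|^2]$ controlled by a metric whose covering numbers do not grow exponentially in $N$; you have not set this up, and it is substantially more work than your sketch suggests.

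The paper avoids this difficulty by not testing on inputs at all. Instead it writes $\Delta_q A_N(\Delta_p f)(t,x)=\int_{\T} g^N_{p,q}(t,x,y)\,\Delta_p f(y)\,\mathd y$ with an explicit \emph{random kernel} $g^N_{p,q}$ (Lemma~\ref{lem:random operator decomposition}), and then bounds the operator norm \emph{pathwise} by $\sum_{q,p}2^{q\beta-p\alpha}\big\|\int|g^N_{p,q}(t,x,y)|\,\mathd y\big\|_{L^r_x}$ (Lemma~\ref{lem:random operator kolmogorov}). Only after this deterministic reduction does one take expectations; Cauchy--Schwarz, Parseval and Nelson's estimate reduce everything to the single second--moment computation $\sup_x\sum_k\E\big[|\CF g^N_{p,q}(t,x,\cdot)(k)-\CF g^N_{p,q}(s,x,\cdot)(k)|^2\big]\lesssim \1_{2^p,2^q\lesssim N}\,2^{p(1-\lambda)+q}N^{-1+3\lambda}|t-s|^\lambda$ (Lemma~\ref{lem:random operator fourier mode}), which is close in spirit to your proposed atom estimate but is a statement about the kernel rather than about any particular input. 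Summing over $p,q$ and applying Kolmogorov then gives~\eqref{eq:random operator bound}. The kernel representation is what lets one take the supremum over $f$ \emph{before} taking moments, and this is the missing idea in your outline.
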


We will prove the theorem by building on three auxiliary lemmas.

\begin{lemma}\label{lem:random operator decomposition}
   The operator $A_N$ is given by
   \[
      A_N (f) (t, x) = \sum_{q, p} \Delta_q A_N (\Delta_p f) (t, x) = \sum_{q, p} \int_{\T} g^N_{p, q} (t, x, y) \Delta_p f (y) \mathd y
   \]
   with
\begin{equation}\label{eq:gpq fourier transform} \CF g^N_{p, q} (t, x, \cdummy) (k) =
    \sum_{k_1, k_2} \Gamma_{p, q}^N (x ; k, k_1, k_2) \CF Q_N (t, k_1) \CF X_N
    (t, k_2),
  \end{equation}
  where
  \begin{align*}
     \Gamma_{p, q}^N (x ; k, k_1, k_2) & = (2 \pi)^{- 2} \tilde{\rho}_p (k) \psi_{\prec} (k, k_1) h_{\varepsilon} (k_1, k_2)   \\
     &\quad \times \big[ e^{i (k_{[12]} - k)^N x} \rho_q ((k_{[12]} - k)^N) \psi_{\circ}((k_1 - k)^N, k_2)  \\
     &\hspace{30pt} -  e^{i (k_{[12]} - k) x} \rho_q (k_{[12]} - k) \psi_{\circ}(k_1 - k, k_2) \1_{|k_{[12]}-k| \leqslant N/2}\big]
  \end{align*}
  and where $\tilde{\rho}_p$ is a smooth function supported in an annulus $2^p
  \CA$ such that $\tilde{\rho}_p \rho_p = \rho_p$.
\end{lemma}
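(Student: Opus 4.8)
The plan is to unfold the definition of $A_N$ in Fourier space and then simply read off the kernel. First I would recall that, by definition~\eqref{eq:random operator def sec10}, $A_N(f)$ is the difference of two expressions, each of which is a composition of the operators $\Pi_N$ (or $\PC_N$), a resonant product against $\tau_{-\varepsilon z}X_N$, the bilinear form datum (encoded in the measure $\mu$ and the symbol $h_\varepsilon$), and a paraproduct $f \para \tau_{-\varepsilon y} Q_N$. Since every one of these operations is linear in $f$ and acts as a Fourier multiplier or a frequency-localized bilinear map, the whole expression is linear in $f$, so it makes sense to decompose $f = \sum_p \Delta_p f$ and to write $A_N(f) = \sum_{q,p}\Delta_q A_N(\Delta_p f)$, introducing the Littlewood--Paley localization on the output as well.

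Next I would compute $\CF[\Delta_q A_N(\Delta_p f)]$ by tracking the effect of each operator on Fourier modes. The paraproduct $\Delta_p f \para \tau_{-\varepsilon y} Q_N$ picks out, in Fourier variables, the contribution $\psi_\prec(k_{[12]},k_1)$-type convolution between the low frequency of $\Delta_p f$ and the high frequency $k_1$ of the shifted $Q_N$; the translation $\tau_{-\varepsilon y}$ contributes a phase $e^{i\varepsilon y k_1}$, and similarly $\tau_{-\varepsilon z}X_N$ contributes $e^{i\varepsilon z k_2}$ at frequency $k_2$. The bilinear form $B_N$ applied (in the guise of the measure $\mu$) to these two arguments yields, after integrating out $y,z$ against $\mu$, exactly the symbol $h_\varepsilon(k_1,k_2) = \int e^{i(\varepsilon k_1 y + \varepsilon k_2 z)}\mu(\dd y,\dd z)$; this is where the factor $h_\varepsilon(k_1,k_2)$ in $\Gamma^N_{p,q}$ comes from. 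The resonant product then imposes $\psi_\circ$ on the two frequencies being paired — but one must be careful about \emph{which} frequencies are resonant: after the shift and the periodization the relevant comparison is between $(k_1-k)^N$ (the frequency fed into the resonant product from the paraproduct side, modulo $N$ because of $\Pi_N$) and $k_2$. Finally the outer $\Pi_N$ periodizes the total output frequency $k_{[12]}-k$ to $(k_{[12]}-k)^N$ and re-localizes it via $\rho_q$, producing the phase $e^{i(k_{[12]}-k)^N x}$; by contrast, in the second (subtracted) term the outer operator is $\PC_N$, which simply restricts to $|k_{[12]}-k|\leqslant N/2$ without periodizing, giving the phase $e^{i(k_{[12]}-k)x}$, the factor $\rho_q(k_{[12]}-k)$, $\psi_\circ(k_1-k,k_2)$, and the indicator $\1_{|k_{[12]}-k|\leqslant N/2}$. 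Assembling these two contributions yields precisely~\eqref{eq:gpq fourier transform} with $\Gamma^N_{p,q}$ as stated, and the factor $\tilde\rho_p(k)$ is inserted harmlessly since $\tilde\rho_p\rho_p = \rho_p$ and the $k$ appearing is already in the support of $\rho_p$ (it is the low frequency coming from $\Delta_p f$). The $(2\pi)^{-2}$ prefactor is bookkeeping from the two inverse Fourier transforms (the paraproduct convolution and the resonant convolution).

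The only genuinely delicate point — and the step I expect to be the main obstacle — is getting all the aliasing/periodization bookkeeping right: making sure that the internal frequencies that appear inside $\Pi_N$ are periodized ($k^N$) while those inside $\PC_N$ are merely cut off, and correctly identifying that in the first term the resonance condition and the output localization see $(k_1-k)^N$ and $(k_{[12]}-k)^N$ rather than $k_1-k$ and $k_{[12]}-k$. Since $\Pi_N$ is defined by $\Pi_N\varphi(x) = (2\pi)^{-1}\sum_k e^{ik^N x}\CF\varphi(k)$, one has $\CF[\Pi_N\varphi](k') = \sum_{k : k^N = k'}\CF\varphi(k)$, which is exactly what produces the $(\cdot)^N$ inside the first bracket. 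I would verify this by carefully composing $\CF$ with each operator in turn, being explicit about the summation variables, and checking that the two terms coincide frequency-by-frequency whenever $N$ is large enough that no aliasing occurs — this is the consistency check that guarantees the lemma's formula is correct. Once the formula is established, no estimates are needed; the lemma is purely algebraic.
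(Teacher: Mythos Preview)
Your approach is correct and essentially identical to the paper's: both compute the action of $A_N$ on a single Fourier mode (equivalently, track each operator's effect in Fourier variables), read off the resulting symbol, and take care that $\Pi_N$ periodizes while $\PC_N$ merely cuts off, which is exactly the aliasing bookkeeping you flag as the main point. The only slip is that the paraproduct symbol should be $\psi_\prec(k,k_1)$ (with $k$ the frequency of $\Delta_p f$), not $\psi_\prec(k_{[12]},k_1)$, but your later remarks show you understand this and it does not affect the argument.
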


\begin{proof}
  Parseval's formula gives
  \begin{align*}
     \Delta_q A_N (\Delta_p f) (t, x) & = \int_{\T} g^N_{p, q} (t, x,y) \Delta_p f (y) \mathd y \\
     & = (2 \pi)^{- 1} \sum_k \CF g^N_{p, q} (t, x,\cdummy) (- k) \rho_p (k) \CF f (k).
  \end{align*}
  It suffices to verify that the same identity holds if we replace $\CF g^N_{p,q} (t, x, \cdummy) (k)$ with the right hand side of~(\ref{eq:gpq fourier transform}) and if we fix some $k$ and consider $f (x) = (2 \pi)^{- 1} e^{i k x}$, i.e. $\CF f (\ell) = \delta_{\ell,k}$. Let us look at the first term on the right hand side of~\eqref{eq:random operator def sec10}
  \begin{align*}
     & \int \Delta_q \Pi_N [(\Pi_{N} (\Delta_p ((2 \pi)^{- 1} e^{i k \cdummy}) \para \tau_{- \varepsilon z_1} ((2 \pi)^{- 1} e^{i k_1 \cdummy}))) \reso (\tau_{- \varepsilon z_2} ((2 \pi)^{- 1} e^{i k_2 \cdummy}))] \mu (\mathd z_1, \mathd z_2) \\
     &\hspace{15pt} = (2 \pi)^{- 3} \Delta_q \Pi_N [(\Pi_{N} (\rho_p (k) \psi_{\prec} (k, k_1) e^{i (k + k_1) \cdummy})) \reso (e^{i k_2 \cdummy})]  \int e^{i k_1 \varepsilon z_1 + i k_2 \varepsilon z_2} \mu (\mathd z_1, \mathd     z_2) \\
     &\hspace{15pt} = (2 \pi)^{- 3} \Delta_q \Pi_N [\psi_{\circ} ((k + k_1)^N, k_2) \rho_p (k) \psi_{\prec} (k, k_1) e^{i ((k + k_1)^N + k_2)  \cdummy} ] h_{\varepsilon} (k_1, k_2) \\
     &\hspace{15pt} = (2 \pi)^{- 3} e^{i (k_{[12]} + k)^N \cdummy}  \rho_q ((k_{[12]} + k)^N) \psi_{\circ} ((k + k_1)^N, k_2)  \tilde{\rho}_p (k) \rho_p (k)  \psi_{\prec} (k, k_1) h_{\varepsilon} (k_1, k_2),
  \end{align*}
  where in the last step we used that $((k+k_1)^N + k_2)^N = (k+k_1+k_2)^N$. The second term in~\eqref{eq:random operator def sec10} is treated with the same arguments, and writing $Q_N$ and $X_N$ as inverse Fourier transforms equation~\eqref{eq:gpq fourier transform} follows.
\end{proof}

\begin{lemma}\label{lem:random operator kolmogorov}
   For all $r\geqslant 1$ and all $\alpha,\beta \in \R$ we can estimate
  \begin{align*}
     &\E[\| A_N(t) - A_N(s) \|^r_{L ( \CC^{\alpha}, B^{\beta}_{r, r})}] \\
     &\hspace{50pt} \lesssim \sum_{q, p} 2^{q r \beta} 2^{- p r \alpha} \Big( \sup_{x \in \T} \sum_k \E \big[ |( \CF g^N_{p, q} (t, x, \cdummy) - \CF g^N_{p, q} (s, x, \cdummy) ) (k) |^2\big] \Big)^{r / 2}.
  \end{align*}
\end{lemma}

\begin{proof}
  From the decomposition in Lemma~\ref{lem:random operator decomposition} we get
  \begin{align*}
     \| A_N (f) (t) - A_N (f) (s) \|^r_{B^{\beta}_{r, r}} & = \sum_q 2^{q r \beta} \Big\| \sum_p \int_{\T} (g^N_{p, q} (t, x,y) - g^N_{p, q} (s, x, y)) \Delta_p f (y) \mathd y \Big\|_{L^r_x(\T)}^r \\
     & \leqslant \sum_{q, p} 2^{q r \beta} 2^{- p r \alpha} \left\| \int_{\T} | g^N_{p, q} (t, x, y) - g^N_{p, q} (s, x, y) | \mathd y \right\|_{L^r_x (\T)}^r \| f \|_{\alpha}^r,
  \end{align*}
  so that
  \begin{align*}
     &\E[\| A_N (t) - A_N (s) \|^r_{L ( \CC^{\alpha}, B^{\beta}_{r, r})}] \\
     &\hspace{50pt} \lesssim \sum_{q, p} 2^{q r \beta} 2^{- p r \alpha} \E\Big[ \Big\| \int_{\T} | g^N_{p, q} (t, x, y) - g^N_{p, q} (s, x, y) | \mathd y \Big\|_{L^r_x (\T)}^r \Big] \\
     &\hspace{50pt} \lesssim  \sum_{q, p} 2^{q r \beta} 2^{- p r \alpha} \E\Big[ \Big\| \Big(\int_{\T} | g^N_{p, q} (t, x, y) - g^N_{p, q} (s, x, y) |^2 \mathd y \Big)^{1 / 2} \Big\|_{L^r_x (\T)}^r \Big] \\
     &\hspace{50pt} \lesssim \sum_{q, p} 2^{q r \beta} 2^{- p r \alpha} \int_\T \E\Big[ \Big( \sum_k | ( \CF g^N_{p, q} (t, x, \cdummy) - \CF g^N_{p, q} (s, x, \cdummy) ) (k) |^2 \Big)^{r / 2} \Big] \dd x,
  \end{align*}
  where we applied Parseval's formula. Now $\sum_k | ( \CF g^N_{p, q} (t, x, \cdummy) - \CF g^N_{p, q} (s, x, \cdummy) ) (k) |^2$ is a random variable in the second inhomogeneous chaos generated by $\xi$, and therefore our claim follows from Nelson's estimate.
\end{proof}

\begin{lemma}\label{lem:random operator fourier mode}
  For all $p, q \geqslant - 1$, all $0 \leqslant t_1 < t_2$, and all $\lambda \in [0,1]$ we have
  \begin{equation}\label{eq:random operator fourier mode}
     \sum_{k} \E [ | \CF g_{p, q} (t_2, x, \cdummy) (k) - \CF g_{p, q} (t_1, x, \cdummy) (k) |^2 ] \lesssim \1_{2^p, 2^q \lesssim N} 2^{p(1-\lambda) + q} N^{- 1 + 3 \lambda} | t_2 - t_1 |^{\lambda}.
  \end{equation}
\end{lemma}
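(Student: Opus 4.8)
The plan is to express the Fourier modes $\CF g^N_{p,q}(t,x,\cdummy)(k)$ through the Gaussian stochastic-integral representations of $X_N$ and $Q_N$, to reduce the second moment in~\eqref{eq:random operator fourier mode} to a sum over frequencies of products of two-point functions, and then to exploit the cancellation built into the kernel $\Gamma^N_{p,q}$ of Lemma~\ref{lem:random operator decomposition} to restrict that sum to a small set of frequencies. First I would write $\CF X_N(t,k)=\int_\R H^N_{t-s}(k)\,W_k(\dd s)$, and, using $\LL Q_N=\mathD_N B_N(X_N,1)$ together with $\CF(\mathD_N B_N(X_N,1))(k)=ik g_\varepsilon(k)\bar h_\varepsilon(k)\CF X_N(k)$, also $\CF Q_N(t,k)=\int_\R \mathcal K^N_t(k,s)\,W_k(\dd s)$ with $\mathcal K^N_t(k,s)=\bar h_\varepsilon(k)\int_0^t H^N_{t-\sigma}(k)H^N_{\sigma-s}(k)\,\dd\sigma$. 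Then $\CF g^N_{p,q}(t,x,\cdummy)(k)=\sum_{k_1,k_2}\Gamma^N_{p,q}(x;k,k_1,k_2)\CF Q_N(t,k_1)\CF X_N(t,k_2)$ is a (non-symmetrised) element of the sum of the zeroth and second Wiener chaos, so by the Cauchy--Schwarz bound on its unsymmetrised kernel used in Section~\ref{sec:chaos}, and because distinct Fourier modes of $X_N,Q_N$ are uncorrelated, its second moment is controlled by $\sum_{k_1,k_2}|\Gamma^N_{p,q}(x;k,k_1,k_2)|^2$ times products of the quantities $\E|\CF X_N(t_i,k_2)|^2$, $\E|\CF Q_N(t_i,k_1)|^2$ and their time increments.

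Next I would record the needed input bounds, uniformly in $N$. From~\eqref{eq:discrete ou covariance} and the discrete Schauder estimates (Lemma~\ref{lem:discrete heat flow}, Corollary~\ref{cor:discrete heat flow time}) one has $\E|\CF X_N(t,k)|^2\lesssim 1$ and $\E|\CF Q_N(t,k)|^2\lesssim |k|^{-2}$; and, splitting the time increment as the action of $(e^{(t_2-t_1)\Delta_N}-\mathrm{id})$ at time $t_1$ plus the remaining $\int_{t_1}^{t_2}$ contribution exactly as in the paragraph on reducing a time difference to a fixed time in Section~\ref{sec:stochastic regularity}, one gets $\E|\CF X_N(t_2,k)-\CF X_N(t_1,k)|^2\lesssim(|t_2-t_1|k^2)^\lambda$ and $\E|\CF Q_N(t_2,k)-\CF Q_N(t_1,k)|^2\lesssim|k|^{-2}(|t_2-t_1|k^2)^\lambda$ for every $\lambda\in[0,1]$. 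Plugging these in, the increment of $\CF g^N_{p,q}$ in~\eqref{eq:random operator fourier mode} is bounded, for each $\lambda\in[0,1]$, by a constant times
\[
   \sum_{k}\sum_{k_1,k_2}|\Gamma^N_{p,q}(x;k,k_1,k_2)|^2\,|k_1|^{-2}\big(|t_2-t_1|\,(k_1^2\vee k_2^2)\big)^\lambda ,
\]
so everything reduces to understanding the frequency support of $\Gamma^N_{p,q}$.

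This is the combinatorial heart. The two summands inside the bracket of the formula for $\Gamma^N_{p,q}$ in Lemma~\ref{lem:random operator decomposition} coincide — hence $\Gamma^N_{p,q}=0$ — whenever the modular reductions are trivial, $(k_{[12]}-k)^N=k_{[12]}-k$ and $(k_1-k)^N=k_1-k$; so $\Gamma^N_{p,q}\neq0$ forces $|k_1+k_2-k|\gtrsim N$ or $|k_1-k|\gtrsim N$. Combining this with the spectral restrictions $|k_1|,|k_2|<N/2$ (since $X_N$ and $Q_N$ are Fourier supported in $(-N/2,N/2)$), the support of $\tilde\rho_p$ (so $|k|\simeq 2^p$), the paraproduct factor $\psi_{\prec}(k,k_1)$ (so $|k|\lesssim|k_1|$, in particular $2^p\lesssim N$), the resonance factor $\psi_{\circ}$, and the outgoing factor $\rho_q$, I would deduce that $\Gamma^N_{p,q}\neq0$ forces $2^p,2^q\lesssim N$ and, on its support, $|k_1|\simeq N$ (so $\E|\CF Q_N(t,k_1)|^2\lesssim N^{-2}$ and $k_1^2\vee k_2^2\lesssim N^2$ there), while the number of admissible triples $(k,k_1,k_2)$ is at most of order $2^{p}\,2^{q}\,2^{\max(p,q)}$ — with a separate, easier regime in which $2^q\simeq N$ contributes at most $2^{3p}$ triples. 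Heuristically: $k$ ranges over $\simeq 2^p$ values; the wrap-around forces $k_1$ within $\simeq 2^{\max(p,q)}$ of $\pm N/2$; and once $k,k_1$ are fixed the wrap-around combined with $\rho_q$ pins $k_2$ up to $\simeq 2^q$ choices, automatically within $\simeq 2^{\max(p,q)}$ of $\mp N/2$. Feeding the count and the bounds $|k_1|^{-2}\simeq N^{-2}$, $(k_1^2\vee k_2^2)^\lambda\lesssim N^{2\lambda}$ into the displayed sum yields $\lesssim \1_{2^p,2^q\lesssim N}\,2^{p}2^{q}2^{\max(p,q)}\,N^{-2+2\lambda}|t_2-t_1|^\lambda$ (plus the easy $2^{3p}N^{-2+2\lambda}|t_2-t_1|^\lambda$ term with $2^q\simeq N$), and a short case distinction between $p\geqslant q$ and $q>p$, using $2^p\lesssim N$ and $2^q\lesssim N$, shows this is $\lesssim\1_{2^p,2^q\lesssim N}\,2^{p(1-\lambda)+q}N^{-1+3\lambda}|t_2-t_1|^\lambda$, which is the claim (in fact with room to spare).

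The main obstacle is precisely the bookkeeping in the last paragraph: enumerating carefully the ways in which the modular reductions $(\cdot)^N$ can be triggered, checking in each case that $\psi_\prec$, $\psi_\circ$ and $\rho_q$ really do force $|k_1|\simeq N$ and cut the number of surviving triples down to $\simeq 2^p 2^q 2^{\max(p,q)}$, and keeping track of the (zeroth-chaos) diagonal term $k_2=-k_1$, which is dominated by the same count. The probabilistic reduction, the two-point-function bounds, and the final power counting are routine.
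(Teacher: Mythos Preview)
Your plan is essentially the paper's proof: split the time increment of $\CF Q_N(\cdot,k_1)\CF X_N(\cdot,k_2)$, use the second-moment bounds $\E|\CF X_N|^2\lesssim 1$, $\E|\CF Q_N(k)|^2\lesssim|k|^{-2}$ and their $\lambda$-H\"older increments, and then do the frequency counting on the support of $\Gamma^N_{p,q}$. Your triple count $\simeq 2^p 2^q 2^{\max(p,q)}$ with $|k_1|\simeq N$ is correct and, after the case distinction $p\gtrless q$, recovers exactly the paper's bound.

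One point needs more care. Your displayed estimate
\[
   \E\big[|\CF g^N_{p,q}(t_2,x,\cdummy)(k)-\CF g^N_{p,q}(t_1,x,\cdummy)(k)|^2\big]\lesssim \sum_{k_1,k_2}|\Gamma^N_{p,q}(x;k,k_1,k_2)|^2\,|k_1|^{-2}(\ldots)^\lambda
\]
is justified by the Cauchy--Schwarz bound on unsymmetrised kernels only for the \emph{second} chaos component. The zeroth-chaos (mean) part contributes $\big|\sum_{k_1}\Gamma^N_{p,q}(x;k,k_1,-k_1)\,\E[\CF Q_N(k_1)\CF X_N(-k_1)]\big|^2$, which is \emph{not} dominated by the corresponding sum of squares; it can exceed it by a factor equal to the number of surviving $k_1$'s. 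The paper treats this term separately: on the diagonal $k_2=-k_1$ one has $k_{[12]}=0$, forcing $p\sim q$, and only the wraparound of $k_1-k$ can make $\Gamma$ nonzero, which restricts $k_1$ to $\leqslant|k|\simeq 2^p$ values; bounding $|\sum_{k_1}\cdots|$ by (number of terms)$\times$(max) and squaring then gives $\lesssim 2^{3p}N^{-2+2\lambda}|t_2-t_1|^\lambda$, which indeed fits under your general count. So your claim that the diagonal is ``dominated by the same count'' is true, but it requires this separate argument rather than the sum-of-squares bound you invoked.
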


\begin{proof}
  We write
  \begin{align*}
     &\CF Q_N (t_2, k_1) \CF X_N (t_2, k_2) - \CF Q_N (t_1, k_1) \CF X_N (t_1, k_2) \\
     &\hspace{50pt} = ( \CF Q_N (t_2, k_1) - \CF Q_N (t_1, k_1)) \CF X_N (t_2, k_2) \\
     &\hspace{50pt}\quad + \CF Q_N (t_1, k_1) ( \CF X_N (t_2, k_2) - \CF X_N (t_1, k_2) )
  \end{align*}
  and estimate both terms on the right hand side separately. We only concentrate on the second one, which is slightly more difficult to treat, the first one being accessible to essentially the same arguments. We have the following chaos decomposition:
  \begin{align*}
     &\CF Q_N (t_1, k_1) ( \CF X_N (t_2, k_2) - \CF X_N (t_1, k_2) ) \\
     &\hspace{10pt} = \int_{(\R \times E_N)^2} \left( (2 \pi)^2 \delta_{\ell_1, k_1} \delta_{\ell_2, k_2} \int_0^{t_1} H^N_{t_1 - r} (\ell_1) H^N_{r - s_1} (\ell_1) \mathd r (H^N_{t_2 - s_2} (\ell_2) - H^N_{t_1 - s_2} (\ell_2)) \right) W (\mathd \eta_{12}) \\
     &\hspace{10pt}\quad + 2 \pi \int_{\R \times E_N} \left( \delta_{\ell_1, k_1} \delta_{- \ell_1, k_2} \int_0^{t_1} H^N_{t_1 - r} (\ell_1) H^N_{r - s_1} (\ell_1) \mathd r (H^N_{t_2 - s_1} (- \ell_1) - H^N_{t_1 - s_1} (- \ell_1)) \right) \mathd \eta_1.
  \end{align*}
  Using the same arguments as in Section~\ref{sec:stochastic regularity}, we can estimate the second term on the right hand side by
  \begin{align*}
     &\Big| \int_{\R \times E_N} \left( \delta_{\ell_1, k_1} \delta_{- \ell_1, k_2} \int_0^{t_1} H^N_{t_1 - r} (\ell_1) H^N_{r - s_1} (\ell_1) \mathd r (H^N_{t_2 - s_1} (- \ell_1) - H^N_{t_1 - s_1} (- \ell_1)) \right) \mathd \eta_1\Big| \\
     &\hspace{70pt} \lesssim \frac{\delta_{k_{-1},k_2}}{ | k_1 |} | t_2 - t_1 |^{\lambda /     2} | k_1 |^{\lambda}
  \end{align*}
  for any $\lambda \in [0,2]$. Based on the decomposition of Lemma~\ref{lem:random operator decomposition}, the contribution from the chaos of order 0 is thus bounded by
  \begin{align}\label{eq:random operator fourier mode pr1}
     &\sum_k \Big|\sum_{k_1, k_2 \in E_N} \Gamma_{p, q}^N (x ; k, k_1, k_2) \E[\CF Q_N (t_1, k_1) ( \CF X_N (t_2, k_2) - \CF X_N (t_1, k_2) )] \Big|^2 \\ \nonumber
     &\hspace{20pt} \lesssim \sum_k \Big| \sum_{k_1 \in E_N} \Gamma_{p, q}^N (x ; k, k_1, k_{-1}) \frac{| t_2 - t_1 |^{\lambda /     2} }{ | k_1 |^{1-\lambda}} \Big|^2 \\ \nonumber
     &\hspace{20pt} \lesssim \sum_k  \Big|  \rho_q (k) \tilde{\rho}_p (k) \sum_{ k_1 \in E_N}  \psi_{\prec} (k, k_1) (\psi_{\circ} ((k_1 -  k)^N, k_1) - \psi_{\circ} (k_1 -  k, k_1)) \frac{| t_2 - t_1 |^{\lambda /     2} }{| k_1 |^{1-\lambda}} \Big|^2,
  \end{align}
  where we used that $|k| < N/2$ on the support of $\psi_{\prec} (\cdot, k_1)$. Now $\psi_{\circ} ((k_1 -  k)^N, k_1) = \psi_{\circ} (k_1 -  k, k_1)$ unless $|k_1 - k| > N/2$ and $|k_1| \simeq N$, and there are at most $| k|$ values of $k_1$ with $|k|<|k_1|< N / 2$ and $| k_1 - k | > N / 2$. Therefore, the right hand side of~\eqref{eq:random operator fourier mode pr1} is bounded by
  \begin{align*}
     \lesssim | t_2 - t_1 |^{\lambda} \sum_k  \1_{| k | < N / 2} | \rho_q (k) \tilde{\rho}_p (k) |^2 | k |^2  N^{- 2 + 2 \lambda} & \lesssim \1_{p \sim q} \1_{2^q     \lesssim N} | t_2 - t_1 |^{\lambda} 2^{3 q} N^{- 2 + 2 \lambda} \\
     &\lesssim \1_{2^p, 2^q \lesssim N} 2^{p(1-\lambda)  + q} N^{- 1 + 3 \lambda} | t_2 - t_1 |^{\lambda} .
  \end{align*}
  Next consider the contribution from the second chaos, which is given by
  \begin{align*}
     &(2 \pi)^2 \int_{(\R \times E)^2} \Big( \Gamma_{p, q}^N (x ; k, k_1, k_2) \int_0^{t_1} H^N_{t_1 - r} (k_1) H^N_{r - s_1} (k_1) \mathd r \\
     &\hspace{150pt} \times (H^N_{t_2 - s_2} (k_2) - H^N_{t_1 - s_2} (k_2)) \Big) W(\mathd \eta_{12}).
  \end{align*}
  Taking the expectation of the norm squared, we obtain (up to a multiple of $2\pi$)
  \begin{align}\label{eq:random operator second integral}
     \int_{(\R \times E_N)^2} \left| \Gamma_{p, q}^N (x ; k, k_1, k_2) \int_0^{t_1} H^N_{t_1 - r} (k_1) H^N_{r - s_1} (k_1) \mathd r    (H^N_{t_2 - s_2} (k_2) - H^N_{t_1 - s_2} (k_2)) \right|^2 \mathd \eta_{12}.
  \end{align}
   As in Section~\ref{sec:stochastic regularity} we can show that
   \[
      \int_{\R^2} \Big| \int_0^{t_1} H^N_{t_1 - r} (k_1) H^N_{r -  s_1} (k_1) \mathd r  (H^N_{t_2 - s_2} (k_2) - H^N_{t_1 - s_2} (k_2)) \Big|^2 \mathd s_{12} \lesssim \frac{| k_2 |^{2  \lambda} | t_2 - t_1 |^{\lambda}}{| k_1 |^2}
   \]   
   whenever $\lambda \in [0,2]$, and plugging this into~(\ref{eq:random operator second integral}) we get
  \begin{align*}
     \eqref{eq:random operator second integral} & \lesssim \int_{E_N^2} | \Gamma_{p, q}^N (x ; k, k_1, k_2) |^2 \frac{| k_2 |^{2  \lambda} | t_2 - t_1 |^{\lambda}}{| k_1 |^2} \mathd k_{12} \\
     & \lesssim N^{2 \lambda} | t_2 - t_1 |^{\lambda} \int_{E^2_N}  \mathd k_{12} \tilde{\rho}^2_p (k) \psi^2_{\prec} (k, k_1) |k_1|^{-2} \\
     &\hspace{90pt} \times \Big| e^{i(k_{[12]} - k)^N x} \rho_q ((k_{[12]} - k)^N) \psi_{\circ} ((k_1 -  k)^N, k_2) \\
     &\hspace{130pt} - e^{i(k_{[12]} - k) x} \rho_q (k_{[12]} - k) \psi_{\circ} (k_1 -  k, k_2) \1_{|k_{[12]} - k|\leqslant N/2} \Big|^2 .
  \end{align*}
  First observe that the difference on the right hand side is zero unless $|k_1| \simeq N$, so that we can estimate $|k_1|^{-2} \lesssim N^{-1+\lambda} |k|^{-1-\lambda}$, and also we only have to sum over $|k_1| > |k|$. Moreover, the summation over $k_2$ gives $O(2^q)$ terms which leads to
  \[
     \eqref{eq:random operator second integral} \lesssim \1_{2^p,2^q \lesssim N} 2^q  N^{-1+3 \lambda} | t_2 - t_1 |^{\lambda} \tilde{\rho}^2_p (k) |k|^{-\lambda},
  \]
  and the sum over $k$ of the right hand side is bounded by
  \[
     \lesssim \1_{2^p,2^q \lesssim N} 2^{p(1-\lambda) + q} N^{-1+3 \lambda} | t_2 - t_1 |^{\lambda},
  \]
  which completes the proof of~\eqref{eq:random operator fourier mode}.
\end{proof}

\begin{proof}[Proof of Theorem~\ref{thm:random operator}]
   Let $\alpha \in (1/4,1/2)$, $r\geqslant 1$, and write $\gamma = 2\alpha-1+1/r$. Combining Lemma~\ref{lem:random operator kolmogorov} and Lemma~\ref{lem:random operator fourier mode}, we get for all $\lambda \geqslant 0$ with $(1-\lambda)/2 > \alpha$
   \begin{align*}
      \E[\| A_N (t) - A_N(s) \|^r_{L ( \CC^{\alpha}, \CC^{2\alpha-1})}]^{1/r} & \lesssim \E[\| A_N (f) (t) - A_N (f) (s) \|^r_{L ( \CC^{\beta}, B^{\gamma}_{r, r})}]^{1/r} \\
      & \lesssim \Big(\sum_{q, p \lesssim \log_2 N} 2^{q r \gamma} 2^{- p r \alpha} \big( 2^{p(1-\lambda) + q} N^{- 1 + 3 \lambda} | t - s |^{\lambda} \big)^{r / 2}\Big)^{1/r} \\
      & \simeq | t - s |^{\lambda / 2} N^{ \alpha - 1/2 + 1/r + 3\lambda/2}
   \end{align*}
   Since $A_N(0) = 0$, Kolmogorov's continuity criterion gives
   \[
      \E[\| A_N (f) \|^r_{C_T L ( \CC^{\alpha}, \CC^{2\alpha-1})}]^{1/r} \lesssim N^{\alpha - 1/2 + 1/r + 3\lambda/2}
   \]
   whenever $\lambda/2>1/r$. Choosing $1/r + 3\lambda/2 \leqslant \delta$, the estimate~\eqref{eq:random operator bound} follows.
\end{proof}

\appendix\section{Proofs of some auxiliary results}\label{app:some proofs}

\begin{proof}[Proof of Lemma~\ref{lem:modified paraproduct exp}]
  We prove the second claim, the first one follows from the same arguments. We also assume that $f\in \mathcal{M}^\gamma_t L^p$, the case $f \in \CM^\gamma_t L^\infty$ is again completely analogous. Let $t \in [0, T]$ and $j \geqslant 0$ and write $M = \|f\|_{\mathcal{M}^{\gamma}_t L^{p}} \|g (t) \|_{\beta}$. Then
  \[
     \Big\| \int_0^t 2^{2 j} \varphi (2^{2 j} (t - s)) S_{j - 1} f (s \vee 0) \mathd s \Delta_j g (t) \Big\|_{L^{p}} \leqslant 2^{- j \beta} M \int_0^t 2^{2 j} | \varphi (2^{2 j} (t - s)) | s^{- \gamma} \mathd s.
  \]
  We now split the integral at $t / 2$. The Schwartz function $\varphi$ satisfies $|
  \varphi (r) | \lesssim r^{- 1}$, so using the fact that $\gamma \in [0,1)$ we get
  \[
     \int_0^{t / 2} 2^{2 j} | \varphi (2^{2 j} (t - s)) | s^{- \gamma}  \mathd s \lesssim \int_0^{t / 2} (t - s )^{- 1} s^{- \gamma} \mathd  s \lesssim t^{- 1} \int_0^{t / 2} s^{- \gamma} \mathd s \lesssim t^{- \gamma} .
  \]
  The remaining part of the integral can be simply estimated by
  \[
     \int^t_{t / 2} 2^{2 j} | \varphi (2^{2 j} (t - s)) |  s^{- \gamma} \mathd s \leqslant (t / 2)^{- \gamma} \int_{\R} 2^{2 j} | \varphi (2^{2 j} (t - s)) | \mathd s \lesssim t^{- \gamma},
  \]
  which concludes the proof.
\end{proof}

\begin{proof}[Proof of Lemma~\ref{lem:modified paraproduct commutators exp}]
  Let us begin with the first claim. By spectral support properties it suffices to control
  \begin{align} \label{eq:modified paraproduct commutators pr1}
     & \Big\| \Big( \int_0^t 2^{2 j} \varphi (2^{2 j} (t - s)) S_{j - 1} f (s) \mathd s - S_{j - 1} f(t) \Big) \Delta_j g (t) \Big\|_{L^{p}} \\ \nonumber
     &\hspace{100pt} \leqslant \Big\| \int_{- \infty}^0 2^{2 j} \varphi (2^{2 j} (t - s))  S_{j - 1} f (t) \mathd s \Delta_j g (t) \Big\|_{L^{p}} \\ \nonumber
     &\hspace{100pt} \qquad + \Big\|\int_0^t 2^{2 j} \varphi (2^{2 j} (t - s)) S_{j - 1} f_{t, s} \mathd s  \Delta_j g (t) \Big\|_{L^{p}},
  \end{align}
  where we used that $\varphi$ has total mass 1. Using that $| \varphi(r) | \lesssim | r |^{- 1 - \alpha / 2}$, we can estimate the first term on the right hand side by
  \[
     \lesssim 2^{- j \beta} t^{- \gamma + \alpha / 2} \| f \|_{\LL^{\gamma, \alpha}_p(t)} \| g (t) \|_{\beta} \int_{2^{2 j} t}^{\infty} | r |^{- 1 - \alpha / 2} \mathd r \lesssim 2^{- j (\alpha + \beta)} t^{- \gamma} \| f \|_{\LL^{\gamma, \alpha}_p(t)} \| g (t) \|_{\beta}.
  \]
  As for the second term in~\eqref{eq:modified paraproduct commutators pr1}, we split the domain of integration into the intervals $[0,t / 2]$ and $[t / 2, t]$. On the first interval we use again that $| \varphi(r) | \lesssim | r |^{- 1 - \alpha / 2}$ and then simply estimate $\| f_{t, s} \|_{L^{p}} \leqslant (t^{\alpha / 2 - \gamma} + s^{\alpha / 2 -
  \gamma}) \| f \|_{\LL^{\gamma, \alpha}_p(t)}$ and $(t - s)^{- 1 - \alpha / 2} \lesssim t^{- 1 - \alpha / 2}$, and the required bound follows. On the second interval, an application of the triangle inequality shows that
  \[
     \| f_{t, s} \|_{L^{p}} = \| s^{- \gamma} (s^{\gamma} f (s)) - t^{- \gamma} (t^{\gamma} f (t)) \|_{L^{p}} \lesssim t^{- \gamma} | t - s |^{\alpha / 2} \| f \|_{\LL^{\gamma, \alpha}_p(t)},
  \]
  from where we easily deduce the claimed estimate.
  
  Let us now get to the bound $t^{\gamma} \left\| \left( \LL (f \mpara g) - f \mpara \left( \LL g \right) \right) (t) \right\|_{\CC_p^{\alpha + \beta - 2}} \lesssim \| f \|_{\LL^{\gamma, \alpha}_p(t)} \| g (t) \|_{\beta}$. Given Lemma~\ref{lem:modified paraproduct exp}, only the estimate for
  \[
     \sum_j \partial_t \left( \int_0^t 2^{2 j} \varphi (2^{2 j} (t - s)) S_{j- 1} u (s) \mathd s \right) \Delta_j v (t).
  \]
  is non-trivial. For fixed $j$ we obtain
  \[
     \int_0^t 2^{4 j} \varphi' (2^{2 j} (t - s)) S_{j - 1} u (s) \mathd s = 2^{2 j} \int_{\R} 2^{2 j} \varphi' (2^{2 j} (t - s)) S_{j - 1} u(s) \1_{s \geqslant 0} \mathd s.
  \]
  Since $\varphi'$ integrates to zero we can subtract $2^{2 j} \int_{\R} 2^{2 j} \varphi' (2^{2 j} (t - s)) S_{j - 1} u (t) \mathd s$. The result then follows as in the first part of the proof.
\end{proof}

\begin{proof}[Proof of Lemma~\ref{lemma:schauder exp}]
   We start by observing that exactly the same arguments as in Lemma~\ref{lemma:schauder} (replacing $L^\infty$ by $L^p$ at the appropriate places) yield
   \[
      \| I f\|_{\LL^{\infty,\alpha}_p(T)} \lesssim \| f \|_{C_T \CC^{\alpha-2}_p}, \qquad \| s \mapsto P_s u_0 \|_{\LL^{\infty,\alpha}_p(T)} \lesssim \| u_0 \|_{\CC^\alpha_p}.
   \]
   It remains to include the possible blow up at 0. The estimate~\eqref{eq:schauder without time hoelder exp},
   \begin{equation}\label{eq:schauder exp pr1}
      \| I f\|_{\CM^\gamma_T \CC^\alpha_p} \lesssim \| f \|_{\CM^\gamma_T \CC^{\alpha-2}_p}
   \end{equation}
   is shown in Lemma~A.9 of \cite{gubinelli_paraproducts_2012} (for $p = \infty$, but the extension to general $p$ is again trivial). For the temporal H\"older regularity of $I f$ that we need in~\eqref{eq:schauder initial contribution exp}, we note that the estimate
   \[
      \| P_t u \|_{\CC^{\alpha}_p} \lesssim t^{-(\alpha+\beta)/2} \|u\|_{\CC^{-\beta}_p}
   \]
   for $\beta \geqslant - \alpha$ is again a simple extension from $p = \infty$ to general $p$, this time of Lemma~A.7 in \cite{gubinelli_paraproducts_2012}. An interpolation argument then yields
   \begin{equation}\label{eq:schauder exp pr2}
      \| P_t u \|_{L^p} \lesssim t^{-\beta/2} \| u\|_{\CC^{-\beta}_p}
   \end{equation}
   whenever $\beta < 0$, and from here we obtain
   \begin{align}\label{eq:schauder exp pr3} \nonumber
      \|(P_t - \mathrm{id}) u\|_{L^p} & = \Big\| \int_0^t \partial_s P_s u \mathd s\Big\|_{L^p} = \Big\| \int_0^t P_s \Delta u \mathd s\Big\|_{L^p} \\
      & \lesssim \int_0^t s^{-1+\alpha/2} \| u \|_{\CC^\alpha_p} \mathd s \lesssim t^{\alpha/2} \| u \|_{\CC^\alpha_p}
   \end{align}
   for $\alpha \in (0,2)$. To estimate the temporal regularity, we now have to control
   \begin{align*}
      &\Big\| t^{\gamma} \int_0^t P_{t - r} f_r \mathd r - s^{\gamma} \int_0^s P_{t - r} f_r \mathd r \Big\|_{L^{p}} \\
      &\hspace{40pt} \leqslant (t^{\gamma} - s^{\gamma}) \int_0^t \| P_{t - r} f_r \|_{L^{p}} \mathd r  + s^{\gamma} \int_s^t \| P_{t - r} f_r \|_{L^{p}} \mathd r +  s^{\gamma} \Big\| (P_{t - s} - \tmop{id}) If(s) \Big\|_{L^{p}} \\
      &\hspace{40pt} \lesssim \Big( (t^{\gamma} - s^{\gamma}) \int_0^t (t - r)^{\alpha / 2 - 1} r^{- \gamma} \mathd r + s^{\gamma} \int_s^t (t - r)^{\alpha / 2 - 1} r^{-\gamma} \mathd r \Big) \| f \|_{\mathcal{M}^{\gamma}_T \CC^{\alpha - 2}_p} \\
      &\hspace{40pt} \qquad + s^{\gamma} | t - s |^{\alpha / 2} \| I f(s) \|_{\CC^\alpha_p},
  \end{align*}
  where we used~\eqref{eq:schauder exp pr2} for the first two terms and~\eqref{eq:schauder exp pr3} for the last term. Now~\eqref{eq:schauder exp pr1} allows us to further simplify this to
  \[
     \lesssim \Big( (t^{\gamma} - s^{\gamma})  t^{\alpha / 2 - \gamma} \int_0^1 (1 - r)^{\alpha / 2 - 1} r^{- \gamma} \mathd r + \int_s^t (t - r)^{\alpha / 2 - 1} \mathd r + | t - s |^{\alpha / 2} \Big) \| f \|_{\mathcal{M}^{\gamma}_T \CC^{\alpha - 2}_p}.
  \]
  Since $\alpha > 0$ and $\gamma < 1$, the first time integral is finite. Moreover it is not hard to see that $(t^{\gamma} - s^{\gamma})  t^{\alpha / 2 - \gamma} \lesssim | t - s |^{\alpha / 2}$ (distinguish for example the cases $s\leqslant t/2$ and $s \in (t/2,t]$), and eventually we obtain
  \[
     \| t^{\gamma} If(t) - s^{\gamma} If(s) \|_{L^{p}} \lesssim | t - s |^{\alpha / 2} \| f \|_{\mathcal{M}^{\gamma}_T \CC^{\alpha - 2}_p} .
  \]
  It remains to control the temporal regularity of $s \mapsto s^\gamma P_s u_0$, but this can be done using similar (but simpler) arguments as above, so that the proof is complete.
\end{proof}

\begin{proof}[Proof of Lemma~\ref{lem:lower explosive regularity}]
  For the spatial regularity, observe that
  \[
     t^{\gamma} \| \Delta_j u (t) \|_{L^{p}} \leqslant \min \{ 2^{- j \alpha}, t^{\alpha / 2} \} \| u \|_{\LL^{\gamma, \alpha}_p(T)}.
  \]
  Interpolating, we obtain
  \[
     t^{\gamma} \| \Delta_j u (t) \|_{L^{p}} \leqslant 2^{- j (\alpha - \varepsilon)} t^{\varepsilon / 2} \| u \|_{\LL^{\gamma, \alpha}_p(T)},
  \]
  or in other words $t^{\gamma - \varepsilon / 2} \| u (t) \|_{\alpha -
  \varepsilon} \leqslant \| u \|_{\LL^{\gamma, \alpha}_p(T)}$. The statement about
  the temporal regularity is a special case of the following lemma.
\end{proof}

\begin{lemma}
  Let $\alpha \in (0, 1)$, $\varepsilon \in [0, \alpha)$, and let $f \colon [0,\infty) \rightarrow X$ be an $\alpha$--H{\"o}lder continuous function with values in a normed vector space $X$, such that $f (0) = 0$. Then
  \[
     \| t \mapsto t^{- \varepsilon} f (t) \|_{C^{\alpha - \varepsilon}_T X} \lesssim \| f \|_{C^{\alpha}_T X} .
  \]
\end{lemma}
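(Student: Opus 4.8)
The plan is to set $g(t) := t^{-\varepsilon} f(t)$ and to estimate the increment $g(t)-g(s)$ for $0\le s<t\le T$ via the splitting
\[
   g(t) - g(s) = t^{-\varepsilon}\bigl(f(t) - f(s)\bigr) + \bigl(t^{-\varepsilon} - s^{-\varepsilon}\bigr) f(s).
\]
For the first term I would use $\|f(t)-f(s)\|_X \le \|f\|_{C^\alpha_T X}\,|t-s|^{\alpha}$ together with $t^{-\varepsilon}|t-s|^{\varepsilon}\le 1$, which holds since $|t-s|\le t$; this already bounds the first term by $\|f\|_{C^\alpha_T X}\,|t-s|^{\alpha-\varepsilon}$.

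For the second term, the key point is that $f(0)=0$, so $\|f(s)\|_X = \|f(s)-f(0)\|_X \le \|f\|_{C^\alpha_T X}\, s^{\alpha}$, and it remains to show $|t^{-\varepsilon}-s^{-\varepsilon}|\,s^{\alpha}\lesssim |t-s|^{\alpha-\varepsilon}$. Here I would split into the two regimes $s\le t/2$ and $s>t/2$. If $s\le t/2$, then $|t-s|=t-s\ge t/2\ge s$, and bounding crudely $|t^{-\varepsilon}-s^{-\varepsilon}|\le s^{-\varepsilon}$ gives $|t^{-\varepsilon}-s^{-\varepsilon}|\,s^{\alpha}\le s^{\alpha-\varepsilon}\le |t-s|^{\alpha-\varepsilon}$, using $\alpha-\varepsilon>0$ and $s\le|t-s|$. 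If $s>t/2$, then $t$ and $s$ are comparable and $|t-s|<s$; the mean value theorem applied to $x\mapsto x^{-\varepsilon}$ yields $|t^{-\varepsilon}-s^{-\varepsilon}|\le \varepsilon\,\xi^{-\varepsilon-1}|t-s|\le s^{-\varepsilon-1}|t-s|$ for some $\xi\in(s,t)$, hence
\[
   |t^{-\varepsilon}-s^{-\varepsilon}|\,s^{\alpha} \le s^{\alpha-\varepsilon-1}|t-s| = s^{\alpha-\varepsilon-1}|t-s|^{1-(\alpha-\varepsilon)}\,|t-s|^{\alpha-\varepsilon}\le |t-s|^{\alpha-\varepsilon},
\]
since $1-(\alpha-\varepsilon)>0$ and $|t-s|\le s$.

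Finally I would note that $g$ extends continuously to $t=0$ with $g(0)=0$, because $\|g(t)\|_X\le \|f\|_{C^\alpha_T X}\,t^{\alpha-\varepsilon}$, so the supremum over $0\le s<t\le T$ of $\|g(t)-g(s)\|_X/|t-s|^{\alpha-\varepsilon}$ is exactly the $C^{\alpha-\varepsilon}_T X$ seminorm, and the above estimates give the claimed bound. There is no genuine obstacle; the only delicate point is the case split in the estimate of $(t^{-\varepsilon}-s^{-\varepsilon})f(s)$, where one must exploit $f(0)=0$ to produce the factor $s^{\alpha}$ compensating the singularity of $t\mapsto t^{-\varepsilon}$ at the origin.
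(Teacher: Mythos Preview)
Your proof is correct and follows essentially the same approach as the paper: both split according to whether $s\le t/2$ or $s>t/2$, use $f(0)=0$ to get $\|f(s)\|_X\le s^{\alpha}\|f\|_{C^{\alpha}_T X}$, and apply the mean value theorem to $x\mapsto x^{-\varepsilon}$ in the regime $s>t/2$. Your treatment is slightly cleaner in that you handle the term $t^{-\varepsilon}(f(t)-f(s))$ uniformly for all $s<t$ via $|t-s|\le t$, whereas the paper case-splits from the start and, in the regime $t>2s$, bounds $t^{-\varepsilon}f(t)$ and $s^{-\varepsilon}f(s)$ separately by the triangle inequality; this is only an organizational difference.
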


\begin{proof}
  Let $0 \leqslant s < t$. If $s = 0$, the required estimate easily follows
  from the fact that $f (0) = 0$, so assume $s > 0$. If $t > 2 s$, then
  we use again that $f (0) = 0$ to obtain
  \[ \frac{\| t^{- \varepsilon} f (t) - s^{- \varepsilon} f (s) \|_X}{| t - s
     |^{\alpha - \varepsilon}} \leqslant (t^{\alpha - \varepsilon} + s^{\alpha
     - \varepsilon}) | t - s |^{\varepsilon - \alpha} \| f \|_{C^{\alpha}_T X}
     . \]
  Now $t^{\alpha - \varepsilon} \leqslant (| t - s | + s)^{\alpha -
  \varepsilon}$, and $s / | t - s | < 1$ by assumption, thus the result
  follows. If $s < t \leqslant 2 s$, we apply a first order Taylor expansion
  to $t^{- \varepsilon} - s^{- \varepsilon}$ and obtain
  \[ \frac{\| t^{- \varepsilon} f (t) - s^{- \varepsilon} f (s) \|_X}{| t - s
     |^{\alpha - \varepsilon}} \leqslant \frac{t^{- \varepsilon}}{| t - s |^{-
     \varepsilon}} \frac{\| f (t) - f (s) \|_X}{| t - s |^{\alpha}} +
     \frac{\varepsilon r^{- \varepsilon - 1} (t - s) \| f (s) \|_X}{| t - s
     |^{\alpha - \varepsilon}} \]
  for some $r \in (s, t)$. Now clearly the first term on the right hand side
  is bounded by $\| f \|_{C^{\alpha}_T X}$. For the second term we use $\| f
  (s) \|_X \leqslant s^{\alpha} \| f \|_{C^{\alpha}_T X}$ and get
  \[ \frac{\varepsilon r^{- \varepsilon - 1} (t - s) \| f (s) \|_X}{| t - s
     |^{\alpha - \varepsilon}} \leqslant \varepsilon s^{- \varepsilon - 1 +
     \alpha} | t - s |^{\varepsilon + 1 - \alpha} \| f \|_{C^{\alpha}_T X}
     \leqslant \varepsilon \| f \|_{C^{\alpha}_T X}, \]
  using $t - s \leqslant s$ in the last step.
\end{proof}


\begin{bibdiv}
\begin{biblist}

\bib{Allez2015}{unpublished}{
      author={Allez, Romain},
      author={Chouk, Khalik},
       title={The continuous {A}nderson {H}amiltonian on a finite square},
        date={2015},
        note={Preprint},
}

\bib{AmirCorwin2011}{article}{
      author={Amir, Gideon},
      author={Corwin, Ivan},
      author={Quastel, Jeremy},
       title={Probability distribution of the free energy of the continuum
  directed random polymer in {$1+1$} dimensions},
        date={2011},
        ISSN={0010-3640},
     journal={Comm. Pure Appl. Math.},
      volume={64},
      number={4},
       pages={466\ndash 537},
         url={http://dx.doi.org/10.1002/cpa.20347},
      review={\MR{2796514 (2012b:60304)}},
}

\bib{AlbertsKhaninQuastel2014}{article}{
      author={Alberts, Tom},
      author={Khanin, Konstantin},
      author={Quastel, Jeremy},
       title={The intermediate disorder regime for directed polymers in
  dimension {$1+1$}},
        date={2014},
        ISSN={0091-1798},
     journal={Ann. Probab.},
      volume={42},
      number={3},
       pages={1212\ndash 1256},
         url={http://dx.doi.org/10.1214/13-AOP858},
      review={\MR{3189070}},
}

\bib{Assing2002}{article}{
      author={Assing, Sigurd},
       title={A pregenerator for {B}urgers equation forced by conservative
  noise},
        date={2002},
        ISSN={0010-3616},
     journal={Comm. Math. Phys.},
      volume={225},
      number={3},
       pages={611\ndash 632},
         url={http://dx.doi.org/10.1007/s002200100606},
      review={\MR{1888875 (2003b:60094)}},
}

\bib{Assing2012}{article}{
      author={Assing, Sigurd},
       title={A rigorous equation for the {C}ole-{H}opf solution of the
  conservative {{KPZ}} equation},
        date={2013},
        ISSN={2194-0401},
     journal={Stoch. Partial Differ. Equ. Anal. Comput.},
      volume={1},
      number={2},
       pages={365\ndash 388},
         url={http://dx.doi.org/10.1007/s40072-013-0013-3},
      review={\MR{3327511}},
}

\bib{Bailleul20152d}{article}{
      author={Bailleul, Isma{\"e}l},
      author={Bernicot, Frederic},
       title={Heat semigroup and singular {PDE}s},
        date={2015},
     journal={arXiv preprint arXiv:1501.06822},
}

\bib{Bailleul20153d}{article}{
      author={Bailleul, Isma{\"e}l},
      author={Bernicot, Frederic},
      author={Frey, Dorothee},
       title={Higher order paracontrolled calculus and 3d-{PAM} equation},
        date={2015},
     journal={arXiv preprint arXiv:1506.08773},
}

\bib{Bahouri2011}{book}{
      author={Bahouri, Hajer},
      author={Chemin, Jean-Yves},
      author={Danchin, Raphael},
       title={{Fourier analysis and nonlinear partial differential equations}},
   publisher={Springer},
        date={2011},
}

\bib{BakhtinCator2014}{article}{
      author={Bakhtin, Yuri},
      author={Cator, Eric},
      author={Khanin, Konstantin},
       title={Space-time stationary solutions for the {B}urgers equation},
        date={2014},
        ISSN={0894-0347},
     journal={J. Amer. Math. Soc.},
      volume={27},
      number={1},
       pages={193\ndash 238},
         url={http://dx.doi.org/10.1090/S0894-0347-2013-00773-0},
      review={\MR{3110798}},
}

\bib{Boue1998}{article}{
      author={Bou{\'e}, Michelle},
      author={Dupuis, Paul},
       title={A variational representation for certain functionals of
  {B}rownian motion},
        date={1998},
        ISSN={0091-1798},
     journal={Ann. Probab.},
      volume={26},
      number={4},
       pages={1641\ndash 1659},
         url={http://dx.doi.org/10.1214/aop/1022855876},
      review={\MR{1675051 (99m:60124)}},
}

\bib{BertiniGiacomin1997}{article}{
      author={Bertini, Lorenzo},
      author={Giacomin, Giambattista},
       title={Stochastic {B}urgers and {KPZ} equations from particle systems},
        date={1997},
        ISSN={0010-3616},
     journal={Comm. Math. Phys.},
      volume={183},
      number={3},
       pages={571\ndash 607},
         url={http://dx.doi.org/10.1007/s002200050044},
      review={\MR{1462228 (99e:60212)}},
}

\bib{Bony1981}{article}{
      author={Bony, Jean-Michel},
       title={{Calcul symbolique et propagation des singularites pour les
  {\'e}quations aux d{\'e}riv{\'e}es partielles non lin{\'e}aires}},
        date={1981},
     journal={Ann. Sci. {\'E}c. Norm. Sup{\'e}r. (4)},
      volume={14},
       pages={209\ndash 246},
}

\bib{CatellierChouk2013}{article}{
      author={Catellier, R{\'e}mi},
      author={Chouk, Khalil},
       title={Paracontrolled distributions and the 3-dimensional stochastic
  quantization equation},
        date={2013},
     journal={arXiv preprint arXiv:1310.6869},
}

\bib{CannizzaroChouk2015}{article}{
      author={Cannizzaro, Giuseppe},
      author={Chouk, Khalil},
       title={Multidimensional {SDE}s with singular drift and universal
  construction of the polymer measure with white noise potential},
        date={2015},
     journal={arXiv preprint arXiv:1501.04751},
}

\bib{ChoukFriz2014}{article}{
      author={Chouk, Khalil},
      author={Friz, Peter},
       title={Support theorem for a singular semilinear stochastic partial
  differential equation},
        date={2014},
     journal={arXiv preprint arXiv:1409.4250},
}

\bib{Cannizzaro2015}{unpublished}{
      author={Cannizzaro, Giuseppe},
      author={Friz, Peter},
      author={Gassiat, Paul},
       title={Malliavin calculus for the 2d parabolic {A}nderson model},
        date={2015},
        note={Preprint},
}

\bib{ChenKim2014}{article}{
      author={Chen, Le},
      author={Kim, Kunwoo},
       title={On comparison principle and strict positivity of solutions to the
  nonlinear stochastic fractional heat equations},
        date={2014},
     journal={arXiv preprint arXiv:1410.0604},
}

\bib{Catuogno2012}{article}{
      author={Catuogno, Pedro},
      author={Olivera, Christian},
       title={Strong solution of the stochastic {B}urgers equation},
        date={2014},
        ISSN={0003-6811},
     journal={Appl. Anal.},
      volume={93},
      number={3},
       pages={646\ndash 652},
         url={http://dx.doi.org/10.1080/00036811.2013.797074},
      review={\MR{3176977}},
}

\bib{Corwin2012}{article}{
      author={Corwin, Ivan},
       title={The {K}ardar-{P}arisi-{Z}hang equation and universality class},
        date={2012},
        ISSN={2010-3263},
     journal={Random Matrices Theory Appl.},
      volume={1},
      number={1},
       pages={1130001, 76},
         url={http://dx.doi.org/10.1142/S2010326311300014},
      review={\MR{2930377}},
}

\bib{CorwinTsai2015}{article}{
      author={Corwin, Ivan},
      author={Tsai, Li-Cheng},
       title={{KPZ} equation limit of higher-spin exclusion processes},
        date={2015},
     journal={arXiv preprint arXiv:1505.04158},
}

\bib{DelarueDiel2014}{article}{
      author={Delarue, Fran{\c{c}}ois},
      author={Diel, Roland},
       title={Rough paths and 1d {SDE} with a time dependent distributional
  drift: application to polymers},
        date={2015},
     journal={Probability Theory and Related Fields},
       pages={1\ndash 63},
}

\bib{daprato_redbook_1992}{book}{
      author={Da~Prato, Giuseppe},
      author={Zabczyk, Jerzy},
       title={Stochastic equations in infinite dimensions},
   publisher={Cambridge {U}niversity {P}ress},
        date={2014},
      volume={152},
}

\bib{Dembo2013}{article}{
      author={Dembo, Amir},
      author={Tsai, Li-Cheng},
       title={Weakly asymmetric non-simple exclusion process and the
  {K}ardar-{P}arisi-{Z}hang equation},
        date={2013},
     journal={arXiv preprint arXiv:1302.5760},
}

\bib{Echeverria1982}{article}{
      author={Echeverr{\'{\i}}a, Pedro},
       title={A criterion for invariant measures of {M}arkov processes},
        date={1982},
        ISSN={0044-3719},
     journal={Z. Wahrsch. Verw. Gebiete},
      volume={61},
      number={1},
       pages={1\ndash 16},
         url={http://dx.doi.org/10.1007/BF00537221},
      review={\MR{671239 (84a:60088)}},
}

\bib{EKhanin2000}{article}{
      author={E, Weinan},
      author={Khanin, K.},
      author={Mazel, A.},
      author={Sinai, Ya.},
       title={Invariant measures for {B}urgers equation with stochastic
  forcing},
        date={2000},
        ISSN={0003-486X},
     journal={Ann. of Math. (2)},
      volume={151},
      number={3},
       pages={877\ndash 960},
         url={http://dx.doi.org/10.2307/121126},
      review={\MR{1779561 (2002e:37134)}},
}

\bib{FrizHairer2014}{book}{
      author={Friz, Peter~K.},
      author={Hairer, Martin},
       title={A course on rough paths},
      series={Universitext},
   publisher={Springer, Cham},
        date={2014},
        ISBN={978-3-319-08331-5; 978-3-319-08332-2},
         url={http://dx.doi.org/10.1007/978-3-319-08332-2},
        note={With an introduction to regularity structures},
      review={\MR{3289027}},
}

\bib{Forster1977}{article}{
      author={Forster, Dieter},
      author={Nelson, David~R.},
      author={Stephen, Michael~J.},
       title={Large-distance and long-time properties of a randomly stirred
  fluid},
        date={1977-08},
     journal={Physical Review A},
      volume={16},
      number={2},
       pages={732\ndash 749},
         url={http://link.aps.org/doi/10.1103/PhysRevA.16.732},
}

\bib{FunakiQuastel2014}{article}{
      author={Funaki, Tadahisa},
      author={Quastel, Jeremy},
       title={{KPZ} equation, its renormalization and invariant measures},
        date={2014},
     journal={Stochastic Partial Differential Equations: Analysis and
  Computations},
      volume={3},
      number={2},
       pages={159\ndash 220},
}

\bib{FrizShekhar2012}{article}{
      author={Friz, Peter},
      author={Shekhar, Atul},
       title={Doob-{M}eyer for rough paths},
        date={2013},
        ISSN={2304-7909},
     journal={Bull. Inst. Math. Acad. Sin. (N.S.)},
      volume={8},
      number={1},
       pages={73\ndash 84},
      review={\MR{3097417}},
}

\bib{furlan2014}{thesis}{
      author={Furlan, Marco},
       title={Stochastic {N}avier-{S}tokes equation in 3 dimensions},
        type={Master's Thesis},
        date={2014},
        note={supervised by Massimiliano Gubinelli},
}

\bib{Gartner1988}{article}{
      author={G{\"a}rtner, J{\"u}rgen},
       title={Convergence towards {B}urgers' equation and propagation of chaos
  for weakly asymmetric exclusion processes},
        date={1988},
        ISSN={0304-4149},
     journal={Stochastic Process. Appl.},
      volume={27},
      number={2},
       pages={233\ndash 260},
         url={http://dx.doi.org/10.1016/0304-4149(87)90040-8},
      review={\MR{931030 (89e:60200)}},
}

\bib{GubinelliImkeller2014}{article}{
      author={Gubinelli, Massimiliano},
      author={Imkeller, Peter},
      author={Perkowski, Nicolas},
       title={A {F}ourier approach to pathwise stochastic integration},
        date={2014},
     journal={arXiv preprint arXiv:1410.4006},
}

\bib{gubinelli_paraproducts_2012}{article}{
      author={Gubinelli, Massimiliano},
      author={Imkeller, Peter},
      author={Perkowski, Nicolas},
       title={Paracontrolled distributions and singular {PDE}s},
        date={2015},
     journal={Forum Math. Pi},
      volume={3},
      number={6},
         url={http://arxiv.org/abs/1210.2684},
}

\bib{GubinelliJara2013}{article}{
      author={Gubinelli, Massimiliano},
      author={Jara, Milton},
       title={Regularization by noise and stochastic {B}urgers equations},
        date={2013},
        ISSN={2194-0401},
     journal={Stoch. Partial Differ. Equ. Anal. Comput.},
      volume={1},
      number={2},
       pages={325\ndash 350},
         url={http://dx.doi.org/10.1007/s40072-013-0011-5},
      review={\MR{3327509}},
}

\bib{GoncalvesJara2014}{article}{
      author={Gon{\c{c}}alves, Patr{\'{\i}}cia},
      author={Jara, Milton},
       title={Nonlinear fluctuations of weakly asymmetric interacting particle
  systems},
        date={2014},
        ISSN={0003-9527},
     journal={Arch. Ration. Mech. Anal.},
      volume={212},
      number={2},
       pages={597\ndash 644},
         url={http://dx.doi.org/10.1007/s00205-013-0693-x},
      review={\MR{3176353}},
}

\bib{Gubinelli2006}{article}{
      author={Gubinelli, Massimiliano},
      author={Lejay, Antoine},
      author={Tindel, Samy},
       title={Young integrals and {SPDE}s},
        date={2006},
        ISSN={0926-2601},
     journal={Potential Anal.},
      volume={25},
      number={4},
       pages={307\ndash 326},
         url={http://dx.doi.org/10.1007/s11118-006-9013-5},
      review={\MR{2255351 (2007k:60182)}},
}

\bib{GubinelliPerkowski2015}{article}{
      author={Gubinelli, Massimiliano},
      author={Perkowski, Nicolas},
       title={Lectures on singular stochastic {PDE}s},
        date={2015},
     journal={arXiv preprint arXiv:1502.00157},
}

\bib{Hairer2011Rough}{article}{
      author={Hairer, Martin},
       title={Rough stochastic {PDE}s},
        date={2011},
     journal={Comm. Pure Appl. Math.},
      volume={64},
      number={11},
       pages={1547\ndash 1585},
}

\bib{hairer_solving_2011}{article}{
      author={Hairer, Martin},
       title={{Solving the {KPZ} equation}},
        date={2013},
     journal={Ann. Math.},
      volume={178},
      number={2},
       pages={559\ndash 664},
}

\bib{Hairer2014Regularity}{article}{
      author={Hairer, Martin},
       title={A theory of regularity structures},
        date={2014},
        ISSN={0020-9910},
     journal={Invent. Math.},
      volume={198},
      number={2},
       pages={269\ndash 504},
         url={http://dx.doi.org/10.1007/s00222-014-0505-4},
      review={\MR{3274562}},
}

\bib{HairerLabbe2015}{article}{
      author={Hairer, Martin},
      author={Labb{\'e}, Cyril},
       title={Multiplicative stochastic heat equations on the whole space},
        date={2015},
     journal={arXiv preprint arXiv:1504.07162},
}

\bib{Hairer_Maas_2012}{article}{
      author={Hairer, Martin},
      author={Maas, Jan},
       title={A spatial version of the {I}t\^o-{S}tratonovich correction},
        date={2012},
        ISSN={0091-1798},
     journal={Ann. Probab.},
      volume={40},
      number={4},
       pages={1675\ndash 1714},
         url={http://dx.doi.org/10.1214/11-AOP662},
      review={\MR{2978135}},
}

\bib{Hairer_Maas_2014}{article}{
      author={Hairer, Martin},
      author={Maas, Jan},
      author={Weber, Hendrik},
       title={Approximating rough stochastic {PDE}s},
        date={2014},
        ISSN={0010-3640},
     journal={Comm. Pure Appl. Math.},
      volume={67},
      number={5},
       pages={776\ndash 870},
         url={http://dx.doi.org/10.1002/cpa.21495},
      review={\MR{3179667}},
}

\bib{HairerPillai2013}{article}{
      author={Hairer, Martin},
      author={Pillai, Natesh~S.},
       title={Regularity of laws and ergodicity of hypoelliptic {SDE}s driven
  by rough paths},
        date={2013},
        ISSN={0091-1798},
     journal={Ann. Probab.},
      volume={41},
      number={4},
       pages={2544\ndash 2598},
         url={http://dx.doi.org/10.1214/12-AOP777},
      review={\MR{3112925}},
}

\bib{HairerQuastel}{unpublished}{
      author={Hairer, Martin},
      author={Quastel, Jeremy},
       title={A class of growth models rescaling to {KPZ}},
        date={2015},
        note={Preprint},
}

\bib{HairerShen2015}{article}{
      author={Hairer, Martin},
      author={Shen, Hao},
       title={A central limit theorem for the {KPZ} equation},
        date={2015},
     journal={arXiv preprint arXiv:1507.01237},
}

\bib{Kardar1986}{article}{
      author={Kardar, Mehran},
      author={Parisi, Giorgio},
      author={Zhang, Yi-Cheng},
       title={{Dynamic scaling of growing interfaces}},
        date={1986},
     journal={Physical Review Letters},
      volume={56},
      number={9},
       pages={889\ndash 892},
}

\bib{Karatzas1988}{book}{
      author={Karatzas, Ioannis},
      author={Shreve, Steven~E.},
       title={{Brownian motion and stochastic calculus}},
   publisher={Springer},
        date={1988},
         url={http://books.google.com/books?hl=en&amp},
}

\bib{Krug1991}{article}{
      author={Krug, Joachim},
      author={Spohn, Herbert},
       title={Kinetic roughening of growing surfaces},
        date={1991},
     journal={C. Godreche, Cambridge University Press, Cambridge},
      volume={1},
      number={99},
       pages={1},
}

\bib{Kupiainen2014}{article}{
      author={Kupiainen, Antti},
       title={Renormalization group and stochastic {PDE}s},
organization={Springer},
        date={2014},
     journal={Annales Henri Poincar{\'e}},
       pages={1\ndash 39},
}

\bib{Lam1998}{article}{
      author={Lam, Chi-Hang},
      author={Shin, Franklin~G},
       title={Improved discretization of the {K}ardar-{P}arisi-{Z}hang
  equation},
        date={1998},
     journal={Phys. Rev. E},
      volume={58},
      number={5},
       pages={5592\ndash 5595},
}

\bib{Meyer1981}{inproceedings}{
      author={Meyer, Yves},
       title={Remarques sur un th\'eor\`eme de {J}.-{M}. {Bony}},
        date={1981},
   booktitle={Rendiconti del {Circolo} {Matematico} di {Palermo}. {Serie}
  {II}},
       pages={1\ndash 20},
         url={http://www.ams.org/mathscinet-getitem?mr=639462},
}

\bib{Moreno-Flores2014}{article}{
      author={Moreno~Flores, Gregorio~R.},
       title={On the (strict) positivity of solutions of the stochastic heat
  equation},
        date={2014},
        ISSN={0091-1798},
     journal={Ann. Probab.},
      volume={42},
      number={4},
       pages={1635\ndash 1643},
         url={http://dx.doi.org/10.1214/14-AOP911},
      review={\MR{3262487}},
}

\bib{Mueller1991}{article}{
      author={Mueller, Carl},
       title={On the support of solutions to the heat equation with noise},
        date={1991},
        ISSN={1045-1129},
     journal={Stochastics Stochastics Rep.},
      volume={37},
      number={4},
       pages={225\ndash 245},
      review={\MR{1149348 (93e:60122)}},
}

\bib{MourratWeber2014}{article}{
      author={Mourrat, Jean-Christophe},
      author={Weber, Hendrik},
       title={Convergence of the two-dimensional dynamic {I}sing-{K}ac model to
  $\phi^4_2$},
        date={2014},
     journal={arXiv preprint arXiv:1410.1179},
}

\bib{Perkowski2014}{thesis}{
      author={Perkowski, Nicolas},
       title={Studies of robustness in stochastic analysis and mathematical
  finance},
        type={Ph.D. Thesis},
        date={2014},
}

\bib{Proemel2015}{article}{
      author={Pr{\"o}mel, David~J},
      author={Trabs, Mathias},
       title={Rough differential equations on {B}esov spaces},
        date={2015},
     journal={arXiv preprint arXiv:1506.03252},
}

\bib{QuastelSpohn2015}{article}{
      author={Quastel, Jeremy},
      author={Spohn, Herbert},
       title={The {O}ne-{D}imensional {KPZ} {E}quation and {I}ts {U}niversality
  {C}lass},
        date={2015},
        ISSN={0022-4715},
     journal={J. Stat. Phys.},
      volume={160},
      number={4},
       pages={965\ndash 984},
         url={http://dx.doi.org/10.1007/s10955-015-1250-9},
      review={\MR{3373647}},
}

\bib{Quastel2014}{incollection}{
      author={Quastel, Jeremy},
       title={The {K}ardar-{P}arisi-{Z}hang equation and universality class},
        date={2014},
   booktitle={X{VII}th {I}nternational {C}ongress on {M}athematical {P}hysics},
   publisher={World Sci. Publ., Hackensack, NJ},
       pages={113\ndash 133},
      review={\MR{3204148}},
}

\bib{SasamotoSpohn2009}{article}{
      author={Sasamoto, Tomohiro},
      author={Spohn, Herbert},
       title={Superdiffusivity of the 1{D} lattice {K}ardar-{P}arisi-{Z}hang
  equation},
        date={2009},
        ISSN={0022-4715},
     journal={J. Stat. Phys.},
      volume={137},
      number={5-6},
       pages={917\ndash 935},
         url={http://dx.doi.org/10.1007/s10955-009-9831-0},
      review={\MR{2570756 (2011a:82063)}},
}

\bib{SasamotoSpohn2010}{article}{
      author={Sasamoto, Tomohiro},
      author={Spohn, Herbert},
       title={Exact height distributions for the {KPZ} equation with narrow
  wedge initial condition},
        date={2010},
        ISSN={0550-3213},
     journal={Nuclear Phys. B},
      volume={834},
      number={3},
       pages={523\ndash 542},
         url={http://dx.doi.org/10.1016/j.nuclphysb.2010.03.026},
      review={\MR{2628936 (2011c:82067)}},
}

\bib{Ustunel2014}{article}{
      author={{\"U}st{\"u}nel, Ali~S{\"u}leyman},
       title={Variational calculation of {L}aplace transforms via entropy on
  {W}iener space and applications},
        date={2014},
        ISSN={0022-1236},
     journal={J. Funct. Anal.},
      volume={267},
      number={8},
       pages={3058\ndash 3083},
         url={http://dx.doi.org/10.1016/j.jfa.2014.07.006},
      review={\MR{3255483}},
}

\bib{Zhu2014NSdiscrete}{article}{
      author={Zhu, Rongchan},
      author={Zhu, Xiangchan},
       title={Approximating three-dimensional {N}avier-{S}tokes equations
  driven by space-time white noise},
        date={2014},
     journal={arXiv preprint arXiv:1409.4864},
}

\bib{Zhu2014NS}{article}{
      author={Zhu, Rongchan},
      author={Zhu, Xiangchan},
       title={Three-dimensional {N}avier-{S}tokes equations driven by
  space-time white noise},
        date={2014},
     journal={arXiv preprint arXiv:1406.0047},
}

\end{biblist}
\end{bibdiv}

\end{document}